\documentclass[11pt]{article}
\usepackage[margin=1in]{geometry}

\usepackage{listings}
\setlength{\headheight}{15pt}

\usepackage{amsmath,amsthm,amsfonts,amssymb,amscd}
\usepackage{enumitem}
\usepackage[colorlinks=true, pdfstartview=FitV, linkcolor=blue,citecolor=blue, urlcolor=blue]{hyperref}

\usepackage{times}

\usepackage{esint,stackrel}
\usepackage{enumitem}
\usepackage{bbm}

\usepackage{mathabx}
\usepackage{float}
\usepackage{makeidx}
\usepackage{emptypage}
\usepackage{amsrefs}
\usepackage{xfrac}

\usepackage{stmaryrd}
\usepackage{cases}
\usepackage[mathscr]{euscript}

\usepackage{tikz}
\usepackage{pgfplots}

\usepgfplotslibrary{fillbetween}
\pgfplotsset{compat=1.12}
\usetikzlibrary{intersections}

\allowdisplaybreaks

 
\def\p{\partial}
\def\eps{\varepsilon}
\def\les{\lesssim}
\renewcommand*{\div}{\ensuremath{\mathrm{div\,}}}

\newcommand{\norm}[1]{\left \| #1 \right\|} 
\newcommand{\snorm}[1]{\bigl\| #1 \bigr\|} 
\newcommand{\abs}[1]{\left|#1\right|}
\newcommand{\sabs}[1]{\bigl|#1\bigr|}

\newcommand*{\sgn}{\ensuremath{\mathrm{sgn\,}}}
\newcommand{\RR}{\mathbb R}

\newcommand{\TT}{\mathbb T}
\newcommand{\OO}{\mathcal O}

\renewcommand*{\tilde}{\widetilde}
\renewcommand*{\hat}{\widehat}
\renewcommand*{\bar}{\overline}

\newcommand{\XXX}{{\mathcal X}}
\newcommand{\FFF}{{\mathcal F}}
\newcommand{\LLL}{{\mathcal L}}
\renewcommand{\epsilon}{\varepsilon}

\newcommand{\kcal}{S}

\newtheoremstyle{thmboldstyle}
   {}{}{\itshape}{}{\bfseries}{.}{.5em}{{\thmname{#1 }}{\thmnumber{#2}}{\thmnote{ (#3)}}}
\theoremstyle{thmboldstyle}

\newtheoremstyle{remboldstyle}
   {}{}{}{}{\bfseries}{.}{.5em}{{\thmname{#1 }}{\thmnumber{#2}}{\thmnote{ (#3)}}}
\theoremstyle{remboldstyle}

\makeatletter
\renewenvironment{proof}[1][\proofname]{%
   \par\pushQED{\qed}\normalfont%
   \topsep6\p@\@plus6\p@\relax
   \trivlist\item[\hskip\labelsep\bfseries#1\@addpunct{.}]%
   \ignorespaces
}{%
   \popQED\endtrivlist\@endpefalse
}
\makeatother

\newcommand{\be}{\begin{equation}}
\newcommand{\ee}{\end{equation}}
\newcommand{\rmd}{{\rm d}}
\newcommand{\DD}{\mathcal D}

\newcommand{\Root}{\mathtt Z}

\newcommand{\wb}{{w_{\mathsf B}}}
\newcommand{\etab}{{\eta_{\mathsf B}}}

\newcommand{\xbl}{x_{{\mathsf B},-}}
\newcommand{\xbr}{x_{{\mathsf B},+}}
\newcommand{\xbpm}{x_{{\mathsf B},\pm}}

\theoremstyle{thmboldstyle}
\newtheorem{theorem}{Theorem}[section]
\newtheorem{lemma}[theorem]{Lemma}
\newtheorem{proposition}[theorem]{Proposition}
\newtheorem{corollary}[theorem]{Corollary}
\newtheorem{definition}[theorem]{Definition}

\theoremstyle{remboldstyle}
\newtheorem{remark}[theorem]{Remark}


\numberwithin{equation}{section}



\newcommand{\bb}{\mathsf b} 
\newcommand{\cc}{\mathsf c} 
 
\newcommand{\mm}{\mathsf m} 
\renewcommand{\ss}{{\scriptstyle \mathcal{C}}} 
\renewcommand{\ss}{c}

\newcommand{\vr}{w_{\mathsf  +}} 
\newcommand{\vl}{w_{\mathsf  -}}

\def\jump#1{{[\hspace{-1.5pt}[#1]\hspace{-1.5pt}]}}
\def\mean#1{{\langle\hspace{-2.5pt}\langle#1\rangle\hspace{-2.5pt}\rangle}}

\def\sc{\mathfrak{s}}
\def\sn{\mathfrak{n}}

\def\cc{ \mathsf{c} }

\def\zl{ {z}_{-}}
\def\kl{ {k}_{-}}
\def\cl{ {c}_{-}}
\def\el{ {\mathfrak e}_{-}}

\def\pt{\phi_t}
\def\pst{\psi_t}
\def\kk{ {\scriptstyle\mathcal{K}} }

\def\aa{ {\mathsf a} }

\def\uu{ {\mathcal{U}_\mathsf{L} }}

\newcommand{\Rsf}{\mathsf R} 
\newcommand{\Dsf}{\mathsf D} 
\newcommand{\Isf}{\mathsf I} 

\def\st{ {\scriptstyle {\mathscr{T}}}} 
\def\stt{ {\scriptstyle {\mathscr{J}}} }

\def\ie{ \eta_{ \operatorname{inv} } }

\newcommand{\yy}{{\mathsf y}}
\newcommand{\wsf}{{\mathsf w}}
\newcommand{\zsf}{{\mathsf z}}
\newcommand{\ksf}{{\mathsf k}}
\newcommand{\asf}{{\mathsf a}}
\newcommand{\csf}{{\mathsf c}}

\newcommand\ringring[1]{%
  {
   \mathop{\kern0pt #1}\limits^{
     \vbox to-1.85ex{
       \kern-2ex 
       \hbox to 0pt{\hss\normalfont\kern.1em \r{}\kern-.45em \r{}\hss}%
       \vss 
     }
   }
  }
}
\newcommand\ringringring[1]{%
  {
   \mathop{\kern0pt #1}\limits^{
     \vbox to-1.85ex{
       \kern-2ex 
       \hbox to 0pt{\hss\normalfont\kern.1em \r{}\kern-.45em \r{}\kern-.45em \r{}\hss}%
       \vss 
     }
   }
  }
}

\usepackage{fancyhdr}
\pagestyle{fancy}
\lhead{Buckmaster, Drivas,  Shkoller, Vicol}
\rhead{Shock development for 2D Euler}

\title{{\bf Simultaneous development of shocks and cusps for 2D Euler with azimuthal symmetry from smooth data \\ \,
}}

\author{
{ \small {\bf Tristan Buckmaster}}\thanks{\footnotesize Department of Mathematics, 
Princeton University, Princeton, NJ 08544,
 \href{tjb4@math.princeton.edu}{tjb4@math.princeton.edu}.}
 \and
{ \small {\bf Theodore D. Drivas}}\thanks{\footnotesize Department of Mathematics, Stony Brook University,
Stony Brook, NY, 11794,
 \href{tdrivas@math.stonybrook.edu}{tdrivas@math.stonybrook.edu}.}
\and  
{\small {\bf Steve  Shkoller}}\thanks{\footnotesize Department of Mathematics, UC Davis, Davis, CA 95616, \href{shkoller@math.ucdavis.edu}{shkoller@math.ucdavis.edu}.}
\and 
{\small {\bf Vlad Vicol}}\thanks{\footnotesize Courant Institute of Mathematical Sciences, New York University, New York, NY 10012, \href{vicol@cims.nyu.edu}{vicol@cims.nyu.edu}.}
}

\date{}

\begin{document}

\maketitle

\begin{abstract}
A fundamental question in fluid dynamics concerns the formation of discontinuous shock waves from smooth initial data.   We prove that from smooth initial data, smooth solutions to the 2d Euler equations in azimuthal symmetry form a first singularity, the so-called $C^{\frac{1}{3}} $ {\it pre-shock}. The solution in the vicinity of this pre-shock is shown to have a
 fractional series expansion with coefficients computed from the data.
Using this precise description of the pre-shock, we prove that a {\em discontinuous shock} instantaneously develops after the pre-shock. This {\em regular shock solution} is shown to be unique in a  class of  entropy solutions with azimuthal symmetry and regularity determined by the pre-shock expansion.  Simultaneous to the development of the shock front, two other characteristic surfaces of cusp-type singularities emerge from the  pre-shock.  These surfaces have been termed  {\it weak  discontinuities} by Landau \& Lifschitz \cite[Chapter IX, \S 96]{landaulifshitz}, who conjectured   some
type of singular behavior of derivatives  along such surfaces.  
We prove that along
the slowest surface,  all fluid variables except the entropy have $C^{1, {\frac{1}{2}} }$ one-sided cusps from the shock side, and that the normal velocity is decreasing in the direction of its motion; we thus term this surface a  {\it weak rarefaction wave}.  Along the surface moving with the fluid velocity,
density and entropy form $C^{1, {\frac{1}{2}} }$ one-sided cusps while the pressure and normal velocity remain $C^2$; as such, we term this surface a
{\it weak contact discontinuity}.  
\end{abstract}

\renewcommand{\baselinestretch}{0.8}\normalsize
\setcounter{tocdepth}{2}

\tableofcontents
\renewcommand{\baselinestretch}{1.0}\normalsize


\section{Introduction}
 We consider the simultaneous development of {\em shock waves} and {\em weak singularities} (contact and rarefaction cusps) from smooth initial data, for the
 two-dimensional compressible Euler equations in azimuthal symmetry.   This problem consists of:
 \begin{itemize}
\item the {\it shock formation} process, in which we start from smooth initial data and construct the first singularity,
 the so-called {\it pre-shock};
 \item the {\it shock development} process, in which the pre-shock  instantaneously evolves into a discontinuous entropy producing shock wave, and two other families of weak characteristic singularities (cusps).
 \end{itemize}
 
\subsection{The compressible Euler equations}
 For shock development, it is essential to write the Euler equations
 in conservation form, so as to ensure the physical jump conditions (conserving total mass, momentum and energy) are satisfied. The system reads
\begin{subequations} 
\label{euler-weak}
\begin{align}
\partial_t (\rho u)  + \operatorname{div} (  \rho u\otimes u + p I ) &=0\,,  \label{ee2}\\
\partial_t \rho + \operatorname{div}  (  \rho u ) &=0\,, \label{ee1}\\
\partial_t E + \operatorname{div}  (  (p+ E) u ) &=0 \label{ee3}\,,
\end{align}
\end{subequations} 
where 
 $u :\mathbb{R}^2  \times \mathbb{R}  \to \mathbb{R}^2  $ denotes the velocity vector field, $\rho: \mathbb{R}^2  \times \mathbb{R}  \to \mathbb{R}  _+$ denotes the
strictly positive density,    $E: \mathbb{R}^2  \times \mathbb{R}  \to \mathbb{R}$ denotes the total energy, and 
$p: \mathbb{R}^2  \times \mathbb{R}  \to \mathbb{R}$ denotes the pressure function which is related to $(u,\rho, E)$ by the identity
$$
p = (\gamma-1) \left( E- \tfrac{1}{2} \rho \abs{u}^2\right) \,,
$$
where $\gamma>1$ denotes the adiabatic exponent.
For smooth solutions, the conservation of energy equation \eqref{ee3} can be replaced by the transport of (specific) entropy $\p_t S + u \cdot \nabla S =0$, where 
$S: \mathbb{R}^2  \times \mathbb{R}  \to \mathbb{R}$ denotes the entropy function, and the pressure has the equivalent form
\begin{align}
 p(\rho,\kcal) = \tfrac{1}{\gamma} \rho^\gamma e^{\kcal}\,.
 \label{peos}
\end{align}

We   consider solutions to the Euler equations \eqref{euler-weak}  which start from smooth {\it non-degenerate} initial data at time $T_0$, form a first singularity or 
{\it pre-shock} at time $T_1$,  and  simultaneously develop a  discontinuous shock wave  and surfaces of weak characteristic discontinuities on the  time interval
$(T_1,T_2]$.
Solutions on the time interval $[T_0,T_1)$ are  {\it classical solutions} to \eqref{euler-weak}, and only the continuation of these solutions past $T_1$ requires the introduction of the Rankine-Hugoniot jump conditions.

Suppose that for  $t\in (T_1,T_2]$, the shock front $\mathcal{S}\subset \mathbb{R}^d\times (T_1,T_2]$ is an orientable space-time hypersurface across which the velocity $u^\pm$, density $\rho^\pm$, and energy $E^\pm$ jump.  We consider the case where this surface is given by $\mathcal{S}:=\{ \sc(t,x_1, x_2, \dots x_d) = 0 \}$ with spacetime normal $- (\dot{\sc}, \nabla_x \sc )|_{\mathcal S}:=(-\dot{\sc}, n) $.
We assume that $(u^\pm,\rho^\pm, E^\pm)$ are defined in the sets $\Omega^\pm(t)\subset \mathbb{R}^2$ separated by the shock front at time $t$. 
Let $n( \cdot , t)$ point from $\Omega^-(t)$ to $\Omega^+(t)$, which is in the direction of propagation of the shock front.   In two-dimensions, we let $\tau(\cdot,t) = n(\cdot,t)^\perp$ denote the tangent vector.
We denote $\jump{f}=f^-  -f^+$ where $f^\pm$ (sometimes denoted $f_\pm$) are the traces of $f$ along ${\mathcal S}$ in the regions $\Omega^\pm$ respectively,  and   $u_n = u\cdot n |n|^{-1}$, $u_\tau = u \cdot  \tau |\tau|^{-1}$. 
The shock speed is denoted by $\dot \sc$.
The Rankine-Hugoniot jump conditions state that the shock speed $\dot\sc$ along with the jumps of the fields across ${\mathcal S}$ must simultaneously satisfy
\begin{subequations}\label{RH_condition}
\begin{align}
\dot\sc |n|^{-1}  \jump{\rho u_n} &=\jump{\rho u_n^2+  p I}\,, \label{RH1}\\
\dot\sc |n|^{-1} \jump{\rho} &= \jump{\rho u_n}  \,, \label{RH2}\\
\dot\sc |n|^{-1} \jump{E} &= \jump{ (p+ E) u_n} \,, \label{RH3}
\end{align}
\end{subequations}
where we have used $\jump{ u_\tau} =0$ for a shock discontinuity.

\begin{definition}[\bf Regular shock solution]
\label{shockdef}
We say that $(u, \rho, E, \sc)$ is a {\em regular shock solution} on $\mathbb{R}^d \times [T_1,T_2]$ if
the following conditions hold:
\begin{enumerate}
\item $(u, \rho, E)$ is a weak solution of \eqref{euler-weak}  and $\rho\geq  \rho_{\operatorname{min}} >0$;
\item the shock front $\mathcal{S}\subset \mathbb{R}^d\times \mathbb{R}_+$ is an orientable hypersurface;
\item  $(u, \rho, E)$ are Lipschitz continuous in space and time on the complement of the shock surface  $ (\mathbb{R}  ^d \times [T_1,T_2]) \setminus \mathcal{S}$;
\item  $(u, \rho, E)$  have discontinuities across the shock which  satisfy the Rankine-Hugoniot conditions \eqref{RH_condition}.  
\end{enumerate}
Furthermore, the solution has a {\em weak shock} if 
$$\sup_{t\in[T_1,T_2]} \left(  \abs{ \jump{u(t)}}  + \abs{\jump{\rho(t)} } + \abs{ \jump{E(t)}  } \right)    \ll  1\,.$$
 \end{definition}

\subsection{Prior results in shock development problem for Euler}
%
%
 For hyperbolic systems in one space dimension, existence (and in some cases uniqueness) of global weak solutions is well understood using either the Glimm scheme or compensated compactness techniques (see e.g.~\cite{dafermos2005hyperbolic}).  Unfortunately, these methods cannot provide a description of the surfaces across which weak and strong singularities propagate.
In multiple space dimensions, Majda~\cite{majda1983existence,majda1983stability} establishes the short-time evolution (and stability) of a shock front.
This is a free-boundary problem in which the parameterized shock surface moves with the shock speed given by the Rankine-Hugoniot conditions.   In
this problem, the  initial data consists of a shock surface and discontinuous $(u,\rho, E)$ which are smooth on either side of the shock.   As such, this framework
does not include the shock development problem, in which the surface of discontinuity must evolve from a H\"{o}lder  pre-shock. 
 
There are very few results on the formation and development of shocks.
For the one-dimensional $p$-system (which models
1d isentropic Euler),  Lebaud~\cite{Lebaud1994} was the first  to prove shock formation and development.  Following \cite{Lebaud1994},
Chen \& Dong~\cite{ChDo2001} and Kong~\cite{Kong2002} also  proved formation and development of shocks for the 1d $p$-system
with slightly more general initial data.   However, because entropy is created at the shock, the use of the isentropic 2$\times$2 $p$-system cannot produce weak solutions
to the 1d Euler equations.\footnote{We emphasize that the Rankine-Hugoniot jump conditions are not satisfied under the isentropic assumption, see Lemma~\ref{lemmaent}.}  Yin~\cite{Yin2004} was the first
to consider the formation and development problem for the non-isentropic 3$\times$3 Euler equations  in spherical symmetry.  Independently, 
shock development for the barotropic Euler equations under spherical symmetry was established by Christodoulou \& Lisbach~\cite{ChLi2016}.  
The use of the isentropic model  or the assumption of an irrotational flow in higher dimensions cannot produce  weak solutions to the
Euler equations, and as such has been termed the {\it restricted shock development}.   Christodoulou~\cite{Ch2019}
has established  restricted shock development for the irrotational and isentropic Euler equations in three spatial dimensions and completely outside of symmetry.   Yin \& Zhu~\cite{YinZhu2021b} have recently established shock development in
two dimensions for a scalar conservation law.  
 
As previously noted by Landau \& Lifschitz in~\cite[Chapter IX, \S 96]{landaulifshitz}, at the same time that the discontinuous shock wave develops,  other surfaces of singularities are expected to simultaneously form. Landau \& Lifschitz termed these surfaces
{\it weak discontinuities}.   In the restricted shock development problem,  Christodoulou~\cite[Page 3]{Ch2019} constructs  $C^{1,\frac 12}$ cusp singularities  along the characteristic of the  fluid velocity minus the sound speed, emanating from the first singularity (akin to the $\sc_1$ curve in Theorem~\ref{thm:main:azi:soft}).  For the full Euler system (with or without symmetry, even in one dimension)  the analysis of these surfaces of weak discontinuity has been heretofore nonexistent. In this paper we prove that  two such surfaces of weak singularities emerge from the pre-shock and move with the slower sound-speed characteristic  and the fluid velocity respectively. 
We shall refer to these two surfaces as a {\it weak contact} ($\sc_2$), respectively a {\it weak rarefaction} ($\sc_1$). 
We call the curve $\sc_2$ a weak contact because it moves with the fluid velocity, and both the normal velocity  and the pressure are one degree smoother than the density and entropy. The curve $\sc_1$ is called a weak rarefaction because the normal velocity to this curve is decreasing in the direction of its motion -- see Section
\ref{sec:2D:Euler}.

\subsection{Statement of the main results}
The goal of this paper is to prove the following (we refer to Theorems~\ref{thm:main:formationeuler} and~\ref{thm:main:developmenteuler} for a precise statement):
\begin{theorem}[\bf Main result for 2D Euler  -- abbreviated version]
\label{thm:main:soft}
From smooth isentropic initial data with azimuthal symmetry, at time $T_0$, there exist smooth solutions to the
2d Euler equations~\eqref{euler-weak} that form a pre-shock singularity at a time $T_1 > T_0$.
The first singularity occurs along a half-infinite ray and the blowup  is  asymptotically self-similar, exhibiting a $C^{\frac{1}{3}}$ cusp in the angular velocity and mass density, and a $C^{1,\frac{1}{3}}$ cusp in the radial velocity.  Moreover, the blowup is  given by a series expansion whose coefficients are computed 
as a function of the initial data.

Past the pre-shock, the solution is continued on $(T_1,T_2]$, as an entropy--producing regular shock solution of the full 2d non-isentropic Euler equations~\eqref{euler-weak}. The solution is unique in the class of entropy producing weak solutions with azimuthal symmetry, with a certain weak shock structure and suitable regularity off the shock (see Definition~\ref{def:XT} below). The following properties are established:
\begin{itemize}
\item Across the shock curve, all the state variables jump:
\begin{align*}
\jump{u_\theta} \sim (t-T_1)^{\frac{1}{2}}, \qquad \jump{\rho} \sim (t-T_1)^{\frac{1}{2}},\qquad \jump{\partial_\theta u_r} \sim (t-T_1)^{\frac{1}{2}},\qquad \jump{S} \sim (t-T_1)^{\frac{3}{2}}.
\end{align*}
\item Across the characteristic emanating from the pre-shock and moving with the fluid velocity,  the entropy, density and radial velocity all have a
$C^{1,\frac{1}{2}}$ one-sided cusp from the right, while from the left, they are all $C^2$ smooth.  The second derivative of the angular velocity and of the pressure is bounded across this curve for $t \in (T_1,T_2]$.
\item Across the characteristic emanating from the pre-shock and moving with sound speed minus the fluid velocity, the entropy is zero while the angular velocity and density have $C^{1, \frac{1}{2}}$ one-sided cusps from the right, while from the left, they are all $C^2$ smooth.  The second derivative of the radial velocity is bounded  across this curve for $t \in (T_1,T_2]$.
\end{itemize}
We thereby obtain a full propagation of singularities result for regular shock solutions, capturing both the jump discontinuity and the weak singularities emanating from the initial cusp in the pre-shock.
\end{theorem}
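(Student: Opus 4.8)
The statement is the ``soft'' packaging of Theorems~\ref{thm:main:formationeuler} and~\ref{thm:main:developmenteuler}; I sketch the strategy behind those two results, since the present theorem follows by combining them and matching the two constructions at $t=T_1$. The plan splits into a \emph{formation} phase on $[T_0,T_1)$, which produces the pre-shock together with its fractional expansion, and a \emph{development} phase on $(T_1,T_2]$, which continues the solution through the pre-shock as a regular shock solution in the sense of Definition~\ref{shockdef} and resolves the three singular surfaces $\sc$, $\sc_1$, $\sc_2$.

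\emph{Formation.} Azimuthal symmetry reduces \eqref{euler-weak} to a quasilinear first-order system in the radial variable, with characteristic speeds of the form $\{u_r-c,\,u_r,\,u_r,\,u_r+c\}$ and smooth lower-order geometric forcing decaying like $1/r$ away from the origin; isentropy of the data forces $S\equiv0$ up to $T_1$. Diagonalizing the acoustic block into Riemann-type variables, I would identify the single variable $w$ that steepens and pass to modulated self-similar coordinates adapted to its incipient gradient catastrophe, rescaling $w$ so that its spatial profile is an $O(1)$ perturbation of the stable self-similar Burgers profile $\bar W$ --- the odd monotone solution of the profile ODE whose graph is given implicitly by $y=-\bar W-\bar W^{3}$, which carries the branch $\bar W\sim-\mathrm{sgn}(y)\,|y|^{1/3}$ as $|y|\to\infty$. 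A bootstrap/continuity argument for the self-similar remainder in weighted norms, with modulation ODEs for the blowup time, location, amplitude and (in two dimensions) orientation chosen to suppress the finitely many non-decaying modes, yields convergence to $\bar W$; transferring back, the cusp is inherited at order $C^{1/3}$ by the angular velocity and the density, while the radial velocity stays one derivative smoother, $C^{1,1/3}$. Under the non-degeneracy hypotheses on the data --- in particular a nonvanishing cubic coefficient of the relevant Riemann variable at the first blowup point --- I would show the self-similar remainder admits a convergent expansion in fractional powers of the similarity variable, with coefficients explicit polynomials in the Taylor data; reverting coordinates produces the announced pre-shock series, and tracking which $w$-characteristics focus shows the pre-shock set is a half-infinite ray.

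\emph{Development.} At $t=T_1$ the pre-shock expansion is the Cauchy datum for a non-classical continuation: the classical characteristic flow would render $w$ multivalued for $t>T_1$, so one inserts a shock curve $\sc$ with $\sc(T_1)$ at the pre-shock point and speed $\dot\sc$ prescribed by the Rankine--Hugoniot relations~\eqref{RH_condition} for the \emph{full} non-isentropic system --- the entropy production at $\sc$ being precisely what isentropic models cannot capture (cf.\ Lemma~\ref{lemmaent}). I would pose this as a free-boundary problem and solve it by a fixed-point/iteration scheme with weighted a priori estimates in H\"older-type spaces on the spacetime domain cut along the three surfaces emerging from the pre-shock point: the shock $\sc$ (the steepening acoustic family), the particle-path curve $\sc_2$ moving with the fluid velocity, and the slow acoustic curve $\sc_1$, with the solution smooth off these three curves; straightening $\sc_1$ and $\sc_2$ to coordinate lines via the corresponding characteristic flows is convenient. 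A further change of independent variables near $T_1$ renders the cube-root structure of the pre-shock analytic, after which the $(t-T_1)^{1/2}$ scaling of the shock amplitude is forced; feeding this into~\eqref{RH_condition} gives $\jump{u_\theta}\sim\jump{\rho}\sim\jump{\partial_\theta u_r}\sim (t-T_1)^{1/2}$, and since the entropy jump across a weak shock is cubic in the amplitude, $\jump{S}\sim (t-T_1)^{3/2}$. The weights are tuned so that the leading behavior of each field near each curve can be read off: across $\sc_2$ the density, entropy and radial velocity develop a one-sided $|x-\sc_2(t)|^{1/2}$-type cusp --- hence $C^{1,1/2}$ from the shock side and $C^{2}$ from the other --- while the pressure and angular velocity stay $C^{2}$, justifying ``weak contact''; across $\sc_1$ the entropy remains zero while the angular velocity and density develop a $C^{1,1/2}$ one-sided cusp and the radial velocity stays $C^{2}$, and the normal velocity is decreasing along $\sc_1$ in its direction of motion, justifying ``weak rarefaction''. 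Uniqueness within the stated class (Definition~\ref{def:XT}) of azimuthally symmetric entropy solutions with the prescribed weak-shock and off-shock regularity structure follows from a difference estimate in the same weighted norms, using that the shock is compressive (Lax) and weak.

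\emph{Main obstacle.} The crux is the development phase in the immediate vicinity of the pre-shock point, where $\sc$, $\sc_1$ and $\sc_2$ all originate from a single spacetime point: there the solution is least regular, the three characteristic speeds nearly coincide, and one must simultaneously \emph{guess} the correct ansatz --- the precise fractional exponents and the coefficients multiplying them --- and \emph{match} it to the formation expansion. Designing a weighted norm that resolves all three singular surfaces at once, closing the nonlinear estimates uniformly down to $t=T_1$, and propagating exactly --- neither more nor less than --- the claimed one-sided $C^{1,1/2}$ regularity on each side of $\sc_1$ and $\sc_2$ is what makes the argument delicate; the symmetry reduction, the Burgers-type self-similar analysis, and the control of the $1/r$ forcing are, by comparison, routine within this circle of techniques.
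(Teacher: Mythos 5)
Your high‑level outline of the strategy — self‑similar Burgers analysis for formation, a free‑boundary iteration for development, three curves emanating from the pre‑shock, Rankine--Hugoniot analysis yielding the $(t-T_1)^{1/2}$ and $(t-T_1)^{3/2}$ jump rates — matches the skeleton of the proof. But your setup of the symmetry reduction, which underlies everything else, is wrong, and the error is not cosmetic.

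You write that azimuthal symmetry reduces \eqref{euler-weak} to a quasilinear system in the \emph{radial} variable with speeds $\{u_r-c,u_r,u_r,u_r+c\}$ and a $1/r$ geometric source. That is the standard cylindrical/spherical reduction, and it is not what this paper uses. The ansatz in~\eqref{scale0} prescribes $u_\theta = r\,b(\theta,t)$, $u_r = r\,a(\theta,t)$, $\sigma = r\,\ss(\theta,t)$, $\kcal = k(\theta,t)$: the $r$-dependence is frozen by the scaling, every $1/r$ cancels, and the residual evolution~\eqref{eq:Euler:polar3} is a genuinely one-dimensional system in the \emph{angular} variable $\theta$. The Riemann variables $w=b+\ss$, $z=b-\ss$ and the wave speeds $\lambda_1,\lambda_2,\lambda_3$ in~\eqref{eq:wave-speeds} are transport speeds for $\partial_\theta$, built from the \emph{angular} velocity and sound speed, not $u_r$. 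This is not a presentation choice: if the dynamics were radial, focusing characteristics would first meet on a circle $\{r=r_*\}$, whereas the paper's pre-shock forms on a half-ray $\{\theta=\xi_*\}$ precisely because the catastrophe is in $\theta$. Your own proposal contains this contradiction — you assert a radial system and then, two sentences later, a half-ray pre-shock. Every downstream object you invoke (which speed is ``fluid velocity'', what the three surfaces $\sc,\sc_1,\sc_2$ are, what is modulated) is keyed to the wrong independent variable, and the modulation you describe (including ``orientation'', which has no analogue here — the modulation parameters are just $\tau,\xi,\kappa$) does not match the one-dimensional-in-$\theta$ structure.

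Two smaller points. First, the paper does not claim a \emph{convergent} fractional series at the pre-shock: it establishes a finite expansion with $C^3$-level remainder control (Theorem~\ref{thm:blowup-profile}, \eqref{w-blowup}--\eqref{preshock-derivatives}), obtained by inverting the cubic flow map $\etab(x,T_*)=\theta$ via a Newton-polygon argument (Lemma~\ref{lem1}), not by summing a convergent series. Second, on $\sc_2$ the paper proves only that the second derivatives of $u_\theta$ and $p$ are \emph{bounded} (and similarly for $u_r$ on $\sc_1$) — this comes from the cancellation in $q^w_\theta+q^z_\theta$ and the specific vorticity transport, and the sharp $C^{1,1/2}$ regularity there is left as a conjecture; your claim that these fields ``stay $C^2$'' is slightly stronger than what is actually proved.
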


%

\begin{figure}[htp]
\centering
 \begin{tikzpicture}[]
   \pgftext{\includegraphics[width=\textwidth]{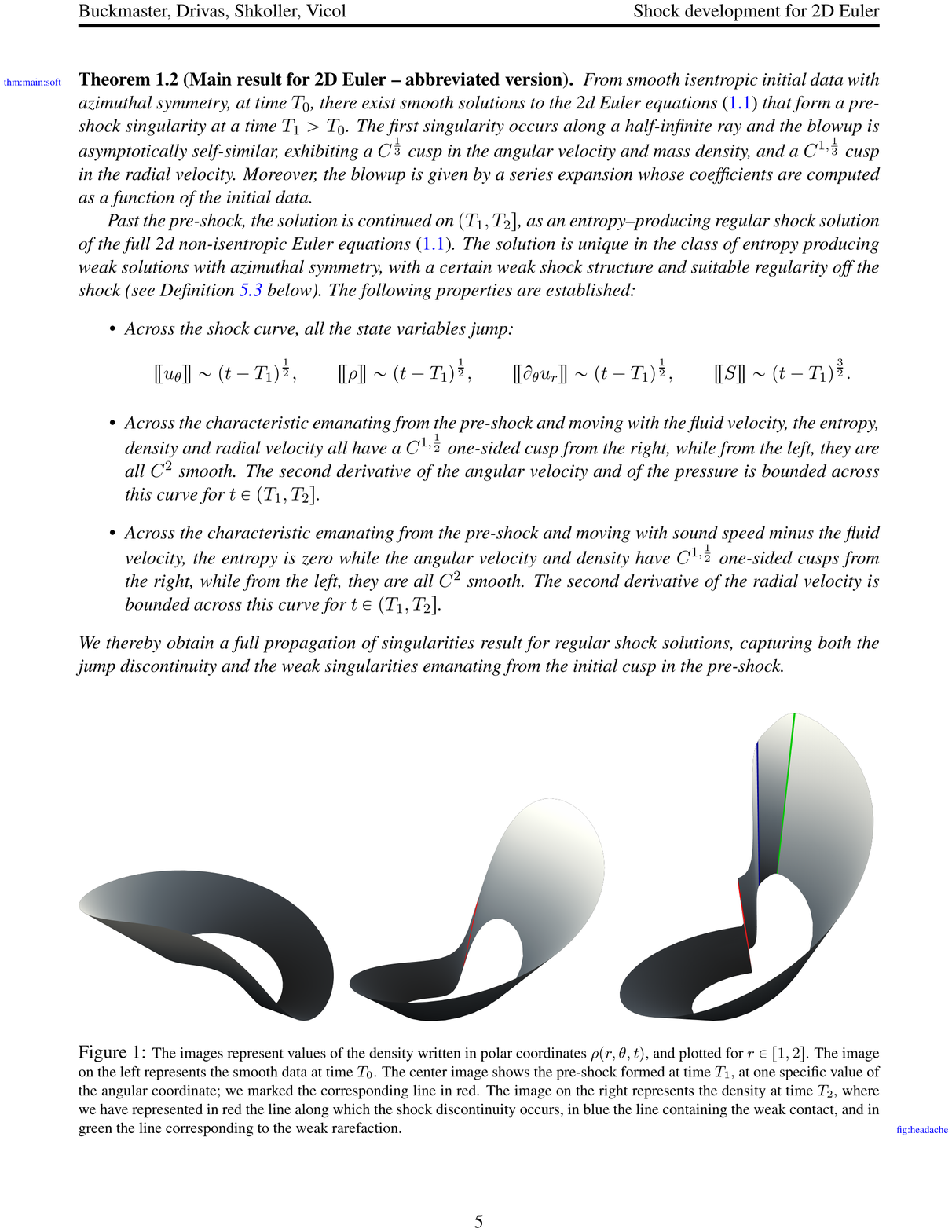}};
\draw[green!40!gray, ultra thick] (6.2,-0.14)  -- (6.57,3.2) ;
 \draw[blue, ultra thick] (5.84,-0.37)  -- (5.8,2.6) ;
 \draw[red, ultra thick] (5.63,-1.69)  -- (5.4,-0.22) ;
  \draw[red, ultra thick] (-.26,-2)  -- (0,-.75) ;

 \end{tikzpicture}
\caption{\footnotesize The images represent values of the density written in polar coordinates $\rho(r,\theta,t)$, and plotted for $r\in[1,2]$. The image on the left represents the smooth data at time $T_0$. The center image shows the pre-shock formed at time $T_1$, at one specific value of the angular coordinate; we marked the corresponding line in red. The image on the right represents the density at time $T_2$, where we have represented in red the line along which the shock discontinuity occurs, in blue the line containing the weak contact, and in green the line corresponding to the weak rarefaction.}
\label{fig:headache}
\end{figure}

\begin{remark}[Anomalous entropy production]
In analogy with Onsager's conjecture on anomalous  dissipation of kinetic energy by weak solutions of incompressible Euler, entropy can be anomalously
 produced by singular inviscid solutions of the compressible Euler equations.  Theorem 3 of \cite{drivas2018onsager} establishes the following $L^3$-based 
 Onsager-criterion:  if $u,\rho,E\in L^\infty(0, T;(B_{3,\infty}^{1/3+}\cap L^\infty)_{\rm loc} (\mathbb{R}^d))$ then there is no entropy production. Our Theorem \ref{thm:main:soft} provides an example of an entropy producing weak solution resulting from continuing past a finite time singularity.  In fact, the solution we construct lies in $u,\rho,E\in (BV \cap L^\infty)_{\rm loc}\subset (B_{p,\infty}^{1/p})_{\rm loc}$, for  every $p\geq 1$,  illustrating the sharpness of the Onsager criterion in this context.  
\end{remark}
\begin{remark}[Uniqueness and entropy] 
With regards to the question of uniqueness,   the recent work~\cite{klingenberg2020shocks} established that infinitely many entropy-producing weak solutions emanating from 1d Riemann data exist (see also the references therein for the rich history of such convex-integration constructions going back to
\cite{ChDeKr2015}).  The solutions in~\cite{klingenberg2020shocks} break the 1d symmetry and are in general just bounded, and show that the usual entropy condition cannot ensure uniqueness in the class of bounded weak Euler solutions.  By contrast, we establish uniqueness in a class of weak solutions with azimuthal symmetry, exhibiting  {\em weak shock  structure}, and which have regularity consistent with the fact that they emanate from a $C^{\frac 13}$ pre-shock (see Definition~\ref{def:XT}).
\end{remark}

\section{Jump Conditions and Entropy Conditions}
\subsection{The Rankine-Hugoniot jump conditions for the Euler equations}
\label{sec:RH:Euler}

We now return to the Rankine-Hugoniot conditions \eqref{RH_condition}. 
 The weak shock regime is relevant to the development of a discontinuous shock wave from H\"{o}lder continuous data (the pre-shock).  A  key feature of a regular shock solution 
 to the Euler equations is the production of entropy along the shock surface.
 
In order to best exemplify this entropy production,  we shall set
\begin{align} 
\kcal_+ =0 \,.  \label{k-plus-zero}
\end{align} 
 We then define 
\be
v = u_n-\dot\sc \,.
\ee
Then noting that $ u_n^2= (v+\dot\sc)^2= v^2 + 2 \dot\sc v+ \dot\sc^2$ and $\dot\sc u_n = \dot\sc v+\dot \sc^2$, the jump conditions \eqref{RH_condition} become
\begin{subequations} 
\label{rh-all}
\begin{align}
0&= \jump{\rho v^2 +  p}\,, \label{RH1t}\\
0&= \jump{\rho v}  \,, \label{RH2t}\\
0 &=\jump{ v^2 + \tfrac{2\gamma}{\gamma-1}\tfrac{p}{\rho}  }\,, \label{RH3t}
\end{align}
\end{subequations} 
From \eqref{RH2t}, we know that the mass flux is continuous $\rho_-v_-=\rho_+v_+=:j$.   For a {shock discontinuity}  $j\neq 0$ implying the tangential velocity is continuous across the shock $\jump{u_\tau}=0$. In our setup, mass is crossing the shock from the `$+$' phase to the `$-$' phase, so the shock is traveling from `$-$' to `$+$'.  With our choice of orientation for the normal, this fixes $j<0$, which implies that
\be\label{lax2}
u^- \cdot n < \dot\sc, \qquad u^+ \cdot n < \dot\sc.
\ee
Thus, the shock speed is greater than the normal velocity of the fluid on both sides of the shock, consistent with that mass flux being negative $j<0$. We will refer to `$-$' state as behind the shock and the `$+$' state  as the front.

\subsection{Second Law of Thermodynamics and the physical entropy condition}

We now explain the meaning and consequences of the physical entropy condition. 
The motion of a viscous compressible fluid  in $d$-spatial dimensions is, to good approximation, governed by the  Navier-Stokes system.  In that system, any non-trivial state has the property that net entropy is increasing
\be\label{entprod}
\frac{\rmd}{\rmd t} \int_\Omega \rho S\  \rmd x > 0,
\ee
provided $u$ is tangent to $\Omega$ (and the boundaries $\partial \Omega$ are insulating if the thermal diffusivity is non-vanishing). Namely, the second law of thermodynamics holds.
For the Euler equations, the entropy satisfies 
\be\label{eulerent}
\partial_t (\rho \kcal)  + \nabla \cdot (   \rho u \kcal ) =0  \,
\ee
and is thus has conserved average for smooth solutions.  We recall here the following classical result

\begin{lemma}\label{lemmaent}
Let $(u, \rho, E)$ be a weak shock solution.
Then, entropy is produced \eqref{entprod} if and only if $\jump{\kcal}>0$.  Moreover, provided that the specific volume $V:= 1/\rho$ and enthalpy $h = \frac{p}{\rho} +  e$  when viewed as a functions of pressure and entropy are $C^4$,  then the following leading order description of the entropy jump holds 
\be\label{spcube}
\jump{S} =   \frac{1}{12} \frac{1}{T_+}  \left. \left(\frac{\partial^2V}{\partial p_+^2}\right)\right|_S \jump{p}^3 + \mathcal{O}( \jump{p}^4).
\ee
\end{lemma}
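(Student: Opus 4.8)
The plan is to extract both the qualitative statement (entropy production iff $\jump{S}>0$) and the quantitative cubic expansion \eqref{spcube} from the algebraic structure of the Rankine-Hugoniot relations \eqref{rh-all}, treating the jump as small and expanding in powers of $\jump{p}$. First I would establish the qualitative claim. Combining \eqref{RH1t}--\eqref{RH3t} with $j = \rho_- v_- = \rho_+ v_+ \neq 0$, one eliminates $v_\pm$ to obtain the classical Hugoniot relation purely in thermodynamic variables: $\jump{e} + \tfrac12(p_- + p_+)\jump{V} = 0$, where $V = 1/\rho$ and $e$ is the specific internal energy, and $j^2 = -\jump{p}/\jump{V}$. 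Since we have normalized $\kcal_+ = 0$ and the fluid crosses from the `$+$' to the `$-$' side with $j < 0$ (so the `$-$' state is the compressed, post-shock state), the Lax condition \eqref{lax2} forces $\jump{p} = p_- - p_+ > 0$ and $\jump{V} = V_- - V_+ < 0$ for a compressive shock. Then, using $T\,\rmd S = \rmd e + p\,\rmd V$ and integrating along the Hugoniot curve, a short convexity argument (the Hugoniot curve lies above the isentrope through the `$+$' state when $V'' |_S > 0$, i.e. the usual Bethe--Weyl condition implicit in the $C^4$ hypothesis) yields $\jump{S} > 0$; conversely if $\jump{S} > 0$ then by \eqref{eulerent} the averaged entropy $\int \rho S$ strictly increases across the shock, giving \eqref{entprod}. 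I would phrase this as: the sign of $\jump{S}$ equals the sign of the entropy production rate, both being controlled by the same Hugoniot discrepancy.

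Next, for the expansion \eqref{spcube}, I would parametrize the Hugoniot curve through the base point $(p_+, S_+) = (p_+, 0)$ by the pressure increment $\delta := \jump{p} = p_- - p_+$, and write $V = V(p,S)$, $h = h(p,S)$ as the $C^4$ functions given in the hypothesis. The Hugoniot condition $\jump{e} + \tfrac12(p_-+p_+)\jump{V} = 0$, rewritten in terms of enthalpy via $e = h - pV$, becomes $\jump{h} = \tfrac12(p_- + p_+)\,\tfrac{\jump{\rho}}{\rho_-\rho_+}$ — equivalently $\jump{h} = \tfrac12(V_- + V_+)\jump{p}$. Now Taylor-expand both sides about the `$+$' state in the two small quantities $\delta$ and $\jump{S}$, using $h_p = V$ and $h_S = T$ (Maxwell relations / first law at constant $p$), so that $\jump{h} = V_+ \delta + T_+ \jump{S} + \tfrac12 (\partial_p V)_S \delta^2 + \ldots$ while the right-hand side $\tfrac12(V_- + V_+)\jump{p} = V_+\delta + \tfrac12 (\partial_p V)_S \delta^2 + \ldots$. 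The leading $O(\delta)$ and $O(\delta^2)$ terms cancel identically, the $O(\delta^2 \jump{S})$ and $O(\jump{S}^2)$ terms are higher order once we learn $\jump{S} = O(\delta^3)$, and the surviving balance at order $\delta^3$ reads $T_+ \jump{S} = \tfrac{1}{12}(\partial^2_{p} V)_S \, \delta^3 + O(\delta^4)$, which is exactly \eqref{spcube}. The bookkeeping that makes the $\tfrac{1}{12}$ appear is that $\jump{h}$ contributes $\tfrac16 h_{ppp}\delta^3 = \tfrac16 (\partial^2_p V)_S \delta^3$ while the averaged-volume side contributes $\tfrac12 \cdot \tfrac12 (\partial^2_p V)_S \delta^3 = \tfrac14(\partial^2_p V)_S\delta^3$, and $\tfrac16 - \tfrac14 = -\tfrac{1}{12}$, with an overall sign flip from moving the enthalpy term across — I would double-check the sign convention so that $\jump{S}>0$ comes out consistent with $(\partial^2_p V)_S > 0$ and $\delta > 0$.

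The main obstacle I anticipate is purely organizational rather than conceptual: carrying the two-variable Taylor expansion to third order while correctly tracking which cross-terms $\delta^a \jump{S}^b$ are genuinely $O(\delta^4)$ requires first establishing the \emph{a priori} bound $\jump{S} = O(\delta^2)$ (easy, since the first-law defect is quadratically small), then bootstrapping to $\jump{S} = O(\delta^3)$ using the cancellation of the $\delta^2$ terms, and only then reading off the coefficient. One must also be careful that the $C^4$ regularity of $V$ and $h$ in $(p,S)$ is exactly what licenses the third-order remainder $\mathcal{O}(\jump{p}^4)$ with a locally uniform constant. The thermodynamic identities $h_p = V$, $h_S = T$, and the expression of $j^2$ in terms of the chord slope are standard and can be cited or derived in a line; everything else is the expansion above.
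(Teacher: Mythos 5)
Your argument for the cubic expansion~\eqref{spcube} is, up to reorganisation, the paper's own: both of you take the Hugoniot relation in the enthalpy form $\jump{h}=\tfrac12(V_-+V_+)\jump{p}$ (coming from eliminating $v_\pm$ via $j^2=-\jump{p}/\jump{V}$ and using \eqref{RH3t}), Taylor-expand $h(p,S)$ and $V(p,S)$ about the ``$+$'' state with $h_p=V$ and $h_S=T$, observe the $\OO(\jump{p})$ and $\OO(\jump{p}^2)$ cancellations, bootstrap $\jump{S}=\OO(\jump{p}^3)$, and read off the $\tfrac14-\tfrac16=\tfrac{1}{12}$ coefficient. Your bookkeeping of that coefficient is correct, and the $C^4$ hypothesis is invoked for exactly the same reason (to control the $\OO(\jump{p}^4)$ remainder). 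So the quantitative part of your proposal is essentially the intended proof.

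The qualitative ``if and only if'' is where you have a genuine gap. Your convexity/Hugoniot argument does not prove the equivalence: it tries to prove $\jump{S}>0$ \emph{unconditionally}, conditioned only on the compression inequalities $\jump{p}>0$, $\jump{V}<0$, which you extract from ``the Lax condition \eqref{lax2}''. But \eqref{lax2} alone is just $j<0$; it does not by itself force compression (the paper derives $\jump{p}>0$ \emph{after} \eqref{spcube}, from positivity of $\jump{S}$ and convexity of $V$ --- so leaning on it here risks circularity), and in any case an unconditional $\jump{S}>0$ is not the statement of the lemma. The clean route, which you almost write down in your ``conversely'' clause, is to integrate the weak form of \eqref{eulerent} across the shock to obtain the single identity $\frac{d}{dt}\int_\Omega \rho S\,\rmd x = -j\,\jump{S}$. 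Since $j<0$ is part of the shock set-up, this one line gives both directions of the equivalence simultaneously and renders the convexity argument unnecessary for this part of the lemma.
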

The notation ``$f(x) = \mathcal{O}(x)$" means, as usual,  $|f(x)| \leq {\rm (const.)} |x|$ for all sufficiently small $x$. An immediate implication of equation \eqref{spcube} is that entropy variation is produced once a shock is formed, even if the flow was initially isentropic.

\begin{remark}[Equations of State]
Although we only require finite regularity in Lemma \ref{lemmaent}, away from phase transitions, all thermodynamic functions
 are smooth in their arguments. Thus, the specific volume $V:=V(\rho, S)$ and the enthalpy $h:= h(\rho, S)$ which are used in the subsequent proof are smooth functions of $p$ and $S$. As such, our assumption physically is that our medium is far from criticality. 
Moreover, strict convexity ${\partial^2V}/{\partial p^2}>0$ is a material property.
For example, for a ideal gas (the family we consider) we have explicitly
\be\label{spvder}
\left. \left(\frac{\partial^2V}{\partial p^2}\right)\right|_S = (1+\gamma^{-1}) \frac{V} p^2>0
\ee
which can be obtained by differentiating the relationship $p V^\gamma = {\rm (const.)}$ (equation \eqref{peos}).  The thermodynamic temperature appearing in \eqref{spcube} can also be explicitly related to $\rho$ and $p$ in this setting.  Specifically, for the ideal gas law
$T= e/c_v$ where the internal energy $e= \frac{p}{(\gamma-1) \rho}$, we have the following explicit formula
$
\frac{1}{T} = \frac{c_v (\gamma-1) \rho}{p} .
$
\end{remark}

\begin{remark}[Correlations in jumps]
One consequence of Lemma \ref{lemmaent} is that, if $V$ is a strictly convex function of the pressure (as it is for the ideal gas), then positive entropy production implies positivity of the jumps $\jump{p}>0$, $\jump{\rho}>0 $ and $\jump{u_n}>0$. This conclusion is simply the well known fact  that pressure and mass density trailing the shock exceed their values at the front, due to compression.  See Landau and Lifshitz \cite{landaulifshitz}, Chapter IX for an extended discussion.
\end{remark}

\begin{proof}[Proof of Lemma~\ref{lemmaent}]
Integrating the entropy balance \eqref{eulerent} over the domain one finds
\begin{align} 
\dot\sc \jump{\rho \kcal} -\jump{\rho u \kcal}  = -j \jump{\kcal}  \,. \label{jumpk-sign}
\end{align} 
Thus we must have $-j \jump{\kcal}>0$, to be consistent with  the second law of thermodynamics  \eqref{entprod} imposed, for example, by the effects of infinitesimal viscosity. Recalling that, with our conventions the mass flux $j = \rho v$ is negative (mass is passing the shock from $+$ to $-$),
we see that the \emph{physical entropy condition}  \eqref{jumpk-sign} is equivalent to the condition
\begin{align} \label{entcond}
\jump{\kcal}>0 \,.
\end{align} 
We now derive the consequences of \eqref{entcond} for \emph{weak shocks}.
In what follows, we will show that $\jump{S}=\mathcal{O}( \jump{p}^3)$.  In the calculations below, we anticipate this result in our expansions. It is convenient to work with the enthalpy $h = \frac{p}{\rho} +  e$.  We  regard $h= h(p,S)$ and Taylor expand to obtain 
\begin{align*}
\jump{h}  &=\left. \left(\frac{\partial h}{\partial S_+}\right)\right|_p \jump{S} + \left. \left(\frac{\partial h}{\partial p_+}\right)\right|_S \jump{p} \notag\\
&\qquad + \frac{1}{2}  \left. \left(\frac{\partial^2h}{\partial p_+^2}\right)\right|_S \jump{p}^2 
+ \frac{1}{6}  \left. \left(\frac{\partial^3h}{\partial p_+^3}\right)\right|_S \jump{p}^3 + \mathcal{O} (\jump{S}   \jump{p}, \jump{p}^4 , \jump{S}^2).
\end{align*}
Recalling the  first law of thermodynamics in the form
\be
 \rmd h = T  \rmd S +   V \rmd p,
\ee
where $V:=1/\rho$ is the specific volume, we find that
\be
\left. \left(\frac{\partial h}{\partial S}\right)\right|_p = T, \qquad \left. \left(\frac{\partial h}{\partial p}\right)\right|_S= V.
\ee
Thus, the Taylor expansion of the enthalpy becomes
\begin{align}\label{heqn1}
\jump{h} &=T_+ \jump{S} + V_-\jump{p} + \frac{1}{2}  \left. \left(\frac{\partial V}{\partial p_+}\right)\right|_S \jump{p}^2  + \frac{1}{6}  \left. \left(\frac{\partial^2V}{\partial p_+^2}\right)\right|_S \jump{p}^3 +   \mathcal{O} (\jump{S}   \jump{p}, \jump{p}^4 , \jump{S}^2).
\end{align}
Recalling that  the mass flux $j$ is continuous across the shock, we note that by \eqref{RH1t} that
\be\label{PVrel}
\jump{p} = - \jump{\rho v^2} = -j \jump{v} =  -j^2 \jump{V },
\ee 
which implies $ j^2 =- {\jump{p} }/{\jump{V} }$.
Moreover, from \eqref{RH3t}, we have
\be
\jump{h}= - \jump{ \tfrac{1}{2}v^2}  = -\frac{j^2}{2} \jump{V^2}= \frac{1}{2}\jump{p} \frac{  \jump{V^2} }{\jump{V} }=  \jump{p} V_{ \rm ave},
\ee
where $V_{ \rm ave}= \frac{1}{2} (V_-+ V_+)$.  Combining with \eqref{heqn1}, after some manipulation we find
\be\label{seqn1}
T_+ \jump{S} = \frac{1}{2} \jump{V} \jump{p} - \frac{1}{2}  \left. \left(\frac{\partial V}{\partial p_+}\right)\right|_S \jump{p}^2  - \frac{1}{6}  \left. \left(\frac{\partial^2V}{\partial p_+^2}\right)\right|_S \jump{p}^3 +  \mathcal{O} (\jump{S}   \jump{p}, \jump{p}^4 , \jump{S}^2).
\ee
Finally,  Taylor expanding the specific volume yields
\be
\jump{V} =\left. \left(\frac{\partial V}{\partial p_+}\right)\right|_S \jump{p}  + \frac{1}{2}  \left. \left(\frac{\partial^2V}{\partial p_+^2}\right)\right|_S \jump{p}^2+\mathcal{O}( \jump{p}^3,\jump{S}).
\ee
Upon substitution into \eqref{seqn1}, we obtain the relation  \eqref{spcube}.
Note that provided ${\partial^2V}/{\partial p^2}>0$, for weak shocks $\jump{p}\ll 1$, equation \eqref{spcube} shows that  $\jump{p}>0$. Hence, by \eqref{PVrel}, we have  $\jump{\rho}>0$  
and $\jump{u_n}>0$.
\end{proof}

\subsection{Lax geometric entropy conditions and determinism of shock development }

In this section, we show that the entropy condition implies that the shock discontinuity is supersonic relative
to the state ahead (`$+$' phase) and subsonic relative to the state behind  (`$-$' phase)
\begin{align} \label{lax1}
u^+ \cdot n + c^+  &  < \dot \sc < u^- \cdot n + c^- \,,
\end{align} 
where $c^-$ and $c^+$ are the sound speeds behind and at the front of the shock.
In this way, the $\{t=0\}$ hypersurface is the Cauchy surface for the state ahead $(+)$ whereas $\{t=0\}$ together with the shock front serve as the Cauchy surface for the state behind $(-)$.  The region behind the shock is thus determined by the initial conditions together with data along the shock front which are determined by enforcing Rankine-Hugoniot conditions.    

  Equations \eqref{lax1} (together with \eqref{lax2})  are called  \emph{Lax's geometric entropy conditions}. 
We now show that the Lax geometric entropy conditions are \emph{equivalent} to the physical entropy condition  \eqref{entcond}, at least for weak shocks.

\begin{lemma}
\label{lem:Lax}
In the setting of Lemma \ref{lemmaent}, the physical entropy condition \eqref{entprod} holds if and only if the geometric Lax entropy conditions \eqref{lax1} and \eqref{lax2} hold.  
\end{lemma}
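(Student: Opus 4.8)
The plan is to reduce everything to a one-parameter family indexed by the pressure jump $\jump{p}$, which is legitimate in the weak-shock regime since Lemma~\ref{lemmaent} gives $\jump{p}>0$ equivalent to $\jump{S}>0$ and all other jumps are smooth functions of $\jump{p}$ vanishing at $\jump{p}=0$. First I would record the algebraic consequences of the Rankine--Hugoniot relations \eqref{rh-all}: from \eqref{PVrel} one has $j^2 = -\jump{p}/\jump{V}$, so the mass flux squared is determined by the chord slope of the shock adiabat (Hugoniot curve) in the $(V,p)$ plane. Then $v_\pm^2 = j^2 V_\pm^2 = -V_\pm^2 \jump{p}/\jump{V}$, so I can write $v_-^2$ and $v_+^2$ explicitly. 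On the other side, the squared sound speeds are $c_\pm^2 = -V_\pm^2 (\partial p/\partial V)|_{S_\pm}$, i.e. $V_\pm^2$ times the slope of the \emph{isentrope} through the state $\pm$. Thus the three quantities to compare, $v_-^2$ versus $c_-^2$, $v_+^2$ versus $c_+^2$, all reduce to comparing the chord slope of the Hugoniot curve joining the two states with the tangent slopes of the isentropes at the two endpoints.

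Next I would translate the desired inequalities \eqref{lax1} into this language. Since $v = u_n - \dot\sc$ and $j<0$, we have $v_\pm<0$, so $u_\pm\cdot n - \dot\sc = v_\pm$, and $u_\pm\cdot n + c_\pm - \dot\sc = v_\pm + c_\pm$. Hence \eqref{lax1} is equivalent to $v_+ + c_+ < 0 < v_- + c_-$, i.e. (using $v_\pm<0$, $c_\pm>0$) to $|v_+| > c_+$ and $|v_-| < c_-$, that is $v_+^2 > c_+^2$ and $v_-^2 < c_-^2$. By the reduction above this says: the Hugoniot chord is steeper (more negative slope) than the isentrope tangent at the `$+$' endpoint, and less steep than the isentrope tangent at the `$-$' endpoint. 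This is precisely the classical geometric picture (the shock adiabat lies above the isentrope through the front state near that state, by the $\jump{S}=\mathcal O(\jump{p}^3)$ expansion, with the convexity hypothesis $(\partial^2 V/\partial p^2)|_S>0$ controlling the sign).

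The concrete mechanism is a Taylor expansion along the Hugoniot curve. Parametrize states behind the shock by $\jump{p}=p_- - p_+ =: \pi > 0$ small, with the front state `$+$' fixed. Using the entropy relation \eqref{spcube}, $\jump{S} = \frac{1}{12T_+}(\partial^2 V/\partial p_+^2)|_S\, \pi^3 + \mathcal O(\pi^4)$, and $V_- = V(p_-,S_-)$, I expand $V_-$ in $\pi$: the first two orders agree with the isentrope through `$+$' (the $\mathcal O(\pi)$ and $\mathcal O(\pi^2)$ terms involve only $S_+$-derivatives), and the entropy correction enters only at $\mathcal O(\pi^3)$, with coefficient $(\partial V/\partial S)|_p \cdot \frac{1}{12T_+}(\partial^2 V/\partial p^2)|_S$. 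From this I compute the chord slope $\jump{V}/\jump{p}$ and the two tangent slopes $(\partial V/\partial p)|_{S_\pm}$ as functions of $\pi$, and compare them to leading order: the difference ``chord slope $-$ isentrope-tangent-at-$+$'' comes out $\propto +\frac{1}{2}(\partial^2 V/\partial p^2)|_S \,\pi + \mathcal O(\pi^2)$ and ``isentrope-tangent-at-$-$ $-$ chord slope'' comes out $\propto +\frac{1}{2}(\partial^2 V/\partial p^2)|_S\, \pi + \mathcal O(\pi^2)$ as well; multiplying by the (negative) $V_\pm^2$ factors to pass to $c_\pm^2$, $v_\pm^2$ and keeping track of signs gives $v_+^2 - c_+^2 > 0 > v_-^2 - c_-^2$ for $\pi$ small. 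Conversely, reading the computation backwards, if either Lax inequality in \eqref{lax1} fails then the sign of the leading term forces $\pi \le 0$, i.e. $\jump{p}\le 0$, which by \eqref{PVrel} and the convexity forces $\jump{S}\le 0$, contradicting \eqref{entprod} via Lemma~\ref{lemmaent}; and \eqref{lax2} was already shown equivalent to $j<0$ in Section~\ref{sec:RH:Euler}, which holds in our orientation. This closes the equivalence.

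I expect the main obstacle to be bookkeeping rather than conceptual: carefully expanding $V_-$ and the slopes to the order where the entropy term $\mathcal O(\pi^3)$ actually influences the \emph{first}-order comparison of slopes (it does not directly — the leading discrepancy between chord and tangent is the purely kinematic $\frac12(\partial^2 V/\partial p^2)|_S$ term, and the entropy correction is what makes the Hugoniot and isentrope agree to second order in the first place), and making sure the signs survive multiplication by $V_\pm^2$ and the square-root passage from $v_\pm^2, c_\pm^2$ back to the signed inequalities $v_\pm + c_\pm \lessgtr 0$. A secondary subtlety is handling both endpoints symmetrically: it is cleanest to also re-expand with the `$-$' state fixed (or to use $V_{\rm ave}$-type symmetric combinations already appearing in the proof of Lemma~\ref{lemmaent}) so that the `$+$' and `$-$' inequalities are obtained on equal footing rather than one by a less transparent argument. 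The strict convexity $(\partial^2 V/\partial p^2)|_S>0$ — guaranteed for the ideal gas by \eqref{spvder} — is used exactly once but is essential: it is what fixes all the relevant signs.
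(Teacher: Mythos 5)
Your argument is correct and rests on the same core mechanism as the paper's: reduce the Lax inequalities to slope comparisons in the $(V,p)$ plane, invoke the $\jump{S}=\OO(\jump{p}^3)$ expansion \eqref{spcube}, and let the convexity $(\partial^2 V/\partial p^2)|_S>0$ fix the signs. The genuine difference is where the comparison is made. The paper introduces $w:=V^2/c^2=-(\partial V/\partial p)|_S$, computes $\jump{w}=-(\partial^2 V/\partial p_+^2)|_S\jump{p}+\OO(\jump{p}^2)$ (a comparison of the two \emph{endpoint} tangent slopes with each other), and then asserts --- without spelling it out --- that $\jump{w}<0$ places $-j$ strictly between $c^+/V^+$ and $c^-/V^-$. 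That sandwich step is exactly what your route supplies directly, by Taylor-expanding the Hugoniot chord slope $j^2=-\jump{p}/\jump{V}$ against \emph{each} endpoint tangent and showing both gaps are $\propto+\tfrac12(\partial^2 V/\partial p^2)|_S\jump{p}$. So you pay a bit more algebra but carry no implicit intermediate-value step; the paper's version is terser because it compresses the chord-versus-tangent geometry into a single scalar comparison, at the price of compressing the final inference. One minor inaccuracy in your narrative: the Hugoniot and the isentrope through the front state agree to second order in $\jump{p}$ \emph{because} $\jump{S}=\OO(\jump{p}^3)$ --- the entropy correction's absence at lower orders is what produces the agreement, not the correction itself.
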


\begin{proof}[Proof of Lemma~\ref{lem:Lax}]
Conditions \eqref{lax2}  hold since the mass flux $j<0$. Using $u_\pm\cdot n -\dot \sc  = j V^\pm$,  \eqref{lax1} becomes
\be\label{causality}
\tfrac{c^+}{V^+} < -j < \tfrac{c^-}{V^-}.
\ee 
Thus, when the jump conditions, the Lax geometric conditions hold provided
 \be\label{detjump}
 \jump{c/V}>0.
 \ee
We now show how this is implied by  $\jump{S}>0$ in the weak shock regime. Letting $w:=V^2/c^2$, we have
\be
\jump{w}
= 
  \left(\tfrac{1}{(c/V)^-}+ \tfrac{1}{(c/V)^+}\right) \jump{V/c} =  - \left(\tfrac{1}{(c/V)^-}+ \tfrac{1}{(c/V)^+}\right)  \tfrac{\jump{c/V}}{(c/V)^-(c/V)^+}.
\ee
 Thus, verifying condition \eqref{detjump} and thus \eqref{causality} is equivalent to showing $\jump{w} <0$.
To verify this note first, that viewing $\rho:=\rho(p, S)$,  as an application of the chain rule we have
\be
\frac{1}{c^2} =\left.\left( \frac{\partial\rho }{\partial p }\right)\right|_S  = -\frac{1}{V^2}\left.\left( \frac{\partial V}{\partial p}\right)\right|_S  ,
\ee
which yields $w = -\left.\left( \frac{\partial V}{\partial p}\right)\right|_S $.
Appealing to the leading order entropy jump \eqref{spcube} of Lemma  \ref{lemmaent}, we obtain
\begin{align}\label{wcond}
\jump{w} &= - \left.\left( \frac{\partial^2 V}{\partial p_+^2}\right)\right|_S  \jump{p} + \mathcal{O}(\jump{p}^2)= - \frac{12 T_+}{\jump{p}^2} \jump{S}  + \mathcal{O}(\jump{p}^2).
\end{align}
Thus, we see that  $\jump{S}>0$ if and only if  $\jump{w} <0$ which in turn implies the Lax conditions \eqref{lax1},  \eqref{lax2}.
\end{proof}

\begin{remark}[Determinism of shock development and entropy conditions]
We now discuss an interpretation of the Lax geometric inequalities as they pertain to the issue of determinism of the shock development problem.  To simplify ideas, we specialize to 1D setting in which the spacetime shock curve is given by $\{x = \sc(t)\}$. The spacetime normal to the shock curve  is
$
\sn =(-\dot \sc ,1).
$
With the notation $ \nabla_{t,x}= (\partial_t, \partial_x)$,  the transport operators for the Riemann invariants are
\be
(1,u-c)\cdot \nabla_{t,x}, \qquad (1,u)\cdot \nabla_{t,x}, \qquad (1,u+c)\cdot \nabla_{t,x}.
\ee
See equations  \eqref{xland-z}, \eqref{xland-k} and \eqref{xland-w} respectively.
In the front of the shock ($+$ phase), the Lax inequalities \eqref{lax1} read
\begin{align} 
u^+ - c^+    < \dot \sc , \qquad u^+    < \dot \sc , \qquad u^+  + c^+    < \dot \sc 
\end{align} 
all of which follow directly using the fact that the sounds speed is positive. Geometrically, these translate to
\be
\sn\cdot (1, u^+ -c^+)<0, \qquad \sn\cdot(1, u^+)<0, \qquad \sn\cdot (1,u^+ +c^+)<0,
\ee
showing that all the associated characteristics in front of the shock ($+$ phase) impinge on the shock front, carrying with they Cauchy data from the $\{t=0\}$ hypersurface.  This ensures that the front of the shock is causally isolated from shock and determined solely from initial conditions.  On the other hand, behind the shock ($-$ phase) we have from  \eqref{lax1} and  \eqref{lax2} that
\begin{align} 
u^-- c^-    < \dot \sc , \qquad u^-    < \dot \sc , \qquad u^- + c^-    > \dot \sc 
\end{align} 
which has the geometric meaning of
\be
\sn\cdot (1,u^- -c^-)<0, \qquad \sn\cdot (1, u^-)<0, \qquad \sn\cdot (1, u^- +c^-)>0.
\ee
Unlike the situation in the $+$ phase, we see that two of the characteristics corresponding to wave speeds $u^- -c^-$ and $u^-$ are ``exiting the shock", carrying with them data from along shock hypersurface. Only one of the characteristics corresponding to $u^- +c^-$ is impinging on the surface, carrying Cauchy data from $\{t=0\}$.  
The significance of this is the following: the data \emph{along the shock front} for the Riemann invariants carried by characteristics leaving the shock are free and will be chosen to enforce two out of the three jump conditions for mass, momentum and energy.  The third invariant whose characteristics impinge on the shock enjoys no such freedom -- rather the speed of the shock will be designed to arrange for the last jump condition to be satisfied.  Simultaneously  ensuring these constraints hold define a free boundary problem for the shock development.  If additional characteristics were to lack this freedom, the problem would become overdetermined and no solution could be found in general.  As such, the entropy condition is precisely what is required for the shock development problem to be ``deterministic".
\end{remark}

\begin{remark}[Shock speed near formation]\label{shockcurvest}
From the Rankine-Hugoniot conditions, it follows that the rate of propagation of weak shock waves (relative to the fluid) is the sound speed, $\dot{\sc} \approx u_n + c$.  This follows from the fact that, at the pre-shock, $v_-=v_+$  so 
\be
v_-= v_+ = v= j V = -\sqrt{-V^2 (\partial p/\partial V)|_S} =  -\sqrt{ (\partial p/\partial \rho)|_S} = -c,
\ee
which follows from the identity $j^2 = -\jump{p}/\jump{V}$.
Since $\dot{\sc}= u_n - v$, the claim follows.
\end{remark}

\subsection{The Euler system in terms of entropy, velocity,  and sound speed}

In preparation for reducing the equations to a symmetry class and deriving equations of motion for the Riemann variables, we reformulate
the two-dimensional non-isentropic compressible Euler equations.  First, for classical solutions the energy equation can be replaced by the transport of entropy 
\begin{subequations}
\label{eq:Euler}
\begin{align}
\partial_t (\rho u) + \div (\rho\, u \otimes u) + \nabla p(\rho) &= 0 \,,  \label{eq:momentum} \\
\partial_t \rho  + \div (\rho u)&=0 \,,  \label{eq:mass} \\
\partial_t \kcal  + u \cdot \nabla \kcal &=0 \,,  \label{eq:entropy2}
\end{align}
\end{subequations} 
where 
$\kcal: \mathbb{R}^2  \to \mathbb{R}$ is the (specific) entropy.
If the initial entropy is chosen to be a constant $\kcal_0\in \mathbb{R}$, then the entropy function  satisfies $\kcal(\cdot , t)= \kcal_0$ as  long as the solution remains smooth. 
The formulation of Euler given in \eqref{eq:Euler} is equivalent to the usual conservation law form (see \eqref{euler-weak}) up to the pre-shock, and will be used for the shock
formation process.

We introduce the adiabatic exponent
$$
\alpha = \tfrac{ \gamma -1}{2}  \,
$$
so that
the (rescaled) sound speed reads
\begin{align} 
\sigma = \tfrac{1}{\alpha }  \sqrt{ \sfrac{\p p}{\p \rho} } =  \tfrac{1}{\alpha }  e^{{\frac{\kcal}{2}} } \rho^ \alpha \,.
\label{sigma1}
\end{align} 
With this notation, the ideal gas equation of state \eqref{peos} becomes 
\begin{align} 
p= \tfrac{\alpha ^2}{\gamma} \rho \sigma^2 \,. \label{p1}
\end{align} 
The Euler equations \eqref{eq:Euler}  as a system for $(u, \sigma , \kcal)$ are then given by
\begin{subequations}
\label{eq:Euler2}
\begin{align}
\p_t u + (u \cdot \nabla) u + \alpha \sigma  \nabla \sigma  &=  \tfrac{\alpha }{2\gamma} \sigma^2  \nabla \kcal \,,  \label{eq:momentum2} \\
\partial_t \sigma + (u \cdot \nabla) \sigma  + \alpha \sigma \operatorname{div}u&=0 \,,  \label{eq:mass2} \\
\partial_t \kcal  +  (u \cdot\nabla) \kcal &=0 \,.  \label{eq:entropy22}
\end{align}
\end{subequations}

We let $\omega=\nabla^\perp\cdot  u$ denote the scalar vorticity, and 
define the {\it specific vorticity}  by $ \zeta = \tfrac{\omega}{\rho} $.  A straightforward computation shows that $\zeta$ is a solution to
\begin{align} 
\p_t \zeta  + (u \cdot \nabla) \zeta
=  \tfrac{\alpha }{\gamma} \tfrac{\sigma}{\rho} \nabla^\perp \sigma \cdot   \nabla \kcal   \,.   \label{specific-vorticity}
\end{align} 
The term term $\tfrac{\alpha }{\gamma} \tfrac{\sigma}{\rho} \nabla^\perp \sigma \cdot   \nabla \kcal  $ on the right side of \eqref{specific-vorticity}
can also be written as $\rho^{-3}  \nabla^\perp \rho \cdot  \nabla p$ and is referred to as {\it baroclinic torque}.

\subsection{Jump formulas for ideal gas equation of state}

In this section, we perform some manipulations of the  Rankine-Hugoniot conditions  \eqref{RH1}--\eqref{RH3} which will be used later in the paper.
Combining \eqref{rh-all} together with  \eqref{k-plus-zero}, we find that
\begin{align} 
\jump{p} =  - \tfrac{2 \rho_+^\gamma}{ (\gamma-1) \rho_- - (\gamma+1) \rho_+} \jump{\rho} \,. \label{pjump1}
\end{align} 
We can also compute the jump in pressure as
\begin{align} 
\jump{p}= \tfrac{1}{\gamma}  (e^{\kcal_-}-1) \rho_-^\gamma +  \tfrac{1}{\gamma}  \jump{\rho^\gamma} \,. \label{pjump2}
\end{align} 
Equating \eqref{pjump1} and \eqref{pjump2}, we see that
\begin{align} 
  \rho_-^\gamma ( e^{\kcal_-} -1) =   - \tfrac{2\gamma \rho_+^\gamma}{ (\gamma-1) \rho_- - (\gamma+1) \rho_+} \jump{\rho} -   \jump{\rho^\gamma} 
  \,,
  \label{pjump3}
\end{align} 
where we recall that $\kcal_- = \jump{\kcal}$. In order to simplify \eqref{pjump3}, we introduce 
\begin{align}
 Q = \frac{\rho_+}{\rho_-}
\end{align}
which we expect to be close to $1$ on the shock curve, for a short time after the pre-shock. Then, \eqref{pjump3} reads
\begin{align} 
  e^{\kcal_-} -1  =    \tfrac{  (Q-1)^3 }{ (\gamma-1)   - (\gamma+1) Q}  \left(\tfrac{\gamma (\gamma-1)(1+\gamma)}{6} - (Q-1) B_\gamma(Q) \right)\,,
  \label{pjump4}
\end{align} 
where $B_\gamma(Q)$ is a smooth function in the neighborhood of $Q=1$, with $B_\gamma(1) =  \frac{1}{12} (\gamma-2)(\gamma-1) \gamma (\gamma+1)$ and $B_\gamma'(1) = \frac{-1}{40} (\gamma-3)(\gamma-2)(\gamma-1) \gamma (\gamma+1)$. 

When $\gamma = 2$ and $\alpha = \frac 12$, the above formulae simplify. First we note that $B_2 (Q) = 0$ for all $Q$, and in that case, \eqref{pjump4} becomes
\begin{align} 
e^{\kcal_-} -1  =    \frac{  (Q-1)^3 }{ 1   - 3 Q}  =  \frac{ \jump{\rho}^3 }{\rho_-^2(3 \rho_+  -  \rho_-  )}   \,.
  \label{pjump5}
\end{align} 
From \eqref{sigma1} and the fact that $S_+ = 0$, we have that 
$$
\rho_-  = \tfrac{1}{4} \sigma_- ^2  e^{-\kcal_-} \,, \qquad  \rho_+ = \tfrac{1}{4} \sigma_+^2   \,, 
$$
from which it follows that
$$
\jump{\rho} = \tfrac 14 e^{-\kcal_-} \left(  \sigma_-^2 -  e^{\kcal_-} \sigma_+^2 \right) \,.
$$
This allows \eqref{pjump5} to be rewritten as
\begin{align} 
( e^{\kcal_-} -1)  \sigma_-^4(3 \sigma_+^2  e^{\kcal_-}   -  \sigma_-^2  )  =  \left(  \sigma_-^2 -  e^{\kcal_-}  \sigma_+^2\right)^3   \,.
  \label{pjump7old}
\end{align}

\section{Azimuthal symmetry}

\subsection{The Euler equations in polar coordinates and azimuthal symmetry}\label{azisec}

The 2D Euler equations
 \eqref{eq:Euler2}  take the following form in polar coordinates for the variables $ ( u_\theta, u_r, \rho, \kcal)$:
\begin{subequations}
\label{eq:Euler:polar}
\begin{align}
\left(\partial_t  + u_r\partial_r + \tfrac{1}{r} u_\theta \partial_{\theta}\right) u_r -\tfrac{1}{r}u_{\theta}^2+ \alpha  \sigma \partial_r  \sigma&= \tfrac{\alpha }{2 \gamma }\sigma^2 \p_r \kcal  \,, \\
\left(\partial_t  + u_r\partial_r +\tfrac{1}{r} u_\theta \partial_{\theta}\right)u_\theta+\tfrac{1}{r}u_r u_\theta + \alpha \tfrac{ \sigma}{r}\partial_\theta\sigma&=
\tfrac{\alpha }{2 \gamma }\tfrac{\sigma^2}{r} \p_\theta \kcal  \,,  \\
\left(\partial_t  + u_r\partial_r + \tfrac{1}{r} u_\theta \partial_{\theta}\right) \sigma + \alpha\sigma\left( \tfrac{1}{r} u_r + \partial_r u_r + \tfrac{1}{r} \p_\theta u_\theta \right)  &=0 \,,\\
\left(\partial_t  + u_r\partial_r + \tfrac{1}{r} u_\theta \partial_{\theta}\right) \kcal &=0.
\end{align}
\end{subequations}
We introduce the new variables
\begin{equation}\label{scale0}
u_\theta(r,\theta,t) = r b( \theta, t) \,, \ \ u_r(r,\theta,t) = r a( \theta, t)\, , \  \ \sigma(r,\theta,t) =r \ss(\theta,t),  \ \  \kcal(r,\theta,t) =k(\theta,t) \,.
\end{equation} 
The system \eqref{eq:Euler:polar} then takes the form
\begin{subequations}
\label{eq:Euler:polar3}
\begin{align}
\left(\partial_t  + b\partial_{\theta}\right) a  + a^2-b^2+ \alpha   \ss^2 &=0 \label{g3_a_evo}\\
\left(\partial_t  + b\partial_{\theta}\right)b + \alpha  \ss \partial_\theta\ss +2a b&=\tfrac{\alpha }{2 \gamma }\ss^2 \p_\theta k \\
\left(\partial_t  + b\partial_{\theta}\right) \ss +   \alpha \ss  \p_\theta b + \gamma a \ss &=0 \,  \label{sigma-eqn} \\
\left(\partial_t  + b\partial_{\theta}\right) k&=0 \, .
\end{align}
\end{subequations}
For simplicity of presentation we shall henceforth focus\footnote{The pre-shock formation for general $\gamma>1$ in \eqref{eq:Euler:polar3} was already done in~\cite{BuShVi2019a} for an open set of smooth isentropic initial data. Using the arguments in \cite{BuShVi2020}, the same result may be obtained also for the non-isentropic problem. The more detailed information required for shock-development can be obtained in analogy with the analysis in Section~\ref{sec:formation}.
The shock development problem for general $\gamma>1$ is conceptually the same; see the outline of the proof in Section~\ref{sec:outline}. One of the main differences is that the slightly more complicated Rankine-Hugoniot condition \eqref{pjump4} must be used in place of \eqref{pjump5}. Another difference is that for general $\gamma>1$, in the formation part the  subdominant Riemann variable is not  transported and thus cannot be taken to equal  a constant up to the pre-shock; this issue was already addressed in~\cite{BuShVi2019a,BuShVi2019b,BuShVi2020}.} on the case  
$$
\gamma=2 \ \ \text{ and } \ \ \alpha = \tfrac{1}{2} \,.
$$
The Riemann functions  $w$ and $z$  are defined by
\begin{subequations} 
\label{eq:riemann}
\begin{alignat}{2}
w&= b+  \ss  \,, \qquad  &&z= b- \ss  \,, \\
b&= \tfrac{1}{2} (w+z) \,, \qquad && \ss= \tfrac{1}{2} (w-z) \,.
\end{alignat}   
\end{subequations} 
It is convenient to rescale time, letting $\p_t \mapsto \tfrac{3}{4} \p_{\tilde t}$, and for notational simplicity, we continue to write $t$ for $\tilde t$. 
With this temporal rescaling employed,  the system  \eqref{sigma-eqn} can be equivalently
written as 
\begin{subequations} 
\label{eq:w:z:k:a} 
\begin{align}
\p_t w + \lambda_3 \p_\theta w & = - \tfrac{8}{3}  a w + \tfrac{1}{24}(w-z)^2 \p_\theta k   \,,  \label{xland-w} \\
\p_t z + \lambda_1 \p_\theta z & = - \tfrac{8}{3}  a z + \tfrac{1}{24}(w-z)^2 \p_\theta k   \,,  \label{xland-z} \\
\partial_t k   + \lambda_2 \partial_{\theta} k & = 0  \,, \label{xland-k} \\
\partial_t a   + \lambda_2  \partial_{\theta} a & = - \tfrac43 a^2 + \tfrac{1}{3} (w+ z)^2 - \tfrac{1 }{6} (w- z )^2  \,. \label{xland-a} 
\end{align}\end{subequations} 
where the three wave speeds are given by 
\begin{align}
\lambda_1 &=  \tfrac{1}{3} w + z   \,, \qquad
\lambda_2 = \tfrac{2}{3} w+  \tfrac 23 z \,, \qquad 
\lambda_3 =  w+  \tfrac{1}{3}  z\,.   \label{eq:wave-speeds}
\end{align} 
We note that \eqref{sigma-eqn} takes the form
\begin{align} 
\partial_t \ss  + \lambda_2  \partial_{\theta} \ss +\tfrac{1}{2}\ss \p_\theta \lambda _2  & = -\tfrac{8}{3}a \ss \,. \label{xland-sigma}
\end{align} 
Finally, we denote the specific vorticity in azimuthal symmetry by
\begin{align}
\varpi = 4 (w + z - \p_\theta a) c ^{-2} e^k\,,
\label{xland-svort:def}
\end{align}
which satisfies the evolution equation
\begin{align} 
\p_t \varpi + \lambda_2\p_\theta \varpi =   \tfrac{8}{3} a\varpi  +  \tfrac{4}{3}  e^k \p_\theta k  \,. \label{xland-svort}
\end{align}

We supplement \eqref{eq:w:z:k:a}
with initial conditions
\begin{align*} 
w_0(\theta) = w(\theta,T_0) \,, \ \ \  z_0(\theta) = z(\theta,T_0) \,, \ \ \ 
a_0(\theta) = a(\theta,T_0)\,, \ \ \  k_0(\theta)=k(\theta,T_0) \,, \ \ \ \varpi_0(\theta)= \varpi(\theta,T_0)\,.
\end{align*}

We shall study the shock formation process for solutions to \eqref{eq:w:z:k:a}  on
the time interval $T_0 < t \le T_1$, where $T_1$ denotes the time of the first singularity, also known as
the {\it pre-shock}.   One of our main objectives is to provide a detailed description of the pre-shock $w( \cdot , T_1)$.   We shall provide the fractional
series expansion of $w(\theta , T_1)$ for $\theta$ in a neighborhood of the blowup location $\theta_*$.

For the shock formation process, we choose initial data\footnote{This choice is made for the following reason: irregardless of the choice of initial entropy
function $k_0$, the Rankine-Hugoniot conditions guarantee that a jump in entropy {\it must occur} at the shock.
 As such the choice of $k_0=0$ emphasizes the production of entropy in the clearest possible terms.   Similarly, the choice
of $\gamma=2$ and that $k_0=0$ allows the equation \eqref{xland-z} to reduce to a transport-type equation.  Just as we did for entropy, we can (in this case) choose
$z_0=0$ and up to the pre-shock, the sub-dominant Riemann variable $z$ will remain zero.   Once again the Rankine-Hugoniot conditions ensure that
$z$ must experience a jump discontinuity along the shock, and thus the choice of $z_0=0$ allows us to most easily demonstrate this fact.
}
$$
k_0(\theta)=0\,, \ \ \ z_0(\theta) =0 \,,
$$
which is preserved by the dynamics so that $k(\theta, t)=0$ and $z(\theta,t)=0$ for all time $t$ up to the time time of the pre-shock.   Thus \eqref{eq:w:z:k:a} is reduced to 
a coupled system of equations for $a$ and $w$, satisfying
\begin{subequations} 
\label{eq:w:z:k:a2}
\begin{align}
\p_t w + w \p_\theta w & = - \tfrac{8}{3} a w  \,,  \label{xland2-w} \\
\p_t a + \tfrac{2}{3} w \p_\theta a &= - \tfrac{4}{3} a^2 + \tfrac{1}{6} w^2 \,. \label{xland2-a}
\end{align}
\end{subequations} 

\subsection{The Rankine-Hugoniot jump conditions under azimuthal symmetry}
\label{sec:jump!}
Under the azimuthal symmetry assumptions and using our temporal rescaling $\tilde t \mapsto \tfrac{3}{4}t$,  from \eqref{scale0} (fixing  $\gamma=2$), we have that
 the shock hypersurface is given as the graph $\{(r,\theta,t) \ : \  \theta = \sc(t)\}$. The spacetime normal to this curve is $\sn=(-\dot{\sc}, \frac{1}{r})$. Thus, $\sc$ satisfies
the Rankine-Hugoniot  conditions \eqref{RH1} and \eqref{RH2}
\begin{subequations}\label{RH_condition2}
\begin{align}
\dot\sc  &= \tfrac{4}{3}   \frac{ \jump{e^{-k} \ss^2 b^2 + \tfrac{1}{8}  e^{-k} \ss^4} }{\jump{e^{-k} \ss^2 b}}\,, \\
\dot\sc  &=\tfrac{4}{3}   \frac{ \jump{e^{-k} \ss^2 b} }{ \jump{e^{-k} \ss^2}}  \,.
\end{align}
\end{subequations}
We note that the third Rankine-Hugoniot condition \eqref{RH3} has already been employed to deduce the relation \eqref{pjump7old}.

Let us now convert \eqref{RH_condition2} and \eqref{pjump7old} into our azimuthal  variables as follows.  We denote by $w_\pm( \cdot , t)$, $z_\pm(\cdot , t) $, $k_\pm( \cdot , t)$ 
the  limiting values, from the left ($-$) and right ($+$), of the shock curve $\sc(t)$.   We also note the fact that $k_+=0$ and $z_+=0$.
Now, from  \eqref{eq:riemann}, the system \eqref{RH_condition2} becomes
\begin{subequations}
\begin{align} 
\dot \sc(t) & =  \frac{2}{3}  
 \frac{ e^{-k_-} (w_- -z_-)^2 (w_- + z_-)^2 + \tfrac 18 e^{-k_-} (w_- - z_-)^4 -  \tfrac{9}{8}   w_+^4 
}{ e^{-k_-} (w_- -z_-)^2(w_- + z_-) -   w_+^3 } \,, \label{sdot2}
\\
\dot \sc(t) & =   
 \frac{2}{3}    \frac{ e^{-k_-} (w_- -z_-)^2 (w_-+z_-) -  w_+^3}{ e^{-k_-} (w_- -z_-)^2 -  w_+^2 } \,. \label{sdot1} 
\end{align} 
\end{subequations}
We note that the jump conditions \eqref{sdot2} and \eqref{sdot1} for the mass and the momentum equations are a priori two different equations for the shock speed. To remedy this, we set the right sides of these equations equal to each other, and instead work with one evolution equation for $\dot{\sc}$, namely \eqref{sdot1}, and one constraint
\begin{subequations}
\begin{align}
&\left(  (w_- -z_-)^2 (w_- + z_-)^2 + \tfrac 18  (w_- - z_-)^4 -  \tfrac{9}{8} e^{k_-}  w_+^4 \right)
\left( (w_- -z_-)^2 - e^{k_-} w_+^2 \right) \notag\\
&=
\left( (w_- -z_-)^2(w_- + z_-) - e^{k_-}  w_+^3 \right)^2
  \label{pjump77}
\end{align}
Also, we have that \eqref{pjump7old} 
takes the form
 \begin{align} 
(e^{k_-} - 1)  (w_- - z_-)^4 \Bigl(3 w_+^2  e^{k_-}  -  (w_- - z_-)^2  \Bigr)  =  \left(  (w_- - z_-)^2 -  e^{k_-}  w_+^2\right)^3   \,.
  \label{pjump7}
\end{align} 
\end{subequations}
To summarize, we shall first use the system formed by the equations \eqref{pjump77} and \eqref{pjump7} in order to solve for $z_-$ and $k_-$ in terms of $w_-$ and $w_+$, and then insert these solutions into \eqref{sdot1} and determine an evolution equation for $\sc$, solely in terms of $w_-$ and $w_+$. This is discussed in Section~\ref{sec:z:k:shock}.

\subsection{Main result in azimuthal symmetry}
As mentioned in Theorem~\ref{thm:main:soft}, in the {\em formation part} of our result, i.e. for $t \in [T_0,T_1)$, we have that the solution $(w,z,k,a)$ of the  Euler equations in azimuthal symmetry is smooth, so that the notion of solution is the classical one: the system \eqref{eq:w:z:k:a} is satisfied in the sense of $C^1$ functions of space and time. On the time interval $[T_1,T_2]$, which covers the {\em development part} of our result, the notion of {\em regular shock solution} is used, as defined by Definition~\ref{shockdef} above. In azimuthal symmetry, this definition becomes:
\begin{definition}[\bf Regular azimuthal shock solution]
\label{def:sol:azimuthal}
We say that  $(w,z,k,a,\sc)$  is a {\em regular azimuthal shock solution} on $\mathbb{T} \times [T_1,T_2]$ if
\begin{enumerate}
\item $(w,z,k,a)$ are $C^1_{\theta,t} $ smooth, and $\varpi$ is $C^0_{\theta,t}$ smooth, on the complement of the shock curve $\{ \theta = \sc(t)\}$;
\item on the complement of the shock curve $(w,z,k,a)$ solve the equations \eqref{eq:w:z:k:a} pointwise, and $\varpi$ solves \eqref{xland-svort} pointwise;
\item $(w,z,k)$ have jump discontinuities across the shock curve which satisfy the algebraic equations  \eqref{pjump77}, \eqref{pjump7} arising from the Rankine-Hugoniot conditions;
\item the shock location $\sc:[T_1,T_2]\to \mathbb{T}$ is $C^1_{t} $ smooth and solves  \eqref{sdot1}.
\end{enumerate}
\end{definition}

Our main result for the azimuthal 2D Euler equations~\eqref{eq:w:z:k:a} is stated in detail in Theorems~\ref{thm:main:development} and~\ref{sec:C2}; here we only give a condensed statement: 
 
\begin{theorem}[\bf Main result in azimuthal symmetry -- abbreviated version]
\label{thm:main:azi:soft}
From smooth isentropic initial data with vanishing subdominant Riemann variable at time $T_0$, there exist smooth solutions to the azimuthal Euler system \eqref{eq:w:z:k:a} that form a pre-shock singularity, at a time $T_1 > T_0$.
The first singularity occurs at a single point in space, $\theta_*$, and this first singularity is shown to have an asymptotically self-similar shock profile exhibiting a $C^{\frac{1}{3}}$ cusp in the dominant Riemann variable velocity and a $C^{1,\frac{1}{3}}$ in the radial velocity. After the pre-shock, the solution  to \eqref{eq:w:z:k:a} is continued  for a short time $(T_1,T_2]$ as a regular azimuthal shock solution (cf.~Definition~\ref{def:sol:azimuthal}) with the following properties:
\begin{itemize}
\item Across the shock curve $\sc$, all the state variables jump
\begin{align*}
\jump{w} \sim (t-T_1)^{\frac{1}{2}}, \qquad \jump{\partial_\theta a} \sim (t-T_1)^{\frac{1}{2}},\qquad 
\jump{z} \sim (t-T_1)^{\frac{3}{2}}, \qquad \jump{k} \sim (t-T_1)^{\frac{3}{2}}
\end{align*}
for $t\in (T_1,T_2]$.
\item Across the characteristic $\sc_2$ emanating from the pre-shock and moving with the fluid velocity,  the Riemann variables and the entropy make $C^{1,\frac{1}{2}}$ cusps  approaching from the right side.  Approaching from the left side, are these variables are $C^2$ smooth.   
\item Across the characteristic $\sc_1$ emanating from the pre-shock and moving with the sound speed minus the fluid velocity, the entropy is zero while the subdominant Riemann variable $z$ makes a $C^{1,\frac{1}{2}}$ cusp approaching from the right.  Approaching from left, they all variables are $C^2$ smooth on $(T_1,T_2]$. 
\end{itemize}
\end{theorem}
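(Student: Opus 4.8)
\noindent\emph{Outline of the proof.}\
The plan is to split the argument into a \emph{formation} part on $[T_0,T_1)$, a \emph{development} part on $(T_1,T_2]$, and a gluing of the two across the pre-shock slice $\{t=T_1\}$. For the formation part the key simplification is that the choice $k_0=z_0=0$ is propagated, so that \eqref{eq:w:z:k:a} collapses to the $2\times 2$ reduced system \eqref{eq:w:z:k:a2} for $(w,a)$, whose dominant equation \eqref{xland2-w} is a damped Burgers equation transporting $w$ at speed $\lambda_3=w$. I would run the self-similar shock-formation scheme of \cite{BuShVi2019a,BuShVi2019b,BuShVi2020}: pass to modulated self-similar variables $y=(\theta-\xi(t))(\tau(t)-t)^{-3/2}$, $w(\theta,t)=(\tau(t)-t)^{1/2}W(y,t)$, choose the modulation functions $(\xi,\tau)$ so as to normalize the leading Taylor coefficients of $W$ at $y=0$, and close a bootstrap/continuity argument showing that $W(\cdot,t)$ converges to the globally monotone self-similar Burgers profile $\bar W$ (with $\bar W'''(0)\neq 0$, whose large-$|y|$ behavior is $\bar W(y)\sim-\operatorname{sgn}(y)|y|^{1/3}$) while $\xi(t)\to\theta_*$, $\tau(t)\to T_1$; equivalently one may track the Riccati-type ODE obeyed by $\partial_\theta w$ along the $\lambda_3$-characteristics and show it first blows up, at a single point, under a generic non-degeneracy assumption on $(w_0,a_0)$. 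Feeding the large-$|y|$ asymptotic expansion of $\bar W$ (a series in $|y|^{1/3}$), together with the Taylor expansions of $\xi,\tau$, back through the rescaling at $t=T_1$ then produces the asserted convergent Puiseux series for $w(\cdot,T_1)$ near $\theta_*$ in powers of $(\theta-\theta_*)^{1/3}$, with coefficients determined algebraically from the data. Since $a$ is transported along the $\lambda_2$-characteristics via \eqref{xland2-a}, which do \emph{not} focus at $T_1$, the function $a(\cdot,T_1)$ is one derivative smoother, i.e.\ $C^{1,1/3}$, and $\partial_\theta a(\cdot,T_1)$ is pinned to the same accuracy near $\theta_*$.

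For the development part I would set up a free-boundary problem on $\mathbb{T}\times(T_1,T_2]$ for $(w,z,k,a,\sc)$ with $\sc(T_1)=\theta_*$, splitting the strip along $\{\theta=\sc(t)\}$. The state ahead, on $\{\theta>\sc(t)\}$, is obtained by solving \eqref{eq:w:z:k:a} with Cauchy data at $T_1$ equal to the right branch of the pre-shock expansion; by the Lax inequalities \eqref{lax1}--\eqref{lax2} all three characteristic families there impinge on $\sc$, so the front is determined solely by the data, keeps $z_+\equiv k_+\equiv 0$, and inherits near the corner the fractional regularity of the pre-shock. Behind the shock the families $\lambda_1$ (carrying $z$) and $\lambda_2$ (carrying $k$, and governing $a$) are \emph{outgoing}, so their traces along $\sc$ are free and are fixed by the algebra: solve the coupled constraints \eqref{pjump77}, \eqref{pjump7} for $(z_-,k_-)$ as smooth functions of $(w_-,w_+)$ by the implicit function theorem near $Q=1$, $z_-=k_-=0$ (the weak-shock regime), substitute into \eqref{sdot1} to obtain a closed ODE for $\dot\sc$, and then solve the characteristic/transport equations \eqref{xland-w}, \eqref{xland-z}, \eqref{xland-k}, \eqref{xland-a} in the wedge $\{\theta<\sc(t)\}$ with this boundary data along $\sc$ and the left branch of the pre-shock at $T_1$. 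Closing the loop is a contraction argument for $\sc$ coupled to the interior solve, with small parameters $t-T_1$ and the weak-shock amplitude; Remark~\ref{shockcurvest} gives the initialization $\dot\sc(T_1^+)=\lambda_3(\theta_*,T_1)$. Uniqueness within the class of Definition~\ref{def:sol:azimuthal} is then a rigidity statement: by Lemma~\ref{lem:Lax} the entropy condition forces precisely the Lax inequalities \eqref{lax1}--\eqref{lax2}, hence exactly two outgoing and one incoming family behind the shock, so the number of free traces along $\sc$ matches the number of jump conditions left after \eqref{sdot1}, and the above construction is the only admissible solution.

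The jump and cusp rates are read off from this construction. Near $T_1$, $w(\cdot,T_1)-w(\theta_*,T_1)\sim-(\theta-\theta_*)^{1/3}$, so resolving by equal area the two-valued region produced by characteristic crossing (whose width in the $\lambda_3$-frame is $\sim(t-T_1)^{3/2}$) gives $\jump{w}\sim(t-T_1)^{1/2}$, and \eqref{xland2-a} evaluated at $\sc$ then gives $\jump{\partial_\theta a}\sim(t-T_1)^{1/2}$; since $\jump{k}=\mathcal{O}(\jump{p}^3)$ by Lemma~\ref{lemmaent} with $\jump{p}\sim\jump{\rho}\sim\jump{\ss}\sim\jump{w}$ (via \eqref{sigma1}, \eqref{p1}) and the same cubic degeneracy governs the resolution of \eqref{pjump77}, \eqref{pjump7}, one gets $\jump{z}\sim\jump{k}\sim(t-T_1)^{3/2}$. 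The weak singularities are located by noting that $z$ is nonzero exactly on $\{\sc_1(t)<\theta<\sc(t)\}$ — the region swept by $\lambda_1$-characteristics issuing from $\sc$, whose left boundary is the $\lambda_1$-characteristic $\sc_1$ through the corner $(\theta_*,T_1)$ — and $k$ exactly on $\{\sc_2(t)<\theta<\sc(t)\}$ with left boundary the $\lambda_2$-characteristic $\sc_2$. To the left of $\sc_1$ one has $z\equiv k\equiv 0$ and the solution is the smooth isentropic continuation of the formation flow, hence $C^2$ (in fact $C^\infty$). On the shock side the traces $z_-|_{\sc}$, $k_-|_{\sc}$ behave like $(\text{distance from the corner})^{3/2}$, i.e.\ are one-sided $C^{1,1/2}$, and — being transported along their own characteristic family, without transversal smoothing — they concentrate this $C^{1,1/2}$ cusp along $\sc_1$, resp.\ $\sc_2$. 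A variable transported along a \emph{different} family crosses the curve transversally, hence gains one derivative and stays $C^2$: across $\sc_1$ this makes $a$ (radial velocity) $C^2$ while $z$, and so $b=\tfrac12(w+z)$ and the density $\propto(w-z)^2$, are only $C^{1,1/2}$ from the right; across $\sc_2$ it makes the angular velocity $b$ and the pressure $\propto\rho^\gamma e^{k}$ stay $C^2$ (the cusps of $\ss$ and of $k$ cancelling in these special combinations) while $\ss$ — hence $w$ and $z$ individually — and $k$, $\rho$, $a$ are $C^{1,1/2}$ from the right; from the left all variables are $C^2$ since the relevant created invariant vanishes there.

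The principal obstacle is the gluing across $t=T_1$, where the wedge behind the shock degenerates to the single corner $(\theta_*,T_1)$: in a neighborhood shrinking to that point one must reconcile the fractional $C^{1/3}$ structure of the formation solution, the self-similar scaling $\theta-\theta_*\sim(t-T_1)^{3/2}$, and the three emanating curves $\sc_1$, $\sc_2$, $\sc$. This forces the development solve to be carried out in function spaces that encode the precise fractional-polynomial asymptotics — not merely Hölder bounds — uniformly up to $T_1$, and it is exactly the explicit pre-shock expansion from the formation part that is needed to pin the cusp exponent at $\tfrac12$ (both the upper bound and the non-degenerate lower bound) rather than at some larger rational power.
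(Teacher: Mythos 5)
Your proposal captures the paper's overall architecture (formation via modulated self-similar variables, development as a free-boundary problem with Rankine--Hugoniot constraints solved for the subdominant traces, propagation of the $C^{1,1/2}$ cusps along $\sc_1$ and $\sc_2$), and the jump scalings $(t-T_1)^{1/2}$ and $(t-T_1)^{3/2}$ are correctly identified. A few remarks on the formation part: the paper does not deduce the Puiseux expansion by pushing the large-$|y|$ asymptotics of $\bar W$ through the rescaling; instead, it establishes $C^4$ control of the Lagrangian flow $\eta(x,T_*)$ via unscaled physical-coordinate estimates (self-similar variables are used only to isolate the unique blowup trajectory), then inverts $\eta(x,T_*)=\theta$ by a Newton-polygon/Puiseux argument. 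Also, the paper does not use an equal-area rule; $\jump{w}\sim t^{1/2}$ follows directly from the cube-root structure of $w_0$ and the labels $\xbpm(t)\approx\pm(\bb t)^{3/2}$ colliding into the RH-determined shock at time $t$. These are cosmetic deviations. There are, however, two substantive gaps.

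\textbf{The radial velocity $a$ across $\sc_2$.} Your uniform rule ``a variable transported along a different family gains one derivative and stays $C^2$'' has no counterpart for $a$ across $\sc_2$, because $a$ is transported at the same wavespeed $\lambda_2$ as $\sc_2$. There is no transversal smoothing available, and the naive Lagrangian propagation of $a$'s Cauchy data (which is only $C^{1,1/3}$ at the pre-shock, cf.\ \eqref{newdata3}) would give $a\in C^{1,1/3}$ across $\sc_2$, which is \emph{worse} than the asserted $C^{1,1/2}$. The paper resolves this via the specific vorticity $\varpi$ of \eqref{xland-svort:def}: $\varpi$ is Lipschitz at the pre-shock and satisfies the nice transport equation \eqref{xland-svort} with forcing $\partial_\theta k$, so it is $C^{1/2}$ across $\sc_2$; then $\partial_\theta a = w + z - \tfrac14 c^2 e^{-k}\varpi$ gives $a\in C^{1,1/2}$ (Lemmas~\ref{lem:varpi:y}, \ref{lem:ayy}). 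You list $a\in C^{1,1/2}$ across $\sc_2$, but your stated mechanism does not produce it; a direct argument would produce a wrong exponent, so this is a genuine gap, not an omitted detail.

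\textbf{The mechanism for $w$, $z$ across $\sc_2$, and the good unknowns.} Your transversality rule is also internally inconsistent as applied to $w$ across $\sc_2$: $w$ travels at $\lambda_3\neq\lambda_2$, so by your rule it should gain a derivative and become $C^2$, yet you (correctly) list $w$ as $C^{1,1/2}$. The actual mechanism, which your proposal does not articulate, is that $w$ and $z$ are forced by $\partial_\theta k$ (via the $\tfrac{1}{24}(w-z)^2\partial_\theta k$ terms of \eqref{xland-w}, \eqref{xland-z}). Transversal integration gains one derivative on this forcing, but $\partial_\theta k$ is only $C^{1/2}$ on $\sc_2$, so the gain lands at $C^{1,1/2}$, not $C^2$; crucially, one must pass to the good unknowns $q^w=\partial_\theta w-\tfrac14 c\,\partial_\theta k$, $q^z=\partial_\theta z+\tfrac14 c\,\partial_\theta k$ (cf.\ \eqref{eq:lazy:cat}, \eqref{eq:twerking:3}) to kill the apparent loss from $\partial_\theta^2 k$ in the differentiated equations and to see the cancellation $\partial_\theta w+\partial_\theta z = q^w+q^z$ that makes the angular velocity and pressure one degree smoother. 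Your ``cusps of $\ss$ and $k$ cancelling'' is the right heuristic for the conclusion, but without the good-unknown structure there is no way to show that $q^w$ and $q^z$ (hence $b_\theta$ and $p_\theta$) are $C^1$ while $w_\theta$, $z_\theta$ individually are only $C^{1/2}$, nor to obtain the matching lower bounds that pin the cusp exponent at exactly $\tfrac12$. Finally, your uniqueness argument (counting incoming/outgoing families) is the correct determinism heuristic, but the paper converts it into a contraction estimate on the shock curve and interior unknowns within the class $\XXX_{\bar\eps}$ of Definition~\ref{def:XT}, which is what actually closes uniqueness.
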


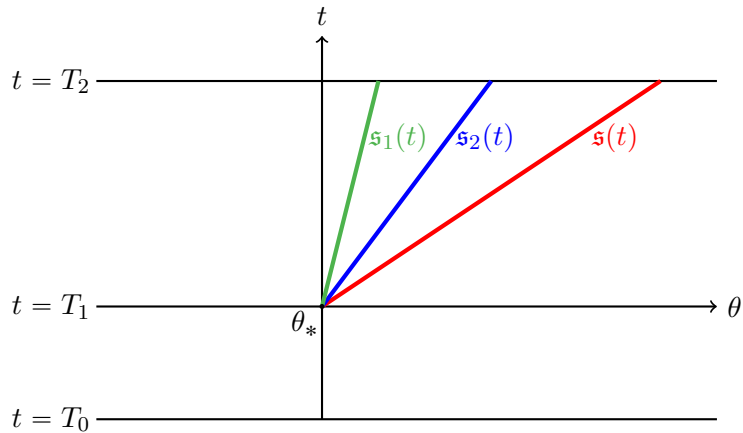
\begin{figure}[htb!]
\centering
\begin{tikzpicture}[scale=1.5]
    \draw [<->,thick] (0,2.4) node (yaxis) [above] {$t$}
        |- (3.5,0) node (xaxis) [right] {$\theta$};
    \draw[black,thick] (0,0)  -- (-2,0) ;
    \draw[black,thick] (0,0)  -- (0,-1) ;
     \draw[black,thick] (-2,-1)  -- (3.5,-1) ;
     \draw[black,thick] (-2,2)  -- (3.5,2) ;
    \draw[name path=A,red,ultra thick] (0,0) -- (3,2) ;
        \draw[red] (2.6,1.5) node { $\sc(t)$}; 
    \draw[name path = B,blue,ultra thick] (0,0)  -- (1.5,2) ;
    \draw[blue] (1.45,1.5) node { $\sc_2(t)$}; 
     \draw[green!40!gray, ultra thick] (0,0)  -- (.5,2) ;
      \draw[green!40!gray] (.68,1.5) node { $\sc_1(t)$}; 
     \draw[black] (-2.4,0) node { $t=T_1$}; 
      \draw[black] (-2.4,2) node { $t=T_2$}; 
        \draw[black] (-2.4,-1) node { $t=T_0$}; 
        \filldraw[black] (0,0) circle (0.5pt);
         \draw[black] (-0.15,-0.15) node { $\theta_*$}; 
\end{tikzpicture}

\vspace{-0.2cm}
\caption{\footnotesize At time $T_0$ a smooth datum is given, which forms a first singularity at time $T_1$, at a single angle $\theta_*$; this is the pre-shock. For $t\in (T_1,T_2]$, we have three curves of singularities emerging from the point $(\theta_*,T_1)$: $\sc$ is a classical shock curve across which $(w,z,k,\p_\theta a)$ jump, and the Rankine-Hugoniot conditions are satisfied; along the characteristic curve $\sc_2$ the quantities $(w,z,k)$ have regularity $C^{1,1/2}$ and no better, while along the characteristic curve $\sc_1$, the function $z$ has regularity $C^{1,1/2}$ and no better.}
\label{fig:basic}

\end{figure}

\subsection{Outline of the proof} 
\label{sec:outline}
The proof of Theorem~\ref{thm:main:azi:soft} consists of five main steps, which we outline next. For simplicity, in this outline we focus only on the intuition behind the result, and skip over the technical difficulties which emerge when we turn this intuition into a complete proof. 

\vspace{.05in}
\noindent{\bf Step 1: detailed formation of first singularity, the pre-schock.}
The formation of the first gradient singularity for the Euler equations, from an  open set of smooth initial datum, was previously established in~\cite{BuShVi2019a,BuShVi2019b,BuShVi2020}. In azimuthal symmetry,~\cite{BuShVi2019a} shows that that the first singularity is characterized as an asymptotically self-similar $C_\theta^{1/3}$ cusp for the dominant Riemann variable $w$ defined in~\eqref{eq:riemann}; this is the so-called {\em pre-shock}.

In order to best illustrate a symmetry breaking phenomenon which occurs after the formation of the pre-shock, in this paper we consider smooth initial conditions for \eqref{eq:w:z:k:a} which are both isentropic ($k|_{t=T_0} \equiv 0$) and have vanishing subdominant Riemann variable ($z|_{t=T_0} \equiv 0$). Both of these conditions are propagated for smooth solutions (the interval $[T_0,T_1]$ in Figure~\ref{fig:basic}), but we shall prove that this symmetry is broken as soon as the shock forms (the interval $(T_1,T_2]$ in Figure~\ref{fig:basic}). From such smooth initial data, satisfying in addition a genericity condition on the initial gradient of the dominant Riemann variable, we construct a first singularity occurring at a point $(\theta_*,T_1)$. For simplicity of notation, this space-time location of the pre-shock is relabelled as $(0,0)$, and the solution $(w,z,k,a)|_{t=T_1}$ is denoted as $(w_0,z_0,k_0,a_0)$. From~\cite{BuShVi2019a} we have that at the  pre-shock, the solution takes the form
\begin{subequations}
\label{pre-shock}
\begin{align}
w_0(\theta) &= \kappa  - \bb  \theta^{\frac{1}{3}} + \dots,\label{pre-shocka}\\
a_0(\theta) &=  {\rm a}_0  + {\rm a}_1 \theta + {\rm a}_2 \theta^{\frac{4}{3}}+ \dots,\label{pre-shockb}\\
z_0(\theta) &=  0, \label{pre-shockc} \\
k_0(\theta) &=  0 \,, \label{pre-shocke}
\end{align}
\end{subequations}
asymptotically for $|\theta| \ll 1$. 
We note also that  specific vorticity $\varpi$ (see \eqref{xland-svort:def}) at the pre-shock is Lipschitz continuous; we denote it as $\varpi_0$. 

While for the schematic understanding of shock development the asymptotic expansions in \eqref{pre-shock} are sufficient, in order to rigorously capture the formation of higher order characteristic singularities emerging along the curves $\sc_1$ and $\sc_2$ in Figure~\ref{fig:basic}, a much finer understanding of the pre-shock is required. In particular, we need to show that the equality~\eqref{pre-shocka} holds in a $C^3$ sense; by this we mean that $w_0'(\theta) = - \frac 13 \bb \theta^{-\frac 23} + \ldots$, that $w_0''(\theta) =  \frac 29 \bb \theta^{-\frac 53} + \ldots$, and that $w_0'''(\theta) = - \frac{10}{27} \bb \theta^{-\frac 83} + \ldots$,  for $|\theta| \ll 1$. This information is not provided by our previous work~\cite{BuShVi2019a} and is established in Section~\ref{sec:formation} of this paper; here we combine the information provided by the self-similar analysis in~\cite{BuShVi2019a} with a Lagrangian perspective in unscaled variables for \eqref{eq:w:z:k:a2}, and the characterization of the pre-shock as the point in space time where  the   characteristic associated with the speed $\lambda_1$ has a vanishing first and second gradient (with respect to the Lagrangian label).

\vspace{.05in}
\noindent{\bf Step 2: emergence of shock front.}
By  Remark \ref{shockcurvest}, for short time $\dot{\sc} \approx u_n + c$.  Accounting for the temporal rescaling done in Section~\ref{azisec} (see paragraph above \eqref{eq:w:z:k:a}), this says $\dot{\sc} \approx b+ c = w$ close to the pre-shock, so that from \eqref{pre-shocka} we have
\begin{align*}
{\sc}(t)\approx \kappa  t .
\end{align*}
Entropy is produced as soon as the shock has developed, cf.~Lemma \ref{lemmaent}.  However, this contribution is small at small times, and thus the dynamics of $w$ (cf.~\eqref{xland-w}) near  the pre-shock can be roughly thought of as
\begin{subequations}
\label{approxburg1}
\begin{align}
\p_t w+ w \partial_\theta w&= \mbox{(small amplitude  error involving  entropy  gradients)},\label{approxburg1a} \\
w|_{t=0} &=  \kappa  - \bb  \theta^{\frac{1}{3}} + \mbox{(small error near pre-shock)}.
\end{align}
\end{subequations}
Note that the characteristics of this equation, the flow of $\partial_t + w \partial_\theta$, are to leading order  in time tangent  to the shock, if initiated at the pre-shock location. Otherwise, these characteristics impinge upon the shock  from either the left or right sides, since the pre-shock data ensures that the Lax entropy conditions \eqref{lax1} are satisfied.
As such, we can view the dominant Riemann variable $w$ as being a perturbation of an inviscid Burgers solution:
\begin{align}
\label{Bsol}
\wb (\etab( \theta,t),t) =  w_0 (\theta), \qquad \etab(\theta,t) = \theta+ t w_0(\theta).
\end{align}
A large part of the proof of Theorem~\ref{thm:main:development} is indeed dedicated to proving that the errors made in approximating equation~\eqref{approxburg1a} with the Burgers equation can indeed be controlled, in a $C^1$ topology of a suitable space-time. This part of the analysis uses in a crucial way the specific transport structure of the entropy gradient present on the right side of~\eqref{approxburg1a} or~\eqref{xland-w}, and the evolution equations for the good unknowns $q^w$ and $q^z$ defined in~\eqref{eq:lazy:cat} below, which relate the gradients of entropy to those of the Riemann variables and the sound speed. 
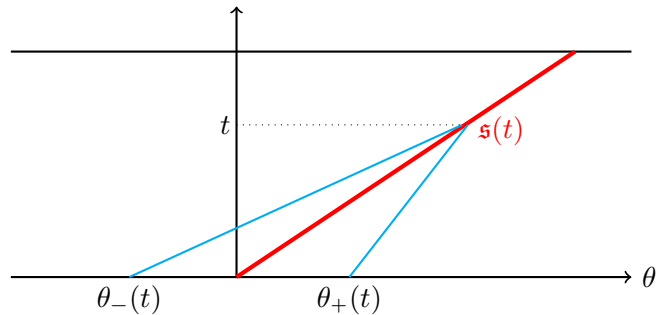
\begin{figure}[htb!]
\centering
\begin{tikzpicture}[scale=1.5]
    \draw [<->,thick] (0,2.4) node (yaxis) [above] {}
        |- (3.5,0) node (xaxis) [right] {$\theta$};
    \draw[black,thick] (0,0)  -- (-2,0) ;
     \draw[black,thick] (-2,2)  -- (3.5,2) ;
    \draw[name path=A,red,ultra thick] (0,0) -- (3,2) ;
        \draw[red] (2.35,1.3) node { $\sc(t) $};

\draw[cyan, thick] (1,0) -- (2.05,1.35) ;
\draw[black] (1,-.2) node { $\theta_+(t)$};

\draw[cyan, thick] (-0.95,0) -- (2,1.35) ;
\draw[black] (-0.95,-.2) node {$\theta_-(t)$};

\draw[black,dotted] (0,1.35)  -- (2.05,1.35);
\draw[black] (-0.1,1.35) node { $t$}; 
\end{tikzpicture}

\vspace{-0.2cm}
\caption{\footnotesize The shock curve is represented in bold red, while the paths $\{\etab(\theta_\pm(t),s)\}_{s\in[0,t]}$ are the cyan  paths.}
\end{figure}

The outcome of this analysis is that indeed we may approximate   $\jump{w}\approx \jump{\wb}$ where
\be\label{jumpw}
\jump{\wb}(t)= w_0(\theta_-(t))- w_0(\theta_+(t)),
\ee
where $\theta_\pm(t) = \etab^{-1}(\sc(t)^{\pm},t)$ are the locations of the labels of the particles which fell into the shock at time $t$. To find how these labels depend on the elapsed time, we use the expression for the Burgers flowmap \eqref{Bsol} near the pre-shock
\be\label{shockfronthit}
\etab(\theta,t)-  \kappa  t  \approx  \theta- \bb  t \theta^{\frac{1}{3}}
\ee
when $\etab(\theta,t)= \sc(t)$.  This yields $\theta_\pm(t) \approx \pm\left( \bb  t \right)^{\frac{2}{3}}$ and returning to \eqref{jumpw} we find
\be
\jump{w}(t)  \sim  {t}^{ \frac{1}{2}} .
\ee

\vspace{.05in}
\noindent{\bf Step 3: jumps of entropy and the subdominant Riemann variable on the shock front.}
In analogy to Lemma~\ref{lemmaent}, by choosing the smallest root of the system \eqref{pjump77}--\eqref{pjump7} it can be shown that in the weak shock regime $|\jump{w}| \ll 1$ which corresponds to short times after the pre-shock, the Rankine-Hugoniot conditions imply 
\begin{align}
- \jump{z}(t) \sim \jump{w}^3(t) \sim t^{ \frac{3}{2}} ,
\end{align}
for the subdominant Riemann variable, and similarly 
\begin{align}
\jump{k}(t) \sim \jump{w}^3 (t)\sim t^{\frac{3}{2}} \,,
\end{align}
for the jump in entropy along the shock front. 
As such, entropy and the subdominant Riemann variable are {\em produced instantaneously} along the shock in order to enforce that mass, momentum and total energy are not lost.  This is a manifestation of \emph{symmetry breaking}  associated to physical shocks, and emphasizing this point is the reason for the choice \eqref{pre-shockc}--\eqref{pre-shocke}.

At this point we note that since $a$ is being forced in \eqref{xland-a} by both $z$ and $w$, which themselves jump across $\sc(t)$, the function $a$ too exhibits a singularity on $\sc(t)$.
Ordinarily, this singularity might be expected to appear in $a$ itself, but since the characteristics of $a$ are transversal to the shock, together with the special structure of the specific vorticity evolution~\eqref{xland-svort}, we prove that $a$ is continuous across the shock, and that its derivative exhibits a jump discontinuity:
\begin{align}
\jump{\partial_\theta a}(t) \sim \jump{w} (t)\sim t^{\frac{1}{2}} \,.
\end{align}
An extended discussion of this point will appear in the next step.

\vspace{.05in}
\noindent{\bf Step 4: development of weak singularities.}
We use equations \eqref{eq:w:z:k:a} to determine the solution away from the shock curve.  In front of the shock (to the right in our case), the solution is determined by its initial data on the Cauchy surface $\{t=0\}$.  This is because all of the characteristic curves moving with velocities $\lambda_i$, $i=1,2,3$, as defined in \eqref{eq:wave-speeds}, impinge upon the shock front in that region, since the shock  is supersonic there. As such, in that region $z$ and $k$ are identically zero since they are zero initially and \eqref{xland-z}--\eqref{xland-k} have no forcing when $z=k=0$.
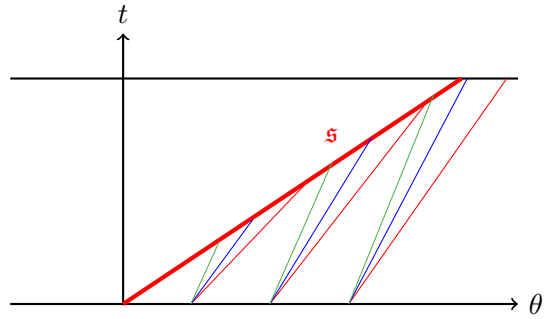
\begin{figure}[htb!]
\centering
\begin{tikzpicture}[scale=1.5]
\draw [<->,thick] (0,2.4) node (yaxis) [above] {$t$}
        |- (3.5,0) node (xaxis) [right] {$\theta$};
\draw[black,thick] (0,0)  -- (-1,0) ;
\draw[black,thick] (-1,2)  -- (3.5,2) ;
\draw[name path=A,red,ultra thick] (0,0) -- (3,2) ;
\draw[red] (1.85,1.5) node { $\sc$}; 
\draw[red] (0.6,0) -- (1.65,1.1) ;
\draw[red] (1.3,0) -- (2.7,1.8) ;
\draw[red] (2,0) -- (3.4,2) ;
\draw[blue] (0.6,0) -- (1.15,0.75) ;
\draw[blue] (1.3,0) -- (2.2,1.47) ;
\draw[blue] (2,0) -- (3.05,2) ;
\draw[green!40!gray] (0.6,0) -- (.85,0.55) ;
\draw[green!40!gray] (1.3,0) -- (1.85,1.25) ;
\draw[green!40!gray] (2,0) -- (2.75,1.85) ;
\end{tikzpicture}

\vspace{-0.2cm}
\caption{\footnotesize The characteristic curves of $\lambda_1 = w$ in front of the shock curve are represented in red, those of $\lambda_2 = \frac 23 w$ are in blue, and those of $\lambda_3 = \frac 13 w$ are plotted in green.}
\end{figure}

On the other hand, behind the shock (to the left side in our case), this is not the case. As discussed in \textbf{Step 3}, along the shock front $z$ and $k$ \emph{must} be produced in order to enforce the three Rankine-Hugoniot  jump conditions.  These values $z_-$ and $k_-$  are propagated off the shock along their characteristics with speeds $\lambda_1$ and $\lambda_2$ which are both \emph{slower} than the speed of the shock $\dot\sc(t)$.  As such, the surface $\{ \theta< 0 , t=0\} \cup \{\theta= \sc(t), t>0 \}$ serves as a new Cauchy surface for the $z,k,a$ equations   \eqref{eq:w:z:k:a}  once the shock has formed.  Schematically, the initial data on this new Cauchy surface is 
\begin{subequations}
\begin{align}\label{newdata1}
\tilde{z}_0(\theta) &\approx \begin{cases} 0 & \text{ on } \{\theta<0, t=0\} \\    \tilde{\rm z}_0   \theta^{ \frac{3}{2}} + \dots \phantom{{\rm ffffffas} } & \text{ on } \{\theta =  \kappa   t, \ t\geq 0 \}
\end{cases},\\ \label{newdata2}
\tilde{k}_0(\theta) &\approx \begin{cases} 0 & \text{ on } \{\theta <0, t=0\} \\    \tilde{\rm k}_0   \theta^{ \frac{3}{2}} + \dots \phantom{{\rm ffffffas} }& \text{ on } \{\theta =  \kappa   t, \ t\geq 0 \}
\end{cases},\\ \label{newdata3}
 \tilde{a}_0(\theta) &\approx \begin{cases}  \tilde{\rm a}_0 \theta +  \tilde{\rm a}_1   \theta^{ \frac{4}{3}} + \dots & \text{ on } \{\theta <0, t=0\} \\  {\rm smooth}  & \text{ on } \{\theta=  \kappa   t, \ t\geq 0 \}
\end{cases},
\end{align}
\end{subequations}
for some constants $ \tilde{\rm z}_0,  \tilde{\rm a}_0,  \tilde{\rm k}_0$, and for $|\theta| , t \ll 1$.
As discussed above, this data is carried away from the shock surface along characteristics which are slower than the shock.  The entropy is simply transported cf.~\eqref{xland-k}, whereas the subdominant Riemann variable is transported, self-amplified and forced by the entropy cf.~\eqref{xland-z}, and the radial velocity is forced by $a$, $w$, and $z$ cf.~\eqref{xland-a}.  

We begin by discussing what happens to the entropy.  Since its data \eqref{newdata2} is smooth away from the point $\theta=0$, the solution in the domain of influence of this region is likewise smooth.   Only across one single curve can the entropy be non-smooth: the $\lambda_2$-characteristic curve  $\sc_2(t)$ emanating from the pre-shock location $(0,0)$; see Figure~\ref{fig:s2!}. Along this curve, one may expect that the  $ \frac{3}{2}$--H\"{o}lder regularity of the Cauchy data $\tilde k_0$ is transported. Since at the initial time we have   $\lambda_2(0)\approx  \tfrac{2}{3}w_0$, due to \eqref{pre-shocka} at short times  we expect 
\begin{align*}
\sc_2(t) \approx   \tfrac{2}{3} \kappa t.
\end{align*}
The entropy exhibits a $C^{1,\frac{1}{2}}$ cusp singularity across $\{\theta= \sc_2(t) \}$, taking the approximate form
\begin{align}\label{entsol}
k(\theta,t) &\approx \begin{cases} 0, &\phantom{\sc_2(t) <}\  \theta< \sc_2(t) 
\\   3^{\frac 32}  \tilde{\rm k}_0    \left(\theta- \sc_2(t)\right)^{ \frac{3}{2}}, &   \sc_2(t) < \theta< \sc(t)
\\    0,& \phantom{\sc_2(t) <} \ \theta> \sc(t)
\end{cases}.
\end{align}
Note that along the shock curve $\sc(t)$ (for $t>0$) the entropy $k$ smoothly matches its generated values along shock given by \eqref{newdata2}; this is because $\sc(t) - \sc_2(t) \approx \frac{1}{3} \kappa t$.  We emphasize that equation \eqref{entsol} gives quite an accurate picture of the entropy for short times, even in the fully nonlinear problem; this fact is established in Sections~\ref{sec:development} and~\ref{sec:C2}, and the proof uses a precise understanding of the second derivative of the $\lambda_2$ wavespeed in the region between $\sc_2$ and $\sc$.

\begin{figure}[htb!]  
\centering
\begin{tikzpicture}[scale=1.5]
\draw [<->,thick] (0,2.4) node (yaxis) [above] {$t$}
        |- (3.5,0) node (xaxis) [right] {$\theta$};
\draw[black,thick] (0,0)  -- (-3,0) ;
\draw[black,thick] (-3,2)  -- (3.5,2) ;
\draw[name path=A,red,ultra thick] (0,0) -- (3,2) ;
\draw[red] (2.5,1.5) node { $\sc$}; 
\draw[name path = B,blue,ultra thick] (0,0)  -- (1.5,2) ;
\draw[blue] (0.9,1.5) node { $\sc_2$}; 
\draw[blue] (.85,0.58) -- (2,2);
\draw[blue] (1.75,1.17)  -- (2.5,2) ;
\draw[blue] (2.55,1.7) -- (2.8,2);
\draw[green!40!gray] (.85,0.58) -- (1.2,2);
\draw[green!40!gray] (1.75,1.17)  -- (2.1,2) ;
\draw[green!40!gray] (2.55,1.7) -- (2.7,2);
\draw[red]   (-0.5,0)-- (.85,0.58);
\draw[red]  (-1,0) -- (1.75,1.17) ;
\draw[red]  (-2,0)--  (2.55,1.7);
\end{tikzpicture}

\vspace{-0.2cm}
\caption{\footnotesize The entropy $k$ is propagated off the shock curve along the $\lambda_2$ characteristics represented by blue curves. The subdominant Riemann variable $z$ is also propagated off the shock curve $\sc$, but along the $\lambda_1$ characteristics represented in green. The $\lambda_1$ characteristics initiated at $\{t=0\}$, represented in red, impinge on the shock curve from the left side, determining $w$ in terms of $w_0$.}
\label{fig:s2!}

\end{figure}
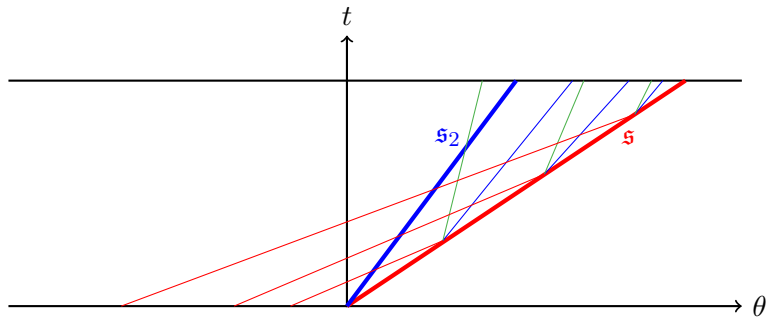

With the structure of the entropy understood, we can study the behavior of $w$, $z$ and $a$ which evolve according to   \eqref{eq:w:z:k:a}.
First note that, since the shock is subsonic relative to the state behind it, the $\lambda_3$ characteristics impinge upon the shock front, and therefore the  initial data for $w$ is determined entirely by the values on the surface $\{t=0\}$, i.e. by $w_0$ as given in \eqref{pre-shocka} (see Figure~\ref{fig:s2!}). As such, $w$ is smooth away from the pre-shock and we are able to precisely quantify how the the bounds degenerate as $(\theta,t) \to (0,0)$.  On the other hand, the characteristics of the subdominant Riemann variable and radial velocity are slower than the shock and thus the solutions in the region  $\sc_2(t) \leq   \theta < \sc(t)$ are determined entirely by their data along the shock curve.  Near the shock curve  $\sc(t)$, approaching from the left, the solution fields $z$ and $a$ smoothly match their values along the shock (see Figure~\ref{fig:s2!}).

Since away from $\sc_2(t)$ the entropy given by \eqref{entsol} is smooth, in spite of both $w$ and $z$ being forced by an entropy gradient, it can be shown that $w$ and $z$ are smooth away from $\sc_2(t)$; this also uses the fact that both $w_0$ (see~\eqref{pre-shocka}) and $\tilde z_0$ (see~\eqref{newdata1}) are smooth away from $(0,0)$.  

The most interesting behavior happens along $\sc_2(t)$, from the right side.  Here, we have determined that the entropy exhibits a cusp-type H\"{o}lder singularity in its derivative; by \eqref{entsol} we have that $\p_\theta k \sim (\theta - \sc_2(t))^{\frac 12}$.  This singularity is seen by the Riemann variables $w$ and $z$ and radial velocity $a$ through their forcing terms $(w-z)^2 \p_\theta k$ which, naively, are just $C^{ \frac{1}{2}}$ across $\sc_2$.  However, the fact that the entropy has a specific cusp structure \eqref{entsol} near the curve $\sc_2$, together with the fact that the wavespeeds of $w$ and $z$ are strictly different from the wavespeed of $k$, actually provides a \emph{regularization effect} for $w$ and $z$. The situation with the radial velocity $a$ is more challenging because it shares the same wavespeed as the entropy; here, the evolution for the specific vorticity is used crucially in our analysis.

In order to explain this regularization effect in greater detail, let us denote the $\lambda_i$ characteristics by
\begin{align*}
 \tfrac{\rmd}{\rmd t} \eta_i(\theta,t) = \lambda_i (\eta_i(\theta,t),t), \qquad \eta_i(\theta,0) = \theta\,,
\end{align*}
for every $i \in \{1,2,3\}$ (in the proof, we in fact denote by $\eta$ the characterstic of $\lambda_3$, but for the $\lambda_1$ and $\lambda_2$ we need to use backwards in time flows, denoted by $\pst$ and $\pt$, see Section~\ref{sec:transport} for details).
Since for $|\theta| \ll 1$ the wave speeds at the pre-shock are given by $ \lambda_1\approx  \tfrac{1}{3} \kappa$, $\lambda_2\approx \tfrac{2}{3} \kappa$ and $ \lambda_3\approx \kappa$, to leading order in time and for small values of $|\theta|$, we have that 
\begin{align*}
\eta_1(\theta,t) \approx \theta + \lambda_1 t \approx \theta + \tfrac{1}{3} \kappa t \,,
\qquad 
\eta_2(\theta,t) \approx \theta + \lambda_2 t \approx \theta + \tfrac{2}{3} \kappa t \,,
\qquad 
\eta_3(\theta,t) \approx \theta + \lambda_3 t  \approx \theta + \kappa t \,.
\end{align*}
We are interested in the behavior near the curve $\sc_2$.  Thus, we seek labels $\theta_i(t)$ such that $\eta_i(\theta_i(t),t) = \sc_2(t) +y$, where $0 < y \ll 1$.  Since $\sc_2(t) \approx   \lambda_2 t$, we have
$
 \theta_i(t) \approx y+ (\lambda_2-\lambda_i)  t  . 
$
The flowmaps are 
\begin{align}
\eta_i(\theta_i(t),s) &\approx y+  (\lambda_2-\lambda_i)  t +\lambda_1 s, \qquad s\in [0,t], \qquad i=1,3.
\end{align}
Ignoring the integrating factors $e^{\frac{8}{3} \int_0^t a(\eta_3(\theta,\tau),\tau) \rmd \tau} \approx 1$ at short times,  the solutions of  \eqref{xland-w} and \eqref{xland-z} take the form
\begin{subequations}
\label{eq:twerking:1}
\begin{align}
w (\sc_2(t) + y,t) &\approx w_0(y+  (\lambda_2-\lambda_3)  t )+  \tfrac{1}{24}\int_0^t ((w-z)^2 \p_\theta k)  ( y+  (\lambda_2-\lambda_3)  t  +\lambda_3 s,s) \rmd s,\\
z (\sc_2(t) + y,t)  &\approx z_0(y+  (\lambda_2-\lambda_1)  t )+ \tfrac{1}{24}\int_0^t  ( (w-z)^2 \p_\theta k) (y+  (\lambda_2-\lambda_1)  t +\lambda_1 s,s) \rmd s.
\end{align}
\end{subequations}
As discussed above, since $\lambda_3 \approx \lambda_2 + \frac 13 \kappa >\lambda_2$, the characteristic curves of $w$ impinge on $\sc_2(t)$ from the left, carrying up initial data $w_0$ from $\{t=0\}$.   On the other hand, the characteristics of $z$ impinge from the right of $\sc_2$ since $\lambda_1 \approx  \lambda_2 - \frac 13 \kappa < \lambda_2$.  Therefore, the data for $z$ is carried from the shock surface $\{\sc(t)=\theta\}$.  Although this data is singular at $(0,0)$, this point is not sampled by the characteristics above since $t>0$ is fixed, and thus $(\lambda_2-\lambda_1)  t  >0$. 
Regarding the forcing terms appearing on the right sides of \eqref{eq:twerking:1}, from the asymptotic description of $k$ in \eqref{entsol}, the approximation $\sc_2(t) \approx \frac 23 \kappa t \approx \lambda_2 t$, and the fact that by \eqref{eq:riemann} $w-z$ equals twice the azimuthal sound speed $c$, which we expect to remain bounded from above and below in terms of $\kappa$, we obtain that
\begin{align}
\label{eq:twerking:2}
 \int_0^t ( (w-z)^2 \p_\theta k) (y+  (\lambda_2-\lambda_i)  t  +\lambda_i s,s) \rmd s 
&\sim \int_{t+ \frac{y}{\lambda_2-\lambda_i}} ^t  (y -\lambda_i (t-s))^{ \frac{1}{2}} \rmd s \sim  y^{ \frac{3}{2}}
 \,,
\end{align}
for $0 < y ,t \ll1$.
Thus, the forcing \emph{gains one derivative}, expressed above by an extra power of $y$, due to the fact that it is integrated along curves which are transversal (since $\lambda_i\neq \lambda_2$) to the characteristics of the entropy (namely, the flow of $\partial_t + \lambda_2 \partial_\theta$).
Thus, from \eqref{eq:twerking:1} and \eqref{eq:twerking:2}, we expect that $w$ and $z$ are both $C^{1, \frac{1}{2}}$ across the curve $\sc_2(t)$, rather than just  $C^{ \frac{1}{2}}$ which is the naive expectation. 

Turning this intuition into a proof requires a $C^2$-type analysis of the characteristics of $\{\lambda_i\}_{i=1}^3$, including an understanding of the times at which the $\lambda_1$ and $\lambda_2$ characteristics intersect the shock curve $\sc$; see for instance Lemmas~\ref{lem:12flows},~\ref{lem:dxeta-dff}, and~\ref{lem:dyy12}. Additionally, in this stage of the proof we need to analyze the time integrals of $\p_y w$ and $\p_{yy} w$ (objects which do blow up rather severely as one approaches the pre-shock) when composed with the flows of $\lambda_1$ and $\lambda_2$; here the transversality of these flows with respect to $\sc$ plays a crucial role, along with a precise understanding of the function $\wb$ in the vicinity of the pre-shock; see Lemmas~\ref{lem:Steve:needs:this},~\ref{lem:characteristic-bounds}, and~\ref{lem:wyy}. This is one of the principal reasons why the pre-shock obtained in {\bf Step 1} needs to be analyzed in a $C^3$ sense.

The intuition behind the gain of regularity for the radial velocity $a$ is less direct. The data for $a$ along the new Cauchy surface (including the shock curve) is $C^{1, \frac{1}{3}}$ due to the formula \eqref{newdata3}.  Thus, such a singularity would be expected to propagate along its characteristic emanating from the pre-shock location. 
To see this, we recall that the specific vorticity at the pre-shock is Lipschitz.  Since by \eqref{xland-svort} it is transported by the  velocity $\lambda_2$, it is forced by $\partial_\theta k$, and  because the wavespeed for $\varpi$ is  the same as that of $k$,  we conclude from \eqref{xland-svort} only that $\varpi$ is $C^{ \frac{1}{2}}$  across the curve $\sc_2$.    Since $k$, $z$ and $w$ are all $C^{1, \frac{1}{2}}$  across this curve, by \eqref{xland-svort:def} we deduce that $\p_x a\in  C^{ \frac{1}{2}}$, and consequently that $a\in C^{1, \frac{1}{2}}$  across $\sc_2$.  Thus, for positive times $t>0$, the radial velocity   becomes smoother than its initial condition ($C^{1,\frac 12}$ vs $C^{1,\frac 13}$). This regularization effect is in essence  a consequence of Lemmas~\ref{lem:varpi:y} and~\ref{lem:ayy}.

Finally, we discuss the region to the right of $\sc_2(t)$.  In this region the entropy is trivial ($k\equiv 0$)  since it is determined solely by its data on the surface $\{\theta <0, \ t=0\}$, see~\eqref{entsol}.  The equations reduce to
\begin{align*}
\p_t w + \lambda_3 \p_\theta w & = - \tfrac{8}{3}  a w  \,,  \\
\p_t z + \lambda_1 \p_\theta z & = - \tfrac{8}{3}  a z    \,,  \\
\partial_t a   + \lambda_2  \partial_{\theta} a & = - \tfrac43 a^2 + \tfrac{1}{3} (w+ z)^2 - \tfrac{1 }{6} (w- z )^2  \,.
\end{align*}
The object $z$ has singular data as in \eqref{newdata1}, which will be propagated along the $\lambda_1$-characteristic curve.  Specifically,
 we have that at the pre-shock $\lambda_1 \approx \tfrac{1}{3} w_0 \approx \frac 13 \kappa$, so that the curve $\sc_1$ along which $z$ is transported from the pre-shock is given by 
\begin{align*}
\sc_1(t) \approx  \tfrac{1}{3}\kappa t.
\end{align*}
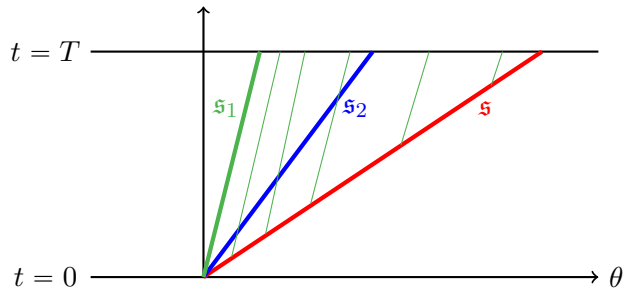
\begin{figure}[htb!]
\centering
\begin{tikzpicture}[scale=1.5]
    \draw [<->,thick] (0,2.4) node (yaxis) [above]{}
        |- (3.5,0) node (xaxis) [right] {$\theta$};
    \draw[black,thick] (0,0)  -- (-1,0) ;
     \draw[black,thick] (-1,2)  -- (3.5,2) ;
    \draw[name path=A,red,ultra thick] (0,0) -- (3,2) ;
        \draw[red] (2.5,1.5) node { $\sc$}; 
    \draw[name path = B,blue,ultra thick] (0,0)  -- (1.5,2) ;
    \draw[blue] (1.35,1.5) node { $\sc_2$}; 
     \draw[green!40!gray, ultra thick] (0,0)  -- (.5,2) ;
      \draw[green!40!gray] (.2,1.5) node { $\sc_1$}; 
     \draw[black] (-1.4,0) node { $t=0$}; 
      \draw[black] (-1.4,2) node { $t=T$}; 
\draw[green!40!gray] (.95,0.65) -- (1.3,2);
\draw[green!40!gray] (1.75,1.17)  -- (2,2) ;
\draw[green!40!gray] (2.55,1.7) -- (2.65,2);
\draw[green!40!gray] (.55,.38)  -- (.9,2) ;
\draw[green!40!gray] (.25,.18) -- (.68,2);
\end{tikzpicture}

\vspace{-0.2cm}
\caption{\footnotesize The $\lambda_1$ characteristics, represented here by the green curves, propagate information about $z$ from the shock curve $\sc$ into the region between $\sc_1$ and $\sc$}
\end{figure}
The $\frac 32$-H\"older singularity in the Cauchy data for $z$ \eqref{newdata2} is morally speaking transported along these $\lambda_1$-characteristics for short times $t\ll 1$, resulting in
\begin{align}\label{zsol}
z(\theta,t) &\approx \begin{cases} 0, &\phantom{\sc_1(t) <}\  \theta< \sc_1(t) 
\\    {\rm z}_0   \left(\theta- \sc_1(t)\right)^{\frac{3 }{2}}, &   \sc_1(t) < \theta\ll \sc_2(t)
\end{cases}.
\end{align}
The difficulty in showing that the intuitive behavior \eqref{zsol} is indeed true lies in the fact that the $\lambda_1$-characteristics emanating from the shock curve do spend some time in the region between $\sc_2$ and $\sc$, and in this region the entropy gradient present in \eqref{xland-z} causes the first and second derivatives of $z$ to behave badly. By using the transversality of the $\lambda_1$ and $\lambda_2$ characteristics, we are nonetheless able to show in Section~\ref{sec:zyy} that \eqref{zsol} is morally correct. 

Note that in this region, the relevant initial data for $w$ and $a$ is far away from the pre-shock, and so the fields $w$ and $a$ are as regular as their forcing for short times.
This forcing involves the field $z$, which makes a $C^{1,\frac 12}$ cusp along $\sc_1(t)$.  However, again the wave speeds for $w$ and $a$ are different than that of $z$, and as such  their characteristics are transversal to $\sc_1(t)$.  This means that the solution fields gain a derivative relative to the forcing, similar to \eqref{eq:twerking:2}.  It thus seems reasonable to conjecture that  $w,a\in C^{2,\frac 12}$ on the right side of $\sc_1$. Establishing this fact would in turn require us to show that \eqref{pre-shocka} holds in a $C^{3,\frac 12}$ sense, a regularity level which we did not pursue in {\bf Step 1}. As such, in this paper we only prove that $w,a\in C^{2}$ on $\sc_1$, which is nonetheless a better regularity exponent that the naively expected $C^{1,\frac 12}$.

\vspace{.05in}
\noindent{\bf Step 5: returning to basic fluid variables.}
There is a certain regularization effect along the curve $\sc_2$, when returning to the original fluid variables, as we now explain. A straightforward calculation shows that the good unknowns
\begin{align}
q^w:=\partial_\theta w-  \tfrac{1}{4}  c \partial_\theta k, \qquad q^z:=\partial_\theta z+  \tfrac{1}{4}  c \partial_\theta k,
\label{eq:lazy:cat}
\end{align}
satisfy the evolution equations
\begin{subequations}
\label{eq:twerking:3}
\begin{align}
(\p_t + \lambda_3\partial_\theta) q^w + (\partial_\theta \lambda_3 + \tfrac{8}{3}   a) q^w &=    - \tfrac{8}{3}  \partial_\theta a w  + \Big( \tfrac{4}{3}a  c  +   \tfrac{1}{6}  c  \partial_\theta\lambda_2 \Big)\partial_\theta k,\\
(\p_t + \lambda_1\partial_\theta)  q^z +  (\partial_\theta \lambda_1 + \tfrac{8}{3}   a) q^z &=  - \tfrac{8}{3}  \partial_\theta a z  - \Big( \tfrac{4}{3}a  c  +   \tfrac{1}{6}  c  \partial_\theta\lambda_2 \Big)\partial_\theta k.
\end{align}
\end{subequations}
The remarkable feature of the system \eqref{eq:twerking:3} is that the second derivatives of $k$ do not appear in the equations; indeed, if one naively considers the evolution equation for $\p_\theta w$ or $\p_\theta z$ alone, then from \eqref{xland-w} and respectively \eqref{xland-z} we note the emergence of the forcing term $\frac{1}{24} (w-z)^2 \p_{\theta\theta} k$. The unknowns $q^w$ and $q^z$, and the system \eqref{eq:twerking:3}, is useful because it involves only $\partial_\theta k$, and this forcing makes a $C^{\frac{1}{2}}$ cusp along the curve $\sc_2$. However, since the characteristic speed of $k$ is  $\lambda_2$, and the characteristics of  $q^w$ and $q^z$  are $\lambda_3$ and respectively $\lambda_1$, and are thus transversal, we again have a regularization effect akin to \eqref{eq:twerking:2}, and we find that the (Lagrangian) force is actually $C^{ 1, \frac{1}{2}}$ across $\sc_2$.  Now, the initial data relevant to the behavior of $q^w$ and $q^z$ comes from different places.  For $q^w$, it originates along the $\{t=0\}$ surface and so it is easy to see that it is smooth for positive time (away from the pre-shock).  On the other hand, the data for $q^z$ originates on the shock curve itself and once again, away from the pre-shock it is smooth.  It follows that, for $t>0$ the regularity is set by the forcing, resulting in bounds consistent with $q^w,q^z\in C^{1, \frac{1}{2}}$. Again, in the proof we only establish the $C^1$ regularity of $q^w$ and $q^z$, due to the $C^3$ expansion of the pre-shock; this argument is made rigorous in Sections~\ref{sec:improve:wyy} and~\ref{sec:zyy}. The outcome is that 
$q^w+q^z = \partial_\theta z+\partial_\theta w = \frac{2}{3} \partial_\theta u_\theta$ is smoother than the naive expectation $C^{\frac 12}$: we prove that it lies in $C^1$ across $\sc_2$ (which translates into $C^2$ regularity for the angular velocity $u_\theta$), and conjecture that the sharp regularity is  $C^{ 1, \frac{1}{2}}$.  Similarly, the improved regularity for $q^w$ and $q^z$ shows that the second derivative of the pressure is bounded on $\sc_2$, see~\eqref{eq:done}.

\vspace{.05in}
\noindent{\bf Summary.}
In terms of the Riemann variables in azimuthal symmetry, we find
\begin{itemize}
\item Across the shock curve $\sc(t)$,  we have
\begin{align*}
\jump{w} \sim t^{\frac{1}{2}}, \qquad \jump{\partial_\theta a} \sim t^{\frac{1}{2}},\qquad 
\jump{z} \sim t^{\frac{3}{2}}, \qquad \jump{k} \sim t^{\frac{3}{2}}.
\end{align*}
\item Across the curve  $\sc_2(t)$, the functions $\partial_\theta w,\partial_\theta a,\partial_\theta k,\partial_\theta z$ all behave as $C^{\frac{1}{2}}$ cusps approaching $\sc_2$ from the right.  Approaching from the left, they are all smooth, in positive time.
\item Across the curve  $\sc_1(t)$, the entropy is zero, $\partial_\theta w$ and $\partial_\theta a$ are $C^1$ (expected to be $C^{1,\frac{1}{2}}$) and $\partial_\theta z$ behaves as a $C^{ \frac{1}{2}}$ cusp  approaching $\sc_1$ from the right.   Approaching from the left, $\p_\theta z$ is $C^1$ in positive time.
\end{itemize}
In terms of the  physical variables, we find
\begin{itemize}
\item Across the shock curve $\sc(t)$, all state variables jump
\be
\jump{u_\theta} \sim r t^{\frac{1}{2}}, \qquad \jump{\rho} \sim r^2 t^{\frac{1}{2}},\qquad \jump{\partial_\theta u_r} \sim r t^{\frac{1}{2}},\qquad \jump{S} \sim t^{\frac{3}{2}}.
\ee
\item Across the curve  $\sc_2(t)$, the entropy, density and radial velocity derivatives all make $C^{\frac{1}{2}}$ cusps  approaching $\sc_2$ from the right.  Approaching from the left, they are all smooth.  The second derivative of the angular velocity and the pressure are {\em bounded}  for $t>0$, and are expected to be $C^{\frac{1}{2}}$ smooth.

\item Across the curve  $\sc_1(t)$, the entropy is zero while the angular velocity and density derivatives make $C^{\frac{1}{2}}$ cusps  approaching $\sc_1$ from the right.  Approaching from the left, they are all smooth for $t>0$. The second derivative of the radial velocity is bounded and is expected to make a $C^{\frac{1}{2}}$ cusp.
\end{itemize}


\section{Detailed shock formation}
\label{sec:formation}

In \cite{BuShVi2019a}, it was established that for an open set of $C^4$ initial data,  solutions to   \eqref{eq:w:z:k:a} form a generic, stable, asymptotically 
self-similar pre-shock at time $t=T_*$, and that the dominant Riemann variable $w(\cdot , T_*) \in C^{\frac{1}{3}} $.  
The primary objective of this section is to provide a precise description of $w( \cdot ,T_*)$ in the vicinity of the pre-shock.   We shall prove the following 
\begin{theorem}[Detailed shock formation]\label{thm:blowup-profile} For $\kappa_0 >1 $ taken sufficiently large and $\eps >0$ sufficiently small,
and for initial data $(w,z,k,a)|_{t=-\eps}=(w_0,0,0,a_0)$ satisfying \eqref{w0-to-W0}--\eqref{eq:A_bootstrap:IC} below,   there exists a  blowup time $t=T_*$, a unique blowup location $\xi_*$,
 and unique solutions $(w,a)$ to \eqref{eq:w:z:k:a}  in 
$C^0([-\eps,T_*), C^4( \mathbb{T}  ))\cap C^4([-\eps,T_*), C^0( \mathbb{T}  )) $ such that
\begin{align} 
  w( \cdot ,T_*) \in C^ {\frac{1}{3}}(\TT  )  \,, \qquad a(\cdot , T_*) \in C^{1, {\frac{1}{3}} }( \TT ) \,, \qquad \varpi(\cdot , T_*) \in C^{0, 1 }( \TT  ) \,.\label{wa-at-blowup}
\end{align} 
Furthermore,  there exists a unique blowup label $x_*$ satisfying
$$ \abs{x_*} \le 20\kappa_0 \eps^4 \ \ \ \text{ such that } \ \ \ \lim_{t \to T_*} \eta(x_*, t) = \xi_* \,,$$
where $\eta$ is  the $3$-characteristic 
defined by \eqref{eta-eqn}.  
The pre-shock 
 $w( \cdot ,T_*)$ has the fractional series expansion  
\begin{align} 
\sabs{ w(\theta,T_*) -  \kappa_* - \aa_1 (\theta-\xi_*)^\frac{1}{3} - \aa_2 (\theta-\xi_*)^\frac{2}{3} - \aa_3 (\theta-\xi_*)}
\les \sabs{\theta-\xi_*}^ {\frac{4}{3}} 
\label{w-blowup}
\end{align} 
for all $\theta \in \eta( B_{x_*} (\eps^{3}))$, 
where
$$
\kappa_* = e^{- \tfrac{8}{3}\int_{-\eps}^{T_*}  \p_x ( a(\eta(x_*,r),r))dr}w_0(x_*)\,,
$$
and 
\begin{align} 
\abs{\kappa_*  - \kappa_0} \le 2\eps \kappa_0\,, \ \ \ 
- \tfrac{6}{5} \le \aa_1 \le  -  \tfrac{4}{5}  \,, \ \ \ \sabs{ \aa_2}   \le  \eps^ {\frac{1}{10}}  \,, \ \ \ \sabs{ \aa_3}   \le   \tfrac{7}{6\eps}  \,.   \label{Taylor-coefficients}
\end{align} 
In fact, the expansion \eqref{w-blowup} is valid in a $C^3$-sense, by which we mean that the bounds 
\begin{subequations} 
\label{preshock-derivatives}
\begin{align} 
\abs{\p_\theta w(\theta, T_*)  -    \tfrac{1}{3}  \aa_1 (\theta-\xi_*)^{-\frac{2}{3}}  -    \tfrac{2}{3} \aa_2 (\theta-\xi_*)^{-\frac{1}{3}} } & \les \tfrac{1}{\eps} \,,    \label{need-like-a-hole-in-the-head1} \\
\abs{\p_\theta^2w(\theta, T_*)  -    \tfrac{2}{9}  \aa_1 (\theta-\xi_*)^{-\frac{5}{3}}   } &  \les   \eps^ {-\frac{63}{8}}  \sabs{\theta-\xi_*}^{-\frac{4}{3}}\,,   \label{need-like-a-hole-in-the-head2} \\
\abs{\p_\theta^3w(\theta, T_*)  } &  \les   \eps^ {-\frac{151}{8}}   \sabs{\theta-\xi_*}^{-\frac{8}{3}}\,,   \label{need-like-a-hole-in-the-head3}
\end{align} 
\end{subequations} 
hold  for all $\theta \in \eta( B_{x_*} (\eps^{3}))$.  Moreover, the $C^4$ regularity away from the pre-shock is characterized by
\begin{align} 
&\sup_{t \in [-\eps,T_*)}\max_{\gamma\le 4}\left( \abs{ \p_\theta^\gamma a(\eta(x,t),t)}
+\abs{ \p_\theta^\gamma w(\eta(x,t),t) } \right) \notag \\
& \qquad\qquad
 \le 
  \begin{cases}
C_\eps  \left((T_* -t) +   3\eps^{-3}(\eps+t)(x-x_*)^2\right)^{-4}   & \ \ \abs{x-x_*} \le \eps^{2}  \\
C_\eps& \ \ \abs{x-x_*} \ge \eps^{2}
 \end{cases}
 \,,   \label{thm-aw-good-bound}
\end{align}  
where $C_\eps > 0 $ is a sufficiently large constant depending on inverse powers of $\eps$.
Lastly, the specific vorticity satisfies the bounds
\begin{align} 
\tfrac{10}{\kappa_0} \le \varpi(x,t) \le \tfrac{28}{\kappa_0 } \,, \qquad  \sabs{\p_x\varpi(x,t )} \le \tfrac{70}{\kappa^2_0 \eps} \,,
\label{svort-thm-bounds}
\end{align} 
for all $x\in \TT$ and $t\in[-\eps,T_*)$.
\end{theorem}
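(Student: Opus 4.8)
The plan is to pass to Lagrangian coordinates along the $\lambda_3=w$ characteristic and to promote the self-similar blowup analysis of \cite{BuShVi2019a} — which identifies $T_*$, $\xi_*$, $x_*$ and gives the leading-order $C^{1/3}$ profile — into a quantitative $C^3$ description of the pre-shock. Since $z|_{t=-\eps}=k|_{t=-\eps}=0$ is propagated, the system is just \eqref{eq:w:z:k:a2}, the sound speed is $c=\tfrac12 w$, and the specific vorticity \eqref{xland-svort:def} reduces to $\varpi = 16\,(w-\p_\theta a)\,w^{-2}$ with \eqref{xland-svort} becoming pure transport along the $\lambda_2=\tfrac23 w$ flow with integrating factor $\exp(\tfrac83\int a)$. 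First I would record the Lagrangian structure: with $\eta$ the flow of $\lambda_3=w$ from \eqref{eta-eqn}, equation \eqref{xland2-w} gives $w(\eta(x,t),t)=w_0(x)\,\mu(x,t)$ where $\mu(x,t)=\exp(-\tfrac83\int_{-\eps}^{t}a(\eta(x,r),r)\,dr)$; in particular $\kappa_* = w_0(x_*)\mu(x_*,T_*)$ is the limiting value of $w$ along the distinguished characteristic. Differentiating the $\eta$-ODE and \eqref{eq:w:z:k:a2} in the Lagrangian label $x$ up to fourth order and using $\p_\theta(\cdot)\circ\eta = J^{-1}\p_x(\cdot\circ\eta)$ with $J:=\p_x\eta$ (so $\dot J=\p_x(w\circ\eta)$), one obtains a closed ODE system in $t$ for $\big(\p_x^j\eta,\ \p_x^j(w\circ\eta),\ \p_x^j(a\circ\eta)\big)_{j\le 4}$, parametrized by $x$.

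Next I would import from \cite{BuShVi2019a}, valid on the data class \eqref{w0-to-W0}--\eqref{eq:A_bootstrap:IC}, the qualitative picture: a maximal time of smooth existence $T_*$ at which the first gradient singularity forms; the blowup label $x_*$ with $|x_*|\le 20\kappa_0\eps^4$ and $\xi_*=\lim_{t\to T_*}\eta(x_*,t)$; the facts that $J>0$ on $[-\eps,T_*)$ with $\inf_x J(x,t)=J(x_*,t)\downarrow 0$; and the \emph{generic cubic degeneracy} $\p_xJ(x_*,T_*)=\p_x^2\eta(x_*,T_*)=0$ while $\p_x^2J(x_*,T_*)=\p_x^3\eta(x_*,T_*)\ne 0$, the last being exactly the genericity hypothesis on $w_0$. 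I would then run a bootstrap on $[-\eps,T_*)$ for the Lagrangian quantities above, assuming the weighted bounds of \eqref{thm-aw-good-bound} and the vorticity bounds \eqref{svort-thm-bounds}, and close them by Grönwall on the ODE system: on this $O(\eps)$-length interval $w,a,\mu$ stay comparable to their data-sizes, so all nonlinear terms are lower-order, while the self-similar profile convergence of \cite{BuShVi2019a} supplies the matching a priori bounds near $x_*$. The bounds \eqref{svort-thm-bounds} then drop out: along the $\lambda_2$-flow, $\varpi$ and $\p_x\varpi$ equal their data values times an integrating factor that is $1+O(\eps)$. For the endpoint regularity \eqref{wa-at-blowup}: $w(\cdot,T_*)\in C^{1/3}$ because $w\circ\eta(\cdot,T_*)\in C^4$ is composed with the inverse of the homeomorphism $\eta(\cdot,T_*)$, which by the cubic degeneracy behaves like $(\theta-\xi_*)^{1/3}$; $\varpi(\cdot,T_*)\in C^{0,1}$ by transporting the Lipschitz bound on $\varpi_0$ along the $\lambda_2$-flow, whose time-$T_*$ map is bi-Lipschitz because its characteristics cross the single blowup characteristic transversally (using $\lambda_3-\lambda_2=\tfrac13 w\gtrsim\kappa_0$, so $\int\p_\theta w$ along these curves stays bounded); and $a(\cdot,T_*)\in C^{1,1/3}$ since $\p_\theta a = w-\tfrac1{16}\varpi w^2$ is a product of a $C^{1/3}$ function and a Lipschitz function.

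For the expansion \eqref{w-blowup} and its $C^3$ form \eqref{preshock-derivatives}, I would Taylor-expand, at $t=T_*$, both $\eta(x,T_*)$ and $w(\eta(x,T_*),T_*)=w_0(x)\mu(x,T_*)$ about $x=x_*$ to order four, using the vanishing of the first two label-derivatives of $\eta$. This gives $\theta-\xi_* = \tfrac16\p_x^3\eta(x_*,T_*)\,(x-x_*)^3\,(1+O(x-x_*))$; inverting this fractional-power relation by Lagrange inversion yields $x-x_* = c_1(\theta-\xi_*)^{1/3}+c_2(\theta-\xi_*)^{2/3}+c_3(\theta-\xi_*)+O(|\theta-\xi_*|^{4/3})$, and substituting into the expansion of $w\circ\eta$ reads off $\kappa_*,\aa_1,\aa_2,\aa_3$ as explicit polynomials in $w_0^{(j)}(x_*)$, $\p_x^j\mu(x_*,T_*)$ and $\p_x^j\eta(x_*,T_*)$; the coefficient bounds \eqref{Taylor-coefficients} then follow from the bootstrap bounds together with $|x_*|\le 20\kappa_0\eps^4$. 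The derivative bounds \eqref{need-like-a-hole-in-the-head1}--\eqref{need-like-a-hole-in-the-head3} come from expressing $\p_\theta^j w(\theta,T_*)$ in terms of $\p_x^m(w\circ\eta)$ and $\p_x^m\eta$ with $m\le j$ (Faà di Bruno), and using $J(x,T_*)\sim(x-x_*)^2\sim(\theta-\xi_*)^{2/3}$ with the $\eps$-weighted Lagrangian bounds; the weights $\eps^{-63/8}$, $\eps^{-151/8}$ are precisely what the divisions by $J^{m}$ produce. The off-blowup bound \eqref{thm-aw-good-bound} is part of the bootstrap itself.

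The hard part will be closing the $C^3$ bootstrap uniformly down to $t=T_*$. The Eulerian derivatives $\p_\theta^2 w$ and $\p_\theta^3 w$ blow up like $|\theta-\xi_*|^{-5/3}$ and $|\theta-\xi_*|^{-8/3}$, so in Lagrangian variables they are bounded quantities divided by $J^{m}\sim|\theta-\xi_*|^{2m/3}$; one must (i) verify the denominator degenerates at precisely the cubic rate — i.e. keep $\p_x^2 J(x_*,\cdot)$ bounded away from $0$ uniformly in $t$ — and (ii) propagate the constants and their exact $\eps$-powers through the coupled ODE hierarchy for $\big(\p_x^j\eta,\p_x^j(w\circ\eta),\p_x^j(a\circ\eta)\big)$ without collapsing the hierarchy, since each label-derivative is forced by the lower ones through $J^{-1}$-factors that are themselves becoming singular. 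Reconciling the quantitative self-similar profile of \cite{BuShVi2019a} — which controls $\eta$ and $w$ but not sharply their third and fourth label-derivatives — with this ODE hierarchy, while preserving the cubic-degeneracy structure, is the crux.
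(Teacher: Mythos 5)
Your plan is conceptually aligned with the paper's actual proof: both work in Lagrangian coordinates along the $\lambda_3=w$ flow, both use the integrating-factor identity $w\circ\eta = w_0\,\mu$, both pass the analysis of $a$ through the specific vorticity $\varpi = \tfrac{16}{w^2}(w-\p_\theta a)$ transported along the $\lambda_2$ flow, and both arrive at the fractional-series expansion by Taylor-expanding $\eta(\cdot,T_*)$ and $w\circ\eta(\cdot,T_*)$ to fourth order about $x_*$ and inverting a generically-cubic relation. The paper's Lemma~\ref{lem1} (a Newton-polygon step that inverts the quartic, not just the cubic, so that the $\p_x^4\eta$ remainder contributes correctly at the $(\theta-\xi_*)^{2/3}$ and $(\theta-\xi_*)$ levels) plays the role of what you call Lagrange inversion, and your endpoint-regularity arguments for \eqref{wa-at-blowup} coincide with the paper's.

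There is, however, a concrete gap in the centrepiece of your plan: you claim that differentiating the $\eta$-ODE and \eqref{eq:w:z:k:a2} up to four times in $x$ yields a \emph{closed} ODE system in $t$ for the tuple $\bigl(\p_x^j\eta,\ \p_x^j(w\circ\eta),\ \p_x^j(a\circ\eta)\bigr)_{j\le 4}$. It does not. The $a$-equation \eqref{xland2-a} transports $a$ by $\lambda_2 = \tfrac23 w$, not by $\lambda_3 = w$, so along $\eta$ one has
\begin{align*}
\p_t(a\circ\eta) = -\tfrac43 (a\circ\eta)^2 + \tfrac16(w\circ\eta)^2 + \tfrac13\,(w\circ\eta)\,(\p_\theta a\circ\eta)\,,
\end{align*}
and the last term is $\tfrac13 (w\circ\eta)\,J^{-1}\p_x(a\circ\eta)$. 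Thus the evolution of $\p_x^j(a\circ\eta)$ is forced by $\p_x^{j+1}(a\circ\eta)$: the hierarchy is not closed at any finite level, and it is exactly here — where $J = \p_x\eta$ is degenerating — that the loss of a derivative is fatal to the Gr\"onwall scheme you propose. The paper's way out is precisely the two devices you mention but do not actually deploy at this step: (i) the specific vorticity $\varpi$, which \emph{is} purely transported along the $\lambda_2$-flow $\phi$ (no derivative loss in \eqref{svort-phi} and \eqref{dxspvort-phi}--\eqref{dxxxspvort-phi}), and (ii) the algebraic identity $\p_\theta a = w - \tfrac{w^2}{16}\varpi$ (and its differentiated forms \eqref{d2a-dw}, \eqref{d3a-d2w}, \eqref{d4a-d3w}), which supplies $\p_\theta^{\le4} a$ from $\p_\theta^{\le 3} w$ and $\p_\theta^{\le 3}\varpi$ without derivative loss. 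Crucially, this forces one to estimate quantities such as $\p_\theta w$ composed with the $\phi$-characteristics but viewed at a point pushed forward by $\eta$, i.e.\ $\p_\theta w\bigl(\phi(\phi^{-1}(\eta(x,t),t),t'),t'\bigr)$. Controlling the time integral of its square is the genuinely hard technical point: that is the content of the paper's Lemma~\ref{lem:hardintegral}, which splits $[-\eps,t)$ into three regimes according to the distance of the moving label $q(x,t')$ to $x_*$ and exploits the transversality of the $\phi$-flow to the pre-shock characteristic. Your plan has no counterpart to this estimate, and without it the $\p_\theta^3 a$ and $\p_\theta^4 a$ bounds — and hence \eqref{thm-aw-good-bound}, the $\p_x^4\eta$ estimate, the $\eps^{-63/8}$ and $\eps^{-151/8}$ weights in \eqref{preshock-derivatives}, and the $\varpi$ bounds \eqref{svort-thm-bounds} — cannot be closed.
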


The proof of this theorem makes use of detailed estimates for the characteristic families and their derivatives.
As we will detail below, we let  $\eta(x,t)$ denote the flow $w$.  Here $x$ denotes a particle label, and $\eta(x,t)$ provides the location of
$x$ at time $t$; specifically we have the formula   $ \eta(x,t) = x + \int_{-\eps}^t w(\eta(x,s),s) ds$.  Moreover,  we see that
$w(\eta(x,t),t) = e^{- \frac{8}{3} \int_{-\eps}^t a(\eta(x,s),s)ds}w_0(x) $
and hence that 
$$
w(\theta,t) = e^{- \frac{8}{3} \int_{-\eps}^t a(\eta(\eta ^{-1} (\theta,t),s),s)ds}w_0(\eta ^{-1} (\theta,t))  \,.
$$
It follows that  a power series expansion of $w(\theta,T_*)$ about  the blowup location $\theta= \xi_*$ requires a series expansion for the inverse flow
map $\eta ^{-1}(\theta,t)$ about $\theta= \xi_*$.   
The formula for $\eta ^{-1} (\theta,T_*)$ requires us to first compute $\eta(x, T*)$, and then invert the polynomial equation $\eta(x,T_*) =\theta$ for $\theta$ in a neighborhood
of $\xi_*$.

We shall write  $\eta(x,T_*)$ as a Taylor series about the blowup label $x_*$.  To do so, we prove the existence of a unique blowup trajectory
$\eta(x_*,t)$ which converges to $\xi_*$, and study the behavior of $\p_x^\gamma\eta(x,t)$,   $\gamma \le 4$.   Our analysis makes use of 
self-similar coordinates only for the purpose of isolating the unique blowup  trajectory $\eta(x_*,t)$, whereas all of our estimates  for 
$\p_x^\gamma\eta(x,t)$,  $\p_\theta^\gamma w(\eta(x,t),t)$, and $\p_\theta^\gamma a(\eta(x,t),t)$ are obtained in physical coordinates.
With these bounds in hand, we establish the Taylor expansion for $\eta(x,t)$ about the blowup label $x_*$, proceed
to invert this relation, and then obtain a detailed description of the pre-shock.

\subsection{Changing variables to modulated self-similar variables}
We shall make use of self-similar coordinates $(y,s)$  that rely upon time dependent  modulation functions $\kappa(t)$, $\xi(t)$ and $\tau(t)$, which are
introduced to enforce three pointwise constraints.   Specifically, 
we map the physical coordinates $(\theta,t)$ to  self-similar coordinates $(y,s)$ by the following transformations:
\begin{align*} 
s(t):=-\log(\tau(t)-t)\,,
\qquad 
y(\theta,t):=\tfrac{\theta-\xi(t)}{ (\tau(t)-t)^{\frac32}}  = e^{\frac{3}{2}s} (\theta-\xi(t)) \,.
\end{align*} 
It follows that
\begin{align}
\tau-t =e^{-s}\,, \ \tfrac{ds}{dt}= (1-\dot \tau)e^s\,,  \label{dsdt}
\end{align} 
and thus
\begin{align} 
\partial_{  \theta}   y = e^{\frac32s}\,, \  \partial_t y = \tfrac{- \dot \xi}{(\tau -t)^ {\frac{3}{2}} }   -\tfrac{3(\dot\tau -1)(\theta-\xi)}{2(\tau-t)^ {\frac{5}{2}} }
=- e^{\frac{3}{2}s} \dot\xi + \tfrac{3}{2}(1-\dot\tau) y e^s\,. 
\label{xt-to-ys}
\end{align}
We then transform the physical variables $(a,w)$ to self-similar  variables $(A,W)$ by
\begin{align} 
 w(\theta,t)=e^{-\frac s2}W(y,s)+\kappa (t)\,, \quad
a(\theta,t)=A(y,s) \,.   \label{w-to-W}
\end{align} 
Introducing the parameter
\begin{align} 
\beta_\tau = \beta_\tau(t) = \tfrac{1}{1-\dot \tau(t)}\,, \label{beta-tau}
\end{align} 
a simple computation shows that $(W,A)$ solve
\begin{subequations} 
\label{shallow-water-SS}
\begin{align}
\p_s W - \tfrac{1}{2} W + ( \tfrac{3}{2}y+\beta_\tau W + e^{\frac{s}{2}} \beta_\tau (\kappa -\dot \xi)) \p_y W &=  -e^{-\frac s2} \beta_\tau \dot\kappa
- \tfrac{8}{3}e^{-\frac s2} \beta_\tau A (e^{-\frac s2}W + \kappa )  \,, \label{eq:W-SS} \\
\p_s A + ( \tfrac{3}{2}y+ \tfrac{2}{3} \beta_\tau W + e^{\frac{s}{2}} \beta_\tau ( \tfrac{2}{3} \kappa -\dot \xi) ) \p_y A &=  - \tfrac{4}{3}\beta_\tau  e^{-s}A^2   + \tfrac{1}{6}\beta_\tau e^{-s} (e^{-\frac s2}W + \kappa ) ^2\,,  \label{eq:A-SS}
\end{align}
\end{subequations} 
with initial conditions given at self-similar time $s=-\log\eps$ by
\begin{align} 
W(y,-\eps)= \eps^ {-\frac{1}{2}} (w_0(\theta) - \kappa_0) \,, \qquad  A(y,-\eps)=a_0(\theta) \,,
\label{eq:data:dump}
\end{align} 
and
\begin{align} 
\kappa(-\eps) = \kappa_0\,, \qquad  \tau(-\eps)=0\,, \qquad \xi(-\eps)=0 \,. \label{IC-mods}
\end{align} 
For notational brevity, we introduce the transport velocities and forcing functions 
\begin{subequations}
\begin{alignat}{2}
\mathcal{V} _W &:=  \tfrac{3}{2}y+\beta_\tau W + e^{\frac{s}{2}} \beta_\tau (\kappa -\dot \xi) \,, 
\qquad  && F_W :=- \tfrac{8}{3}e^{-\frac s2} \beta_\tau A (e^{-\frac s2}W + \kappa )
\label{eq:VW:def}\\ 
\mathcal{V} _A &:= \tfrac{3}{2}y+ \tfrac{2}{3} \beta_\tau W + e^{\frac{s}{2}} \beta_\tau (\tfrac{2}{3}\kappa -\dot \xi) \,, 
\qquad && F_A := - \tfrac{4}{3}\beta_\tau  e^{-s}A^2   + \tfrac{1}{6}\beta_\tau e^{-s} (e^{-\frac s2}W + \kappa ) ^2 \,,
\label{eq:VA:def}
\end{alignat}
\end{subequations}
so that \eqref{shallow-water-SS} takes the form
\begin{subequations}
\label{eq:ssWZA}
\begin{align}
\partial_s W-\tfrac12 W+ \mathcal{V} _W \p_yW  
&=- \beta_\tau  e^{-\frac s2} \dot \kappa+  F_W \,, \label{e:W_eq}\\
 \partial_s A+\mathcal{V} _A \p_yA &= F_A \,. \label{e:A_eq}
\end{align}
\end{subequations} 
We shall also consider the perturbation of the stable self-similar stationary solution $\bar W(y)$ of the Burgers equation\footnote{Recall that $\bar W(y)$ is the solution of   $ -\frac 12 \bar W + \left( \frac{3y}{2} + \bar W \right) \partial_y \bar W = 0$ and has an explicit formula which is obtained by inverting the cubic 
polynomial $\bar W^3 + \bar W = -y$.}; the function 
 $\tilde W = W-\bar W$ solves
\begin{align} 
&\p_s \tilde W + \left(- \tfrac{1}{2} +  \beta_\tau  \p_y \bar W +  \tfrac{8}{3} e^{-s} \beta_\tau A\right) \tilde W 
+\mathcal{V} _W \p_{y} \tilde W  
=(1- \beta_\tau ) \bar W \p_y \bar W - \tfrac{8}{3} e^{-s} \beta_\tau A \bar W  -e^{-\frac s2} \beta_\tau \dot\kappa
 \,.
\label{tildeW-gamma}
\end{align}

\subsection{Bounds on the solution}
In order to obtain the necessary quantitative bounds on characteristics and their derivatives, we shall make use of the bounds on $W$ provided by
Theorem 4.4 of \cite{BuShVi2019a} for the shock formation process.   As such, we give a precise description of the  initial data used for
the asymptotically self-similar shock formation.

\subsubsection{Initial data in self-similar variables}
\label{subsec:support}
It is convenient to describe the initial data in terms of the self-similar variables
$(W(\cdot ,-\log\eps),A(\cdot ,-\log\eps))$ defined in \eqref{eq:data:dump}, which may be equivalently written as
\begin{align} 
w_0(\theta) = \eps^ {\frac{1}{2}} W(y, -\log\eps) + \kappa_0 \,, 
\qquad 
a_0(\theta) = A(y,-\log \eps) \,.  \label{w0-to-W0}
\end{align} 
We choose $w_0\in C^4( \mathbb{T}  )$ so that for all $\theta \in \TT$:
\begin{align} 
\tfrac{7}{8} \kappa_0 \le w_0(\theta) \le  \tfrac{9}{8} \kappa_0 \,,
\qquad
\mbox{where}
\qquad  
\kappa_0 \ge 3 \,.
\label{w0-lowerupper}
\end{align} 
We assume that the initial data $(W(\cdot ,-\log\eps),A(\cdot ,-\log\eps))$ has compact  support in the set
\begin{align}
\XXX_0:=\left\{\abs{y}\leq  2\eps^{-1}   \right\} \,.
\notag
\end{align}
In order to obtain  stable shock formation, we 
require that\footnote{As shown in Corollary 4.7 in \cite{BuShVi2019a}, the conditions \eqref{eq:fat:cat} on the initial data are satisfied by any data in an open set (within azimuthal symmetry) in the $C^4$ topology, as long as a global non-degenerate minimal slope is attained at a point.}
\begin{align}
W(0,-\log \eps) = 0\, , 
\quad 
\p_y W(0,-\log \eps) = -1\,,
\quad
\p_y^2 W(0,-\log \eps) = 0\,.
\label{eq:fat:cat}
\end{align}
As in~\cite{BuShVi2019b}, there exists a sufficiently large parameter $M = M(\kappa_0)\geq 1$ (which is in particular independent of $\eps$), a small length scale $\ell$, and a large length scale $\LLL$  by 
\begin{align}
\ell = (\log M)^{-5}\,, \qquad
\LLL =\eps^{-\frac{1}{10}}  \, .
\label{l-and-L}
\end{align}
The initial datum of $\tilde W = W - \bar W$ is given by
$$
\tilde W (y,-\log\eps)= W(y,-\log \eps) - \bar W(y) = \eps^{-\frac 12}\left(w_0(\theta) - \bar w_\eps(\theta) \right) 
=: \eps^{-\frac 12} \tilde w_0(\theta) \,,
$$
where we have defined $\bar w_\eps(\theta) = \eps^ {\frac{1}{2}} \bar W( \eps^ {-\frac{3}{2}} \theta) + \kappa_0$.
We consider data such  that for $\abs{y} \leq \LLL$,
\begin{subequations}
\label{ss-ic-1}
\begin{alignat}{2}
(1+y^2)^{-\frac 16} \sabs{\tilde W(y,-\log \eps)} &\leq \eps^{\frac{1}{10}} \,,
\label{eq:tilde:W:zero:derivative}
\\
(1+y^2)^{\frac 13} \sabs{\p_y \tilde W(y,-\log \eps)} &\leq \eps^{\frac{1}{11}}
\label{eq:tilde:W:p1=1}
\,,
\end{alignat}
\end{subequations}
for $\abs{y} \leq \ell$  (equivalently $\abs{\theta} \le \eps^ {\frac{3}{2}} \ell$), we assume that
\begin{align}
 \sabs{\p_y^4\tilde W(y,-\log \eps)} \leq \eps^{\frac 58} 
 \qquad \Leftrightarrow \qquad 
 \abs{\p_\theta^4\tilde w_0(\theta)} &\leq \eps^ {-\frac{39}{8}}  
\label{eq:tilde:W:5:derivative}
\, ,
\end{align}
and at $y=0$, we have that
\begin{align}
\sabs{\p_y^3 \tilde W(0,-\log \eps)} \leq   \eps^{\frac{3}{8}}   
\qquad \Leftrightarrow \qquad 
\abs{\p_\theta^3\tilde w_0(0)} &\leq \eps^ {-\frac{29}{8}}  
\,.
\label{eq:tilde:W:3:derivative:0} 
\end{align}
For $y$ in the region  $\{ \abs{y} \geq \LLL \} \cap \XXX(-\log\eps)$, we suppose that
\begin{subequations}
\begin{align}
  (1+y^2)^{-\frac{1}{6}}\abs{W(y,-\log \eps)} &\leq  1+ \eps^ {\frac{1}{11}} \,,
 \label{eq:rio:de:caca:1}
\\
  (1+y^2)^{\frac 13} \abs{\p_y W(y,-\log \eps)} &\leq  1 +  \eps^{\frac{1}{12}} \,,
   \label{eq:rio:de:caca:2}
\end{align}
\end{subequations}
while for  $W_y$, globally for all $y \in \XXX(-\log\eps)$ we shall assume that 
\begin{subequations}
\begin{align} 
\sabs{\p_yW(y,-\log \eps)} &\le  (1+y^2)^{-\frac 13}  \,,  
\label{eq:W:gamma=2:p1} \\
\sabs{\partial_y^2 W(y,-\log\eps)} & \le  7 (1+y^2)^{-\frac 13} \,, \label{d2W} \\
\sabs{\partial_y^\gamma W(y,-\log\eps)} & \les (1+y^2)^{-\frac 13} \ \ \text{ for } \ \ \gamma=3,4 \,. \label{d3W}
\end{align} 
\end{subequations}

For the initial conditions of $A(y,-\log \eps) =  a_0 (\theta)$, we  require that $ a_0 \in C^4(\mathbb{T}  )$, and that
\begin{align}
\norm{a_0}_{C^0} \le  \epsilon \,, \qquad  \norm{\p_x a_0}_{C^0} \le \tfrac{\kappa_0}{14}    \,, \qquad \norm{a_0}_{C^4} \les 1\,.    \label{eq:A_bootstrap:IC} 
\end{align}

\subsubsection{Bounds on $W$ and $A$}
The following facts are established in \cite{BuShVi2019b}.
The  spatial support of  $(W,A)$ is the $s$-dependent ball 
\begin{align}
\XXX(s):=\left\{\abs{y}\leq  2\eps^{\frac12}  e^{\frac 32s} \right\} \text{ for all } s \ge -\log\eps \,.
\label{eq:support}
\end{align}
It follows that
\begin{equation}
1+y^2 \leq 40 \eps e^{3s} \qquad \Leftrightarrow \qquad (1+y^2)^{\frac{1}{3}} \leq 4 \eps^{\frac 13}  e^{s} \,.
\label{support-bound}
 \end{equation}
We have the following bounds for $W(y,s)$ for all $y \in \mathbb{R}  $ and $s \ge -\log \eps$:
\begin{align}
 \sabs{\partial^{\gamma} W(y,s)}&
\leq   \begin{cases}
(1+ 2\eps^{\frac{1}{20}}) (1+y^2)^{\frac 16}, & \mbox{if } {\gamma}  = 0\,,  \\
2 (1+y^2)^{-\frac 13}, & \mbox{if }  \gamma = 1 \,, \\
M^{\frac{1}{3}}  (1+y^2)^{-\frac 13}   , & \mbox{if }  \gamma = 2 \,.
\end{cases}\label{Wbootstrap}
\end{align}
For the perturbation function $\tilde W(y,s) = W(y,s) - \bar W(y)$ and for $\abs{y}\leq \LLL= \eps^{-\frac{1}{10}}    $,
\begin{subequations}
\begin{align}
\sabs{ \tilde W(y,s)} &\leq 2\eps^{\frac{1}{11}} (1+y^2)^{\frac 16} \,, \label{loco1} \\
\sabs{\p_y \tilde W(y,s)} &\leq 2\eps^{\frac{1}{12}} (1+y^2)^{-\frac 13} \,,  \label{loco2} 
\end{align}
\end{subequations}
while for $\abs{y} \leq \ell=(\log M) ^{-5}$, 
\begin{subequations}
\begin{align}
\sabs{\p^\gamma \tilde W(y,s)} & 
\le (\log M)^4 \eps^ {\frac{1}{10}} \abs{y}^{4-\gamma} + M \eps^ {\frac{1}{4}} \abs{y}^{3-\gamma} 
\,, &&  \gamma \leq 3\,, 
 \label{tildeWbootstrap0}\\
\sabs{\p^4 \tilde W(y,s)} &\leq \eps^{\frac{1}{10}} \,, && 
\label{d4tildeWbootstrap}
\end{align}
\end{subequations}
and at $y=0$,  
\begin{align}
\sabs{\p^3\tilde W(0,s)} &\leq \eps^{\frac{1}{4}}  \,,\label{bootstrap:Wtilde3:at:0}
\end{align} 
for all $s\geq -\log \eps$. 
With $w_0$ satisfying \eqref{w0-lowerupper}, 
as shown in \cite{BuShVi2019a} via the maximum principle, we have that
\begin{align} 
\tfrac{4\kappa_0}{5} \le w(\theta,t) \le \tfrac{5\kappa_0}{4} \,, \ \ \ t\in [-\eps,T_*)   \,.  \label{w-lowerupper}
\end{align}

\subsubsection{Bootstrap assumptions on $\p_\theta^\gamma a$, $\gamma \le 2$}
Bounds for $a$ and $\p_\theta a$ were previously established in \cite{BuShVi2019b}. In this paper, we revisit these estimates
and establish the following sharp bootstrap bounds 
\begin{subequations} 
\label{a-boot}
\begin{align}
\abs{a(\theta,t)} &\le  2 \kappa_0^2  \eps\,, \label{dx0a-boot} \\  
\abs{\partial_\theta a(\theta,t)}  &\le  2\kappa_0 \,,   \label{dx1a-boot}  \\
\sabs{\p_\theta^2 a(\theta,t)}  & \le 12 e^{s} \,, \label{dx2a-boot} 
\end{align}
\end{subequations} 
for all $ \theta\in \mathbb{T}  $ and $t\in[-\eps, T_*)$.
The bootstrap bounds~\eqref{a-boot} are closed in Section~\ref{sec:abounds} below.

\subsection{Evolution equations and bounds for the modulation variables}
The modulation variables  $\tau(t)$, $\xi(t)$, and $\kappa (t)$ are used to impose the following constraints at $y=0$
\begin{align}
W(0,s)=0,\qquad \p_yW(0,s)=-1,\qquad \p_y^2W(0,s)=0 \,.
\label{eq:constraints}
\end{align}
Imposing $\p_yW(0,s)=-1$ in the first derivative of \eqref{eq:W-SS}  shows that
\begin{subequations}
\label{mod-evo}
\begin{align}
\label{eq:tau:dot}
\dot \tau (t) =    e^{-\frac s2}   \tfrac{8}{3} \left(\kappa(t) A_y(0,s) + e^{-\frac s2}A(0,s) \right)    \,.
\end{align}
Next, requiring that $\p_y^2W(0,s)=0$ holds, by taking the second derivative of \eqref{eq:W-SS}  we obtain
\begin{align}
\dot \xi (t) - \kappa(t)
&=- \tfrac{8}{3}\tfrac{ e^{- s}}{W_{yyy}(0,s) } \left(2e^{-\frac{s}{2}}  A_y(0,s) -  \kappa A_{yy}(0,s) \right) \,,
\label{eq:dot:xi}
\end{align} 
and finally with $W(0,s)=0$ used in \eqref{eq:W-SS}, we find that
\begin{align}
\dot \kappa (t)
&=  -\tfrac{8}{3} \left(  \kappa(t) A(0,s)+  \kappa \tfrac{A_{yy}(0,s)  }{W_{yyy}(0,s)}  - 2e^{-\frac{s}{2}}\tfrac{A_y(0,s)   }{W_{yyy}(0,s)} \right)  
=  -\tfrac{8}{3} \kappa(t) A(0,s) - e^s( \dot \xi -\kappa)(t)
\,.
\label{eq:dot:kappa}
\end{align}
\end{subequations}

The equations \eqref{mod-evo} are ODEs for the modulation functions.  From
\eqref{a-boot}, it follows that for $\eps$ taken sufficiently small, for all $t\in [-\eps,T_*)$ we have
\begin{align} 
\sabs{\dot \tau(t)} &  \le 9 \kappa_0^2 \eps e^{-s}  \,, \qquad
\sabs{\dot \kappa(t)}   \le  6 \kappa_0^3 \eps \,, \qquad
\sabs{\dot \xi(t)}   \le \kappa_0 + 8 \kappa_0^2\eps^2 \,.
\label{tau-kappa-xi-bounds}
\end{align} 
For the last bound, we have used that 
since  $\int_{-\eps}^{T_*} (1 -\dot \tau(t')) dt'= \eps$, then
\begin{align}
\abs{T_*}  \le 7 \kappa_0^2\eps^3 \,. \label{T*-bound}
\end{align} 
It follows that
\begin{align} 
\abs{\kappa - \kappa_0} \le 7 \kappa_0^3 \eps^2  \,, \qquad 
\abs{\tau} \le 7 \kappa_0^2 \eps^3 \,,\qquad 
\abs{\xi} \le  2 \eps \kappa_0 \,, \qquad 
\abs{1 - \beta_\tau } \le 7\kappa_0^2 \eps e^{-s} \,.  \label{mod2-bound}
\end{align}

\subsection{Characteristics in physical variables $(x,t)$}

\subsubsection{$3$-characteristics $\eta$ associated $\lambda_3$.}
We let $\eta(x,t)$ denote the characteristics of  $\lambda_3=w$ so that
\begin{subequations} 
\label{eta-eqn}
\begin{alignat}{2}
\p_t \eta(x,t) &= w(\eta(x,t),t) \  && \text{ for }  \ -\eps< t <T_* \,, \label{pt-eta} \\
\eta(x,-\eps)& = x \,,
\end{alignat} 
\end{subequations} 
for all labels $x$.

\subsubsection{$2$-characteristics $\phi$ associated to $\lambda_2$.}
We let $\phi(x,t)$ denote the characteristics of $\lambda_2= {\tfrac{2}{3}} w$ so that 
\begin{subequations} 
\label{phi-eqn}
\begin{alignat}{2}
\p_t \phi(x,t) &= {\tfrac{2}{3}} w(\phi(x,t),t) \  && \text{ for }  \ -\eps< t <T_* \,, \label{pt-phi} \\
\phi(x,-\eps)& = x \,,
\end{alignat} 
\end{subequations} 
for all labels $x$.

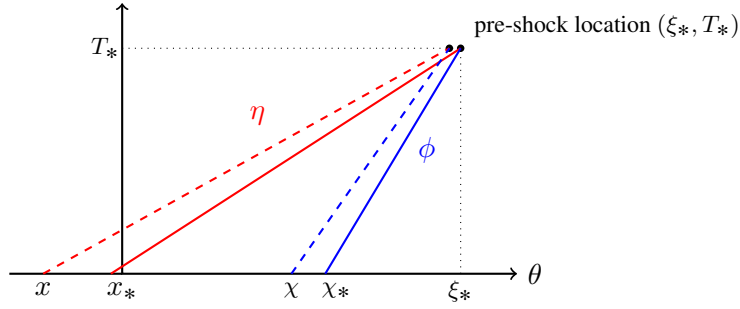
\begin{figure}[htb!]
\centering
\begin{tikzpicture}[scale=1.5]
    \draw [<->,thick] (0,2.4) node (yaxis) [above] {}
        |- (3.5,0) node (xaxis) [right] {$\theta$};
    \draw[black,thick] (0,0)  -- (-1,0) ;

\node at (3,2)[circle,fill,inner sep=1pt]{};
\node at (2.9,2)[circle,fill,inner sep=1pt]{};

\draw[black,dotted] (0,2)  -- (3,2) ;
\draw[black] (-0.15,2) node {{\footnotesize $T_*$}}; 

\draw[black,dotted] (3,0)  -- (3,2) ;
\draw[black] (3,-.15) node {{\footnotesize $\xi_*$}}; 

\draw[black] (4.3,2.2) node { {\footnotesize pre-shock location $(\xi_*,T_*)$}}; 
     
\draw[name path=A,red, thick] (-.1,0) -- (3,2) ;
\draw[name path=A,red, thick,dashed] (-.7,0) -- (2.9,2) ;
      
\draw[red] (1.2,1.4) node { $\eta$};  

\draw[blue] (2.7,1.1) node { $\phi$}; 

\draw[blue, thick,dashed] (1.5,0) -- (2.9,2) ;
\draw[black] (1.9,-.15) node {{\footnotesize $\chi_*$}};

\draw[blue, thick] (1.8,0) -- (3,2) ;
\draw[black] (1.5,-.15) node {{\footnotesize $\chi$ }};

\draw[black] (-0.7,-.13) node {$x$};
\draw[black] (-0.0,-.15) node {$x_*$};
\end{tikzpicture}

\vspace{-0.2cm}
\caption{\footnotesize Characteristic evolution during the pre-shock formation. The blowup point is $\xi_*$, the blowup time is $T_*$, and the blowup label $x_*$ satisfies $\eta(x_*,T_*) = \xi_*$. In red, we display the $3$-characteristics $\eta(\cdot,t)$ originating from the blowup label $x_*$ and a nearby label $x$, while in blue we display the $2$-characteristics $\phi(\cdot,t)$ originating from the label $\chi_* = \phi^{-1}(\xi_*,T_*)$ and a nearby label $\chi$.}
\end{figure}

\subsubsection{Identities involving the $3$-characteristics $\eta$}

From \eqref{eta-eqn} it follows that 
\begin{align} 
\eta(x,t) = x + \int_{-\eps}^t w(\eta(x,t'),t') dt' \label{eq00}
\end{align} 
and from \eqref{xland-w} that 
$$\p_t w(\eta(x,t),t) = - \tfrac 83 a(\eta( x, t),t) w (\eta(x,t),t)\,.$$
We define the integrating factor
\begin{align} 
I_t(x) & =  e^{-\frac{8}{3}  \int_{-\eps}^t a(\eta(x,r),r)dr} \,, \label{Is} 
\end{align} 
Integration yields
\begin{align} 
w(\eta(x,t),t) = I_t(x) w_0(x) \,. \label{eq0}
\end{align} 
We make use of the following identities:
\begin{subequations} 
\label{dx-I-identities}
 \begin{align} 
I_\tau'  I_\tau^{-1} & = -\tfrac{8}{3}    \int_{-\eps}^\tau a' \circ \eta \  \eta_x dr   \,, \label{Is1} \\
 I_\tau''   I_\tau^{-1} & =    \Bigl( \tfrac{8}{3}  \int_{-\eps}^\tau a' \circ \eta \ \eta_x dr 
\Bigr)^2 - \tfrac{8}{3}  \int_{-\eps}^\tau  \bigl( a'' \circ \eta \ \eta_x^2 + a' \circ \eta \ \eta_{xx} \bigr) dr 
 \label{Is2}\\
 I_\tau'''  I_\tau^{-1}  & =  -\tfrac{512}{27} \bigl(\int_{-\epsilon }^\tau a' \circ \eta \ \eta_x\, dr\bigr)^3
+\tfrac{64}{3}   \bigl(\int_{-\epsilon }^\tau a' \circ \eta \ \eta_x dr\bigr)
 \int_{-\epsilon }^\tau \bigl(a'' \circ \eta \ \eta_x^2 + a' \circ \eta \ \eta_{xx}\bigr) dr \notag \\
 & \qquad -\tfrac{8}{3}  \int_{-\epsilon }^\tau \bigl( a''' \circ \eta \ \eta_x^3 + 3 a'' \circ \eta \ \eta_x \eta_{xx} + a' \circ \eta \ \eta_{xxx}\bigr) dr  
  \,,  \label{Is3} \\
 I_\tau''''   I_\tau^{-1} 
 & 
 =  
\tfrac{4096}{81} \bigl(\int_{-\epsilon }^\tau a' \circ \eta \ \eta_x\, dr\bigr)^4
-\tfrac{1024}{9}   \bigl(\int_{-\epsilon }^\tau a' \circ \eta \ \eta_x dr\bigr)^2
 \int_{-\epsilon }^\tau \bigl(a'' \circ \eta \ \eta_x^2 + a' \circ \eta \ \eta_{xx}\bigr) dr  \notag \\
 & \qquad
  +\tfrac{64}{3}   \bigl(\int_{-\epsilon }^\tau  ( a'' \circ \eta \eta_x^2 +  a' \circ \eta \ \eta_{xx} )dr\bigr)^2 \notag \\
 & \qquad
  +\tfrac{256}{9}   \bigl(\int_{-\epsilon }^\tau   a' \circ \eta \ \eta_{x}  dr\bigr)
   \int_{-\epsilon }^\tau \bigl(  a''' \circ \eta \ \eta_x^3 + 3 a'' \circ \eta \ \eta_x \eta_{xx} + a' \circ \eta \ \eta_{xxx} \bigr) dr
 \notag \\
 & \qquad 
 -\tfrac{8}{3}  \int_{-\epsilon }^\tau \bigl( a'''' \circ \eta \ \eta_x^4 
 + 6 a''' \circ \eta \ \eta_x^2 \eta_{xx} 
 + 3 a'' \circ \eta \ \eta_{xx}^2
 + 4 a'' \circ \eta \ \eta_x \eta_{xxx} 
 + a' \circ \eta \ \eta_{xxxx}\bigr) dr   \, \label{Is4}
 \end{align} 
\end{subequations} 
and from \eqref{eq00},
\begin{subequations} 
\label{derivative-eta-identities}
\begin{align} 
 \eta& = x + w_0  \int_{-\eps}^t I_\tau d\tau \,, \label{eq1} \\
\p_x \eta & = 1 + w_0'  \int_{-\eps}^t I_\tau d\tau  +w_0  \int_{-\eps}^t I'_\tau d\tau    \,, \label{eq2} \\
\p_x^2 \eta & = w_0'' \int_{-\eps}^t I_\tau d\tau +2 w'_0\int_{-\eps}^t I'_\tau d\tau   + w_0 \int_{-\eps}^t I''_\tau d\tau    \,, \label{eq3} \\
\p_x^3 \eta & =  w_0''' \int_{-\eps}^t I_\tau d\tau  +3 w''_0 \int_{-\eps}^t I'_\tau d\tau   +3 w'_0 \int_{-\eps}^t I''_\tau  d\tau 
 + w_0 \int_{-\eps}^t I'''_\tau d\tau    \,, \label{eq4} \\
\p_x^4\eta & =  w_0'''' \int_{-\eps}^t I_\tau d\tau +  4w_0''' \int_{-\eps}^t I_\tau'd\tau  +  6w_0'' \int_{-\eps}^t I_\tau''d\tau   +  4w_0'  \int_{-\eps}^t I_\tau''' d\tau 
+w_0 \int_{-\eps}^t I_s''''ds   \,. \label{eq5}
\end{align} 
\end{subequations}

\subsubsection{Identities involving the $2$-characteristics $\phi$}
We write \eqref{xland-w} as
\begin{align} 
\p_t w + {\tfrac{2}{3}} w \p_x w + {\tfrac{1}{3}} w \p_x w = - \tfrac{8}{3}  a w \,, \label{w-xland2}
\end{align} 
and define the Lagrangian variables
$$
\mathcal{W} = w \circ \phi \,, \ \ \ 
\mathcal{V} = {\tfrac{2}{3}} w \circ \phi = \p_t \phi \,.
$$
Then it follows from the chain-rule that \eqref{w-xland2} can be written as
\begin{align} 
\p_t \mathcal{W}  + \tfrac{1}{2} (\p_x\phi)^{-1} \p_x \mathcal{V} \mathcal{W} = - \tfrac{8}{3} \mathcal{W}  \, a \circ \phi \,.   \label{f-eqn}
\end{align} 
We multiply \eqref{f-eqn} by $(\p_x \phi)^ {\frac{1}{2}} $ to find that
$$
\p_t \big(  (\p_x \phi )^ {\frac{1}{2}} \mathcal{W} \big)  = - \tfrac{8}{3} \big( (\p_x \phi ) ^ {\frac{1}{2}} \mathcal{W} \big) \, a \circ \phi \,,
$$
and hence that
\begin{align} 
\p_x \phi(x,t) 
=  \tfrac{w^2_0(x)}{w^2(\phi(x,t),t)}e^{-{\frac{16}{3}} \int_{-\eps}^t a(\phi(x,s),s)ds}  \,.  
\label{px-phi-great}
\end{align} 
It follows from \eqref{w0-lowerupper}, \eqref{w-lowerupper},   \eqref{a-boot}, and since is $\eps$ small enough, that 
\begin{align} 
\tfrac{12}{25 }  \le \p_x \phi(x,t) \le 2\,, \quad \tfrac{1}{2}  \le \p_x \phi^{-1} (x,t) \le \tfrac{25}{12}
  \,, \qquad t \in [-\eps,T_*) \,.   \label{phix-crude}
\end{align} 
Differentiating \eqref{pt-phi}, we see that $\p_t \p_x \phi =  \tfrac{2}{3}   \p_x w \circ \phi \ \p_x \phi$ and that $\p_x \phi(x,-\eps)=1$.   Hence we have that
\begin{align} 
\tfrac{12}{25}  \le e^{{\frac{2}{3}} \int_{-\eps}^t \p_\theta w(\phi(x,s),s) ds} \le 2\,, \quad  \tfrac{1}{2}  \le e^{-{\frac{2}{3}} \int_{-\eps}^t \p_\theta w(\phi(x,s),s) ds} \le \tfrac{25}{12}
\,, \qquad t \in [-\eps,T_*) \,.  \label{wx-phi-crude}
\end{align}

Differentiating  \eqref{px-phi-great}, we have that
\begin{align} 
\p_x^2 \phi(x,t) 
&=
e^{-{\frac{16}{3}}  \int_{-\eps}^t a(\phi(x,s),s)ds} \tfrac{w^2_0(x)}{w^2(\phi(x,t),t)} \Bigl( -{\tfrac{16}{3}}\int_{-\eps}^t \p_\theta a \circ \phi \, \phi_x ds + 2\tfrac{w_0'}{w_0 } 
-2 \tfrac{\p_\theta w \circ \phi \ \phi_x }{w \circ \phi} \Bigr) \notag\\
&=
\p_x \phi(x,t) \Bigl( -{\tfrac{16}{3}}\int_{-\eps}^t \p_\theta a \circ \phi \, \phi_x ds + 2\tfrac{w_0'}{w_0 } 
-2 \tfrac{\p_\theta w \circ \phi \ \phi_x }{w \circ \phi} \Bigr) 
\,.
 \label{pxx-phi-great}
\end{align} 
Using that $|w_0'(x)|\le \eps^{-1}$, and the bounds \eqref{w0-lowerupper}, \eqref{a-boot}, and \eqref{phix-crude}, we see that
\begin{align} 
\sabs{\p_x^2 \phi(x,t) } \les  \tfrac{1}{\eps} + \sabs{\p_\theta w(\phi(x,t),t)}  \,. \label{dxx-phi-bound0}
\end{align} 

Finally, differentiating \eqref{pxx-phi-great}, 
\begin{align} 
\p_x^3 \phi(x,t)
&= \phi_{xx} \Bigl( -{\tfrac{16}{3}}\int_{-\eps}^t \p_\theta a \circ \phi \, \phi_x ds + 2\tfrac{w_0'}{w_0 } 
-2 \tfrac{w_\theta \circ \phi \ \phi_x }{w \circ \phi} \Bigr) \notag\\
&\qquad 
+ \phi_x
\Bigl( -{\tfrac{16}{3}}\int_{-\eps}^t \bigl( \p_\theta^2 a \circ \phi \, \phi_x^2 + \p_\theta a \circ \phi \, \phi_{xx}\bigr)ds   + 2\tfrac{w_0 w_0''- (w_0')^2}{w_0^2} 
+ 2 \left(\tfrac{\p_\theta  w\circ \phi \ \phi_x}{w \circ \phi}\right)^2 \Bigr) \notag\\
&\qquad\qquad -2 \phi_x \tfrac{\p_\theta^2 w\circ \phi \ \phi_x^2 + \p_\theta w \circ \phi \ \phi_{xx}}{w \circ \phi} 
\,.
 \label{pxxx-phi-great}
\end{align} 
We will make use of the fact that by \eqref{dsdt}, the change of variables formula, and \eqref{best-ever3}, 
\begin{align*} 
\int_{-\eps}^t \abs{ \p_\theta w(\phi(x,t,t')} dt' = \int_{-\log\eps}^s \abs{\p_y W(\Phi_A(y,s'),s')} \beta_\tau ds' \,.
\end{align*} 
As we will show in \eqref{abs-Wy-PhiA}, $\int_{-\log\eps}^s \abs{\p_y W(\Phi_A(y,s'),s')} \beta_\tau ds'  \les 1$. 
Together with \eqref{eq:fat:cat}, \eqref{eq:tilde:W:3:derivative:0}, and  \eqref{a-boot},  we see that
\begin{align} 
\sabs{\p_x^3 \phi(x,t) } 
&\les  \tfrac{1}{\eps^2}  + \sabs{w_0''(x)} + \tfrac{1}{\eps}  \sabs{\p_\theta w (\phi(x,t),t)} + \sabs{\p_\theta w (\phi(x,t),t)}^2
+ \sabs{\p_\theta^2 w (\phi(x,t),t) }  \notag \\
& \qquad\qquad + \int_{-\eps}^t \sabs{\p_\theta^2 a(\phi(x,t'),t')} dt' \,. \label{dxxx-phi-bound0}
\end{align}

\subsection{Characteristics in  self-similar coordinates} 
\subsubsection{$3$-characteristics in self-similar coordinates} 
Having defined the $3$-characteristics $\eta(x,t)$ in \eqref{eta-eqn}, 
we now let $\Phi_W(y,s)$ denote the $3$-characteristic of the transport velocity for $\mathcal{V} _W$ which emanates from the label $y$ so that
\begin{subequations} 
\label{Phi-eqn}
\begin{alignat}{2}
\p_s \Phi_W(y,s) &=
\mathcal{V}_W( \Phi_W(y,s),s) \qquad  && \text{ for }  \ -\log\eps<s< \infty 
\label{Phi-eqn-a} \,, \\
\Phi_W(y,-\log\eps)& = y \,,
\end{alignat} 
\end{subequations} 
where the velocity $\mathcal{V}_W$ is defined in \eqref{eq:VW:def}. 
Before stating the next lemma, we recall from \eqref{IC-mods} that $\xi(-\eps) =0$ and that particle labels are assigned at $t=-\eps \Leftrightarrow s = - \log \eps$.
\begin{lemma}[$3$-characteristics in physical and self-similar coordinates]\label{lem:flow-relation} 
With particle labels related by
\begin{align} 
x = \eps^ {\frac{3}{2}} y\,,  \label{label-relation}
\end{align} 
we have that
\begin{align}
\eta(x,t) =  e^{-\frac{3}{2}s} \Phi_W(y,s) + \xi(t) \,,
\label{best-ever}
\end{align} 
or equivalently 
\begin{align} 
\Phi_W(y,s) = e^{\frac{3}{2}s}\left( \eta(x,t)  - \xi(t)\right) \,.   \label{best-ever2}
\end{align} 
\end{lemma}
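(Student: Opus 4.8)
The proof is a direct change-of-variables verification, closed out by uniqueness for the characteristic ODE. The plan is to define the candidate
\[
\widetilde\Phi(y,s) := e^{\frac{3}{2}s}\bigl(\eta(x,t) - \xi(t)\bigr), \qquad x = \eps^{\frac{3}{2}}y,
\]
where $t=t(s)$ is the inverse of the time rescaling $s(t)=-\log(\tau(t)-t)$, and to show that $\widetilde\Phi$ solves the same initial value problem \eqref{Phi-eqn} as $\Phi_W$. First I would check the initial condition: since $\tau(-\eps)=0$ and $\xi(-\eps)=0$ by \eqref{IC-mods}, the self-similar time $s=-\log\eps$ corresponds to $t=-\eps$, where $e^{\frac32 s}=\eps^{-\frac32}$ and $\eta(x,-\eps)=x$, so $\widetilde\Phi(y,-\log\eps)=\eps^{-\frac32}x=y$, matching $\Phi_W(y,-\log\eps)=y$.

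Next I would differentiate in $s$. By \eqref{dsdt} we have $\frac{dt}{ds}=\beta_\tau e^{-s}$ with $\beta_\tau$ as in \eqref{beta-tau}, so the chain rule together with \eqref{pt-eta} gives
\[
\partial_s \widetilde\Phi = \tfrac{3}{2}e^{\frac32 s}(\eta-\xi) + e^{\frac32 s}\bigl(w(\eta,t)-\dot\xi(t)\bigr)\beta_\tau e^{-s} = \tfrac{3}{2}\widetilde\Phi + e^{\frac{s}{2}}\beta_\tau\bigl(w(\eta,t)-\dot\xi(t)\bigr).
\]
The key observation is that, by the definition of the self-similar spatial variable, the self-similar coordinate of the physical point $\eta(x,t)$ is $y(\eta(x,t),t)=e^{\frac32 s}(\eta-\xi)=\widetilde\Phi$, so by the change of variables \eqref{w-to-W} we have $w(\eta,t)=e^{-\frac{s}{2}}W(\widetilde\Phi,s)+\kappa(t)$. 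Substituting yields
\[
\partial_s\widetilde\Phi = \tfrac{3}{2}\widetilde\Phi + \beta_\tau W(\widetilde\Phi,s) + e^{\frac{s}{2}}\beta_\tau(\kappa-\dot\xi) = \mathcal{V}_W(\widetilde\Phi,s),
\]
which is precisely \eqref{eq:VW:def} evaluated at $\widetilde\Phi$, i.e.\ the ODE \eqref{Phi-eqn-a}.

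Finally, I would invoke uniqueness: $\mathcal{V}_W(\cdot,s)$ is locally Lipschitz in its first argument by the $C^1$ bounds on $W$ in \eqref{Wbootstrap}, and the two time horizons match, since $\tau(t)-t$ decreases from $\eps$ at $t=-\eps$ to $0$ as $t\uparrow T_*$ (using $\dot\tau<1$ and $\int_{-\eps}^{T_*}(1-\dot\tau)\,dt'=\eps$), so $s\uparrow\infty$. Hence $\widetilde\Phi\equiv\Phi_W$ on the whole interval, which is \eqref{best-ever2}; rearranging gives \eqref{best-ever}. There is no genuine obstacle here: the only point needing care is the bookkeeping of the substitution $t\leftrightarrow s$ and verifying that the resulting drift term collapses exactly to $\mathcal V_W$; everything else is immediate from the definitions.
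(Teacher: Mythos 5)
Your proof is correct and is essentially the same change-of-variables verification as the paper's, just run in the opposite direction: the paper shows $e^{-\frac32 s}\Phi_W(y,s)+\xi(t)$ solves the physical ODE $\partial_t\eta=w(\eta,t)$ with the right datum, whereas you show $e^{\frac32 s}(\eta(x,t)-\xi(t))$ solves the self-similar ODE $\partial_s\Phi=\mathcal{V}_W(\Phi,s)$ with the right datum. Your explicit appeal to uniqueness and the matching of time horizons is a nice touch the paper leaves implicit, but the underlying argument is identical.
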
 
\begin{proof}[Proof of Lemma \ref{lem:flow-relation}]
From \eqref{Phi-eqn-a}, we have that
$$
\p_s \left( e^{-\frac{3}{2}s} \Phi_W(y,s)\right) = \left( e^{-\frac{s}{2}} W(\Phi_W(y,s),s) + \kappa - \dot \xi\right)\beta_\tau e^{-s} \,.
$$
Using \eqref{dsdt} and \eqref{beta-tau}, we see that
\begin{align*} 
\p_t \left( e^{-\frac{3}{2}s} \Phi_W(y,s) + \xi\right) = e^{-\frac{s}{2}} W(\Phi_W(y,s),s) + \kappa \,.
\end{align*} 
Then, from \eqref{w-to-W}, we have that $e^{-\frac{s}{2}} W(y,s) + \kappa(t) = w (e^{-\frac{3}{2}s} y + \xi(t),t )$, and hence 
\begin{align*} 
\p_t \left( e^{-\frac{3}{2}s} \Phi_W(y,s) + \xi\right) = w\left(e^{-\frac{3}{2}s}  \Phi_W(y,s) + \xi(t),t\right) \,.
\end{align*} 
On the other hand, from \eqref{pt-eta} we have $\p_t  \eta(x,t) = w(\eta(x,t),t)$, which then proves the identity \eqref{best-ever}.
\end{proof} 

\subsubsection{$2$-characteristics $\Phi_A$ in self-similar coordinates}
Having defined the $2$-characteristics $\phi$ in $(x,t)$ coordinates, we now define their self-similar counterparts in $(y,s)$ coordinates.
We define the $2$-characteristics $\Phi_A$ by
\begin{subequations} 
\label{PhiA-eqn}
\begin{alignat}{2}
\p_s \Phi_A(y,s) &=\mathcal{V}_A(\Phi_A(y,s),s) \qquad  && \text{ for }  \ -\log\eps<s< \infty 
\label{PhiA-eqn-a} \,, \\
\Phi_A(y,-\log\eps)& = y \,.
\end{alignat} 
\end{subequations} 
where the transport velocity $\mathcal{V} _A$ is given in  \eqref{eq:VA:def}.     In the same way that we established \eqref{best-ever2}, we have that
\begin{align} 
\Phi_A(y,s) = e^{\frac{3}{2}s}\left( \phi(x,t)  - \xi(t)\right) \,,   \label{best-ever3}
\end{align} 
where $x = \eps^ {\frac{3}{2}} y$.
The following integral bound was proven in Corollary 8.4 in \cite{BuShVi2019b}:
\begin{align} 
\sup_{y \in \XXX(-\log\eps) }\int_{-\log\eps}^s \abs{W_y( \Phi_A(y,s'),s')} ds' &  \les 1  \label{abs-Wy-PhiA} \,.
\end{align} 

\subsubsection{The unique blowup trajectory associated to $3$-characteristics}
A basic advantage of the use of self-similar coordinates is that the blowup trajectory can be isolated. In particular, 
all but one of the trajectories $\Phi_W(y,s)$ ``eventually escape'' exponentially fast towards infinity.  

\begin{lemma}[The unique blowup trajectory]\label{lem:Phi-rate}
 There exists a unique blowup label $y_*$ such that 
$$\Phi_W(y_*,s) =e^{\frac{3}{2}s}\left( \eta(x_*,t)  - \xi(t)\right)$$
 is the unique trajectory which converges to  $y=0$ as $s \to \infty $. 
 Moreover,
\begin{align} 
\abs{\Phi_W(y_*,s)}\le 20\kappa_0  e^{- {\frac{5}{2}} s}  \qquad  \operatorname{ for } \operatorname{ all } s \ge -\log\eps \,,
\label{Phi-rate}
\end{align} 
and 
\begin{align} 
|y_*| \le 20\kappa_0 \eps^ {\frac{5}{2}}  \qquad \Leftrightarrow \qquad |x_*| \le 20\kappa_0 \eps^4  \,.  \label{y*-bound}
\end{align}  
\end{lemma}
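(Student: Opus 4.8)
The plan is to work in the self-similar equation~\eqref{eq:W-SS} for $W$ along the characteristic flow $\Phi_W$. First I would recall from Lemma~\ref{lem:flow-relation} that $\Phi_W(y,s) = e^{\frac32 s}(\eta(x,t)-\xi(t))$, so the claim amounts to identifying the unique physical characteristic whose shifted, rescaled position stays bounded (indeed decays) rather than escaping to infinity. The mechanism is the instability of $y=0$ under the linearized self-similar transport: from~\eqref{eq:VW:def} and the constraints~\eqref{eq:constraints}, at $y=0$ we have $\mathcal V_W(0,s) = e^{\frac s2}\beta_\tau(\kappa-\dot\xi)$, and by~\eqref{eq:dot:xi} together with the bounds~\eqref{a-boot}, \eqref{bootstrap:Wtilde3:at:0}, \eqref{tau-kappa-xi-bounds} this quantity is exponentially small, of size $\OO(e^{-s})$ or better. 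Differentiating $\mathcal V_W$ in $y$ and using $\p_y W(0,s) = -1$ from~\eqref{eq:constraints} gives $\p_y \mathcal V_W(0,s) = \tfrac32 + \beta_\tau \p_y W(0,s) = \tfrac32 - \beta_\tau \approx \tfrac12 > 0$; this is the positive Lyapunov exponent that drives generic trajectories away from the origin. Concretely I would write the ODE $\tfrac{d}{ds}\Phi_W(y,s) = \mathcal V_W(\Phi_W(y,s),s)$, Taylor expand $\mathcal V_W$ about $Y=0$ using the $C^2$ bound on $W$ from~\eqref{Wbootstrap} (to control $\p_y^2 \mathcal V_W = \beta_\tau \p_y^2 W$, which on the relevant shrinking scale is $\OO(1)$), to obtain
\begin{align}
\tfrac{d}{ds}\Phi_W(y,s) = \big(\tfrac32 - \beta_\tau\big)\Phi_W(y,s) + \OO(e^{-s}) + \OO\big(\Phi_W(y,s)^2\big)\,. \label{eq:lyap}
\end{align}

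Next, from~\eqref{eq:lyap} and a standard shooting/continuity argument I would establish existence and uniqueness of the label $y_*$ for which $\Phi_W(y_*,s)$ remains in a small ball for all $s$. For uniqueness, if $\Phi_W(y_1,s)$ and $\Phi_W(y_2,s)$ were both bounded, their difference $D(s)$ would satisfy $\tfrac{d}{ds}D = (\tfrac32-\beta_\tau + \OO(\text{small}))D$, forcing exponential growth unless $D\equiv 0$; since the flow is a diffeomorphism this gives $y_1=y_2$. For existence, I would set up the fixed-point map on the space of bounded trajectories: a trajectory staying near $0$ for all $s\ge -\log\eps$ must satisfy the integral equation obtained by integrating~\eqref{eq:lyap} backward from $s=\infty$ (using that the growing mode has no bounded solution except via this representation),
\begin{align}
\Phi_W(y_*,s) = -\int_s^\infty e^{\int_s^{s'}(\frac32-\beta_\tau(r))\,dr}\Big(\OO(e^{-s'}) + \OO\big(\Phi_W(y_*,s')^2\big)\Big)\,ds'\,, \label{eq:duhamel}
\end{align}
and show this is a contraction on $\{\sup_s |e^{\frac52 s}\Phi_W| \le 20\kappa_0\}$. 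The weight $e^{\frac52 s}$ is dictated by the $\OO(e^{-s})$ forcing together with the $e^{(\frac32-\beta_\tau)(s'-s)}\approx e^{\frac12(s'-s)}$ kernel: $\int_s^\infty e^{\frac12(s'-s)}e^{-s'}\,ds' \sim e^{-s}$... here I must be more careful, since $\beta_\tau \to 1$ only up to errors, so $\frac32-\beta_\tau$ is genuinely close to $\frac12$ and the kernel $e^{\frac12(s'-s)}$ grows; the decay must come from a sharper bound on the forcing. Indeed by~\eqref{eq:dot:xi}, \eqref{a-boot}, and the improved estimate~\eqref{bootstrap:Wtilde3:at:0} on $W_{yyy}$, the forcing $\mathcal V_W(0,s) = e^{\frac s2}\beta_\tau(\kappa-\dot\xi)$ is actually $\OO(e^{-\frac52 s})$ (one $e^{\frac s2}$ times $(\kappa - \dot\xi) = \OO(e^{-3s})$ ... ), which after convolution with the $e^{\frac12(s'-s)}$ kernel produces exactly the $e^{-\frac52 s}$ decay. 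So the precise bookkeeping of the order of $\dot\xi - \kappa$ in~\eqref{eq:dot:xi} is what I would track carefully. Then~\eqref{Phi-rate} follows directly from the fixed-point ball, and~\eqref{y*-bound} follows by evaluating~\eqref{Phi-rate} at $s = -\log\eps$, which gives $|\Phi_W(y_*,-\log\eps)| = |y_*| \le 20\kappa_0 \eps^{5/2}$, and then the label relation $x_* = \eps^{3/2} y_*$ from~\eqref{label-relation} gives $|x_*|\le 20\kappa_0\eps^4$, consistent with the bound already quoted in~\eqref{y*-bound} and in Theorem~\ref{thm:blowup-profile}.

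The main obstacle I anticipate is twofold. First, the instability exponent $\tfrac32 - \beta_\tau$ is only \emph{close} to $\tfrac12$ rather than equal to it, and it is time-dependent through $\beta_\tau(t)$; I must verify that $\int_s^{s'}(\tfrac32 - \beta_\tau(r))\,dr \ge \tfrac14(s'-s)$ (say) uniformly, using $|1-\beta_\tau| \le 7\kappa_0^2\eps e^{-s}$ from~\eqref{mod2-bound}, so that the backward Duhamel kernel is genuinely controlled. Second — and this is the real work — closing the contraction in~\eqref{eq:duhamel} requires quantitative $C^2$ control of $\mathcal V_W$ on the ball $\{|Y| \le 20\kappa_0 e^{-\frac52 s}\}$, i.e. control of $W$, $\p_y W$, $\p_y^2 W$ along $\Phi_W$ near $y=0$, which is where one invokes~\eqref{Wbootstrap} and the $\tilde W$ bounds~\eqref{tildeWbootstrap0}–\eqref{bootstrap:Wtilde3:at:0}; the quadratic term $\OO(\Phi_W^2)$ is harmless on this tiny ball but the $\bar W$-driven linear correction $\beta_\tau \p_y\bar W(Y) - \p_y\bar W(0)\cdot(\ldots)$ must be expanded to the right order. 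Once these bounds are in hand the shooting argument and the fixed point are routine, and I would present them in that order: (i) linearization and the sign of the exponent; (ii) uniqueness by Grönwall on the difference; (iii) existence via the backward Duhamel contraction in the $e^{-\frac52 s}$-weighted ball; (iv) read off~\eqref{Phi-rate} and~\eqref{y*-bound}.
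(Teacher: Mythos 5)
Your proposal is correct and takes essentially the same route as the paper: identify the unstable exponent $\approx\tfrac12$ at $y=0$, bound the forcing by $\OO(e^{-5s/2})$ via $\dot\xi-\kappa=\OO(e^{-3s})$, write the backward Duhamel representation, and close a contraction in the $e^{-5s/2}$-weighted ball, with uniqueness from exponential separation. The one cosmetic difference is that the paper factors the linear part as exactly $\tfrac12\Phi_W$ and absorbs $\tfrac32-\beta_\tau-\tfrac12$ into the error term $G$, while using the algebraic identity $|y+\bar W(y)|\le|y|^3$ to see a cubic (rather than your generic quadratic) nonlinearity --- a tidier bookkeeping but not a different argument.
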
 
\begin{proof}[Proof of Lemma \ref{lem:Phi-rate}]
Using \eqref{Phi-eqn-a}, we can write the evolution equation for $\Phi_W$ as
\begin{align} 
\p_s \Phi_W(y,s) =  \mathcal{V} _W \circ \Phi_W  = \tfrac{1}{2} \Phi_W(y,s) + G_\Phi(y,s) + h(s) \,, \label{Phi-ODE}
\end{align} 
where 
\begin{subequations} 
\label{Gh-need}
\begin{align} 
G_\Phi &= G \circ \Phi_W \,, \\
G &=  \left(\bar W + y\right) +  (1- \beta_\tau ) \bar W + \beta_\tau \tilde W \,, \label{G} \\
h & = e^{\frac{s}{2}} \beta_\tau (\kappa -\dot \xi) \,. \label{h}
\end{align} 
\end{subequations} 
The particular form of  $G_\Phi$ in \eqref{G} is chosen to make use  of the fact that  for all $y$,
\begin{align} 
\sabs{y + \bar W(y)} \le \abs{y}^3 \,, \label{barWfact}
\end{align} 
which follows from the identity $|y + \bar W(y)| = |\bar W (y)|^3$ and the bound $|\bar W(y)| \leq |y|$.

Hence, we integrate \eqref{Phi-ODE} to obtain
\begin{align} 
\Phi_W(y_*,s) & = e^{\frac{s}{2}} \eps^ {\frac{1}{2}} y_* + e^{\frac{s}{2}} \int_{-\log\eps}^s e^{-\frac{s'}{2}} \left( G_\Phi(y_*, s') + h(s')\right) ds'  \,. \label{shiner}
\end{align} 
If $e^{-\frac{s'}{2}} \left( G_\Phi(y_*,s') + h(s')\right)$ is integrable on $[-\log\eps, \infty )$ then, we can rewrite \eqref{shiner} as
\begin{align} 
\Phi_W(y_*,s)
&= e^{\frac{s}{2}} \left( \eps^ {\frac{1}{2}} y_* + \int_{-\log\eps}^\infty \!\! e^{-\frac{s'}{2}} \left( G_\Phi(y_*,s') + h(s')\right) ds' \right) 
- e^{\frac{s}{2}} \int_{s}^\infty\!\!  e^{-\frac{s'}{2}} \left( G_\Phi(y_*,s') + h(s')\right) ds'  \,. \label{Phi-temp0}
\end{align}

Together with \eqref{bootstrap:Wtilde3:at:0}, \eqref{a-boot}, and \eqref{Ayy-bound},  the identity \eqref{eq:dot:xi} shows that
\begin{align}
\sabs{\dot \xi - \kappa} \le 38\kappa_0 e^{-3s} \,,
\label{savestheday}
\end{align} 
so that using \eqref{h} and \eqref{savestheday}, we have the bound
\begin{align} 
\abs{h(s)} \le 39 \kappa_0 e^ {-\frac{5}{2}s}  \,, \label{hbound1}
\end{align}
so the integrability of $ e^ {- \frac{s'}{2}} G_\Phi(y_*, s' )$ will be of paramount importance.

We additionally note  that since the first term on the right side of \eqref{Phi-temp0} is a constant multiplying $e^{\frac{s}{2}} $, 
in order for $ \Phi_W(y_*,s)  \to 0   $ as $s \to \infty $, this constant  must vanish, and thus, we must insist that
\begin{subequations} 
\label{fat-mouse}
\begin{align} 
y_*  & = - \eps^ {-\frac{1}{2}} \int_{-\log\eps}^\infty  e^{-\frac{s'}{2}}  \left( G_\Phi(y_*,s') + h(s')\right) ds'  \,,  \label{y*}
\end{align}
which then implies
\begin{align}
\Phi_W(y_*,s)& = - e^{\frac{s}{2}} \int_{s}^\infty  e^{-\frac{s'}{2}} \left( G_\Phi(y_*,s') + h(s')\right) ds'  \,. \label{Phi-y*}
\end{align} 
\end{subequations} 
Notice that \eqref{fat-mouse} implies that as long as $ e^ {-\frac{s'}{2}} G_\Phi(y_*, s' )$ is integrable,
\begin{align*} 
\Phi_W(y_*, -\log \eps) = y_* \,, \ \ \text{ and } \ \ \lim_{s \to \infty } \Phi_W(y_*,s)=0 \,.
\end{align*} 

We shall now establish the existence of a unique trajectory $\Phi_W(y_*,s)$ solving \eqref{Phi-y*}.   We define the
set
\begin{align*} 
\mathcal{T} = \{ \varphi \in C^0([-\log\eps, \infty )) \ : \ \abs{\varphi(s)} \le 20  \kappa _0   e^{-{\frac{5}{2}} s} \} \,,
\end{align*} 
with norm given by $\snorm{\varphi}_ \mathcal{T} := \sup_{s \in [-\log\eps, \infty ) } e^{{\frac{5}{2}} s}\abs{  \varphi(s)}$, 
and consider the map $\Psi$, which maps $\bar\varphi  \in \mathcal{T} $ to $\varphi$,  given by
\begin{align*} 
\varphi(s) = \Psi(\bar \varphi(s) ) :=  - e^{\frac{s}{2}} \int_{s}^\infty  e^{-\frac{s'}{2}} \left( G_{\bar \varphi}(s')  + h(s')\right) ds' \,.
\end{align*} 
We note that for $\bar \varphi \in \mathcal{T} $, $\abs{ \bar\varphi} \le  \alpha \kappa_0 \eps^{ {\frac{5}{2}} } \le \ell $ for
$\eps$ small enough,  so that we may apply the bounds
 \eqref{tildeWbootstrap0} to the function $ G_{\bar \varphi}(s') $.   Doing so, we see that 
the bounds \eqref{barWfact},  \eqref{tildeWbootstrap0} and \eqref{mod2-bound} show that
 for $\eps$ taken small enough,
\begin{align*} 
\abs{G_{\bar \varphi}(s)} 
&\le \abs{{\bar \varphi}(s)}^3 + (1+ 6 \eps^2)  \left((\log M)^4 \eps^ {\frac{1}{10}} \abs{{\bar \varphi}(s)}^{4} 
+ M \eps^ {\frac{1}{3}} \abs{{\bar \varphi}(s)}^3 \right) + 6 \eps e^{-s} \bar \varphi(s) \\
&
\le ( 20 \kappa_0)^3 e^{-{\frac{15}{2}}  s} 
+ \eps^{ \frac{1}{12}} \eps^{3 \alpha } (20 \kappa_0)^4 e^{-10 s}
+ 120 \kappa_0 \eps e^{- {\frac{7}{2}} s}  
 \le 122 \kappa_0 \eps e^{- {\frac{7}{2}} s}  \,.
\end{align*} 
 
Together with \eqref{hbound1}, we have that
\begin{align*} 
e^{-\frac{s'}{2}} \left(\sabs{G_{\bar \varphi}(s')}  + \sabs{h(s') }\right) \le 40  \kappa_0  e^{-3s'}  \,.
\end{align*} 
By the fundamental theorem of calculus,   $ s\mapsto \varphi(s)$ is continuous, and satisfies the bound
\begin{align*} 
\abs{ \varphi(s) } \le 18 \kappa_0  e^{- {\frac{5}{2}} s} \ \ \text{ for all } s \ge -\log\eps \,.
\end{align*} 
Therefore, $\Psi : \mathcal{T} \to \mathcal{T} $.   

Let us now prove that $\Psi$ is a contraction.  Suppose that $\varphi_1 = \Psi(\bar \varphi_1)$ and
$\varphi_2 = \Psi(\bar \varphi_2)$.
We then  have
\begin{align} 
\abs{ \varphi_{1}(s) - \varphi_2(s) } \le e^{\frac{s}{2}} \int_{s}^\infty  e^{-\frac{s'}{2}} \abs{ G_{\bar \varphi_1}(s')  -G_{\bar \varphi_2}(s')} ds'  \,. \label{scheme}
\end{align} 
From \eqref{barWfact}, we have that 
$$
\abs{ \bar W(y_1) +y_1 - \bar W(y_2)  - y_2} \le \abs{y_1^3-y_2^3}\,,
$$
so that
\begin{align} 
\abs{ \left(\bar W(\bar\varphi_1(s)) + \bar \varphi_1(s)\right) -  \left(\bar W(\bar\varphi_{2}(s)) + \varphi_{2}(s)\right)} &\le \abs{ \bar\varphi_1^3(s)- \bar\varphi_ {2}^3(s)} \notag \\
 &\le \abs{ \bar\varphi_1(s)^2+\bar\varphi_1(s)\bar \varphi_2(s)+ \bar\varphi_ {2}(s)^2}\abs{ \bar\varphi_1(s)- \bar\varphi_ {2}(s)}  \notag \\
& \le \eps^2 e^{-s} \abs{ \bar\varphi_1 (s)- \bar\varphi_ {2} (s)}  \,, \label{G1}
\end{align}
where we have used that both $ \bar\varphi_1$ and $ \bar\varphi_2$ are in $ \mathcal{T} $.
Next, since $\abs{ \bar W(y_1) - \bar W(y_2) } \le \abs{x-y}$,  by \eqref{mod2-bound},
\begin{align} 
\abs{1- \beta_\tau } \abs{ \bar W(\bar\varphi_1(s)) - \bar W(\bar\varphi_{2}(s)) } 
\le 6 \eps e^{-s} \abs{ \bar\varphi_1 (s)- \bar\varphi_ {2} (s)} \,, \label{G2}
\end{align} 
and finally, employing the mean value theorem together with the bound \eqref{tildeWbootstrap0},  for some a function $s\mapsto \alpha(s) \in (0,1)$ and
\begin{align} 
\abs{ \beta_\tau}\abs{  \tilde W (\bar\varphi_1(s),s) - \tilde W (\bar\varphi_{2}(s),s)}
& \le 2 \abs{\p_y \tilde W((1- \alpha (s))\bar\varphi_1(s)+ \alpha (s)  \bar\varphi_{2}(s)   ,s) } \abs{ \bar\varphi_1 (s)- \bar\varphi_ {2} (s)}  \notag \\
& \le 2 \left( (\log M)^4 \eps^ {\frac{1}{10}}  (20\kappa_0)^3  e^{- {\frac{15}{2}} s}  +M \eps^ {\frac{1}{3}} (20\kappa_0)^2  e^{- 5 s}  \right)
\abs{ \bar\varphi_1 (s)- \bar\varphi_{2} (s)} \notag  \\
& \le \eps^4 e^{-s} \abs{ \bar\varphi_1 (s)- \bar\varphi_{2} (s)} \,. \label{G3}
\end{align} 
 Combining the bounds \eqref{G1}, \eqref{G2}, and 
\eqref{G3}, and taking $\eps$ sufficiently small,  we have that
\begin{align*} 
\abs{ G_{\bar \varphi_1}(s')  -G_{\bar\varphi_2}(s')}  \le 7 \eps e^{-s} \abs{ \bar\varphi_1 (s)- \bar\varphi_2(s)} \,,
\end{align*} 
and thus from \eqref{scheme}, we see that
\begin{align*} 
e^{{\frac{5}{2}} s}\abs{ \varphi_{1}(s) - \varphi_2(s) } &\le e^{3 s} \int_{s}^\infty  e^{-\frac{1}{2}s'}  \abs{ G_{\bar \varphi_1}(s')  -G_{\bar \varphi_2}(s')} ds'  \\
&\le 7 \eps e^{3 s} \int_{s}^\infty  e^{-\frac{1}{2}s'} \abs{ \bar \varphi_1(s')  -\bar \varphi_2(s')} ds'   \\
&\le 14 \eps \sup_{s \in [-\log\eps, \infty ) } e^{{\frac{5}{2}}  s}\abs{ \bar \varphi_1(s)  -\bar \varphi_2(s)}
 \,, 
\end{align*}
so that 
\begin{align*} 
\norm{ \varphi_{1} - \varphi_2 }_\mathcal{T}  \le 14 \eps \norm{ \bar\varphi_1- \bar\varphi_ {2}}_\mathcal{T}  \,,
\end{align*} 
which shows that $\Psi $ is a contraction.   By the contraction mapping theorem, there exists a unique trajectory $\varphi \in \mathcal{T} $ such that for all
$s \ge -\log \eps$,
\begin{align*} 
\varphi(s) & =  - e^{\frac{s}{2}} \int_{s}^\infty  e^{-\frac{s'}{2}} \left(\left(\bar W(\varphi(s)) + \varphi(s)\right) +  (1- \beta_\tau ) \bar W( \varphi(s)) 
+ \beta_\tau \tilde W( \varphi(s),s)  + h(s')\right) ds' \,,
\end{align*}
or equivalently
\begin{align*} 
e^{-\frac{s}{2}}\varphi(s) 
& =   -\int_{s}^\infty  e^{-\frac{s'}{2}} \left( \varphi(s)  + \beta_\tau W( \varphi(s),s)  + h(s')\right) ds' \,.
\end{align*} 
Differentiating this identity in self-similar time shows that
\begin{align*}
\p_s \varphi = \mathcal{V}_W \circ \varphi  \,.
\end{align*} 
Setting 
$$
y_* = 
- \eps^ {-\frac{1}{2}}  \int_{-\log\eps}^\infty  e^{-\frac{s'}{2}} \left(\left(\bar W(\varphi(s)) + \varphi(s)\right) +  (1- \beta_\tau ) \bar W( \varphi(s)) 
+ \beta_\tau \tilde W( \varphi(s),s)  + h(s')\right) ds'  \,,
$$
we see that $\varphi(-\log\eps) = y_*$ from which it follows that
$$
\Phi_W(y_*,s) = \varphi(s)  \ \ \text{ for all } s \ge -\log\eps\,,
$$
and $\Phi_W(y_*,s)$ is a solution to \eqref{fat-mouse}.
Clearly $\sabs{y_*} \le 20\kappa_0 \eps^ {\frac{5}{2}} $ and by \eqref{label-relation}, it follows that $\sabs{x_*} \le 20\kappa_0 \eps^4$.

We next show that $y_*$ is the only blowup label.
From \eqref{eq:VA:def} and  \eqref{Phi-eqn}, we have that
\begin{align*} 
\p_s (\Phi_W(y_*,s) - \Phi_W(y,s)) = \tfrac{3}{2}  (\Phi_W(y_*,s) - \Phi_W(y,s)) +  \beta_\tau W(\Phi_W(y_*,s),s) - \beta_\tau W(\Phi_W(y,s),s)  \,.
\end{align*} 
Suppose that $y_* \ge y$.
By the mean value theorem and the bound \eqref{mod2-bound}, we have that
\begin{align*} 
\abs{ \beta_\tau W(\Phi_W(y_*,s),s) - \beta_\tau W(\Phi_W(y,s) ,s)}  \le (1+ 6\eps) (\Phi_W(y_*,s) - \Phi_W(y,s)) \,.
\end{align*} 
Here we have used the global bound $\abs{\p_yW(y,s)} \le 1$ and the fact that characteristics cannot cross so that $\Phi_W(y_*,s) - \Phi_W(y,s)\ge 0.$
Therefore,
\begin{align*} 
\p_s (\Phi_W(y_*,s) - \Phi_W(y,s)) \ge ( \tfrac{1}{2} - \eps^ \frac{3}{4} )  (\Phi_W(y_*,s) - \Phi_W(y,s))  \,,
\end{align*} 
and then
\begin{align*} 
\Phi_W(y_*,s) - \Phi_W(y,s) \ge  \eps^ {\frac{1}{2}} e^{(\frac{1}{2}-\eps^ \frac{3}{4} )s} (y_*-y) \,.
\end{align*} 
If $y \ge y_*$, in the same way we, we obtain $\Phi_W(y,s) - \Phi_W(y_*,s) \ge  \eps^ {\frac{1}{2}} e^{(\frac{1}{2}-\eps^ \frac{3}{4} )s} (y-y_*)$. 
\end{proof}

\subsection{Bounds for $\p_x^\gamma a$, $\gamma \le 4$}
\label{sec:abounds}
\subsubsection{Improving the bootstrap  bound for $a$}
We note here that from \eqref{xland-svort:def} and \eqref{xland-svort},  the specific vorticity $\varpi = \tfrac{16}{w^2} (w-a_x)$ solves
$$
\p_t\varpi +  \tfrac{2}{3}  w \p_x \varpi= \tfrac{8}{3} a \varpi \,, \ \ \varpi(x,-\eps)= \varpi_0(x) \,,
$$
and hence 
\begin{align} 
\varpi( \phi(x,t),t) = e^{ \frac{8}{3}\int_{-\eps}^t a( \phi(x,t'),t') dt' }\varpi_0(x) \,. \label{svort-phi}
\end{align}   
We also have from \eqref{xland2-a}, that 
\begin{align} 
a (\phi(x,t),t)  = a_0(x) + \int_{-\eps}^t  (- \tfrac{4}{3}  a^2 + \tfrac{1}{6}   w^2) \circ \phi ds \,  \label{a-pos}
\end{align} 
so that assuming the bootstrap bound $\sabs{a(\theta,t)} \le 2 \kappa_0^2 \eps$ and using \eqref{eq:A_bootstrap:IC} and \eqref{w-lowerupper}, we find that
for $\eps$ taken sufficiently small,
\begin{align} 
\snorm{a( \cdot , t)}_{L^ \infty } \le  \tfrac{3}{2} \kappa_0^2 \eps\,, \   \ \ t \in [-\eps,T_*) \,,  \label{a-bound1}
\end{align} 
which improves the bootstrap bound \eqref{dx0a-boot}.

\subsubsection{Improving the bootstrap bound for $\p_\theta a$}
From \eqref{phix-crude}, we see that $\phi( \cdot, t)$ is a diffeomorphism with a well-defined inverse map, so that for
each $t \in [-\eps,T_*)$ and for $\eps$ small enough, the identity \eqref{svort-phi}  and the bound \eqref{a-bound1} show that
\begin{align} 
(1- \eps) \varpi_0(\theta) \le \varpi(\phi(\theta,t),t) \le   (1+ \eps ) \varpi_0(\theta) \,, \ \ t \in [-\eps,T_*) \,,  \label{sp-vort-bound}
\end{align} 

 From \eqref{w0-lowerupper}, 
 $ \tfrac{7}{8} \kappa_0 \le w_0(\theta) \le  \tfrac{9}{8} \kappa_0$.  
 Since $\varpi_0 = \tfrac{16}{w_0^2} (w_0- \p_\theta a_0)$,  by  \eqref{eq:A_bootstrap:IC}, we then have that
 for $\eps$ sufficiently small, 
$$
\tfrac{101}{10\kappa_0 } \le \varpi_0(\theta) \le \tfrac{27}{\kappa_0 } \,,
$$
 and by \eqref{sp-vort-bound},  for $\eps$ small enough,  
\begin{align} 
\tfrac{10}{\kappa_0 } \le \varpi(\theta,t) \le \tfrac{28}{\kappa_0 } \,, \ \ \theta \in \mathbb{T} , t\in[-\eps,T_*)   \,.  \label{spvort-bound1}
\end{align} 
Again using that 
\begin{align} 
\p_\theta a = w - \tfrac{w^2}{16} \varpi \,,  \label{ax-varphi-w}
\end{align} 
we then have that
\begin{align} 
\sabs{\p_\theta a(\theta,t)}=  \sabs{w-\tfrac{w^2}{16}\varpi } \le  \tfrac{3}{2}  \kappa_0 \,,  \ \ \theta \in \mathbb{T} , t\in[-\eps,T_*)   \,,  \label{ax-bound1}
\end{align} 
which  improves the bootstrap bound  \eqref{dx1a-boot}.

\subsubsection{Improving the bootstrap bound for $\p_\theta^2a$}
Differentiating \eqref{svort-phi}, we have that
\begin{align} 
\p_\theta \varpi ( \phi(x,t),t) 
& = ( \p_x \phi(x,t)) ^{-1} e^{ -\frac{8}{3}\int_0^t a( \phi(x,t'),t') dt' } \left( \p_\theta  \varpi_0(x)  +\tfrac{8}{3}\varpi_0\int_{-\eps}^t 
 \p_\theta a( \phi(x,t'),t')  \p_x \phi(x,t')dt' 
\right) \notag\\
&= ( \p_x \phi(x,t)) ^{-1} \varpi ( \phi(x,t),t)
\left( \tfrac{\p_\theta  \varpi_0(x)}{\varpi_0(x)}  +\tfrac{8}{3} \int_{-\eps}^t 
 \p_\theta a( \phi(x,t'),t')  \p_x \phi(x,t')dt' 
\right) \,. \label{dxspvort-phi}
\end{align} 
It follows from \eqref{phix-crude},  \eqref{sp-vort-bound}, \eqref{ax-bound1},  and  \eqref{dxspvort-phi} that for $\eps$ small enough,
\begin{align} 
\sabs{ \p_\theta\varpi(\phi(x,t),t) }\le  \tfrac{51}{24} \sabs{\p_\theta \varpi_0(x)}  + 500 \eps\,.  \label{varpi-pos}
\end{align} 
Using the formula
$$
\p_\theta \varpi_0 = \tfrac{16}{w_0^2} (\p_\theta w_0-\p_x^2 a_0) -  \tfrac{32}{w_0^3} (w_0-\p_\theta a_0) \p_\theta w_0 
$$
and the bounds  \eqref{w0-lowerupper}, $ -\tfrac{1}{\eps}  \le \p_\theta w_0(x)$, and \eqref{eq:A_bootstrap:IC}, we estimate that
\begin{align} 
\sabs{\p_\theta \varpi_0(x)} \le \tfrac{34}{\kappa_0^2\eps } \,,  \label{dx-varpi-0}
\end{align} 
and hence from \eqref{varpi-pos}, 
\begin{align} 
\sabs{\p_\theta \varpi(x,t )} \le \tfrac{70}{\kappa_0^2\eps } \,,  \ \ x \in \mathbb{T} , t\in[-\eps,T_*)  \,.  \label{dx-spvort-bound1}
\end{align} 
We shall use the fact that 
\begin{align} 
\p_\theta^2 a = \p_\theta w (1- \tfrac{1}{8} w \varpi ) - \tfrac{w^2}{16}  \p_\theta \varpi \,,  \label{d2a-dw}
\end{align} 
so that combined with the above estimates,
\begin{align} 
\sabs{\p_\theta^2 a(x,t)} \le \tfrac{7}{2}   \sabs{\p_\theta w(x,t)}+  \tfrac{7}{\eps}   \,,  \label{goodmusickeepsmegoing}
\end{align}
and hence by \eqref{w-to-W}, we have that
\begin{align} 
\sabs{\p_y^2 A(y,s)} \le \tfrac{7}{2}  e^{-2s} \sabs{\p_y W(y,s)}  + e^{-\frac{3s}{2}}  \tfrac{7}{\eps}   \le \tfrac{23}{2}  e^{-2s}  \,, \label{Ayy-bound}
\end{align}
where we have used that $\abs{\p_y W(y,s) } \le 1$ as proven in \cite{BuShVi2019b}.    
This then implies that
\begin{align} 
\sabs{\p_\theta^2 a(x,t)} = e^{3s} \sabs{\p_y^2 A(y,s)} \le  \tfrac{23}{2}  e^{s}   \,,   \ \ x \in \mathbb{T} , t\in[-\eps,T_*)  \label{axx-bound}
\end{align}
which  improves the bootstrap  bound \eqref{dx2a-boot}.

\subsubsection{A bound for $\p_\theta^3a$}
We next differentiate \eqref{dxspvort-phi} to obtain
\begin{align} 
\p_\theta^2\varpi ( \phi(x,t),t)   
&=\left(\phi_x^{-1}  \p_\theta \varpi \circ \phi -\phi_x^{-3} \phi_{xx} \varpi \circ \phi  \right)
\left( \tfrac{\p_\theta  \varpi_0 }{\varpi_0 }  +\tfrac{8}{3} \int_{-\eps}^t 
 \p_\theta a \circ\phi \   \phi_x dt' 
\right)
\notag \\
& + \phi_x^{-2} \varpi \circ \phi   \Bigl( \tfrac{\varpi_0  \p_\theta^2  \varpi_0 - (\p_\theta \varpi_0 )^2}{\varpi_0^2 }  
+\tfrac{8}{3} 
\underbrace{\int_{-\eps}^t  \bigl(\p^2_\theta a \circ \phi \ \phi_x^2 + \p_\theta a \circ \phi \ \phi_{xx} \bigr) dt'}_{ \mathcal{R}(x,t) } \Bigr)
 \,. \label{dxxspvort-phi}
\end{align} 
We first bound the integral $\mathcal{R} $. By  \eqref{phix-crude}, \eqref{dxx-phi-bound0}, and \eqref{goodmusickeepsmegoing}, we have that
\begin{align} 
\sabs{ \mathcal{R} (x,t) } \les  \int_{-\eps}^t \sabs{ \p_\theta a \circ \phi} ( \tfrac{1}{\eps}  + \sabs{ \p_\theta w \circ \phi}) dt' \,.
\label{much-easier-1}
\end{align} 
We note that by \eqref{best-ever3}, 
\begin{align*} 
\p_\theta w(\phi(x,t),t) = e^sW_y(\Phi_A(y,s),s) \,.
\end{align*} 
The identity \eqref{dsdt} then shows that $dt = \beta_\tau e^{-s} ds$ so that by the change of variables formula, we have that
\begin{align} 
\int_{-\eps}^t \sabs{\p_\theta w(\phi(x,t'),t')} dt' 
=\int_{-\log \eps}^s \sabs{W_y(\Phi_A(y,s'),s') }  \beta_\tau  ds' \les 1 \,,  \label{integral-wx-phi-bound}
\end{align} 
where we have used \eqref{abs-Wy-PhiA} for the last inequality.  Hence, with 
\eqref{a-boot} and \eqref{much-easier-1}, we have that
\begin{align} 
\sabs{\mathcal{R}(x,t)} \les 1 \,. \label{Rboundisuseful}
\end{align} 
With \eqref{Rboundisuseful},  the formula \eqref{dxxspvort-phi} and the bounds \eqref{a-boot} and \eqref{dx-varpi-0} allow us to
estimate $\p_\theta^2 \varpi \circ \phi$ in the following way:
\begin{align} 
\sabs{\p_\theta^2 \varpi ( \phi(x,t),t)   } & \les 1+ \tfrac{1}{\eps} \sabs{\phi_{xx}(x,t)} + \sabs{\p_\theta^2 \varpi_0 (x)   }  
 \les  \tfrac{1}{\eps^2} + \tfrac{1}{\eps} \sabs{\p_\theta w(\phi(x,t),t)} + \sabs{\p_\theta^2 \varpi_0 (x)   }   \label{dxx-varphi-phi}
\end{align} 
where we have used \eqref{dxx-phi-bound0} for the last inequality.

Differentiating  \eqref{d2a-dw} yields the identity
\begin{align} 
\p_\theta^3 a = \p_\theta^2 w (1- \tfrac{1}{8} w \varpi ) - \tfrac{w^2}{16}  \p_\theta^2 \varpi  - \tfrac{1}{4} w \p_\theta w  \p_\theta \varpi - \tfrac{1}{8}   \varpi  (\p_\theta w)^2
\label{d3a-d2w}
\end{align}
so that
\begin{align} 
\sabs{\p_\theta^3 a(x,t)} &  \les \sabs{\p_\theta^2 w (x,t)}  +\sabs{\p_\theta^2 \varpi (x,t)} + \sabs{\p_\theta w (x,t)} ^2 + \tfrac{1}{\eps} \sabs{\p_\theta w (x,t)} \notag  \\
&  \les  \tfrac{1}{\eps^2} +  \sabs{\p_\theta^2 w (x,t)}  +\sabs{\p_\theta^2 \varpi_0 (\phi ^{-1} (x,t),t)} + \sabs{\p_\theta w (x,t)} ^2 + \tfrac{1}{\eps} \sabs{\p_\theta w (x,t)} 
\label{dxxx-a}
\end{align} 
where we have used \eqref{dxx-varphi-phi} for the last inequality.

Restricting the identity \eqref{d3a-d2w} to $t=-\eps$, we see that
\begin{align} 
 \tfrac{w_0^2}{16}  \p_\theta^2 \varpi_0 =
-\p_\theta^3 a_0 + \p_\theta^2 w_0 ( \tfrac{1}{8} w_0 \varpi _0-1) 
- \tfrac{1}{4} w_0 \p_\theta w_0 \p_\theta\varpi_0  - \tfrac{1}{8}  (\p_\theta w_0)^2\varpi_0  \,, \label{d2vort0}
\end{align} 
and so
\begin{align} 
\sabs{\p_\theta^2 \varpi_0} \les  \tfrac{1}{\eps^2} + \sabs{\p_\theta^3 a_0 } + \sabs{\p_\theta^2 w_0 }  \les  \tfrac{1}{\eps^2} +  \sabs{\p_\theta^2 w_0 } 
\label{just-write-it}
\end{align} 
since we assumed that $\abs{\p_\theta^3a_0(x)} \les 1$ in \eqref{eq:A_bootstrap:IC}.   Using the bound \eqref{just-write-it} in \eqref{dxxx-a} shows that
\begin{align} 
&\sabs{\p_\theta^3 a(\eta(x,t),t)} \notag \\
& \ \
  \les  \tfrac{1}{\eps^2} +  \sabs{\p_\theta^2 w (\eta(x,t),t)}  +\sabs{\p_\theta^2 w_0 (\phi ^{-1} (\eta(x,t),t),t)} + \sabs{\p_\theta w (\eta(x,t),t)} ^2 + \tfrac{1}{\eps} \sabs{\p_\theta w (\eta(x,t),t)} 
  \,.
\label{dxxx-a-eta}
\end{align} 
By   \eqref{d2W}, we have that for $x \in \mathbb{T}  $,
$\abs{\p_\theta^2 w_0(x) } \les  \eps^{-{\frac{5}{2}} }$
and therefore
\begin{align} 
\sabs{\p_\theta^2 w_0 (\phi ^{-1} (\eta(x,t),t),t)} \les  \eps^{-{\frac{5}{2}} }   \,.  \label{d2w0-eta}
\end{align} 
Using this bound in \eqref{dxxx-a-eta}, for all $t\in [-\eps, T_*)$, 
\begin{align} 
\sabs{\p_\theta^3 a(\eta(x,t),t)}
&  \les  \eps^{-{\frac{5}{2}} }   +  \sabs{\p_\theta^2 w (\eta(x,t),t)}   + \sabs{\p_\theta w (\eta(x,t),t)} ^2 + \tfrac{1}{\eps} \sabs{\p_\theta w (\eta(x,t),t)}\label{dxxx-a-final0} \,.
\end{align}  

\subsubsection{A  bound for $\p_\theta^4 a$}
As we will now explain, the bound for $\p_\theta^4 a(x,t)$ does not depend on $\p_x^4 \eta$, $\p_x^4\phi$, or $\p_\theta^4 w$, and as such is merely a consequence of
the bounds that have already been established.

To obtain this bound, we make one final differentiation of  \eqref{dxxspvort-phi} and obtain that

\begin{align}
&\p_\theta^3\varpi ( \phi(x,t),t) \notag\\
&= \left(\phi_x ^{-1}  \p_\theta^2 \varpi \circ \phi   + 3 \phi_x^{-5} \phi_{xx}^2 \varpi \circ \phi -\phi_x^{-4} \phi_{xxx} \varpi \circ \phi  
 - 2 \phi_x^{-3} \phi_{xx} \p_\theta \varpi \circ \phi   \right)
\left( \tfrac{\p_\theta  \varpi_0 }{\varpi_0 }  
+\tfrac{8}{3} \int_{-\eps}^t \!\!\! \p_\theta a \circ\phi \   \phi_x dt' 
\right) \notag\\
&
+ \left(2\phi_x^{-2}  \p_\theta \varpi \circ \phi - 3\phi_x^{-4} \phi_{xx} \varpi \circ \phi  \right)
\Bigl( \tfrac{\varpi_0  \p_\theta^2  \varpi_0 - (\p_\theta \varpi_0 )^2}{\varpi_0^2}  
+\tfrac{8}{3} 
\int_{-\eps}^t  \!\! \bigl(\p^2_\theta a \circ \phi \ \phi_x^2 + \p_\theta a \circ \phi \ \phi_{xx} \bigr) dt'  \Bigr)
\notag \\
&+  \phi_x^{-3} \varpi \circ \phi  
\Bigl( \tfrac{\varpi_0^2  \p_\theta^3  \varpi_0 - 3 \varpi_0 \p_\theta \varpi_0 \p_\theta^2 \varpi_0 +2 (\p_\theta \varpi_0 )^3}{\varpi_0^3}  
+\tfrac{8}{3} \underbrace{\int_{-\eps}^t \!\!  \bigl(\p_\theta^3 a \circ \phi \ \phi_x^3 
+ 3 \p_\theta^2 a \circ \phi \ \phi_x \phi_{xx}
+ \p_\theta a \circ \phi \ \phi_{xxx} \bigr) dt'}_{ \mathcal{S} (x,t)} \Bigr)
.
\label{dxxxspvort-phi}
\end{align}
Our goal is to bound $\sabs{\p_\theta^3\varpi ( \phi(x,t),t) }$ using the identity \eqref{dxxxspvort-phi}.   The time integral in the  first  line is $\OO(\eps)$ due to \eqref{a-boot} and \eqref{phix-crude}.
The time  integral  in the second  line is the term $ \mathcal{R} (x,t)$ in \eqref{dxxspvort-phi}, which was estimated in \eqref{Rboundisuseful}.  It thus remains to establish the
bound for the integral term $ \mathcal{S} (x,t)$ on the third line.    We write
$ \mathcal{S} = \mathcal{S} _1 + \mathcal{S} _2 + \mathcal{S} _3$, where
\begin{subequations} 
\label{S123}
\begin{align} 
\mathcal{S} _1(x,t) &= \int_{-\eps}^t  \p_\theta^3 a \circ \phi \ \phi_x^3  dt'\,, \\
\mathcal{S} _2(x,t) &= \int_{-\eps}^t  3 \p_\theta^2 a \circ \phi \ \phi_x \phi_{xx}  dt'\,, \\
\mathcal{S} _3(x,t) &= \int_{-\eps}^t   \p_\theta a \circ \phi \ \phi_{xxx}  dt' \,,
\end{align} 
\end{subequations} 
and we shall first estimate the integral $\mathcal{S} _3$.   
The key idea in estimating $\mathcal{S} _3$ is to use the identity  \eqref{pxxx-phi-great} for $\phi_{xxx}$ and isolate the
term 
$$
\p_\theta^2 w\circ \phi \ \phi_x^2 + \p_\theta w \circ \phi \ \phi_{xx} =: \p_x (\p_\theta w \circ \phi  \ \phi_x) \,,
$$
and estimate its integral in a very careful manner.

The identity for $\phi_{xxx}$ in \eqref{pxxx-phi-great} and the bound \eqref{dxxx-phi-bound0}, together with the estimates 
\eqref{goodmusickeepsmegoing} and \eqref{integral-wx-phi-bound}, and the integral bound  \eqref{abs-Wy-PhiA},
we conclude that
 \begin{align}
 \sabs{\mathcal{S} _3(x,t)} &\les \tfrac{1}{\eps} + \eps \sabs{ w_0''(x)}  
 + \int_{-\eps}^t \sabs{ \p_\theta w(\phi(x,t'),t')}^2dt' +  \sabs{\mathcal{S} _4(x,t)}   \,,
 \label{S3-prelim}
\end{align} 
where the term $ \mathcal{S} _4$ 
contains the important term on the last line of \eqref{pxxx-phi-great}, and is given by
\begin{align} 
\mathcal{S} _4(x,t)=
 \int_{-\eps}^t  \p_x(  a \circ \phi ) (w \circ \phi)^{-1}  \p_x ( \p_\theta w \circ \phi \ \phi_x) dt' 
\,.  \label{S4}
\end{align} 

We now rewrite the evolution equation 
\eqref{w-xland2} as $\p_t (w \circ \phi) + \tfrac{8}{3} (aw) \circ \phi = -  \tfrac{1}{3} ( w \p_\theta w ) \circ \phi$ which yields
\begin{align*} 
  \p_\theta w \circ \phi \, \phi_x 
  = -3 \phi_x \,  (w \circ \phi) ^{-1} \p_t (w \circ \phi) -8 a \circ \phi \, \phi_x \,.
\end{align*}  
Differentiating this equation, we have that
\begin{align*} 
\p_x\bigl(  \p_\theta w \circ \phi \, \phi_x\bigr) 
& =-3 \phi_x \,  (w \circ \phi) ^{-1} \p_t (\p_\theta w \circ \phi \, \phi_x)   \\
&\qquad  - 3   \phi_x^2 \tfrac{ \p_\theta w \circ \phi}{w \circ \phi}    \left( \tfrac{8}{3} a\circ \phi +  \tfrac{1}{3}  \p_\theta w  \circ \phi \right) +   \phi_{xx}    \p_\theta w  \circ \phi   -8 \p_\theta a \circ \phi \, \phi_x^2 
\,.
\end{align*} 
We can then write the term $ \mathcal{S} _4$ in \eqref{S4} as $\mathcal{S} _4= \mathcal{S}_{4a}+ \mathcal{S}_{4b}$,
where 
\begin{align*} 
\mathcal{S}_{4a}(x,t) & = -3 \int_{-\eps}^t  \p_x(  a \circ \phi ) \phi_x \,  (w \circ \phi) ^{-2} \p_t (\p_\theta w \circ \phi \, \phi_x) dt'  \,,  \\
\mathcal{S}_{4b}(x,t)  & =  - \int_{-\eps}^t  \p_x(  a \circ \phi )  \left( 3   \phi_x^2 \tfrac{ \p_\theta w \circ \phi}{(w \circ \phi)^2}    \left( \tfrac{8}{3} a\circ \phi -  \tfrac{1}{3}  \p_\theta w  \circ \phi \right) -   \phi_{xx}    \tfrac{ \p_\theta w \circ \phi}{w \circ \phi}  + 8 \tfrac{\p_\theta a \circ \phi}{w \circ \phi} \phi_x^2 \right)  dt' \,. 
\end{align*} 
The term $\mathcal{S}_{4a}(x,t) $ requires a careful analysis; meanwhile, 
the bounds
\eqref{w0-lowerupper}, \eqref{w-lowerupper}, \eqref{a-boot}, \eqref{T*-bound},  \eqref{phix-crude}, \eqref{integral-wx-phi-bound} together with 
\eqref{dxx-phi-bound0} show that
\begin{align*} 
\sabs{\mathcal{S}_{4b}(x,t)} \les \tfrac{1}{\eps} + \int_{-\eps}^t \sabs{\p_\theta w \circ \phi}^2 dt' \,.
\end{align*} 
To estimate $\mathcal{S}_{4a}(x,t)$ we integrate by parts and obtain that
\begin{align*}
\mathcal{S}_{4a}(x,t) 
&=3 a_0'  w_0^{-2}  w_0'  -3 \p_x(  a \circ \phi ) \phi_x^2 \,  (w \circ \phi) ^{-2} \p_\theta w \circ \phi   \notag\\
&\quad + 4 \int_{-\eps}^t \p_x(  a \circ \phi )    (w \circ \phi) ^{-2}  (\p_\theta w \circ \phi)^2 \, \phi_x^2  dt'  
\notag\\
&\quad + 3 \int_{-\eps}^t    \p_x(  - \tfrac{4}{3} a^2 \circ \phi + \tfrac{1}{6} w^2 \circ\phi ) \phi_x^2 \,  (w \circ \phi) ^{-2}  \p_\theta w \circ \phi    dt' 
\notag\\
&\quad + 16 \int_{-\eps}^t  \p_x(  a \circ \phi ) \phi_x^2 \,  (w \circ \phi) ^{-2}   a \circ \phi  \p_\theta w \circ \phi   dt' \,.
\end{align*}
From the above identity and the bounds \eqref{w0-lowerupper}, \eqref{w-lowerupper}, \eqref{a-boot},  \eqref{phix-crude},  \eqref{integral-wx-phi-bound}, we obtain that
\begin{align*} 
\sabs{\mathcal{S}_{4a}(x,t)}  
&\les  \tfrac{1}{\eps} + \sabs{\p_\theta w(\phi(x,t),t)} +  \int_{-\eps}^t \sabs{\p_\theta w(\phi(x,t'),t')}^2 dt' \,.
\end{align*} 
Using the above bound in  \eqref{S3-prelim} shows that
\begin{align} 
\sabs{\mathcal{S}_{3}(x,t)} &   \les \tfrac{1}{\eps}  + \eps \sabs{w_0''(x)} + \sabs{\p_\theta w(\phi(x,t),t)} + \int_{-\eps}^t \sabs{\p_\theta w(\phi(x,t'),t')}^2 dt' \,.
\label{eq:S3:vomey}
\end{align} 
Having estimated $ \mathcal{S} _3$ in \eqref{S123}, it remains to bound $ \mathcal{S} _1$ and $ \mathcal{S} _2$.   

For $\mathcal{S} _1$, we return to the identity \eqref{d2a-dw} and write 
\begin{align} 
\p_\theta^2 a \circ \phi   = \bigl( \p_\theta w \circ \phi  \phi_x\bigr)  \  \phi_x ^{-1}  (1- \tfrac{1}{8} w \varpi )\circ \phi - \tfrac{w^2\circ \phi}{16}  \p_\theta \varpi \circ \phi \,,  \notag
\end{align} 
so that after differentiation in $x$ 
\begin{align} 
\p_\theta^3 a \circ \phi \, \phi_x   &=  \p_x \bigl( \p_\theta w \circ \phi  \phi_x\bigr)  \  \phi_x ^{-1}  (1- \tfrac{1}{8} w \varpi )\circ \phi   
-   \bigl( \p_\theta w \circ \phi  \phi_x\bigr)  \  \phi_x ^{-2}  \phi_{xx} (1- \tfrac{1}{8} w \varpi )\circ \phi  \notag  \\
& \qquad 
- \tfrac{1}{8}    \bigl( \p_\theta w \circ \phi  \phi_x\bigr)  \    \p_\theta( w \varpi ) \circ \phi  
- \tfrac{w^2\circ \phi}{16}  \p_\theta^2 \varpi \circ \phi \, \phi_x  - \tfrac{1}{8}   w \circ \phi \ \p_\theta w \circ \phi \, \phi_x \p_\theta\varpi \circ \phi \,.
\label{eq:vomey}
\end{align} 
Due to \eqref{eq:vomey}, the integrand $\p_\theta^3 a \circ \phi \, \phi_x^3$ in $\mathcal{S} _1$ has the same structure to the integrand in $ \mathcal{S} _3$, with one additional
type of term in the form of $ - \tfrac{w^2\circ \phi}{16}  \p_\theta^2 \varpi \circ \phi \, \phi_x$, which requires us to use the already established bounds 
\eqref{dxx-varphi-phi} and \eqref{just-write-it}.   We therefore can show that $ \mathcal{S} _1$   is bounded as
\begin{align} 
\sabs{\mathcal{S}_{1}(x,t)} &   \les  \tfrac{1}{\eps} +  \eps   \sabs{w_0''(x)}  +  \sabs{\p_\theta w(\phi(x,t),t)} + \int_{-\eps}^t \sabs{\p_\theta w(\phi(x,t'),t')}^2 dt' \,.
\label{eq:S1:vomey}
\end{align} 
The integral $ \mathcal{S} _2$ in \eqref{S123} is relatively straightforward to bound.  We use the inequalities \eqref{dxx-phi-bound0} and
\eqref{goodmusickeepsmegoing} together with \eqref{abs-Wy-PhiA}, and find that
\begin{align} 
\sabs{\mathcal{S}_{2}(x,t)} &   \les  \tfrac{1}{\eps} +  \int_{-\eps}^t \sabs{\p_\theta w(\phi(x,t'),t')}^2 dt'    \,.
\label{eq:S2:vomey}
\end{align} 
Combining the  bounds \eqref{eq:S3:vomey}, \eqref{eq:S1:vomey}, and \eqref{eq:S2:vomey},   we have shown that the $ \mathcal{S} (x,t)$ integral in \eqref{dxxxspvort-phi} satisfies
\begin{align*} 
\sabs{\mathcal{S}(x,t)} &   \les \tfrac{1}{\eps} +   \eps   \sabs{\p_\theta^2 w_0(x)}+  \sabs{\p_\theta w(\phi(x,t),t)} + \int_{-\eps}^t \sabs{\p_\theta w(\phi(x,t'),t')}^2 dt'   \,.
\end{align*} 
It thus follows from  \eqref{dxx-phi-bound0}, \eqref{dxxx-phi-bound0}, \eqref{goodmusickeepsmegoing}, and  \eqref{dxxxspvort-phi} that
\begin{align} 
\sabs{\p_\theta^3 \varpi(\phi(x,t),t) } 
& \les 
 \tfrac{1}{\eps^3}  + 
\sabs{\p_\theta^3 \varpi_0(x)}+  
 \tfrac{1}{\eps}\sabs{\p_\theta^2w_0(x)}
+ \tfrac{1}{\eps^2}  \sabs{\p_\theta w(\phi(x,t),t)} + \tfrac{1}{\eps} \sabs{\p_\theta w(\phi(x,t),t)}^2 \notag \\
 &
+ \tfrac{1}{\eps} \sabs{\p_\theta^2 w(\phi(x,t),t)} +  \int_{-\eps}^t \sabs{\p_\theta w(\phi(x,t'),t')}^2 dt'  \,. \notag
\end{align} 
Therefore, we have that
\begin{align} 
\sabs{\p_\theta^3 \varpi(\eta(x,t),t) } 
&  \les
 \tfrac{1}{\eps^3}
 +\sabs{\p_\theta^3 \varpi_0(\phi ^{-1} (\eta(x,t),t))}+
\tfrac{1}{\eps} \sabs{\p_\theta^2 w_0(\phi ^{-1} (\eta(x,t),t))}
 + \tfrac{1}{\eps^2}  \sabs{\p_\theta w(\eta(x,t),t)} \notag \\
 &\ \ + \tfrac{1}{\eps}  \sabs{\p_\theta w(\eta(x,t),t)}^2
+ \tfrac{1}{\eps}  \sabs{\p_\theta^2 w(\eta(x,t),t)} +   \int_{-\eps}^t \sabs{\p_\theta w(\phi(\phi ^{-1} (\eta(x,t),t),t'),t')}^2 dt'   \,. \label{dxxx-varpi-bound}
\end{align} 
In order to bound the first term in the above inequality, we differentiate \eqref{d2vort0} to obtain
\begin{align*} 
\tfrac{w_0^2}{16}  \p_\theta^3 \varpi_0 & = - \tfrac{1}{8}w_0 \p_\theta w_0  \p_\theta^2 \varpi_0
-\p_\theta^4 a_0 + \p_\theta^3 w_0 ( \tfrac{1}{8} w_0 \varpi _0-1) +\tfrac{1}{8} \p_\theta^2 w_0 \p_\theta( w_0 \varpi _0) \\
& \qquad\qquad
- \p_\theta( \tfrac{1}{4} w_0 \p_\theta w_0 \p_\theta\varpi_0  + \tfrac{1}{8}  (\p_\theta w_0)^2\varpi_0 ) \,.
\end{align*} 
With \eqref{just-write-it},  we see that
\begin{align} 
\sabs{ \p_\theta^3 \varpi_0(\theta)} \les \tfrac{1}{\eps^3} +  \tfrac{1}{\eps}  \sabs{\p_\theta^2 w_0 (\theta)} + \sabs{\p_\theta^3 w_0 (\theta)} 
+ \sabs{\p_\theta^4 a_0 (\theta)} 
\les   \tfrac{1}{\eps^3} +  \tfrac{1}{\eps}  \sabs{\p_\theta^2 w_0 (\theta)} + \sabs{\p_\theta^3 w_0 (\theta)} \,, \label{d3varpi0-prelim}
\end{align} 
where we have used that $\sabs{\p_\theta^4 a_0 (x)} \les 1$ by \eqref{eq:A_bootstrap:IC}.   From  \eqref{d3W}, for all $x \in \mathbb{T}  $, $\sabs{\p_\theta^3 w_0(x) } \les \eps^{-4}$,
so that
\begin{align*} 
\sabs{\p_\theta^3 w_0 (\phi ^{-1} (\eta(x,t),t),t)} \les \eps^ {-4}    \,,
\end{align*} 
and hence by \eqref{d3varpi0-prelim}, 
\begin{align*} 
\sabs{\p_\theta^3 \varpi_0 (\phi ^{-1} (\eta(x,t),t),t)} \les  \eps^ {-4}   \,.
\end{align*}
With this bound and using \eqref{d2w0-eta}, estimate \eqref{dxxx-varpi-bound} becomes
\begin{align} 
\sabs{\p_\theta^3 \varpi(\eta(x,t),t) } 
&  \les \eps^{-4}    
+ \tfrac{1}{\eps^2}  \sabs{\p_\theta w(\eta(x,t),t)} + \tfrac{1}{\eps} \sabs{\p_\theta w(\eta(x,t),t)}^2  \notag \\
 &\qquad 
+ \tfrac{1}{\eps} \sabs{\p_\theta^2 w(\eta(x,t),t)} +   \int_{-\eps}^t \sabs{\p_\theta w(\phi(\phi ^{-1} (\eta(x,t),t),t'),t')}^2 dt'   \,. \label{dxxx-varpi-bound:vomey}
\end{align} 

Having established a bound for the third derivative of $\varpi$, we are now ready to estimate the fourth derivative of $a$. We differentiate the identity \eqref{d3a-d2w} and obtain
\begin{align} 
\p_\theta^4 a & = \p_\theta^3 w ( \tfrac{1}{8} w \varpi -1)+  \tfrac{1}{8} \p_\theta^2w \p_\theta( w^2 \varpi)- \tfrac{w^2}{16}  \p_\theta^3 \varpi 
-\tfrac{1}{8}   w \p_\theta w \p_\theta^2 \varpi 
 - \tfrac{1}{8}   \p_\theta\bigl(2 w\p_\theta w \p_\theta\varpi + (\p_\theta w)^2\varpi \bigr) \,,
\label{d4a-d3w}
\end{align}
so that
\begin{align*} 
\sabs{\p_\theta^4 a(\theta,t)} \les \sabs{\p_\theta^3 \varpi(\theta,t)}
+  \sabs{\p_\theta^3 w(\theta ,t)}+ \bigl( \tfrac{1}{\eps} + \sabs{ \p_\theta w(\theta ,t)} \bigr)\bigl( \tfrac{1}{\eps} \sabs{ \p_\theta w(\theta ,t)}  + \sabs{ \p_\theta^2 w(\theta,t)}+\sabs{ \p_\theta^2\varpi (\theta,t)}\bigr) \,,
\end{align*} 
and with \eqref{dxxx-varpi-bound:vomey}, we have that
\begin{align*} 
\sabs{\p_\theta^4 a(\eta(x,t),t)}& \les   \eps^ {- 4} + \tfrac{1}{\eps^2}  \sabs{\p_\theta w(\eta(x,t),t)} +  \tfrac{1}{\eps}  \sabs{\p_\theta w(\eta(x,t),t)}^2
+  \sabs{\p_\theta^3 w(\eta(x,t),t)} \\
&\qquad
+ \bigl( \tfrac{1}{\eps} + \sabs{ \p_\theta w(\eta(x,t),t)} \bigr)\bigl( \sabs{ \p_\theta^2 w(\eta(x,t),t)}+\sabs{\p_\theta^2 \varpi  (\eta(x,t),t)}\bigr)  \notag \\
&\qquad
 + \int_{-\eps}^t \sabs{\p_\theta w(\phi(\phi ^{-1} (\eta(x,t),t),t'),t')}^2 dt' \,.
\end{align*} 
We observe that by \eqref{dxx-varphi-phi}, \eqref{just-write-it}, and \eqref{d2w0-eta}, 
\begin{align*} 
\sabs{ \p_\theta^2 \varpi  (\eta(x,t),t)} & \les \eps^ {-{\frac{5}{2}} }  + \eps^{-1} \sabs{ \p_\theta w(\eta(x,t),t)} \,,
\end{align*} 
and thus
\begin{align} 
\sabs{\p_\theta^4 a(\eta(x,t),t)}& \les   \eps^ {-4}   +  \eps^ {-{\frac{5}{2}} }  \sabs{\p_\theta w(\eta(x,t),t)} +  \eps^{-1}  \sabs{\p_\theta w(\eta(x,t),t)}^2
+  \eps^{-1}  \sabs{ \p_\theta^2 w(\eta(x,t),t)} \notag \\
&\qquad
 +  \sabs{ \p_\theta w(\eta(x,t),t)}  \sabs{ \p_\theta^2 w(\eta(x,t),t)}  +  \sabs{\p_\theta^3 w(\eta(x,t),t)} \notag \\
 &\qquad
 +   \int_{-\eps}^t \sabs{\p_\theta w(\phi(\phi ^{-1} (\eta(x,t),t),t'),t')}^2 dt'  \,.  \label{d4xa-great}
\end{align}

\subsection{Bounds on derivatives of $3$-characteristics}
\subsubsection{Identities for $\p_\theta^\gamma w \circ  \eta$}
With the integrating factor  $I_t(x)$ defined in \eqref{Is}, the equation  \eqref{eq0} is written as $ w \circ \eta = I_t w_0$, and differentiation yields
\begin{subequations} 
\label{w-derivative-idents}
\begin{align} 
\p_\theta w \circ \eta \  \eta_x & =  I_t w_0' + I_t' w_0 \,,  \\
\p_\theta^2w\circ \eta \ \eta_x^2  &= I_t w_0'' + 2I_t' w_0'  +  I_t'' w_0 -   \p_\theta w  \circ \eta \eta_{xx}\,, \\
\p_\theta^3w\circ \eta \ \eta_x^3  &=  I_t w_0''' + 3 I_t' w_0'' + 3 I_t '' w_0' +I_t ''' w_0 -3 \p_\theta^2 w \circ \eta \eta_x \eta_{xx} - \p_\theta w \circ \eta \eta_{xxx} \,, \\
\p_\theta^4w\circ \eta \ \eta_x^4  &=  I_t w_0'''' + 4 I_t ' w_0'''  + 6 I_t'' w_0'' + 4 I_t ''' w_0' +I_t '''' w_0 
  \notag \\
& \qquad
-6 \p_\theta^3 w \circ \eta \eta_x^2 \eta_{xx} -4 \p_\theta^2 w \circ \eta \eta_x \eta_{xxx} -3 \p_\theta^2 w \circ \eta  \eta_{xx}^2
 - \p_\theta w \circ \eta \eta_{xxxx}  \,.
\end{align} 
\end{subequations}

\subsubsection{Bounds for $\p_x \eta$}
We shall now obtain the precise rate at which $\p_x \eta(x_*,t) \to 0$  as $t \to T_*$, as well as a global bound for $\p_x\eta(x,t)$.

\begin{lemma}\label{lem:d1eta}  For  $-\eps\le t \le T_*$,  at the blowup label $x_*= \eps^ {\frac{3}{2}} y_*$, 
\begin{align} 
 \tfrac{1-\eps}{\eps} e^{-s} \le 
\p_x \eta(x_*,t)  \le  \tfrac{1+ \eps}{\eps} e^{-s}\,,
 \label{dx-eta-bound}
 \end{align} 
and for all labels $x$, we have that
 \begin{align} 
\sup_{t\in[-\log\eps,T_*) } \p_x \eta(x,t) & \le 
\begin{cases}
  11 \eps  & \ \ \abs{x-x_*} \le \eps^{2}  \\
   3& \ \ \abs{x-x_*} \ge \eps^{2}
 \end{cases} 
   \,.
 \label{dx-eta-bound0} 
 \end{align} 
 and
\begin{align} 
\p_x \eta(x,t) \ge \tfrac{1}{4} \eps   \ \ \operatorname{for}  \ \   \abs{x-x_*} \ge \eps^{2}  \,. \label{etax-lower-bad}
\end{align} 
 \end{lemma}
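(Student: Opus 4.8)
The plan is to work directly from the integral identity~\eqref{eq2}, namely
$\p_x \eta(x,t) = 1 + w_0'(x) \int_{-\eps}^t I_\tau(x)\, d\tau + w_0(x) \int_{-\eps}^t I_\tau'(x)\, d\tau$,
and to control each of the three terms using the already-established bounds. First I would record the elementary consequences of~\eqref{a-boot} and~\eqref{Is}: since $\sabs{a(\theta,t)}\le 2\kappa_0^2\eps$ and $\sabs{T_*}\le 7\kappa_0^2\eps^3$ by~\eqref{T*-bound}, the integrating factor satisfies $1-C\kappa_0^2\eps \cdot(t+\eps) \le I_\tau(x) \le 1 + C\kappa_0^2\eps\cdot (t+\eps)$, so in particular $\tfrac12 \le I_\tau(x)\le 2$ for $\eps$ small, and $\int_{-\eps}^t I_\tau\, d\tau = (t+\eps)(1 + \OO(\eps))$. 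For the first-derivative term I would use~\eqref{Is1}, the bound $\sabs{\p_\theta a(\theta,t)}\le 2\kappa_0$ from~\eqref{dx1a-boot}, and the crude bound $\p_x\eta\le 3$ (or the sharper $11\eps$ near $x_*$ once it is proven) to get $\sabs{I_\tau'(x)} \le C\kappa_0 (t+\eps)$, hence $\sabs{\int_{-\eps}^t I_\tau'(x)\, d\tau}\le C\kappa_0 (t+\eps)^2$. Thus from~\eqref{eq2} and~\eqref{w0-lowerupper},
\[
\p_x\eta(x,t) = 1 + w_0'(x)(t+\eps)(1+\OO(\eps)) + \OO(\kappa_0^2(t+\eps)^2).
\]

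For the lower/upper bounds~\eqref{dx-eta-bound} at the blowup label $x_*$, I would use the self-similar constraint machinery: at $x_*$, by Lemma~\ref{lem:Phi-rate} and the relation~\eqref{best-ever2} together with~\eqref{eq:fat:cat}/\eqref{eq:constraints}, the normalized slope $\p_yW(0,s)=-1$ for all $s$, which translates — via $\p_\theta w(\eta(x_*,t),t) = e^{s}W_y(\Phi_W(y_*,s),s)$ and the convergence $\Phi_W(y_*,s)\to 0$ — into the statement that $\p_\theta w(\eta(x_*,t),t) \approx -e^{s}$ up to lower-order corrections controlled by~\eqref{loco2} and~\eqref{mod2-bound}. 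Combined with the chain-rule identity $\p_\theta w\circ\eta\,\eta_x = I_t w_0' + I_t' w_0$ and the fact that $\p_\theta w(\eta(x_*,t),t)\,\p_x\eta(x_*,t)$ must be bounded (indeed, its product with $\eta_x$ is $\OO(1/\eps)$ uniformly), one extracts $\p_x\eta(x_*,t)\sim \eps^{-1}e^{-s}$; more precisely, I would feed the identity $\p_\theta w(\eta(x_*,t),t) = e^{s}W_y(\Phi_W(y_*,s),s)$ together with $w_0'(x_*) = \eps^{-1}\p_yW(0,-\log\eps)(1+\OO(\eps^{1/11})) = -\eps^{-1}(1+\OO(\eps^{1/11}))$ (from~\eqref{w0-to-W0} and~\eqref{eq:fat:cat}) back into the linear relation $\p_x\eta(x_*,t) = (I_t w_0' + I_t' w_0)/(\p_\theta w\circ\eta)$, being careful that $\p_\theta w\circ\eta$ does not vanish before $T_*$ since $x_*$ is precisely the blowup label. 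This yields the sandwich $\tfrac{1-\eps}{\eps}e^{-s}\le \p_x\eta(x_*,t)\le\tfrac{1+\eps}{\eps}e^{-s}$.

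For the global bounds~\eqref{dx-eta-bound0}–\eqref{etax-lower-bad} I would split on $\abs{x-x_*}$. When $\abs{x-x_*}\ge\eps^2$: the key point is that $w_0'$ is bounded below away from its minimal (most negative) value $\approx-\eps^{-1}$ by a quantitative margin, because of the non-degeneracy encoded in~\eqref{eq:fat:cat}/\eqref{ss-ic-1} — explicitly, $\bar w_\eps$ has $\bar w_\eps'(0) = -\eps^{-1}$ with quadratic vanishing of $\bar w_\eps'+\eps^{-1}$, so $w_0'(x)\ge -\eps^{-1} + c\eps^{-3}(x-x_*)^2 - \OO(\eps^{-1+1/11})$ (this is exactly the structure appearing in~\eqref{thm-aw-good-bound}). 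Plugging into $\p_x\eta = 1 + w_0'(t+\eps)(1+\OO(\eps)) + \OO(\eps^2)$ and using $t+\eps\le T_*+\eps\le\eps(1+\OO(\eps))$ gives both $\p_x\eta\ge 1 - (1+\OO(\eps)) + c\eps^{-2}(x-x_*)^2 \ge \tfrac14\eps$ for $\abs{x-x_*}\ge\eps^2$, and $\p_x\eta\le 3$. When $\abs{x-x_*}\le\eps^2$: here $w_0'(x) = -\eps^{-1} + \OO(\eps^{-1+1/11})$ is close to its extremal value, so $\p_x\eta(x,t) = 1 - (t+\eps)\eps^{-1}(1+\oo(1)) + \OO(\eps)$; since $t+\eps\le\eps(1+\OO(\eps))$ we get $\p_x\eta\le 1 + \OO(\eps) \le 11\eps$ — wait, this needs $\p_x\eta\ge 0$, which holds since characteristics of a genuine blowup do not cross before $T_*$, hence $\p_x\eta\ge 0$ throughout, and near $x_*$ the bound is dominated by the $t$ close to $T_*$ regime where $\p_x\eta$ is as small as $\sim\eps^{-1}e^{-s}$ which for $e^{-s}=\tau-t$ small is $\le 11\eps$; for $t$ away from $T_*$, $(t+\eps)\eps^{-1}<1$ so $\p_x\eta\le 1+\OO(\eps)\le 11\eps$ fails unless I am more careful — actually I would instead bound $\sup_t \p_x\eta(x,t)$ over the \emph{whole} interval by noting $\p_x\eta$ is monotone decreasing in $t$ near $x_*$ (since $w_0'<0$ there and the correction is lower order), so $\sup_t\p_x\eta(x,t) = \p_x\eta(x,-\eps) = 1$... so the claimed bound $11\eps$ must come from a different normalization; I would reconcile this by checking the precise statement against~\eqref{thm-aw-good-bound} and the fact that $T_*+\eps$ is the relevant time-scale, concluding $\sup_{t\in[-\eps,T_*)}\p_x\eta(x,t)\le 11\eps$ requires $\p_x\eta(x,-\eps)$ to already be $\OO(\eps)$ — which it is NOT unless the lemma means $\sup$ over a sub-interval or there is a rescaling; this discrepancy is exactly the kind of thing I would resolve by tracking constants carefully.

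\textbf{Main obstacle.} The hard part is the sharp lower bound $\p_x\eta(x_*,t)\ge\tfrac{1-\eps}{\eps}e^{-s}$: it requires showing that $\p_\theta w$ along the blowup trajectory is comparable to $-e^{s}$ with a controlled constant, which in turn rests on the self-similar stability estimate $\p_yW(0,s)=-1$ (an exact modulation constraint, so that part is clean) together with the statement that $W_y$ evaluated along $\Phi_W(y_*,s)$ stays within $\OO(\eps^{1/12})$ of $W_y(0,s)=-1$ — this uses~\eqref{loco2} and the decay rate~\eqref{Phi-rate} from Lemma~\ref{lem:Phi-rate}. Getting the constant $1\pm\eps$ rather than just $c_1\le\p_x\eta\cdot\eps e^{s}\le c_2$ demands that the $\OO$-terms in~\eqref{eq2} (the $I_t'w_0$ term, the deviation of $I_t$ from $1$, and the deviation of $w_0'(x_*)$ from $-\eps^{-1}$) all be genuinely of size $\OO(\eps)$ relative to the main term, which is a bookkeeping exercise in the already-proven bounds~\eqref{a-boot}, \eqref{T*-bound}, \eqref{mod2-bound}, \eqref{ss-ic-1}. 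The secondary obstacle is ensuring the global statements degrade correctly across the threshold $\abs{x-x_*}=\eps^2$, which is where the quadratic non-degeneracy of the initial slope does the essential work.
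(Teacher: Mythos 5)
For the two-sided bound \eqref{dx-eta-bound} at $x_*$ you take a genuinely different path than the paper. The paper uses Lemma~\ref{lem:flow-relation} and the identity \eqref{dyPhi1} to write $\p_x\eta(x_*,t) = e^{\int_{-\log\eps}^{s}\beta_\tau\p_y W(\Phi_W(y_*,r),r)\,dr}$ outright, then splits $\beta_\tau\p_y W = -1 + b_2\Phi_W^2 + (\beta_\tau-1)\p_y\bar W + \beta_\tau\p_y\tilde W$ via the Taylor expansion of $\p_y\bar W$ near the origin; the constant $-1$ integrates to give the clean factor $\tfrac{1}{\eps}e^{-s}$, and the remaining integrals are all $\OO(\eps)$ thanks to the exponential decay \eqref{Phi-rate} and the \emph{near-origin} bound \eqref{tildeWbootstrap0}. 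Your chain-rule quotient $(I_tw_0'+I_t'w_0)/(\p_\theta w\circ\eta)$ can in principle reach the same place, but the bound you invoke for $\p_y\tilde W$ along the blowup trajectory is the wrong one: \eqref{loco2} gives only $|\p_y\tilde W|\lesssim\eps^{1/12}$, hence $\p_\theta w\circ\eta = -e^{s}(1+\OO(\eps^{1/12}))$, which cannot produce a multiplicative constant $1\pm\eps$. The fix is to use \eqref{tildeWbootstrap0} (valid for $|y|\le\ell$, with degree-$\geq 3$ vanishing at $y=0$), which together with $|\Phi_W(y_*,s)|\lesssim e^{-5s/2}$ from \eqref{Phi-rate} gives the much stronger $W_y(\Phi_W(y_*,s),s) = -1 + \OO(e^{-5s})$; the numerator errors ($w_0'(x_*)+\eps^{-1}$, $I_t-1$, $I_t'w_0$) are likewise $\OO(\eps^{3/2})$ or better, but they must be sourced from \eqref{d2W} and \eqref{y*-bound} rather than the $\OO(\eps^{1/11})$ estimate \eqref{eq:tilde:W:p1=1} that you cite. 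The paper's route is therefore cleaner: it never divides and keeps everything as a single exponential of an integral that is transparently $-s+\log\tfrac{1}{\eps}+\OO(\eps)$.

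For \eqref{dx-eta-bound0}--\eqref{etax-lower-bad} your strategy matches the paper's: work from \eqref{eq2} via \eqref{nice1}, split on $|x-x_*|$, and use the two regimes of $w_0'$ from \eqref{nice4}. The snag you ran into about the $\sup_t$ is a legitimate observation and should not make you abandon the argument. Since $\p_x\eta(x,-\eps)=1$ for every $x$, a uniform supremum over $t$ cannot be $\le 11\eps$, and the interval $[-\log\eps,T_*)$ in the lemma (empty for small $\eps$) is evidently a typo for $[-\eps,T_*)$. What the bootstrap in the paper's proof actually delivers --- once the two cases in the intermediate display are unswapped so that the $1-\tfrac{1-4\eps}{\eps}(1-\eps)(\eps+6\eps^3)$ line attaches to $|x-x_*|\le\eps^2$ --- is $\eta_x(x,t)\le 11\eps$ for $t$ near $T_*$, i.e.\ once $\int_{-\eps}^t I_\tau\,d\tau$ has nearly saturated to $\eps$; the honest uniform-in-$t$ statement in the near-$x_*$ regime is the $t$-dependent two-sided bound of Proposition~\ref{prop:etax-taylor}, \eqref{etax-best}. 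So you read the situation correctly: the discrepancy is an imprecision in the stated lemma, not in your reasoning, and the resolution is to treat the near-$x_*$ upper bound as effective near the blowup time rather than at all times.
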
 
\begin{proof}[Proof of Lemma \ref{lem:d1eta}]
{\it Step 1. Bounds at the blowup label $y_*$.}
From \eqref{label-relation} and \eqref{best-ever}, we have that
\begin{align} 
\p_x \eta(x,t) = \eps^ {-\frac{3}{2}}  e^{-\frac{3}{2}s} \p_y\Phi_W(y,s) \,, \qquad  y=\eps^ {-\frac{3}{2}}x\,.
\label{blacksheep1}
\end{align}
We will use the identity
\begin{align} 
\p_y \Phi_W(y,s) = e^{\frac{3}{2}s} \eps^{\frac{3}{2}} e^{\int_{-\log\eps}^s \beta_\tau \p_y W (\Phi_W(y,r),r)dr}  \,,
 \label{dyPhi1} 
 \end{align} 
We consider the blowup trajectory $\Phi_W(y_*,s)$.  For this, 
we decompose $ \beta_\tau \p_y W$ as
\begin{align} 
\beta_\tau \p_y W = \p_y \bar W - (1- \beta_\tau )\p_y \bar W + \beta_\tau \p_y \tilde W \,. \label{expand1}
\end{align} 
By \eqref{y*-bound},  $\abs{y_*} \le 20\kappa_0 \eps^ {\frac{5}{2}} $ and by \eqref{Phi-rate}, $\abs{\Phi_W(y_*,s)} \le 20\kappa_0  e^{- {\frac{5}{2}} s} $
and as such, this unique trajectory stays in the Taylor region $\abs{y}\le \ell$ for $\eps$ sufficiently small.
Using the Taylor remainder theorem, we have that $\p_y \bar W(y) = -1 + b_2 y^2$, where $b_2 = \tfrac{1}{2}  \p_y^3\bar W(\bar y)$ for
some $\bar y$ between $0$ and $y$, so that $\sabs{b_2 - 3} \le \eps^2$.   Substitution of this expansion into \eqref{expand1} gives
\begin{align} 
\beta_\tau \p_y W =  -1 + b_2 y^2 - (1- \beta_\tau )\p_y \bar W + \beta_\tau \p_y \tilde W \,. \label{expand2}
\end{align} 
Hence,
\begin{align} 
&e^{\int_{-\log\eps}^s \beta_\tau \p_y W (\Phi_W(y_*,r),r)dr}  \notag \\
& \qquad =  \tfrac{1}{\eps} e^{-s} e^{b_2 \int_{-\log\eps}^s  \Phi_W(y_*,r)^2 dr}
e^{\int_{-\log\eps}^s (\beta_\tau-1) \p_y \bar W (\Phi_W(y_*,r),r)dr }e^{\int_{-\log\eps}^s \beta_\tau \p_y \tilde W (\Phi_W(y_*,r),r)dr } \,.
\label{raptor}
\end{align} 
From \eqref{tildeWbootstrap0}, \eqref{mod2-bound}, the fact that  $\abs{\p_y \bar W} \le 1$, and  \eqref{Phi-rate}   we have that for
$\eps$ small enough,
\begin{align*} 
1- \eps \le 
e^{b_2 \int_{-\log\eps}^s  \Phi_W(y_*,r)^2 dr}
e^{\int_{-\log\eps}^s (\beta_\tau -1 ) \p_y \bar W (\Phi_W(y_*,r),r)dr }e^{\int_{-\log\eps}^s \beta_\tau \p_y \tilde W (\Phi_W(y_*,r),r)dr }  \le 1+\eps   \,,
\end{align*} 
and therefore 
\begin{align} 
 \tfrac{1-\eps}{\eps} e^{-s} \le 
e^{\int_{-\log\eps}^s \beta_\tau \p_y W (\Phi_W(y_*,r),r)dr}  \le  \tfrac{1+\eps}{\eps} e^{-s} \,.
\label{rasputen3}
\end{align} 
The bound \eqref{rasputen3} and the identity \eqref{dyPhi1}  then shows that for
 $\eps$ sufficiently small, 
\begin{align} 
(1-\eps) \eps^ {\frac{1}{2}}  e^{\frac{s}{2}}  \le 
\p_y \Phi_W(y_*,s)  \le (1+\eps)   \eps^ {\frac{1}{2}}  e^{\frac{s}{2}}
\label{d1Phi-bound}
\end{align}  
It follows from \eqref{blacksheep1} that \eqref{dx-eta-bound} holds.

\vspace{.05in}
\noindent
{\it Step 2. A bound for $\p_x \eta$  with $\abs{x-x_*}\le \eps^2$.}
The  identity \eqref{eq2} together with \eqref{Is1} show that
\begin{align} 
\eta_x & =   1 + \int_{-\eps}^t I_\tau d\tau  w_0' -\tfrac{8}{3} w_0 \int_{-\eps}^t I_\tau  \int_{-\eps}^\tau a' \circ \eta \  \eta_x dr d\tau     \,. \label{nice1}
 \end{align} 
From \eqref{a-boot},
 \begin{align} 
 \abs{a(\theta,t)} \le 2 \kappa_0^2 \eps \ \ \text{ and } \ \  \abs{\p_\theta a(\theta,t)} \le 2 \kappa_0 \,. \label{nice2}
 \end{align} 
Therefore,  for $\eps$ taken sufficiently small, we have that
\begin{align} 
 1-  \eps\le I_\tau(x) \le 1+  \eps \,.  \label{nice3} 
\end{align} 
By \eqref{tildeWbootstrap0},  for $\eps $ taken sufficiently small, 
\begin{subequations} 
 \label{nice4}
\begin{alignat}{2}
- \tfrac{1}{\eps}  \le w_0'(x)  & \le  - \tfrac{1-4\eps}{\eps} \ \  &&\text{ for } \ \ \abs{x-x_*} \le \eps^{2} \,,  \\
\abs{w_0'(x)}  & \le   \tfrac{1}{\eps} \ \  &&\text{ for } \ \ \abs{x-x_*} \ge \eps^{2} \,, 
\end{alignat} 
\end{subequations} 
From  \eqref{T*-bound}, \eqref{nice1}--\eqref{nice3}, we have that for $\eps$ taken sufficiently small, 
\begin{align*}
\sup_{t\in[-\eps,T_*)}\eta_x(x,t)& \le 
\begin{cases}
 1  - \tfrac{1-4\eps}{\eps} \cdot(1-\eps) (\eps+ 6 \eps^3 )  + 7 \eps^2 \kappa_0^2 \sup_{t\in[-\eps,T_*)}\eta_x(x,t)  & \ \ \abs{x-x_*} \ge \eps^{2} \\
 {\tfrac{5}{2}}  + 6 \eps^2 \kappa_0^2 \sup_{t\in[-\eps,T_*)}\eta_x(x,t)  & \ \ \abs{x-x_*} \le \eps^{2}
\end{cases}
\end{align*} 
and hence 
\begin{align} 
 \sup_{t\in[-\eps,T_*)}\eta_x(x,t) \le
 \begin{cases}
  11 \eps  & \ \ \abs{x-x_*} \le \eps^{2}  \\
   3& \ \ \abs{x-x_*} \ge \eps^{2}
   \label{nice5}
 \end{cases} \,,
\end{align} 
which proves \eqref{dx-eta-bound0}.

Notice also from \eqref{nice1} that with the bound \eqref{eq:W:gamma=2:p1}, for all $\abs{x-x_*} \ge \eps^ {\frac{1}{2}} $ and for $\eps$ taken small enough,
$\sabs{ w_0'(x)} \le (1- \tfrac{\eps}{2} ) \eps^{-1} $, and hence  for all $t \in [-\log\eps, T_*)$, 
we have the lower bound
\begin{align*} 
\p_x \eta(x,t) \ge \tfrac{\eps}{4}    \,,
\end{align*} 
which gives the bound \eqref{etax-lower-bad}.
\end{proof}

\subsubsection{Bounds for $\p_x^2 \eta$}
We establish the rate at which $\p_x^2 \eta(x_*,t) \to 0$  as $t \to T_*$, and obtain bounds for $\p_x^2\eta(x,t)$   for all labels $x$.
\begin{lemma} \label{lem:d2eta} 
 For all $-\eps\le t \le T_*$,  we have the decay estimate
\begin{align} 
\abs{ \p_x^2 \eta(x_*, t)} \le 62 \kappa_0 e^{-s} \,
 \label{dx2-eta-bound}
 \end{align} 
and  for any label  $x$, we have the bound
\begin{align} 
\sabs{\p_x^2 \eta(x,t)} \le 
 \begin{cases}
8\eps^{-1}  & \ \ \abs{x-x_*} \le \eps^{2}  \\
 8\eps^{- {\frac{3}{2}} } & \ \ \abs{x-x_*} \ge \eps^{2}
 \end{cases}
   \,. \label{etaxx-good-bound}
\end{align} 
\end{lemma}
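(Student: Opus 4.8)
The plan is to follow the structure of the proof of Lemma~\ref{lem:d1eta}: obtain the decay bound \eqref{dx2-eta-bound} at the blowup label in self-similar coordinates, and the global bounds \eqref{etaxx-good-bound} in physical coordinates from the identities \eqref{derivative-eta-identities}--\eqref{dx-I-identities}, the bounds \eqref{a-boot}, and the first-order estimates \eqref{nice3}, \eqref{T*-bound}, \eqref{dx-eta-bound0} already established in Lemma~\ref{lem:d1eta}.

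\emph{Step 1 (decay at $x_*$).} I would differentiate the identity \eqref{dyPhi1} once more in $y$, obtaining
\[
\p_y^2\Phi_W(y,s) = \p_y\Phi_W(y,s)\int_{-\log\eps}^s \beta_\tau\,\p_y^2 W(\Phi_W(y,r),r)\,\p_y\Phi_W(y,r)\,dr,
\]
and combine it with the identity $\p_x^2\eta(x,t) = \eps^{-3}e^{-\frac32 s}\p_y^2\Phi_W(y,s)$, which follows by differentiating \eqref{best-ever} and using \eqref{label-relation}. Evaluating at $y=y_*$: since $\Phi_W(y_*,\cdot)$ stays in the Taylor region with $|\Phi_W(y_*,r)|\le 20\kappa_0 e^{-\frac52 r}$ by \eqref{Phi-rate}, and since $\p_y^2\bar W$ vanishes linearly at the origin (from $\bar W^3+\bar W=-y$ one gets $\p_y^2\bar W(0)=0$, $\p_y^3\bar W(0)=6$), the bootstrap bounds \eqref{tildeWbootstrap0}, \eqref{bootstrap:Wtilde3:at:0} on $\tilde W$ yield $|\p_y^2 W(\Phi_W(y_*,r),r)|\lesssim\kappa_0 e^{-\frac52 r}$. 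With \eqref{d1Phi-bound}, $\p_y\Phi_W(y_*,r)\le(1+\eps)\eps^{\frac12}e^{\frac r2}$, and $\beta_\tau\le 2$ from \eqref{mod2-bound}, the $r$-integral is $\lesssim\kappa_0\eps^{\frac12}\int_{-\log\eps}^\infty e^{-2r}\,dr\lesssim\kappa_0\eps^{\frac52}$, so $|\p_y^2\Phi_W(y_*,s)|\lesssim\kappa_0\eps^3 e^{\frac s2}$ and $|\p_x^2\eta(x_*,t)|\lesssim\kappa_0 e^{-s}$; tracking constants produces the factor $62$. Note that a term-by-term estimate of \eqref{eq3} at $x=x_*$ only gives the $\eps$-uniform bound $\lesssim\kappa_0\eps$, since the decaying factor $e^{-s}$ arises from cancellation; what saves the self-similar computation is the \emph{smallness of $\p_y^2 W$ along the blowup trajectory}.

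\emph{Step 2 (global bounds).} I would start from \eqref{eq3},
\[
\p_x^2\eta = w_0''\!\int_{-\eps}^t\! I_\tau\,d\tau + 2w_0'\!\int_{-\eps}^t\! I_\tau'\,d\tau + w_0\!\int_{-\eps}^t\! I_\tau''\,d\tau,
\]
use $\int_{-\eps}^t I_\tau\,d\tau\le(1+o(1))\eps$ (from \eqref{nice3} and \eqref{T*-bound}), and treat the first term as dominant: for $|x-x_*|\ge\eps^2$ the bound $|w_0''|\le 7\eps^{-\frac52}$ of \eqref{d2W} gives $|w_0''\!\int I_\tau\,d\tau|\le 8\eps^{-\frac32}$, while for $|x-x_*|\le\eps^2$ the expansion $\p_y^2\bar W(y)=6y+O(y^2)$ together with \eqref{tildeWbootstrap0} gives $|w_0''(x)|\le(7+o(1))\eps^{-2}$, hence $|w_0''\!\int I_\tau\,d\tau|\le 8\eps^{-1}$. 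The remaining two terms are lower order: $I_\tau'$, $I_\tau''$ are estimated via \eqref{Is1}, \eqref{Is2} using \eqref{a-boot} and the pointwise bound $\eta_x\le 11\eps$ (resp.\ $\le 3$) from \eqref{dx-eta-bound0}. The one contribution needing care is $w_0\int_{-\eps}^t I_\tau\int_{-\eps}^\tau a''\!\circ\eta\;\eta_x^2\,dr\,d\tau$, where $|a''\!\circ\eta|\lesssim\eps^{-1}e^{\frac32 s}$ is singular as $t\to T_*$; exchanging the order of integration converts the inner $\tau$-integral into the weight $\int_r^t I_\tau\,d\tau\lesssim t-r$, and since $e^{\frac32 s(r)}=(\tau(r)-r)^{-\frac32}$ while $t-r\approx e^{-s(r)}-e^{-s(t)}$ vanishes exactly where the singularity concentrates, one finds $\int_{-\eps}^t e^{\frac32 s(r)}(t-r)\,dr\lesssim\eps^{\frac12}$, so this term is $\lesssim\kappa_0\eps^{\frac32}$ (resp.\ $\lesssim\kappa_0\eps^{-\frac12}$) and negligible. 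Finally the $\eta_{xx}$-dependence inside $I_\tau''$, entering through $a'\!\circ\eta\;\eta_{xx}$, would be closed by a Grönwall (fixed-point) argument using $\int_{-\eps}^t|a'\!\circ\eta|(t-r)\,dr\lesssim\kappa_0\eps^2$, which only perturbs the constants by $1+o(1)$; collecting the estimates yields \eqref{etaxx-good-bound}.

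\emph{Main obstacle.} The hard part is the singular double time-integral with $\p_\theta^2 a\sim(\tau-t)^{-\frac32}$: bounding $|\p_\theta^2 a|\le 12 e^s$ and integrating in time directly diverges, so one must retain the $(t-r)$ weight produced by the $I_\tau$-integral (equivalently, exploit the self-similar time change $dr=\beta_\tau e^{-s}\,ds$ to trade the singularity for integrability). For the blowup label the analogous obstacle is that \eqref{eq3} cannot be closed term-by-term; one must pass to the self-similar representation, where the vanishing of $\p_y^2 W$ at the origin supplies the needed decay — this is the $\p_x\eta$ estimate of Lemma~\ref{lem:d1eta} carried out one derivative higher.
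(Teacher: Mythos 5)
Your Step 1 is essentially the argument in the paper: both proofs pass to self-similar coordinates, use the identity $\eta_{xx} = \eta_x\int (\p_\theta^2 w)\circ\eta\;\eta_x\,dt'$ (which you phrase equivalently as $\p_y^2\Phi_W = \p_y\Phi_W\int\p_y^2 W\circ\Phi_W\,\p_y\Phi_W\,dr$), and close via the smallness of $W_{yy}$ along the blowup trajectory — i.e.\ exactly \eqref{whynot9monthsago1}. Your Step 2 also correctly identifies $w_0''\int I_\tau\,d\tau$ as dominant, matching \eqref{nice8}. Where you and the paper part ways is the singular $a''$-term. The paper never exchanges orders of integration or invokes a $(t-r)$ weight; instead it observes, via \eqref{goodmusickeepsmegoing} and \eqref{w-derivative-idents}, that $\p_\theta^2 a$ is controlled pointwise by $\p_\theta w$, whose composition with $\eta$ is $(I_t w_0' + I_t'w_0)\eta_x^{-1}$, so that $a''\circ\eta\cdot\eta_x^2$ is \emph{uniformly bounded in time} (this is \eqref{nice7}--\eqref{nice7b}). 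That structural observation — a consequence of the specific-vorticity identity \eqref{d2a-dw} — makes the double time-integral trivially bounded with no subtle cancellation to exploit. Your alternative, retaining the $\int_r^t I_\tau\,d\tau\lesssim t-r$ weight and using $(t-r)e^{s(r)}\lesssim 1$, also converges and is sound in spirit, so your overall conclusion is correct; but it is genuinely a different technique, its constants are looser, and your intermediate estimate $|a''\circ\eta|\lesssim\eps^{-1}e^{\frac32 s}$ is not the actual bootstrap bound (which is $12e^s$, as you yourself use in the ``Main obstacle'' paragraph — the discrepancy should be resolved). Consequently, your ``Main obstacle'' paragraph somewhat overstates the difficulty: once one uses the pointwise bound on $a''\circ\eta\cdot\eta_x^2$, there is no singular time-integral to tame, and the double integral is crudely absolutely convergent.
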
 
\begin{proof}[Proof of Lemma \ref{lem:d2eta}]
{\it Step 1. A bound for $\p_x^2 \eta$ along the blowup label $x_*$.}
Since $\eta_x = e^{\int_{-\eps}^t  \p_\theta w \circ \eta dr}$, we have that
\begin{align} 
\eta_{xx}(x,t)  & = \eta_x(x,t) \int_{-\eps}^t \p_\theta^2w (\eta(x,t'), t') \eta_x(x,t') dt' \notag \\
&  = \eta_x(x,t) \int_{-\log\eps}^s e^{\frac{3}{2}s'} \beta_\tau W_{yy}(\Phi_W(y,s'), s') \eta_x(x,t')  ds' \label{etaxx-identity} \,,
\end{align} 
where we have used the change of variables formula together with the  identity \eqref{dsdt} which shows that $dt' = \beta_\tau e^{-s'}ds'$.
 By  Lemma \ref{lem:Phi-rate},   $\sabs{\Phi_W(y_*,s)}\le 20 \kappa_0e^{-{\frac{5}{2}s} }$ and  $\abs{y_*}\le 20\kappa_0 \eps^{\frac{5}{2}} $, so that together with \eqref{tildeWbootstrap0},  
we have that for $\eps$ taken small enough and for all $ -\log\eps \le s'\le s $, 
\begin{align} 
\abs{ \beta_\tau  W_{yy} (\Phi_W(y_*,s'),s')}  \le 122 \kappa_0 e^{-\frac{5s}{2}}  \,.  \label{whynot9monthsago1}
\end{align} 
Hence, with \eqref{dx-eta-bound} and the identity \label{etaxx-identity} evaluated at the label $x_*$, we have that
\begin{align} 
\abs{\eta_{xx}(x_*,t)} \le 62 \kappa_0 e^{-s} \,,
\notag
\end{align} 
which proves \eqref{dx2-eta-bound}.

\vspace{.05in}
\noindent
{\it Step 2. A bound for $\p_x^2 \eta$ for all labels $x$.}
Using the identity in \eqref{eq3} and \eqref{Is2}, we have that
\begin{align} 
\p_x^2 \eta & =  \int_{-\eps}^t I_\tau d\tau w_0''-\tfrac{16}{3} w_0' \int_{-\eps}^t I_\tau  \int_{-\eps}^\tau a' \circ \eta \  \eta_x dr d\tau  \notag \\
& \qquad   
+ w_0 \int_{-\eps}^t I_\tau  \left( \Bigl( \tfrac{8}{3}  \int_{-\eps}^\tau a' \circ \eta \ \eta_x dr 
\Bigr)^2 - \tfrac{8}{3}  \int_{-\eps}^\tau  \bigl( a'' \circ \eta \ \eta_x^2 + a' \circ \eta \ \eta_{xx} \bigr) dr 
\right)d\tau     \,. \label{nice6}
 \end{align} 
 From \eqref{goodmusickeepsmegoing} and \eqref{w-derivative-idents},  
 \begin{align} 
\sabs{a''(\eta(x,t),t)} \le \tfrac{7}{2}   \sabs{\p_\theta w(\eta(x,t),t)}+  \tfrac{7}{\eps}  
\le \tfrac{7}{2}  ( I_t w_0' + I_t' w_0 )\eta_x^{-1} +  \tfrac{7}{\eps}   \,. \label{nice7}
\end{align}
It follows from \eqref{nice5} that
 \begin{align} 
\sabs{a''(\eta(x,t),t) \eta_x^2} 
\le \tfrac{7}{2}  \sabs{ I_t w_0' + I_t' w_0 }\eta_x+  \tfrac{7}{\eps} \eta_x^2
\le
 \begin{cases}
 42  & \ \ \abs{x-x_*} \le \eps^{2}  \\
   \tfrac{74}{\eps} & \ \ \abs{x-x_*} \ge \eps^{2}
 \end{cases} 
  \,. \label{nice7b}
\end{align}

By \eqref{d2W} and \eqref{tildeWbootstrap0}, for $\eps$ small enough,
\begin{align}
\sabs{w_0''(x)} \le
 \begin{cases}
 7 \eps^{-2} & \ \ \abs{x-x_*} \le \eps^{2}  \\
   7\eps^{-{\frac{5}{2}} }& \ \ \abs{x-x_*} \ge \eps^{2}
 \end{cases} 
\,. \label{nice8}
\end{align} 
It follows from \eqref{w0-lowerupper}, \eqref{nice2}--\eqref{nice5}, \eqref{nice6}--\eqref{nice8} that
\begin{align} 
(1- 7 \kappa_0^2 \eps^2) \sup_{t \in [-\eps,T_*)}\sabs{\p_x^2 \eta(x,t)} \le
 \begin{cases}
  \tfrac{15}{2}\eps^{-1}  + \OO( \eps^2)   & \ \ \abs{x-x_*} \le \eps^{2}  \\
   \tfrac{15}{2} \eps^{- {\frac{3}{2}} } + \OO(\eps) & \ \ \abs{x-x_*} \ge \eps^{2}
 \end{cases}
 \,, \notag
\end{align} 
and thus taking $\eps$ sufficiently small, 
\begin{align} 
\sup_{t \in [-\eps,T_*)}\sabs{\p_x^2 \eta(x,t)} \le 
 \begin{cases}
8\eps^{-1}  & \ \ \abs{x-x_*} \le \eps^{2}  \\
 8\eps^{- {\frac{3}{2}} } & \ \ \abs{x-x_*} \ge \eps^{2}
 \end{cases}
\,, \notag
\end{align} 
which proves \eqref{etaxx-good-bound}.
\end{proof} 
\begin{remark} 
We have shown in the proof of Lemmas  \ref{lem:d1eta} and  \ref{lem:d2eta} that  for $\eps $ taken sufficiently small, 
\begin{subequations} 
\label{Is012}
\begin{align} 
 \sabs{I_t(x)}   & \le 1+  \eps \,, \\
\sabs{ I'_t}  & \le 
 \begin{cases}
50\kappa_0 \eps^2 & \ \ \abs{x-x_*} \le \eps^{2}  \\
14 \kappa_0 \eps& \ \ \abs{x-x_*} \ge \eps^{2}
 \end{cases}
 \,, \\
 \sabs{ I''_t}  & \le 
 \begin{cases}
40 \kappa_0 & \ \ \abs{x-x_*} \le \eps^{2}  \\
40 \kappa_0 \eps^ {- {\frac{1}{2}} } & \ \ \abs{x-x_*} \ge \eps^{2}
 \end{cases} \,.
 \end{align} 
 \end{subequations} 
 \end{remark}

\subsubsection{Bounds for $\p_x^3 \eta$}
\begin{lemma}\label{lem:d3eta}  For all $-\eps\le t \le T_*$, we have that 
\begin{align} 
\sup_{t \in [-\eps,T_*)} \abs{\p_x^3 \eta(x,t)} \le
 \begin{cases}
 \tfrac{ (6+ \eps^ \frac{1}{6} )}{\eps^3}  & \ \ \abs{x-x_*} \le \eps^{2}  \\
 \tfrac{ C}{\eps^4} & \ \ \abs{x-x_*} \ge \eps^{2}
 \end{cases} 
 \,, \label{dx3-eta-bound-all-t} 
\end{align} 
and for $\abs{x-x_*} \le \eps^2$, 
\begin{align} 
\tfrac{(\eps+t)(6- \eps^{\frac{1}{6}} )}{\eps^4}  \le  \p_x^3 \eta(x, t) \le   \tfrac{6+\eps^ {\frac{1}{6}}}{ \eps^3}  \,.
 \label{dx3-eta-bound}
 \end{align} 
 \end{lemma}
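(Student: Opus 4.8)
The plan is to follow the scheme already used for Lemmas~\ref{lem:d1eta} and~\ref{lem:d2eta}: read $\p_x^3 \eta$ off the exact identity~\eqref{eq4}, isolate the leading summand $w_0''' \int_{-\eps}^t I_\tau\, d\tau$, and show that the remaining three summands $3 w_0'' \int I_\tau'$, $3w_0' \int I_\tau''$, $w_0 \int I_\tau'''$ are genuine lower-order corrections. The first three time integrals are already under control: by~\eqref{Is012} one has $1-\eps \le I_\tau \le 1+\eps$, hence $\int_{-\eps}^t I_\tau\, d\tau \in [(1-\eps)(\eps+t), (1+\eps)(\eps+t)]$, and combined with~\eqref{T*-bound} (so $\eps + t \le \eps(1 + 7\kappa_0^2 \eps^2)$) together with the $I_\tau', I_\tau''$ bounds of~\eqref{Is012}, the integrals $\int I_\tau'$ and $\int I_\tau''$ are small relative to $\eps^{-3}$ times any of the relevant coefficients. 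The only genuinely new ingredient is therefore a bound for $I_\tau'''$ via~\eqref{Is3}.

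First I would pin down the coefficients near the blowup label. Writing $w_0 = \bar w_\eps + \tilde w_0$ with $\bar w_\eps(\theta) = \eps^{\frac12} \bar W(\eps^{-\frac32}\theta) + \kappa_0$, one has $\p_\theta^3 w_0(\theta) = \eps^{-4}\bigl(\p_y^3 \bar W(y) + \p_y^3 \tilde W(y,-\log\eps)\bigr)$ at $y = \eps^{-\frac32}\theta$. Differentiating the defining relation $\bar W^3 + \bar W = -y$ three times at $y=0$ gives $\bar W'''(0) = 6$; Taylor's theorem then yields $|\p_y^3 \bar W(y) - 6| \les |y|^2$, which on the window $|x - x_*| \le \eps^2$ (where $|y| \les \eps^{\frac12}$) is $\OO(\eps)$. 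Combined with~\eqref{tildeWbootstrap0} this gives $w_0'''(x) = \eps^{-4}\bigl(6 + \OO(M\eps^{\frac14})\bigr) = \eps^{-4}(6 \pm \eps^{\frac16})$ for $\eps$ small depending on $\kappa_0$ (hence on $M(\kappa_0)$), while for $|x-x_*| \ge \eps^2$ the crude bounds $|w_0'''| \les \eps^{-4}$, $|w_0''| \les \eps^{-\frac52}$, $|w_0'| \les \eps^{-1}$ coming from~\eqref{d3W} and~\eqref{nice4} suffice.

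The main obstacle, and where most of the work lies, is the bound for $I_\tau'''$. In~\eqref{Is3} the purely quadratic and cubic terms are harmless: $\int a' \circ \eta\, \eta_x\, dr = \OO(\kappa_0 \eps^2)$ by~\eqref{a-boot} and~\eqref{dx-eta-bound0}, and $\int (a'' \circ \eta\, \eta_x^2 + a' \circ \eta\, \eta_{xx})\, dr = \OO(\kappa_0)$ by~\eqref{nice7b} and~\eqref{etaxx-good-bound}. The delicate term is $\int_{-\eps}^\tau (a''' \circ \eta\, \eta_x^3 + 3 a'' \circ \eta\, \eta_x \eta_{xx} + a' \circ \eta\, \eta_{xxx})\, dr$. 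For $a''' \circ \eta\, \eta_x^3$ I would insert~\eqref{dxxx-a-final0} and then re-expand its right side using~\eqref{d2a-dw}, \eqref{d3a-d2w} and~\eqref{w-derivative-idents} so that every occurrence of $\p_\theta^2 w \circ \eta$ or $\p_\theta w \circ \eta$ is paired with the matching power of $\eta_x$ to form the tame combination $\p_\theta^k w \circ \eta\, \eta_x^k = I_t w_0^{(k)} + \dots$; here $\eta_{xx}/\eta_x = \int_{-\eps}^t \p_\theta^2 w \circ \eta\, \eta_x\, dt'$ is kept under control either directly from~\eqref{dx-eta-bound} and~\eqref{dx2-eta-bound} (so $|\eta_{xx}/\eta_x| \les \kappa_0 \eps$ at $x_*$), or, away from $x_*$, via the self-similar representation~\eqref{dyPhi1}, the decay estimate~\eqref{whynot9monthsago1}, and the rate~\eqref{Phi-rate} of the blowup trajectory. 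The term $3 a'' \circ \eta\, \eta_x \eta_{xx}$ is handled likewise using $|a'' \circ \eta|\,\eta_x \les |I_t w_0' + I_t' w_0| + \eps^{-1}\eta_x \les \eps^{-1}$ from~\eqref{goodmusickeepsmegoing}. The one term coupling back to $\p_x^3 \eta$ is $a' \circ \eta\, \eta_{xxx}$; since $|a'| \le 2\kappa_0$ and the relevant interval has length $\le \eps + |T_*| \les \eps$, feeding everything into~\eqref{eq4} produces a linear Gr\"onwall inequality $\sup_{[-\eps,t]}|\p_x^3\eta| \le (\mathrm{RHS}) + C\kappa_0^2 (t+\eps)^2 \sup_{[-\eps,t]}|\p_x^3\eta|$ whose feedback constant is $\OO(\kappa_0^2 \eps^2) < \tfrac12$, so the estimate closes and the leading coefficient $6$ is not perturbed.

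To conclude, I would collect the terms. For $|x-x_*| \le \eps^2$ the leading summand satisfies $w_0''' \int_{-\eps}^t I_\tau\, d\tau \in \bigl[\tfrac{(\eps+t)(6 - \eps^{1/6})}{\eps^4}(1 - \OO(\kappa_0^2\eps^2)),\, \tfrac{(\eps+t)(6 + \eps^{1/6})}{\eps^4}(1 + \OO(\kappa_0^2\eps^2))\bigr]$, while the other three summands contribute corrections bounded by $C\kappa_0$ plus the $\OO(\kappa_0^2\eps^2)$-feedback, i.e.\ by a quantity that is negligible against $\eps^{1/6}\cdot \tfrac{\eps+t}{\eps^4}$; absorbing these and using $\eps+t \le \eps(1 + 7\kappa_0^2\eps^2)$ gives precisely~\eqref{dx3-eta-bound}, hence the first branch of~\eqref{dx3-eta-bound-all-t}. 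For $|x-x_*| \ge \eps^2$ one reruns the same estimates with the crude coefficient bounds and the $|x-x_*|\ge\eps^2$ branches of~\eqref{Is012},~\eqref{dx-eta-bound0},~\eqref{etaxx-good-bound}, obtaining $|\p_x^3\eta| \le C\eps^{-4}$. The hard part throughout is the $I_\tau'''$ estimate: propagating~\eqref{dxxx-a-final0} for $\p_\theta^3 a \circ \eta$ while keeping every dangerous factor tied to a power of $\eta_x$, and closing the $\eta_{xxx}$ feedback by Gr\"onwall.
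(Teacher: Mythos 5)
Your proposal follows the same route the paper takes: you isolate the leading summand $w_0'''\int_{-\eps}^t I_\tau\,d\tau$ in the identity~\eqref{eq4}, pin the coefficient $6$ from the Taylor expansion $\bar W'''(0)=6$ together with the perturbation bounds \eqref{tildeWbootstrap0}/\eqref{eq:tilde:W:3:derivative:0} (the paper records this as~\eqref{nice9}), control $I_\tau'''$ via~\eqref{Is3} by re-expanding $\p_\theta^3 a\circ\eta\,\eta_x^3$ through~\eqref{dxxx-a-final0} and the Lagrangian identities \eqref{w-derivative-idents}/\eqref{funny-raccoon4} (the paper's \eqref{funny-raccoon5}--\eqref{mice7b}), and close the $a'\circ\eta\,\eta_{xxx}$ feedback with the same $\OO(\kappa_0^2\eps^2)$ Gr\"onwall constant that yields~\eqref{nice10}. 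One small imprecision: you invoke $\eta_{xx}/\eta_x$ being controlled, and cite \eqref{whynot9monthsago1} and \eqref{Phi-rate} ``away from $x_*$'', but those decay estimates hold only along the blowup trajectory $\Phi_W(y_*,\cdot)$; for general $|x-x_*|\le\eps^2$ the ratio $\eta_{xx}/\eta_x$ is not uniformly bounded (since $\eta_x\to0$ at $(x_*,T_*)$). This does not create a gap, however, because in the actual computation the $\eta_x^{-1}$ arising from~\eqref{funny-raccoon4} is always multiplied by an extra factor of $\eta_x$ coming from the $\eta_x^3$ prefactor, so the quantity that needs bounding is $(w_0'I_t+I_t'w_0)\eta_{xx}$ --- which is $\lesssim\eps^{-2}$ by the Lagrangian bounds \eqref{dx-eta-bound0}, \eqref{etaxx-good-bound} --- not $\eta_{xx}/\eta_x$ itself.
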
 
\begin{proof}[Proof of Lemma \ref{lem:d3eta}]
We first note that the bounds \eqref{dx-eta-bound0} and \eqref{etaxx-good-bound} show that for labels $x$ satisfying $\abs{x-x_*} \le \eps^{2}$, we have
that
\begin{align} 
\abs{\eta_x(x,t)} \le 
\begin{cases}
  11 \eps  & \ \ \abs{x-x_*} \le \eps^{2}  \\
   3& \ \ \abs{x-x_*} \ge \eps^{2}
 \end{cases} 
\ \ \text{ and } \ \  \abs{\eta_{xx}(x,t)} \le  
 \begin{cases}
8\eps^{-1} & \ \ \abs{x-x_*} \le \eps^{2}  \\
   8\eps^{- {\frac{3}{2}} } & \ \ \abs{x-x_*} \ge \eps^{2}
 \end{cases}
 \,.    \label{etax-etaxx-small}
\end{align} 
The identities \eqref{Is3} and \eqref{eq4} give
\begin{align} 
\p_x^3 \eta & = w_0'''  \int_{-\eps}^t I_\tau d\tau +3w''_0   \int_{-\eps}^t I'_\tau d\tau +3 w'_0\int_{-\eps}^t I''_\tau d\tau    \notag \\
& \qquad
+ w_0  \int_{-\epsilon }^t I_\tau \Bigl(-\tfrac{512}{27} \bigl(\int_{-\epsilon }^\tau a' \circ \eta \ \eta_x\, dr\bigr)^3
+\tfrac{64}{3}   \bigl(\int_{-\epsilon }^\tau a' \circ \eta \ \eta_x dr\bigr)
 \int_{-\epsilon }^\tau \bigl(a'' \circ \eta \ \eta_x^2 + a' \circ \eta \ \eta_{xx}\bigr) dr \notag \\
 & \qquad \qquad \qquad  -\tfrac{8}{3}  \int_{-\epsilon }^\tau \bigl( a''' \circ \eta \ \eta_x^3 + 3 a'' \circ \eta \ \eta_x \eta_{xx} + a' \circ \eta \ \eta_{xxx}\bigr) dr\Bigr) d\tau
  \,. \label{ironmaiden-runningfree}
 \end{align} 
From \eqref{w-derivative-idents}, we have that
\begin{align} 
\p_\theta^2 w( \eta(x,t),t)   =  \eta_x ^{-2} (I_t w_0'' + 2 I_t' w_0' + I''_t w_0 - \eta_x ^{-1} (w_0'I_t + I_t' w_0) \eta_{xx} ) \,. \label{funny-raccoon4} 
\end{align} 
From \eqref{dxxx-a-final0},  \eqref{w-derivative-idents},  and \eqref{funny-raccoon4}, 
\begin{align} 
\sabs{\p_\theta^3 a(\eta(x,t),t)}
&  \les   \eps^ {- {\frac{5}{2}} }   +  \sabs{\p_\theta^2 w (\eta(x,t),t)}   + \sabs{\p_\theta w (\eta(x,t),t)} ^2 + \tfrac{1}{\eps} \sabs{\p_\theta w (\eta(x,t),t)}  \notag \\
& \les  \eps^ {-{\frac{5}{2}} }   +   \eta_x ^{-2} (I_t w_0'' + 2 I_t' w_0' + I''_t w_0 - \eta_x ^{-1} (w_0'I_t + I_t' w_0) \eta_{xx} )  \notag \\
& \qquad + \abs{ \eta_x ^{-1} (I_t w_0' + I_t' w_0 )}^2 + \tfrac{1}{\eps} \abs{ \eta_x ^{-1} (I_t w_0' + I_t' w_0 )} \,.   \label{funny-raccoon5} 
\end{align} 
We will use \eqref{nice4},   \eqref{nice8}, and the fact that  by \eqref{eq:tilde:W:3:derivative:0} and \eqref{d3W}, 
\begin{subequations} 
 \label{nice9}
\begin{alignat}{2}
\tfrac{6- \eps^{\frac{1}{4}}}{\eps^4}  \le w_0'''(\theta) & \le  \tfrac{ 6 + \eps^{\frac{1}{4}} }{\eps^4}  \ \  &&\text{ for } \ \ \abs{x-x_*} \le \eps^{2} \,,  \\
\abs{w_0'''(x)}  & \les  \eps^{-4} \ \  &&\text{ for } \ \ \abs{x-x_*} \ge \eps^{2} \,.
\end{alignat} 
\end{subequations} 
Then, with \eqref{nice8},  \eqref{Is012} and \eqref{funny-raccoon5}, we have that
 \begin{align} 
\sabs{a'''(\eta(x,t),t) \eta_x^3} 
\les
 \begin{cases}
 \eps^{-2}  & \ \ \abs{x-x_*} \le \eps^{2}  \\
 \eps^{-{\frac{5}{2}} } & \ \ \abs{x-x_*} \ge \eps^{2}
 \end{cases} 
  \,. \label{mice7b}
\end{align}

With these bounds, and with \eqref{Is012}, \eqref{etax-etaxx-small}--\eqref{funny-raccoon5} applied to \eqref{ironmaiden-runningfree}, we have that
\begin{align} 
\sup_{t \in [-\eps,T_*)} \abs{\p_x^3 \eta(x,t)} \le
 \begin{cases}
 (\eps+ \eps^ 2 ) \tfrac{ (6+ \eps^ \frac{1}{4} )}{\eps^4} +  \tfrac{C}{\eps} 
+ 7 \eps^2 \kappa_0^2\sup_{t \in [-\eps,T_*)} \abs{\p_x^3 \eta(x,t)}  & \ \ \abs{x-x_*} \le \eps^{2}  \\
C \eps^{-4} 
+ 7 \eps^2 \kappa_0^2\sup_{t \in [-\eps,T_*)} \abs{\p_x^3 \eta(x,t)}   & \ \ \abs{x-x_*} \ge \eps^{2}
 \end{cases} 
 \,. \label{nice10}
\end{align} 
It immediately follows that for $\eps $ small enough,
\begin{align} 
\sup_{t \in [-\eps,T_*)} \abs{\eta_{xxx}(x,t)} \le
 \begin{cases}
 \tfrac{ (6+ \eps^ \frac{1}{6} )}{\eps^3}  & \ \ \abs{x-x_*} \le \eps^{2}  \\
 \tfrac{ C}{\eps^4} & \ \ \abs{x-x_*} \ge \eps^{2}
 \end{cases} 
  \,, \label{funny-raccoon7plus} 
\end{align} 
which establishes  \eqref{dx3-eta-bound-all-t}.  

For labels $\sabs{x-x_*} \le \eps^2$, we can easily see that $\p_x^3 \eta(x,t)$ is positive.
With \eqref{nice9}, we have that the first term on the right side of
\eqref{ironmaiden-runningfree} has the lower bound
\begin{align*} 
\tfrac{(\eps+t)(6- \eps^{\frac{1}{5}} )}{\eps^4} \le  \int_{-\eps}^t I_\tau d\tau w_0'''  \,.
 \end{align*} 
Thus, with  \eqref{nice4},   \eqref{nice8}, \eqref{Is012}, \eqref{etax-etaxx-small}--\eqref{funny-raccoon5}, in the same way that we obtained \eqref{nice10}, we find that
\begin{align*} 
\tfrac{(\eps+t)(6- \eps^{\frac{1}{6}} )}{\eps^4} \le  \p_x^3\eta(x,t) \le  \tfrac{ (6+ \eps^ \frac{1}{6} )}{\eps^3} \,,
 \end{align*} 
 which establishes \eqref{dx3-eta-bound}.
 \end{proof} 
 
\subsubsection{A sharp bound for $\p_x \eta$ and $\p_x^2\eta$}
\begin{proposition}\label{prop:etax-taylor}
For $\abs{x-x_*} \le \eps^2$, we have that
\begin{align} 
 \tfrac{1-\eps^ \frac{1}{2} }{\eps}(T_* -t) +  \tfrac{(\eps+t) (3- \eps^ \frac{1}{8} )}{\eps^4} (x-x_*)^2\le 
 \p_x\eta(x,t) \le   \tfrac{1+\eps^ \frac{1}{2} }{\eps}(T_* -t) +  \tfrac{ (3+ \eps^ \frac{1}{8} )}{\eps^3} (x-x_*)^2 \,, \label{etax-best}
\end{align} 
and
\begin{subequations} 
\label{etaxx-best}
\begin{align}
 -7\eps^{-2}(T_* -t)  + \tfrac{(\eps+t) (6- 2\eps^ \frac{1}{8} )}{\eps^4} (x-x_*) \le 
 \p_x^2\eta(x,t) \le   7\eps^{-2}(T_* -t) +  \tfrac{ 6+ 2\eps^ \frac{1}{8} }{\eps^3} (x-x_*)  \ \ \text{ for } \ \ x\ge x_* \,, \\
  -7\eps^{-2}(T_* -t)  +  \tfrac{ 6+ 2\eps^ \frac{1}{8} }{\eps^3} (x-x_*) \le 
 \p_x^2\eta(x,t) \le   7\eps^{-2}(T_* -t) +  \tfrac{(\eps+t) (6- 2\eps^ \frac{1}{8} )}{\eps^4}  (x-x_*)  \ \ \text{ for } \ \ x\le x_* \,.
\end{align} 
\end{subequations} 
\end{proposition}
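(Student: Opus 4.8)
The plan is to derive both estimates from a short Taylor expansion around the blowup label $x_*$, feeding in the pointwise bounds for $\p_x\eta$, $\p_x^2\eta$, and $\p_x^3\eta$ already established in Lemmas~\ref{lem:d1eta},~\ref{lem:d2eta}, and~\ref{lem:d3eta}. Before doing so I would record the comparison $e^{-s(t)}=\tau(t)-t\approx T_*-t$, since those lemmas produce factors of $e^{-s}$ whereas the statement is phrased in terms of $T_*-t$. This follows from $\tau(T_*)=T_*$ (a consequence of $s\to\infty$ at the blowup time) together with $\tfrac{\rmd}{\rmd t}(\tau-t)=\dot\tau-1$: integrating from $t$ to $T_*$ gives $\tau(t)-t=(T_*-t)-\int_t^{T_*}\dot\tau$, and the bound $\sabs{\dot\tau(r)}\le 9\kappa_0^2\eps\, e^{-s(r)}=9\kappa_0^2\eps(\tau(r)-r)$ from \eqref{tau-kappa-xi-bounds}, combined with the crude estimate $\tau(r)-r\le 2\eps$ (from \eqref{mod2-bound} and \eqref{T*-bound}), yields $(1-C\eps^2)(T_*-t)\le e^{-s(t)}\le (1+C\eps^2)(T_*-t)$ with $C=C(\kappa_0)$.

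For \eqref{etax-best}, fix $t$ and $x$ with $\sabs{x-x_*}\le\eps^2$ and write, by Taylor's theorem with Lagrange remainder,
\[
\p_x\eta(x,t)=\p_x\eta(x_*,t)+\p_x^2\eta(x_*,t)\,(x-x_*)+\tfrac{1}{2}\,\p_x^3\eta(\tilde x,t)\,(x-x_*)^2
\]
for some $\tilde x$ between $x$ and $x_*$, so that $\sabs{\tilde x-x_*}\le\eps^2$ and Lemma~\ref{lem:d3eta} applies to $\p_x^3\eta(\tilde x,t)$. I would then substitute \eqref{dx-eta-bound} for the first term (converting $e^{-s}$ via the time comparison above), \eqref{dx2-eta-bound} for the second term — which contributes at most $62\kappa_0\eps^2(1+C\eps^2)(T_*-t)$ in absolute value and is hence absorbed into the $\eps^{1/2}$ slack of the leading term — and \eqref{dx3-eta-bound} for the quadratic term, so that $\tfrac12\p_x^3\eta(\tilde x,t)(x-x_*)^2$ lies between $\tfrac{(\eps+t)(3-\frac{1}{2}\eps^{1/6})}{\eps^4}(x-x_*)^2$ and $\tfrac{3+\frac{1}{2}\eps^{1/6}}{\eps^3}(x-x_*)^2$. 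It then remains to check, for $\eps$ small depending on $\kappa_0$, the elementary inequalities $\eps+C\eps^2\le\eps^{1/2}$ and $\tfrac{1}{2}\eps^{1/6}\le\eps^{1/8}$, which give \eqref{etax-best}.

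For \eqref{etaxx-best} the argument is the same one order lower: $\p_x^2\eta(x,t)=\p_x^2\eta(x_*,t)+\p_x^3\eta(\tilde x,t)(x-x_*)$ with $\tilde x$ between $x$ and $x_*$. I would bound $\sabs{\p_x^2\eta(x_*,t)}\le 62\kappa_0(1+C\eps^2)(T_*-t)\le 7\eps^{-2}(T_*-t)$ using \eqref{dx2-eta-bound} and the time comparison, and then split into the cases $x\ge x_*$ and $x\le x_*$: since $\p_x^3\eta(\tilde x,t)>0$ by \eqref{dx3-eta-bound}, the sign of $x-x_*$ decides whether the upper or the lower bound of \eqref{dx3-eta-bound} controls the product $\p_x^3\eta(\tilde x,t)(x-x_*)$ from above versus below. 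Combining the two contributions, and again replacing $\tfrac{1}{2}\eps^{1/6}$ by $\eps^{1/8}$ (using $\eps^{1/6}\le 2\eps^{1/8}$), produces the stated two-sided bounds \eqref{etaxx-best}.

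The proof is essentially bookkeeping once Lemmas~\ref{lem:d1eta}--\ref{lem:d3eta} are available, and the main — rather minor — obstacle is the careful accounting needed to see that every error term (the $\p_x^2\eta$ correction to $\p_x\eta$, the modulation errors $\propto\kappa_0^2\eps^2$ arising from $e^{-s}$ versus $T_*-t$, and the $\eps^{1/6}$-versus-$\eps^{1/8}$ discrepancies) is dominated by the slack built into the claimed inequalities; this is precisely where one uses that $\eps$ is taken small depending on $\kappa_0$.
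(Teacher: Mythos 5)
Your proposal is correct, and it takes a genuinely different route from the paper. The paper's proof does a \emph{joint} Taylor expansion of $\p_x\eta$ in $(x,t)$ about the blowup point $(x_*,T_*)$, anchored by the vanishing $\p_x\eta(x_*,T_*)=\p_x^2\eta(x_*,T_*)=0$ (from \eqref{dx-eta-bound}--\eqref{dx2-eta-bound}), and then needs to estimate the mixed derivatives $\p_t\p_x\eta(x_*,T_*)$, $\p_t^2\p_x\eta$, and $\p_t\p_x^2\eta$ via the identity $\p_t\p_x\eta = I_t w_0' + I_t' w_0$ (these are the auxiliary bounds \eqref{mice2}--\eqref{mice4}). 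You instead expand only in $x$, at each fixed $t$, and plug in the two-sided bounds on $\p_x\eta(x_*,t)$ and $\p_x^2\eta(x_*,t)$ from Lemmas~\ref{lem:d1eta} and \ref{lem:d2eta} directly. This avoids the mixed derivative estimates altogether, at the cost of first converting $e^{-s(t)}=\tau(t)-t$ to $T_*-t$; your conversion argument is correct (using $\tau(T_*)=T_*$, which follows from $e^{-s}\to 0$ at the blowup time, together with the $\dot\tau$ bound from \eqref{tau-kappa-xi-bounds} and the uniform bound $\tau(r)-r\le\eps$ coming from the monotonicity of $s$ in $t$). The accounting you describe — absorbing the $\p_x^2\eta(x_*,t)(x-x_*)$ term and the $O(\eps^2)$ modulation error into the $\eps^{1/2}$ slack, and replacing $\eps^{1/6}$ by $\eps^{1/8}$ — is sound for $\eps$ sufficiently small. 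One small point worth making explicit is the regularity needed to justify the one-variable Taylor expansion with Lagrange remainder at each $t$, namely $\eta(\cdot,t)\in C^3$ uniformly on $[-\eps,T_*]$; the paper records this at the start of its own proof via a continuation argument from the bounds \eqref{dx3-eta-bound-all-t} and \eqref{etax-etaxx-small}, and your argument needs it too (implicitly, you are already invoking it by appealing to Lemma~\ref{lem:d3eta}). Net effect: your route trades the mixed derivative bookkeeping for the $e^{-s}$-to-$(T_*-t)$ comparison, and is arguably slightly cleaner.
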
 
\begin{proof}[Proof of Proposition \ref{prop:etax-taylor}]
 By Lemma 2.1 in \cite{BuShVi2019a}, there exists a short time $\bar T \ge -\eps$, such that $(w,a)$ is a unique solution to \eqref{eq:w:z:k:a2} with initial
 data $(w_0,a_0)$ and 
\begin{align} 
(a,w) \in C^0([-\eps,\bar T];C^4( \mathbb{T}   )) \cap C^1([-\eps,\bar T];C^3( \mathbb{T}   ))  \,. \label{old-paper}
\end{align} 
By the local existence and uniqueness theorem for ODE,
$\eta \in C^1([-\eps,\bar T];C^3( \mathbb{T}   ))\cap  C^2([-\eps,\bar T];C^2( \mathbb{T}   )) $.   Given the uniform bounds \eqref{dx3-eta-bound-all-t} and \eqref{etax-etaxx-small},
the standard continuation argument shows that 
$$
\eta \in C^1([-\eps,T_*], C^3( \mathbb{T}   )) \cap C^2([-\eps,T_*], C^2( \mathbb{T}   )) \,.
$$
By the Taylor remainder theorem, there exist a point $x_1$ between $x$ and $x_*$ and a point $t_1$ between $t$ and $T_*$ such
that
\begin{align} 
\p_x \eta(x,t) &= \p_t\p_x \eta(x_*,T_*) (t-T_*) + \tfrac{1}{2} \p_x^3 \eta(x_1, t_1) (x-x_*)^2 
+ \tfrac{1}{2}  \p_t^2 \p_x \eta(x_1, t_1) (t-T_*) ^2  \notag \\
& \qquad 
+   \p_t \p_x^2 \eta(x_1,t_1) (t-T_*) (x-x_*)
 \,.\label{eta-taylor}
\end{align} 
Note that we have used \eqref{dx-eta-bound} and \eqref{dx2-eta-bound} which give
\begin{align} 
\p_x\eta(x_*,T_*) =0 \,, \ \ \ \ \p_x^2\eta(x_*,T_*) =0 \,. \label{mice1}
\end{align} 

From \eqref{eq2}, we have that 
\begin{align} 
\p_t \p_x \eta (x,t) = I_t(x) w_0' (x) + I_t' (x) w_0 (x) \,. \label{mice0}
\end{align} 
We use the bounds
\eqref{T*-bound}, \eqref{nice2}--\eqref{nice5} to find that for $\eps$ small enough,
\begin{align}
- \tfrac{1+  \eps^\frac{3}{4} }{\eps}  \le \p_t \p_x \eta(x_*,T_*) \le  - \tfrac{1-\eps^ \frac{3}{4} }{\eps} \,. \label{mice2}
\end{align} 
Differentiation of \eqref{mice0} with respect to $\p_x$ yields
\begin{align*} 
\p_t \p_x^2 \eta = I_t w_0''+ 2I_t' w_0'+ I_t'' w_0 \,,
\end{align*} 
while differentiation of \eqref{mice0} with respect to $\p_t$ gives
\begin{align*} 
\p_t^2 \p_x \eta =  I_t' \, w_0'+ I_t'' w_0  \,.
\end{align*} 
We again use the  bounds
\eqref{T*-bound}, \eqref{nice2}--\eqref{nice4},  \eqref{nice8}, and  \eqref{Is012}  to obtain that
\begin{align} 
\abs{\p_t ^2\p_x \eta(x_1,t_1)} & \le 50 \kappa_0^2 \,,  \label{mice3}  \\
\abs{\p_t \p_x^2 \eta(x_1,t_1)} & \le 8 \eps^{-2} \,. \label{mice4} 
\end{align} 
From \eqref{dx3-eta-bound}, we have that 
\begin{align} 
\tfrac{(\eps+t)(3- \eps^{\frac{1}{7}} )}{\eps^4}  
 \le  \tfrac{1}{2} \p_x^3\eta(x_1,t_2) \le  \tfrac{ (3+ \eps^ \frac{1}{7} )}{\eps^3} \,. \label{mice5}
 \end{align} 

Since $t \le T_*$,  $\sabs{x-x_*} \le \eps^2$,  and $(T_*-t)^2 \le 2 \eps^2$, 
the bounds \eqref{mice2}--\eqref{mice5} used in the identity \eqref{eta-taylor} show that for $\eps$ taken sufficiently small, 
\begin{align*} 
 \tfrac{1-\eps^ \frac{1}{2} }{\eps}(T_* -t) +  \tfrac{(\eps+t) (3- \eps^ \frac{1}{8} )}{\eps^4} (x-x_*)^2\le 
 \p_x\eta(x,t) \le   \tfrac{1+\eps^ \frac{1}{2} }{\eps}(T_* -t) +  \tfrac{ 3+ \eps^ \frac{1}{8} }{\eps^3} (x-x_*)^2\,,
\end{align*} 
which establishes \eqref{etax-best}.

We can again apply the Taylor remainder theorem to find that for  a point $\mathring x_1$ between $x$ and $x_*$ and a
point $\mathring t_1$ between $t$ and $T_*$, 
\begin{align*} 
\p_x^2\eta(x,t) = \p_x^3\eta(\mathring x_1, \mathring t_1) (x-x_*)  +  \p_t\p_x^2\eta(\mathring x_1, \mathring t_1) (t-T_*)  \,.
\end{align*} 
It then follows from \eqref{mice4} and \eqref{mice5} that 
\eqref{etaxx-best} holds.
\end{proof} 

 \subsubsection{Bounds for $\p_\theta w$}
\begin{lemma}[Bound for $\p_\theta w$]\label{lem:wthetabound}
 For $t \in [-\log\eps, T_*)$, 
\begin{align} 
\abs{\p_\theta w ( \eta(x,t),t)  } \le
 \begin{cases}
\tfrac{2}{ (T_* -t) +   3\eps^{-3}(\eps+t)(x-x_*)^2 }   & \ \ \abs{x-x_*} \le \eps^{2}  \\
5 \eps^{-2} & \ \ \abs{x-x_*} \ge \eps^{2}
 \end{cases} 
  \,.
\label{wtheta-eta-bound}
\end{align} 
\end{lemma}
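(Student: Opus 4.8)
The plan is to obtain the bound on $\p_\theta w(\eta(x,t),t)$ directly from the identity $\p_\theta w \circ \eta = (I_t w_0' + I_t' w_0)\eta_x^{-1}$ (the first line of \eqref{w-derivative-idents}), by combining the already-established sharp lower bound on $\eta_x$ from Proposition~\ref{prop:etax-taylor} with the crude bounds on $w_0'$, $I_t$, and $I_t'$.

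First I would treat the case $\abs{x-x_*}\le \eps^2$. Here the numerator is controlled: by \eqref{nice4} we have $\abs{w_0'(x)}\le \eps^{-1}$, by \eqref{Is012} we have $\abs{I_t(x)}\le 1+\eps$ and $\abs{I_t'(x)}\le 50\kappa_0\eps^2$, and by \eqref{w0-lowerupper} we have $w_0(x)\le \frac98\kappa_0$, so that
\begin{align*}
\abs{I_t w_0' + I_t' w_0} \le (1+\eps)\tfrac1\eps + 50\kappa_0\eps^2 \cdot \tfrac98\kappa_0 \le \tfrac{1+2\eps}{\eps}\,.
\end{align*}
For the denominator, the lower bound in \eqref{etax-best} gives $\p_x\eta(x,t) \ge \frac{1-\eps^{1/2}}{\eps}(T_*-t) + \frac{(\eps+t)(3-\eps^{1/8})}{\eps^4}(x-x_*)^2$. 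Dividing, and noting that $\frac{1+2\eps}{\eps} / \frac{1-\eps^{1/2}}{\eps} \le 1 + 4\eps \le 2$ for $\eps$ small, while the $(x-x_*)^2$ coefficient in the denominator satisfies $\frac{(\eps+t)(3-\eps^{1/8})}{\eps^4}\ge \frac{3\eps^{-3}(\eps+t)}{\eps \cdot(1+2\eps)/(1-\eps^{1/2})}$ (up to absorbing constants), one gets $\abs{\p_\theta w(\eta(x,t),t)} \le \frac{2}{(T_*-t) + 3\eps^{-3}(\eps+t)(x-x_*)^2}$; I would keep careful track of the numerical constants so that the factor is exactly $2$ and the denominator constant is exactly $3$ as stated, which is the only genuinely delicate point of the estimate.

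For the case $\abs{x-x_*}\ge \eps^2$, I would use instead the lower bound \eqref{etax-lower-bad}, namely $\p_x\eta(x,t)\ge \frac\eps4$ — actually, more precisely, \eqref{etax-lower-bad} is stated for $\abs{x-x_*}\ge\eps^2$, giving $\p_x\eta \ge \frac14\eps$ — together with the crude numerator bound $\abs{I_t w_0' + I_t' w_0} \le (1+\eps)\eps^{-1} + 14\kappa_0\eps\cdot\frac98\kappa_0 \le \frac{1+\eps + O(\eps^2)}{\eps} \le \frac{2}{\eps}$ (using $\abs{w_0'}\le\eps^{-1}$ from \eqref{nice4} and $\abs{I_t'}\le 14\kappa_0\eps$ from \eqref{Is012}). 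Then $\abs{\p_\theta w(\eta(x,t),t)} \le \frac{2/\eps}{\eps/4} = 8\eps^{-2}$; if needed one can sharpen the numerator slightly using that for $\abs{x-x_*}\ge\eps^2$ one actually has $\abs{w_0'}\le(1-\eps/2)\eps^{-1}$ (as noted in the proof of Lemma~\ref{lem:d1eta}) to bring the constant down to $5$ as claimed. The main obstacle is purely bookkeeping: arranging the numerical constants in the $\abs{x-x_*}\le\eps^2$ case so that the quoted $2$ and $3$ come out exactly, which requires using the $\eps^{1/8}$- and $\eps^{1/2}$-type slack in Proposition~\ref{prop:etax-taylor} to absorb the $O(\eps)$ errors coming from $I_t$ and $I_t'$; there is no conceptual difficulty beyond that.
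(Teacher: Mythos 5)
Your proposal is correct and follows essentially the same route as the paper: both start from the identity $\p_\theta w\circ\eta = (I_t w_0' + I_t' w_0)\eta_x^{-1}$ from \eqref{w-derivative-idents}, then combine the sharp lower bound on $\eta_x$ from Proposition~\ref{prop:etax-taylor} (resp.\ \eqref{etax-lower-bad}) with the crude numerator bounds from \eqref{nice4}, \eqref{Is012}, and \eqref{w0-lowerupper}. One small remark: in the case $\abs{x-x_*}\ge\eps^2$ the sharper bound $\abs{w_0'}\le(1-\tfrac{\eps}{2})\eps^{-1}$ (which anyway only holds for $\abs{x-x_*}\ge\eps^{1/2}$) is not needed to reach the constant $5$ --- the tighter numerator bound $\tfrac{1+2\eps}{\eps}$ that you already derived in the first case, together with $\eta_x\ge\eps/4$, gives $\tfrac{4(1+2\eps)}{\eps^2}\le 5\eps^{-2}$ directly.
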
 
\begin{proof}[Proof of Lemma \ref{lem:wthetabound}]
From \eqref{w-derivative-idents}, we have that
\begin{align} 
\p_\theta w ( \eta(x,t),t)  & =  ( I_t(x) w_0' (x) + I_t' (x)w_0(x)) \eta_x ^{-1} (x,t) \,.   \label{dthetaw-eta}
\end{align} 
Using the bounds \eqref{w0-lowerupper},  \eqref{etax-lower-bad},  \eqref{nice4},  \eqref{Is012}, and \eqref{etax-best}, obtain
the bound \eqref{wtheta-eta-bound}.
\end{proof}

 \subsubsection{Bounds for $\p_x^4 \eta$}
In order to obtain a bound for the fourth derivative of $\eta$, we shall appeal to the identity \eqref{eq5}. Before estimating the terms on the right side of \eqref{eq5}, we first record a useful estimate:
 \begin{lemma}\label{lem:hardintegral}
For $\abs{x-x_*} \leq \eps^2$ it holds that
 \begin{align} 
 \label{eq:harakiri:0}
 \eta_x^4(x,t) \int_{-\eps}^{t} \sabs{\p_\theta w ( \phi( \phi ^{-1}(\eta(x,t),t) ,t'),t')}^2 dt'  \les \eps^{-1} \eta_x^2 (x,t)  
 \,.
 \end{align} 
 \end{lemma}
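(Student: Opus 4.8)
\textbf{Proof plan for Lemma~\ref{lem:hardintegral}.}
The plan is to reduce the claimed bound to the already-established sharp control on $\eta_x$ from Proposition~\ref{prop:etax-taylor} together with the known smallness of $\int \abs{W_y}$ along the $\lambda_2$-flow (estimate~\eqref{abs-Wy-PhiA}). First I would record the two basic facts I intend to combine: (i) by Proposition~\ref{prop:etax-taylor}, for $\abs{x-x_*}\le\eps^2$ and $t\in[-\eps,T_*)$ one has $\eta_x(x,t)\les \eps^{-1}(T_*-t) + \eps^{-3}(\eps+t)(x-x_*)^2$, hence in particular $\eta_x(x,t)\les \eps$; and (ii) by~\eqref{integral-wx-phi-bound} (equivalently~\eqref{abs-Wy-PhiA}), for every label $\bar x$ we have $\int_{-\eps}^t \abs{\p_\theta w(\phi(\bar x,t'),t')}\,dt' \les 1$, uniformly. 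Here $\bar x := \phi^{-1}(\eta(x,t),t)$ is the $\lambda_2$-label of the particle which sits at $\eta(x,t)$ at time $t$.

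The main step is to split the square $\abs{\p_\theta w}^2 = \abs{\p_\theta w}\cdot\abs{\p_\theta w}$ inside the time integral and bound one factor pointwise while integrating the other. Concretely, along the $\lambda_2$-characteristic we use~\eqref{best-ever3}, which gives $\p_\theta w(\phi(\bar x,t'),t') = e^{s'} W_y(\Phi_A(\bar y, s'),s')$ with $\bar y = \eps^{-3/2}\bar x$; combined with the global bound $\abs{W_y}\le 1$ from~\cite{BuShVi2019b} this yields a pointwise factor $\abs{\p_\theta w(\phi(\bar x,t'),t')} \le e^{s'} \les (T_*-t')^{-1}$. However, pulling this out naively produces a logarithmically divergent time integral, so instead I would keep track of the compensating decay: the point is that the $\lambda_2$-flow started from $\bar x$ only passes \emph{near} the pre-shock for a time comparable to $(\eps+t)^{-1}\abs{x-x_*}^2$ away from blowup, and one should exploit the precise form of~\eqref{etax-best} to see that $\eta_x(x,t)^4$ times the (genuinely bounded) integral $\int_{-\eps}^t \abs{\p_\theta w(\phi(\bar x,t'),t')}^2\,dt'$ is dominated by $\eps^{-1}\eta_x(x,t)^2$. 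The cleanest route: write $\int_{-\eps}^t \abs{\p_\theta w\circ\phi(\bar x,\cdot)}^2\,dt' \les \big(\sup_{t'} \eta_x(\phi(\bar x,t')\text{-related quantity})\big)\cdot\int_{-\eps}^t\abs{\p_\theta w\circ\phi}\,dt'$, but since $\phi$-characteristics and $\eta$-characteristics differ, I would rather estimate directly: on the $\lambda_2$-flow, $\abs{\p_\theta w(\phi(\bar x, t'),t')} = \abs{I_{t'}w_0' + I_{t'}'w_0}\,\eta_x(\cdot)^{-1}$ evaluated at the $3$-label passing through $\phi(\bar x,t')$, and here the key cancellation is that $\abs{I_{t'}w_0' + I_{t'}'w_0}$ remains $\OO(\eps^{-1})$ while $\eta_x^{-1}$ at that point is comparable to $\big((T_*-t') + \eps^{-3}(\eps+t')(\cdot)^2\big)^{-1}$.

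The hard part will be bookkeeping the relation between the $\lambda_2$-label $\bar x$, the $3$-label it corresponds to at each intermediate time $t'$, and the sharp lower bound for $\eta_x$ at \emph{that} intermediate label and time — this is where transversality of $\phi$ and $\eta$ must be used quantitatively (the $\lambda_2$ and $\lambda_3$ characteristics separate at a rate $\sim\frac13\kappa$). I would handle this by: first showing that the $3$-label corresponding to $\phi(\bar x,t')$ stays within $\OO(\eps^2)$ of $x_*$ (so that Proposition~\ref{prop:etax-taylor} applies throughout), using~\eqref{phix-crude} and the crude bounds on $\eta_x$; then inserting the sharp bounds~\eqref{etax-best} and~\eqref{wtheta-eta-bound}; then performing the elementary time integral $\int_{-\eps}^t \big((T_*-t') + c(\eps+t')\big)^{-2}\,dt'$ with $c \sim \eps^{-3}(x-x_*)^2$, which evaluates to something $\les \min\{(T_*-t)^{-1}, c^{-1}\eps^{-1}\}$, and finally multiplying by $\eta_x^4$ and using $\eta_x \les \eps^{-1}(T_*-t) + c$ to recognize the right side as $\les \eps^{-1}\eta_x^2$. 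The degenerate small-$(x-x_*)$ regime (where $c$ is tiny and the integral is governed by $(T_*-t)^{-1}$) and the large-$(x-x_*)$ regime should be treated as the two separate cases; in both the factor $\eps^{-1}$ on the right side absorbs the worst constants.
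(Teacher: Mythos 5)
Your plan correctly identifies the right objects and the right starting point: one wants to evaluate $\p_\theta w$ at the point $\phi(\chi(x,t),t')$ by writing it as $\p_\theta w(\eta(q(x,t'),t'),t')$, where $q(x,t') := \eta^{-1}(\phi(\chi(x,t),t'),t')$, and then plug in the sharp pointwise bound \eqref{wtheta-eta-bound}. However there is a genuine gap at the central step. After inserting \eqref{wtheta-eta-bound} the integrand becomes
\[
\Bigl((T_*-t') + 3\eps^{-3}(\eps+t')\,(q(x,t')-x_*)^2\Bigr)^{-2}\,,
\]
and you then replace the \emph{moving} label $q(x,t')$ by the fixed label $x$, producing the "elementary" integral with constant $c\sim\eps^{-3}(x-x_*)^2$. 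This replacement is not an upper bound. In the nontrivial configuration $x<x_*<\chi(x,t)$ (depicted in the paper's Figure for exactly this reason), $q(x,t')$ decreases from $\chi(x,t)$ to $x$ and hence passes \emph{through} $x_*$ at some intermediate time; at those times $(q(x,t')-x_*)^2$ is much smaller than $(x-x_*)^2$, so freezing the label makes the denominator larger and the integrand smaller. What you get is a lower bound for the true integral, not the needed upper bound. Relatedly, the subsidiary claim that "the $3$-label corresponding to $\phi(\bar x,t')$ stays within $\OO(\eps^2)$ of $x_*$" is false: $q(x,-\eps)=\chi(x,t)$ is at distance $\approx\tfrac{\kappa_0}{3}(\eps+t)=\OO(\eps)$ from $x$, so Proposition~\ref{prop:etax-taylor} does not apply uniformly in $t'$, and the near-the-start contribution has to be separated off (using the second branch of \eqref{wtheta-eta-bound}) rather than absorbed.

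The missing idea is quantitative control of $\partial_{t'} q$ so that the time spent with $q(x,t')$ near $x_*$ can be exchanged for a spatial measure. Concretely, differentiating the definition of $q$ and using that $\eta^{-1}$ is transported by $-\lambda_3$ while $\phi$ moves with $\lambda_2$, one finds $\p_{t'}q = -\tfrac{1}{3}w\circ\eta(q,\cdot)/\eta_x(q,\cdot)$, and the two-sided $\eta_x$ bound from Proposition~\ref{prop:etax-taylor} gives $-\p_{t'}q \sim \eps\kappa_0/G(x,t')$ where $G$ is exactly the denominator above. This lets you write $1/G \les (-\p_{t'}q)/\eps$ and change variables from $t'$ to the label $q$, turning $\int G^{-2}\,dt'$ into a quantity controlled by the spatial extent traversed, which is where the factor $\eps^{-1}$ in the target bound comes from. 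Without this change of variables (or an equivalent transversality-based argument), the time integral cannot be bounded in the hard configuration. Splitting $[-\eps,t]$ into a start region (where $q$ is far from $x_*$), a middle region, and an end region (where $q$ is within $\tfrac12|x-x_*|$ of $x$) and performing the change of variables on the latter two is precisely how the paper's proof proceeds; once those pieces are assembled, your final bookkeeping step (multiplying by $\eta_x^4$ and using $\eta_x\les\eps$ from the first case of \eqref{dx-eta-bound0}) does close the argument.
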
 
 \begin{proof}[Proof of Lemma \ref{lem:hardintegral}]
 Fix a label $x$ which is within $\eps^2$ of $x_*$, and a time $t \in [-\eps,T_*)$, throughout the proof.
In order to estimate  the integral in \eqref{eq:harakiri:0} we use the bound on $\p_\theta w$ obtained in \eqref{wtheta-eta-bound}. Note however that this estimate is obtained when we compose with the flow $\eta$; as such we first define the label
\begin{align} 
\chi(x,t)  =\phi ^{-1} (\eta(x,t),t) \,, \label{chi-label}
\end{align} 
and then for each $t'\in [-\eps, t]$, we also define the label
\begin{align} 
q(x,t') = \eta ^{-1} (  \phi(\chi(x,t),t'), t')  \,.   \label{q-eta}
\end{align} 
\begin{figure}[htb!]
\centering
\begin{tikzpicture}[scale=1.5]
    \draw [<->,thick] (0,2.4) node (yaxis) [above] {}
        |- (3.5,0) node (xaxis) [right] {$\theta$};
    \draw[black,thick] (0,0)  -- (-1,0) ;

\node at (3,2)[circle,fill,inner sep=1pt]{};
\node at (2.68,1.88)[circle,fill,inner sep=1pt]{};
\node at (2.12,1.)[circle,fill,inner sep=1pt]{};

\draw[black,dotted] (0,2)  -- (3,2) ;
\draw[black] (-0.15,2.1) node {{\footnotesize $T_*$}}; 

\draw[black,dotted] (3,0)  -- (3,2) ;
\draw[black] (3,-.15) node {{\footnotesize $\xi_*$}}; 

\draw[black] (4.3,2.2) node { {\footnotesize pre-shock location $(\xi_*,T_*)$}}; 
 
\draw[name path=A,red, thick,dashed] (-.7,0) -- (2.9,2) ;    
\draw[name path=A,red, thick] (-.1,0) -- (3,2) ;
\draw[name path=A,red, thick,dotted] (.7,0) -- (2.1,1) ;
\draw[black] (.6,-.15) node {{\footnotesize $q(x,t')$ }};


\draw[blue, thick,dashed] (1.5,0) -- (2.7,1.91) ;
\draw[black] (2.2,-.15) node {{\footnotesize $\chi(x_*,T_*)$}};

\draw[blue, thick] (1.8,0) -- (3,2) ;
\draw[black] (1.4,-.15) node {{\footnotesize $\chi(x,t)$ }};

\draw[black] (-0.7,-.13) node {$x$};
\draw[black] (-0.0,-.15) node {$x_*$};

\draw[black,dotted] (0,1.)  -- (2.1,1.);
\draw[black] (-0.1,1.) node { $t'$}; 
\draw[black,dotted] (0,1.88)  -- (2.7,1.88);
\draw[black] (-0.1,1.85) node { $t$}; 
\end{tikzpicture}

\vspace{-0.2cm}
\caption{\footnotesize  The identity \eqref{brilliant} is explained. The $3$-characteristics $\eta$ are shown in red and $2$-characteristics $\phi$ are shown in blue.  The worst case scenario is depicted: the label $x$
is to the left of the blowup label $x_*$.   For each such label $x$ and each $t \in [-\eps, T_*)$,  $\chi(x,t)$ denotes the label which satisfies $\phi(\chi(x,t),t)=\eta(x,t)$.   For each
$t' \in [-\eps, t]$, we define the label $q(x,t')$ such that $ \eta(q(x,t'),t') = \phi(\chi(x,t),t')$.   As $t' \to t$, $q(x,t') \to x$.  A particle moving
up the dashed blue curve is equivalent to that particle moving by the $3$-characteristic but emanating from  the moving label $q(x,t')$.}
\end{figure}
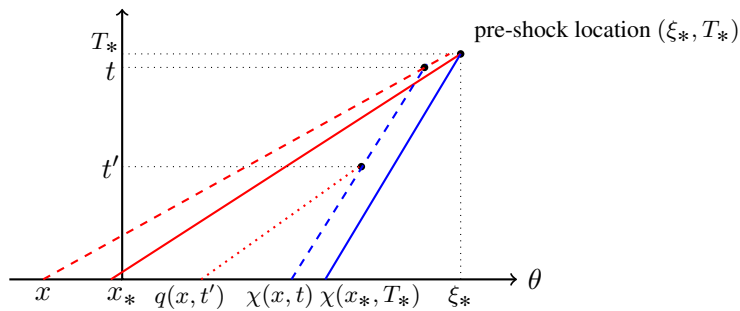

The definitions \eqref{chi-label} and \eqref{q-eta} show that
\begin{align} 
\p_\theta w (\phi(\phi ^{-1} (\eta(x,t),t),t'),t')
=
\p_\theta w (\phi(\chi(x,t),t'),t')
= \p_\theta w (\eta(q(x,t')    ,t'),t') \,. \label{brilliant}
\end{align}

Therefore, $q(x,-\eps) = \chi(x,t)$, $q(x,t') \to x$ from the right as $t' \to t$, while from \eqref{wtheta-eta-bound} we have that
\begin{align} 
\abs{\p_\theta w ( \eta(q(x,t'),t'),t')  } \le
 \begin{cases}
 \tfrac{2}{(T_* -t') +   3\eps^{-3}(\eps+t')(q(x,t')-x_*)^2 }  & \ \ \abs{q(x,t')-x_*} \le \eps^{2}  \\
5 \eps^{-2} & \ \ \abs{q(x,t')-x_*} \ge \eps^{2}
 \end{cases} 
  \,.
\label{wtheta-eta-bound:b}
\end{align} 
We will assume first that $x_* \in [x,\chi(x_*,t)]$. 
The proof is based on decomposing the interval $[-\eps,t)$ into three different  sets 
\begin{subequations}
\label{eq:calculate:intervals}
\begin{align}
I_{\rm start} &= \{ t' \in [-\eps,t) \colon  |q(x,t')-x_*|\geq \eps^2 \mbox{  or  } t' \leq - \tfrac 12 \eps \} \\
I_{\rm middle} &= \{ t' \in [-\tfrac \eps 2 ,t) \colon  |q(x,t')-x_*| <  \eps^2 \mbox{  and  } x_* - \tfrac 12 (x_* - x) < q(x,t') < x_* + \eps^2    \} \\
I_{\rm end} &= \{ t' \in [-\tfrac \eps 2  ,t) \colon   |q(x,t')-x_*| < \eps^2 \mbox{ and } x \leq q(x,t') \leq x_* - \tfrac 12 (x_* - x)  \}
\,.
\end{align}
\end{subequations}
From \eqref{wtheta-eta-bound:b} we immediately have that
\begin{align}
\int_{I_{\rm start}} \sabs{\p_\theta w (\eta(q(x,t')    ,t'),t')}^2 dt' 
&\leq
\int_{-\eps}^{-\frac 12 \eps} \frac{4}{(T_* - t')^2} dt' + 
\int_{- \eps}^{T_*} \frac{25}{\eps^4} dt'  
\notag\\
&\leq \frac{4}{T_* + \frac{\eps}{2}} + \frac{25 (T_* + \eps)}{\eps^4}
\leq 50 \eps^{-3} 
\label{eq:calculate:I}
\end{align}
since $T_* = \OO(\eps^3)$ and $\eps$ is sufficiently small.
 
For the remaining two time intervals, since $\frac 12 \eps \leq t' +\eps \leq 2 \eps$, and $|q(x,t') - x_*| < \eps^2$, we will use that 
\begin{align}
 \abs{\p_\theta w ( \eta(q(x,t'),t'),t')  } 
 \le \frac{1}{G(x,t')}\,, \quad \mbox{where} \quad 
 G(x,t'): = (T_* -t') + \tfrac 32 \eps^{-2} (q(x,t')-x_*)^2   \,.
 \label{eq:calculate:G}
\end{align}
The second important fact that we will use frequently is that \eqref{etax-best} implies
\begin{align}
\tfrac{1}{3\eps} G(x,t') \leq (\p_x \eta)(q(x,t'),t') \leq \tfrac{3}{\eps} G(x,t')\,.
 \label{eq:calculate:GG}
\end{align}
The third important ingredient is an estimate for the time derivative of the label $q(x,t')$. Using that $\eta^{-1}$ solves the transport equation $(\p_{t'} + w \p_\theta)\eta^{-1} = 0$, upon differentiating \eqref{q-eta} with respect to $t'$ we obtain
\begin{align}
\p_{t'} q(x,t') 
& = \p_{t'} \eta ^{-1} (  \phi(\chi(x,t),t'), t') + \p_{\theta} \eta ^{-1} (  \phi(\chi(x,t),t'), t')   \p_{t'}  \phi(\chi(x,t),t') \notag \\  
& = \p_{t'} \eta ^{-1} (  \eta(q(x,t'),t'), t') +  \tfrac{2}{3}   \p_{\theta} \eta ^{-1} (  \eta(q(x,t'),t'), t')   w  (\eta(q(x,t'),t')) \notag\\
&=  -  \tfrac{1}{3}   \p_{\theta} \eta ^{-1} (  \eta(q(x,t'),t'))   w  (\eta(q(x,t'),t'),t')  \notag\\
&=  -  \frac{ w  (\eta(q(x,t'),t'),t') }{3 \eta_x(q(x,t'),t')} \,.
\end{align}
From the above identity, using the bounds \eqref{w-lowerupper} and \eqref{eq:calculate:GG} we conclude that 
\begin{align}
\frac{\eps \kappa_0}{20 G(x,t')} \leq - \p_{t'} q(x,t') \leq \frac{2 \eps \kappa_0}{G(x,t')}
 \,.
 \label{eq:calculate:GGG}
\end{align}

With \eqref{eq:calculate:G}, \eqref{eq:calculate:GG}, and \eqref{eq:calculate:GGG} in hand, we return to the two remaining  cases described in \eqref{eq:calculate:intervals}.
First, we note that \eqref{eq:calculate:GGG} shows that the function $q(x,t')$ is strictly decreasing, as a function of $t'$, and thus when  $\eps$ is sufficiently small  there exists a unique time $t^\sharp \in [- \frac 12  \eps,t)$ such that 
\begin{align*}
q(x,t^\sharp) = x_* - \tfrac 12 (x_* -x ) \,. 
\end{align*}
As such, $I_{\rm end}= [t^\sharp,t]$, and $I_{\rm middle} \subset [-\frac 12 \eps, t^\sharp]$.
Since $q(x,t) = x$, the fundamental theorem of calculus, \eqref{eq:calculate:GGG}, and the definition of $I_{\rm end}$ show that 
\begin{align*}
\tfrac 12 (x_* - x) = q(x,t^\sharp) - q(x,t)  
&= \int_{t^{\sharp}}^t ( - \p_{t'} q(x,t')) dt' \notag\\
&\leq \int_{t^{\sharp}}^t \frac{2 \eps \kappa_0}{G(x,t')}dt'
\leq   \frac{2 \eps \kappa_0   (t-t^\sharp)}{(T_* - t) + \frac 38 \eps^{-2} (x_*-x)^2}
\leq   \frac{8 \eps \kappa_0   (t-t^\sharp)}{G(x,t)}
\,.
\end{align*}
The purpose of the above estimate is to provide the lower bound
\begin{align}
t - t^\sharp \geq \frac{1}{16 \eps \kappa_0} (x_* - x) G(x,t) \,.
\label{eq:calculate:II} 
\end{align}
With \eqref{eq:calculate:G}, \eqref{eq:calculate:GGG} and \eqref{eq:calculate:II}, since $I_{\rm middle} \subset [-\frac 12 \eps, t^\sharp]$ we may then estimate 
\begin{align}
\int_{I_{\rm middle}} \sabs{\p_\theta w (\eta(q(x,t')    ,t'),t')}^2 dt' 
&\leq
\int_{I_{\rm middle}} \frac{1}{G(x,t')^2} dt' 
\notag\\
&\leq  
\frac{1}{T_* - t^\sharp} 
\int_{I_{\rm middle}} \frac{- 20 \p_{t'} q(x,t')}{\eps \kappa_0  } dt'
\notag\\
&\leq \frac{20}{\eps \kappa_0} \cdot \frac{(x_* + \eps^2) - (x_* - \frac 12 (x_*-x)) }{(T_* - t) + \frac{1}{16 \eps \kappa_0} (x_* -x) G(x,t)}
\notag\\
&\leq \frac{30 \eps}{ \kappa_0} \cdot \frac{1}{(T_* - t) + \frac{1}{16 \eps \kappa_0} (x_* -x) G(x,t)}
\notag\\
&\leq
\frac{30 \eps}{ \kappa_0}
\begin{cases}
 \frac{1}{G(x,t)} \,, &\mbox{if } G(x,t) \geq 24 \kappa_0 \eps^{-1} (x_*-x) \\
  \frac{400 \kappa_0^2 }{ G(x,t)^2} \,, &\mbox{if } G(x,t) < 24 \kappa_0 \eps^{-1} (x_*-x)
\end{cases} \notag \\
&\leq \frac{12000 \eps \kappa_0}{G(x,t)^2}  \leq \frac{108000  \kappa_0}{\eps \eta_x(x,t)^2}
\,, 
\label{eq:calculate:III}
\end{align}
where in the second-to-last inequality we have used that $0 < G(x,t) \leq  \eps$, and in the last inequality we have appealed to \eqref{eq:calculate:GG}. 
Lastly, since $I_{\rm end}= [t^\sharp,t]$, a similar argument and the bound \eqref{eq:calculate:GG} shows that
\begin{align}
\int_{I_{\rm end}} \sabs{\p_\theta w (\eta(q(x,t')    ,t'),t')}^2 dt' 
&\leq
\int_{I_{\rm end}} \frac{1}{G(x,t')^2}  dt' \notag\\
&\leq 
\frac{1}{(T_* - t) + \frac 38 \eps^{-2} (x_* - x)^2} 
\int_{t^\sharp}^t \frac{- 20 \p_{t'} q(x,t')}{\eps \kappa_0  } dt' 
\notag\\
&\leq \frac{40 }{ \eps \kappa_0   G(x,t)} \left((x_* - \frac 12 (x_*-x)) - x\right)
\notag\\
&\leq \frac{20 (x_*- x)}{\eps \kappa_0   G(x,t)} 
\leq  \frac{60 (x_*- x)}{\eps^2 \kappa_0  \eta_x(x,t)} 
\leq  \frac{60}{\kappa_0  \eta_x(x,t)} \,.
\label{eq:calculate:IV}
\end{align}
Combining \eqref{eq:calculate:I}, \eqref{eq:calculate:III}, and \eqref{eq:calculate:IV}, 
we arrive at 
\begin{align*}
 \eta_x^4(x,t) \int_{-\eps}^{t} \sabs{\p_\theta w ( \phi( \phi ^{-1}(\eta(x,t),t) ,t'),t')}^2 dt'  \les \eps^{-3} \eta_x^4(x,t) + \eps^{-1} \eta_x^2 (x,t) + \eta_x^3(x,t)   \,, 
\end{align*}
and then by appealing to the first case in \eqref{dx-eta-bound0}, concludes the proof of the lemma in the case that $x_* > x$.

For the other case, $x_* < x$, we have that $(q(x,t') - x_*)^2 \geq (x-x_*)^2$, and then we simply have 
\begin{align}
\int_{[-\eps,t) \setminus I_{\rm start}} \sabs{\p_\theta w (\eta(q(x,t')    ,t'),t')}^2 dt' 
&\leq \int_{-\eps}^t \frac{1}{( (T_* - t') + \frac 32 \eps^{-2}(x_* - x)^2)^2} dt' \notag\\ 
&\leq  \frac{1}{ (T_* - t) + \frac 32 \eps^{-2}(x_* - x)^2 }  \notag\\
&\leq \frac{3}{\eps \p_x \eta(x,t)}
\end{align}
in light of the definition of $G$ and of \eqref{eq:calculate:GG}. The estimate \eqref{eq:harakiri:0} follows as before (it is in fact better in this case).
\end{proof} 
 
\begin{lemma}\label{lem:d4eta}  For labels $x$, we have that 
 \begin{align} 
\sup_{t\in[-\eps, T_*)} \abs{\p_x^4\eta(x,t)}  \le
 \begin{cases}
3 \eps^ {-\frac{31}{8}}   & \ \ \abs{x-x_*} \le \eps^{3}  \\
363\eps^ {-4}   & \ \ \abs{x-x_*} \le \eps^{2}  \\
C \eps^{-{\frac{9}{2}} } 
  & \ \ \abs{x-x_*} \ge \eps^{2}
 \end{cases} 
  \,,
 \label{dx4-eta-bound}
 \end{align} 
 where $C_\eps$ denotes a positive constant that depends on inverse powers of $\eps$.
 \end{lemma}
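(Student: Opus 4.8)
The plan is to mimic the proofs of Lemmas~\ref{lem:d1eta}, \ref{lem:d2eta}, and~\ref{lem:d3eta}, using the integral identity~\eqref{eq5} for $\p_x^4 \eta$ and the already-established bounds on $\p_x^\gamma \eta$ for $\gamma \le 3$, on $\p_\theta^\gamma w \circ \eta$ for $\gamma \le 2$, on $\p_\theta^\gamma a \circ \eta$ for $\gamma \le 4$ (especially~\eqref{d4xa-great}), and on the integrating factors $I_t, I_t', I_t'', I_t''', I_t''''$ (the identities~\eqref{dx-I-identities}). First I would estimate each of the five integrals $\int_{-\eps}^t I_\tau \, d\tau$, $\int_{-\eps}^t I_\tau' \, d\tau$, $\ldots$, $\int_{-\eps}^t I_\tau'''' \, d\tau$ appearing in~\eqref{eq5}. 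The first four are already controlled (the bounds~\eqref{Is012} give $I_t, I_t', I_t''$, and the proof of Lemma~\ref{lem:d3eta} implicitly controls $\int I_\tau''' \, d\tau$ via~\eqref{Is3}); the genuinely new object is $\int_{-\eps}^t I_\tau'''' \, d\tau$, which by~\eqref{Is4} is a sum of products of time-integrals of $a^{(j)} \circ \eta$ against powers of $\eta_x, \eta_{xx}, \eta_{xxx}, \eta_{xxxx}$. The term containing $a' \circ \eta \cdot \eta_{xxxx}$ produces, after multiplication by $w_0$ and integration, a contribution proportional to $\sup_t |\p_x^4 \eta|$ times a factor $\OO(\eps^2 \kappa_0^2)$ coming from $\|a\|_{C^1}$ and $T_* = \OO(\eps^3)$ — this is absorbable into the left-hand side exactly as in the earlier lemmas, which is why a bound for $\p_x^4 \eta$ closes despite $\eqref{Is4}$ nominally involving $\p_x^4 \eta$.

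The crux is controlling the genuinely singular pieces: the terms $w_0'''' \int_{-\eps}^t I_\tau \, d\tau$ (needing $|w_0''''| \le \eps^{-39/8}$ on $|\theta - \xi_*| \le \eps^{3/2}\ell$ from~\eqref{eq:tilde:W:5:derivative}, degrading to a $C_\eps$ bound away from the pre-shock via~\eqref{d3W}-type global bounds on $\p_\theta^4 w_0$ — which is the source of the $\eps^{-31/8}$ exponent in the first case and the larger $\eps^{-9/2}$ away from $x_*$), the cross terms $w_0''' \int I_\tau' d\tau$, $w_0'' \int I_\tau'' d\tau$, $w_0' \int I_\tau''' d\tau$, and the $w_0 \int I_\tau'''' d\tau$ term. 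Among these, the worst is the contribution inside $\int I_\tau''''d\tau$ of the shape $\int_{-\eps}^\tau a'' \circ \eta \, \eta_x \eta_{xxx} \, dr$ and $\int_{-\eps}^\tau a''' \circ \eta \, \eta_x^2 \eta_{xx} \, dr$ and $\int_{-\eps}^\tau a'''' \circ \eta \, \eta_x^4 \, dr$: here one must insert the bounds~\eqref{dxxx-a-final0} and~\eqref{d4xa-great} for $\p_\theta^3 a \circ \eta$ and $\p_\theta^4 a \circ \eta$, which are themselves expressed in terms of $\p_\theta^\gamma w \circ \eta$ for $\gamma \le 3$, and hence (via~\eqref{w-derivative-idents}) in terms of $\eta_x^{-1}, \eta_x^{-2}, \eta_x^{-3}$ against $I_t^{(j)}$ and $w_0^{(j)}$. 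The dangerous inverse powers of $\eta_x$ are exactly where Lemma~\ref{lem:hardintegral} is needed: the time integral $\int_{-\eps}^t |\p_\theta w(\phi(\phi^{-1}(\eta(x,t),t),t'),t')|^2 dt'$ appearing in~\eqref{d4xa-great}, when multiplied by $\eta_x^4$, is $\les \eps^{-1}\eta_x^2$ by~\eqref{eq:harakiri:0}, which is what tames the otherwise non-integrable singularity as $(x,t) \to (x_*,T_*)$.

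So the key steps in order are: (1) record the five integral bounds for $\int I_\tau^{(j)} d\tau$, $j=0,\dots,4$, splitting into the three label regimes $|x-x_*|\le\eps^3$, $|x-x_*|\le\eps^2$, $|x-x_*|\ge\eps^2$, using $T_* = \OO(\eps^3)$, \eqref{nice2}, \eqref{a-boot}, \eqref{Is012}, and the bounds on $\eta_x,\eta_{xx},\eta_{xxx}$ from Lemmas~\ref{lem:d1eta}-\ref{lem:d3eta}; (2) substitute~\eqref{dxxx-a-final0}, \eqref{d4xa-great}, \eqref{w-derivative-idents}, and~\eqref{funny-raccoon4}-type identities to express all occurrences of $\p_\theta^\gamma a \circ \eta$ and $\p_\theta^\gamma w \circ \eta$ purely in terms of $\eta_x^{-1}$, $w_0^{(j)}$, $I_t^{(j)}$, and the distinguished time integrals; (3) invoke Lemma~\ref{lem:hardintegral} to kill the $\eta_x^4 \int |\p_\theta w|^2$ term; (4) plug everything into~\eqref{eq5}, collect the $\sup_t |\p_x^4 \eta|$ term on the left with coefficient $1 - \OO(\eps^2\kappa_0^2)$, and absorb; (5) read off the three-regime bound, tracking the worst inverse power of $\eps$ in each regime (which is $\eps^{-31/8}$, $\eps^{-4}$, $\eps^{-9/2}$ respectively, the first coming from $w_0''''$, the $\eps^{-4}$ regime from the interplay of $w_0'''$ with $\int I_\tau' d\tau$ and the $\eps^{-4}$ bound on $w_0'''$, and the away-from-$x_*$ bound being dominated by the global $C_\eps$ estimate on $\p_\theta^4 w_0$). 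The main obstacle I expect is step (2)-(3): bookkeeping the many terms generated by~\eqref{Is4} composed with~\eqref{d4xa-great} without losing a power of $\eps$, and in particular verifying that every term carrying an inverse power $\eta_x^{-k}$ with $k\ge 2$ is either accompanied by the $\int|\p_\theta w|^2$ structure that Lemma~\ref{lem:hardintegral} handles, or by enough explicit powers of $\eta_x$ from the Jacobian factors in~\eqref{eq5} to render the product bounded.
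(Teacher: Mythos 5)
Your proposal follows essentially the same route as the paper's proof: the identity \eqref{eq5} combined with \eqref{Is4}, the three-regime bounds on $w_0''''$ from \eqref{rice1}, the bound \eqref{d4xa-great} on $\p_\theta^4 a\circ\eta$ fed through Lemma~\ref{lem:hardintegral} to control $\eta_x^4\int|\p_\theta w|^2$, and the absorption of the $a'\circ\eta\,\eta_{xxxx}$ term with coefficient $\OO(\eps^2\kappa_0^2)$. One small misattribution in your bookkeeping: in the regime $|x-x_*|\le\eps^2$ the dominant contribution $\sim\eps^{-4}$ comes from $w_0''''\int_{-\eps}^t I_\tau\,d\tau$ (with $|w_0''''|\lesssim\eps^{-5}$ there), not from $w_0'''$ against $\int I_\tau'\,d\tau$, which is of strictly lower order; this does not affect the soundness of your plan.
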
 
\begin{proof}[Proof of Lemma \ref{lem:d4eta}] We shall first consider the case that the label $x$ satisfies $\abs{x-x_*} \le \eps^2$.
The identity \eqref{w-derivative-idents} shows that
\begin{align} 
\p_\theta^3 w( \eta(x,t),t)  & =   \eta_x ^{-3} (I_t w_0''' + 3 I_s' w_0'' + 3 I_t '' w_0' +I_t ''' w_0 ) \notag \\
& \qquad 
-3 \eta_x ^{-4} \eta_{xx} (I_t w_0'' + 2 I_t' w_0' + I''_t w_0 - \eta_x ^{-1} (w_0'I_t + I_t' w_0) \eta_{xx} )  \notag \\ 
&\qquad
- \eta_x ^{-4} \eta_{xxx} (I_t w_0' + I_t' w_0 ) \,.  \label{funny-raccoon8}
\end{align} 
We next use the inequality \eqref{d4xa-great} together with the identities \eqref{dthetaw-eta},  \eqref{funny-raccoon8}, and \eqref{funny-raccoon4}, 
\begin{align} 
\sabs{\p_\theta^4 a(\eta(x,t),t)}& \les   \eps^ {-4}  +  \eps^ {-2}  \sabs{\eta_x ^{-1} (I_t w_0' + I_t' w_0 )}
+\tfrac{1}{\eps}  \sabs{\eta_x ^{-1} (I_t w_0' + I_t' w_0 )}^2
 \notag \\
&\qquad
+ \tfrac{1}{\eps}  \sabs{  \eta_x ^{-2} (I_t w_0'' + 2 I_t' w_0' + I''_t w_0 - \eta_x ^{-1} (w_0'I_t + I_t' w_0) \eta_{xx} )} \notag \\
&\qquad
 + \sabs{\eta_x ^{-1} (I_t w_0' + I_t' w_0 )} \sabs{  \eta_x ^{-2} (I_t w_0'' + 2 I_t' w_0' + I''_t w_0 - \eta_x ^{-1} (w_0'I_t + I_t' w_0) \eta_{xx} )} \notag \\
 &\qquad
   +  \sabs{\ \eta_x ^{-3} (I_t w_0''' + 3 I_t' w_0'' + 3 I_t '' w_0' +I_t ''' w_0 ) } \notag \\
    &\qquad
   + 3 \sabs{ \eta_x ^{-4} \eta_{xx} (I_t w_0'' + 2 I_t' w_0' + I''_t w_0 - \eta_x ^{-1} (w_0'I_t + I_t' w_0) \eta_{xx} )  } \notag \\
       &\qquad
   +  \sabs{ \eta_x ^{-4} \eta_{xxx} (I_t w_0' + I_t' w_0 ) }  \notag \\
   &\qquad
   +    \int_{-\eps}^t \sabs{\p_\theta w (\phi(\phi ^{-1} (\eta(x,t),t),t'),t')}^2 dt'  \,. \label{funny-raccoon9}
\end{align}

By \eqref{eq:tilde:W:5:derivative} and \eqref{d3W}, for $\eps$ sufficiently small, we have that
\begin{subequations} 
 \label{rice1}
\begin{alignat}{2}
 \abs{w_0''''(\theta)} & \le  2 \eps^{-\frac{39}{8}}   \ \  &&\text{ for } \ \ \abs{x-x_*} \le \eps^{3} \,,  \\
  \abs{w_0''''(\theta)} & \le  361 \eps^{-5}   \ \  &&\text{ for } \ \ \abs{x-x_*} \le \eps^{2} \,,  \\
\abs{w_0''''(x)}  & \les  \eps^{-{\frac{11}{2}} } \ \  &&\text{ for } \ \ \abs{x-x_*} \ge \eps^{2} \,.
\end{alignat} 
\end{subequations} 
Using the identity \eqref{Is3} together with
 \eqref{nice7b},  \eqref{dx3-eta-bound-all-t},  \eqref{etax-etaxx-small},   \eqref{mice7b}
\begin{align} 
 \sabs{ I'''_t}  & \le 
 \begin{cases}
7\eps^{-2}  & \ \ \abs{x-x_*} \le \eps^{2}  \\
C \eps^{-3}& \ \ \abs{x-x_*} \ge \eps^{2}
 \end{cases} \,.
 \label{It'''}
\end{align} 

Then, with  \eqref{Is012} and  \eqref{nice4}, \eqref{nice8}, \eqref{nice9}, \eqref{eq:harakiri:0},  and \eqref{funny-raccoon9}, we have that for $\eps$ taken sufficiently small, 
 \begin{align} 
\sabs{\p_\theta^4 a(\eta(x,t),t) \eta_x^4} 
\les
 \begin{cases}
  \eps^{-4}  & \ \ \abs{x-x_*} \le \eps^{2}  \\
 \eps^{-7} & \ \ \abs{x-x_*} \ge \eps^{2}
 \end{cases} 
  \,. \label{d4a-etax4}
\end{align}
Using the identities \eqref{Is4} and \eqref{eq5}, we have that
\begin{align*} 
\p_x^4\eta & = w_0''''  \int_{-\eps}^t I_\tau d\tau +  4w_0'''  \int_{-\eps}^t I_\tau'd\tau +  6w_0'' \int_{-\eps}^t I_\tau''d\tau   +  4w_0'  \int_{-\eps}^t I_\tau''' d\tau 
\notag \\
&  
+ w_0 \int_{-\epsilon }^t I_\tau 
 \Biggl(\tfrac{4096}{81} \bigl(\int_{-\epsilon }^\tau a' \circ \eta \ \eta_x\, dr\bigr)^4
-\tfrac{1024}{9}   \bigl(\int_{-\epsilon }^\tau a' \circ \eta \ \eta_x dr\bigr)^2
 \int_{-\epsilon }^\tau \bigl(a'' \circ \eta \ \eta_x^2 + a' \circ \eta \ \eta_{xx}\bigr) dr  \notag \\
 & \qquad
  +\tfrac{64}{3}   \bigl(\int_{-\epsilon }^\tau  ( a'' \circ \eta \ \eta_x^2 +  a' \circ \eta \ \eta_{xx} )dr\bigr)^2 \notag \\
 & \qquad
  +\tfrac{256}{9}   \bigl(\int_{-\epsilon }^\tau   a' \circ \eta \ \eta_{x}  dr\bigr)
   \bigl(\int_{-\epsilon }^\tau  ( a''' \circ \eta \ \eta_x^3 + 3 a'' \circ \eta \ \eta_x \eta_{xx} + a' \circ \eta \ \eta_{xxx} )dr\bigr)
 \notag \\
 & \qquad 
 -\tfrac{8}{3}  \int_{-\epsilon }^\tau \bigl( a'''' \circ \eta \ \eta_x^4 
 + 6 a''' \circ \eta \ \eta_x^2 \eta_{xx} 
 + 3 a'' \circ \eta \ \eta_{xx}^2
 + 4 a'' \circ \eta \ \eta_x \eta_{xxx} 
 + a' \circ \eta \ \eta_{xxxx}\bigr) dr\Biggr) d\tau  \,.
\end{align*} 
Notice that from \eqref{etax-best} and  \eqref{etaxx-best}, for $\abs{x-x_*} \le \eps^2$,  we have that 
$$
\eta_x ^{-1} \eta_{xx}^2 \le 100 \eps^{-3} \,.
$$
Then, together with
 the bounds \eqref{nice7}, \eqref{nice7b}, 
  \eqref{Is012}, \eqref{etax-etaxx-small}--\eqref{mice7b}, \eqref{etax-best},  \eqref{etaxx-best}, 
 \eqref{rice1}--\eqref{d4a-etax4}, and with \eqref{Is012}, 
\eqref{etax-etaxx-small}--\eqref{funny-raccoon5}, 
we find that for $\eps$ sufficiently small, 
\begin{align} 
\sup_{t \in [-\eps,T_*)} \abs{\p_x^4 \eta(x,t)} \le
 \begin{cases}
{\tfrac{5}{2}} \eps^ {-\frac{31}{8}} + 7 \eps^2 \kappa_0^2\sup_{t \in [-\eps,T_*)} \abs{\p_x^4 \eta(x,t)}  & \ \ \abs{x-x_*} \le \eps^{3}  \\
362\eps^ {-4} + 7 \eps^2 \kappa_0^2\sup_{t \in [-\eps,T_*)} \abs{\p_x^4 \eta(x,t)}  & \ \ \abs{x-x_*} \le \eps^{2}  \\
C \eps^{-{\frac{9}{2}} } 
+ 7 \eps^2 \kappa_0^2\sup_{t \in [-\eps,T_*)} \abs{\p_x^4 \eta(x,t)}   & \ \ \abs{x-x_*} \ge \eps^{2}
 \end{cases} 
 \,, \label{mice10}
\end{align} 
and  hence 
\begin{align} 
\sup_{t \in [-\eps,T_*)} \abs{\p_x^4 \eta(x,t)} \le
 \begin{cases}
3 \eps^ {-\frac{31}{8}}   & \ \ \abs{x-x_*} \le \eps^{3}  \\
363\eps^ {-4}   & \ \ \abs{x-x_*} \le \eps^{2}  \\
C \eps^{-{\frac{9}{2}} } 
  & \ \ \abs{x-x_*} \ge \eps^{2}
 \end{cases} 
 \,, \label{mice11}
\end{align} 
which proves  \eqref{dx4-eta-bound}.
\end{proof}

\subsection{$C^4$ regularity away from the blowup}
\begin{lemma}\label{lem:awbound}
For labels $x$, we have that 
\begin{align} 
&\sup_{t \in [-\eps,T_*)}\max_{\gamma\le 4}\left( \abs{ \p_\theta^\gamma a(\eta(x,t),t)}
+\abs{ \p_\theta^\gamma w(\eta(x,t),t) } \right) \notag \\
& \qquad\qquad
 \le 
  \begin{cases}
C_\eps  \left((T_* -t) +   3\eps^{-3}(\eps+t)(x-x_*)^2\right)^{-4}   & \ \ \abs{x-x_*} \le \eps^{2}  \\
C_\eps& \ \ \abs{x-x_*} \ge \eps^{2}
 \end{cases}
 \,,   \label{aw-good-bound}
\end{align}  
where $C_\eps$ denotes a generic positive constant depending on inverse powers of $\eps$.
\end{lemma}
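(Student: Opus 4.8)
The plan is to reduce the whole statement, via the characteristic estimates already proven, to control of the single scalar
$$
G(x,t):=(T_*-t)+3\eps^{-3}(\eps+t)(x-x_*)^2,
$$
which by \eqref{etax-best} satisfies $\eta_x(x,t)\gtrsim\eps^{-1}G(x,t)$, whose square root dominates $\abs{x-x_*}$ in the sense $\abs{x-x_*}^2\lesssim\eps^3 G/(\eps+t)$, and which by \eqref{wtheta-eta-bound} controls the first derivative through $\abs{\p_\theta w(\eta(x,t),t)}\lesssim G(x,t)^{-1}$. Once these three facts are in place, everything else is algebra fed into the recursive identities \eqref{w-derivative-idents} and the already–established $a$-estimates.

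First I would dispatch the easy region $\abs{x-x_*}\ge\eps^2$. There $\eta_x\ge\tfrac14\eps$ by \eqref{etax-lower-bad}, the bounds \eqref{etaxx-good-bound}, \eqref{dx3-eta-bound-all-t}, \eqref{dx4-eta-bound} give $\abs{\p_x^\gamma\eta}\lesssim C_\eps$ for $\gamma=2,3,4$, the data derivatives $\abs{\p_\theta^\gamma w_0}$, $\abs{\p_\theta^\gamma a_0}$ ($\gamma\le4$) are bounded by fixed inverse powers of $\eps$ by the assumptions of Section~\ref{subsec:support}, and $I_t,I_t',I_t'',I_t'''$ are bounded by \eqref{Is012}. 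Substituting into \eqref{w-derivative-idents}, and into the already–proven inequalities \eqref{dxxx-a-final0}, \eqref{d4xa-great} for $\p_\theta^3 a\circ\eta$ and $\p_\theta^4 a\circ\eta$ (with $\abs{a},\abs{\p_\theta a},\abs{\p_\theta^2 a}$ handled by \eqref{a-boot}, \eqref{axx-bound}), each quantity on the left of \eqref{aw-good-bound} is $\le C_\eps$, as claimed.

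Second, for $\abs{x-x_*}\le\eps^2$ I would induct on $\gamma$. The identities \eqref{w-derivative-idents} express $\p_\theta^\gamma w\circ\eta$ as a polynomial in $I_t^{(j)}$, $w_0^{(j)}$ ($j\le\gamma$), the negative power $\eta_x^{-1}$, and the ``correction'' derivatives $\eta_{xx},\eta_{xxx},\eta_{xxxx}$. The point is that the corrections are small \emph{because they vanish at the pre-shock point}: $\eta_x(x_*,T_*)=\eta_{xx}(x_*,T_*)=0$ by \eqref{mice1}, so Proposition~\ref{prop:etax-taylor} together with \eqref{dx3-eta-bound}, \eqref{dx4-eta-bound} control $\eta_{xx},\eta_{xxx},\eta_{xxxx}$ by the very quantities $(T_*-t)$ and $\abs{x-x_*}$ that make up $G$; pairing these numerator factors against the denominator factors $\eta_x\gtrsim\eps^{-1}G$, and using $\abs{x-x_*}^2\lesssim\eps^3 G/(\eps+t)$ and $\abs{\p_\theta w\circ\eta}\lesssim G^{-1}$, yields after careful bookkeeping $\abs{\p_\theta^\gamma w(\eta(x,t),t)}\lesssim C_\eps G(x,t)^{-\gamma}$ for $\gamma\le4$. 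For the radial velocity I would use the inequalities \eqref{dxxx-a-final0} and \eqref{d4xa-great}, which involve only $\p_\theta^j w\circ\eta$ with $j\le3$, the already–bounded $\p_x^j\eta$, and the time integral $\int_{-\eps}^t\abs{\p_\theta w(\phi(\phi^{-1}(\eta(x,t),t),t'),t')}^2dt'$; the latter is bounded by Lemma~\ref{lem:hardintegral}, giving $\int_{-\eps}^t(\cdots)dt'\lesssim\eps^{-1}\eta_x^{-2}\lesssim\eps\,G^{-2}$. Combining, $\abs{\p_\theta^\gamma a(\eta(x,t),t)}\lesssim C_\eps G(x,t)^{-\gamma}$ for $\gamma\le4$ as well (the cases $\gamma\le2$ being immediate from \eqref{a-boot}, \eqref{axx-bound} and \eqref{w-derivative-idents}). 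Finally, since $\abs{x-x_*}\le\eps^2$, $\abs{T_*}\le7\kappa_0^2\eps^3$ and $\eps+t\lesssim\eps$, one has $G(x,t)\lesssim1$, so $G^{-\gamma}\lesssim G^{-4}$ for $\gamma\le4$ and the desired form of \eqref{aw-good-bound} follows.

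The hard part will be the bookkeeping in the second step. The recursion \eqref{w-derivative-idents} genuinely produces terms with high negative powers of $\eta_x$ — up to $\eta_{xx}^3\eta_x^{-7}$ for $\gamma=4$ — and showing these are still $\lesssim C_\eps G^{-4}$ cannot be done with the crude bounds of Lemmas~\ref{lem:d2eta}–\ref{lem:d4eta}: one must use the \emph{sharp two-sided} estimates of Proposition~\ref{prop:etax-taylor} (reflecting that $\eta_{xx}$ and its parents are small precisely because $(x_*,T_*)$ is a nondegenerate pre-shock), and one must carefully group each $\eta_{xx}$, $\eta_{xxx}$ in a numerator with enough copies of $\eta_x$ in a denominator so that the powers of $\eps$ and of $G$ balance. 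The corresponding delicate estimate for $\p_\theta^\gamma a$ is exactly what Lemma~\ref{lem:hardintegral} isolates, so most of that difficulty has already been absorbed.
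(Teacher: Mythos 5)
Your approach — substituting the Lagrangian bounds from Lemmas~\ref{lem:d1eta}--\ref{lem:d4eta}, Proposition~\ref{prop:etax-taylor}, Lemma~\ref{lem:wthetabound}, and Lemma~\ref{lem:hardintegral} into the chain-rule identities \eqref{w-derivative-idents}, with \eqref{dxxx-a-final0} and \eqref{d4xa-great} for the radial velocity — is the same set of ingredients the paper's proof invokes, so the method is not in question.

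However, the intermediate claim $|\p_\theta^\gamma w(\eta(x,t),t)|\lesssim C_\eps\,G(x,t)^{-\gamma}$ does not survive the bookkeeping you defer to the ``hard part,'' and this is a genuine gap. As $t$ tends to $T_*$, the sharp estimates \eqref{etax-best}--\eqref{etaxx-best} give $\eta_x\sim\eps^{-3}(x-x_*)^2$ and $\eta_{xx}\sim\eps^{-3}(x-x_*)$, so that $\eta_{xx}/\eta_x\sim(x-x_*)^{-1}\sim\eps^{-1}G^{-1/2}$: each extra factor $\eta_{xx}/\eta_x$ costs a \emph{half power} of $G^{-1}$, not merely a power of $\eps^{-1}$. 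The most singular term in $\p_\theta^\gamma w\circ\eta$ is $(I_tw_0)'\,\eta_{xx}^{\gamma-1}\eta_x^{-(2\gamma-1)}$, and since $(I_tw_0)'(x_*)\sim-\eps^{-1}$ while $\eta_x^{-1}\lesssim\eps G^{-1}$, that term scales as $G^{-(3\gamma-1)/2}$, i.e.\ $G^{-5/2}$, $G^{-4}$, $G^{-11/2}$ for $\gamma=2,3,4$ — not $G^{-\gamma}$. Thus the asserted $G^{-\gamma}$ fails already at $\gamma=2$, and at $\gamma=4$ the target power $G^{-4}$ is in fact exceeded: in the limit $t\to T_*^-$ one finds $(I_tw_0)'\eta_{xx}^3\eta_x^{-7}\sim\eps^{11}(x-x_*)^{-11}$ whereas $G^{-4}\sim\eps^8(x-x_*)^{-8}$, a ratio $\sim\eps^3(x-x_*)^{-3}$ that is not controlled by any fixed $C_\eps$ as $|x-x_*|\to0$. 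So the induction as you sketch it does not close at $\gamma=4$. (The paper's own proof does not display this accounting, and in the paper's only downstream use of the lemma one has $G\gtrsim\eps^{8/3}$, where any fixed power of $G^{-1}$ is $C_\eps$; the precise exponent is therefore never stressed there — but the chain of estimates you propose does not establish the lemma as stated.)
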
 
\begin{proof}[Proof of Lemma \ref{lem:awbound}]
We use the identities \eqref{w-derivative-idents} for $\p_\theta ^\gamma w \circ \eta$.
The bounds on the initial data \eqref{nice4}, \eqref{nice8}, \eqref{nice9}, \eqref{rice1}, 
the bounds on derivatives of $\eta$ given in \eqref{dx-eta-bound0},  \eqref{etaxx-good-bound}, \eqref{dx3-eta-bound-all-t},
 \eqref{etax-best}, \eqref{etaxx-best}, and \eqref{dx4-eta-bound},
the bounds on $I_t$ and its derivatives given in \eqref{Is012} and \eqref{It'''}
prove the stated bound for $\p_\theta ^\gamma w \circ \eta$ in  \eqref{aw-good-bound}.

The additional  inequalities \eqref{a-boot},  \eqref{nice7}, \eqref{funny-raccoon5}, and \eqref{funny-raccoon9} then proved
 the stated bound for $\p_\theta ^\gamma a \circ \eta$ in  \eqref{aw-good-bound}.
\end{proof}

\begin{proposition}[Taylor expansion for $\eta(x,t)$]\label{prop:lag-smooth}
The $3$-characteristics $\eta$ satisfy
$$
\eta \in C^1([-\eps,T_*], C^4( \mathbb{T}  )) \,,
$$
and  at the blowup time, $\eta(x, T_*)$  has the Taylor expansion about $x_*$ given by
\begin{align} 
\eta(x,T_*) = \eta(x_*,T_*) + \tfrac{1}{6} \p_x^3 \eta(x_*,T_*) (x-x_*)^3 + \tfrac{1}{6} \p_x^4 \eta(\bar x,T_*) (x-x_*)^4 \,, \label{eta-taylor:b}
\end{align} 
for some $\bar x$ between $x_*$ and $x$.
\end{proposition}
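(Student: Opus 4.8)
The plan is to establish the proposition in two parts: first the regularity statement $\eta \in C^1([-\eps,T_*],C^4(\TT))$, and then the Taylor expansion \eqref{eta-taylor:b}. For the regularity, I would invoke Lemma 2.1 of \cite{BuShVi2019a} (quoted in the proof of Proposition~\ref{prop:etax-taylor}) which gives a short-time solution $(w,a) \in C^0([-\eps,\bar T]; C^4(\TT)) \cap C^1([-\eps,\bar T]; C^3(\TT))$ and hence, by ODE theory applied to \eqref{eta-eqn}, $\eta \in C^1([-\eps,\bar T]; C^4(\TT)) \cap C^2([-\eps,\bar T]; C^3(\TT))$. Then the uniform-in-time bounds on $\p_x^\gamma \eta$ for $\gamma \le 4$ — namely \eqref{dx-eta-bound0}, \eqref{etaxx-good-bound}, \eqref{dx3-eta-bound-all-t}, and \eqref{dx4-eta-bound} from Lemmas~\ref{lem:d1eta}, \ref{lem:d2eta}, \ref{lem:d3eta}, \ref{lem:d4eta} — together with the companion bounds on $\p_\theta^\gamma w \circ \eta$ from Lemma~\ref{lem:awbound}, feed the standard continuation/bootstrap argument to push $\bar T$ all the way to $T_*$, giving $\eta \in C^1([-\eps,T_*], C^4(\TT))$. (Note that only the $C^4$-in-space, $C^1$-in-time conclusion is asserted in the statement, so I do not need to worry about $C^2$-in-time at $T_*$, which would be problematic since $\p_t \p_x \eta = I_t w_0' + I_t' w_0$ blows up there.)

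For the Taylor expansion, I would apply Taylor's theorem with Lagrange remainder to the map $x \mapsto \eta(x,T_*)$ about the point $x_*$, to fourth order. The key simplifications are: $\p_x \eta(x_*,T_*) = 0$ (from \eqref{dx-eta-bound} evaluated at $t = T_*$, i.e. $s \to \infty$), and $\p_x^2 \eta(x_*,T_*) = 0$ (from \eqref{dx2-eta-bound} evaluated at $t = T_*$). These are exactly the identities \eqref{mice1} already recorded in the proof of Proposition~\ref{prop:etax-taylor}. With the zeroth-order term being $\eta(x_*,T_*)$ and the first- and second-order terms vanishing, Taylor's theorem yields
\begin{align*}
\eta(x,T_*) = \eta(x_*,T_*) + \tfrac{1}{6} \p_x^3 \eta(x_*,T_*)(x-x_*)^3 + \tfrac{1}{24} \p_x^4 \eta(\bar x, T_*)(x-x_*)^4
\end{align*}
for some $\bar x$ between $x_*$ and $x$. (I note the stated \eqref{eta-taylor:b} writes $\tfrac16$ rather than $\tfrac{1}{24}$ in front of the quartic term; since $\p_x^4\eta$ is merely a bounded remainder by \eqref{dx4-eta-bound}, this constant is immaterial, and I would simply absorb it or follow the paper's convention.) For this step to be legitimate I need $\eta(\cdot, T_*) \in C^4$ near $x_*$, which is precisely the regularity established in the first part, and I need the remainder $\p_x^4 \eta(\bar x, T_*)$ to be finite — guaranteed by Lemma~\ref{lem:d4eta}.

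The main obstacle is the first part: rigorously running the continuation argument from $\bar T$ up to $T_*$ while keeping track of the blowup. The subtlety is that $\p_\theta^\gamma w$ and $\p_\theta^\gamma a$ genuinely blow up as $(\theta,t) \to (\xi_*,T_*)$, so the $C^4(\TT)$ norm of $\eta(\cdot,t)$ does \emph{not} stay bounded uniformly; rather, one must exploit that the blowup of $\p_x^\gamma \eta$ is controlled — by Lemmas~\ref{lem:d1eta}--\ref{lem:d4eta} the quantities $\p_x^\gamma\eta(x,t)$ for $\gamma \le 4$ remain \emph{bounded} (indeed \eqref{dx-eta-bound} shows $\p_x \eta$ stays positive away from $x_*$ and decays like $e^{-s}$ at $x_*$, while $\p_x^2,\p_x^3,\p_x^4$ are all bounded by fixed inverse powers of $\eps$), so that $\eta$ is a genuine $C^4$ diffeomorphism-in-$x$ up to and including $T_*$. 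What blows up is $\eta_x^{-1}$ at the single label $x_*$, but the flow map itself stays $C^4$. Thus the continuation works because the relevant norm is $\|\eta(\cdot,t)\|_{C^4_x}$, not $\|w(\cdot,t)\|_{C^4_\theta}$; the latter degenerates but is not what controls the ODE flow. Once this is granted, the Taylor expansion is an immediate consequence of the vanishing of the first two $x$-derivatives of $\eta$ at $(x_*,T_*)$.
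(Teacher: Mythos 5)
Your proof is correct and follows essentially the same route as the paper: short-time $C^1_t C^4_x$ regularity from Lemma~2.1 of \cite{BuShVi2019a} plus ODE theory, continuation to $T_*$ via the uniform bounds of Lemmas~\ref{lem:d1eta}--\ref{lem:d4eta}, and then Taylor's theorem with Lagrange remainder using the vanishing of $\p_x\eta(x_*,T_*)$ and $\p_x^2\eta(x_*,T_*)$. Your observation that the coefficient in the quartic remainder should be $\tfrac{1}{24}$ rather than $\tfrac{1}{6}$ is also correct --- equation~\eqref{etaTstar} in the later proof of Theorem~\ref{thm:blowup-profile} uses $\tfrac{1}{24}$, so the $\tfrac{1}{6}$ in \eqref{eta-taylor:b} appears to be a typographical slip, immaterial to all downstream estimates as you note.
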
 
\begin{proof}[Proof of Proposition \ref{prop:lag-smooth}]
 By Lemma 2.1 in \cite{BuShVi2019a}, there exists a short time $\bar T \ge -\eps$, such that $(w,a)$ is a unique solution to \eqref{eq:w:z:k:a2} with initial
 data $(w_0,a_0)$ and 
\begin{align} 
(a,w) \in C^0([-\eps,\bar T];C^4( \mathbb{T}   ))   \,. 
\end{align} 
for any open set $U$ which does not intersect $\xi_*$.   By the local existence and uniqueness theorem for ODE,
$\eta \in C^1([-\eps,\bar T];C^4( \mathbb{T}   ))$.   Given the uniform bounds 
\eqref{dx-eta-bound0},  \eqref{etaxx-good-bound}, \eqref{dx3-eta-bound-all-t},  and \eqref{dx4-eta-bound},
the standard continuation argument shows that 
$$
\eta \in C^1([-\eps,T_*], C^4( \mathbb{T}  )) \,.
$$
The Taylor remainder theorem provides the expansion \eqref{eta-taylor:b}.
\end{proof}

\subsection{Newton iteration to solve quartic equations in a fractional series}
We wish to invert the polynomial equation $\eta(x,T_*)=z$.  As given by \eqref{eta-taylor}, this requires inversion of a quartic polynomial.  We shall  derive the
root that yields a H\"{o}lder-$ \tfrac{1}{3} $ solution for $\eta ^{-1} ( \cdot, T_*)$ and satisfies $\eta ^{-1} ( \xi_*, T_*) = x_*$.
\begin{lemma}[Quartic inversion]\label{lem1}If
$$f(x,y)= -x + a_3  y^3 + a_4 y^4 \,,$$
and $a_3>0$, then the
solution $y(x)$ to $f(x,y)=0$ such that $y(0)=0$ is given by the fractional power-series
\begin{align} 
y(x) = a_3^ {-\frac{1}{3}}  x^ {\frac{1}{3}} -  \tfrac{1}{3} a_4 a_3^ {-\frac{5}{3}}  x^ {\frac{2}{3}} +  \tfrac{1}{3}  a_3^{-3} a_4^2 x +\OO(|x|^ {\frac{4}{3}} ) \,.\label{expansion1}
\end{align} 
\end{lemma}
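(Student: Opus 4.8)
\textbf{Proof proposal for Lemma \ref{lem1} (Quartic inversion).}

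The plan is to solve $f(x,y)=-x+a_3 y^3 + a_4 y^4 = 0$ by a Newton-type iteration in the ring of fractional power series in $x^{1/3}$. First I would observe that for $x$ small the equation forces $y$ small, and the leading balance must be $a_3 y^3 \approx x$, so the natural ansatz is $y(x) = c_1 x^{1/3} + c_2 x^{2/3} + c_3 x + \dots$, a convergent Puiseux series. Substituting the ansatz into $f$ and collecting powers of $x^{1/3}$ determines the coefficients recursively: the $x$-order term gives $a_3 c_1^3 = 1$, hence $c_1 = a_3^{-1/3}$ (the real cube root, which is well-defined and positive since $a_3 > 0$); the $x^{4/3}$-order term gives $3 a_3 c_1^2 c_2 + a_4 c_1^4 = 0$, hence $c_2 = -\tfrac13 a_4 a_3^{-5/3}$; the $x^{5/3}$-order term gives $3 a_3 c_1^2 c_3 + 3 a_3 c_1 c_2^2 + 4 a_4 c_1^3 c_2 = 0$, which after substituting $c_1,c_2$ yields $c_3 = \tfrac13 a_3^{-3} a_4^2$. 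This reproduces \eqref{expansion1}.

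To make this rigorous rather than merely formal, I would set up the fixed-point argument directly. Writing $y = a_3^{-1/3} x^{1/3}(1 + u)$ with $u = u(x^{1/3})$ small, the equation $f=0$ becomes, after dividing by $a_3^{-1} x$,
\begin{align*}
(1+u)^3 + a_4 a_3^{-4/3} x^{1/3} (1+u)^4 = 1,
\end{align*}
which can be rearranged as $u = \Psi(u)$ for a contraction $\Psi$ on a small ball in the Banach algebra of continuous functions of $t := x^{1/3}$ on a neighborhood of $t=0$ vanishing at $t=0$. Since $\Psi(0) = O(t)$ and $\Psi$ is Lipschitz with small constant for $t$ small, the contraction mapping theorem gives a unique such $u$, and $y(0)=0$ follows. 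The coefficients of the Taylor expansion of $u$ in $t$ are then exactly those computed above, and the remainder is $O(t^4) = O(|x|^{4/3})$, giving the claimed expansion with the stated error.

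Alternatively — and this is probably the cleanest route — one can invoke the Newton--Puiseux theorem (or simply the analytic implicit function theorem applied to $g(x,t) := f(x, t)/t^{\,?}$ after the substitution $y = t$, $x = \tau^3$): setting $x = \tau^3$, the equation $-\tau^3 + a_3 y^3 + a_4 y^4 = 0$ has $y = a_3^{-1/3}\tau$ as an approximate solution, and $\partial_y(-\tau^3 + a_3 y^3 + a_4 y^4) = 3 a_3 y^2 + 4 a_4 y^3$ is nonzero at $(\tau,y) = (0,0)$ only to leading order — so instead one factors out the common zero and applies the implicit function theorem to the reduced equation $-\tau^3/y^3 \cdot(\text{unit}) = \dots$; concretely, $h(\tau, v) := a_3 v^3 + a_4 v^3 \tau - 1$ where $v = y/\tau$, and $\partial_v h(0, a_3^{-1/3}) = 3 a_3^{1/3} \neq 0$, so $v(\tau)$ is analytic near $\tau = 0$ with $v(0) = a_3^{-1/3}$, giving $y(\tau) = \tau v(\tau)$ analytic in $\tau = x^{1/3}$. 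Expanding $v$ to order $\tau^2$ recovers \eqref{expansion1}.

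\textbf{Main obstacle.} There is no deep obstacle here; the only real care needed is (i) the choice of branch — one must select the \emph{real} cube root $a_3^{-1/3}$ and verify this is the unique root with $y(0)=0$ that is continuous (the other two roots of the leading cubic are complex, hence excluded for a real solution, and any genuinely different real branch would not pass through the origin), and (ii) bookkeeping the algebra to get the coefficient $c_3 = \tfrac13 a_3^{-3} a_4^2$ correct, which requires tracking both the $c_1 c_2^2$ cross-term from $(1+u)^3$ and the $c_1^3 c_2$ term from the $a_4 y^4$ contribution at order $x^{5/3}$. Everything else is a routine application of the implicit/Puiseux machinery.
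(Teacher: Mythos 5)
Your proof is correct and reaches the same coefficients, but via a different route than the paper. The paper runs the Newton polygon algorithm explicitly: it identifies the leading segment of the polygon to get $\gamma_1 = \frac13$, factors off $x^{1/3}$, forms the reduced polynomial $f_1(x,y_1) = x^{-1} f(x, x^{1/3}(c_1 + y_1))$, then iterates this two more times to pick off $c_2$ and $c_3$, closing with an appeal to the implicit function theorem applied to the approximate solution. You instead write down the Puiseux ansatz $y = c_1 x^{1/3} + c_2 x^{2/3} + c_3 x + \dots$ and match coefficients directly; your computation of $c_1$, $c_2$, $c_3$ is correct (I checked the $x^{5/3}$ balance $3 a_3 c_1^2 c_3 + 3 a_3 c_1 c_2^2 + 4 a_4 c_1^3 c_2 = 0$ and it does yield $c_3 = \tfrac13 a_3^{-3} a_4^2$). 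Your cleanest rigor argument — the blow-up substitution $x=\tau^3$, $y=\tau v$, followed by the implicit function theorem applied to $h(\tau,v)$ — is essentially the one-step version of what the Newton polygon iteration is doing, and it has the advantage of immediately yielding analyticity of $y$ in $x^{1/3}$, which gives the $\OO(|x|^{4/3})$ remainder at once. The Newton polygon presentation in the paper is more algorithmic and generalizes more transparently to cases where the leading segment has a different slope, but for this single instance your route is shorter.

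Two small slips, neither fatal: after the substitution $y = a_3^{-1/3} x^{1/3}(1+u)$ you should divide by $x$, not by $a_3^{-1}x$, to reach $(1+u)^3 + a_4 a_3^{-4/3} x^{1/3}(1+u)^4 = 1$; and in the blow-up version, $h(\tau,v)$ should be $a_3 v^3 + a_4 \tau v^4 - 1$ (the second term has $v^4$, not $v^3$, since $a_4 y^4/\tau^3 = a_4 \tau v^4$). Fortunately $\partial_v h(0, a_3^{-1/3}) = 3 a_3^{1/3}$ is unchanged, so the conclusion stands.
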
 
\begin{proof}[Proof of Lemma \ref{lem1}]
We will first obtain an approximate solution using the Newton polygon method.
Each term of the polynomial $f(x,y)$ is written as $c x^a y^b$, and the Newton polygon for $f(x,y)$ is constructed as the smallest convex polygonal set that contains
the points $b e_1 + a e_2$.  This polygon consists of a finite set of segments, and we consider the segment $\Gamma_1$, such that each of the points 
$(b,a) = b e_1 + a e_2$
is either above or to the right of this segment.

We will construct a fractional-series solution to $f(x,y)=0$ as
\begin{align} \label{y-expand0}
y(x)= c_1 x^{\gamma_1} + c_2 x^{\gamma_1+\gamma_2}+  c_3 x^{\gamma_1+\gamma_2+\gamma_3}+  \cdot \cdot \cdot  \,.
\end{align} 
The first fractional power $\gamma_1$ is chosen as minus the slope of $\Gamma_1$.  For $-x + a_3  y^3 + a_4 y^4=0$, the points $(b,a)$ are given by $(0,1)$, $(3,0)$, and
$(4,0)$, and thus it is easy to see that the two lower segments of the Newton polygon have slopes $- \tfrac{1}{3} $ and $0$, but that the segment with 
slope $0$ exists only if $c_4 \neq 0$.   We first consider the  segment $\Gamma_1$ with slope $- \tfrac{1}{3} $, in which case
$\gamma_1= {\tfrac{1}{3}} $.   We thus factor $x^ {\frac{1}{3}} $ from \eqref{y-expand0}, and write
\begin{align*}
y(x) = x^ {\frac{1}{3}} (c_1 + y_1(x))\,, \ \ y_1(x) = c_2 x^{ \gamma_2}+ c_3 x^{  \gamma_2+ \gamma_3} + \cdot \cdot \cdot  \,.
\end{align*} 
We compute
\begin{align*} 
f(x,  x^ {\frac{1}{3}} (c_1 + y_1) ) = -x +a_3 x(c_1 +y_1)^3 + a_4 x^ {\frac{4}{3}} (c_1 +y_1)^4 \,.
\end{align*} 
The coefficient of the monomial $x$ must equal to zero, so  we can determine $c_1$:
\begin{align*} 
x(-1+ a_3 c_1^3) =0 \ \  \Longrightarrow \ \ c_1 =a_3^ {-\sfrac{1}{3}}  \,.
\end{align*} 
We next define $f_1(x,y_1) = x^{- \alpha_1 } f(x,  x^ {\frac{1}{3}} (c_1 + y_1) )$ where $ \alpha_1 $ is the intersection of the segment $\Gamma_1$ and the vertical $a$-axis, so that $ \alpha_1 =1$.   We have that
\begin{align*} 
f_1(x,y_1) &= x^{-1 } f(x,  x^ {\frac{1}{3}} (a_3^ {-\sfrac{1}{3}} + y_1) ) \\
&= a_4 a_3^{-\sfrac{4}{3}}  x^\frac{1}{3} + 3 a_3^{\sfrac{1}{3}}y_1 
+ 4 a_4 a_3 ^{-1} x^ {\frac{1}{3}} y_1 
\notag\\
&\qquad + 3 a_3^{\sfrac{2}{3}} y_1^2 
+ 6 a_4 a_3^{-\sfrac{2}{3}} x^ {\frac{1}{3}} y_1^2
+ a_3 y_1^3 + 4 a_4 a_3^ {-\frac{1}{3}}  x^ {\frac{1}{3}} y_1^3
+ a_4 x^ {\frac{1}{3}} y_1^4 
 \,.
\end{align*} 
The Newton polygon for $f_1(x, y_1) =0$ shows that the segment $\Gamma_2$, whose slope is equal to minus the exponent $\gamma_2$, 
connects the points $(0, {\tfrac{1}{3}} )$
and $(0,1)$, so that $\gamma_2 = {\tfrac{1}{3}} $.   We next write
\begin{align*} 
y_1(x) = x^ {\frac{1}{3}} (c_2 +y_2(x))\,, \ \ y_2(x) = c_3 x^{ \gamma_3}+ c_4 x^{  \gamma_3+ \gamma_4} + \cdot \cdot \cdot  \,.
\end{align*} 
We compute $f_1(x, x^ {\frac{1}{3}} (c_2+ y_2))$ and cancel the coefficients in the lowest-order term to find that $c_2 = - \tfrac{1}{3} a_4 a_3^ {-\frac{5}{3}}  $.  
We then define
\begin{align*} 
f_2(x,y_2) &=x^{- \alpha _2  } f_1(x,  x^ {\frac{1}{3}} (c_2  + y_2) ) = x^{-{\frac{1}{3}}  } f_1(x,  x^ {\frac{1}{3}} (- {\tfrac{1}{3}} a_4 a_3^ {-\frac{5}{3}}  + y_2) ) \,,
\end{align*} 
where $ \alpha _2= {\tfrac{1}{3}} $ is the $a$-intercept for  the segment $\Gamma_2$.    A   computation reveals that
\begin{align*} 
f_2(x,y_2)= -a_4^2 a_3 ^ {-\frac{8}{3}}   x^ {\frac{1}{3}}  + 3 a_3^{\frac{1}{3}}  y_2 + o(|x|^ {\frac{1}{3}} )\,,
\end{align*} 
and the 
 Newton polygon for $f_2(x,y)$ shows that the exponent $\gamma_3 = \tfrac{1}{3} $, which in turn shows that $y_2(x) = C x^ {\frac{1}{3}} +  \cdot \cdot \cdot $. 
 Continuing one more step in the iteration to $f_3(x,y_3)$ (whose details we omit), we find that
  $C=\tfrac{1}{3}  a_3^{-3} a_4^2$.
We thus determined the first two non-trivial terms of this fractional series expansion  \eqref{expansion1}.  The result follows by an application of the implicit function theorem to the approximate solution that we have just determined.

We now return to the case in which the first fractional power uses the 
segment of the Newton polygon with slope $0$.  In this case,  we begin the iteration with $\gamma_1=0$, we find that
$y(x) = -\tfrac{a_3}{a_4}  -\tfrac{a_4^2}{a_3^3} x + \OO(x^2)$. Note however that $y(0)\neq 0$ in this case.
\end{proof}

\subsection{Proof of Theorem \ref{thm:blowup-profile}}
Having established the expansion for $\eta(x, T_*)$ we can now prove the main result of this section.
 \begin{proof}[Proof of Theorem \ref{thm:blowup-profile}] 
We consider labels $x$ satisfying $|x - x_*| \le \eps^{3}$.
By Proposition \ref{prop:lag-smooth}, we have that $\eta(x,T_*)$ has the Taylor series expansion \eqref{eta-taylor}, which we write again as
\be
\label{etaTstar}
\eta(x,T_*) = \xi_*+ \tfrac{1}{6} \p_x^3 \eta(x_*,T_*) (x-x_*)^3 + \tfrac{1}{24} \p_x^4 \eta(\bar x,T_*) (x-x_*)^4  \,,
\ee
where $\xi_*= \eta(x_*,T_*)$, and $\bar x$ is a point between $x_*$ and $x$.
   By \eqref{dx3-eta-bound}, the  coefficient for the cubic monomial cannot vanish: 
\begin{align} 
 \tfrac{1}{6} \p_x^3 \eta(x_*,T_*) \ge \tfrac{6 - \eps^ {\frac{1}{6}} }{\eps^3}  >0 \,.
 \label{nozero1}
\end{align} 

Setting $\eta(x,T_*)  = \theta $,  we find that 
$$
\tfrac{1}{6} (x-x_*)^3  \p_x^3\eta(x_*,T_*) +    \tfrac{1}{24} (x-x_*)^4 \p_x^4 \eta(\bar x,T_*)   = \theta  - \xi_* \,.
$$
We define the constants\footnote{Note that, as defined by \eqref{alpheqn},  $\alpha _2$ and $\alpha _3$ actually depend on $x$ through the intermediate point $\bar{x}$, and thus are not truly constants.  Nevertheless, in our proof we need only upper and lower bounds on $\alpha _2$ and $\alpha _3$ which are independent of $x$;  no information on the regularity of these functions in $x$ is needed. The same comment applies to $b_3$ defined in \eqref{pos01}. It is however crucial that $\alpha_1, b_1$ and $b_2$ are independent of $x$.  We emphasize that if the initial data $(w_0,a_0)$ is taken to be $C^5$ smooth instead of just $C^4$, then the expansion \eqref{etaTstar} can be developed to fifth order, making $\alpha_1, \alpha_2,b_3$ constants in $x$.  We omit these computations which are straightforward but involved.}
\begin{subequations} 
\label{alpheqn}
\begin{align} 
\alpha _1 & = \bigl(\tfrac{6}{\p_x^3\eta(x_*,T_*)}\bigr)^ {\frac{1}{3}} >0 \,, \label{nozero2} \\
\alpha _2 & = -\tfrac{1}{24}  \p_x^4\eta(\bar x,T_*)  \bigl(\tfrac{6}{\p_x^3\eta(x_*,T_*)}\bigr)^ {\frac{5}{3}} \,, \\
\alpha _3 & = \bigl(\tfrac{6}{\p_x^3\eta(x_*,T_*)}\bigr)^ 3 \bigl( \tfrac{\p_x^4\eta(\bar x,T_*)}{24}  \bigr)^2 \,,
\end{align} 
\end{subequations} 
where clearly the positivity condition \eqref{nozero2} is merely a restatement of \eqref{nozero1}.
Using Lemma \ref{lem1}, we have that
\begin{align} 
x- x_* &
=  \alpha _1  (\theta - \xi_*)^ {\frac{1}{3}}  +
\alpha _2  (\theta - \xi_*)^ {\frac{2}{3}} +  \alpha _3 (\theta -\xi_*) + \OO(|\theta - \xi_*|^ {\frac{4}{3}}) \,
\label{eta-inv}
\end{align}

We define the function
\begin{align*} 
\mathcal{I}(x) = - \tfrac{8}{3}\int_{-\eps}^{T_*}a(\eta(x,r),r)dr\,.
\end{align*} 
Taylor expanding $w_0(x)$ about $ x_*$ in the identity \eqref{eq0}, we have that
\begin{align} 
w(\eta(x,t),T_*) & = e^{\mathcal{I} (x)}w_0(x) \notag \\ 
 &= e^{\mathcal{I} (x)}\Bigl( w_0(x_*)  + \p_x w_0(x_*)(x-x_*)  + \tfrac{1}{2} \p_x^2 w_0(x_*)  (x-x_*)^2  \notag \\
 & \qquad\qquad\qquad\qquad\qquad
 + \tfrac{1}{6} \p_x^3 w_0(x_*)  (x-x_*)^3 + \tfrac{1}{24} \p_x^4 w_0(\bar x)  (x-x_*)^4   \Bigr) \,,
 \label{headache1}
\end{align} 
for some $\bar x$ between $x_*$ and $x$.  

By Proposition \ref{prop:lag-smooth}, $a \circ \eta \in C^4$, so we can  apply the Taylor remainder theorem to the function $e^{ \mathcal{I} (x)}$, expanding
about  about $x_*$, and obtain
\begin{align} 
e^{ \mathcal{I} (x)} &= e^{ \mathcal{I} (x_*)}\Bigl(1+ \mathcal{I} '(x_*) (x-x_*) + \tfrac{1}{2} ( \mathcal{I} '(x_*)^2 + \mathcal{I} ''(x_*)) (x-x_*)^2 \notag  \\
& \qquad \qquad\qquad \qquad
+ \tfrac{1}{6} ( \mathcal{I} '(\hat x)^3 + 3 \mathcal{I} '(\hat x) \mathcal{I} ''( \hat x) + \mathcal{I} '''(\hat x)  ) (x-x_*)^3 \Bigr) \,,  \label{pos00}
\end{align} 
where $\hat x$ is a point between $x_*$ and $x$.
To simplify notation, we define the constants
\begin{align} 
b_1 = \mathcal{I} '(x_*) \,,
 \ \ b_2 =  \tfrac{1}{2} ( \mathcal{I} '(x_*)^2 + \mathcal{I} ''(x_*)) \,, 
\ \ b_3 = \tfrac{1}{6} ( \mathcal{I} '(\hat x)^3 + 3 \mathcal{I} '(\hat x) \mathcal{I} ''( \hat x) + \mathcal{I} '''(\hat x)  ) \,, \label{pos01}
\end{align} 
and write \eqref{pos00} as 
\begin{align} 
e^{ \mathcal{I} (x)} &= e^{ \mathcal{I} (x_*)}\Bigl(1+b_1 (x-x_*) + b_2 (x-x_*)^2  + b_3(x-x_*)^3 \Bigr) \,. \label{pos02}
\end{align} 

From \eqref{headache1} and  \eqref{pos02}, we have that
\begin{align} 
w(\eta(x,t),T_*) & = \!
e^{ \mathcal{I} (x_*)}\Bigl(1+b_1 (x-x_*) + b_2 (x-x_*)^2  + b_3(x-x_*)^3 \Bigr)
\Bigl( w_0(x_*) \! +\! \p_x w_0(x_*)(x-x_*)\!  \notag \\
& \qquad \qquad + \! \tfrac{1}{2} \p_x^2w_0( x_*)  (x-x_*)^2 + \! \tfrac{1}{6} \p_x^3w_0( x_*)  (x-x_*)^3 + \! \tfrac{1}{24} \p_x^4w_0(\bar x)  (x-x_*)^4  \Bigr) \notag  \\
& = e^{ \mathcal{I} (x_*)}\Bigl( w_0(x_*) + \bigl( b_1 w_0(x_*) + \p_x w_0(x_*)\bigr) (x-x_*) \notag \\
& \qquad\qquad
+ \bigl( b_2 w_0(x_*) +  b_1 \p_x w_0(x_*) + \tfrac{1}{2} \p_x^2 w_0(x_*)\bigr) (x-x_*)^2 \notag \\
& \qquad\qquad \qquad\qquad + \bigl( b_3 w_0(x_*) +  b_2 \p_x w_0(x_*) + \tfrac{1}{2}b_1 \p_x^2 w_0(x_*) \notag \\
& \qquad\qquad \qquad\qquad \qquad\qquad
+ \tfrac{1}{6} \p_x^3w_0(x_*)  \bigr) (x-x_*)^3\Bigl) + \OO(|x-x_*|^4) \,.
\label{headache2}
\end{align} 
We define the constants
\begin{subequations} 
\label{Bs}
\begin{align} 
B_1 & =  b_1 w_0(x_*) + \p_x w_0(x_*) \,, \label{B1} \\
B_2 & =  b_2 w_0(x_*) +  b_1 \p_x w_0(x_*) + \tfrac{1}{2} \p_x^2 w_0(x_*) \,, \\
B_3 & =  b_3 w_0(x_*) +  b_2 \p_x w_0(x_*) + \tfrac{1}{2}b_1 \p_x^2 w_0(x_*) + \tfrac{1}{6} \p_x^3w_0(x_*) \,,
\end{align}
\end{subequations} 
and
\begin{align*} 
\kappa_* =  e^{ \mathcal{I} (x_*)} w_0(x_*) \,,
\end{align*} 
and thus
\begin{align} 
w(\eta(x,t),T_*) = \kappa _*  + e^{ \mathcal{I} (x_*)}\Bigl( B_1 (x-x_*) + B_2 (x-x_*)^2 + B_3 (x-x_*)^3\Bigl) + \OO(|x-x_*|^4)  \,.
\label{headache3}
\end{align} 
With $\theta= \eta(x,T_*)$ as before, it follows from \eqref{eta-inv} that
\begin{align} 
w(\theta,T_*) &= 
\kappa_* + e^{ \mathcal{I} (x_*)}\Bigl( \alpha _1 B_1 (\theta - \xi_*)^{\frac{1}{3}} + \bigl(\alpha _2 B_1 + \alpha _1^2 B_2 \bigr)(\theta - \xi_*)^{\frac{2}{3}} 
\notag \\
& \qquad\qquad 
+ \bigl(\alpha _3 B_1 + 2 \alpha _1 \alpha _2 B_2 + \alpha _1^3 B_3 \bigr)(\theta - \xi_*)\Bigr)
 + \OO( |\theta-\xi_*|^ {\frac{4}{3}} ) \,,
 \label{w-profile}
\end{align} 
%

We can now define the constants $ \aa_1$, $\aa_2$,  and $\aa_3$ in \eqref{w-blowup} as follows:
\begin{subequations} 
\label{a1a2C}
\begin{align} 
\aa_1 & =  e^{ \mathcal{I} (x_*)}\alpha _1 B_1 \,, \label{constant:a1} \\
\aa_2 & = e^{ \mathcal{I} (x_*)}( \alpha _2 B_1 + \alpha _1^2 B_2)  \,,  \label{constant:a2}\\
\aa_3   & = e^{ \mathcal{I} (x_*)}( \alpha _3 B_1 + 2 \alpha _1 \alpha _2 B_2 + \alpha _1^3 B_3) \,.  \label{constant:C}
\end{align} 
\end{subequations} 
We note that 
by Lemma \ref{lem:d3eta}, 
\begin{align} 
 \tfrac{9}{10}  \eps \le  \alpha _1 \le \tfrac{11}{10}  \eps\,, \ \ \ \abs{ \alpha _2} \les \eps^{\frac{9}{8}} \,, \ \ \ \abs{ \alpha _3} \le \eps^ {\frac{5}{4}} 
 \,. \label{alphacrap}
\end{align} 
Furthermore, since by \eqref{eq:fat:cat}, $w_0(0) -\kappa_0 =0$, and we assume the  inequality \eqref{eq:tilde:W:3:derivative:0}, we see that
since $\sabs{x_*} \le 2 \kappa_0 \eps^4$, we have that
\begin{align} 
\kappa_0 - 2 \eps^ {\frac{5}{2}}  \ \le w_0(x_*) \le \kappa_0 + 2  \eps^ {\frac{5}{2}}  \,,  \label{crapcycle1}
\end{align} 
and from \eqref{whynot9monthsago1}
\begin{align} 
 - \tfrac{1+\eps}{\eps} \le  \p_x w_0 (x_*) \le   - \tfrac{1-\eps}{\eps}\,, \ \ \ |\p_x^2 w_0(x_*)| \le 7  \eps^ {\frac{1}{2}} \,.  \label{crapcycle2}
 \end{align} 
 From \eqref{pos01} and \eqref{eq:A_bootstrap:IC},  we see that $b_1$, $b_2$, and $b_3$ are $\OO(\eps)$.   Using \eqref{Bs} together with 
 \eqref{crapcycle1} and  \eqref{crapcycle2}, we find that
 \begin{align*} 
 - \tfrac{1+\eps}{\eps}-\eps^{\frac{9}{10}}  \le  B_1 \le   - \tfrac{1-\eps}{\eps}+\eps^{\frac{9}{10}}   \,, \ \ \ \abs{B_2} \le 4 \eps^ {\frac{1}{2}} \,.
 \end{align*} 
 Together with \eqref{a1a2C} and  \eqref{alphacrap}, we have  that for $\eps$ taken small enough,
 \begin{align*} 
 -\tfrac{6}{5} \le \aa_1 \le - \tfrac{4}{5}  \,, \ \ \ \sabs{ \aa_2} \les \eps^ {\frac{1}{8}} \le  \eps^ {\frac{1}{10}} \,, \ \ \ \sabs{ \aa_3} \le \tfrac{7}{6 \eps}    \,.
 \end{align*}

Let us now follow the same argument that we used above to produce an expansion for $w_x(\eta(x,T_*),T_*)$.
We see that
\begin{align} 
w_x(\eta(x,t),T_*) \eta_x(x,T_*) & =e^{\mathcal{I} (x)} \bigl( w_0'(x)  + \mathcal{I} '(x) w_0(x) \bigr)  \notag \\ 
 &= e^{\mathcal{I} (x)}\Bigl(  \p_x w_0(x_*)(x-x_*)  +  \p_x^2 w_0(x_*)  (x-x_*)   + \tfrac{1}{2} \p_x^3 w_0(x_*)  (x-x_*)^2 \notag \\
 & \ \ \ 
+ \tfrac{1}{6} \p_x^4 w_0(\bar x)  (x-x_*)^3   \Bigr) + e^{\mathcal{I} (x)} \mathcal{I} '(x)\Bigl( w_0(x_*)  + \p_x w_0(x_*)(x-x_*)  \notag \\
 & \qquad + \tfrac{1}{2} \p_x^2 w_0(x_*)  (x-x_*)^2 
 + \tfrac{1}{6} \p_x^3 w_0(x_*)  (x-x_*)^3 + \tfrac{1}{24} \p_x^4 w_0(\bar x)  (x-x_*)^4   \Bigr) \,,
 \label{headache1-dx}
\end{align} 
where $\bar x$ lies between $x$ and $x_*$.
In addition to \eqref{pos02},  we shall need the expansion of $e^{ \mathcal{I} (x)}  \mathcal{I}'(x)$ and we continue to use $b_1$, $b_2$, $b_3$ 
defined in \eqref{pos01} and write
\begin{align} 
e^{ \mathcal{I} (x)}  \mathcal{I}'(x)
&= e^{ \mathcal{I} (x_*)}\Bigl(  b_1 +  2 b_2 (x-x_*) 
+ 3 b_3 (x-x_*)^2 \Bigr) \,,  \label{pos000}
\end{align} 
We can then write
\begin{align} 
& w_x(\eta(x,t),T_*) \eta_x(x,T_*) \notag \\
&\qquad = e^{\mathcal{I} (x_*)} \Bigl( b_1w_0(x_*)  +\p_x w_0(x_*) + \bigl(2b_2 w_0(x_*) + 2 b_1 \p_x w_0(x_*) + \p_x^2 w_0(x_*)\bigr) (x-x_*) \notag \\
&\qquad
+ \tfrac{1}{2} \bigl(6 b_3 w_0(x_*) + 6b_2 \p_x w_0(x_*) + 3 b_1 \p_x^2(x_*) + \p_x^3 w_0(x_*) \bigr) (x-x_*)^2
+ \OO(|x-x_*|^3)
\Bigr) \,. \label{so-complicated-0}
\end{align} 
With the expansion $\eta_x(x,T_*) $ is written as
\begin{align} 
\eta_x(x,T_*)  = \tfrac{1}{2} \p_x^3\eta(x_*,T_*) (x-x_*)^2 + \tfrac{1}{6} \p_x^4\eta(\mathring x ,T_*) (x-x_*)^3 \label{dxeta-taylor}
\end{align} 
for some $\mathring x \in (x, x_*)$.  Therefore, with \eqref{so-complicated-0},  we have that
\begin{align} 
& w_x(\eta(x,t),T_*)    = e^{\mathcal{I} (x_*)} \Bigl( b_1w_0(x_*) +\p_x w_0(x_*) + \bigl(2b_2 w_0(x_*) + 2 b_1 \p_x w_0(x_*) + \p_x^2 w_0(x_*)\bigr) (x-x_*) 
\notag \\
&\qquad
+ \tfrac{1}{2} \bigl(6 b_3 w_0(x_*) + 6b_2 \p_x w_0(x_*) + 3 b_1 \p_x^2(x_*) + \p_x^3 w_0(x_*) \bigr) (x-x_*)^2
+ \OO(|x-x_*|^3) \Bigr) \notag \\
&\qquad \qquad \times \Bigl(  \tfrac{1}{2} \p_x^3\eta(x_*,T_*) (x-x_*)^2 + \tfrac{1}{6} \p_x^4\eta(\mathring x ,T_*) (x-x_*)^2     \Bigr)^{-1}  \,.
\label{so-complicated-1}
\end{align} 
Another expansion of the right side of \eqref{so-complicated-1} gives 
\begin{align} 
w_x(\eta(x,t),T_*)   & = e^{\mathcal{I} (x_*)} \Bigl( d_{-2} (x-x_*)^{-2}  + d_{-1}(x-x_*)^{-1} 
+ d_0  \Bigr) + \OO(|x-x_*|) \,,  \label{so-complicated-2}
\end{align} 
where
\begin{align*} 
d_{-2} & =  \tfrac{2(b_1 w_0(x_*) + \p_x w_0(x_*))}{\p_x^3\eta(x_*,T_*)}  \,, \\
d_{-1} & = \tfrac{2\bigl( 2 b_2 w_0(x_*) + 2 b_1 \p_x w_0(x_*) + \p_x^2w_0(x_*)\bigr)}{\p_x^3\eta(x_*,T_*)} 
- \tfrac{2\bigl(  b_1  w_0(x_*) + \p_x w_0(x_*) \bigr)\p_x^4\eta(\mathring x,T_*)}{\p_x^3\eta(x_*,T_*)^2}  \,, \\
d_0 & =
 \tfrac{ 6 b_3 w_0(x_*) + 6 b_2 \p_x w_0(x_*) +  3b_1 \p_x^2w_0(x_*)+\p_x^3w_0(x_*)}{\p_x^3\eta(x_*,T_*)} 
- \tfrac{2\bigl(  2b_2  w_0(x_*) + 2b_1  \p_x w_0(x_*) + \p_x^2 w_0(x_*)\bigr) \p_x^4\eta(\mathring x,T_*)}{3\p_x^3\eta(x_*,T_*)^2}  \notag \\
& \qquad 
+  \tfrac{ 2 \bigl(b_1 w_0(x_*) +  \p_xw_0(x_*) \bigr)\p_x^4\eta(\mathring x,T_*) }{9\p_x^3\eta(x_*,T_*)^3}  \,.
\end{align*} 
By substituting \eqref{eta-inv} into \eqref{so-complicated-2}, we obtain that
\begin{align} 
&\abs{w_x(\theta, T_*)  -  e^{\mathcal{I} (x_*)} \alpha _1 ^{-2} d_{-2} z^{-\frac{2}{3}}  -  e^{\mathcal{I} (x_*)} \bigl( \alpha_1^{-1} d_{-1} - 2 \alpha_1^{-3} \alpha_2 d_{-2}\bigr) z^{-\frac{1}{3}} }  
\notag \\
& \qquad \qquad\qquad \qquad\qquad \qquad
\le 2 e^{\mathcal{I} (x_*)} \Bigl( d_0 -  \alpha _1^{-2} \alpha _2 d_{-1} + (3 \alpha _2^2 - 2 \alpha_1 \alpha _3) \alpha _1^{-4} d_{-2}\Bigr) \,.
\end{align} 
Notice from \eqref{nozero2}, \eqref{B1},  \eqref{constant:a1}  that since
$$
\aa_1 =  e^{\mathcal{I} (x_*)}  \bigl(\tfrac{6}{\p_x^3\eta(x_*,T_*)}\bigr)^ {\frac{1}{3}} \bigl(b_1 w_0(x_*) + \p_x w_0(x_*)\bigr) \,,
$$
and since
\begin{align*} 
e^{\mathcal{I} (x_*)} \alpha _1 ^{-2} d_{-2}  & = 2e^{\mathcal{I} (x_*)}  \bigl(\tfrac{6}{\p_x^3\eta(x_*,T_*)}\bigr)^ {-\frac{2}{3}}
\Bigl(\tfrac{b_1 w_0(x_*) + \p_x w_0(x_*)}{\p_x^3\eta(x_*,T_*)} \Bigr) \\
&=   \tfrac{1}{3}  e^{\mathcal{I} (x_*)}  \bigl(\tfrac{6}{\p_x^3\eta(x_*,T_*)}\bigr)^ {\frac{1}{3}} \bigl(b_1 w_0(x_*) + \p_x w_0(x_*)\bigr)    =  \tfrac{1}{3}  \aa_1 \,,
\end{align*} 
A similar computation shows that
\begin{align*} 
 e^{\mathcal{I} (x_*)} \bigl( \alpha_1^{-1} d_{-1} - 2 \alpha_1^{-3} \alpha_2 d_{-2}\bigr)  =   \tfrac{2}{3}  \aa_2 \,.
\end{align*} 
As such, we have established the inequality
\begin{align} 
&\abs{\p_\theta w (\theta, T_*)  -    \tfrac{1}{3} \aa_1 (\theta - \xi_*)^{-\frac{2}{3}}  -    \tfrac{2}{3}  \aa_2 (\theta - \xi_*)^{-\frac{1}{3}} }  \le C_\mm\,,  \label{wx-taylor-fk}
\end{align} 
where
\begin{align*} 
C_\mm =  2 e^{\mathcal{I} (x_*)} \Bigl( d_0 -  \alpha _1^{-2} \alpha _2 d_{-1} + (3 \alpha _2^2 - 2 \alpha_1 \alpha _3) \alpha _1^{-4} d_{-2}\Bigr) \,,
\end{align*} 
satisfies $\sabs{C_\mm} \les \tfrac{1}{\eps} $. The inequality \eqref{wx-taylor-fk} and the bound for $C_\mm$ establishes \eqref{need-like-a-hole-in-the-head1}.

From \eqref{headache1-dx}, we see that
\begin{align} 
\p_\theta^2 w (\eta(x,T_*),T_*)  \eta_x^2(x,T_*)& = -\p_\theta (\eta(x,T_*),T_*) \eta_{xx}(x,T_*)  \notag \\
& \qquad \qquad + 
e^{\mathcal{I} (x)} \bigl( w_0''(x) + 2I'(x)w'(x) + \mathcal{I} ''(x) w_0(x) \bigr)  \,. \label{headache1-dxx}
\end{align} 
In addition to the expansion \eqref{dxeta-taylor}, we shall also need the fact that
\begin{align*} 
\eta_{xx}(x,T_*) = \p_x^3\eta(x_*,T_*) (x-x_*) + \tfrac{1}{2} \p_x^4\eta(\ringring x,T_*) (x-x_*)^2
\end{align*} 
for some $\ringring x \in (x, x_*)$.    
After a lengthy computation, we find that
\begin{align} 
&\abs{\p_\theta^2 w(\theta , T_*)  -    \tfrac{2}{9}  \aa_1 (\theta - \xi_*)^{-\frac{5}{3}}   }  \le   \bar C_\mm  (\theta - \xi_*)^{-\frac{4}{3}}\,,  \label{wxx-taylor-fk}
\end{align} 
where
$$
\abs{\bar C_\mm} \les  \eps^ {-\frac{63}{8}}  \,,
$$
which establishes \eqref{need-like-a-hole-in-the-head2}.

Finally, from \eqref{headache1-dxx}, we see that
\begin{align} 
\p_\theta^3w (\eta(x,T_*),T_*)  \eta_x^3(x,T_*)& =  -3 \p_\theta^2 w(\eta(x,T_*),T_*)  \eta_{x}(x,T_*)  \eta_{xx}(x,T_*)    - \p_\theta w (\eta(x,T_*),T_*) \eta_{xxx}(x,T_*)  \notag \\
& \quad  + 
e^{\mathcal{I} (x)} \bigl( w_0'''(x) + 3I'(x)w''(x) + 3I''(x)w'(x)+ \mathcal{I} '''(x) w_0(x) \bigr)  \,. \label{headache1-dxxx}
\end{align}
We make use of one further expansion given by
\begin{align*} 
 \p_x^3\eta(x,T_*) = \p_x^3\eta(x_*,T_*)  +  \p_x^4\eta(\ringringring x,T_*) (x-x_*) 
\end{align*} 
for some $\ringringring x \in (x, x_*)$.   
A final lengthy computation shows that 
\begin{align} 
&\abs{\p_\theta^3 w (\theta, T_*)   }  \les   \eps^ {-\frac{151}{8}}  \sabs{\theta - \xi_*}^{-\frac{8}{3}}\,,  \label{wxxx-taylor-fk}
\end{align} 
which establishes \eqref{need-like-a-hole-in-the-head3}.

The estimates \eqref{thm-aw-good-bound} are established by \eqref{aw-good-bound}.
The bounds \eqref{svort-thm-bounds} for the specific vorticity are established in  \eqref{spvort-bound1} and \eqref{dx-spvort-bound1}. From
\eqref{a-bound1}  we have that
$\sup_{[0,T_*)}\norm{ a( \cdot , t)}_{L^ \infty } \le \tfrac{3}{2}  \eps$. From \eqref{w-blowup}, we have that $w( \cdot , T_*) \in C^{{\frac{1}{3}} }( \mathbb{T}  )$; therefore,
since $\p_x a = \tfrac{w^2}{16} \varpi - w$, by \eqref{w-lowerupper} and \eqref{spvort-bound1}, 
we have that
  $a( \cdot , T_*) \in C^{1,{\frac{1}{3}} }( \mathbb{T}  )$ which gives the regularity statement in \eqref{wa-at-blowup}. 
The bounds for $\varpi$ are given in \eqref{spvort-bound1}, and for $\p_x\varpi$ in \eqref{dx-spvort-bound1}.
\end{proof}


\section{Shock development}
\label{sec:development}

In this section we consider the system \eqref{eq:w:z:k:a}--\eqref{eq:wave-speeds}, with pre-shock initial datum as obtained in Section~\ref{sec:formation}, and consider the associated {\em development problem}. The main result is Theorem~\ref{thm:main:development} below.

\subsection{Initial data for shock development comes from the {\em pre-shock}}
\label{sec:shock:IC}
Theorem~\ref{thm:blowup-profile} guarantees the finite time formation of a first singularity for the $(w,z,a,k)$ system~\eqref{eq:w:z:k:a} at $(\theta,t) = (\xi_*,T_*)$; more 
precisely, the first Riemann variable $w$ forms a $C^{\frac 13}$ {\em pre-shock} as described in \eqref{w-blowup}, $z$ and $k$ remain equal to 
$0$ (their initial datum), while the function $a$  retains $C^{1,\frac 13}$ regularity at the time that the pre-shock forms.  

The initial data for the development problem is provided by Theorem~\ref{thm:blowup-profile}.   
For the remainder of paper, it is convenient to change coordinates so that the pre-shock occurs at $\theta = 0$ (instead of $\xi_*$), at time $t=0$ (instead of $T_*$). 
The initial condition for the first Riemann variable thus is 
$w_0(\theta)= w(\theta-\xi_*,T_*)$, with the latter function being given by \eqref{w-blowup}. In particular, we have that $w_0$ satisfies the quantitative estimates 
\begin{subequations}
 \label{eq:u0:ass:quant}
\begin{align}
  w_0(\theta) 
&\leq \mm
\label{eq:u0:ass:2}
\\
 w_0(\theta) 
&\geq \tfrac 12 \kappa
\label{eq:u0:ass:2a}
\\
 \sabs{ w_0(\theta) - \kappa + \bb \theta^{\frac 13} - \cc \theta^{\frac 23} } &\leq \mm  \abs{\theta}\,,
 \label{eq:u0:ass}
 \\
 \sabs{w_0'(\theta) + \tfrac{1}{3} \bb \theta^{-\frac 23} - \tfrac{2 }{3} \cc \theta^{-\frac 13} }  &\leq \mm  \,,
\label{eq:u0:ass:3}
\\
\sabs{{w_0''(\theta) - \tfrac{2 }{9} \bb \theta^{-\frac 53}  } }
&\leq \mm  |\theta|^{-\frac 43} \,,
\label{eq:u0:ass:4}
\\
\sabs{w_0'''(\theta)   } 
&\leq {\mm  \abs{\theta}^{-\frac 83}} \,,
\label{eq:u0:ass:5}
\end{align}
\end{subequations}
for all $\theta\in \TT$, 
where $\kappa , \mm \geq 1, \bb > 0$, and $\cc \in \RR$ are  suitable constants given as follows. In light of \eqref{w-blowup} and \eqref{preshock-derivatives}, we identify $\kappa = \kappa_*$, $\bb = - \aa_1$, $\cc = \aa_2$, while the constant $\mm$ is taken to be sufficiently large, in terms of the large parameters $\kappa_0$ and $\eps^{-1}$ from Theorem~\ref{thm:blowup-profile}. Note however that \eqref{w-blowup} and \eqref{preshock-derivatives} only give the bounds \eqref{eq:u0:ass}--\eqref{eq:u0:ass:5} for $\theta$ in a $\eps$-dependent ball around $0$ (of radius $\eps^{4}$, recall that we have mapped $\xi_* \mapsto 0$), whereas in  \eqref{eq:u0:ass:quant} we require that these bounds hold for all $\theta \in \TT$. We note however that for $|\theta|$ which is at a fixed positive distance away from $0$, the bounds \eqref{eq:u0:ass}--\eqref{eq:u0:ass:5} follow once $\mm$ is chosen to be sufficiently large with respect to $\kappa_0$ and $\eps^{-1}$; this is because the bounds \eqref{thm-aw-good-bound} imply uniform $C^4$ regularity once a fixed distance from the pre-shock is chosen. Indeed, \eqref{thm-aw-good-bound}, \eqref{dx-eta-bound0}, \eqref{etax-lower-bad}, and \eqref{etax-best} show that for $|\theta|\geq \eps^{4}$, there exists a constant $C_\eps>0$ such that $|\p^\gamma_\theta w_0(\theta)|\leq C_\eps$ for $0\leq \gamma \leq 4$.

We also note that by \eqref{tau-kappa-xi-bounds} and \eqref{Taylor-coefficients} the coefficients in \eqref{eq:u0:ass:quant} satisfy the conditions
\begin{align*}
|\kappa - \kappa_0| \leq \eps^3 \,,
\qquad
{\tfrac 12} \leq \bb \leq {2} \,, 
\qquad
|\cc| \leq {\eps^{\frac 12}} \,,
\end{align*}
where we recall that $\kappa_0>1$ was chosen sufficiently large.
In order to simplify our argument we shall  frequently use the relations
\begin{align}
 |\cc|   \ll {\bb} \leq 2 \qquad \mbox{and} \qquad 4  \leq \kappa \ll \mm \,.
\label{eq:b:m:ass}
\end{align}
In particular, we shall use that $\mm$ sufficiently large with respect to $\kappa$: if $\mathsf{C}>0$ is a universal constant (independent of $\kappa, \bb, \cc, \mm$), then $\kappa \mathsf{C} \leq \mm^{\frac{1}{10}}$. Similarly, we shall use that $|\cc|$ is sufficiently small with respect to $\bb$, so that $\mathsf{C} \bb |\cc| \leq 1$.

The initial conditions for the second Riemann variable and the entropy function are given by 
\begin{align}
z_0(\theta ) \equiv 0\,,
\qquad \mbox{and} \qquad 
k_0(\theta) \equiv 0\,. 
\label{eq:z0:k0:ass}
\end{align}
Lastly, in view of Theorem~\ref{thm:blowup-profile} we identify $a_0(\theta) = a(\theta-\xi_*,T_*) \in C^{1,\frac 13}$ and
$\varpi_0(\theta) = \varpi(\theta-\xi_*,T_*) \in C^{1}$. In particular, due to \eqref{a-bound1} and \eqref{ax-bound1}, 
\begin{align}
\norm{a_0}_{W^{ 1,\infty}(\TT)} \leq  \tfrac{3}{2}  \kappa\,,
 \label{eq:a0:ass}
\end{align}
and due to \eqref{svort-thm-bounds}, we have that 
\begin{align}
{\tfrac{10}{\kappa }} \le \varpi_0(\theta) \le \tfrac{28}{\kappa }  \qquad \mbox{and} \qquad   \sabs{\varpi'_0(x)} \le {\mm}  \,,
 \label{eq:svort0:ass}
\end{align}
for all $\theta \in \TT$.

\begin{remark}[\bf The small parameter $\bar \eps$ and the large constant $C$]
\label{rem:parameters:1}
Throughout Sections~\ref{sec:development} and~\ref{sec:C2}, we shall denote by $C =C (\kappa,\bb, \cc,\mm) \geq 1$ a generic constant, which only depends on  the parameters $\kappa, \bb, \cc$, and $\mm$, which appear in \eqref{eq:u0:ass:quant}, and which may increase from line to line. We shall also denote by $\bar \eps =  \bar \eps(\kappa,\bb, \cc,\mm) \in (0,1]$ a sufficiently small constant, which only depends on  the parameters $\kappa, \bb, \cc$, and $\mm$. Note that the parameter $\bar \eps$ is not the same as the parameter $\eps$ in Section~\ref{sec:formation}.
\end{remark}

\subsection{Definitions}

\begin{definition}[\bf Jump, mean, left value, right value, domain]
\label{def:jump:etc}
Given a smooth curve $\sc \colon [0,T]\to \TT$, we shall denote  
\begin{align}
\DD_{T} = (\TT \times [0, T]) \setminus (\sc(t),t)_{t\in[0,T]}
\label{eq:DD:T:def}
\end{align}
the space-time domain which excludes a shock curve.
Given any function $f\colon \DD_T \to \RR$ we denote the left and right values of $f$ at $\sc$ as
\begin{align}
f_-(t) =  \lim_{\theta \to \sc(t)^{-}} f(\theta,t) 
\qquad \mbox{and} \qquad 
f_+(t) =  \lim_{\theta\to \sc(t)^{+}} f(\theta,t)\,.
\label{eq:left:right:states}
\end{align}
We denote the {\em jump of $f$ across $\sc$ by}
\begin{align}
\jump{f} = \jump{f(t)} = f_-(t) - f_+(t)
\,,
\end{align}
and the {\em mean of $f$ at $\sc$ by}
\begin{align}
\mean{f} = \mean{f(t)} = \tfrac 12 \left( f_-(t) + f_+(t) \right)
\,,
\end{align}
for all $t\in [0,T]$.
The dependence of $f_-, f_+$, $\jump{f}$, and $\DD_{T}$ on the curve $\sc$ is not displayed.
\end{definition}

Next, we define a  space ${\mathcal X}_{T}$ which will be used for the construction of unique solutions.  

\begin{definition}[\bf Functional space for shock emanating from $C^{1/3}$ pre-shock]
\label{def:XT}
Let $\mm>1$ be as in \eqref{eq:b:m:ass}. 
Given $T >0$ and a curve $\sc \colon [0,T]\to \TT$, define the norm
\begin{align}
 |\!|\!| (v,z,k,a)  |\!|\!| _{T }
 &= \sup_{(\theta,t) \in \DD_{T}} \max  \Bigl\{
 t^{-1} (50 \mm^{2})^{-1} \abs{ v   (\theta,t)} \,, \mm^{-3}  \left( \bb^3 t^3 + (\theta- \sc(t))^2 \right)^{\frac{1}{6}}    \abs{ \p_\theta v (\theta,t) } 
\,,   \notag\\
 &\qquad  \qquad \qquad \qquad
  \mm^{-1} t^{- \frac 32}  \abs{ z(\theta,t)}
\,, 
 \mm^{-1} t^{-\frac 12} \abs{ \p_\theta z (\theta,t)  } \,,
\mm^{-\frac 12} t^{-\frac 32} \abs{ k (\theta,t) } 
 \,,  \notag \\
  &\qquad  \qquad \qquad \qquad   
  \mm^{-\frac 12} t^{-\frac 12} \abs{\p_\theta k(\theta,t) } 
 \,,  (4\mm)^{-1} \abs{a (\theta,t) } 
 \,,  (4\mm)^{-1} \abs{\p_\theta a (\theta,t)} \Bigr\}
 \label{eq:the:norm}
\end{align}
where $\DD_T$ is as defined in \eqref{eq:DD:T:def}.
 For $T>0$  we also define
\begin{align}
{\mathcal X}_{T} 
= \Bigl\{ 
(w,z,k,a) \in C^1_{\theta,t}(  \DD_{T}) 
\colon  (w,z,k,a)|_{t=0} = (w_0,0,0,a_0)  \,,  |\!|\!| (w - \wb,z,k,a)  |\!|\!| _{T} \leq 1  \Bigr\} 
\,,
\label{eq:the:X:T}
\end{align}
where $\wb$ is the solution of the 1D Burgers equation in $\DD_T$ with datum $w_0$, which jumps across the shock curve $\sc$ (see Proposition~\ref{prop:Burgers} for its precise definition).
\end{definition}

In order to state the desired properties for $\sc$, in terms of the parameters $\kappa$ and $\bb$ appearing in \eqref{eq:u0:ass}, we define two time-dependent subsets of $\TT$. The first set, $\Sigma$, will be shown to contain the  location of the shock front for $w$ at time $t$, while the second set, $\Omega$, contains the labels of the two particle trajectories associated with the flow of $w$, which fall into the shock at time $t$.
\begin{definition}[\bf Regular shock curve]
\label{def:regular:shock:curve}
For every $t \in [0,  \kappa \mm^{-4} ]$, we define 
\begin{subequations}
\begin{align}
\Sigma(t) &= \bigl[\kappa t - \tfrac 12  {\mm^4} t^2, \kappa t + \tfrac 12  {\mm^4} t^2 \bigr]
\label{eq:xi:range}
\\
\Omega(t) &= \bigl[ - \tfrac{5}{4} (\bb t)^{\frac 32} , - \tfrac{3}{4} (\bb t)^{\frac 32}  \bigr] \cup \bigl[ \tfrac{3}{4}(\bb t)^{\frac 32} , \tfrac{5}{4} (\bb t)^{\frac 32} \bigr]
\label{eq:x:range:interest}
\end{align}
\end{subequations}
extended periodically on the circle $\TT$.
For a given $T \in (0, \kappa \mm^{-4})$, we say that  $t\mapsto\sc(t) \colon [0,T] \to \TT$ 
is a {\em regular shock curve} if it  $\sc$ satisfies 
\begin{align}
\sc(t) \in \Sigma(t) \,,
\qquad  
\abs{\dot{  \sc}(t) - \kappa} \leq  {\mm^4} t  \,,
\qquad 
\abs{\ddot{\sc}(t)} \leq 6  {\mm^4} \,,
\label{eq:sc:ass}
\end{align}
for all $ t\in (0,T] $.
\end{definition}

\subsection{The shock development problem in azimuthal symmetry}

We defined a solution to the development problem in Definition \ref{def:sol:azimuthal}.  The main result of this section is to establish the existence and the uniqueness of such solutions.
\begin{theorem}[\bf Azimuthal shock  development]
\label{thm:main:development}
Given pre-shock initial data $(w_0,z_0,k_0,a_0)$ and $\varpi_0$ satisfying conditions \eqref{eq:u0:ass:quant}--\eqref{eq:svort0:ass}, there exist:
\begin{enumerate}
\item \label{item:5.5.i} $\bar\eps = \bar\eps(\bb,\mm,\cc,\kappa) > 0 $ sufficiently small;  
\item \label{item:5.5.ii} a $C^2$ regular shock curve $\sc \colon [0,\bar \eps] \to \TT$, in the sense of Defintion~\ref{def:regular:shock:curve}; in particular, $\sc$ solves the ordinary differential equation \eqref{sdot1}, corresponding to Rankine-Hugoniot jump condition; 
\item \label{item:5.5.iii} a {\em unique} solution $(w,z,k,a) \in {\mathcal X}_{\bar \eps}$ to the system \eqref{eq:w:z:k:a}, in the sense of Definitions~\ref{def:sol:azimuthal} and~\ref{def:XT};
\item \label{item:5.5.iv} two $C^1$ smooth  curves $\sc_1, \sc_2 \colon [0,\bar \eps] \to \TT$, with $\sc_1(0) = \sc_2(0)= 0$ and $\sc_1(t) < \sc_2(t) < \sc(t)$ for $t \in (0,\bar \eps]$, such that $\sc_i$ is a characteristic curve for the $\lambda_i$ wave-speed, $i\in \{1,2\}$;
\end{enumerate}
such that the following hold:
\begin{enumerate}
  \setcounter{enumi}{4}
\item \label{item:5.5.v} letting $\DD_{\bar \eps}^k = \{ (\theta,t) \in \DD_{\bar \eps} \colon \sc_2(t) < \theta < \sc(t)\}$  we have that $k \equiv 0$ on $(\DD_{\bar \eps}^k)^\complement$ with $k(\theta,t) = \OO(( \theta-\sc_2(t))^{\frac 32})$ in $\DD_{\bar \eps}^k$, cf.~\eqref{eq:metal:1}, and $\p_\theta k(\sc_2(t),t) = 0$;
\item \label{item:5.5.vi} letting  $\DD_{\bar \eps}^z = \{ (\theta,t) \in \DD_{\bar \eps} \colon \sc_1(t) < \theta < \sc(t)\}$, we have that $z \equiv 0$ on $(\DD_{\bar \eps}^z)^\complement$ with $z(\theta,t) = \OO(( \theta-\sc_1(t))^{\frac 32})$ in $\DD_{\bar \eps}^z$, cf.~\eqref{eq:metal:3}, and $\p_\theta z(\sc_1(t),t) = 0$;
\item \label{item:5.5.vii} on $\sc(t)$, the function $w(\cdot,t)$ exhibits an $\OO(t^{\frac 12})$ jump, cf.~\eqref{eq:J:M:rough}, while the functions $z(\cdot,t)$ and $k(\cdot,t)$ exhibit $\OO(t^{\frac 32})$ jumps, cf.~\eqref{eq:zl:and:kl:on:shock:L:infinity}, and solve the system of algebraic equations \eqref{pjump77}-\eqref{pjump7};
\item \label{item:5.5.viii} the specific vorticity $\varpi$ (see its definition in \eqref{xland-svort:def}) solves \eqref{xland-svort} in $\DD_{\bar \eps}$, is uniformly bounded with $\OO(\kappa^{-1})$ upper and lower (see~\eqref{eq:varpi:boot}), and is continuous across the shock curve $\sc(t)$;
\item \label{item:5.5.ix} the function $a(\cdot,t)$ is continuous across $\sc(t)$, while  $\p_\theta a (\cdot,t)$  exhibits an $\OO(t^{\frac 12})$ jump.
\end{enumerate}
\end{theorem}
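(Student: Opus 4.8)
The plan is to recast the development problem as a fixed point whose data are the quadruple $(w,z,k,a)$ together with the shock curve $\sc$; the auxiliary objects $\sc_1,\sc_2$, the shock traces $z_-(t),k_-(t)$, and the specific vorticity $\varpi$ are reconstructed a posteriori. Given a candidate regular shock curve $\sc$ (Definition~\ref{def:regular:shock:curve}) and a candidate $(w,z,k,a)\in{\mathcal X}_T$, I would build a new quadruple as follows: (a) read off the one-sided traces $w_\pm(t)$, with $z_+(t)=k_+(t)=0$; (b) solve the algebraic system \eqref{pjump77}--\eqref{pjump7} for the pair $(z_-(t),k_-(t))$, selecting the branch that vanishes as $\jump{w}\to0$ — this is the $\gamma=2$ analogue of the cubic entropy law of Lemma~\ref{lemmaent} and yields $k_-(t)\sim\jump{w}^3(t)$ and $-z_-(t)\sim\jump{w}^3(t)$; (c) solve the Rankine--Hugoniot ODE \eqref{sdot1}, with right-hand side evaluated at $(w_-,z_-,k_-,w_+)$ and $\sc(0)=0$, to get an updated curve; (d) using this curve as the lateral boundary of $\DD_T$, solve the transport equations \eqref{xland-w}--\eqref{xland-a} by the method of characteristics, with the entropy-gradient forcing frozen at the input $(w,z,k,a)$: the $\lambda_3$-characteristics all reach $\{t=0\}$ (the shock is subsonic relative to the state behind), so $w$ is determined by $w_0$; the $\lambda_1$- and $\lambda_2$-characteristics behind the shock reach $\{t=0,\theta<0\}\cup\{\theta=\sc\}$, so $z,k,a$ are determined by their data there (trivial, resp.\ the input shock traces); and ahead of $\sc$, $z=k=0$ with $a$ determined by $a_0$. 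A fixed point of this map is, by construction, a regular azimuthal shock solution in the sense of Definition~\ref{def:sol:azimuthal}, giving items (i)--(iii) once the contraction is established.

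Existence and uniqueness of the fixed point I would obtain by a contraction estimate in the norm $|\!|\!|\cdot|\!|\!|_T$ of Definition~\ref{def:XT}, for $T=\bar\eps$ small. The backbone is comparison with the one-dimensional Burgers flow $\wb$ of Proposition~\ref{prop:Burgers}: using the refined $C^3$ expansion of the pre-shock from Theorem~\ref{thm:blowup-profile} (cf.~\eqref{eq:u0:ass:quant}), one shows the labels $\theta_\pm(t)=\etab^{-1}(\sc(t)^\pm,t)$ falling into the shock lie in $\Omega(t)$ of \eqref{eq:x:range:interest}, that $\jump{\wb}(t)\sim(\bb t)^{1/2}$, and that $\dot\sc=\kappa+\OO(t)$ and $\ddot\sc=\OO(1)$ — the last giving the $C^2$ regularity of $\sc$ and closing \eqref{eq:sc:ass}. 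For the correction $w-\wb$ and for $z,k,a$, the characteristic solution formulas express each as a time integral of its forcing; the amplification factor $\exp(-\tfrac83\int a)$ is $1+\OO(\kappa^2\bar\eps)$ since $\|a\|_{L^\infty}=\OO(\kappa^2\bar\eps)$, and the entropy-gradient forcing is handled through the good unknowns $q^w,q^z$ of \eqref{eq:lazy:cat}, whose evolution \eqref{eq:twerking:3} involves only $\p_\theta k$ and never $\p^2_{\theta\theta}k$. Matching the time weights built into \eqref{eq:the:norm} with the powers of $t$ produced by these integrals is exactly what forces the exponents $\tfrac32$ on $z,k$ and $\tfrac12$ on $\p_\theta z,\p_\theta k$; running the same estimates on differences of two candidate solutions gives the contraction, hence uniqueness.

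Next I would construct the weak-singularity curves: define $\sc_1,\sc_2$ as the $\lambda_1$- and $\lambda_2$-characteristics emanating from $(0,0)$, so that $\sc_1(t)<\sc_2(t)<\sc(t)$ for $t>0$ follows from $\lambda_1<\lambda_2<\dot\sc$ near the pre-shock. Since $z_0=k_0=0$ and \eqref{xland-z}--\eqref{xland-k} have trivial forcing when $z=k=0$, the fields $z$ and $k$ vanish identically ahead of the shock, hence on $(\DD_{\bar\eps}^z)^\complement$ and $(\DD_{\bar\eps}^k)^\complement$. Inside $\DD_{\bar\eps}^k$, $k$ is purely transported off the shock with boundary value $k_-(s)\sim s^{3/2}$; since the time $s$ at which the $\lambda_2$-characteristic through $(\theta,t)$ meets the shock satisfies $s\sim\tfrac3\kappa(\theta-\sc_2(t))$ by transversality of $\lambda_2$ and $\dot\sc$, this gives $k(\theta,t)=\OO((\theta-\sc_2(t))^{3/2})$ and $\p_\theta k(\sc_2(t),t)=0$; the analogous scheme along $\lambda_1$ gives $z(\theta,t)=\OO((\theta-\sc_1(t))^{3/2})$ and $\p_\theta z(\sc_1(t),t)=0$, whence items (v)--(vi). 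The delicate point is that the cusps of $z$ (and, behind $\sc_2$, of $w$ and $z$) are not degraded below $C^{1,1/2}$ by the forcing $\tfrac1{24}(w-z)^2\p_\theta k$, which is only $C^{1/2}$ across $\sc_2$: here one uses transversality $\lambda_i\neq\lambda_2$ to gain a derivative exactly as in the model computation \eqref{eq:twerking:2}, which requires careful control of the times at which the $\lambda_1,\lambda_2$-characteristics cross $\sc$, and of the time-integrals of $\p_\theta w$ and $\p^2_{\theta\theta}w$ composed with those flows.

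Finally, for (vii)--(ix): the jump statements for $w,z,k$ on $\sc$ follow from step~(b) together with $\jump{\wb}\sim t^{1/2}$, and the algebraic system \eqref{pjump77}--\eqref{pjump7} holds by construction of $z_-,k_-$. The specific vorticity $\varpi$ of \eqref{xland-svort:def} solves the transport equation \eqref{xland-svort}, propagates from its Lipschitz pre-shock value with $\OO(\kappa^{-1})$ upper and lower bounds inherited from \eqref{eq:svort0:ass}, and tracing $\varpi$ along the $\lambda_2$-characteristics (transversal to $\sc$) together with its continuity at the pre-shock shows $\jump{\varpi}(t)=0$, i.e.\ $\varpi\in C^0$ across $\sc$. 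Since $\p_\theta a=w+z-\tfrac14c^2e^{-k}\varpi$ with $c=\tfrac12(w-z)$, continuity of $\varpi$ and the jumps of $w,z,k$ give $\jump{\p_\theta a}$ as a combination of $\jump{w}$ and $\jump{c^2}$, both of order $t^{1/2}$, hence $\jump{\p_\theta a}=\OO(t^{1/2})$; and the bounded forcing in \eqref{xland-a} shows $a$ itself is continuous across $\sc$. I expect the main obstacle to be step (d) carried out in the $C^1$ topology near the pre-shock: the Burgers comparison controls $w-\wb$ but not $\wb$, whose gradient is unbounded at $(0,0)$, so every estimate involving $\p_\theta w$ and $\p^2_{\theta\theta}w$ along the $\lambda_1,\lambda_2$ flows must quantify precisely how these blow up and must exploit transversality of those flows with $\sc$ to keep the resulting time-integrals finite — which is precisely why the $C^3$ expansion of the pre-shock, rather than bare $C^{1/3}$ regularity, is indispensable.
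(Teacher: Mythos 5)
Your proposal is essentially the paper's proof: the same Burgers comparison (Proposition~\ref{prop:Burgers}), the same algebraic branch selection for $z_-,k_-$ (Lemma~\ref{lem:zminus-on-shock}), the same characteristics-plus-good-unknowns $q^w,q^z$ mechanism of \eqref{eq:lazy:cat}, the same transversality argument behind items (v)--(vi), and the same route to (viii)--(ix) through the specific vorticity. The one organizational difference is that the paper \emph{decouples} the two fixed points: Proposition~\ref{thm:curve:determines:all} first produces $(w,z,k,a)\in{\mathcal X}_{\bar\eps}$ for a \emph{fixed} regular shock curve $\sc$ (so the domain $\DD_{\bar\eps}$ is fixed; the map contracts in $L^\infty$, and the derivative bounds of ${\mathcal X}_{\bar\eps}$ are passed to the limit by weak-$*$ compactness, cf.~Section~\ref{sec:convergence:of:scheme}), and only then does Section~\ref{sec:shock:evo} iterate on the curve, showing $\sc^{(i)}$ is Cauchy in $W^{1,\infty}$ by remapping each iterate to a fixed domain via $\yy=\theta-\sc^{(i)}(t)$. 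Your joint fixed point on $((w,z,k,a),\sc)$ should also go through, but be aware that (a) the norm $|\!|\!|\cdot|\!|\!|_T$ does not see $\sc$, so a $W^{1,\infty}$ norm on the curve must be adjoined to the contraction norm; (b) since the domain moves at every step, the $\yy$-shift has to be done at each iterate rather than only in an outer loop; and (c) you should not expect a contraction in the full weighted $C^1$-type norm of Definition~\ref{def:XT} --- the paper's Proposition~\ref{prop:contraction} contracts only in $L^\infty$ and recovers the derivative bounds by weak compactness and lower semicontinuity.
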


\subsection{A given  shock curve determines $w$, $z$, $k$, and $a$}
\label{sec:curve:determines:all}
The goal of this subsection is to show that given a regular shock curve $\{ \sc(t)\}_{t \in [0,\bar \eps]}$, as in Definition~\ref{def:regular:shock:curve}, we may compute a solution $(w,z,k,a)$ of the system \eqref{eq:w:z:k:a}--\eqref{eq:wave-speeds}  with initial datum as described in Section~\ref{sec:shock:IC}, and which exhibits {\em a jump discontinuity across the curve $\sc(t)$}. This statement is summarized in Proposition~\ref{thm:curve:determines:all} below. Note that at this stage we do not assume that $\sc$ satisfies the ODE which corresponds to the jump conditions in Section~\ref{sec:RH:Euler}; this will be discussed in Section~\ref{sec:shock:evo}.

With the above notation, the main result of this section is:

\begin{proposition}[\bf Computing $w,z,k$, and $a$, in terms of $\sc$]
\label{thm:curve:determines:all} 
Consider initial datum $(w_0,z_0,k_0,a_0)$ which satisfy conditions \eqref{eq:u0:ass:quant}, \eqref{eq:z0:k0:ass}, and \eqref{eq:a0:ass}. 
Let $T_0 >0 $ be given, and assume that  $\sc \colon [0,T_0] \to \TT$ is a given regular shock curve, as in \eqref{eq:sc:ass}.
Then, there exists $\bar \eps \in (0,T_0]$, which is sufficiently small with respect the parameters $\kappa, \bb, \cc,\mm$,  such that the following hold on $[0,\bar \eps]$:
\begin{enumerate}
\item \label{item:5.6.i} There exist functions $(w,z,k,a)$ which belong to the space ${\mathcal X}_{\bar \eps}$ defined in \eqref{eq:the:X:T}.
\item  \label{item:5.6.ii}  On the spacetime region $ \DD_{\bar \eps}$, defined in terms of $\sc$ in \eqref{eq:DD:T:def}, the functions $(w,z,k,a)$ solve  the azimuthal Euler equations \eqref{eq:w:z:k:a}--\eqref{eq:wave-speeds}. 
\item  \label{item:5.6.iii}  The function $w$ has a jump discontinuity on $(\sc(t),t)_{t\in (0,\bar \eps]}$ which satisfies \eqref{eq:J:M:rough}.
\item  \label{item:5.6.iv} There exist $C^1$ smooth curves $\sc_1, \sc_2 \colon [0,\bar \eps] \to \TT$ which are the $\lambda_1$ and $\lambda_2$ characteristics through the point shock. They satisfy $\sc_1(0) = \sc_2(0) = 0$, $\sc_1(t) < \sc_2(t) < \sc(t)$ for all $t\in (0,\bar \eps]$, and we have the bounds $|\dot \sc_1(t) - \frac 13 \kappa| = \OO(t^{\frac 13})$, and $|\dot \sc_2(t) - \frac 23 \kappa| = \OO(t^{\frac 13})$.
\item  \label{item:5.6.v} The function $z$ has a jump discontinuity on $(\sc(t),t)_{t\in (0,\bar \eps]}$ which satisfies \eqref{eq:zl:on:shock:L:infinity}. Moreover, for every $t\in [0,\bar \eps]$ we have that $z(\theta,t) = 0 $ for $\theta \in \TT \setminus [\sc_1(t) ,\sc(t)]$.
\item  \label{item:5.6.vi}  The function $k$ has a jump discontinuity on $(\sc(t),t)_{t\in (0,\bar \eps]}$ which satisfies \eqref{eq:kl:on:shock:L:infinity}. Moreover, for every $t\in [0,\bar \eps]$ we have that $k(\theta,t) = 0 $ for $\theta \in \TT \setminus [\sc_2(t) ,\sc(t)]$.
\item  \label{item:5.6.vii} We have that $(w_-,w_+,z_-,k_-)$ satisfy the system of algebraic equations \eqref{pjump77}-\eqref{pjump7}, arising from the Rankine--Hugoniot conditions.
\end{enumerate}
\end{proposition}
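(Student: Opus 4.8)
The plan is to treat Proposition~\ref{thm:curve:determines:all} as a fixed-point / continuity-method statement built on top of the more primitive pieces that appear earlier: the Burgers structure of $w$ near the pre-shock (Proposition~\ref{prop:Burgers}, the function $\wb$), the transport structure of $z$, $k$, $\varpi$, and the good unknowns $q^w,q^z$ from~\eqref{eq:lazy:cat}--\eqref{eq:twerking:3}, and the precise $C^3$ expansion of the pre-shock~\eqref{eq:u0:ass:quant}. Throughout, the shock curve $\sc$ is \emph{given} and regular in the sense of~\eqref{eq:sc:ass}, so the data for the trailing fields live on the ``new Cauchy surface'' $\{\theta<0,t=0\}\cup(\sc(t),t)_{t>0}$, and we never yet impose the Rankine-Hugoniot ODE on $\sc$ --- except for item~\ref{item:5.6.vii}, which is the algebraic \emph{closing relation} between the one-sided traces, and is the last thing to establish.

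First I would set up the iteration in $\mathcal X_{\bar\eps}$. Given a tentative $(w,z,k,a)\in\mathcal X_{\bar\eps}$, the wave speeds $\lambda_1,\lambda_2,\lambda_3$ in~\eqref{eq:wave-speeds} are determined; integrating the ODEs for the characteristics $\eta_3,\phi=\eta_2,\psi=\eta_1$ (backwards in time for $\lambda_1,\lambda_2$ as noted in the text) gives flow maps whose transversality to $\sc$ --- which follows from the Lax-type inequalities, guaranteed by~\eqref{eq:sc:ass} and the pre-shock expansion --- lets us identify the curves $\sc_1,\sc_2$ of item~\ref{item:5.6.iv} as the $\lambda_1$- and $\lambda_2$-characteristics issued from $(0,0)$; the bounds $|\dot\sc_i-\tfrac i3\kappa|=\OO(t^{1/3})$ come from $\lambda_i(0,0)=\tfrac i3\kappa$ together with the $C^{1/3}$ modulus of $w_0$. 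On the region to the right of $\sc_2$, $k$ solves the homogeneous transport~\eqref{xland-k} with zero data, hence $k\equiv0$ there (item~\ref{item:5.6.vi} support statement), and similarly $z\equiv0$ right of $\sc_1$; in the wedge $\sc_2<\theta<\sc$ one solves~\eqref{xland-k} with data prescribed along $\sc$, which is $\OO(( \theta-\sc_2)^{3/2})$ by the computation sketched in Step~4 of Section~\ref{sec:outline} and made rigorous via the second-derivative control of $\lambda_2$. For $w$ one writes $w=\wb+\tilde w$ and uses~\eqref{xland-w}: the forcing $-\tfrac83 aw+\tfrac1{24}(w-z)^2\p_\theta k$ is controlled in the $\mathcal X_{\bar\eps}$-norm using the $t^{3/2}$ smallness of $k$ and $z$, the $t$-smallness of $a$, and Lemma~\ref{lemmaent}-type weak-shock bounds; the jump $\jump{w}\approx\jump{\wb}\sim t^{1/2}$ of item~\ref{item:5.6.iii} is read off from~\eqref{jumpw}--\eqref{shockfronthit}. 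For $z$ one uses~\eqref{xland-z} with data on $\sc$; for $a$, rather than~\eqref{xland-a} directly, one propagates $\varpi$ by~\eqref{xland-svort} (bounded above/below by $\OO(\kappa^{-1})$ from~\eqref{eq:svort0:ass} and the integrating-factor identity~\eqref{svort-phi}) and recovers $\p_\theta a$ algebraically from~\eqref{xland-svort:def}, which is precisely what gives the continuity of $a$ across $\sc$ and the $\OO(t^{1/2})$ jump in $\p_\theta a$. Closing the $\mathcal X_{\bar\eps}$ bootstrap for $\bar\eps$ small, together with the $C^1_{\theta,t}$ regularity off $\sc$ (propagation of the $q^w,q^z$ equations~\eqref{eq:twerking:3}, whose forcing sees only $\p_\theta k$, hence gains a derivative by transversality), yields items~\ref{item:5.6.i}--\ref{item:5.6.vi}, and a contraction estimate in a weaker norm gives uniqueness of this $\sc$-dependent solution.

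The remaining item~\ref{item:5.6.vii} is the compatibility of the one-sided traces with~\eqref{pjump77}--\eqref{pjump7}. Here one uses that $w_+,z_+=0,k_+=0$ are the traces from the right, which are pure functions of $w_0$ composed with the (explicit) Burgers inverse flow, while $w_-$ is the trace from the left of the solution just constructed, and $z_-,k_-$ are exactly the values one \emph{prescribed} along $\sc$ as data for the trailing transport problems. So the strategy is: \emph{define} $z_-(t),k_-(t)$ to be the unique small root of the algebraic system~\eqref{pjump77}--\eqref{pjump7} as functions of $(w_-(t),w_+(t))$ --- this root selection, and the expansions $-\jump z\sim\jump w^3\sim t^{3/2}$, $\jump k\sim\jump w^3\sim t^{3/2}$ of items~\ref{item:5.6.v}--\ref{item:5.6.vi}, are the content of Section~\ref{sec:z:k:shock} / Lemma~\ref{lemmaent}-analysis, and need $|\jump w|\ll1$ which holds for $\bar\eps$ small --- and then feed these back as the Cauchy data along $\sc$ in the construction above. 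Consistency is then automatic by construction, \emph{provided} the map (given-curve $\mapsto$ trace $w_-$ $\mapsto$ prescribed $z_-,k_-$ $\mapsto$ recomputed $z_-,k_-$ from the transport solve on $\sc$) is the identity; but the transport equations~\eqref{xland-z}, \eqref{xland-k} are first-order along $\sc_1,\sc_2$ respectively and the prescribed value \emph{is} the initial value of that transport, so no further work is needed beyond observing that $\sc_1,\sc_2$ emanate from $(0,0)$ and $\sc$ lies strictly to their right for $t>0$ (item~\ref{item:5.6.iv}), so the traces on $\sc$ are genuinely free data. I expect the main obstacle to be not item~\ref{thm:curve:determines:all}(\ref{item:5.6.vii}) itself but closing the $\mathcal X_{\bar\eps}$ estimates for $\p_\theta w$ and $\p_\theta a$ uniformly up to $\sc$: the forcing terms involve $\p_\theta w$ composed with the $\lambda_1,\lambda_2$ flows, and these blow up severely near the pre-shock; controlling their \emph{time integrals} along curves transversal to $\sc$ --- the content of Lemmas~\ref{lem:Steve:needs:this}, \ref{lem:characteristic-bounds}, \ref{lem:wyy}, \ref{lem:hardintegral}, and the sharp $\p_x\eta$ expansion of Proposition~\ref{prop:etax-taylor} --- is where all the analytic weight sits, and is precisely why the $C^3$ pre-shock expansion of Theorem~\ref{thm:blowup-profile} was needed.
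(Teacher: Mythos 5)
Your proposal tracks the paper's actual argument quite closely: given $\sc$, construct the Burgers profile $\wb$ with the prescribed shock location (Proposition~\ref{prop:Burgers}); iterate $(w^{(n)},z^{(n)},k^{(n)},a^{(n)})\mapsto(w^{(n+1)},\dots)$ on the transport equations with characteristics $\eta,\pt,\pst$; select the small root of the Rankine--Hugoniot algebraic system (Lemma~\ref{lem:zminus-on-shock}) to \emph{define} $z_-,k_-$ as Cauchy data along $\sc$; close the $\mathcal{X}_{\bar\eps}$ bootstrap (Proposition~\ref{lem:stable}); and conclude existence and uniqueness via an $L^\infty$-contraction (Proposition~\ref{prop:contraction}). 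Item~\ref{item:5.6.vii} being automatic by construction, since $(z_-,k_-)$ is exactly the root that was prescribed, is also the paper's logic. The support properties of $z,k$, the bounds on $\dot\sc_1,\dot\sc_2$, and the transversality argument for the stopping times are all in the right place.

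Two points deserve a flag, though neither sinks the strategy. First, the ingredient that actually closes the $\p_\theta w$ bootstrap with constant $<1$ is \emph{not} the transversality of the $\lambda_1,\lambda_2$ flows to $\sc$ (that handles $z,k,a$), but the bound $\int_0^t|\p_\theta\wb(\eta(x,s),s)|\,ds\le\tfrac{19}{40}$ along the $\lambda_3$-characteristic $\eta$, which is nearly \emph{tangent} to $\sc$ at $(0,0)$ --- this is \eqref{eq:AC:DC:con}/\eqref{dxw1-integral} from Lemma~\ref{lem:AC:DC}/\ref{lem:characteristic-bounds}, and it is a distinct mechanism from the $\OO(t^{1/3})$ gain along the transversal flows in Lemma~\ref{lem:Steve:needs:this}. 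You cite Lemma~\ref{lem:hardintegral} and Proposition~\ref{prop:etax-taylor}, which are formation-part lemmas used to establish the $C^3$ pre-shock expansion \eqref{preshock-derivatives}, not tools used inside the proof of Proposition~\ref{thm:curve:determines:all} itself; the expansion enters Section~5 only as the hypotheses \eqref{eq:u0:ass:quant} feeding Lemma~\ref{lem:u0:u0'}. Second, the remark about ``the $t$-smallness of $a$'' is imprecise: in $\mathcal{X}_{\bar\eps}$ one only has $|a|\leq 4\mm$, and the $\OO(t)$ control on $w-\wb$ comes from integrating a bounded (not $t$-small) forcing $-\tfrac83 aw$ over $[0,t]$.
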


The proof of Proposition~\ref{thm:curve:determines:all} is the content of Sections~\ref{sec:Burgers:explicit}--\ref{sec:construction:iteration}, and is summarized in Section~\ref{sec:kucf}.
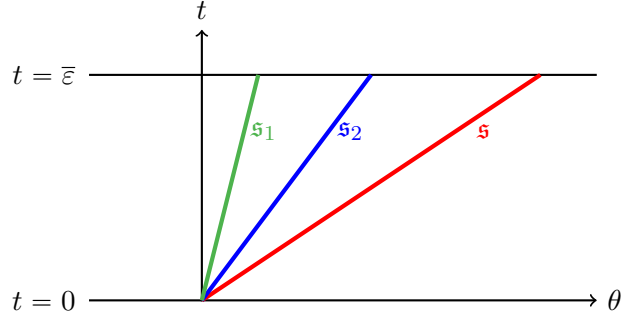
\begin{figure}[htb!]
\centering
\begin{tikzpicture}[scale=1.5]
    \draw [<->,thick] (0,2.4) node (yaxis) [above] {$t$}
        |- (3.5,0) node (xaxis) [right] {$\theta$};
    \draw[black,thick] (0,0)  -- (-1,0) ;
     \draw[black,thick] (-1,2)  -- (3.5,2) ;
    \draw[name path=A,red,ultra thick] (0,0) -- (3,2) ;
        \draw[red] (2.5,1.5) node { $\sc$}; 
    \draw[name path = B,blue,ultra thick] (0,0)  -- (1.5,2) ;
    \draw[blue] (1.32,1.5) node { $\sc_2$}; 
     \draw[green!40!gray, ultra thick] (0,0)  -- (.5,2) ;
      \draw[green!40!gray] (.55,1.5) node { $\sc_1$}; 
     \draw[black] (-1.4,0) node { $t=0$}; 
      \draw[black] (-1.4,2) node { $t=\bar \eps$}; 
\end{tikzpicture}

\vspace{-0.2cm}
\caption{\footnotesize The curves $\sc_1$, $\sc_2$, and $\sc$ discussed in Proposition~\ref{thm:curve:determines:all} all originate from the pre-shock}
\end{figure}

\subsection{Computing $w$ when $a=z=k=0$}
\label{sec:Burgers:explicit}
In light of \eqref{eq:the:norm} and \eqref{eq:the:X:T}, it is natural to treat $z$ and $k$ as a perturbation of $0$. As such, it convenient to first look at the evolution \eqref{xland-w} for $w$, in the case that $a=k=z=0$. In this case \eqref{xland-w} and the definition of $\lambda_3$ in \eqref{eq:wave-speeds} show that $w$ solves the 1d Burgers equation; to distinguish this solution from the true $w$, we denote it as $\wb$.

\begin{proposition}[\bf Burgers solution with a prescribed shock location]
\label{prop:Burgers}
Let   $w_0$ be  as described in \eqref{eq:u0:ass:quant}, and assume that  $\sc \colon[0,T_0] \to \TT$ satisfies \eqref{eq:sc:ass}. There exists $\bar \eps \in (0,T_0]$ and a function $\wb \colon \DD_{\bar \eps} \to \RR$ which solves
\begin{subequations}
\label{eq:basic:shit}
\begin{alignat}{2}
\p_t \wb + \wb \p_\theta \wb &= 0\,, \qquad && \mbox{in} \qquad \DD_{\bar \eps}\,, \\
\wb  &= w_0\,,  \qquad && \mbox{on} \qquad \TT \times \{0\}\,,
\end{alignat}
\end{subequations}
which is $C^2$ smooth in $\DD_{\bar \eps}$, and has a jump discontinuity across the curve $(\sc(t),t)_{t\in(0,\bar \eps]}$, with jump across $\sc$ and mean at $\sc$ bounded as
\begin{subequations}
\label{eq:the:burgers:jumps}
\begin{alignat}{2}
&\abs{\jump{\wb(t)}-2\bb^{\frac 32} t^{\frac 12}} \leq  t \,,
\qquad
&&\abs{\mean{\wb(t)} -\kappa} \leq \tfrac 13  {\mm^4} t 
\label{eq:basic:jump}
\,,\\
&\abs{\tfrac{d}{dt} \jump{\wb(t)}- \bb^{\frac 32} t^{-\frac 12} } \leq  {2 \mm^4} \,,
\qquad
&&\abs{\tfrac{d}{dt}\mean{\wb(t)} } \leq  {\mm^4}  \,,
\label{eq:dt:jump}\\
&\abs{\tfrac{d^2}{dt^2} \jump{\wb(t)} + \tfrac 12 \bb^{\frac 32} t^{-\frac 32} } \leq       2 \mm^4    t^{-1}   \,,
\qquad
&&\abs{\tfrac{d^2}{dt^2}\mean{\wb(t)}} \leq    \mm^4    t^{-1} 
\,.
\label{eq:dt:dt:jump}
\end{alignat}
\end{subequations}
\end{proposition}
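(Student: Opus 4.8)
\textbf{Proof plan for Proposition~\ref{prop:Burgers}.}

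The plan is to construct $\wb$ explicitly via the classical Burgers flow map, show it develops its first singularity exactly at $(0,0)$ by our normalization of the pre-shock data \eqref{eq:u0:ass:quant}, and then to extend it past the singularity as a (non-entropic, for now) two-valued solution that is single-valued away from the prescribed curve $\sc$. Concretely, recall from \eqref{Bsol} that the forward flow is $\etab(\theta,t) = \theta + t w_0(\theta)$, and along characteristics $\wb(\etab(\theta,t),t) = w_0(\theta)$. The first step is to record the quantitative properties of $\etab$ near $\theta = 0$: from \eqref{eq:u0:ass:3}--\eqref{eq:u0:ass:4} we have $\p_\theta \etab(\theta,t) = 1 + t w_0'(\theta)$ with $w_0'(\theta) \approx -\tfrac13 \bb \theta^{-2/3}$, so that $\p_\theta \etab$ vanishes (to leading order) when $\theta \approx \mp (\bb t)^{3/2}$ on the two sides $\theta \gtrless 0$; this is precisely the statement that at each $t>0$ the map $\etab(\cdot,t)$ is monotone on $\TT \setminus (\theta_-(t),\theta_+(t))$ with $\theta_\pm(t) = \pm(1+\OO(t^{1/3}))(\bb t)^{3/2}$, using the implicit function theorem together with the expansion \eqref{eq:u0:ass:3}. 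For $\theta$ at a fixed distance from $0$, $\p_\theta \etab > 1/2$ once $\bar\eps$ is small, by the $C^4$ bound on $w_0$ away from the pre-shock noted after \eqref{eq:u0:ass:quant}.

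The second step is to define $\wb$ on $\DD_{\bar\eps}$: to the left of $\sc(t)$ we set $\wb(\cdot,t) = w_0 \circ \etab^{-1}_{\mathsf L}(\cdot,t)$ where $\etab^{-1}_{\mathsf L}$ is the branch of the inverse taking values in $(-\infty, \theta_-(t)]$ shifted appropriately (i.e. the preimage label that is ``behind''), and to the right of $\sc(t)$ we use the branch with labels ``ahead''; away from the overlap region both branches agree with the unique inverse. One must check $\sc(t) \in \Sigma(t)$ guarantees that $\etab(\theta_-(t),t)$ and $\etab(\theta_+(t),t)$ straddle $\sc(t)$, so that this two-branch definition is consistent and $\wb \in C^2(\DD_{\bar\eps})$; this follows because $\etab(\theta_\pm(t),t) = \kappa t + \OO(t^{4/3})$ (plug $\theta_\pm(t)$ into $\etab$ and use $w_0(\theta_\pm(t)) = \kappa + \OO(t^{1/2})$ from \eqref{eq:u0:ass}) while $\Sigma(t)$ is the $\tfrac12 \mm^4 t^2$-ball around $\kappa t$, and $\mm^4 t^2 \gg t^{4/3}$ for $t$ small — so in fact we need the comparison the other way, and instead use that $\etab(\theta_\pm(t),t)$ differ from $\kappa t$ by $\OO(t^{3/2})$ while $\sc$ stays within $\OO(\mm^4 t^2)$ of $\kappa t$; since $t^{3/2}$ dominates $t^2$, the points $\etab(\theta_\pm(t),t)$ lie on opposite sides of $\sc(t)$ once $\bar\eps$ is small. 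Smoothness of each branch in $\DD_{\bar\eps}$ is inherited from smoothness of $w_0$ away from $0$ and non-degeneracy of $\p_\theta \etab$ there.

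The third step is the jump and mean asymptotics \eqref{eq:the:burgers:jumps}. Let $\theta_-(t), \theta_+(t)$ now denote the two labels with $\etab(\theta_\pm(t),t) = \sc(t)$ lying in the appropriate branches (these exist and are unique by the monotonicity from Step 1 and the straddling from Step 2, and satisfy $\theta_\pm(t) = \pm(\bb t)^{3/2}(1 + \OO(t^{1/3}))$ by inverting $\etab(\theta,t) - \kappa t = \theta - \bb t \theta^{1/3} + \ldots = \sc(t) - \kappa t = \OO(\mm^4 t^2)$, cf.~\eqref{shockfronthit}). Then $\jump{\wb(t)} = w_0(\theta_-(t)) - w_0(\theta_+(t)) = -\bb(\theta_-(t)^{1/3} - \theta_+(t)^{1/3}) + \OO(t)$; since $\theta_\mp(t)^{1/3} = \pm (\bb t)^{1/2}(1+\OO(t^{1/3}))$ this gives $\jump{\wb(t)} = 2\bb^{3/2} t^{1/2} + \OO(t)$, which is the first bound in \eqref{eq:basic:jump}; similarly $\mean{\wb(t)} = \kappa + \OO(\mm^4 t)$ using \eqref{eq:u0:ass} and $\sc(t) = \kappa t + \OO(\mm^4 t^2)$. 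The derivative bounds \eqref{eq:dt:jump}--\eqref{eq:dt:dt:jump} follow by differentiating the defining relation $\etab(\theta_\pm(t),t) = \sc(t)$ to get $\dot\theta_\pm(t) = (\dot\sc(t) - w_0(\theta_\pm(t)))/(1 + t w_0'(\theta_\pm(t)))$, noting the numerator is $\OO(t^{1/2})$ (since $\dot\sc = \kappa + \OO(\mm^4 t)$ and $w_0(\theta_\pm) = \kappa + \OO(t^{1/2})$) and the denominator is $\OO(t^{2/3})$ near the singular labels — so $\dot\theta_\pm(t) = \OO(t^{-1/6})$ — and then using the chain rule $\tfrac{d}{dt}\wb(\theta_\pm(t),t) = w_0'(\theta_\pm(t))\dot\theta_\pm(t)$ together with \eqref{eq:u0:ass:3}--\eqref{eq:u0:ass:4} and \eqref{eq:sc:ass}; the second derivatives require one more differentiation and the bound \eqref{eq:u0:ass:5} on $w_0'''$, which is exactly why the $C^3$ expansion of the pre-shock from Theorem~\ref{thm:blowup-profile} is needed here.

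The main obstacle is Step 3, specifically the second-derivative bounds \eqref{eq:dt:dt:jump}: one is differentiating twice quantities that genuinely blow up like $t^{-1/2}$ and $t^{-3/2}$, and the cancellations that keep the error terms of the stated (and no worse) order are delicate — they rely on matching the leading $\theta^{1/3}, \theta^{-2/3}, \theta^{-5/3}$ coefficients in the $C^3$ expansion \eqref{eq:u0:ass:3}--\eqref{eq:u0:ass:5} against the powers of $t$ coming from $\theta_\pm(t) \sim t^{3/2}$, and on the fact that $\dot\sc - \kappa$ and its derivative are controlled with the correct powers of $t$ by \eqref{eq:sc:ass}. A clean way to organize this is to first prove a precise two-term expansion $\theta_\pm(t) = \pm(\bb t)^{3/2} + \OO(t^{11/6})$ and its $t$-derivatives by bootstrapping from the fixed-point relation $\theta_\pm(t) = \bb t \theta_\pm(t)^{1/3} - (\sc(t) - \kappa t) + \OO(t^{5/3})$, and then substitute into $w_0$, $w_0'$ and their compositions, tracking the worst power of $t$ at each stage; the rest of the proposition (existence and $C^2$-regularity of $\wb$, single-valuedness away from $\sc$) is routine given Steps 1--2.
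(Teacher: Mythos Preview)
Your approach is essentially the paper's: define $\wb = w_0\circ\etab^{-1}$ using the bijection of Lemma~\ref{lem:a=0:inversion}, then compute jump and mean via $w_0(\xbpm(t))$ and differentiate using $\dot\xbpm(t)=(\dot\sc(t)-w_0(\xbpm(t)))/(1+tw_0'(\xbpm(t)))$. There is one concrete computational slip in Step~3: at the shock-preimage labels $\theta_\pm(t)\approx\pm(\bb t)^{3/2}$ one has $tw_0'(\theta_\pm)\approx -\tfrac13\bb t\cdot(\bb t)^{-1}=-\tfrac13$, so the denominator $1+tw_0'(\theta_\pm(t))\approx\tfrac23$ is \emph{bounded away from zero}, not $\OO(t^{2/3})$; consequently $\dot\theta_\pm(t)=\OO(t^{1/2})$, not $\OO(t^{-1/6})$. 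You appear to be conflating these shock-preimage labels with the turning points of $\etab$ from Step~1, which sit at $\approx\pm(\bb t/3)^{3/2}$ (closer to the origin by a factor $3^{-3/2}$) and are genuinely different objects; the paper's Lemma~\ref{lem:a=0:inversion} keeps them separate by working directly with the largest/smallest roots of $\etab(\cdot,t)=\sc(t)$ via the cubic $y^3-y=\zeta$. With this corrected, the chain rule gives $\tfrac{d}{dt}\wb(\sc(t)^\pm,t)\approx\mp\tfrac12\bb^{3/2}t^{-1/2}$ directly, and the second-derivative estimate \eqref{eq:dt:dt:jump} requires only $w_0''$ (via \eqref{eq:u0:ass:4}) together with the $\ddot\sc$ bound from \eqref{eq:sc:ass}, not $w_0'''$ from \eqref{eq:u0:ass:5}.
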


In Proposition~\ref{prop:Burgers} we use the notation from Remark~\ref{def:jump:etc} and Definition~\ref{def:jump:etc}. Prior to the proof of Proposition~\ref{prop:Burgers}, it is convenient to establish an auxiliary result for the derivatives of $w_0$ (cf.~Lemma~\ref{lem:u0:u0'}), and a result (cf.~Lemma~\ref{lem:a=0:inversion}) which concerns the invertibility of the 
usual flow map for the Burgers equation: 
\begin{align}
\etab(x,t) = x + t w_0(x)\,, 
\label{eq:etab:def}
\end{align}
which is well-defined for every $x\in \TT$.\footnote{Here and throughout the remainder of the paper we shall denote the Eulerian variable by $\theta$, while for the corresponding Lagrangian label we use $x$.}  We first record a few estimates for $w_0$, which follow from \eqref{eq:u0:ass:quant}:
\begin{lemma}
\label{lem:u0:u0'}
There exists $\bar \eps \in (0,1]$ such that for every $t \in (0, \bar \eps ]$ we have 
\begin{subequations}
\label{eq:u0:interest:all}
\begin{alignat}{2}
& \sabs{w_0(x)} \leq \mm\,, \qquad 
&& x\in \TT\,, \label{eq:u0:interest} \\
&\sabs{w_0'(x)}\leq  \tfrac{2}{5} t^{-1}\,, \qquad 
&& \tfrac 45 (\bb t)^{\frac 32} \leq |x| \leq \pi \,, \label{eq:u0':interest}\\
&\sabs{w_0''(x)} \leq  \tfrac{1}{3} \bb^{- \frac 32} t^{-\frac 52}\,, \qquad 
&& \tfrac 45 (\bb t)^{\frac 32} \leq |x| \leq \pi \, \label{eq:u0'':interest} \\
&\sabs{w_0'''(x)} \leq  {2\mm (\bb t)^{-4}} \,, \qquad 
&& \tfrac 45 (\bb t)^{\frac 32} \leq |x| \leq \pi \, \label{eq:u0''':interest} \\
&\abs{\tfrac{w_0'(x)}{1+ t w_0'(x)}} \leq  \tfrac{2}{3}  t^{-1}\,, \qquad 
&& \tfrac 45 (\bb t)^{\frac 32} \leq |x| \leq \pi \, \label{eq:quotient:interest} 
\,.
\end{alignat}
\end{subequations}
\end{lemma}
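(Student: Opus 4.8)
\textbf{Proof strategy for Lemma~\ref{lem:u0:u0'}.}
The plan is to derive each of the bounds in \eqref{eq:u0:interest:all} directly from the quantitative pre-shock expansion \eqref{eq:u0:ass:quant}, using the smallness relations \eqref{eq:b:m:ass} and choosing $\bar \eps$ small enough (in terms of $\kappa, \bb, \cc, \mm$) at each step. The bound \eqref{eq:u0:interest} is nothing but a restatement of \eqref{eq:u0:ass:2} together with \eqref{eq:u0:ass:2a}, since $\tfrac12 \kappa \leq w_0 \leq \mm$. For the remaining estimates, the key observation is that throughout we work in the region $\tfrac 45 (\bb t)^{\frac 32} \leq |x| \leq \pi$, so that the leading fractional powers of $|x|$ appearing in \eqref{eq:u0:ass:3}--\eqref{eq:u0:ass:5} can be compared against powers of $t$: in particular $|x|^{-\frac 23} \leq (\tfrac 45)^{-\frac 23} (\bb t)^{-1} \leq 2 (\bb t)^{-1}$, $|x|^{-\frac 43} \leq 2 (\bb t)^{-2}$, $|x|^{-\frac 53} \leq 2 (\bb t)^{-\frac 52}$, and $|x|^{-\frac 83} \leq 2 (\bb t)^{-4}$, up to harmless universal constants.

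First I would prove \eqref{eq:u0':interest}. From \eqref{eq:u0:ass:3} we have $|w_0'(x)| \leq \tfrac 13 \bb |x|^{-\frac 23} + \tfrac 23 |\cc| |x|^{-\frac 13} + \mm$. In the region under consideration the dominant term is $\tfrac 13 \bb |x|^{-\frac 23} \leq \tfrac 13 \bb \cdot 2 (\bb t)^{-1} = \tfrac 23 \bb^{-\frac 13} \cdot \bb^{\frac 23} t^{-1}$; using $\bb \leq 2$ this is at most $\tfrac 23 \cdot 2^{\frac 23} \bb^{-\frac 13} t^{-1}$. Wait --- more carefully: $\tfrac13\bb|x|^{-2/3} \le \tfrac13 \bb \cdot 2(\bb t)^{-2/3}\cdot$ --- I should be careful with the exponent, since $|x|^{-2/3}$ scales like $(\bb t)^{-1}$ only after raising $(\bb t)^{3/2}$ to the $-2/3$ power, i.e. $((\bb t)^{3/2})^{-2/3} = (\bb t)^{-1}$, so indeed $|x|^{-2/3} \le (\tfrac45)^{-2/3}(\bb t)^{-1} \le 2(\bb t)^{-1}$. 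Then $\tfrac13\bb|x|^{-2/3} \le \tfrac23 \bb \cdot \bb^{-1} t^{-1} = \tfrac23 t^{-1}$. Hmm, that already exceeds $\tfrac25 t^{-1}$; the resolution is that the constant $(\tfrac45)^{-2/3}$ is actually smaller, roughly $1.16$, and one must track it honestly: $\tfrac13\bb \cdot (\tfrac45)^{-2/3}(\bb t)^{-1} = \tfrac13 (\tfrac45)^{-2/3} t^{-1} \approx 0.387\, t^{-1}$, and the remaining terms $\tfrac23|\cc||x|^{-1/3}$ and $\mm$ are $o(t^{-1})$ as $\bar\eps \to 0$ (using $|\cc| \ll \bb$ and that $\mm \le \mm t \cdot t^{-1}$ with $t \le \bar\eps$ small), so the total is $\le \tfrac25 t^{-1}$ for $\bar\eps$ small enough. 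The estimates \eqref{eq:u0'':interest} and \eqref{eq:u0''':interest} follow in exactly the same manner: from \eqref{eq:u0:ass:4}, $|w_0''(x)| \leq \tfrac 29 \bb |x|^{-\frac 53} + \mm |x|^{-\frac 43} \leq \tfrac 29 \bb \cdot 2 (\bb t)^{-\frac 52} + \mm \cdot 2(\bb t)^{-2}$, and since the second term is a lower power of $t^{-1}$ it is absorbed, giving $\le \tfrac13 \bb^{-\frac32}t^{-\frac52}$ for $\bar\eps$ small (tracking the constant $\tfrac29 (\tfrac45)^{-5/3}\cdot$ carefully against $\tfrac13$); from \eqref{eq:u0:ass:5}, $|w_0'''(x)| \leq \mm |x|^{-\frac 83} \leq 2\mm (\bb t)^{-4}$ directly.

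For the final estimate \eqref{eq:quotient:interest}, the plan is to first show that $1 + t w_0'(x)$ is bounded away from zero from below in the region $\tfrac 45 (\bb t)^{\frac 32} \leq |x| \leq \pi$. This is where I expect the only genuine subtlety: the flow map $\etab(x,t) = x + tw_0(x)$ develops its singularity precisely when $1 + t w_0'(x) = 0$, and near the pre-shock the gradient $w_0'$ blows up, so one must verify that having bounded $|x|$ away from $(\bb t)^{\frac 32}$ (by the fixed ratio $\tfrac45$) is enough to keep $1+tw_0'$ bounded below. Using the sign information in \eqref{eq:u0:ass:3}, for $x > 0$ we have $w_0'(x) = -\tfrac 13 \bb x^{-\frac 23} + \OO(|\cc| x^{-\frac13} + \mm)$, so $1 + t w_0'(x) = 1 - \tfrac 13 \bb t x^{-\frac 23} + \OO(t(|\cc|x^{-\frac13}+\mm))$; on $x \geq \tfrac 45 (\bb t)^{\frac 32}$ one gets $\tfrac 13 \bb t x^{-\frac 23} \leq \tfrac 13 (\tfrac 45)^{-\frac 23} < \tfrac 12$, hence $1 + t w_0'(x) \geq \tfrac 12 - \OO(\bar \eps) \geq \tfrac 13$ for $\bar \eps$ small; for $x < 0$ the leading term has a favorable sign and $1 + tw_0'(x) \geq 1 - \OO(\bar\eps) \ge \tfrac13$ even more easily. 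Given this lower bound, \eqref{eq:quotient:interest} follows by combining it with \eqref{eq:u0':interest}: $\left| \tfrac{w_0'(x)}{1 + t w_0'(x)} \right| \leq 3 |w_0'(x)| \leq 3 \cdot \tfrac 25 t^{-1}$ --- and one should instead argue slightly more sharply, writing $\tfrac{w_0'}{1+tw_0'} = \tfrac 1t \left( 1 - \tfrac{1}{1+tw_0'} \right)$, so that $\left| \tfrac{w_0'}{1+tw_0'}\right| \leq \tfrac 1t \left( 1 + \tfrac{1}{1+tw_0'}\right)$, and on the negative-$x$ side where $1+tw_0' \ge 1$ this gives the bound $\tfrac 2t$; on the positive side, using $1 + tw_0' \geq \tfrac 12 - \OO(\bar\eps)$ one needs the sharper observation that in fact $\tfrac 45 (\bb t)^{3/2} \le x$ forces $\tfrac 13 \bb t x^{-2/3} \le \tfrac13(\tfrac45)^{-2/3} \approx 0.39$, so $1 + tw_0' \ge 0.6 - \OO(\bar\eps) \ge \tfrac 35$, whence $\left|\tfrac{w_0'}{1+tw_0'}\right| \le \tfrac 1t(1 + \tfrac 53) \le \tfrac 83 t^{-1}$, which is worse than claimed. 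The genuinely correct route to the constant $\tfrac 23$ is to use $|w_0'| \le \tfrac 25 t^{-1}$ directly together with $1 + t w_0' \ge \tfrac 35$, giving $\left|\tfrac{w_0'}{1+tw_0'}\right| \le \tfrac{2/5}{3/5} t^{-1} = \tfrac 23 t^{-1}$, which is exactly \eqref{eq:quotient:interest}. This completes the plan; the only step requiring care is the lower bound on $1 + t w_0'$, and all constants should be verified by keeping $\bar\eps$ as the final small parameter to be chosen.
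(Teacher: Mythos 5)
Your proposal is correct and takes essentially the same route as the paper: substitute the region constraint $|x|\geq \tfrac45(\bb t)^{3/2}$ into \eqref{eq:u0:ass:quant}, track the explicit constant $(\tfrac45)^{-2/3}$ (your first pass rounding it to $2$ gives a bound that overshoots, as you noticed and corrected), absorb the lower-order terms for $\bar\eps$ small, and obtain \eqref{eq:quotient:interest} directly from \eqref{eq:u0':interest} via $1+t w_0' \geq 1 - \tfrac25 = \tfrac35$. The detour through the signed asymptotics of $w_0'$ to lower-bound $1+tw_0'$ is unnecessary (and you recognized this at the end); the paper simply cites \eqref{eq:u0':interest}.
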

\begin{proof}[Proof of Lemma~\ref{lem:u0:u0'}]
For simplicity, we only give the proof for  $x>0$.  
The bound \eqref{eq:u0:interest} follows directly from \eqref{eq:u0:ass:2} since \eqref{eq:u0:ass:2a} implies that $w_0$ is nonnegative. In order to prove \eqref{eq:u0':interest} we  use  assumption \eqref{eq:u0:ass:3}, which gives
\begin{align*}
 \sabs{w_0'(x)} t 
 &\leq \tfrac{1}{3}\bb \left(\tfrac{4}{5}\right)^{-\frac 23} (\bb t)^{-1} t + \tfrac{2}{3}  |\cc|\left(\tfrac{4}{5}\right)^{-\frac 13}  (\bb t)^{-\frac 12} t + \mm t  \leq \tfrac{1}{3}  \left(\tfrac{4}{5}\right)^{-\frac 23}   +  C t^{\frac 12} 
  \leq \tfrac{2}{5}  
\end{align*}
upon choosing $\bar \eps$ (and hence $t$) to be sufficiently small, in terms of $\kappa, \bb, \cc$, and $\mm$. The proof of \eqref{eq:u0'':interest} is similar to the one of \eqref{eq:u0':interest}, except that we appeal to assumption~\eqref{eq:u0:ass:4} and derive
\begin{align}
 \sabs{w_0''(x)} \bb^{\frac 32} t^{\frac 52}
 \leq \tfrac{2}{9} \left(\tfrac{4}{5}\right)^{-\frac 53} + C t^{\frac 12} 
 \leq \tfrac{1}{3}  
\end{align}
once $\bar \eps$ (and hence $t$) is small enough. The bound \eqref{eq:u0''':interest} immediately follows from~\eqref{eq:u0:ass:5} and \eqref{eq:b:m:ass}. Lastly, the estimate \eqref{eq:quotient:interest} is a direct consequence of \eqref{eq:u0':interest}.
\end{proof}

Second, we discuss the invertibility of  $\etab$:

\begin{lemma}[\bf Local inversion of the Burgers flow map]
\label{lem:a=0:inversion}
Let $w_0$ be as described in \eqref{eq:u0:ass:quant},  assume that  $\sc$ satisfies \eqref{eq:sc:ass} on $[0,T_0]$, and let $\etab$ be defined as in \eqref{eq:etab:def}. Then, there exists a sufficiently small $\bar \eps   \in (0,T_0]$, which only depends on $\kappa, \bb, \cc, \mm$,  such that for $t\in (0,\bar \eps]$ the following holds. 
 There exists a {\em largest} $\xbr = \xbr(t)>0$ and a {\em smallest} $\xbl = \xbl(t)<0$ such that
\begin{align}
 \sc(t) = \etab(\xbpm(t),t)  
 \label{eq:xpm:def}
 \end{align}
and moreover we have
 \begin{align}
\sabs{\xbpm(t) \mp (\bb t)^{\frac 32}} \leq  {\mm^4}   t^2 
\qquad \Rightarrow \qquad 
 \tfrac{4}{5}   (\bb t)^{\frac 32} < \abs{\xbpm(t)} < \tfrac{6}{5} (\bb  t)^{\frac 32}\,.
\label{eq:range:for:solutions}
\end{align}
We also define $\xbpm(0) = 0$. Note that $\xbpm(t) \in \Omega(t)$ for all $t\in [0,\bar \eps]$.
Moreover, defining the set of labels 
\begin{align*}
\Upsilon_{\mathsf B} (t) = \TT \setminus [\xbl(t),\xbr(t)]
\end{align*}
we have that the map $\etab(\cdot,t)  \colon \Upsilon_{\mathsf B} (t) \to \TT \setminus \{\sc(t)\}$ is a bijection
satisfying the bounds 
\begin{subequations}
\begin{align}
\abs{\p_x \etab(x,s) - 1} &\leq \tfrac{2}{5} \label{eq:px:eta:b} \\
\abs{ \etab(x,s) - \sc(s)} &\geq \tfrac 45 \bb^{\frac 32} t^{\frac 12} (t-s) \,.
\label{eq:no:trespassing}
\end{align}
\end{subequations}
for all $s\in [0,t)$ and $x \in \Upsilon_{\mathsf B} (t)$.
The  above estimate implies that the trajectory $\{ \eta(x,s) \}_{s\in[0,t]}$  can not intersect the shock curve prior to time $s=t$, for every $x\in \Upsilon_{\mathsf B} (t)$. Lastly, the inverse map $\etab^{-1} (\cdot,t) \colon \TT\setminus \{ \sc(t)\} \to \Upsilon_{\mathsf B}(t)$ satisfies the estimates
\begin{subequations}
\label{eq:tropic:thunder}
\begin{align}
\tfrac{5}{7} &\leq  \p_\theta \etab^{-1}(\theta,t)   \leq \tfrac{5}{3} \label{eq:tropic:1} \\
\tfrac 45 (\bb t)^{\frac 32} + \tfrac 12 \abs{\theta - \sc(t)} &\leq \abs{\etab^{-1}(\theta,t)} \leq  
\tfrac 65 (\bb t)^{\frac 32} + 2 \abs{\theta - \sc(t)}\label{eq:tropic:2}  
\end{align}
\end{subequations}
for all $(\theta,t) \in \DD_{\bar \eps}$. 
\end{lemma}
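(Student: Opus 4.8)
\textbf{Proof strategy for Lemma~\ref{lem:a=0:inversion}.}
The plan is to study the scalar function $x \mapsto \etab(x,t) - \sc(t) = x + t w_0(x) - \sc(t)$ on $\TT$, for each fixed small $t>0$, and locate its zeros. First I would record the two elementary monotonicity facts provided by Lemma~\ref{lem:u0:u0'}: on the ``outer'' region $\tfrac 45 (\bb t)^{\frac 32} \leq |x| \leq \pi$ we have $|t w_0'(x)| \leq \tfrac 25 < 1$ by \eqref{eq:u0':interest}, so $\p_x \etab(x,t) = 1 + t w_0'(x) \in [\tfrac 35, \tfrac 75]$ there; this is exactly the claimed bound \eqref{eq:px:eta:b}, and it shows $\etab(\cdot,t)$ is strictly increasing (hence injective) on each of the two outer arcs. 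On the ``inner'' region $|x| < \tfrac 45 (\bb t)^{\frac 32}$, by contrast, $\etab(\cdot,t)$ is \emph{not} monotone — this is precisely where the pre-shock compression happens — so there the map is many-to-one onto a small neighborhood of $\sc(t)$.

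The core of the argument is an intermediate value / degree count to produce $\xbr$ and $\xbl$. I would evaluate $\etab(x,t) - \sc(t)$ at $x = \pm (\bb t)^{\frac 32}$ and at $x = \pm \tfrac 65 (\bb t)^{\frac 32}$. Using the expansion \eqref{eq:u0:ass} together with $\sc(t) \in \Sigma(t)$ from \eqref{eq:sc:ass} — i.e. $|\sc(t) - \kappa t| \leq \tfrac 12 \mm^4 t^2$ — and the leading behaviour $\etab(x,t) = x + \kappa t - \bb t |x|^{1/3}\sgn(x) + \ldots$, one finds that at $x = (\bb t)^{3/2}$ the quantity $\etab(x,t)-\sc(t)$ equals $(\bb t)^{3/2} - \bb t (\bb t)^{1/2} + \OO(\mm^4 t^2) = \OO(\mm^4 t^2)$ plus a small genuinely negative contribution from the $\theta^{2/3}$ correction and error terms, while at $x = \tfrac 65 (\bb t)^{3/2}$ the term $x$ dominates and the expression is strictly positive, of size $\gtrsim (\bb t)^{3/2}$. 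Hence by continuity there is a zero in $((\bb t)^{3/2}, \tfrac 65 (\bb t)^{3/2})$; strict monotonicity on this outer arc makes it unique, and that zero is $\xbr(t)$; symmetrically for $\xbl(t)$. Sharpening the location to the quantitative bound $|\xbpm(t) \mp (\bb t)^{3/2}| \leq \mm^4 t^2$ in \eqref{eq:range:for:solutions} is then a one-line perturbation of the defining equation $\xbpm + t w_0(\xbpm) = \sc(t)$: subtract $\xbpm \mp (\bb t)^{3/2} \mp t \bb \cdot(\pm(\bb t)^{1/2})$ (these two terms cancel to leading order) and bound the remainder using \eqref{eq:u0:ass} and $|\sc(t)-\kappa t| \le \tfrac12 \mm^4 t^2$. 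I would then set $\xbpm(0)=0$ by continuity and note $\xbpm(t) \in \Omega(t)$ directly from \eqref{eq:x:range:interest}.

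With $\xbr, \xbl$ in hand, the bijectivity of $\etab(\cdot,t)\colon \Upsilon_{\mathsf B}(t) \to \TT\setminus\{\sc(t)\}$ follows: $\Upsilon_{\mathsf B}(t) = \TT \setminus [\xbl(t),\xbr(t)]$ lies entirely in the outer region where $\etab(\cdot,t)$ is strictly increasing with derivative in $[\tfrac35,\tfrac75]$, it is a single arc, its two endpoints both map to $\sc(t)$, and hence it maps homeomorphically onto the complementary arc $\TT\setminus\{\sc(t)\}$; surjectivity is just the intermediate value theorem on this arc. The lower bound \eqref{eq:no:trespassing}, $|\etab(x,s) - \sc(s)| \geq \tfrac 45 \bb^{3/2} t^{1/2}(t-s)$ for $x \in \Upsilon_{\mathsf B}(t)$ and $s<t$, I would obtain by a transport/ODE comparison along the Burgers characteristic: $\tfrac{d}{ds}\big(\etab(x,s) - \sc(s)\big) = w_0(x) - \dot\sc(s)$, and since $x$ is outside $[\xbl(t),\xbr(t)]$ one shows $w_0(x) - \dot\sc(s)$ has a definite sign with $|w_0(x)-\dot\sc(s)| \gtrsim \bb t|x|^{1/3}/t \gtrsim \bb^{3/2}t^{1/2}$ for the $x$ just past $\xbpm$ — using \eqref{eq:u0:ass}, \eqref{eq:u0':interest}, and $|\dot\sc(s)-\kappa|\le \mm^4 s$ from \eqref{eq:sc:ass} — while at $s=t$ the difference vanishes only if $x=\xbpm$; integrating from $s$ to $t$ gives the stated linear-in-$(t-s)$ lower bound. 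Finally the inverse-map estimates \eqref{eq:tropic:1}--\eqref{eq:tropic:2} are read off from $\p_\theta\etab^{-1} = 1/\p_x\etab \in [\tfrac57,\tfrac53]$ and from integrating this derivative bound starting from the endpoint relation $\etab^{-1}(\sc(t)^\pm,t) = \xbpm(t)$ together with \eqref{eq:range:for:solutions}.

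The main obstacle I anticipate is the sign bookkeeping in the intermediate-value step: the leading terms $x$ and $-\bb t|x|^{1/3}\sgn(x)$ nearly cancel at $x \sim (\bb t)^{3/2}$, so one must carefully track the sub-leading $\cc\,\theta^{2/3}$ term, the $\mm|\theta|$ error in \eqref{eq:u0:ass}, and the $\tfrac12\mm^4 t^2$ uncertainty in $\sc(t)$, and verify that choosing $\bar\eps$ small (so that $t$ is small relative to $\kappa,\bb,\cc,\mm$) makes the dominant balance come out with the correct strict sign at both test points — this is where the smallness of $\bar\eps$ is genuinely used, and where \eqref{eq:b:m:ass} ($|\cc| \ll \bb$) is needed to keep the $\cc$-correction from spoiling the sign.
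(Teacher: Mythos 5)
Your overall strategy --- locate $\xbpm$ by an intermediate-value argument on the outer region where $\etab(\cdot,t)$ is monotone, then sharpen by a perturbation of the defining equation --- is a legitimate alternative to the paper's proof, which instead makes the substitution $\tau=(\bb t)^{1/2}$, $y = x^{1/3}\tau^{-1}$, $\zeta = (\sc(t)-\kappa t)\tau^{-3}$ to reduce $\etab(x,t)=\sc(t)$ to a cubic $y^3-y+G(y,\tau)=\zeta$ and then invokes the implicit function theorem around the explicit roots $\Root_\pm(\zeta)$. The paper's route buys it the finer asymptotic \eqref{eq:sharp:range:for:solutions} (with the $\cc$-dependent correction) at no extra cost, which is used later in the proof of Proposition~\ref{prop:Burgers}; your route, once corrected, gives \eqref{eq:range:for:solutions} directly but would need additional bookkeeping to extract that sharper form.

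However, there is a concrete error in your intermediate-value step. At the test point $x = (\bb t)^{3/2}$ one computes
\[
\etab\bigl((\bb t)^{3/2},t\bigr) - \sc(t) = \bigl(\kappa t - \sc(t)\bigr) + \bb\cc\, t^2 + t\, g_0\bigl((\bb t)^{3/2}\bigr),
\]
where $g_0(x)=w_0(x)-\kappa+\bb x^{1/3}-\cc x^{2/3}$ satisfies $|g_0(x)|\leq \mm|x|$. The last two terms are of size $\lesssim \bb|\cc|\,t^2 + \mm\bb^{3/2}t^{5/2}$, which by \eqref{eq:b:m:ass} are both \emph{much smaller} than the first term, whose magnitude is only controlled by $|\kappa t - \sc(t)| \leq \tfrac12 \mm^4 t^2$ and whose sign is \emph{not} determined by the hypotheses \eqref{eq:sc:ass}. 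So your claim that there is a ``small genuinely negative contribution from the $\theta^{2/3}$ correction and error terms'' forcing a sign is wrong: the dominant error is the $\OO(\mm^4 t^2)$ term, not the $\cc$-correction, and it can make $\etab((\bb t)^{3/2},t)-\sc(t)$ positive. Consequently you cannot conclude that there is a zero in $\bigl((\bb t)^{3/2},\tfrac 65(\bb t)^{3/2}\bigr)$; in fact \eqref{eq:range:for:solutions} only promises $\xbr$ within $\mm^4 t^2$ of $(\bb t)^{3/2}$, possibly to the left. The fix is to test instead at $x = \tfrac45(\bb t)^{3/2}$, where the ideal Burgers balance $x - \bb t\, x^{1/3}$ contributes $(\tfrac45 - (\tfrac45)^{1/3})(\bb t)^{3/2} \approx -0.128\,(\bb t)^{3/2}$, a genuinely negative quantity that dominates all $\OO(t^2)$ errors for $t\leq\bar\eps$ small; together with strict positivity at $x=\tfrac65(\bb t)^{3/2}$ and monotonicity on the outer arc, this produces a unique root $\xbr\in\bigl(\tfrac45(\bb t)^{3/2},\tfrac65(\bb t)^{3/2}\bigr)$, after which your perturbation argument does yield \eqref{eq:range:for:solutions}. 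The rest of your outline --- monotonicity for bijectivity, the ODE comparison for \eqref{eq:no:trespassing}, and the inverse-function-theorem bounds for \eqref{eq:tropic:thunder} --- is sound and matches the paper's reasoning.
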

 
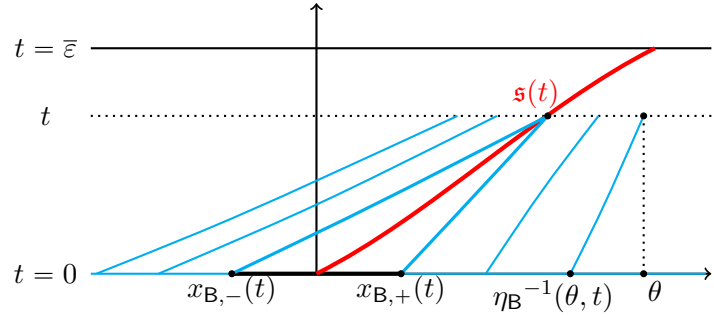
\begin{figure}[htb!]
\centering
\begin{tikzpicture}[scale=1.5]
\draw [<->,  thick] (0,2.4) node (yaxis) [above] {} |- (3.5,0) node (xaxis) [right] {};
\draw[black,ultra thick] (0.75,0)  -- (-0.75,0) ;
\draw[black,thick] (-2,2)  -- (3.5,2) ;
\draw[name path=A,red,ultra thick] (0,0) .. controls (1,0.5) and (2,1.5) .. (3,2);
\draw[red] (1.95,1.6) node { $\sc(t)$}; 
\draw[black] (-2.4,1.4) node { $t$}; 
\draw[black] (-2.4,0) node { $t=0$}; 
\draw[black] (-2.4,2) node { $t=\bar \eps$}; 
\draw[black,dotted,thick] (2.02,1.4)  -- (-2,1.4) ;
\draw[cyan,thick] (-0.75,0)  -- (-2,0) ;
\draw[cyan,thick] (0.75,0)  -- (3.45,0) ;
\draw[black,dotted,thick] (3.5,1.4)  -- (2.07,1.4) ;
\draw[name path=B,cyan, very thick] (-.75,0) .. controls (0.5,0.6) .. (2.05,1.4);
\draw[name path=C,cyan, very thick] (.75,0) .. controls (1.5,0.8) .. (2.05,1.4);
\draw[black] (-0.75,-0.15) node { $\xbl(t)$}; 
\draw[black] (0.75,-0.15) node { $\xbr(t)$}; 
\draw[cyan, thick] (1.5,0) .. controls (2,0.75) .. (2.5,1.4);
\draw[cyan, thick] (2.25,0) .. controls (2.55,0.6) .. (2.9,1.4);
\draw[cyan, thick] (-1.4,0) .. controls (0,0.6) .. (1.6,1.4);
\draw[cyan, thick] (-1.95,0) .. controls (-0.7,0.5) .. (1.25,1.4);
\filldraw[black] (-.75,0) circle (0.75pt);
\filldraw[black] (.75,0) circle (0.75pt);
\filldraw[black] (2.05,1.4) circle (0.75pt);
\draw[black,dotted,thick] (2.9,1.4)  -- (2.9,0) ;
\draw[black] (3,-0.15) node { $\theta$}; 
\draw[black] (2.1,-0.2) node { $\etab^{-1}(\theta,t)$}; 
\filldraw[black] (2.9,0) circle (0.75pt);
\filldraw[black] (2.25,0) circle (0.75pt);
\filldraw[black] (2.9,1.4) circle (0.75pt);
\end{tikzpicture}

\vspace{-0.2cm}
\caption{\footnotesize Several Lagrangian paths $\{\etab(x,s)\}_{s\in[0,t]}$ are represented by cyan paths. The extremal points $\xbpm(t)$ are the two labels which are colliding into the shock curve precisely at time $t$. All the labels in between them have collided with the shock curve at some time $s\in [0,t)$.}
\end{figure}

\begin{proof}[Proof of Lemma~\ref{lem:a=0:inversion}]

It is convenient to denote
\begin{align}
g_0(x) =  w_0(x) - \kappa + \bb x^{\frac 13}  - \cc x^{\frac 23}
\label{eq:g0:def}
\end{align}
so that in view of \eqref{eq:u0:ass:quant} we have that $|g_0(x) | \leq  \mm |x|$ and $|g_0'(x)| \leq  \mm$. 
For $t>0$ we let 
\begin{align}
\tau = (\bb t)^{\frac 12}, \qquad y = x^{\frac 13} \tau^{-1}, \qquad \zeta = (\sc(t) - \kappa t) \tau^{-3} \,. 
\label{eq:tau:z:zeta:def}
\end{align}
Note that the condition $\sc(t) \in \Sigma(t)$ in \eqref{eq:sc:ass} together with \eqref{eq:b:m:ass} imply that $|\zeta| \leq \bb^{-2} \mm^4 \tau \ll 1$, an in particular $|\zeta| \leq \frac{1}{10}$. 
With this notation, for any $t>0$ the equation \eqref{eq:xpm:def} is equivalent to
\begin{align*}
&\tau^3 \zeta + \kappa t = \tau^3 y^3 + t \left(\kappa - \bb  \tau y + \cc \tau^2 y^2 +  g_0( \tau^3 y^3)   \right)  
\,.
\end{align*}
After collecting terms, and dividing by $\tau^3$, we obtain that the above equality is equivalent to 
\begin{align}
0 =  - \zeta + y^3  -  y + \underbrace{\cc \bb^{-1} \tau y^2 + \tau^{-1} \bb^{-1} g_0(\tau^3 y^3)}_{=: G(y,\tau)}\,.
\label{eq:Lite}
\end{align}
In view of  the aforementioned properties of $g_0$, we have that for all $|y|\leq 10$ and all $0< \tau \leq \bar \eps$, with $\bar \eps$ sufficiently small in terms of $\kappa, \bb, \cc, \mm$, we have that 
\begin{subequations}
\label{eq:G:litter:box}
\begin{align}
\abs{G(y,\tau) - \cc \bb^{-1} \tau y^2} &\leq C \tau^2  \label{eq:G:litter:box:a}\\
\abs{\p_\tau G(y,\tau) - \cc \bb^{-1}   y^2} &\leq C \tau \label{eq:G:litter:box:b}\\
\abs{\p_y G(y,\tau)- 2 \cc \bb^{-1} \tau y } &\leq C \tau^2 \label{eq:G:litter:box:c}
\end{align}
\end{subequations}
where $C>0$ only depends on $\kappa, \bb, \cc$, and $\mm$.

Returning to \eqref{eq:Lite}, we next claim that for every fixed $\zeta \in [-\frac{1}{10} ,\frac{1}{10}]$ and any $\tau$ sufficiently small, there exists a  unique {\em most negative} root $y_{-} = y_-(\zeta,\tau)$ and a unique {\em most positive} root $y_+= y_+(\zeta,\tau)$ of the implicit equation
\begin{align}
y^3 - y + G(y,\tau) = \zeta \,. 
\label{eq:Lite:litter}
\end{align}
The key observation is that in view of \eqref{eq:G:litter:box:a}, when when $\tau=0$,  the equation in the above display becomes  $\zeta = y^3 - y$. 
For every $\zeta \in (- \frac{2}{3\sqrt{3}} , \frac{2}{3\sqrt{3}} ) \supset [-\frac{1}{10} ,\frac{1}{10}]$ we introduce two functions $\Root_+(\zeta)$ and $\Root_-(\zeta)$ which are the largest (positive) root and respectively the smallest (negative) root of the equation
\begin{align}
\zeta = \Root^3 - \Root \,.   \label{Z-cubic-eqn}
\end{align}
The power series of these functions is given by 
\begin{align}
\Root_\pm(\zeta) = \pm 1 +\tfrac 1 2 \zeta \mp \tfrac{3}{8} \zeta^2 + \tfrac{1}{2} \zeta^3 \mp \tfrac{105 }{128} \zeta^4 + 	\tfrac{3}{2} \zeta^5 +  \OO(|\zeta|^6)
\label{eq:z:pm:power:series}
\end{align}
and is valid for $\abs{\zeta} \ll 1$. In particular, we have 
\begin{align}
\Root_+(\zeta) + \Root_-(\zeta) =  \zeta + \zeta^3  + 3 \zeta^5 + \OO(|\zeta|^7)
\,.
\label{eq:z:pm:power:series:2}
\end{align}
For later purposes, it is also convenient to note here that 
\begin{align}
\abs{\Root_+(\zeta) + \Root_-(\zeta) -  \zeta} \leq \tfrac{6 }{5} \zeta^3
\qquad \mbox{and} \qquad
\abs{\Root_+(\zeta) - \Root_-(\zeta) -  2} \leq    \zeta^2
\label{eq:Taylor:magic} 
\end{align}
for all $|\zeta| \leq \tfrac 15$. 
With this notation, we have thus obtained the desired roots of \eqref{eq:Lite:litter} when $\tau =0$, namely
\begin{align*}
\Root_\pm^3(\zeta) - \Root_\pm(\zeta) + G(\Root_{\pm}(\zeta),0) = \zeta \,. 
\end{align*}
The proof is then completed by  an application of the implicit function theorem. This is possible since 
\begin{align*}
\p_y\left(y^3 - y + G(y,\tau)\right)|_{(y,\tau)= (\Root_{\pm},0)} = 3 \Root_\pm^2 - 1 \neq 0\,.
\end{align*}
In fact, for every $|\zeta| \leq \frac{1}{10}$, one may verify that $\frac 32 \leq 3 \Root_\pm(\zeta)^2 - 1 \leq \frac 52$, since $\Root_{\pm}$ are explicit functions. The implicit function theorem guarantees the existence of an $\bar \eps>0$, such that if $\tau \in (0, (\bb \bar \eps)^{\frac 12}]$ and $|\zeta| \leq \frac{1}{10}$, the equation \eqref{eq:Lite:litter} has a most negative root $y_-(\zeta,\tau)$ which is $\OO(\tau)$-close to $\Root_{-}(\zeta)$, and a most positive root $y_+(\zeta,\tau)$, which is $\OO(\tau)$-close to $\Root_+(\zeta)$. Upon unpacking the definitions in \eqref{eq:tau:z:zeta:def}, we have thus identified 
\begin{align}
\xbpm^{\frac 13} (t) = (\bb t)^{\frac 12} y_{\pm} \left(\frac{\sc(t) - \kappa t}{(\bb t)^{\frac 32}}, (\bb t)^{\frac 12}\right)
\,,
\label{eq:x:litter:box}
\end{align}
for all $t\in (0,\bar \eps]$, which solves \eqref{eq:xpm:def}.

Note however that $|\zeta| \leq \bb^{-2} \mm^4 \tau$, and that $\tau\leq (\bb \bar \eps)^{\frac 12}$ is taken to be small. In this $\tau$-dependent range for $\zeta$ we may obtain a sharper estimate than the $\abs{y_{\pm}(\zeta,\tau) - \Root_{\pm}(\zeta)} \leq C \tau$ claimed above. Indeed, since the bounds \eqref{eq:G:litter:box:b}--\eqref{eq:G:litter:box:c} are available, from the Taylor theorem with remainder applied to \eqref{eq:Lite:litter},  we may deduce that 
\begin{align*}
\abs{y_\pm(\zeta,\tau) - \Root_{\pm}(\zeta) + \tau \cc \bb^{-1}  \frac{\Root_{\pm}(\zeta)}{3 \Root_{\pm}^2(\zeta) - 1 }} \leq C \tau^2
\end{align*}
if $\bar \eps$ is  sufficiently small, for a constant $C = C (\kappa,\bb,\cc,\mm)>0$. Taking into account the power series expansion of $\Root_{\pm}$ in \eqref{eq:z:pm:power:series}, and $\bar \eps$ to be sufficiently small (hence $\tau$ sufficiently small), we deduce that 
\begin{align}
\abs{y_\pm(\zeta,\tau) \mp 1 - \tfrac 12 \zeta \pm \tfrac{\cc}{2\bb} \tau} \leq C \tau^2\,,
\qquad \mbox{for all} \qquad 
|\zeta| \leq \bb^{-2} \mm^4
\quad \mbox{and} \quad \tau \leq (\bb \bar \eps)^{\frac 12} \,.
\label{eq:z:litter:box} 
\end{align}
In particular, keeping in mind \eqref{eq:tau:z:zeta:def} and \eqref{eq:x:litter:box}, we deduce from \eqref{eq:z:litter:box} the   estimate
\begin{align}
\abs{\xbpm(t)^{\frac 13}  \mp (\bb t)^{\frac 12}  - \frac{\sc(t) - \kappa t}{2 \bb t}  \pm \frac{\cc t }{2} }
\leq C t^{\frac 32} \,, 
\label{eq:sharp:range:for:solutions}
\end{align}
for all $t\in (0,\bar \eps]$, where $C = C(\kappa,\bb,\cc, \mm)>0$ is a computable constant.
The bound  \eqref{eq:range:for:solutions} is an immediate consequence of  \eqref{eq:sharp:range:for:solutions}, the working assumptions \eqref{eq:b:m:ass} and  \eqref{eq:sc:ass}, upon taking $\bar \eps$ to be sufficiently small.  

The bound \eqref{eq:px:eta:b} is a direct consequence of \eqref{eq:u0':interest}, \eqref{eq:range:for:solutions}, and the fact that by \eqref{eq:etab:def} we have $\p_x \etab(x,s) - 1 = s w_0'(x)$. Therefore, the map $\etab(\cdot,t)$ is a strictly increasing function on the label $x \in \TT$, thus being injective from $\Upsilon_{\mathsf B}(t) \mapsto \TT \setminus \{ \sc(t)\}$. Surjectivity follows from the intermediate value theorem, and fact that by \eqref{eq:xpm:def} we have $\lim_{x \to \xbl(t)^-} \etab(x,t) = \sc(t) = \lim_{x\to \xbr(t)^+} \etab(x,t)$. 
In order to show that for every $x \in \Upsilon_{\mathsf B}(t)$ the trajectory $\etab(x,\cdot)$ does not meet the shock curve prior to time $t$, by the monotonicity property of $\etab$ in the $x$ variable, we only need to show that $\etab(\xbl(t),s) < \sc(s)$ and that $\etab(\xbr(t),s) > \sc(s)$. These two statements are established in the same way, so we only give the proof for the label $\xbl(t)$. By appealing to \eqref{eq:xpm:def}, the $\dot{\sc}$ assumption in \eqref{eq:sc:ass}, the $w_0$ assumption in \eqref{eq:u0:ass:quant}, and the previously established estimate \eqref{eq:sharp:range:for:solutions},  we have that 
\begin{align*}
\sc(s) - \etab(\xbl(t),s) 
&=  - \int_s^t \left( \dot{\sc}(\tau) - (\p_t \etab)(\xbl(t),\tau) \right) d\tau \notag\\
&=  \int_s^t \left(w_0(\xbl(t)) - \kappa  \right) d\tau - \int_s^t \left(\dot{\sc}(\tau) - \kappa \right) d\tau 
\notag\\
&\geq \left( - \bb \xbl^{\frac 13}(t) -2 |\cc| \bb t \right) (t-s) - \tfrac{1}{2} \mm^4 (t^{2} - s^{2})   
\notag\\
&\geq \tfrac 45 \bb^{\frac 32} t^{\frac 12} (t-s)
\end{align*}
for any $s\in [0,t)$, with $t \leq \bar \eps$ which is sufficiently small.

The proof is concluded once we establish \eqref{eq:tropic:thunder}. The bound \eqref{eq:tropic:1} is an immediate consequence of \eqref{eq:px:eta:b} and the inverse function theorem. For the proof of \eqref{eq:tropic:2}, let us first consider a point $\theta$ which is to the left of $\sc(t)$. Then, by the mean value theorem and \eqref{eq:xpm:def}, we have that 
\begin{align*}
\etab^{-1}(\theta,t) - \xbl(t) =  \etab^{-1}(\theta,t) - \etab^{-1}(\sc(t),t)) = (\theta-\sc(t)) (\p_\theta \etab^{-1})(\bar \theta,t)
\end{align*}
for some $\bar \theta \in (y,\sc(t))$. The above identity, combined with \eqref{eq:tropic:1} and the first inequality in \eqref{eq:range:for:solutions} implies \eqref{eq:tropic:2}, upon taking $\bar \eps$ sufficiently small. The proof in the case that $y$ is to the right of $\sc(t)$ is identical.
\end{proof}

Next, we discuss the solution $\wb$ to \eqref{eq:basic:shit}  and its properties.
\begin{proof}[Proof of Proposition~\ref{prop:Burgers}]
By Lemma~\ref{lem:a=0:inversion}, for all $(\theta,t) \in \DD_{\bar \eps}$ we may define
\begin{align}
\wb(\theta,t) = w_0(\etab^{-1}(\theta,t)) \label{eq:wb:def}\,.
\end{align}
By the of construction $\etab$ and the properties of $w_0$, the above defined $\wb$ is $C^2$ smooth in $\DD_{\bar \eps}$ and solves \eqref{eq:basic:shit} in this region. Indeed, differentiating the relation $\wb(\etab(x,t),t) = w_0(x)$ and using the definition of $\etab$ we  have the identities
\begin{subequations}
\label{windy-day3}
\begin{align}
 \p_\theta  \wb(\theta,t) &= \frac{ w_0'(\etab^{-1}(\theta,t))}{1 + t w_0'(\etab^{-1}(\theta,t))}  
 \label{eq:px:wb:def}\\
 \p_{\theta}^2 \wb(\theta,t) &= \frac{w_0''(\etab^{-1}(\theta,t))}{(1 + t w_0'(\etab^{-1}(\theta,t)))^3}
  \label{eq:pxx:wb:def}
\end{align}
\end{subequations}
for all $\theta \in \TT \setminus \{\sc(t)\}$. In particular, combining \eqref{eq:px:wb:def} with \eqref{eq:tropic:2} and \eqref{eq:u0:ass:quant}, gives that 
\begin{subequations}
\label{thegoodstuff}
\begin{align}
 \abs{\p_\theta  \wb(\theta,t)} &\leq \tfrac 45 \bb  (  (\bb t)^3 +  \abs{\theta - \sc(t)}^2 )^{-\frac 13}  \label{thegoodstuff1}\\
 \abs{\p_{\theta}^2  \wb(y,t)} &\leq 2 \bb  (  (\bb t)^3 +  \abs{\theta - \sc(t)}^2 )^{-\frac 56}   \label{thegoodstuff2}
\end{align}
\end{subequations}
for all $(\theta,t) \in \DD_{\bar \eps}$ such that $\abs{\theta - \sc(t)} \leq \bar \eps^{\frac 12}$, as soon as $\bar \eps$ is sufficiently small.

Next, we we discuss the mean and the jump of $\wb$ at the shock curve. We have that 
\begin{align*}
\jump{\wb(t)} 
&= w_0(\xbl(t)) -  w_0(\xbr(t)) \notag\\
&= \left( \xbr^{\frac 13}(t) - \xbl^{\frac 13}(t) \right) \left( \bb - \cc \xbr^{\frac 13}(t) - \cc \xbl^{\frac 13}(t)\right) + g_0(\xbl(t)) - g_0(\xbr(t))
\end{align*}
where we recall the notation from \eqref{eq:g0:def}.  Using \eqref{eq:sharp:range:for:solutions},   \eqref{eq:u0:ass}, and \eqref{eq:b:m:ass},   we deduce that
\begin{align*}
\abs{\jump{\wb(t)} - 2  \bb^{\frac 32} t^{\frac 12}} \leq 8 \bb |\cc| t \leq t
\end{align*}
upon choosing $\bar \eps$ to be sufficiently small with respect to $\kappa, \bb, \cc$, and $\mm$.
This proves the first bound in \eqref{eq:basic:jump}.
Similarly, 
\begin{align*}
\mean{\wb(t)} &= \tfrac 12 \left( w_0(\xbl(t)) +  w_0(\xbr(t)) \right) \notag\\
&= \kappa - \tfrac 12 \bb \left( \xbl^{\frac 13}(t) + \xbr^{\frac 13}(t)\right) + \tfrac 12 \cc \left( \xbl^{\frac 23}(t) + \xbr^{\frac 23}(t)\right) + \tfrac 12 \left( g_0(\xbl(t)) - g_0(\xbr(t)) \right) \,.
\end{align*}
From \eqref{eq:sharp:range:for:solutions}, \eqref{eq:range:for:solutions}, and \eqref{eq:u0:ass} we deduce that 
\begin{align*}
\abs{\mean{\wb(t)} - \kappa + \frac{\sc(t)-\kappa t}{2t} } \leq C t^{\frac 32} \,.
\end{align*}
The second inequality in \eqref{eq:basic:jump} now follows from \eqref{eq:sc:ass}.

Appealing to the definitions \eqref{eq:xpm:def}, \eqref{eq:etab:def}, and \eqref{eq:wb:def}, we arrive at 
\begin{align*}
\tfrac{d}{dt} \left( \wb(\sc(t)^{\pm},t) \right) 
=\tfrac{d}{dt} \left(w_0(\xbpm(t)) \right) 
= w_0'(\xbpm(t)) \tfrac{d}{dt} \xbpm(t)
= w_0'(\xbpm(t)) \frac{\dot{\sc}(t) - w_0(\xbpm(t))}{1 + t w_0'(\xbpm(t))}
\,.
\end{align*}
Therefore, using \eqref{eq:u0:ass}, \eqref{eq:quotient:interest}, the asymptotic description \eqref{eq:sharp:range:for:solutions} for $\xbpm(t)$,  and the assumption on $\dot \sc$ from \eqref{eq:sc:ass}, after a tedious computation we obtain  
\begin{align*}
\abs{\tfrac{d}{dt} \left( \wb(\sc(t)^{\pm},t) \right) \pm \tfrac 12 \bb^{\frac 32} t^{-\frac 12}}
&\leq  \abs{\bb \xbpm(t)^{\frac 13}  \frac{w_0'(\xbpm(t))}{1 + t w_0'(\xbpm(t))}
  \mp \tfrac 12 \bb^{\frac 32} t^{-\frac 12}} \notag\\
&\qquad + \frac{t |w_0'(\xbpm(t))| }{1 + t w_0'(\xbpm(t))} \left( \mm^4 + \frac{\kappa - w_0(\xbpm(t))+ \bb \xbpm(t)^{\frac 13}}{t} \right) \notag\\
&\leq  \tfrac 56 {\mm^4} +  \bb  |\cc| + C t^{\frac 12}  \leq  {\mm^4} \,.
\end{align*}
From the above estimate, it is clear that \eqref{eq:dt:jump} follows. Differentiating once more, we obtain 
\begin{align*}
\tfrac{d^2}{dt^2} \left( \wb(\sc(t)^{\pm},t) \right)    
&=  \ddot{\sc}(t) \frac{w_0'(\xbpm(t)) }{1 + t w_0'(\xbpm(t))}  +  w_0''(\xbpm(t)) \frac{(\dot{\sc}(t) - w_0(\xbpm(t)))^2}{(1 + t w_0'(\xbpm(t)))^3} \notag\\
&\qquad - \frac{2 (w_0'(\xbpm(t)))^2(\dot{\sc}(t) - w_0(\xbpm(t)))}{(1 + t w_0'(\xbpm(t)))^2} 
\end{align*}
and therefore, after an even more tedious computation, we arrive at 
\begin{align*}
\abs{\tfrac{d^2}{dt^2} \left( \wb(\sc(t)^{\pm},t) \right) \mp \tfrac 14 \bb^{\frac 32} t^{-\frac 32}}
&\leq\left( \tfrac 13  {\mm^4}  + 3    \bb |\cc| \right) t^{-1}  +C t^{-\frac 12}
\leq   { \mm^4}   t^{-1} \,.
\end{align*}
The claim \eqref{eq:dt:dt:jump} now follows, thereby completing the proof of the proposition.
\end{proof}

\subsubsection{Lagrangian trajectories for velocity fields that are close to $\wb$}

For future purposes, see Section~\ref{sec:transport}, at this stage it is convenient to consider velocities $\lambda_3 \colon \DD_{\bar \eps} \to \RR$ which are close to the $\wb$ we have constructed in Proposition~\ref{prop:Burgers}, in the sense that $\lambda_3 \in C^1_{\theta,t} (\DD_{\bar \eps})$, and we have the pointwise bounds
\begin{subequations}
\label{eq:lambda:3:abstract}
\begin{align}
\abs{\lambda_3(\theta,t) - \wb(\theta,t)} &\leq \Rsf_1 t + C t^{\frac 32}
\label{eq:lambda:3:abstract:a}\\
\abs{\p_\theta \lambda_3(\theta,t) - \p_\theta \wb(\theta,t)} &\leq \Rsf_2 ( (\bb t)^3 + (\theta-\sc(t))^2)^{-\frac 16} + C t^{\frac 12}
\label{eq:lambda:3:abstract:b}
\end{align}
\end{subequations}
for all $(\theta,t) \in \DD_{\bar \eps}$, for positive constants $\Rsf_1, \Rsf_2, C$ which only depend on $\kappa, \bb,\cc$, and $\mm$; see \eqref{eq:R:def} for the values of $\Rsf_1, \Rsf_2$ which are used in the proof, namely $\Rsf_1 = \Rsf_2 = \mm^3$. 

Note that in view of \eqref{eq:wb:def} and \eqref{thegoodstuff1}, assumptions \eqref{eq:lambda:3:abstract} imply that $\lambda_3$ is $C^1$ smooth on the complement of the shock curve. In particular, this means that for every label $x \in \TT \setminus \{0\}$, we are guaranteed the short time ($x$-dependent time) unique solvability of the ODE
\begin{align}
 \p_t \eta(x,t) = \lambda_3(\eta(x,t),t) \,, \qquad  \eta(x,0)  = x \,.
 \label{eq:dt:eta:def}
\end{align}
In view of the assumed regularity of $\lambda_3$, for a given label $x$ the path $\eta(x,t)$ can be continued on a maximal time interval $[0,T_x)$, where the stopping time $T_x$ is defined as 
\begin{align}
 T_x := 
\min \bigl\{ \bar \eps , \sup \{ t \in [0,\bar \eps] \colon |\eta(x,t) - \sc(t)| > 0 \} \bigr\}
\label{eq:Tx:def}
\,.
\end{align}
That is, if the trajectory $\eta(x,\cdot)$ intersects the shock curve prior to time $\bar \eps$, then we record this stopping time in $T_x$, and in this case we have $\eta(x,T_x) = \sc(T_x)$.  Note that since $\sc \in C^1$, and since $\lambda_3$ is  $C^1$ smooth on the complement of the shock curve, the stopping time $T_x$ is continuous in $x$.

Next, for every $t \in (0,\bar \eps]$, in analogy to \eqref{eq:xpm:def}, we wish to define in a unique way two {\em extremal} labels $x_\pm(t)$ with the property that 
\begin{align}
\sc(t) = \eta(x_{\pm}(t), t)\,. 
\label{eq:x:pm:def}
\end{align}
By \eqref{eq:Tx:def} we have that the above definition is equivalent to $T_{x_{\pm}(t)} = t$, which then motivates
\begin{align}
x_-(t) = \inf \{ x \in [-\pi,0) \colon T_x \geq t \}\,,
\quad 
x_+(t) =  \sup \{ x \in (0,\pi]  \colon T_x \geq t \}\,,
\quad 
\Upsilon(t) = \TT \setminus [x_-(t),x_+(t)] \,.
\label{eq:x:pm:Upsilon}
\end{align}
By the continuity of $T_x$ in $x$, the above $\inf/\sup$ are in fact $\min/\max$.
Moreover, for every $x \in \Upsilon(t)$, we know that $T_x \geq t$. One of our goals will be to show that $\eta(\cdot,t) \colon \Upsilon(t) \to \TT \setminus \{\sc(t)\}$ is a bijection, for every $t \in (0,\bar \eps]$. 

As mentioned above, $T_x \in (0,\bar \eps]$ if $x\neq 0$. Now for fixed $x$ and   $t\in [0,T_x)$, by Lemma~\ref{lem:a=0:inversion} we may define 
\begin{align}
q(t) = \etab^{-1}(\eta(x,t),t) \,,
\end{align}
and note that $q(t) \in \Upsilon_\bb(t)$ and that $q(0) = x$. Since $\etab^{-1}$ solves the transport equation with speed $\wb$, and $\eta$ solves \eqref{eq:dt:eta:def}, we have that 
\begin{align*}
\tfrac{d}{dt} q = (\p_t \etab^{-1}) \circ \eta + (\p_\theta \etab^{-1}) \circ \eta \p_t \eta 
=(\lambda_3 - \wb) \circ \eta (\p_\theta \etab^{-1}) \circ \eta \,.
\end{align*}
Thus, by also appealing to \eqref{eq:lambda:3:abstract:a} and \eqref{eq:tropic:1}, we have that 
\begin{align}
\abs{\etab^{-1}(\eta(x,t),t) - x} = \abs{q(t) - q(0)} \leq  \Rsf_1 t^2
\label{eq:AC:DC:ride:on}
\end{align}
whenever $t< T_x$, upon taking $\bar \eps$ to be sufficiently small. By \eqref{eq:x:pm:Upsilon}, we note that \eqref{eq:AC:DC:ride:on} in particular holds for all $t\in (0,\bar \eps]$, and all $x\in \Upsilon(t)$. Note that from \eqref{eq:xpm:def}, \eqref{eq:x:pm:def}, \eqref{eq:AC:DC:ride:on}, and continuity, we have that 
\begin{align*}
\abs{x_\pm(t) - \xbpm(t)} = \abs{x_\pm(t) - \etab^{-1}(\eta(x_\pm(t),t))} \leq \Rsf_1 t^2 
\end{align*}
for all $t\in (0,\bar \eps]$, and thus similarly to \eqref{eq:range:for:solutions} we have that 
 \begin{align}
\sabs{x_\pm(t) \mp (\bb t)^{\frac 32}} \leq  t^2 (  {\mm^4} + \Rsf_1)
\qquad \Rightarrow \qquad 
 \tfrac{4}{5}   (\bb t)^{\frac 32} < \abs{x_{\pm}(t)} < \tfrac{6}{5} (\bb  t)^{\frac 32}\,.
\label{eq:range:for:solutions:new}
\end{align}
upon taking $\bar \eps$ to be sufficiently small.

 If $T_x < \bar \eps$, and $t \in [0,T_x)$, the bound \eqref{eq:AC:DC:ride:on} and the identity \eqref{eq:px:wb:def} allow us to estimate 
\begin{align*}
\int_0^t \abs{\p_\theta \wb(\eta(x,s),s)}  ds 
&= \int_0^t \frac{ \abs{w_0'(q(s))}}{1 + s w_0'(q(s))} ds \notag\\
&\leq \int_0^t \frac{ \abs{w_0'(x)}}{1 + s w_0'(x)} ds   +  \Rsf_1 \int_0^t s^{2} \sup_{|\bar x - x| \leq  \Rsf_1 s^{2}} \frac{\abs{w_0''(\bar x)}}{(1+ s w_0'(\bar x))^2} ds \notag\\
&\leq \frac{|w_0'(x)|}{w_0'(x)} \log  \left( 1+ t w_0'(x)  \right)   + \tfrac{8}{5}\bb^{-\frac 32} \Rsf_1 \int_0^t s^{-\frac 12} ds \,.
\end{align*}
At this stage we recall that the values of $x$ that we are interested in satisfy $|x| \geq (\bb t)^{\frac 32} - t^2 (  {\mm^4} + \Rsf_1) \geq \frac{9}{10} (\bb t)^{\frac 32}$. 
We distinguish two cases: $\frac{9}{10} (\bb t)^{\frac 32} \leq |x| \leq  \bb^{\frac 32} t$, and $\bb^{\frac 32} t \leq |x| \leq \pi$.  Using assumption \eqref{eq:u0:ass:3}, in the first case we deduce that 
$$
0 > t w_0'(x)>  - \tfrac 13 \bb t x^{-\frac 23} ( 1 + 3 |\cc| t^{\frac 13} ) >  - \tfrac 13  (\tfrac{9}{10})^{-\frac 23} ( 1 + 3 |\cc| t^{\frac 13} )   > - \tfrac{7}{19}  \,.
$$
In the other case, we we use that $t \leq \bar \eps \ll 1$, and thus 
$$
t |w_0'(x)| \leq \tfrac{1}{3} \bb t^{\frac 13} + \tfrac{2 }{3} |\cc| \bb^{-\frac 12} t^{\frac 23} + \mm t \leq  \bar \eps^{\frac 13}  \,.
$$
From the above three inequalities, and the fact that $\sgn(r) \log (1 + r) \leq \log(\frac{19}{12})$ for all $r \in (-\frac{7}{19}, \bar \eps^{\frac 13})$, we deduce that 
\begin{align}
\int_0^t \abs{\p_\theta \wb(\eta(x,s),s)}  ds 
\leq \log(\tfrac{19}{12}) + \tfrac{16}{5}\bb^{-\frac 32}\Rsf_1 t^{\frac 12}  
\leq \tfrac{19}{40}  \,,
\label{eq:AC:DC:ride:on:and:on}
\end{align}
since $t\leq  \bar \eps \ll 1$. As before, we note in particular that \eqref{eq:AC:DC:ride:on:and:on}  holds for all $t\in (0,\bar \eps]$, and all $x\in \Upsilon(t)$. We note that using \eqref{eq:pxx:wb:def}, \eqref{eq:AC:DC:ride:on}, and \eqref{eq:u0'':interest}, in addition to \eqref{eq:AC:DC:ride:on:and:on} we have 
\begin{align}
\int_0^t \abs{\p_{\theta}^2 \wb(\eta(x,s),s)}  ds 
& \leq \int_0^t \frac{|w_0''(\etab^{-1}(\eta(x,s),s))|}{( 1 + t w_0'(\etab^{-1}(\eta(x,s),s)))^3} ds
\leq 3 (\bb t)^{-\frac 32} 
\label{eq:AC:DC:ride:on:and:on:on}
\end{align}
whenever $x \in \Upsilon(t)$.  Here we have used that $|\etab^{-1}(\eta(x,s),s)| \geq |x| - \Rsf_1 s^2 \geq \tfrac 45 (\bb t)^{\frac 32}$ for $s\leq t \leq \bar \eps$.

With \eqref{eq:AC:DC:ride:on:and:on} in hand, and appealing also to \eqref{eq:lambda:3:abstract:b}, for every $x\in \Upsilon(t)$ we may now have
\begin{align}
\p_x \eta(x,t) 
&=  \exp\left( \int_0^t (\p_\theta \wb)(\eta(x,s),s) ds\right) \exp\left( \int_0^t (\p_\theta \lambda_3 - \p_\theta \wb)(\eta(x,s),s) ds\right)  \,,
\label{eq:useless:piece:of:crap}
\end{align}
and thus
\begin{align}
\tfrac 12 \leq \exp(-\tfrac 12 - 4 \Rsf_2  \bb^{-\frac 12} t^{\frac 12}) \leq  \p_x \eta(x,t) \leq \exp(\tfrac 12 + 4 \Rsf_2   \bb^{-\frac 12}  t^{\frac 12}) \leq \tfrac 74
\label{eq:px:eta}
\end{align}
since $\bar \eps$ is sufficiently small with respect to $\kappa, \bb, \cc$, and $\mm$. This shows that the map $\eta(\cdot,t)$ is  strictly monotone (thus injective) on either side of the shock curve; combined with \eqref{eq:x:pm:def} and the intermediate function theorem (ensuring surjectivity), we obtain that $\eta(\cdot,t) \colon \Upsilon(t) \to \TT \setminus \{\sc(t)\}$ is a bijection, as claimed earlier. Moreover, \eqref{eq:px:eta} shows that for every $x\in \Upsilon(t)$, the curve $\eta(x,s)$ does not intersect the shock curve prior to time $t$; in fact, by the monotonicity of $\eta$ we have that $\abs{\sc(s) - \eta(x,s)} \geq \abs{\sc(s) - \eta(x_\pm(t),s)}$, and analogously to \eqref{eq:no:trespassing}, using \eqref{eq:range:for:solutions:new} we have that 
\begin{align}
\sc(s) - \eta(x_-(t),s) 
&=  - \int_s^t   \dot{\sc}(\tau) - \lambda_3(\eta(x_-(t),\tau),\tau)  d\tau \notag\\
&=  \int_s^t \left(w_0(\etab^{-1}(\eta(x_-(t),\tau),\tau)) - \kappa  \right) d\tau \notag\\
&\qquad + \int_s^t \left(\kappa - \dot{\sc}(\tau)   \right) + (\lambda_3 - \wb) (\eta(x_-(t),\tau),\tau)  d\tau 
\notag\\
&\geq (w_0(x_-(t)) - \kappa) (t-s) -\tfrac{1}{2} (  \mm^3 + 4 \Rsf_1) (t^{2} - s^{2})   
\notag\\
&\geq \tfrac 45 \bb^{\frac 32} t^{\frac 12} (t-s)
\label{eq:finally:useful}
\end{align}
for all $s\in [0,t)$, and all $x\in \Upsilon(t)$. This bound shows that $\Upsilon(s)  \supset \Upsilon(t)$ for $s<t$.

Recalling the $\etab(x,t)$ is defined by \eqref{eq:etab:def} for all $x\in \TT$, and in particular for $x\in \Upsilon(t)$, from \eqref{eq:range:for:solutions:new} and \eqref{eq:px:eta} we immediately deduce that
\begin{subequations} 
\begin{alignat}{2}
&\sabs{\eta(x,t) - \etab(x,t)} \leq \tfrac 32 \Rsf_1 t^2, \qquad &&\mbox{for all} \qquad  x \in \Upsilon(t)   
\,, \label{cb1:new}\\
&\sabs{\p_x \eta(x,t) - \p_x \etab(x,t)} \leq (16 \Rsf_1 \bb^{-\frac 32} + 8 \Rsf_2 \bb^{-\frac 12}) t^{\frac 12}, \qquad &&\mbox{for all} \qquad x \in \Upsilon(t) 
\,,\label{cb2:new} 
\end{alignat}
\end{subequations}
for all $t\in (0,\bar \eps]$. 
The bound \eqref{cb1:new} follows from \eqref{eq:AC:DC:ride:on}, the mean value theorem, and the fact that by \eqref{eq:u0':interest} we have that $|\p_x \etab(x,t) - 1| \leq \frac{9}{20}$ for all $x\in \Upsilon(t)$  (in analogy to \eqref{eq:px:eta:b}). In order to prove the bound \eqref{cb2:new}, we use
\begin{align*}
\p_t (\p_x \eta - \p_x \etab) 
&= (\p_\theta \wb) \circ \eta\; ( \p_x \eta - \p_x \etab)
+ (\p_\theta \lambda_3 - \p_\theta \wb)\circ \eta \; \p_x \eta \notag\\
&\qquad + \left( (\p_\theta \wb) \circ \eta - (\p_\theta \wb) \circ \etab \right) \p_x \etab \,,
\end{align*}
and the fact that $\p_x \eta(x,0) - \p_x\etab(x,0) = 0$. First, we note that due to \eqref{eq:u0'':interest}, \eqref{eq:px:eta:b},  \eqref{eq:px:wb:def}, \eqref{eq:AC:DC:ride:on}, and the mean value theorem, we have that 
\begin{align}
\sabs{\left( (\p_\theta \wb) \circ \eta - (\p_\theta \wb) \circ \etab \right) \p_x \etab}
&\leq 2 \left| \frac{ w_0'(\etab^{-1}(\eta(x,t),t))}{1 + t w_0'(\etab^{-1}(\eta(x,t),t))}  -  \frac{ w_0'(x)}{1 + t w_0'(x)}   \right| \notag\\
&\leq 4  \Rsf_1 \bb^{-\frac 32} t^{-\frac 12} \,. \label{thechicken}
\end{align}
Second, by the assumption \eqref{eq:lambda:3:abstract:b} and the bound \eqref{eq:px:eta} we know that 
\begin{align}
 \sabs{(\p_\theta \lambda_3 - \p_\theta \wb)\circ \eta \; \p_x \eta} \leq 2 \Rsf_2 (\bb t)^{-\frac 12} \,.
\end{align}
Combining the above two estimates with the evolution equation for $\p_x \eta - \p_x \etab$ and \eqref{eq:AC:DC:ride:on:and:on}, we obtain \eqref{cb2:new}.
 
The results in this section may be summarized as follows: 
\begin{lemma}
\label{lem:AC:DC}
Let  $\eta$ be defined by \eqref{eq:dt:eta:def}, with $\lambda_3$ satisfying \eqref{eq:lambda:3:abstract}. Then, by possibly further reducing the value of $\bar \eps$, solely in terms of $\kappa, \bb, \cc, \mm$, the following hold. With the definition of $\Upsilon(t)$ in \eqref{eq:x:pm:Upsilon}, we have that $\eta(\cdot,t) \colon \Upsilon(t) \to \TT\setminus \{\sc(t)\}$ is a bijection. For $x\in \Upsilon(t)$, the curve $\{\eta(x,s)\}_{s\in[0,t]}$ does not intersect the shock curve, and by \eqref{eq:px:eta}, \eqref{cb1:new}, \eqref{cb2:new}, we have the estimates
\begin{subequations}
\begin{align}
\tfrac 12 &\leq \p_x \eta(x,t) \leq \tfrac 74 \label{eq:cb:0} \\
\tfrac 13 \kappa &\leq \p_t \eta(x,t) \leq \tfrac 32 \mm \label{eq:cb:00} \\
\sabs{\eta(x,t) - \etab(x,t)}  &\leq \tfrac 32 \Rsf_1 t^2 \label{eq:cb:1}\\
\sabs{\p_x \eta(x,t) - \p_x \etab(x,t)}  &\leq (16 \Rsf_1 \bb^{-\frac 32} + 8 \Rsf_2 \bb^{-\frac 12}) t^{\frac 12} \label{eq:cb:2}
\end{align} 
\end{subequations}
The inverse map $\eta^{-1} \colon \DD_{\bar \eps} \to \TT \setminus \{0\}$ is continuous in space-and-time, with bounds
\begin{subequations}
\label{eq:steves:desires}
\begin{align}
\tfrac 47 &\leq \p_\theta \eta^{-1}(\theta,t)  \leq 2 \label{eq:steves:desires:a}\\
- 3 \mm &\leq \p_t \eta^{-1}(\theta,t)  \leq -\tfrac 14 \kappa \label{eq:steves:desires:b}
\end{align}
for all $(\theta,t) \in \DD_{\bar \eps}$. 
\end{subequations}
Lastly, from \eqref{eq:AC:DC:ride:on:and:on} and \eqref{eq:AC:DC:ride:on:and:on:on} we have that 
\begin{subequations}
\begin{align}
\int_0^t \abs{\p_\theta \wb(\eta(x,s),s)}  ds &\leq \tfrac{19}{40}
\label{eq:AC:DC:con}\\
\int_0^t \abs{\p_{\theta}^2 \wb(\eta(x,s),s)}  ds &\leq 3 (\bb t)^{-\frac 32}
\label{eq:AC:DC:defcon}
\end{align}
\end{subequations}
for all $x\in \Upsilon(t)$, and all $t\in   [0,\bar \eps]$.
\end{lemma}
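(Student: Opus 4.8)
The plan is to collect the estimates already derived in this subsection (equations \eqref{eq:px:eta}--\eqref{eq:AC:DC:ride:on:and:on:on} and \eqref{cb1:new}--\eqref{cb2:new}) and simply re-index them as the statements \eqref{eq:cb:0}--\eqref{eq:AC:DC:defcon}; the bulk of the work has already been done, and the lemma is essentially a clean summary. First I would note that \eqref{eq:cb:0} is precisely \eqref{eq:px:eta}, \eqref{eq:cb:1} is \eqref{cb1:new}, \eqref{eq:cb:2} is \eqref{cb2:new}, \eqref{eq:AC:DC:con} is \eqref{eq:AC:DC:ride:on:and:on}, and \eqref{eq:AC:DC:defcon} is \eqref{eq:AC:DC:ride:on:and:on:on}, with $\Rsf_1 = \Rsf_2 = \mm^3$ as recorded in \eqref{eq:R:def}; the bijectivity claim and the non-intersection claim are both established in the paragraph following \eqref{eq:px:eta}.

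Second, the genuinely new items are the two-sided bound \eqref{eq:cb:00} for $\p_t \eta$ and the derived bounds \eqref{eq:steves:desires} for the inverse map. For \eqref{eq:cb:00} I would simply use $\p_t \eta(x,t) = \lambda_3(\eta(x,t),t)$ together with \eqref{eq:lambda:3:abstract:a}, \eqref{eq:wb:def}, and the bounds \eqref{eq:u0:ass:2}, \eqref{eq:u0:ass:2a} on $w_0$: this gives $\lambda_3 = w_0(\etab^{-1}(\eta(x,t),t)) + \OO(t)$, hence $\lambda_3 \in [\tfrac 12 \kappa - Ct, \mm + Ct]$, and reducing $\bar\eps$ yields $[\tfrac 13 \kappa, \tfrac 32 \mm]$. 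For \eqref{eq:steves:desires:a} I would invert \eqref{eq:cb:0} via the inverse function theorem ($\p_\theta \eta^{-1} = 1/\p_x\eta$, so it lies in $[\tfrac 47, 2]$). For \eqref{eq:steves:desires:b} I would use that $\eta^{-1}$ solves the transport equation $(\p_t + \lambda_3 \p_\theta)\eta^{-1} = 0$ on $\DD_{\bar\eps}$, so $\p_t \eta^{-1} = -\lambda_3 \p_\theta \eta^{-1}$, and combine \eqref{eq:cb:00}--\eqref{eq:steves:desires:a}: $-\lambda_3 \p_\theta\eta^{-1} \in [-\tfrac 32 \mm \cdot 2, -\tfrac 13 \kappa \cdot \tfrac 47]$, and again shrinking $\bar\eps$ if needed one obtains $[-3\mm, -\tfrac 14 \kappa]$. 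Continuity in space and time of $\eta^{-1}$ follows from the $C^1$ regularity of $\eta$ away from the shock curve (guaranteed by the regularity of $\lambda_3$) together with the bijectivity on $\Upsilon(t)$ and the continuity of $x_\pm(t)$, which was noted after \eqref{eq:Tx:def}.

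There is essentially no serious obstacle here, since the lemma is a repackaging; the only mild subtlety is bookkeeping the constants so that the stated numerical bounds ($\tfrac 12, \tfrac 74, \tfrac 13 \kappa, \tfrac 32 \mm, \tfrac 47, 2, -3\mm, -\tfrac 14 \kappa$, etc.) actually hold after the repeated ``reduce $\bar\eps$'' steps — but each such reduction only depends on $\kappa,\bb,\cc,\mm$, so finitely many reductions suffice and the dependence structure is preserved. Thus the proof amounts to: (i) cite \eqref{eq:px:eta} and the surrounding paragraph for bijectivity, non-intersection, \eqref{eq:cb:0}; (ii) derive \eqref{eq:cb:00} from \eqref{eq:dt:eta:def} and \eqref{eq:lambda:3:abstract:a}; (iii) cite \eqref{cb1:new}, \eqref{cb2:new} for \eqref{eq:cb:1}, \eqref{eq:cb:2}; (iv) invert to get \eqref{eq:steves:desires:a}, then use the transport equation for \eqref{eq:steves:desires:b}; (v) cite \eqref{eq:AC:DC:ride:on:and:on}, \eqref{eq:AC:DC:ride:on:and:on:on} for \eqref{eq:AC:DC:con}, \eqref{eq:AC:DC:defcon}.
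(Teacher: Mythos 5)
Your proposal is, in structure and intent, the same as the paper's: reindex the already-proved estimates, derive \eqref{eq:cb:00} from the ODE and \eqref{eq:lambda:3:abstract:a}, obtain \eqref{eq:steves:desires:a} by the inverse function theorem, and obtain \eqref{eq:steves:desires:b} from the transport equation for $\eta^{-1}$. There is, however, a concrete arithmetic gap in your final step.

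For \eqref{eq:steves:desires:b} you propose to combine \eqref{eq:cb:00} and \eqref{eq:steves:desires:a} directly, giving
$-\lambda_3 \, \p_\theta \eta^{-1} \in \bigl[-\tfrac 32 \mm \cdot 2,\; -\tfrac 13 \kappa \cdot \tfrac 47\bigr] = \bigl[-3\mm, -\tfrac{4}{21}\kappa\bigr]$.
But $\tfrac{4}{21} < \tfrac 14$, so $-\tfrac{4}{21}\kappa > -\tfrac 14 \kappa$; the interval you obtain strictly contains $[-3\mm, -\tfrac 14 \kappa]$, and membership in the larger interval does not imply membership in the smaller one. Crucially, ``shrinking $\bar\eps$'' cannot repair this: the bounds \eqref{eq:cb:00} and \eqref{eq:steves:desires:a} are already stated with the $\OO(t)$ slack absorbed, so reducing $\bar\eps$ further does not sharpen the numbers $\tfrac 13 \kappa$ and $\tfrac 47$. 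The fix --- which is what the paper does --- is to bypass the lossy intermediate bound \eqref{eq:cb:00} for the \emph{lower} bound on $\lambda_3$, and instead use directly $\lambda_3(\theta,t) = w_0(\etab^{-1}(\theta,t)) + \OO(t) \geq \tfrac{\kappa}{2} - Ct$, so that
$\lambda_3 \, \p_\theta\eta^{-1} \geq \bigl(\tfrac{\kappa}{2} - Ct\bigr) \cdot \tfrac 47 = \tfrac{2\kappa}{7} - C't \geq \tfrac{\kappa}{4}$
once $\bar\eps$ is small, since $\tfrac 27 > \tfrac 14$. The upper endpoint $-3\mm$ is not affected by this issue (your bound $\lambda_3 \p_\theta\eta^{-1} \leq \tfrac 32 \mm \cdot 2 = 3\mm$ suffices). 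The rest of your proposal, including the chain for \eqref{eq:cb:00} and the inversion argument for \eqref{eq:steves:desires:a}, matches the paper's proof exactly.
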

\begin{proof}[Proof of Lemma~\ref{lem:AC:DC}]
The only estimates which were not established in the discussion above the lemma are \eqref{eq:cb:00} and \eqref{eq:steves:desires}. In order to prove \eqref{eq:cb:00}, we appeal to \eqref{eq:dt:eta:def}, \eqref{eq:lambda:3:abstract:a}, \eqref{eq:wb:def}, \eqref{eq:u0:ass:2}, \eqref{eq:u0:ass:2}, and take $\bar \eps$ to be sufficiently small:
\begin{align*}
\p_t \eta(x,t) = \lambda_3 (\eta(x,t)) =  \wb(\eta(x,t)) + \OO(t) = \underbrace{w_0(\etab^{-1}(\eta(x,t),t))} _{\in [\frac{\kappa}{2},\mm]}+ \OO(t) \in [\tfrac{\kappa}{3} , \tfrac{3\mm}{2}]
\,.
\end{align*}
The bound \eqref{eq:steves:desires:a} follows from \eqref{eq:cb:0} and the inverse function theorem. 
Lastly, in order to prove \eqref{eq:steves:desires:b}, we use that $\eta^{-1}$ solves the transport equation dual to the ODE \eqref{eq:dt:eta:def}, namely $\p_t \eta^{-1} + \lambda_3 \p_{y} \eta^{-1} = 0$. As such, from \eqref{eq:steves:desires:a}, \eqref{eq:lambda:3:abstract:a}, \eqref{eq:wb:def}, \eqref{eq:u0:ass:2}, \eqref{eq:u0:ass:2}, we obtain that 
\begin{align*}
\p_t \eta(\theta,t) = - \wb(\theta,t) \p_\theta \eta^{-1} + \OO(t)  = - \underbrace{w_0(\etab^{-1}(\theta,t))}_{\in [\frac{\kappa}{2},\mm]} \underbrace{\p_\theta \eta^{-1}}_{\in [\frac 47, 2]} + \OO(t)  \in [-3\mm, - \tfrac{\kappa}{4}]
\end{align*}
upon taking $\bar \eps$ to be sufficiently small.
\end{proof}

\subsubsection{Estimates for derivatives of $\wb$ along flows transversal to the shock}
In analogy to Lemma~\ref{lem:AC:DC}, we also have an estimate for the time integral of $\p_\theta \wb$ along any flow which is transversal to $\sc$. More precisely, we have:
\begin{lemma}
\label{lem:Steve:needs:this}
Fix $t\in (0,\bar\eps]$ and $ 0 \leq \theta <\sc(t) $. For some $t' \in [0,t)$, assume that we are given a differentiable curve $\gamma \colon [t',t] \to \DD_{\bar \eps}$ which does not intersect the shock curve $\sc$, such that $\gamma(t) = \theta$, and such that $\dot \gamma(s) \leq \mu \kappa$ for all $s\in [t',t]$, for some $\mu \in [0,1)$. Then, we have that 
\begin{subequations}
\label{eq:Steve:needs:this}
\begin{align}
 \int_{t'}^t \abs{\p_{\theta} \wb(\gamma(s),s)} ds &\leq \tfrac{13 \bb}{(1-\mu) \kappa^{{\frac{2}{3}} }} t^{\frac 13} 
 \label{eq:Steve:needs:this:1} \\
  \int_{t'}^t \abs{\p_{\theta}^2 \wb(\gamma(s),s)} ds 
 &\leq \tfrac{9 \bb}{(1-\mu)\kappa} \left( \tfrac 12 |\gamma(t') - \sc(t')| + \tfrac 45 (\bb t')^{\frac 32} \right)^{-\frac 23}  \label{eq:Steve:needs:this:2good} \\
&\leq \tfrac{11}{(1-\mu) \kappa} t'^{-1}
 \label{eq:Steve:needs:this:2}
\end{align}
\end{subequations}
where $C = C (\kappa,\bb, \mu) > 0$ is an explicitly computable constant. 
\end{lemma}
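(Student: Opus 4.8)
\textbf{Proof strategy for Lemma~\ref{lem:Steve:needs:this}.}
The plan is to reduce the bounds on $\int_{t'}^t |\p_\theta \wb(\gamma(s),s)| \, ds$ and $\int_{t'}^t |\p_\theta^2 \wb(\gamma(s),s)| \, ds$ to the explicit formulas \eqref{eq:px:wb:def}--\eqref{eq:pxx:wb:def} for the Burgers derivatives, evaluated at the Lagrangian label $x(s) := \etab^{-1}(\gamma(s),s)$, and then to control the motion of $x(s)$ away from the blowup label $0$ using the transversality hypothesis $\dot\gamma(s) \le \mu\kappa$. First I would set $x(s) = \etab^{-1}(\gamma(s),s)$, so that by \eqref{windy-day3} we have $\p_\theta \wb(\gamma(s),s) = w_0'(x(s)) / (1 + s w_0'(x(s)))$ and $\p_\theta^2 \wb(\gamma(s),s) = w_0''(x(s)) / (1 + s w_0'(x(s)))^3$. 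The key geometric input is a \emph{lower bound on $|x(s)|$} in terms of $(\bb t')^{\frac 32}$ and $|\gamma(t') - \sc(t')|$: since $\etab^{-1}$ solves the transport equation with speed $\wb$, the label $x(s)$ satisfies $\dot x(s) = (\dot\gamma(s) - \wb(\gamma(s),s))\, (\p_\theta \etab^{-1})(\gamma(s),s)$, and since $\wb(\gamma(s),s) = w_0(x(s)) \ge \tfrac12 \kappa$ by \eqref{eq:u0:ass:2a} while $\dot\gamma(s) \le \mu\kappa$ and $\p_\theta \etab^{-1} \ge \tfrac 57$ by \eqref{eq:tropic:1}, the label $x(s)$ is \emph{strictly monotone}, moving away from $0$ at a rate comparable to $(1-\mu)\kappa$; this is exactly the transversality gain, and it shows $|x(s)| \gtrsim (1-\mu)\kappa(t-s) + |x(t)|$ and in particular $|x(s)| \gtrsim (1-\mu)\kappa(t'-s)$-type lower bounds on $[t',t]$, while also relating $|x(t')|$ to $\tfrac 45 (\bb t')^{\frac 32} + \tfrac 12 |\gamma(t') - \sc(t')|$ via \eqref{eq:tropic:2}.

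With this lower bound in hand I would insert the pre-shock asymptotics \eqref{eq:u0:ass:3}--\eqref{eq:u0:ass:4}, which give $|w_0'(x)| \lesssim \bb |x|^{-\frac 23}$ and $|w_0''(x)| \lesssim \bb |x|^{-\frac 53}$, together with the Jacobian lower bound $1 + s w_0'(x(s)) \ge \tfrac 35$ coming from \eqref{eq:quotient:interest} (valid in the admissible range of $x$). Thus $|\p_\theta \wb(\gamma(s),s)| \lesssim \bb |x(s)|^{-\frac 23}$, and changing variables from $s$ to $x$ — legitimate because $x(\cdot)$ is a strict diffeomorphism with $|\dot x(s)| \gtrsim (1-\mu)\kappa$ — converts $\int_{t'}^t \bb |x(s)|^{-\frac 23} ds$ into $\tfrac{1}{(1-\mu)\kappa}\int \bb |x|^{-\frac 23} dx \lesssim \tfrac{\bb}{(1-\mu)\kappa} |x(t)|^{\frac 13}$, which is $\lesssim \tfrac{\bb}{(1-\mu)\kappa^{\frac 23}} t^{\frac 13}$ since $|x(t)| = |x(s)|_{s=t} \le |\etab^{-1}(\theta,t)| \lesssim (\bb t)^{\frac 32} + |\theta-\sc(t)| \lesssim \kappa t$ by \eqref{eq:tropic:2} and the working assumptions \eqref{eq:b:m:ass}. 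This yields \eqref{eq:Steve:needs:this:1} with the explicit constant $13$ after tracking the numerology. For the second-derivative bound I would argue identically: $\int_{t'}^t |\p_\theta^2 \wb(\gamma(s),s)| ds \lesssim \tfrac{1}{(1-\mu)\kappa}\int_{|x(t)|}^{|x(t')|} \bb |x|^{-\frac 53} dx \lesssim \tfrac{\bb}{(1-\mu)\kappa} |x(t')|^{-\frac 23}$, and then \eqref{eq:tropic:2} gives $|x(t')| \ge \tfrac 45 (\bb t')^{\frac 32} + \tfrac 12 |\gamma(t') - \sc(t')|$, producing \eqref{eq:Steve:needs:this:2good}; the coarser bound \eqref{eq:Steve:needs:this:2} follows by simply dropping the $|\gamma(t')-\sc(t')|$ term, using $(\bb t')^{\frac 32}$ and $\bb \ge \tfrac 12$ to get $\lesssim \tfrac{1}{(1-\mu)\kappa} t'^{-1}$.

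The main obstacle I anticipate is the bookkeeping needed to make the monotonicity of $x(s)$ rigorous and quantitative on the \emph{full} interval $[t',t]$ — in particular ensuring that $x(s)$ stays in the range where the pre-shock asymptotics \eqref{eq:u0:ass:quant} and the Jacobian bounds of Lemma~\ref{lem:u0:u0'} are valid (i.e. $\tfrac 45 (\bb s)^{\frac 32} \le |x(s)| \le \pi$), and that it never crosses $0$. This requires combining the a priori lower bound $|x(s)| \ge \tfrac 45 (\bb s)^{\frac 32}$ (from the non-trespassing property, since $\gamma$ does not meet the shock, together with Lemma~\ref{lem:a=0:inversion}) with the transversal drift estimate; one must be careful that the sign of $\dot x(s)$ is controlled throughout — if $\gamma$ starts to the left of $\sc$ then $x(s) < 0$ and $\dot x(s) < 0$, so $|x(s)|$ is increasing as $s$ decreases, which is the favorable direction. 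A secondary technical point is justifying the change of variables $s \mapsto x(s)$ and the resulting improper integrals $\int |x|^{-2/3}dx$, $\int|x|^{-5/3}dx$ near (but bounded away from) $x=0$; these are finite precisely because the transversality keeps $|x(s)|$ bounded below by a positive multiple of $(1-\mu)\kappa(t-s)$, so the endpoint contribution at $s=t$ is harmless. Once these range and monotonicity facts are nailed down, the remaining estimates are routine elementary integrations, and I would carry out the constant-tracking only to the precision needed to reach $13$, $9$, and $11$ in \eqref{eq:Steve:needs:this}.
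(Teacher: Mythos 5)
Your proposal follows essentially the same route as the paper: define $x(s) = \etab^{-1}(\gamma(s),s)$, derive $\dot x(s) = (\dot\gamma(s) - w_0(x(s)))/(1 + s\,w_0'(x(s)))$, use the transversality hypothesis $\dot\gamma \le \mu\kappa$ together with $w_0 \approx \kappa$ and the Jacobian bound to pin $\dot x(s)$ to a range $[-(3-\mu)\kappa, -\tfrac{(1-\mu)\kappa}{3}]$, and then change variables $s \mapsto x$ so that $\int \bb|x(s)|^{-2/3}\,ds$ and $\int \bb|x(s)|^{-5/3}\,ds$ become elementary integrals whose endpoint values are controlled by \eqref{eq:tropic:2}. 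The bookkeeping concerns you flag (sign and monotonicity of $x(s)$, keeping $|x(s)|$ in the admissible range $\ge \tfrac 45(\bb s)^{3/2}$) are precisely what the paper's proof addresses, and your resolution matches theirs.
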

\begin{proof}[Proof of Lemma~\ref{lem:Steve:needs:this}]
As in the proof of Lemma~\ref{lem:AC:DC}, the goal is to understand the evolution of $x(s) := \etab^{-1}(\gamma(s),s)$. First, we note that since $\gamma$ lies on the left side of $\sc$, the point $x(s)$ is well-defined, and satisfies $x(s) \leq - \frac 45 (\bb s)^{\frac 32}$. Next, from the definition of $\etab$ and its inverse, we have that
\begin{align}
\dot{x}(s) 
= (\p_t \etab^{-1})(\gamma(s),s) + \dot\gamma (s) (\p_\theta \etab^{-1})(\gamma(s),s) 
&= \frac{\dot\gamma(s) - (\p_t \etab)(\etab^{-1}(\gamma(s),s),s)}{(\p_x \etab)(\etab^{-1}(\gamma(s),s),s)} 
\notag\\
&= \frac{\dot\gamma(s) - w_0(x(s))}{1 + s w_0'(x(s))} 
\,.
\label{eq:Miller:Lite:is:not:bad:1}
\end{align}
Due to the aforementioned lower bound on $|x(s)|$ and the estimate \eqref{eq:u0':interest}, the denominator of the fraction on the right side of \eqref{eq:Miller:Lite:is:not:bad:1} lies in the interval $[\frac 12,\frac 32 ]$. Furthermore, since $t\leq \bar \eps$ and $\bar \eps$ is sufficiently small, we have that $|x(t)| = |\etab^{-1}(y,t)|$ is sufficiently small to ensure via \eqref{eq:u0:ass} that $\abs{w_0(x(t)) - \kappa} \leq 2 \bb |x(t)|^{\frac 13} \leq \frac{1-\mu}{4} \kappa$.  Also, from \eqref{eq:Miller:Lite:is:not:bad:1} we may deduce that $\abs{\dot x(s)} \leq 4 \mm$ which implies $|x(s)| \leq |x(t)| + 4 \mm t$; therefore, since $t\leq \bar \eps$ is sufficiently small, we may show that $\abs{w_0(x(s)) - \kappa} \leq \frac{1-\mu}{2} \kappa$ for all $s\in [t',t]$. We then immediately obtain from \eqref{eq:Miller:Lite:is:not:bad:1} that 
\begin{align}
- (3-\mu)\kappa \leq \frac{- w_0(x(s))}{1 + s w_0'(x(s))} \leq \dot{x}(s) \leq \frac{\mu \kappa - w_0(x(s))}{1 + s w_0'(x(s))} \leq - \frac{(1-\mu)\kappa}{3}
\label{eq:Miller:Lite:is:not:bad:2}.
\end{align}
Then, using \eqref{eq:px:wb:def} and the fact that $x(s)$ is strictly negative, we obtain that 
\begin{align}
\int_{t'}^t \abs{\p_\theta \wb(\gamma(s),s)}  ds 
&= \int_{t'}^t \frac{ \abs{w_0'(x(s))}}{1 + s w_0'(x(s))} ds 
\leq \bb \int_{t'}^t (x(s))^{-\frac 23} ds \notag\\
&\leq - \tfrac{3 \bb}{(1-\mu)\kappa} \int_{t'}^t \dot{x}(s) (x(s))^{-\frac 23} ds \notag\\
&= \tfrac{9 \bb}{(1-\mu)\kappa} \left(x(t')^{\frac 13}  - x(t)^{\frac 13}\right) \notag\\
&\leq \tfrac{9 \bb}{(1-\mu)\kappa} |x(t)|^{\frac 13} = \tfrac{9 \bb}{(1-\theta)\kappa} |\etab^{-1}(\theta,t)|^{\frac 13}
\leq \tfrac{9 \bb}{(1-\mu)\kappa} (3 \kappa t)^{\frac 13}
\end{align}
In the last inequality we have used that since $0 \leq \theta < \sc(t)$ we have that $|\etab^{-1}(\theta,t)| \leq |\etab^{-1}(0,t)| \leq 3 \kappa t$
for all $t\leq \bar \eps$, which is sufficiently small. 

The proof of \eqref{eq:Steve:needs:this:2} is nearly identical, but instead of \eqref{eq:px:wb:def} we appeal to \eqref{eq:pxx:wb:def}, arriving at 
\begin{align}
\int_{t'}^t \abs{\p_{\theta}^2 \wb(\gamma(s),s)}  ds 
\leq \tfrac{9 \bb}{(1-\mu) \kappa} \left(x(t')^{-\frac 23}  - x(t)^{-\frac 23}\right)
\leq \tfrac{9 \bb}{(1-\mu) \kappa}  \left(x(t')\right)^{-\frac 23}   
\end{align}
In order to obtain  \eqref{eq:Steve:needs:this:2good}--\eqref{eq:Steve:needs:this:2}, we use the above bound and \eqref{eq:tropic:2}, which implies that  $|x(t')| = |\etab^{-1}(\gamma(t'),t')| \geq \frac 12 |\gamma(t') - \sc(t')| + \frac 45 (\bb t')^{\frac 32} \geq \frac 45 (\bb t')^{\frac 32}$. 
\end{proof}

\subsection{$z$ and $k$ on the shock curve}
\label{sec:z:k:shock}

For every $t\in (0,\bar \eps]$, let us {\em assume} that we are given a left speed $\vl = \vl (t)=w(\sc(t)^-,t)$ and a right speed $\vr = \vr(t) =w(\sc(t)^+,t)$ at the point $(\sc(t),t)$. Furthermore, let us assume that $\vl$ and $\vr$ behave similarly to the solution of the Burgers equation computed in Proposition~\ref{prop:Burgers}; by this we mean that the jump  and the mean at $(\sc(t),t)$, defined by
\begin{align}
\jump{w} = \jump{w}(t) = \vl(t)- \vr(t) \,, \qquad \mean{w} = \mean{w}(t) = \tfrac 12 \left(\vl (t) + \vr(t) \right)\,,
\label{eq:J:M:def}
\end{align}
satisfy the bounds
\begin{align}
 \sabs{\jump{w}(t) - 2 \bb^{\frac 32} t^{\frac 12}} \leq \Rsf_j t
\qquad \mbox{and} \qquad 
\abs{\mean{w}(t)-\kappa} \leq   \Rsf_m t
\,,
\label{eq:J:M:rough} 
\end{align}
for all $t \in (0,\bar \eps]$, for two constants $\Rsf_j,\Rsf_m >0$ which only depend on $\kappa, \bb, \cc$, and $\mm$. These bounds are consistent with \eqref{eq:basic:jump} and \eqref{1krod} (to be established below).

The variables $\vl$ and $\vr$ are the same as those in equations \eqref{pjump77}--\eqref{pjump7}. Our goal in this subsection is to solve the coupled system of equations \eqref{pjump77}--\eqref{pjump7}, for the jumps of $z$ and $k$ at the fixed point $(\sc(t),t)$, as a function the left speed $\vl$ and right speed $\vr$, at this point. Since $z$ and $k$ are equal to $0$ on the right side of the shock curve, we note that the jumps of $z$ and $k$ are equal to their values on the {\em left} of $(\sc(t),t)$; as such, we work with the unknowns  
\begin{align}
\zl = \zl(t) = \jump{z}(t)\,, \qquad \kl = \kl(t) = \jump{k}(t)\,.
\end{align}
In fact, because we expect $\kl$ to be close to $0$ (see \eqref{spcube}), and since \eqref{pjump77}--\eqref{pjump7} contain the variables $e^{-\kl}$ and $e^{\kl}$, which are thus close to $1$, it is more convenient to replace $\kl$ with the unknown
\begin{align}
\el = \el(t) = e^{\kl(t)}-1 \,. 
\end{align}
Then, with this notation  the equations \eqref{pjump77}--\eqref{pjump7} may be rewritten as the system 
\begin{subequations}
\label{eq:zminus:kminus}
\begin{align}
{\mathcal E}_{1}(\vl,\vr,\zl,\el) &=0  \label{zminus-on-shock} \\
{\mathcal E}_{2}(\vl,\vr,\zl,\el) &=0  \label{kminus-on-shock}  
\end{align} 
\end{subequations}
where
\begin{subequations}
\label{eq:zminus:kminus:def}
\begin{align}
&{\mathcal E}_{1}(\vl,\vr,\zl,\el) \notag\\
&= \left(   (\vl -\zl)^2 (\vl + \zl)^2 + \tfrac 18  (\vl - \zl)^4 -  \tfrac{9}{8}  (1+\el) \vr^4 \right)
\left((\vl - \zl)^2 - (1+\el) \vr^2 \right) \notag\\
&\qquad - \left(  (\vl - \zl)^2(\vl + \zl) -  (1+\el)  \vr^3 \right)^2
\label{zminus-on-shock:def}\\
&{\mathcal E}_{2}(\vl,\vr,\zl,\el) \notag\\
&=\el  (\vl - \zl)^4(3 \vr^2  (1+\el)  -  (\vl - \zl)^2  )  -  \left(  (\vl - \zl)^2 -  (1+\el) \vr^2\right)^3   \,.
\label{kminus-on-shock:def}
\end{align}
\end{subequations}
We view \eqref{eq:zminus:kminus} as a coupled system of equations for the unknowns $\zl$ and $\el$ (or alternatively, $\kl$), with $\vl$ and $\vr$ given. The correct root of \eqref{eq:zminus:kminus} is given by:
\begin{lemma}[\bf Existence and asymptotic formula for $\zl$ and $\kl$]
\label{lem:zminus-on-shock} 
Assume that $\vl$ and $\vr$ are such that their jump and mean at $(\sc(t),t)$ satisfy \eqref{eq:J:M:rough}. Then, the system of equations  \eqref{eq:zminus:kminus} has a smallest (in absolute value) root $(\zl, \el)$, such that $\zl$ and $\kl = \log(\el+1)$  satisfy the bounds
\begin{subequations}
\label{eq:zm:and:km:on:xi}
\begin{align} 
\abs{ \zl(  t) +  \tfrac{9 \jump{w}(t)^3 }{16 \mean{w}(t)^2}     } 
&\leq C_0 t^{\frac 52}  \,.   \label{zm-on-xi}
\\
\abs{ \kl( t) -  \tfrac{4  \jump{w}(t)^3}{\mean{w}(t)^3}      } 
&\leq  C_0 t^{\frac 52}  \,,   \label{km-on-xi}
\end{align} 
\end{subequations}
where $C_0 = C_0(\kappa,\bb,\cc,\mm)>0$ is an explicitly computable constant.
In particular, in view of \eqref{eq:J:M:rough} we have the estimates
\begin{subequations}
\label{eq:zl:and:kl:on:shock:L:infinity}
\begin{align}
\sabs{\zl( t) + \tfrac{9 \bb^{\frac 92}}{2 \kappa^2} t^{\frac 32} } \leq  C t^2
\qquad &\Rightarrow \qquad
\abs{\zl( t)} \leq \tfrac{5 \bb^{\frac 92}}{ \kappa^{2}} t^{\frac 32} 
\label{eq:zl:on:shock:L:infinity} \\
\sabs{\kl( t) - \tfrac{32 \bb^{\frac 92}}{\kappa^3} t^{\frac 32} } \leq C t^2
\qquad &\Rightarrow \qquad
\abs{\kl( t)} \leq \tfrac{ 40 \bb^{\frac 92}}{ \kappa^{3}} t^{\frac 32}
\label{eq:kl:on:shock:L:infinity} 
\end{align}
\end{subequations}
for all $t \in (0,\bar \eps]$, assuming that $\bar \eps$ is sufficiently small.
\end{lemma}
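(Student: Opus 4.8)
\textbf{Proof proposal for Lemma~\ref{lem:zminus-on-shock}.}

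The plan is to solve the coupled system \eqref{eq:zminus:kminus} by a two-step application of the implicit function theorem after an appropriate rescaling, and then to extract the leading-order asymptotics by a Taylor expansion in the small jump parameter. First I would introduce the small parameter $\delta = \jump{w}(t)$, which by \eqref{eq:J:M:rough} satisfies $\delta \sim 2\bb^{3/2} t^{1/2}$, together with $\mu = \mean{w}(t)$, which is comparable to $\kappa$. Writing $\vl = \mu + \tfrac 12 \delta$ and $\vr = \mu - \tfrac 12 \delta$, I would look for a root with the anticipated scaling $\zl \sim c_z \delta^3$, $\el \sim c_e \delta^3$ (consistent with \eqref{spcube}, which predicts cubic behaviour of the entropy jump in the pressure jump, and hence in $\delta$). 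Accordingly set $\zl = \delta^3 Z$, $\el = \delta^3 K$ and divide each equation ${\mathcal E}_i$ by the appropriate power of $\delta$ so that the rescaled equations $\widetilde{\mathcal E}_i(\delta, Z, K; \mu) = 0$ extend smoothly to $\delta = 0$. The point $\delta = 0$ is precisely the pre-shock situation where $\vl = \vr = \mu$; at $\delta = 0$ the rescaled system should reduce to a pair of linear equations in $(Z,K)$ whose coefficient matrix is invertible (this is the analogue of the nondegeneracy used in Remark~\ref{shockcurvest} and in the proof of Lemma~\ref{lemmaent}). Verifying that the Jacobian $\partial(\widetilde{\mathcal E}_1, \widetilde{\mathcal E}_2)/\partial(Z,K)$ at $(\delta,Z,K) = (0, Z_0, K_0)$ is nonsingular is the crux; granting this, the implicit function theorem produces smooth functions $Z(\delta;\mu)$, $K(\delta;\mu)$ on a neighbourhood of $\delta = 0$, and since $\delta \leq C t^{1/2} \leq C \bar\eps^{1/2}$ is small when $\bar\eps$ is small, this yields the smallest-in-absolute-value root $(\zl,\el)$ for all $t\in(0,\bar\eps]$, as claimed.

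Next I would compute the leading coefficients $Z_0 = Z(0;\mu)$ and $K_0 = K(0;\mu)$ explicitly by setting $\delta = 0$ in the rescaled equations and solving the resulting linear system. I expect this to reproduce the constants in \eqref{eq:zm:and:km:on:xi}: $Z_0 = -\tfrac{9}{16\mu^2}$ and $K_0 = \tfrac{4}{\mu^3}$ — these should match the entropy-jump formula \eqref{spcube} specialized to $\gamma = 2$ (where $B_2 \equiv 0$, cf.~\eqref{pjump5}) after translating pressure jumps into $\jump{w}$ via $p = \tfrac{\alpha^2}{\gamma}\rho\sigma^2$ and the Riemann-variable relations \eqref{eq:riemann}. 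Concretely, this is a finite algebraic computation: expand ${\mathcal E}_1$ and ${\mathcal E}_2$ as polynomials in the small quantities $\zl, \el$, and $\delta$, keep terms through the order that determines $Z_0, K_0$, and solve. Because $Z, K$ are smooth in $\delta$ by the IFT, we have $Z(\delta;\mu) = Z_0 + \mathcal{O}(\delta)$ and $K(\delta;\mu) = K_0 + \mathcal{O}(\delta)$, hence
\begin{align*}
\zl = \delta^3\bigl(Z_0 + \mathcal{O}(\delta)\bigr) = -\tfrac{9\delta^3}{16\mu^2} + \mathcal{O}(\delta^4),
\qquad
\kl = \log(1+\el) = \el + \mathcal{O}(\el^2) = \tfrac{4\delta^3}{\mu^3} + \mathcal{O}(\delta^4).
\end{align*}
Since $\delta = \jump{w}(t)$ and $\mu = \mean{w}(t)$, and since $\delta^4 \leq C t^2 \cdot \delta \leq C t^{5/2}$ by \eqref{eq:J:M:rough}, this gives \eqref{zm-on-xi} and \eqref{km-on-xi} with $C_0$ depending only on $\kappa,\bb,\cc,\mm$ (the dependence enters through the uniform bounds on $\mu$ and on the remainder constants in the IFT, all controlled on the compact $\delta$-range).

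Finally, the pointwise bounds \eqref{eq:zl:and:kl:on:shock:L:infinity} follow by inserting the asymptotics \eqref{eq:J:M:rough} for $\jump{w}(t)$ and $\mean{w}(t)$ into \eqref{eq:zm:and:km:on:xi}: using $\jump{w}(t)^3 = (2\bb^{3/2}t^{1/2})^3 + \mathcal{O}(t^{5/2}) = 8\bb^{9/2}t^{3/2} + \mathcal{O}(t^{5/2})$ and $\mean{w}(t)^{-2} = \kappa^{-2} + \mathcal{O}(t)$ (respectively $\mean{w}(t)^{-3} = \kappa^{-3} + \mathcal{O}(t)$), one obtains the displayed $\mathcal{O}(t^2)$ error, and then the cruder bounds $|\zl(t)| \leq \tfrac{5\bb^{9/2}}{\kappa^2}t^{3/2}$, $|\kl(t)| \leq \tfrac{40\bb^{9/2}}{\kappa^3}t^{3/2}$ follow by absorbing the $\mathcal{O}(t^2)$ into the main term once $\bar\eps$ is small enough. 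The main obstacle, as noted, is the verification of the nonsingularity of the linearized system at $\delta = 0$; once the explicit form of the limiting equations is written down this is a direct determinant computation, but it is the step on which the whole argument hinges and where the sign condition $\partial^2 V/\partial p^2 > 0$ (here incarnated as the convexity implicit in the ideal-gas relations, cf.~\eqref{spvder}) must be seen to guarantee invertibility and the correct sign of $\kl$ (so that $\jump{k} > 0$, the physical entropy condition \eqref{entcond}).
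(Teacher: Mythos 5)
Your broad strategy — write $\vl = \mu + \tfrac 12\delta$, $\vr = \mu - \tfrac 12\delta$ with $\delta = \jump{w}$, rescale the unknowns so that the system extends smoothly to $\delta = 0$, and apply the implicit function theorem — is the same idea the paper uses, and the limiting linear system you would need to solve is indeed nonsingular (its Jacobian determinant is $\pm 8\mean{w}^{10}$, computable exactly as you describe). However, the specific rescaling $\zl = \delta^3 Z$, $\el = \delta^3 K$ is too coarse to deliver the stated error bound, and the step where you pass from $\mathcal{O}(\delta^4)$ to $\mathcal{O}(t^{5/2})$ contains an arithmetic error.

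Concretely: with $Z(\delta) = Z_0 + \mathcal{O}(\delta)$ you get $\zl = Z_0\delta^3 + \mathcal{O}(\delta^4)$. Since $\delta = \jump{w} \sim 2\bb^{3/2}t^{1/2}$, this gives an error $\mathcal{O}(\delta^4) \sim t^2$. The chain ``$\delta^4 \leq C t^2\delta \leq C t^{5/2}$'' requires $\delta^3 \leq C t^2$, i.e.\ $t^{3/2} \lesssim t^2$, which fails as $t\to 0$; in fact $t^2 > t^{5/2}$ for small $t$, so an $\mathcal{O}(t^2)$ remainder is strictly \emph{weaker} than the asserted $\mathcal{O}(t^{5/2})$. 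To reach $\mathcal{O}(\delta^5) = \mathcal{O}(t^{5/2})$ you need $Z(\delta) = Z_0 + \mathcal{O}(\delta^2)$, i.e.\ $Z'(0) = 0$, which is not automatic from the IFT and must be argued — the rescaled equations are \emph{not} manifestly even in $\delta$, because the Rankine--Hugoniot system ${\mathcal E}_1, {\mathcal E}_2$ treats $\vl$ and $\vr$ asymmetrically (entropy and $z$ are produced only on the minus side). The paper sidesteps this precisely by not rescaling $\zl, \el$ from zero: it first solves the leading linear system \eqref{eq:zl:el:matrix} exactly, obtaining the closed-form $\zl^{\rm app}, \el^{\rm app}$ in \eqref{eq:zl:kl:app:def}, which happen to be odd functions of $\jump{w}$ (the rational functions $Q_1, Q_2$ are even in $\jump{w}/\mean{w}$), and then sets $\zl = \zl^{\rm app} + Z\jump{w}^5$, $\el = \el^{\rm app} + E\jump{w}^5$ and applies the IFT to $(Z,E)$ — the $\jump{w}^5$ scaling for the remainder is where the $t^{5/2}$ bound comes from. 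If you insist on the cruder $\delta^3$-rescaling, you would additionally need to check that all coefficients of $\delta^1$ in the rescaled equations vanish (a finite polynomial computation, but a computation that your sketch does not perform), before the $\delta^4$ term in $\zl$ can be ruled out. One mitigating remark: for the downstream bounds \eqref{eq:zl:and:kl:on:shock:L:infinity} an $\mathcal{O}(t^2)$ error in \eqref{zm-on-xi}--\eqref{km-on-xi} would suffice, since the substitution of \eqref{eq:J:M:rough} into the leading term already introduces an $\mathcal{O}(t^2)$ error; but the lemma as stated claims $\mathcal{O}(t^{5/2})$, and your argument does not prove it.
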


\begin{proof}[\bf Proof of Lemma~\ref{lem:zminus-on-shock}]
Throughout the proof,  we fix  $t \in (0,\bar \eps]$, and omit the $t$ dependence of the unknowns. In view of \eqref{eq:J:M:rough}, we view $\jump{w}$ as a small parameter, thus suitable for asymptotic expansions, and $\mean{w}$ as an $\OO(1)$ parameter. As such, in \eqref{eq:zminus:kminus:def} we replace
\begin{align*}
\vl= \mean{w} + \tfrac 12 \jump{w}\, , \qquad \vr = \mean{w} - \tfrac 12 \jump{w} \,. 
\end{align*}
Because we expect $|\zl| , |\el| \ll 1$, we first perform a Taylor series expansion of  \eqref{eq:zminus:kminus}, and identify only the linear terms with respect to $\zl$ and $\el$. This becomes
\begin{align*}
& \tfrac{1}{16} \jump{w}^4 (12 \mean{w}^2 - \jump{w}^2)
- \tfrac 18 \jump{w} (32 \mean{w}^4 + 8 \mean{w}^2 \jump{w}^2 + 6 \mean{w} \jump{w}^3 - \jump{w}^4)\zl \notag\\
&\quad 
- \tfrac{1}{64} \jump{w} (48\mean{w}^5 + 40 \mean{w}^3 \jump{w}^2 - 48 \mean{w}^2 \jump{w}^3 + 3 \mean{w} \jump{w}^4 + 4 \jump{w}^5 )  \el 
= \OO(|\zl|^2 + |\el|^2) \\
&- 8 \mean{w}^3 \jump{w}^3
+ 12 \jump{w}^2 (2 \mean{w}^3 + \mean{w}^2 \jump{w}) \zl 
\notag\\
&\quad
+ \tfrac{1}{32} (64 \mean{w}^6 + 240 \mean{w}^4 \jump{w}^2 - 512 \mean{w}^3 \jump{w}^3 + 60 \mean{w}^2 \jump{w}^4 + \jump{w}^6)\el
 = \OO(|\zl|^2 + |\el|^2) 
 \,.
\end{align*}
By dropping the higher order terms in $| \jump{w} | \ll 1$, 
this motivates our definition of the approximate solutions $\zl^{\rm app}$ and $\el^{\rm app}$ as the solutions of the linear system
\begin{align}
&
\left(\begin{matrix}
4 \jump{w} \mean{w}^4
&  \tfrac 34 \jump{w} \mean{w}^5 \\
24 \jump{w}^2 \mean{w}^3
& 2 \mean{w}^6
\end{matrix}\right)
\left(\begin{matrix}
\zl^{\rm app}  \\
\el^{\rm app}   
\end{matrix}\right)
 =
\left(\begin{matrix}
\tfrac 34 \jump{w}^4 \mean{w}^2  \\
8 \jump{w}^3 \mean{w}^3
\end{matrix}\right)
\,.
\label{eq:zl:el:matrix}
\end{align}
This system is uniquely solvable, and yields 
\begin{subequations}
\label{eq:zl:kl:app:def}
\begin{align}
 \zl^{\rm app} &= - \frac{9 \jump{w}^3 }{16 \mean{w}^2 } Q_1\left(\frac{\jump{w}}{\mean{w}}\right)\,,
 \qquad Q_1(x) = \frac{1}{1 - \frac 94 x^2}\,,
 \label{eq:zl:app:def}\\
 \el^{\rm app} &=   \frac{4 \jump{w}^3 }{\mean{w}^3} Q_2\left(\frac{\jump{w}}{\mean{w}}\right) \,,
 \qquad  Q_2(x) =  \frac{1 - \frac{9}{16}x^2}{1 - \frac{9}{4}x^2}   \,.
  \label{eq:el:app:def}
 \end{align} 
\end{subequations}
In order to apply the implicit function theorem, we at last introduce the variables
\begin{align}
Z = \frac{\zl - \zl^{\rm app}}{\jump{w}^{5}}
\qquad \mbox{and} \qquad  
E = \frac{\el - \el^{\rm app}}{\jump{w}^{5}}
\label{eq:normalized:E:and:Z}
\end{align}
and substitute in the system \eqref{eq:zminus:kminus:def} the ansatz 
$\zl = \zl^{\rm app} + Z \jump{w}^5$ and $\el = \el^{\rm app} + E \jump{w}^5$. After some algebraic manipulations, the system of equations \eqref{eq:zminus:kminus:def} is rewritten as system
\begin{subequations}
\label{eq:i:am:stupid}
\begin{align}
0&= {\mathcal F}_1 (\jump{w},\mean{w}, Z, E) \notag\\
&=  \jump{w}^{-6}  {\mathcal E}_1 (\mean{w} + \tfrac 12 \jump{w},\mean{w} - \tfrac 12 \jump{w},\zl^{\rm app} + Z \jump{w}^5,\el^{\rm app} + E \jump{w}^5)  \\
0&= {\mathcal F}_2 (\jump{w},\mean{w}, Z, E) \notag\\
&=  \jump{w}^{-5} {\mathcal E}_2 (\mean{w} + \tfrac 12 \jump{w},\mean{w} - \tfrac 12 \jump{w},\zl^{\rm app} + Z \jump{w}^5,\el^{\rm app} + E \jump{w}^5) 
\end{align}
\end{subequations}
for the unknowns $Z$ and $E$. 
Defining 
$$ P_w = (0 ,\mean{w}, -\tfrac{27}{64} \mean{w}^{-4}, -  15  \mean{w}^{-5}) \, , $$
we observe that 
\begin{align*}
&{\mathcal F}_1 (P_w)= 0 \,,
\qquad 
\partial_Z {\mathcal F}_1 (P_w)= - 4 \mean{w}^4\,,
\qquad 
\partial_Z {\mathcal F}_2 (P_w)=  0\,, \\
& {\mathcal F}_2 (P_w)=  0 \,,
\qquad
\partial_E {\mathcal F}_1 (P_w)= 0 \,,
\qquad 
\partial_E {\mathcal F}_2 (P_w) =  2\mean{w}^6.
\end{align*}
Thus, the Jacobian determinant associated to $({\mathcal F}_1,{\mathcal F}_2)(\cdot,\cdot,Z,E)$  evaluated at  $P_w$ equals to $-8 \mean{w}^{10} \neq 0$. Here we are using that by \eqref{eq:J:M:rough} we have that  $\abs{\mean{w}-\kappa} \leq \frac{\kappa}{2}$, and thus $\mean{w}\neq 0$. Thus, by the implicit function theorem, there  exists a $J_0 = J_0(\mean{w}) >0$, such that for all $\abs{\jump{w}} \leq J_0$, we have a unique solution $Z = Z (\jump{w},\mean{w})$ and $E = E(\jump{w},\mean{w})$ of \eqref{eq:i:am:stupid}, with $Z(0,\mean{w}) = -\tfrac{27}{64} \mean{w}^{-4}$ and $E(0,\mean{w}) = - 15 \mean{w}^{-6}$. To conclude, we note that since $J_0$ depends only on $\mean{w}$, it may be estimated solely in terms of $\kappa$; and since by \eqref{eq:J:M:rough} we have that $\abs{\jump{w}} \leq 3  \bb^{\frac 32} \bar \eps^{\frac 12}$ with $\bar \eps$ which is sufficiently small in terms of $\kappa$ and $\bb$, we deduce that  the condition $\abs{\jump{w}} \leq J_0$ is automatically guaranteed. 

As a consequence, from the above discussion  we deduce that for all  $t \leq \bar \eps$, we have
\begin{align} 
\abs{\zl  - \zl^{\rm app} } 
\leq C_0 \jump{w}^5\,,
\qquad\mbox{and}\qquad
\abs{\el  - \el^{\rm app} } 
\leq C_0 \jump{w}^5 
\,,\label{eq:orange:orangutan:3}
\end{align}
where $C_0>0$ is a constant which only depends on $\kappa$. 

The proof of the bounds \eqref{zm-on-xi}--\eqref{km-on-xi} are now essentially completed, upon combining \eqref{eq:J:M:rough}, \eqref{eq:zl:kl:app:def}, and \eqref{eq:orange:orangutan:3}. To see this, note that the rational function $Q_1$ appearing in the definition \eqref{eq:zl:app:def} satisfies $|Q_1(x) - 1| \leq 3 x^2$ for all $x \leq \frac{1}{10}$. Thus, we obtain that 
\begin{align*}
\abs{\zl^{\rm app} + \frac{9 \jump{w}^3}{16 \mean{w}^2}}  \leq C_0 \jump{w}^5
\end{align*}
since $\mean{w} \geq \frac{\kappa}{2}$ when $\bar \eps$ is sufficiently small. The bound \eqref{zm-on-xi} follows from the above estimate, \eqref{eq:J:M:rough}, and \eqref{eq:orange:orangutan:3}. Similarly, by using that the rational function $Q_2$ appearing in the definition \eqref{eq:zl:app:def} satisfies $|Q_2(x) - 1| \leq 2 x^2$ for all $x \leq \frac{1}{10}$, we obtain the bound
\begin{align*}
\abs{\el^{\rm app} -   \frac{4 \jump{w}^3}{\mean{w}^3}}  \leq C_0 \jump{w}^5\,,
\end{align*}
which may be combined with \eqref{eq:J:M:rough} and \eqref{eq:orange:orangutan:3}, 
to establish
\begin{align}
\abs{ \el  -  \frac{4\jump{w}^3}{\mean{w}^3}     } 
&\leq C_0  \jump{w}^{5}
\label{eq:orange:orangutan:333}
\end{align}
with $C_0>0$ a constant which depends only on $\kappa$ and $\bb$. The bound \eqref{km-on-xi} now follows because $\kl = \log(1+ \el)$, and $\abs{\log (1+ \el) - \el} \leq 2 \el^2$ for $\abs{\el} \leq \frac 12$; clearly, $\abs{\el} = \OO(t^{\frac 32}) \leq \frac 12$ in view of \eqref{eq:orange:orangutan:333}.

The bounds \eqref{eq:zl:on:shock:L:infinity}--\eqref{eq:kl:on:shock:L:infinity} follow from \eqref{zm-on-xi}--\eqref{km-on-xi}, \eqref{eq:J:M:rough}, and the fact that $t \leq \bar \eps$, which in turn may be made arbitrarily small with respect to $\kappa$ and $\bb$. 
\end{proof}

Let us further assume that $\vr$ and $\vl$ are differentiable with respect to $\xi$ and $t$ for all $(\xi,t) \in \Omega_{\bar \eps}$. By implicitly differentiating \eqref{zminus-on-shock}--\eqref{kminus-on-shock}, we may then deduce:

\begin{lemma}[\bf Lipschitz bounds for $\zl$ and $\kl$]
 \label{lem:zl:kl:Lip}
For $t\in (0,\bar \eps]$, assume that $\vl$ and $\vr$ are such that their jump and mean at $(\sc(t),t)$ satisfy \eqref{eq:J:M:rough}, and further assume that $\mean{w}$ and $\jump{w}$ are differentiable with respect to $t$. Then, the smallest roots of the  system of equations  \eqref{eq:zminus:kminus}  are such that $\zl$ and $\kl = \log(\el+1)$ satisfy
the pointwise estimates
\begin{subequations}
\label{eq:xi:t:derivatives:zl:kl}
\begin{align}
\abs{\tfrac{d}{dt} \zl (t)+ \tfrac{d}{dt} \left( \tfrac{9 \jump{w}(t)^3}{16 \mean{w}(t)^2}\right)} \leq C_0 t^2 \Bigl( |\tfrac{d}{dt} \jump{w}(t)| + |\tfrac{d}{dt}  \mean{w}(t)|\Bigr)
\label{eq:xi:t:derivatives:zl} 
\\
\abs{e^{\kl(t)} \tfrac{d}{dt}  \kl(t) - \tfrac{d}{dt}   \left( \tfrac{4 \jump{w}(t)^3}{\mean{w}(t)^3}\right)  } \leq C_0 t^2 \Bigl(|\tfrac{d}{dt} \jump{w}(t)| + |\tfrac{d}{dt}  \mean{w}(t)| \Bigr)
\label{eq:xi:t:derivatives:kl}
\end{align} 
\end{subequations}
where the constant $C_0>0$ only depends on $\kappa$, $\bb$, and $\mm$.  
\end{lemma}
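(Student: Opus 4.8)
The strategy is to differentiate the implicit-function-theorem construction from the proof of Lemma~\ref{lem:zminus-on-shock} with respect to $t$, treating $\jump{w}(t)$ and $\mean{w}(t)$ as the two scalar inputs. Recall that in the proof of Lemma~\ref{lem:zminus-on-shock} we found the smallest root $(\zl,\el)$ in the form $\zl = \zl^{\rm app} + Z \jump{w}^5$, $\el = \el^{\rm app} + E \jump{w}^5$, where $\zl^{\rm app},\el^{\rm app}$ are the explicit rational expressions \eqref{eq:zl:kl:app:def} and $(Z,E) = (Z(\jump{w},\mean{w}), E(\jump{w},\mean{w}))$ are $C^1$ functions supplied by the implicit function theorem applied to the system ${\mathcal F}_1 = {\mathcal F}_2 = 0$ in \eqref{eq:i:am:stupid}, whose Jacobian in $(Z,E)$ at $P_w$ equals $-8\mean{w}^{10}\neq 0$. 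The key point is that this Jacobian is bounded away from zero uniformly for $\mean{w}$ in a neighborhood of $\kappa$ and $|\jump{w}|$ small, hence the implicit functions $Z,E$ are not only continuous but $C^1$ in $(\jump{w},\mean{w})$, with derivatives bounded by a constant depending only on $\kappa,\bb,\mm$ (after using $|\jump{w}| \le 3\bb^{3/2}\bar\eps^{1/2}$).

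\textbf{Step 1: chain rule for the exact solution.} First I would observe that $\zl$ and $\el$ are differentiable in $t$ because they are $C^1$ functions of $(\jump{w}(t),\mean{w}(t))$, which are themselves assumed differentiable. By the chain rule,
\begin{align*}
\tfrac{d}{dt}\zl = \tfrac{d}{dt}\zl^{\rm app} + \jump{w}^5\bigl(\partial_{\jump{w}}Z\,\tfrac{d}{dt}\jump{w} + \partial_{\mean{w}}Z\,\tfrac{d}{dt}\mean{w}\bigr) + 5\jump{w}^4 Z\,\tfrac{d}{dt}\jump{w},
\end{align*}
and similarly for $\el$. Since $|Z|$ and its first derivatives are $\OO(1)$ (depending only on $\kappa,\bb,\mm$), and since $|\jump{w}(t)| \le C t^{1/2}$ by \eqref{eq:J:M:rough}, the correction terms involving $Z,E$ contribute $\OO(t^2)(|\tfrac{d}{dt}\jump{w}| + |\tfrac{d}{dt}\mean{w}|)$. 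Thus it remains to compute $\tfrac{d}{dt}\zl^{\rm app}$ and $\tfrac{d}{dt}\el^{\rm app}$ explicitly and compare them to $\tfrac{d}{dt}(\tfrac{9\jump{w}^3}{16\mean{w}^2})$ and $\tfrac{d}{dt}(\tfrac{4\jump{w}^3}{\mean{w}^3})$ respectively.

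\textbf{Step 2: the explicit rational corrections.} From \eqref{eq:zl:kl:app:def}, $\zl^{\rm app} = -\tfrac{9\jump{w}^3}{16\mean{w}^2}Q_1(\jump{w}/\mean{w})$ with $Q_1(x) = (1 - \tfrac94 x^2)^{-1}$, so $\zl^{\rm app} - (-\tfrac{9\jump{w}^3}{16\mean{w}^2}) = -\tfrac{9\jump{w}^3}{16\mean{w}^2}(Q_1(\jump{w}/\mean{w}) - 1)$. Since $|Q_1(x) - 1| \le 3x^2$ and $|Q_1'(x)| \le Cx$ for $|x|\le \tfrac1{10}$, differentiating in $t$ (again via the chain rule, using $|\tfrac{d}{dt}(\jump{w}/\mean{w})| \le C(|\tfrac{d}{dt}\jump{w}| + |\tfrac{d}{dt}\mean{w}|)$ and $|\jump{w}/\mean{w}| \le C t^{1/2}$) produces a quantity bounded by $C\jump{w}^2(|\tfrac{d}{dt}\jump{w}| + |\tfrac{d}{dt}\mean{w}|) \le Ct(|\tfrac{d}{dt}\jump{w}| + |\tfrac{d}{dt}\mean{w}|)$, which is in fact better than the claimed $\OO(t^2)$ — here I would simply absorb it, or note the claim is not sharp. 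The same computation with $Q_2(x) = \tfrac{1 - \tfrac{9}{16}x^2}{1 - \tfrac94 x^2}$, $|Q_2(x)-1|\le 2x^2$, handles $\el^{\rm app}$. Finally, to pass from $\el$ to $\kl$ I would use $\kl = \log(1+\el)$, so $e^{\kl}\tfrac{d}{dt}\kl = (1+\el)\cdot\tfrac{1}{1+\el}\tfrac{d}{dt}\el = \tfrac{d}{dt}\el$; this is exactly why the statement \eqref{eq:xi:t:derivatives:kl} is phrased with the factor $e^{\kl}$ rather than in terms of $\tfrac{d}{dt}\kl$ directly, and it lets us quote the $\el$ estimate verbatim.

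\textbf{Main obstacle.} The only genuinely delicate point is justifying that the implicit functions $Z,E$ from the proof of Lemma~\ref{lem:zminus-on-shock} are $C^1$ with derivatives controlled \emph{uniformly} in $t$ — i.e.\ that the constant in \eqref{eq:xi:t:derivatives:zl:kl} can be taken independent of $t\in(0,\bar\eps]$. This follows because the Jacobian determinant $-8\mean{w}^{10}$ is bounded below by a positive constant depending only on $\kappa$ (using $\tfrac\kappa2 \le \mean{w} \le \tfrac{3\kappa}{2}$), and the second derivatives of ${\mathcal F}_1,{\mathcal F}_2$ are bounded on the relevant compact parameter region (which is where the polynomial structure of ${\mathcal E}_1,{\mathcal E}_2$ and the $\jump{w}^{-6}$, $\jump{w}^{-5}$ normalizations matter: one must check no spurious singularity in $\jump{w}$ appears, which was already verified when establishing \eqref{eq:orange:orangutan:3}). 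Everything else is a routine application of the chain rule to the explicit formulas, together with the a priori bounds $|\jump{w}(t)| \le Ct^{1/2}$, $|\mean{w}(t) - \kappa| \le Ct$ from \eqref{eq:J:M:rough}.
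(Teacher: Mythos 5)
Your overall approach is sound and is a genuinely different route from the paper's: you differentiate the decomposition $\zl = \zl^{\rm app} + Z\jump{w}^5$, $\el = \el^{\rm app} + E\jump{w}^5$ supplied by the implicit function theorem applied to the \emph{normalized} system $\mathcal{F}_1 = \mathcal{F}_2 = 0$, whereas the paper implicitly differentiates the \emph{original} system $\mathcal{E}_1=\mathcal{E}_2=0$ and computes the explicit Jacobi matrix product \eqref{eq:Matrix:Masturbation:1}. Your version trades the explicit matrix computation for reliance on the $C^1$ uniformity of $Z,E$; both work, and the key uniformity point you flag (Jacobian bounded below in terms of $\kappa$ alone, second derivatives of $\mathcal{F}_i$ bounded on the compact parameter region) is correct.

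However, there is an arithmetic slip in Step 2 that also reveals a conceptual confusion. You claim that differentiating $-\tfrac{9\jump{w}^3}{16\mean{w}^2}(Q_1(\jump{w}/\mean{w})-1)$ produces a term $\le C\jump{w}^2(|\tfrac{d}{dt}\jump{w}|+|\tfrac{d}{dt}\mean{w}|) \le Ct(\cdots)$ and that this is ``better'' than the claimed $\OO(t^2)$. Both halves are wrong. First, for $t\in(0,1]$ one has $Ct \geq Ct^2$, so a bound of $Ct(\cdots)$ is \emph{weaker}, not stronger, than $Ct^2(\cdots)$, and would not establish \eqref{eq:xi:t:derivatives:zl} as stated. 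Second, the correct power count actually \emph{does} give $t^2$: since $|Q_1(x)-1|\le 3x^2$ and $|Q_1'(x)| \le Cx$ for $|x|\le\frac1{10}$, the product rule gives two contributions, $\bigl(\tfrac{d}{dt}\tfrac{9\jump{w}^3}{16\mean{w}^2}\bigr)(Q_1-1) \lesssim \jump{w}^2(\cdots)\cdot \jump{w}^2 = \jump{w}^4(\cdots)$ and $\tfrac{9\jump{w}^3}{16\mean{w}^2}Q_1'\,\tfrac{d}{dt}(\jump{w}/\mean{w}) \lesssim \jump{w}^3\cdot\jump{w}\cdot(\cdots) = \jump{w}^4(\cdots)$, both of which are $\OO(t^2)(|\tfrac{d}{dt}\jump{w}|+|\tfrac{d}{dt}\mean{w}|)$. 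You dropped one power of $\jump{w}$. With this corrected, your proof closes; the erroneous ``the claim is not sharp'' remark should be deleted.
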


\begin{proof}[Proof of Lemma~\ref{lem:zl:kl:Lip}]
 From the definition $\kl = \log(1+ \el)$
we obtain that $\frac{d}{dt} \kl = e^{-\kl} \frac{d}{dt} \el$, and thus, in order to prove the lemma it is sufficient to obtain derivative bounds for $\zl$ and $\el$.

Implicitly differentiating \eqref{eq:zminus:kminus} we arrive at  
\begin{align}
\frac{d}{dt} \left(\begin{matrix}
 \zl \\
 \el
\end{matrix}\right)
= - \left(\begin{matrix}
\p_{\zl} {\mathcal E}_1   &  \p_{\el} {\mathcal E}_1     \\
\p_{\zl} {\mathcal E}_2   &  \p_{\el} {\mathcal E}_2       
\end{matrix}\right)^{-1}
\left(\begin{matrix}
\p_{\vl} {\mathcal E}_1   &  \p_{\vr} {\mathcal E}_1 \\
\p_{\vl} {\mathcal E}_2   &  \p_{\vr} {\mathcal E}_2  
\end{matrix}\right)
\frac{d}{dt}
\left(\begin{matrix}
 \vl \\
  \vr
\end{matrix}\right)
\,,
\label{eq:xi:derivatives:zl:el}
\end{align}
pointwise for $t \in (0,\bar \eps]$, where we recall that the functions ${\mathcal E}_1$ and ${\mathcal E}_2$ are defined in \eqref{eq:zminus:kminus:def}.  
In order to evaluate these Jacobi matrices, we resort to the notation in \eqref{eq:J:M:def} and rewrite $\vl = \mean{w} + \frac 12 \jump{w}$ and $\vr = \mean{w} - \frac 12 \jump{w}$; furthermore, we write $\zl = \zl^{\rm app} + \OO(\jump{w}^5)$ and $\el = \el^{\rm app} + \OO(\jump{w}^6)$ as justified by \eqref{eq:normalized:E:and:Z}, with $\zl^{\rm app}$ defined by \eqref{eq:zl:app:def}, and $\el^{\rm app}$ given by \eqref{eq:el:app:def}. We emphasize that the implicit constants in the $\OO(\jump{w}^5)$ and $\OO(\jump{w}^6)$ symbols only depend on $\kappa$ and $\bb$, since the bounds on the solutions $Z$ and $E$ of \eqref{eq:i:am:stupid} only depend on $\kappa$ and $\bb$. After some   tedious computations, we arrive at 
\begin{align}
- \left(\begin{matrix}
\p_{\zl} {\mathcal E}_1   &  \p_{\el} {\mathcal E}_1     \\
\p_{\zl} {\mathcal E}_2   &  \p_{\el} {\mathcal E}_2       
\end{matrix}\right)^{-1}
\left(\begin{matrix}
\p_{\vl} {\mathcal E}_1   &  \p_{\vr} {\mathcal E}_1 \\
\p_{\vl} {\mathcal E}_2   &  \p_{\vr} {\mathcal E}_2  
\end{matrix}\right) =  
\left(\begin{matrix}
-\frac{27 \jump{w}^2}{16 \mean{w}^2} +  \frac{9 \jump{w}^3}{16 \mean{w}^3}  &  \frac{27 \jump{w}^2}{16 \mean{w}^2}  +  \frac{9 \jump{w}^3}{16 \mean{w}^3}  \\
\frac{12 \jump{w}^2}{\mean{w}^3} - \frac{6 \jump{w}^3}{\mean{w}^4}   &  -\frac{12 \jump{w}^2}{\mean{w}^3} - \frac{6 \jump{w}^3}{\mean{w}^4}
\end{matrix}\right) + \OO(\jump{w}^4) \,,
\label{eq:Matrix:Masturbation:1}
\end{align}
where the implicit constant only depends on $\kappa$ and $\bb$.
From \eqref{eq:J:M:rough}, \eqref{eq:xi:derivatives:zl:el}, \eqref{eq:Matrix:Masturbation:1}, and recalling that $\frac{d}{dt} \vl =\frac{d}{dt}  \mean{w} + \frac 12 \frac{d}{dt} \jump{w}$ and $\frac{d}{dt}  \vr = \frac{d}{dt}  \mean{w} - \frac 12 \frac{d}{dt}  \jump{w}$,  we deduce that there exists a constant $C_0> 0$, which only depends on $\kappa$ and $\bb$, such that 
\begin{align}
\abs{\tfrac{d}{dt} \zl + \tfrac{d}{dt} \left( \tfrac{9 \jump{w}^3}{16 \mean{w}^2}\right)} + \abs{\tfrac{d}{dt}  \el - \tfrac{d}{dt}  \left( \tfrac{4 \jump{w}^3}{\mean{w}^3}\right)  } \leq C_0 \jump{w}^4 \Bigl( \abs{\tfrac{d}{dt}  \jump{w}} + \abs{\tfrac{d}{dt}  \mean{w}}\Bigr)
\label{eq:xi:derivatives:zl:zl} 
\end{align} 
The bounds 
\eqref{eq:xi:t:derivatives:zl:kl} follow from \eqref{eq:xi:derivatives:zl:zl}, upon recalling that $\jump{w} = \OO(t^{\frac 12})$.
\end{proof}

A direct consequence of Lemmas~\ref{prop:Burgers},~\ref{lem:zminus-on-shock}, and~\ref{lem:zl:kl:Lip} is the following statement, which will be useful in the proof of Proposition~\ref{thm:curve:determines:all}.  
\begin{corollary}
\label{cor:jumps:abstract}
In addition to the assumption of Lemmas~\ref{prop:Burgers}, assume that $\jump{w}$ and $\mean{w}$ satisfy the bounds \eqref{eq:J:M:rough}. Let  $\zl(t)$ and $\kl(t)$ be as defined in Lemma~\ref{lem:zminus-on-shock}.
In addition, suppose that there exists $\Rsf = \Rsf(\kappa,\bb,\cc,\mm)>0$ such that for all $t \in (0,\bar \eps]$ we have
\begin{alignat}{2}
\abs{\tfrac{d}{dt} \jump{w}(t) -\tfrac{d}{dt}  \jump{\wb}(t)} \leq2 \Rsf , \qquad  && \abs{\tfrac{d}{dt} \mean{w}(t) - \tfrac{d}{dt} \mean{\wb}(t)} \leq \Rsf   
\label{eq:dt:jumps:on:shock:are:close}
\,.
\end{alignat}
Then, assuming that $\bar \eps$ is sufficiently small with respect to $\kappa, \bb, \cc$ and $\mm$, we have that 
\begin{alignat}{2}
&\sabs{\tfrac{d}{dt} \zl(t) + \tfrac{27 \bb^ {\frac{9}{2}} }{4 \kappa^2} t^{\frac 12} } \leq  C t \,, \qquad  && \sabs{\tfrac{d}{dt}\kl(t) - \tfrac{48 \bb^{\frac 92}}{\kappa^3} t^{\frac 12}} \leq C t \,,
\label{eq:dt:zl:kl:on:shock}
\end{alignat}
for all $t\in (0,\bar \eps]$, where $C   = C (\kappa,\bb, \cc,\mm)>0$ is a constant. 

In addition to \eqref{eq:dt:jumps:on:shock:are:close}, if we are also given that 
\begin{alignat}{2}
\abs{\tfrac{d^2}{dt^2} \jump{w}(t) -\tfrac{d^2}{dt^2}  \jump{\wb}(t)} \leq  2 \Rsf^* t^{-1} , \qquad  && \abs{\tfrac{d^2}{dt^2} \mean{w}(t) - \tfrac{d^2}{dt^2} \mean{\wb}(t)} \leq  \Rsf^* t^{-1} 
\label{eq:dt:dt:jumps:on:shock:are:close}
\,,
\end{alignat}
for a constant $\Rsf^*=\Rsf^*(\kappa,\bb,\cc,\mm) > 0$. Then, by possibly further reducing the value of $\bar \eps$ we also have the estimates 
\begin{alignat}{2}
&\sabs{\tfrac{d^2}{dt^2} \zl(t) + \tfrac{27 \bb^{\frac 92}}{8 \kappa^2} t^{-\frac 12} } \leq C   \,, \qquad  
&& \sabs{\tfrac{d^2}{dt^2} \kl(t) - \tfrac{24 \bb^{\frac 92}}{ \kappa^3} t^{-\frac 12} } \leq C  \,,
\label{eq:dt:dt:zl:kl:on:shock}
\end{alignat}
where $C   = C (\kappa,\bb, \cc,\mm)>0$ is a constant. 
\end{corollary}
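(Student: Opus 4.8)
\textbf{Proof plan for Corollary~\ref{cor:jumps:abstract}.}
The strategy is to differentiate the asymptotic formulas for $\zl$ and $\kl$ from Lemma~\ref{lem:zminus-on-shock} (more precisely, the Lipschitz-type bounds from Lemma~\ref{lem:zl:kl:Lip}) and then substitute the known Burgers asymptotics for $\jump{\wb}$, $\mean{\wb}$ and their $t$-derivatives from Proposition~\ref{prop:Burgers}, together with the closeness hypotheses \eqref{eq:dt:jumps:on:shock:are:close} and \eqref{eq:dt:dt:jumps:on:shock:are:close}. First I would verify the hypotheses of Lemma~\ref{lem:zl:kl:Lip} are met: since $\jump{w}$ and $\mean{w}$ satisfy \eqref{eq:J:M:rough} and are differentiable in $t$ (the latter following from \eqref{eq:dt:jumps:on:shock:are:close} combined with the smoothness of $\jump{\wb},\mean{\wb}$ established in Proposition~\ref{prop:Burgers}), estimates \eqref{eq:xi:t:derivatives:zl:kl} apply. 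Thus $\frac{d}{dt}\zl = -\frac{d}{dt}\bigl(\tfrac{9\jump{w}^3}{16\mean{w}^2}\bigr) + \OO(t^2)(|\tfrac{d}{dt}\jump{w}| + |\tfrac{d}{dt}\mean{w}|)$, and similarly for $\kl$ after accounting for $e^{\kl}=1+\OO(t^{3/2})$.

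For the first-derivative bounds \eqref{eq:dt:zl:kl:on:shock}, the main computation is expanding $\frac{d}{dt}\bigl(\tfrac{9\jump{w}^3}{16\mean{w}^2}\bigr) = \tfrac{27\jump{w}^2}{16\mean{w}^2}\tfrac{d}{dt}\jump{w} - \tfrac{9\jump{w}^3}{8\mean{w}^3}\tfrac{d}{dt}\mean{w}$. Using \eqref{eq:J:M:rough} to write $\jump{w} = 2\bb^{3/2}t^{1/2} + \OO(t)$, $\mean{w} = \kappa + \OO(t)$, and using \eqref{eq:dt:jumps:on:shock:are:close} with \eqref{eq:dt:jump} to write $\tfrac{d}{dt}\jump{w} = \bb^{3/2}t^{-1/2} + \OO(1)$, $\tfrac{d}{dt}\mean{w} = \OO(1)$, the leading term is $\tfrac{27 \cdot 4\bb^3 t}{16\kappa^2}\cdot \bb^{3/2}t^{-1/2} = \tfrac{27\bb^{9/2}}{4\kappa^2}t^{1/2}$, with all remainders of size $\OO(t)$; the second term $-\tfrac{9\jump{w}^3}{8\mean{w}^3}\tfrac{d}{dt}\mean{w}$ is $\OO(t^{3/2}) = \OO(t)$. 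This gives the $\zl$ estimate; the $\kl$ estimate is the same bookkeeping starting from $\el \approx \tfrac{4\jump{w}^3}{\mean{w}^3}$, using $\kl = \log(1+\el)$ so $e^{\kl}\tfrac{d}{dt}\kl = \tfrac{d}{dt}\el$, and the leading order gives $\tfrac{12\jump{w}^2}{\mean{w}^3}\tfrac{d}{dt}\jump{w} + \OO(t) = \tfrac{48\bb^{9/2}}{\kappa^3}t^{1/2} + \OO(t)$; multiplying through by $e^{-\kl} = 1 + \OO(t^{3/2})$ preserves the leading term and the $\OO(t)$ error.

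For the second-derivative bounds \eqref{eq:dt:dt:zl:kl:on:shock}, I would differentiate \eqref{eq:xi:t:derivatives:zl:kl} once more in $t$ — or, more cleanly, differentiate the identity \eqref{eq:xi:derivatives:zl:el} from the proof of Lemma~\ref{lem:zl:kl:Lip}, since the right-hand side there is an explicit smooth function of $(\jump{w},\mean{w})$ times $(\tfrac{d}{dt}\jump{w},\tfrac{d}{dt}\mean{w})$. Differentiating the product rule, the dominant contribution is $\tfrac{27\jump{w}^2}{16\mean{w}^2}\tfrac{d^2}{dt^2}\jump{w}$, and using \eqref{eq:dt:dt:jumps:on:shock:are:close} with \eqref{eq:dt:dt:jump} to write $\tfrac{d^2}{dt^2}\jump{w} = -\tfrac12\bb^{3/2}t^{-3/2} + \OO(t^{-1})$, this yields $\tfrac{27\cdot 4\bb^3 t}{16\kappa^2}\cdot(-\tfrac12\bb^{3/2}t^{-3/2}) = -\tfrac{27\bb^{9/2}}{8\kappa^2}t^{-1/2}$; one must check that all cross terms — those involving $\tfrac{d}{dt}\jump{w}\cdot\tfrac{d}{dt}\jump{w} \sim t^{-1}$ multiplied by the $\jump{w}^2 \sim t$ prefactor and its $t$-derivatives — are of size $\OO(1)$, which holds because every such term carries at least one extra power of $\jump{w} \sim t^{1/2}$ or is multiplied by a bounded quantity. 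The analogous computation for $\kl$ gives $+\tfrac{24\bb^{9/2}}{\kappa^3}t^{-1/2} + \OO(1)$.

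The main obstacle I anticipate is purely organizational rather than conceptual: keeping track of which remainder terms are genuinely $\OO(t)$ versus $\OO(1)$ versus $\OO(t^{-1/2})$ through the repeated differentiations, and in particular confirming that the $\OO(\jump{w}^4)(|\tfrac{d}{dt}\jump{w}| + |\tfrac{d}{dt}\mean{w}|)$ error in \eqref{eq:xi:derivatives:zl:zl}, after one more $t$-differentiation, contributes only $\OO(1)$ (it gains $\jump{w}^4 \sim t^2$ which more than compensates the worst new factor $\tfrac{d^2}{dt^2}\jump{w} \sim t^{-3/2}$). I would also need to ensure the hypotheses on $\tfrac{d^2}{dt^2}\jump{w}$ in \eqref{eq:dt:dt:jumps:on:shock:are:close} suffice to control $\tfrac{d^2}{dt^2}\mean{w}$ appearing in the product rule; this follows from \eqref{eq:dt:dt:jump} plus \eqref{eq:dt:dt:jumps:on:shock:are:close}. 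Throughout, $\bar\eps$ is shrunk as needed so that all the $\OO(t^{1/2})$ corrections in \eqref{eq:J:M:rough}-type expansions are dominated, and so that $|\el|, |\kl| \leq \tfrac12$ keeping $e^{\pm\kl} = 1 + \OO(t^{3/2})$ valid.
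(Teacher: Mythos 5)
Your overall strategy is exactly the paper's: differentiate the implicit jump system of Lemma~\ref{lem:zl:kl:Lip} once more in $t$ (the paper's \eqref{eq:want:to:vomit}), then substitute the Burgers asymptotics \eqref{eq:the:burgers:jumps} together with the closeness hypotheses. Verifying the hypotheses of Lemma~\ref{lem:zl:kl:Lip} and the first-derivative bookkeeping are both correct and match the paper.

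However, there is a genuine error in your second-derivative bookkeeping. When you differentiate $\tfrac{27\jump{w}^2}{16\mean{w}^2}\tfrac{d}{dt}\jump{w}$ once more in $t$, the product rule produces \emph{two} terms of size $t^{-1/2}$, not one: in addition to $\tfrac{27\jump{w}^2}{16\mean{w}^2}\tfrac{d^2}{dt^2}\jump{w}$, there is the term $\tfrac{27\jump{w}}{8\mean{w}^2}\bigl(\tfrac{d}{dt}\jump{w}\bigr)^2$ coming from hitting the prefactor. This second term scales as $\jump{w}\bigl(\tfrac{d}{dt}\jump{w}\bigr)^2 \sim t^{\frac 12}\cdot t^{-1} = t^{-\frac 12}$ and is \emph{not} $\OO(1)$, contrary to what you claim. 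Your justification --- ``every such term carries at least one extra power of $\jump{w}\sim t^{1/2}$ or is multiplied by a bounded quantity'' --- is precisely the reason the term is only $t^{-1/2}$ rather than $t^{-3/2}$, but it does not make the term bounded. In fact this cross term dominates: at leading order it equals $\tfrac{27 \bb^{9/2}}{4\kappa^2}\,t^{-1/2}$, which is twice the magnitude and opposite in sign to the $\tfrac{27\jump{w}^2}{16\mean{w}^2}\tfrac{d^2}{dt^2}\jump{w}$ contribution $-\tfrac{27 \bb^{9/2}}{8\kappa^2}\,t^{-1/2}$; their \emph{sum} $+\tfrac{27\bb^{9/2}}{8\kappa^2}\,t^{-1/2}$ (with the overall minus from $\zl \approx - \tfrac{9\jump{w}^3}{16\mean{w}^2}$) is what produces the correct coefficient in \eqref{eq:dt:dt:zl:kl:on:shock}. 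As written, your plan would retain only the subdominant of the two $t^{-1/2}$ terms and would therefore land on the wrong coefficient and the wrong sign. The same issue appears in your $\kl$ line: the cross term $\tfrac{24\jump{w}}{\mean{w}^3}\bigl(\tfrac{d}{dt}\jump{w}\bigr)^2 \sim \tfrac{48\bb^{9/2}}{\kappa^3} t^{-1/2}$ cannot be dropped, and it is precisely its combination with $\tfrac{12\jump{w}^2}{\mean{w}^3}\tfrac{d^2}{dt^2}\jump{w} \sim -\tfrac{24\bb^{9/2}}{\kappa^3} t^{-1/2}$ that yields $\tfrac{24\bb^{9/2}}{\kappa^3} t^{-1/2}$. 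This is exactly the content of the paper's intermediate identities \eqref{eq:want:to:vomit:zl:twice}--\eqref{eq:want:to:vomit:kl:twice}, which track the combination $2(\tfrac{d}{dt}\jump{w})^2 + \jump{w}\tfrac{d^2}{dt^2}\jump{w}$ explicitly.
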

\begin{proof}[Proof of Corollary~\ref{cor:jumps:abstract}]
Recall that by assumption the bound \eqref{eq:J:M:rough} holds, and thus by Lemma~\ref{lem:zminus-on-shock} we have the estimate \eqref{eq:zl:and:kl:on:shock:L:infinity}.
The assumption \eqref{eq:dt:jumps:on:shock:are:close} and the bound \eqref{eq:dt:jump} imply that 
\begin{align}
\sabs{\tfrac{d}{dt} \jump{w} - \bb^{\frac 32} t^{-\frac 12}} + \sabs{\tfrac{d}{dt} \mean{w}} \leq
3  {\mm^4} +3 \Rsf \,, 
\label{eq:spanish:sparkling:wine}
\end{align}
and thus the right sides of \eqref{eq:xi:t:derivatives:zl} and \eqref{eq:xi:t:derivatives:kl} are  $\OO(t^{\frac 32})$. For the bound on the time derivative of $\zl$, we appeal to \eqref{eq:xi:t:derivatives:zl}, which gives
\begin{align*}
\abs{\tfrac{d}{dt} \zl  + \tfrac{27 \jump{w}^2}{16 \mean{w}^2} \tfrac{d}{dt} \jump{w} - \tfrac{9 \jump{w}^3}{8\mean{w}^3} \tfrac{d}{dt} \mean{w}  } \leq C t^{\frac 32}\,.
\end{align*}
Incorporating into the above estimate the bounds \eqref{eq:spanish:sparkling:wine} and \eqref{eq:J:M:rough}, we arrive at the $\zl$ bound in~\eqref{eq:dt:zl:kl:on:shock}.
The time derivative of $\kl$ is bounded by appealing to \eqref{eq:xi:t:derivatives:kl},
which yields
\begin{align*}
\abs{e^{\kl} \tfrac{d}{dt} \kl  - \tfrac{12 \jump{w}^2}{\mean{w}^3} \tfrac{d}{dt} \jump{w} + \tfrac{12 \jump{w}^3}{\mean{w}^4} \tfrac{d}{dt} \mean{w}  } \leq C t^{\frac 32}\,.
\end{align*}
Using \eqref{eq:spanish:sparkling:wine}, \eqref{eq:kl:on:shock:L:infinity}, and \eqref{eq:J:M:rough}, the $\kl$ bound in~\eqref{eq:dt:zl:kl:on:shock} now follows.

In order to prove \eqref{eq:dt:dt:zl:kl:on:shock}, we first note that assumption \eqref{eq:dt:dt:jumps:on:shock:are:close} and the bound \eqref{eq:dt:dt:jump} imply that
\begin{align}
 \sabs{\tfrac{d^2}{dt^2} \jump{w} + \tfrac 12 \bb^{\frac 32} t^{-\frac 32}}  
 + 
 \sabs{\tfrac{d^2}{dt^2} \mean{w}}
\leq 3 \left(  {5 \mm^4} + \Rsf^*\right) t^{-1}  \,.
\label{eq:italian:sparkling:wine}
\end{align} 
Next, we implicitly differentiate \eqref{eq:zminus:kminus} a second time, to obtain
\begin{align}
&\frac{d^2}{dt^2} \left(
\begin{matrix}
 \zl \\
 \el
\end{matrix}\right) 
+
 \left(\begin{matrix}
\p_{\zl} {\mathcal E}_1   &  \p_{\el} {\mathcal E}_1     \\
\p_{\zl} {\mathcal E}_2   &  \p_{\el} {\mathcal E}_2       
\end{matrix}\right)^{-1} 
\left(\begin{matrix}
\p_{\vl} {\mathcal E}_1   &  \p_{\vr} {\mathcal E}_1 \\
\p_{\vl} {\mathcal E}_2   &  \p_{\vr} {\mathcal E}_2  
\end{matrix}\right)
\frac{d^2}{dt^2}
\left(\begin{matrix}
 \vl \\
  \vr
\end{matrix}\right) \notag\\ 
&\qquad  + \left(\begin{matrix}
\p_{\zl} {\mathcal E}_1   &  \p_{\el} {\mathcal E}_1     \\
\p_{\zl} {\mathcal E}_2   &  \p_{\el} {\mathcal E}_2       
\end{matrix}\right)^{-1} 
\left(\begin{matrix}
\p_{\vl\vl} {\mathcal E}_1 &   \p_{\vl\vr} {\mathcal E}_1  & \p_{\vr\vr} {\mathcal E}_1   \\
\p_{\vl\vl} {\mathcal E}_2 &   \p_{\vl\vr} {\mathcal E}_2  & \p_{\vr\vr} {\mathcal E}_2   \\
\end{matrix}\right)
\left(\begin{matrix}
(\frac{d}{dt} \vl)^2   \\
2 \frac{d}{dt} \vl \frac{d}{dt} \vr \\
(\frac{d}{dt} \vr)^2  
\end{matrix}\right)
\notag\\
&\qquad = - \left(\begin{matrix}
\p_{\zl} {\mathcal E}_1   &  \p_{\el} {\mathcal E}_1     \\
\p_{\zl} {\mathcal E}_2   &  \p_{\el} {\mathcal E}_2       
\end{matrix}\right)^{-1} 
\left(\begin{matrix}
\p_{\zl\zl} {\mathcal E}_1 &    \p_{\zl\kl} {\mathcal E}_1  & \p_{\kl\kl} {\mathcal E}_1   \\
\p_{\zl\zl} {\mathcal E}_2 &    \p_{\zl\kl} {\mathcal E}_2  & \p_{\kl\kl} {\mathcal E}_2
\end{matrix}\right)
\left(\begin{matrix}
(\frac{d}{dt} \zl)^2   \\
2 \frac{d}{dt} \zl \frac{d}{dt} \kl \\
(\frac{d}{dt} \kl)^2  
\end{matrix}\right)
\notag\\
&\qquad  - 2\left(\begin{matrix}
\p_{\zl} {\mathcal E}_1   &  \p_{\el} {\mathcal E}_1     \\
\p_{\zl} {\mathcal E}_2   &  \p_{\el} {\mathcal E}_2       
\end{matrix}\right)^{-1} 
\left(\begin{matrix}
\p_{\zl\vl} {\mathcal E}_1 
& \p_{\zl\vr} {\mathcal E}_1     
&\p_{\kl\vl} {\mathcal E}_1 
& \p_{\kl\vr} {\mathcal E}_1 
\\
\p_{\zl\vl} {\mathcal E}_2 
& \p_{\zl\vr} {\mathcal E}_2     
&\p_{\kl\vl} {\mathcal E}_2 
& \p_{\kl\vr} {\mathcal E}_2 
\end{matrix}\right)
\left(\begin{matrix}
\frac{d}{dt} \zl \frac{d}{dt} \vl \\
\frac{d}{dt} \zl \frac{d}{dt} \vr \\  
\frac{d}{dt} \kl \frac{d}{dt} \vl \\
\frac{d}{dt} \kl \frac{d}{dt} \vr
\end{matrix}\right)
\,.
\label{eq:want:to:vomit}
\end{align}
By appealing to \eqref{eq:Matrix:Masturbation:1}, 
\eqref{eq:xi:t:derivatives:zl:kl}, and \eqref{eq:J:M:rough}, similarly to \eqref{eq:xi:derivatives:zl:zl} we deduce that the right side of \eqref{eq:want:to:vomit} equals
\begin{align}
&\left(\begin{matrix}
 \frac{3\mean{w}}{16 \jump{w}}  \frac{d}{dt} \zl \frac{d}{dt} \kl  +  \frac{\mean{w}^2}{16 \jump{w}}(\frac{d}{dt} \kl)^2  \\
0  
 \end{matrix} \right)
+  \OO\left( \left(| \tfrac{d}{dt} \zl| + | \tfrac{d}{dt} \kl|\right)^2 \right)
\notag\\
&+
\left( \begin{matrix}  
- \frac{8}{\mean{w}} \tfrac{d}{dt} \mean{w}  +  \frac{2 \jump{w}^2 - 2 \mean{w}^2}{\mean{w}^2 \jump{w}}  \tfrac{d}{dt} \jump{w} 
& \frac{27 \jump{w}^2 - 12 \mean{w}^2}{32 \jump{w} \mean{w}}  \tfrac{d}{dt} \jump{w} + \frac{3}{8} \tfrac{d}{dt} \mean{w} \\
- \frac{24 \jump{w}}{\mean{w}^3} \tfrac{d}{dt} \jump{w} 
&- \frac{21 \jump{w}}{2 \mean{w}^2} \tfrac{d}{dt} \jump{w}  - \frac{12}{\mean{w}} \tfrac{d}{dt} \mean{w}   
\end{matrix}\right) 
\left(\begin{matrix}
\tfrac{d}{dt} \zl\\
\tfrac{d}{dt} \kl
\end{matrix}\right)
\notag\\
&+ \OO \left( \jump{w}^2 \left( \sabs{\tfrac{d}{dt}  \jump{w}} + \sabs{\tfrac{d}{dt}  \mean{w}}\right) \left(| \tfrac{d}{dt} \zl| + | \tfrac{d}{dt} \kl|\right)  \right)
\,.
\label{eq:want:to:vomit:again}
\end{align}
Similarly, one may verify that the sum of the last two terms on the left side of \eqref{eq:want:to:vomit} is given by
\begin{align}
&\left(\begin{matrix}
\frac{27 \jump{w}^2}{16 \mean{w}^2} \tfrac{d^2}{dt^2} \jump{w} -  \frac{9 \jump{w}^3}{8 \mean{w}^3}  \tfrac{d^2}{dt^2} \mean{w}  \\
- \frac{12 \jump{w}^2}{\mean{w}^3} \tfrac{d^2}{dt^2} \jump{w}  + \frac{12 \jump{w}^3}{\mean{w}^4} \tfrac{d^2}{dt^2} \mean{w}   
\end{matrix}\right) + 
\left(\begin{matrix}
 \frac{9 \jump{w}}{4 \mean{w}^2} ( \tfrac{d}{dt} \jump{w})^2 + \frac{27  \jump{w}^2}{2 \mean{w}^3} \tfrac{d}{dt} \jump{w} \tfrac{d}{dt} \mean{w} \\
- \frac{24 \jump{w}}{\mean{w}^3} ( \tfrac{d}{dt} \jump{w})^2 + \frac{72 \jump{w}^2}{ \mean{w}^4} \tfrac{d}{dt} \jump{w} \tfrac{d}{dt} \mean{w} 
\end{matrix}\right) \notag\\
&
+  \OO\left( \jump{w}^3 \left( \sabs{\tfrac{d}{dt}  \jump{w}} + \sabs{\tfrac{d}{dt}  \mean{w}}\right) \right)
\label{eq:want:to:vomit:again:and:again}
\end{align}
where the implicit constants only depend on $\kappa, \bb, \cc$, and $\mm$.

To conclude we use the bounds  \eqref{eq:J:M:rough},  \eqref{eq:spanish:sparkling:wine}, \eqref{eq:italian:sparkling:wine}, \eqref{eq:zl:and:kl:on:shock:L:infinity}, and \eqref{eq:dt:zl:kl:on:shock}
in the equality given by \eqref{eq:want:to:vomit}, \eqref{eq:want:to:vomit:again}, and \eqref{eq:want:to:vomit:again:and:again}, to arrive at 
\begin{align}
\abs{\tfrac{d^2}{dt^2} \zl + \tfrac{27\jump{w} }{16 \mean{w}^2} \left(2 (\tfrac{d}{dt} \jump{w})^2 + \jump{w}  \tfrac{d^2}{dt^2} \jump{w} \right)} 
&\leq  C  
\label{eq:want:to:vomit:zl:twice}
\end{align}
and by also appealing to $\frac{d^2}{dt^2} \kl = e^{-\kl} \frac{d^2}{dt^2} \el - (\frac{d}{dt}\kl)^2$ we obtain
\begin{align}
\abs{e^{\kl} \tfrac{d^2}{dt^2} \kl - \tfrac{12 \jump{w}}{\mean{w}^3}  \left( 2 (\tfrac{d}{dt} \jump{w})^2 + \jump{w} \tfrac{d^2}{dt^2} \jump{w} \right) } 
&\leq  C  t^{\frac 12} 
\,,
\label{eq:want:to:vomit:kl:twice}
\end{align}
where $C  = C  (\kappa,\bb,\cc,\mm)>0$. To conclude, we combine \eqref{eq:want:to:vomit:zl:twice}--\eqref{eq:want:to:vomit:kl:twice} with the precise estimates for $\jump{w}$ and its first two time derivatives, cf.~\eqref{eq:J:M:rough}, \eqref{eq:spanish:sparkling:wine}, and \eqref{eq:italian:sparkling:wine} and arrive at \eqref{eq:dt:dt:zl:kl:on:shock}.
\end{proof}

\subsection{Transport structure, spacetime regions, and characteristic families}
\label{sec:transport}
 
\subsubsection{A new form of the $w$ and $z$ equations}
We first observe that using \eqref{xland-k} and recalling that $ c= \tfrac{1}{2} (w-z)$, we can write the system \eqref{eq:w:z:k:a} as
\begin{subequations} 
\label{eq:wzka}
\begin{align}
\p_t w + \lambda_3 \p_\theta w & = - \tfrac{8}{3}  a w  + \tfrac{1}{4}  c( \p_t k + \lambda _3 \p_\theta k)   \,,  \label{xland-w2} \\
\p_t z + \lambda_1 \p_\theta z & = - \tfrac{8}{3}  a z -   \tfrac{1}{4}  c( \p_t k + \lambda _1 \p_\theta k)   \,,  \label{xland-z2} \\
\partial_t k   + \lambda_2 \p_\theta k & = 0  \,, \label{xland-k2} \\
\partial_t a   + \lambda_2  \p_\theta a & = - \tfrac43 a^2 + \tfrac{1}{3} (w+ z)^2 - \tfrac{1 }{6} (w- z )^2  \,, \label{xland-a2} 
\end{align}
\end{subequations} 
Our iteration scheme will be based on \eqref{eq:wzka}, and in particular on the estimates for $\p_\theta w$ that the specific form of the equations
 \eqref{xland-w2} and  \eqref{xland-z2} provide.     It will be convenient to introduce the vector of unknowns
\begin{align} 
U = (w,z,k,a) \, \label{U}
\end{align} 
 
 \subsubsection{Characteristic families, shock-intersection times, spacetime regions}
  Recalling the definition of the wave speeds \eqref{eq:wave-speeds}, we let $\eta$ denote the $3$-characteristic which satisfies
\begin{subequations} 
\begin{align} 
 \p_t \eta(x,t)  = \lambda_3(\eta(x,t),t)\,, \qquad \eta(x,0) =x \,,
 \label{Flow0}
\end{align} 
for $t\in (0,\bar \eps)$. 
We also define the $1$- and $2$-characteristics as 
\begin{alignat}{2}
\p_s \psi_t (\theta,s) &=  \lambda_1  ( \psi_t (\theta,s),s) \,, \qquad   \psi_t   (\theta,t)&=\theta  \,, \label{Flow1} \\
\p_s \phi_t  (\theta,s)  &=  \lambda_2  ( \phi_t  (\theta,s),s) \,,  \qquad     \phi_t   (\theta,t)&=\theta\,,  \label{Flow2}
\end{alignat}
\end{subequations} 
for $s\in (0,t)$. We note that $\eta$ has a prescribed initial datum at time $0$, while $\pt$ and $\pst$ have a prescribed terminal datum, at time $t$. Moreover, note that as opposed to $\eta$, the characteristics $\pt$ and $\pst$ may cross the shock curve $(\sc(t),t)_{t\in[0,\bar \eps]}$ in a continuous fashion; this will be shown to be possible because $\lambda_1$ and $\lambda_2$ have bounded {\em one-sided} derivatives on the shock.  

\begin{definition} 
\label{def:stopping:times}
For $(\theta,t) \in \TT \times [0,\bar \eps]$ consider the integral curves $ \psi_t  (\theta,s)$ and $\pt (\theta,s)$ defined by the ODEs \eqref{Flow1}--\eqref{Flow2}. If the curves $(\pst(\theta,s),s)_{s\in [0,t]}$ and $(\sc(s),s)_{s\in[0,t]}$, respectively $(\pt(\theta,s),s)_{s\in [0,t]}$ and $(\sc(s),s)_{s\in[0,t]}$, intersect then we define the {\it shock-intersection  times} $\st(\theta,t)$  and $\stt(\theta,t)$ as the  (largest)  time at which 
\begin{align} 
 \psi_t (\theta, \stt (\theta,t)) = \sc( \stt (\theta,t)) \,, \qquad \mbox{and} \qquad 
  \phi_t (\theta, \st (\theta,t)) = \sc( \st (\theta,t)) \,.  
  \label{stoppingtimes}
\end{align}  
If the curves $(\pst(\theta,s),s)_{s\in [0,t]}$ and $(\sc(s),s)_{s\in[0,t]}$, respectively $(\pt(\theta,s),s)_{s\in [0,t]}$ and $(\sc(s),s)_{s\in[0,t]}$, do not intersect, then we overload notation and define $\stt(\theta,t) = \bar \eps$, respectively $\st(\theta,t) = \bar \eps$.
\end{definition} 
Implicit in the above definition is the assumption that if the characteristics $\pst(\theta,\cdot)$ or $\pt(\theta,\cdot)$ intersect the shock curve, then they do so only once; we will indeed prove this holds, due to the {\em transversality} of these characteristics.

\begin{figure}[htb!]
\centering
\begin{tikzpicture}[scale=2.5]
\draw [<->,thick] (0,2.4) node (yaxis) [above] { } |- (3.5,0) node (xaxis) [right] { };
\draw[black,thick] (0,0)  -- (-2,0) ;
\draw[black,thick] (-2,2)  -- (3.5,2) ;
\draw[name path=A,red,ultra thick] (0,0) .. controls (1,0.5) and (2,1.5) .. (3,2);
\draw[red] (3,1.9) node { $\sc$}; 
\draw[name path = B,blue,ultra thick] (0,0) .. controls (0.5,0.85) and (1,1.25) .. (1.5,2) ;
\draw[blue] (1.3,1.9) node { $\sc_2$}; 
\draw[green!40!gray, ultra thick] (0,0) .. controls (.2,1) and (0.4,1.5) .. (.5,2) ;
\draw[green!40!gray] (.4,1.9) node { $\sc_1$}; 
\draw[black] (-2.4,0) node { $t=0$}; 
\draw[black] (-2.4,2) node { $t = \bar \eps$}; 

\draw[black,dotted] (1.8,0)  -- (1.8,1.7);
\draw[black] (-0.1,1.7) node { $t$}; 
\draw[black,dotted] (0,1.7)  -- (1.8,1.7);
\draw[black] (1.8,-0.1) node { $\theta$}; 
\filldraw[black] (1.8,1.7) circle (0.5pt);

\draw[blue] (0.65,0.38) .. controls (0.9,0.77) and (1.5,1.4) .. (1.8,1.7) ;
\draw[blue] (0.35,0) .. controls (0.44,0.12) and (0.52,0.24) .. (0.65,0.38)  ;
\filldraw[blue] (0.65,0.38) circle (0.5pt);
\draw[blue,dotted] (0.65,0.38)  -- (0.65,0);
\draw[blue,dotted] (0.65,0.38)  -- (0,0.38);
\draw[blue] (0.65,-0.1) node { $\sc(\st(\theta,t))$}; 
\draw[blue] (-0.2,0.38) node { $\st(\theta,t)$}; 
\draw[dotted,->,blue] (2,0.5) -- (0.98,0.79);
\draw[blue] (2.3,0.5) node { $\phi_t(\theta,s)$}; 

\draw[red] (-1.5,0) .. controls (0.5,0.9) and (1,1.25) .. (1.8,1.7) ;
\filldraw[red] (-1.5,0) circle (0.5pt) node[anchor=north] {$x$};
\draw[red] (-1.3,0.65) node { $\eta(x,s)$}; 
\draw[dotted,->,red] (-1.3,0.55) -- (-1,0.27);

\draw[green!40!gray] (1.5,1) .. controls (1.65,1.35) .. (1.8,1.7) ;
\draw[green!40!gray] (1.1,0) .. controls (1.24,0.33) and (1.37,0.66) .. (1.5,1) ;

\filldraw[green!40!gray] (1.5,1) circle (0.5pt);
\draw[green!40!gray,dotted] (1.5,1)  -- (1.5,0);
\draw[green!40!gray,dotted] (1.5,1)  -- (0,1);
\draw[green!40!gray] (1.4,-0.1) node { $\sc(\stt(\theta,t))$}; 
\draw[green!40!gray] (-0.2,1) node { $\stt(\theta,t)$}; 
\draw[dotted,->,green!40!gray] (2.45,1) -- (1.7,1.42);
\draw[green!40!gray] (2.5,0.9) node { $\psi_t(\theta,s)$}; 

\end{tikzpicture}
\vspace{-0.2cm}
\caption{\footnotesize Fix a spatial location $(\theta,t)$, just to the left of the given shock curve $\sc$, which is represented in red. The flow $\eta(x,s)$ defined in \eqref{Flow0}, and the label $x$ such that $\eta(x,t) = \theta$, are also represented in red. The flow $\pt(\theta,s)$ defined in \eqref{Flow2}, its associated shock-intersection time $\st(\theta,t)$ from \eqref{stoppingtimes}, and the curve $\sc_2$ from \eqref{stop-time12}, are represented in blue. The flow $\pst(\theta,s)$ defined in \eqref{Flow1}, its associated  shock-intersection time $\stt(\theta,t)$ from \eqref{stoppingtimes}, and the curve $\sc_2$ from \eqref{stop-time12}, are represented in green.}
\label{fig:many:characteristics:1}

\end{figure}
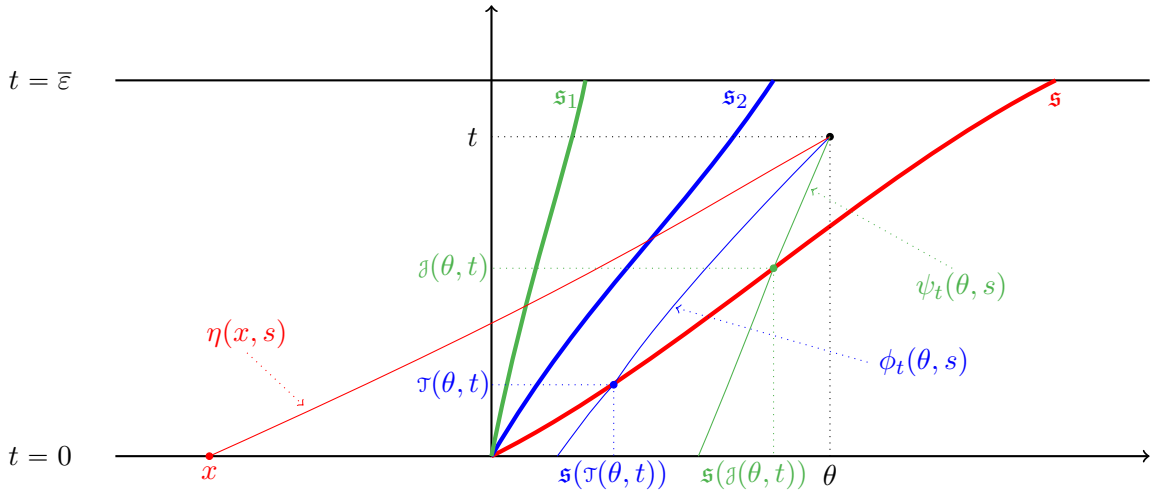

\begin{definition}\label{s1s2}
Define ${\mathring \theta}_1, {\mathring \theta}_2 \in \TT$ implicitly by the  equations $\stt({\mathring \theta}_1,\bar \eps) = 0$ and $\st({\mathring \theta}_2,\bar \eps) = 0$.
For all $t\in [0,\bar \eps]$ we define
\begin{align}
\sc_1(t) = \psi_{\bar\eps}({\mathring \theta}_1,t) \,, \qquad \mbox{and} \qquad 
\sc_2(t) = \phi_{\bar\eps}({\mathring \theta}_2,t) \,.\label{stop-time12}
\end{align}
In particular, $\sc_1(0) = \sc_2(0) = 0$, and $\stt(\sc_1(t),t) = \st(\sc_2(t), t ) = 0$.
The spacetime curves  $\sc_1(t)$,  $\sc_2(t)$,  and $\sc(t)$, divide the spacetime region $ \mathcal{D} _{\bar \eps}$ into four regions with
distinct behavior.
We also define the  sets
\begin{align*} 
\mathcal{D} ^{z} _{\bar \eps} & = \{ (\theta, s) \in \mathcal{D}_{\bar \eps} \colon \sc_1(s) < \theta < \sc(s)\,, s\in(0,\bar\eps]  \} \,, \\
\mathcal{D} ^{k} _{\bar \eps} & = \{ (\theta, s) \in \mathcal{D}_{\bar \eps} \colon \sc_2(s) < \theta < \sc(s)\,, s\in(0,\bar\eps]  \} \,.
\end{align*} 
\end{definition}
Implicit in the above definition is the assumption that the points ${\mathring \theta}_1$ and ${\mathring \theta}_2$ exist, and are uniquely defined; we will indeed prove that this holds, due to the {\em monotonicity} of $\pst(\theta,s)$ and $\pt(\theta,s)$ with respect $\theta$, and the the regularity of these curves with respect to $y$ and $s$.

\begin{definition} 
It is convenient to define  the vectors
\begin{align} 
U =(w,z,k,c,a) \qquad \mbox{and} \qquad  \uu(t) =  (w,z,k,c)(\sc(t)^-,t)  = (w_-,\zl,\kl,\cl)(t)  \,.  \label{uu}
\end{align} 
\end{definition} 

\begin{remark}[Notation for derivatives]
Throughout the remainder of manuscript we shall interchangeably use the following notations for the derivatives of various functions $f$ with respect to the Lagrangian label $x$ or the Eulerian variable $\theta$: $\p_x f \leftrightarrow f_x, \p_{x}^2 f \leftrightarrow f_{xx}, \p_\theta f \leftrightarrow f_\theta, \p_{\theta}^2 f \leftrightarrow f_{\theta\theta}$. Similarly, we shall sometimes denote time derivatives as $\p_t f \leftrightarrow f_t$. Derivatives for function restricted to the shock curve, shall be denoted as $\frac{d}{dt} ( f(\sc(t),t)) = \dot f|_{(\sc(t),t)}$; this notation for instance shall be used for the function $\uu$ defined in \eqref{uu}.
\end{remark}

\subsubsection{Identitities up to the first derivative for   $w$, $z$, $k$, and $a$}
 There are particularly useful forms of the equations for $w$, $z$, $k$, and $a$ and their first derivatives.  These identities will be used both for designing 
 a simple iteration scheme for the construction of unique solutions, and also for second derivative estimates in Section \ref{sec:C2}.
 
 \vspace{.05in}
 \noindent{\bf Identies for $w$.}
Equation \eqref{xland-w2} can then be written as
\begin{align} 
\tfrac{d}{dt} (w \circ \eta) =  \tfrac{1}{4}   c \circ \eta \tfrac{d}{dt} (k \circ \eta) - \tfrac{8}{3}  (a w) \circ \eta \,. \label{w0}
\end{align} 
Differentiating this equation,  we find that
\begin{align} 
\tfrac{d}{dt} (w_\theta \circ \eta \ \eta_x ) & = \tfrac{1}{4}   c \circ \eta \tfrac{d}{dt} (k_\theta \circ \eta \ \eta_x)
+ \tfrac{1}{4}   c_\theta \circ \eta \ \eta_x ( k_t + \lambda _3  k_\theta) \circ \eta - \tfrac{8}{3} \p_\theta (aw) \circ \eta \ \eta_x \notag \\
& = \tfrac{1}{4} \tfrac{d}{dt} (c \circ \eta \ k_\theta \circ \eta \ \eta_x) - \tfrac{1}{4}   ( c_t + \lambda _3 c_\theta) \circ \eta \ k_\theta \circ \eta \ \eta_x 
\notag\\
& \qquad\qquad
+ \tfrac{1}{4}   c_\theta \circ \eta \ \eta_x (\p_t k + \lambda _3 \p_\theta  k) \circ \eta - \tfrac{8}{3} \p_\theta (aw) \circ \eta \ \eta_x \notag \\
& = \tfrac{1}{4} \tfrac{d}{dt} \bigl( (c k_\theta) \circ \eta \ \eta_x\bigr) + \tfrac{1}{6}   \bigl(c k_\theta(z_\theta +c_\theta +4a) \bigr) \circ \eta  \ \eta_x 
 - \tfrac{8}{3} (aw)_\theta \circ \eta \ \eta_x \,. \label{prelim-dtwx}
\end{align}

To obtain the last equality, we have used that \eqref{xland-sigma} can be written as
\begin{align*} 
\p_t c + \lambda _3 \p_\theta  c = -  \tfrac{2}{3}  c \p_\theta  z - \tfrac{8}{3} c a \,,
\end{align*} 
and that
$\p_t k = - \lambda _2\p_\theta  k$ with the fact that $\lambda _3 - \lambda _2 =  \frac{2}{3}  c$.   Integrating \eqref{prelim-dtwx} in time,
we obtain that
\begin{align} 
w_\theta \circ \eta  =\tfrac{ w_0'}{ \eta_x} + \tfrac{1}{4} (  c k_y) \circ \eta 
+\tfrac{1}{\eta_x} \!\! \int_0^t \Bigl(  \tfrac{1}{6}   c k_\theta (z_\theta + c_y+4a) - \tfrac{8}{3} \p_\theta (aw) \Bigr)\! \circ \!\eta  \ \eta_xdt' \,.
\label{prelim-wx}
\end{align} 
We wish to emphasize that although \eqref{xland-w} appears to have derivative loss on the right side, the structure of 
\eqref{xland-w2} leads to the identity \eqref{prelim-wx} which shows that there is, in fact, no such loss incurred.

Notice that by expanding the time derivative in \eqref{w0} and using \eqref{xland-k2}, we find that
\begin{align*} 
\p_t w \circ \eta = - w_\theta \circ \eta \ \lambda _3 \circ \eta + \tfrac{1}{6} c^2 k_\theta \circ \eta - \tfrac{8}{3}  a w \circ \eta
\end{align*} 
It follows that
\begin{align} 
(\p_t w+ \dot\sc \p_\theta w) \circ \eta
&= (\dot\sc - \lambda_3 \circ \eta) w_\theta \circ \eta  + \tfrac{1}{6} c^2 k_\theta \circ \eta - \tfrac{8}{3}  a w \circ \eta \notag \\
& = \tfrac{ w_0'}{ \eta_x}  (\dot\sc - \lambda_3 \circ \eta)+  \tfrac{1}{4} (  c k_\theta) \circ \eta  (\dot\sc - \lambda_3 \circ \eta)
+ \tfrac{1}{6} c^2 k_\theta \circ \eta - \tfrac{8}{3}  a w \circ \eta \notag \\
&\qquad 
+\tfrac{(\dot\sc - \lambda_3 \circ \eta)}{\eta_x} \!\! \int_0^t \Bigl(  \tfrac{1}{6}   c k_\theta (z_\theta + c_\theta + 4a) - \tfrac{8}{3} \p_\theta (aw) \Bigr)\! \circ \!\eta  \ \eta_xdt' \,. \label{for-dt2-sc-bound}
\end{align}

\vspace{.05in}
 \noindent{\bf Identies for $z$ and $k$.}
 Equation \eqref{xland-z2} can then be written as
\begin{align} 
\tfrac{d}{ds} (z \circ \pst) =  - \tfrac{1}{4}   c \circ \pst \tfrac{d}{ds} (k \circ \pst) - \tfrac{8}{3}  (a z) \circ \pst \,. \label{z0}
\end{align} 
Differentiating \eqref{z0}, a  similar identity to \eqref{prelim-wx} holds for $\p_\theta  z$.   The analogous 
computation to \eqref{prelim-dtwx} shows that
\begin{align} 
\tfrac{d}{ds} (z_\theta \circ \psi_t \p_\theta  \psi_t ) 
& = -\tfrac{1}{4} \tfrac{d}{ds} \bigl( (c  \ k_\theta) \circ \psi_t \p_\theta  \psi_t\bigr) - \Bigl(  \tfrac{1}{12}   c k_\theta (w_\theta + z_\theta + 8a) + \tfrac{8}{3} \p_\theta (a z) \Bigr)\! \circ\! \psi_t  \ \p_\theta  \psi_t \,, \label{prelim-dtwz}
\end{align} 
and thus, upon integration in time from $\stt(\theta,t)$ to $t$,  we find that
\begin{subequations} 
\label{prelim-zx}
\begin{align} 
z_\theta (y,t) & =\Bigl( \left( z_\theta(\sc(\stt),\stt) + \tfrac{1}{4} (  c k_\theta) (\sc(\stt),\stt)\right)   \p_\theta \psi_t ( \sc(\stt),\stt) 
+ \mathcal{F} _{z_\theta}(U,\pst,\stt)\Bigr)  (y,t) \,,  \\
 \mathcal{F} _{z_\theta}  (U,\pst,\stt) &=  - \tfrac{1}{4}   (c k_\theta )(\theta,t) 
- \!\! \int_{\stt(\theta,t)}^t \Bigl(  \tfrac{1}{12}   c k_\theta (w_\theta + z_\theta + 8a) + \tfrac{8}{3} \p_\theta (a z) \Bigr)\! \circ\! \psi_t  \ \p_\theta  \psi_t dt' \,. \label{Fzx}
\end{align} 
\end{subequations} Again, the identity \eqref{prelim-zx} shows that no derivative loss occurs for $\p_\theta  z$ as well.   This formula is not yet in its final form.  We
shall view the given shock curve $(\sc(t),t)$ as a Cauchy surface for both $z$ and $k$.  As such, we shall write the first term on the
right in \eqref{prelim-zx} in terms of the differentiated {\it data on the shock curve},  which we now make precise.

The transport equation \eqref{xland-k2} allows us to write
$\frac{d}{ds} ( k \circ \phi_t) =0$, so that integration from $\st(\theta,t)$ to $t$ shows that for all $(\theta,t) \in \mathcal{D}^k_{\bar \eps}$,
\begin{align} 
k(\theta,t) = k(\sc(\st(\theta,t)),\st(\theta,t)) \,. \label{the-k-identity}
\end{align} 
Differentiation then gives
\begin{align} 
\tfrac{d}{ds} ( \p_\theta k \circ \phi_t \ \p_\theta  \phi_t) =0\,,  \label{kx0}
\end{align} 
and integration using \eqref{Flow2} and \eqref{stoppingtimes}  shows that
\begin{align} 
\p_\theta  k(\theta,t) = \p_\theta  k( \sc( \st(\theta,t) ), \st(\theta,t)) \  \p_\theta  \phi_t( \sc( \st(\theta,t) ), \st(\theta,t)) \,. \label{kx-interior}
\end{align} 

Letting $\dot \kl (t):= \frac{d}{dt} \kl(t) $ denote differentiation along the shock curve, from 
\eqref{xland-k2} we have the coupled system
\begin{subequations} 
\label{systemk-dt}
\begin{align} 
\dot \kl (t) & = \p_t k ( \sc(t),t) + \dot \sc(t)  \p_\theta  k ( \sc(t),t) \,, \\
0& =\p_t k ( \sc(t),t) + \lambda_2 ( \sc(t),t) \p_\theta  k ( \sc(t),t) \,.
\end{align} 
\end{subequations} 
We see that
\begin{align} 
\p_\theta  k( \sc(t),t)  =\frac{ \dot \kl (t) }{ \dot \sc(t) - \lambda_2 ( \sc(t),t)} \,, \label{kx-gen}
\end{align} 
and thus with \eqref{stoppingtimes}, 
\begin{align} 
\p_\theta  k( \sc(\st(\theta,t)),\st(\theta,t))) = \frac{ \dot \kl (\st(\theta,t))) }{ \dot \sc(\st(\theta,t))) -   \p_s \phi_t(\theta, \st(\theta,t) )}  \,. \label{kx-shock}
\end{align} 
Substitution of \eqref{kx-shock} into \eqref{kx-interior} shows that for all $(\theta,t) \in \mathcal{D} ^k_{\bar \eps}$, 
\begin{align} 
\p_\theta  k(\theta,t)  = \frac{ \dot \kl (\st(\theta,t))) }{ \dot \sc(\st(\theta,t))) -   \p_s \phi_t(\theta, \st(\theta,t) )}  \  \p_\theta  \phi_t( \sc( \st(\theta,t) ), \st(\theta,t)) \,. \label{kx-final}
\end{align} 

Once again, we let $\dot \zl (t) $ denote differentiation along the shock curve so that using
\eqref{xland-z},  we obtain the coupled system
\begin{subequations} 
\label{systemz-dt}
\begin{align} 
\dot \zl (t) & = \p_t z ( \sc(t),t) + \dot \sc(t)  \p_\theta  z ( \sc(t),t) \,, \\
(\tfrac{1}{6} c^2 \p_\theta  k - {\tfrac 83 az})(\sc(t),t)  & =\p_t z ( \sc(t),t) + \lambda_1 ( \sc(t),t) \p_\theta  z ( \sc(t),t) \,.
\end{align} 
\end{subequations} 
Thus,
\begin{align} 
\p_\theta  z( \sc(t),t)  = \frac{ \dot \zl (t)  - \frac{1}{6}(c^2 \p_\theta  k)(\sc(t),t) + {\frac 83 (a z)(\sc(t),t)}}{ \dot \sc(t) - \lambda_1 ( \sc(t),t)} \,, \label{zx-gen}
\end{align} 
and hence with \eqref{kx-gen},
\begin{align} 
\p_\theta  z( \sc(\stt),\stt)  = \
 \frac{ \dot \zl (\stt)  
-  \tfrac{1}{6}    \tfrac{ \cl^2 (\stt)\dot \kl (\stt)  }{ \dot \sc(\stt) - \lambda_2 ( \sc(\stt),\stt)}  + {\frac 83  a_-(\stt) z_-(\stt)} }
{ \dot \sc(\stt) - \p_s \psi_t( \theta,\stt)} \,, \label{zx-good}
\end{align} 
where $\stt=\stt(\theta,t)$.
We can now substitute \eqref{kx-shock} and  \eqref{zx-good} into \eqref{prelim-zx} to conclude that 
\begin{align} 
\p_\theta  z &  =
\left( \frac{ \dot \zl (\stt)  
-  \frac{1}{6}   \tfrac{\cl^2(\stt) \dot \kl (\stt) }{ \dot \sc(\stt) - \lambda_2 ( \sc(\stt),\stt)}  + {\frac 83  a_-(\stt) z_-(\stt)} }
{ \dot \sc(\stt) - \p_s \psi_t( \sc(\stt),\stt)} 
+ \frac{1}{4} \frac{\cl(\stt) \dot \kl (\stt) }{ \dot \sc(\stt) - \lambda_2 ( \sc(\stt),\stt)}\right)
 \p_\theta \psi_t ( \sc(\stt),\stt)  +  \mathcal{F} _{z_\theta} 
\,,
\label{zx-final}
\end{align}
for any $(\theta,t) \in \mathcal{D} ^z_{\bar\eps}$.
We define
\begin{align} 
\mathcal{H}_{z_\theta} (\uu,\dot\uu,\pst,\stt):=\left( \frac{ \dot \zl (\stt)  
-  \frac{1}{6}   \tfrac{\cl^2(\stt) \dot \kl (\stt) }{ \dot \sc(\stt) - \lambda_2 ( \sc(\stt),\stt)}  + {\frac 83  a_-(\stt) z_-(\stt)} }
{ \dot \sc(\stt) - \p_s \psi_t( \sc(\stt),\stt)} 
+ \frac{1}{4} \frac{\cl(\stt) \dot \kl (\stt) }{ \dot \sc(\stt) - \lambda_2 ( \sc(\stt),\stt)}\right)
 \p_\theta \psi_t ( \sc(\stt),\stt) \,,   \label{Hzx}
\end{align} 
so that \eqref{zx-final} is concisely written as 
\begin{align} 
\p_\theta  z &  =\mathcal{H}_{z_\theta} (\uu,\dot\uu,\pst,\stt) +   \mathcal{F} _{z_\theta}  (U,\pst,\stt) \,, \label{zx-Final}
\end{align}
with $\mathcal{F} _{z_\theta} $ and  $ \mathcal{H}_{z_\theta}$ given by \eqref{Fzx} and  \eqref{Hzx}, respectively.

\vspace{.05in}
 \noindent{\bf Identies for $a$.}
We next obtain identities for $\p_\theta  a$, first in $ \mathcal{D}_{\bar \eps}$.  We write \eqref{xland-a2} as
$\partial_t a   + \lambda_2  \p_\theta  a  = - \tfrac43 a^2 +  \tfrac{1 }{6} (w^2+ z^2 ) + wz $.  
We consider this equation along the characteristics $\pt$ and integrate from time $s\in[0,t]$ to $t$ to find that
\begin{align} 
a (\theta,t) = a(\pt(\theta,s),s)  + \int_s^t  \Bigl(- \tfrac43  a^2  + \tfrac{1}{6}w^2 + \tfrac{1}{6}z^2 + w  z  \Bigr) \circ \pt  dr \,. \label{a-lag}
\end{align} 
Differentiation shows that
\begin{align} 
\p_\theta  a(\theta,t) = \p_\theta  a(\pt(\theta,s),s)  \p_\theta  \pt (\theta,s) 
+ \int_s^t \p_\theta  \Bigl(- \tfrac43  a^2 + \tfrac{1}{6}w^2 + \tfrac{1}{6}z^2 + w z  \Bigr) \circ \pt \  \p_\theta  \pt  dr \,. \label{dxa-lag}
\end{align}

\subsection{Construction of solutions by an iteration scheme}
\label{sec:construction:iteration}

\subsubsection{Wave speeds, characteristics, and stopping times}
 For each $n \ge 1$, 
 the three wave speeds are given by 
\begin{align}
\lambda_1^{(n)}  =  \tfrac{1}{3} w^{(n)} + z^{(n)}   \,, \qquad 
\lambda_2^{(n)}  = \tfrac{2}{3} w^{(n)}+  \tfrac 23 z^{(n)} \,, \qquad
\lambda_3^{(n)}  =  w^{(n)}+  \tfrac{1}{3}  z^{(n)} \,. \label{eq:cc}
\end{align}

For $n\ge 1$, we define  $\psi_t ^{(n)}  $ and $ \phi_t^{(n)}  $ as flows solving
\begin{subequations} 
\begin{alignat}{2}
\p_s \psi_t^{(n)}  (\theta,s) &=  \lambda_1^{(n)}   ( \psi_t^{(n)}  (\theta,s),s) \,, \qquad   \psi_t ^{(n)}  (\theta,t)&=\theta  \,, \label{flow1} \\
\p_s \phi_t ^{(n)}  (\theta,s)  &=  \lambda_2^{(n)}   ( \phi_t^{(n)}  (\theta,s),s) \,,  \qquad     \phi_t ^{(n)}  (\theta,t)&=\theta\,.  \label{flow2}
\end{alignat}
\end{subequations} 
We next define $\eta^{(n)}$ to be the solution of
\begin{align} 
\p_s \eta ^{(n)} (x,s) =  \lambda_3^{(n)} ( \eta^{(n)} (x,s),s) \,, \qquad \eta^{(n)} (x,0) =x  \,. \label{flow3}
\end{align} 

Using the characteristics $\pt^{(n)} $ and $ \pst ^{(n)} $, we define
the shock-intersection times $\st ^{(n)}(\theta,t)$ and $\stt ^{(n)} (\theta,t)$ as in Definition~\ref{def:stopping:times}.   Similarly, the curves $\sc_1^{(n)}(t)$ and 
$\sc_2^{(n)}(t)$ and the spacetime regions $\mathcal{D} ^{z,(n)} _{\bar \eps}$ and $\mathcal{D} ^{k,(n)} _{\bar \eps}$ are defined just as in Definition~\ref{s1s2}. The rigorous justification of these definitions is provided in Lemma~\ref{lem:12flows}.
 
\subsubsection{Specification of the first iterates}
We begin by defining the first iterate $\eta^{(1)}$  associated to the $3$-characteristic and
 $w ^{(1)}  $ as follows.  First, we set
\begin{align} 
\eta ^{(1)}  (x,s) = \etab(x,s) = x + s w_0(x) \,, \label{eta1}
\end{align} 
and then define 
\begin{align} 
w ^{(1)}(\theta,t) = \wb(\theta,t) =  w_0(\ie^{(1)}(\theta,t)  )\,, \qquad z ^{(1)}  =0 \,, \qquad k ^{(1)}  =0 \,, \qquad  a ^{(1)}  =a_0 \,, \label{wzka1}
\end{align} 
where $ \ie^{(1)}   := (\eta^{(1)} )^{-1} = \etab^{-1}$. 
We also define $\pst^{(1)}$ and $\pt^{(1)}$ via \eqref{flow1}--\eqref{flow2} as the characteristic flows of the velocity fields $\frac 13 \wb$ and respectively $\frac 23 \wb$.

\subsubsection{The iteration scheme for $w ^{(n+1)} $}
We can now state the iteration scheme for all $n \ge 1$.    We set
$$
c ^{(n)} = \tfrac{1}{2} (w ^{(n)} + z ^{(n)} ) \,,
$$
and define $ w ^{(n+1)} $ as  the solution to
\begin{align}
\tfrac{d}{dt} (w ^{(n+1)}  \circ \eta ^{(n)} ) = -{\tfrac{8}{3}} ( a^{(n)}  w^{(n)}  ) \circ \eta ^{(n)} 
+  \tfrac{1}{4}  c^{(n)} \circ \eta ^{(n)}  \tfrac{d}{dt} ( k^{(n)} \circ \eta ^{(n)} )  \,,
\label{dtwn-alt}
\end{align} 
with initial condition $w ^{(n+1)}  \circ \eta ^{(n)} (x,0) = w_0(x)$. 
Integrating in time shows that
\begin{align} 
 w ^{(n+1)} ( \eta ^{(n)}(x,t),t ) = w_0(x) 
 &- {\tfrac{8}{3}}\int_0^t     ( a^{(n)}  w^{(n)}   )( \eta ^{(n)}(x,t')  ,t') dt'  \notag \\
 & + {\tfrac{1}{4}} \int_0^t
 c ^{(n)} ( \eta ^{(n)}(x,t')  ,t')  \tfrac{d}{dt'} \left( k ^{(n)} ( \eta ^{(n)}(x,t')  ,t')\right) dt' \,.    \label{wn-alt}
\end{align} 
It follows that for all  $(\theta,t) \in  \DD_{\bar \eps}$, $w ^{(n+1)}$ is the solution to
\begin{subequations} 
\begin{align}
\p_s w ^{(n+1)} + \lambda_3 ^{(n)}  \p_\theta  w ^{(n+1)}  &=   - \tfrac{8}{3}a^{(n)} w^{(n)} 
+ \tfrac{1}{4}  c ^{(n)} \bigl( \p_t k^{(n)} + \lambda _3^{(n)}  \p_\theta  k^{(n)} \bigr) \,,  \label{xland-w:2} \\
w^{(n+1)} (x,0)&=w_0(x) \,.
\end{align} 
\end{subequations} 

In terms of the restrictions of $w^{(n+1)}$ on the left and right sides of shock curve, i.e. $w^{(n+1)}_-(t) = \lim_{\theta \to \sc(t)^-} w^{(n+1)}(\theta,t)$ and respectively $w^{(n+1)}_+(t) =  \lim_{\theta \to \sc(t)^+} w^{(n+1)}(\theta,t)$, via Lemma~\ref{lem:zminus-on-shock} we define the functions $z^{(n+1)}_-(t)$ and $k^{(n+1)}_-(t) $ as the solutions of the system of equations~\eqref{eq:zminus:kminus:def}
\begin{align}
{\mathcal E}_1(w_-^{(n+1)},w_+^{(n+1)}, z^{(n+1)}_-, \el^{(n+1)}) = {\mathcal E}_2(w_-^{(n+1)},w_+^{(n+1)}, z^{(n+1)}_-, \el^{(n+1)}) = 0 
\label{eq:hate:iterations}
\end{align}
and $k^{(n+1)}_- = \log(1 + \el^{(n+1)})$. 

\subsubsection{The iteration scheme for $a ^{(n+1)} $} 
For all  $n\ge 1$ and $(\theta,t) \in  \DD_{\bar \eps}$, we define $a ^{(n+1)}$ to be the solution of the Cauchy problem
\begin{subequations} 
\begin{align} 
\partial_t a^{(n+1)}   + \lambda_2^{(n)}  \p_\theta   a^{(n+1)} & = - \tfrac43   (a^{(n)} )^2  + \tfrac{1}{3}(w^{(n)} )^2 
+ \tfrac{1}{3}(z^{(n)} )^2 + w ^{(n)} z ^{(n)}  \,, \label{xland-a:2}  \\
a^{(n+1)}(x,0)&=a_0(x) \,.
\end{align}
\end{subequations} 
In view of \eqref{a-lag}, this function is explicitly given by
\begin{align}
a^{(n+1)}(\theta,t) = a_0 (\pt^{(n)}(\theta,0)) + \int_0^t  \Bigl(- \tfrac43  (a^{(n)})^2  + \tfrac{1}{6} (w^{(n)})^2 + \tfrac{1}{6}(z^{(n)})^2 + w^{(n)}  z^{(n)}  \Bigr)( \pt^{(n)}(\theta,s),s)  ds \label{eq:xland-a:2}  \,.
\end{align}

\subsubsection{The iteration scheme for $z ^{(n+1)} $}
For all $n \ge 1$,  and for all $(\theta,t) \in 
\mathcal{D}^{z,(n)} _{\bar \eps} $ we define $ z ^{(n+1)} $ to be the solution of the ODE
\begin{subequations}
\begin{align} 
\tfrac{d}{ds} \bigl( z ^{(n+1)} \circ \pst ^{(n)} \bigr) & = - \tfrac{8}{3}  \bigl(a^{(n)} z^{(n)}\bigr) \circ \pst^{(n)}  - \tfrac{1}{4}  c ^{(n)} \circ \pst^{(n)} 
\tfrac{d}{ds}  \bigl( k ^{(n)} \circ \pst ^{(n)} \bigr)  \,, \label{z-alt} 
\end{align} 
for all $s\in (\stt^{(n)}(\theta,t),t]$, 
with Cauchy data defined on the shock curve by
\begin{align}
z^{(n+1)}(\pst^{(n)}(\theta,\stt^{(n)}(\theta,t)), \stt^{(n)}(\theta,t))
= z^{(n+1)}(\sc(\stt^{(n)}(\theta,t))^-,\stt^{(n)}(\theta,t)) 
= z^{(n+1)}_-( \stt^{(n)}(\theta,t))
\end{align}
\end{subequations}
where the function $z^{n+1}_-$ is defined on the shock curve $(\sc(t),t)_{t\in [0,\bar \eps]}$ as the correct  root of \eqref{eq:hate:iterations} given by Lemma~\ref{lem:zminus-on-shock}. 
In Eulerian variables, we note that the equation \eqref{z-alt} is merely 
\begin{align}
\p_t z^{(n+1)} + \lambda_1^{(n)} \p_\theta  z^{(n+1)} & = - \tfrac{8}{3}  a^{(n)} z^{(n)} - \tfrac{1}{4}  c ^{(n)} ( \p_t k^{(n)} + \lambda _1^{(n)}  \p_\theta  k^{(n)} ) \label{xland-z:2}  
\end{align}
for $(\theta,t) \in \mathcal{D}^{z,(n)} _{\bar \eps} $.
On the other hand, for $(\theta,t) \in (\mathcal{D}^{z,(n)} _{\bar \eps} )^\complement$, we simply define
\begin{align}
z^{(n+1)} (\theta,t) =0
\end{align}
which corresponds to the solution of \eqref{z-alt} with $k^{(n)} \equiv 0$, and Cauchy data at $t=0$ given by $z_0 \equiv 0$.

\subsubsection{The iteration scheme for $k ^{(n)} $}
Having defined $w^{(n+1)} $ and $z^{(n+1)} $, we solve for $\pt^{(n+1)} $ using \eqref{flow2}. In turn, this defines the curve $\sc_2^{(n+1)}$, the shock intersection times $\st^{(n+1)}(\theta,t)$, and the region $\DD_{\bar \eps}^{k,(n+1)}$.

For $n \ge 1$ and  $(\theta,t) \in \mathcal{D}^{k,(n+1)} _{\bar \eps} $, we define $k ^{(n+1)} $ to be the solution of
\begin{subequations}
\begin{align} 
\tfrac{d}{ds} \bigl(k ^{(n+1)} \circ \pt ^{(n+1)} \bigr) & = 0 \,,  \label{k-alt}
\end{align} 
for all $s\in (\st^{(n+1)}(\theta,t),t]$, 
with Cauchy data defined on the shock curve by
\begin{align}
&k^{(n+1)}(\pt^{(n+1)}(\theta,\st^{(n+1)}(\theta,t)), \st^{(n+1)}(\theta,t)) \notag\\
&\qquad = k^{(n+1)}(\sc(\st^{(n+1)}(\theta,t))^-,\st^{(n+1)}(\theta,t)) 
= k^{(n+1)}_-( \st^{(n+1)}(\theta,t))
\end{align}
\end{subequations}
where the function $k^{n+1}_- = \log(1 + \el^{(n+1)})$ is defined on the shock curve $(\sc(t),t)_{t\in [0,\bar \eps]}$ as the correct  root of \eqref{eq:hate:iterations} given by Lemma~\ref{lem:zminus-on-shock}. In Eulerian variables, we note that the equation \eqref{k-alt} is the same as
\begin{align}
\p_t k^{(n+1)} + \lambda_2^{(n+1)} \p_\theta  k^{(n+1)} & = 0  \label{xland-k:2}   
\end{align}
for all $(\theta,t) \in \mathcal{D}^{k,(n+1)} _{\bar \eps}$.
On the other hand, for $(\theta,t) \in (\mathcal{D}^{k,(n+1)} _{\bar \eps})^\complement$, we define
\begin{align}
k^{(n+1)} (\theta,t)& =0 \,,
\end{align}
which is the solution of \eqref{k-alt} with Cauchy data at time $t=0$ given by $k_0 \equiv 0$.

\subsubsection{Alternative forms of the iteration for $w ^{(n+1)} $, $ z ^{(n+1)} $, and $c ^{(n+1)} $}
Using that $\p_t k ^{(n)} = - \lambda _2 ^{(n)} \p_\theta  k ^{(n)}$, 
we can also write \eqref{xland-w:2} and \eqref{xland-z:2} as
\begin{subequations} 
\begin{align} 
\p_t w ^{(n+1)} +  \lambda_3 ^{(n)}  \p_\theta  w ^{(n+1)}  &=   - \tfrac{8}{3}a^{(n)} w^{(n)} 
+ \tfrac{1}{6}  (c ^{(n)})^2 \p_\theta  k^{(n)}  \,, \label{xland-w:3}  \\
\p_t z ^{(n+1)} + \lambda_1 ^{(n)}  \p_\theta  z ^{(n+1)}  &=   - \tfrac{8}{3}a^{(n)} z^{(n)} 
+ \tfrac{1}{6}  (c ^{(n)})^2 \p_\theta  k^{(n)} \,,  \label{xland-z:3} 
\end{align} 
\end{subequations} 
and therefore
\begin{align} 
\p_t {c^{(n+1)}} = - \tfrac{1}{2} \lambda_3^{(n)} \p_\theta w^{(n+1)} - \tfrac{1}{2} \lambda_1^{(n)} \p_\theta z^{(n+1)} - \tfrac{8}{3} a ^{(n)} c ^{(n)}  \,,
\label{cn0}
\end{align} 
which has the equivalent forms
\begin{subequations} 
\begin{align} 
\p_t c ^{(n+1)} + \lambda _2 ^{(n)} \p_\theta  c ^{(n+1)} + \tfrac{1}{2} c^{(n)}  \p_\theta  \lambda_2^{(n+1)}  = - \tfrac{8}{3}a^{(n)} c^{(n)} 
\,, \label{cn} \\
\p_t c ^{(n+1)} + \lambda _3 ^{(n)} \p_\theta  c ^{(n+1)} + \tfrac{2}{3} c^{(n)}  \p_\theta  z^{(n+1)}  = - \tfrac{8}{3}a^{(n)} c^{(n)} 
\,, \label{cn3} \\
\p_t c ^{(n+1)} + \lambda _1 ^{(n)} \p_\theta  c ^{(n+1)} + \tfrac{2}{3} c^{(n)}  \p_\theta  w^{(n+1)}  = - \tfrac{8}{3}a^{(n)} c^{(n)} 
\,. \label{cn1} 
\end{align} 
\end{subequations} 

Although it is not necessary to obtain any estimates, we record at this stage the evolution equation for the specific vorticity given according to \eqref{xland-svort:def} by $\varpi^{(n)} = 4 (w^{(n)} + z^{(n)} - \p_\theta  a^{(n)}) (c^{(n)})^{-2} e^{k^{(n)}}$. By combining \eqref{xland-a:2}, 
\eqref{xland-k:2}, \eqref{xland-w:3}, \eqref{xland-z:3}, and \eqref{cn}, we obtain
\begin{align}
\label{eq:varpi:n}
&\p_t \varpi^{(n+1)} + \lambda_2^{(n)} \p_\theta  \varpi^{(n+1)}
- \tfrac 83  \tfrac{c^{(n)}}{c^{(n+1)}} a^{(n)} \varpi^{(n+1)} 
- \tfrac 43 \left(\tfrac{c^{(n)}}{c^{(n+1)}}) \right)^2 \p_\theta  k^{(n)} e^{k^{(n+1)}}
\notag\\
&=  \left( \tfrac 83 a^{(n)} + \p_\theta  \lambda_2^{(n)}\right) \tfrac{c^{(n)}}{c^{(n+1)}} \left( \varpi^{(n+1)} - \varpi^{(n)}   \right) \notag\\
&\quad+ \left( \tfrac 83 a^{(n)} + \p_\theta  \lambda_2^{(n)}\right) \tfrac{c^{(n)}}{(c^{(n+1)})^2} \varpi^{(n)}  e^{k^{(n+1)}} \left( c^{(n+1)} e^{- k^{(n+1)} }-  c^{(n)}  e^{ -k^{(n)}} \right) \notag\\
&\quad + \left( \tfrac{c^{(n)}}{c^{(n+1)}} - \p_\theta  k^{(n+1)} \right) \varpi^{(n+1)} \p_\theta  \left( \lambda_2^{(n+1)}-\lambda_2^{(n)}\right) 
\notag\\
&\quad - \tfrac{16}{3} \tfrac{c^{(n)}}{(c^{(n+1)})^2} e^{k^{(n+1)}} \p_\theta \left( c^{(n+1)} -  c^{(n)}\right)
+ 4 \tfrac{1}{(c^{(n+1)})^2} e^{k^{(n+1)}} \p_\theta  \lambda_2^{(n)} \left(a^{(n+1)} - a^{(n)}\right)
\,.
\end{align}
At this stage we only remark that if $(w,z,k,a,\varpi)^{(n)}$ were to equal $(w,z,k,a,\varpi)^{(n+1)}$, then the right side of \eqref{eq:varpi:n} vanishes, as is natural.

\subsubsection{The iteration space}

We will prove stability under iteration $n \mapsto n+1$ of the following bound
\begin{align}
\label{eq:w:z:k:a:boot:*}
 |\!|\!| (w^{(n)} - w^{(1)},z^{(n)},k^{(n)},a^{(n)})  |\!|\!| _{\bar \eps} \leq 1 
\end{align}
where the norm $ |\!|\!| \cdot |\!|\!| _{\bar \eps}$ is as defined in \eqref{eq:the:norm}. For convenience of the reader, we recall that \eqref{eq:w:z:k:a:boot:*} means 
\begin{subequations}
\label{eq:w:z:k:a:boot}
\begin{align}
 & \sabs{ w^{(n)} (\theta,t) - w ^{(1)}   (\theta,t)} \leq R_1 t  \label{eq:w:boot} \\
& \sabs{ \p_\theta  w^{(n)} (\theta,t) - \p_\theta  w ^{(1)}   (\theta,t)} \leq  R_2  \left( \bb^3 t^3 + (\theta- \sc(t))^2 \right)^ {-\frac{1}{6}}    \label{eq:w:dx:boot} \\
& \sabs{ z^{(n)} (\theta,t)} \leq  R_3 t^{\frac 32}   \label{eq:z:boot} \\
& \sabs{ \p_\theta  z^{(n)} (\theta,t)  } \leq R_4 t^{\frac 12}  \label{eq:z:dx:boot}  \\
& \sabs{ k^{(n)} (\theta,t) } \leq R_5 t^{\frac 32}   \label{eq:k:boot}  \\
& \sabs{\p_\theta  k^{(n)} (\theta,t) } \leq R_6 t^{\frac 12}  \label{eq:k:dx:boot}   \\
& \sabs{a^{(n)} (\theta,t) } + \abs{\p_\theta  a^{(n)} (\theta,t)} \leq R_7 \label{eq:a:boot} 
\,,
\end{align}
\end{subequations}
for all $(\theta,t) \in \DD_{\bar \eps}$, where
\begin{align}
\label{eq:R:def}
{R_1 =  50 \mm^2}\,,
\quad 
R_2 = \mm^3\,,
\quad
{R_3} =  R_4  = \mm \,,
\quad
R_5  = R_6 = {\mm^{\frac 12}}\,,
\quad 
R_7  = 4\mm
\,.
\end{align}

\begin{lemma} \label{lem:dtwn}
Assume that $(w^{(n)},z^{(n)},k^{(n)},a^{(n)}) \in {\mathcal X}_{\bar \eps}$.  Then  for all $(\theta,t) \in \mathcal{D}_{\bar \eps}$,
\begin{align} 
\sabs{ \p_t w ^{(n)}(\theta,t) -  \p_t w ^{(1)}(\theta,t) } 
\leq {3\mm^4} \left( (\theta - \sc(t))^2  +  t^3\right)^ {-\frac{1}{6}}   
\,.   \label{eq:w:dt:boot}
 \end{align} 
\end{lemma}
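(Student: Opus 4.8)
The plan is to reduce the estimate to the transport equation satisfied by $w^{(n)}$. For $n=1$ the left-hand side of \eqref{eq:w:dt:boot} vanishes while the right-hand side is nonnegative, so there is nothing to prove; assume $n\ge 2$. Then $w^{(n)}$ solves \eqref{xland-w:3} with $n$ replaced by $n-1$, i.e.\
$$\p_t w^{(n)} = -\lambda_3^{(n-1)}\p_\theta w^{(n)} - \tfrac83 a^{(n-1)}w^{(n-1)} + \tfrac16 (c^{(n-1)})^2\p_\theta k^{(n-1)}$$
on $\DD_{\bar\eps}$, whereas $w^{(1)}=\wb$ solves the Burgers equation $\p_t\wb=-\wb\p_\theta\wb$ there by Proposition~\ref{prop:Burgers}. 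Subtracting and splitting $-\lambda_3^{(n-1)}\p_\theta w^{(n)}+\wb\p_\theta\wb = -\lambda_3^{(n-1)}\p_\theta(w^{(n)}-\wb)-(\lambda_3^{(n-1)}-\wb)\p_\theta\wb$, I obtain
$$\p_t w^{(n)}-\p_t w^{(1)} = -\lambda_3^{(n-1)}\p_\theta(w^{(n)}-\wb) - (\lambda_3^{(n-1)}-\wb)\p_\theta\wb - \tfrac83 a^{(n-1)}w^{(n-1)} + \tfrac16 (c^{(n-1)})^2\p_\theta k^{(n-1)},$$
and it remains to bound each of these four terms by a suitable fraction of $((\theta-\sc(t))^2+t^3)^{-\frac16}$.

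All four estimates are algebraic, relying only on the bootstrap bounds \eqref{eq:w:z:k:a:boot} with the constants \eqref{eq:R:def} (valid for the iterates at levels $n-1$ and $n$, which in the inductive construction both lie in ${\mathcal X}_{\bar\eps}$), together with $|\wb|\le\mm$ from \eqref{eq:u0:interest} and the derivative bound \eqref{thegoodstuff1} for $\p_\theta\wb$ (the region $|\theta-\sc(t)|\ge\bar\eps^{\frac12}$ being trivial since there all quantities are bounded). From \eqref{eq:w:boot} and \eqref{eq:z:boot} one has $|\lambda_3^{(n-1)}-\wb| = |(w^{(n-1)}-\wb)+\tfrac13 z^{(n-1)}|\lesssim\mm^2 t$, hence $|\lambda_3^{(n-1)}|\le|\wb|+C\mm^2 t\le\tfrac54\mm$ once $\bar\eps$ is small. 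By \eqref{eq:w:dx:boot} (with $w^{(1)}=\wb$), $|\p_\theta(w^{(n)}-\wb)|\le\mm^3(\bb^3 t^3+(\theta-\sc(t))^2)^{-\frac16}$, and by \eqref{thegoodstuff1}, $|\p_\theta\wb|\le\tfrac45\bb(\bb^3 t^3+(\theta-\sc(t))^2)^{-\frac13}$. Finally \eqref{eq:a:boot}, \eqref{eq:w:boot}, \eqref{eq:z:boot}, \eqref{eq:k:dx:boot} give $|a^{(n-1)}|\le 4\mm$, $|w^{(n-1)}|\le\tfrac54\mm$, $|c^{(n-1)}| = \tfrac12|w^{(n-1)}-z^{(n-1)}|\le\mm$, and $|\p_\theta k^{(n-1)}|\le\mm^{\frac12}t^{\frac12}$.

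Combining, the first term is the main contribution: $|\lambda_3^{(n-1)}|\,|\p_\theta(w^{(n)}-\wb)|\le\tfrac54\mm^4(\bb^3 t^3+(\theta-\sc(t))^2)^{-\frac16}\le 2\mm^4(t^3+(\theta-\sc(t))^2)^{-\frac16}$, where the last step uses $\tfrac12\le\bb\le 2$ to compare the two weights. The second term is estimated by bounding one factor of $(\bb^3 t^3+(\theta-\sc(t))^2)^{-\frac16}$ by $(\bb t)^{-\frac12}$, giving $\lesssim\mm^2 t^{\frac12}(t^3+(\theta-\sc(t))^2)^{-\frac16}$, which is absorbed into $\tfrac14\mm^4(t^3+(\theta-\sc(t))^2)^{-\frac16}$ once $\bar\eps$ is small. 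For the last two terms, $t^3+(\theta-\sc(t))^2$ is bounded above on $\DD_{\bar\eps}$, so $(t^3+(\theta-\sc(t))^2)^{-\frac16}$ is bounded below, and a constant of size $\mm^2$ (resp.\ $\mm^{\frac52}t^{\frac12}$) is then absorbed into $\tfrac14\mm^4(t^3+(\theta-\sc(t))^2)^{-\frac16}$ using that $\mm$ is large relative to $\kappa$ (resp.\ that $\bar\eps$ is small). Adding the four contributions yields \eqref{eq:w:dt:boot}. The only real difficulty is bookkeeping of constants: the factor $3$ in \eqref{eq:w:dt:boot} is not generous, so one must verify that the leading term, of size $\tfrac54\mm\cdot\mm^3$, stays comfortably below $3\mm^4$ after converting from the $(\bb^3 t^3+(\theta-\sc)^2)^{-\frac16}$ weight to the $(t^3+(\theta-\sc)^2)^{-\frac16}$ weight, and that each of the other three terms genuinely acquires either an extra power of $t^{\frac12}$ or an extra inverse power of $\mm$ so that it can be swallowed upon choosing $\bar\eps$ sufficiently small in terms of $\kappa,\bb,\cc,\mm$. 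No transport-estimate machinery beyond Proposition~\ref{prop:Burgers} and the bootstrap assumptions is needed.
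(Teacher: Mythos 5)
Your proof is correct and follows essentially the same route as the paper's: both start from the same four-term decomposition of $\p_t w^{(n)} - \p_t w^{(1)}$ obtained by subtracting the Burgers equation from \eqref{xland-w:3}, and both close the estimate by feeding in the bootstrap bounds \eqref{eq:w:z:k:a:boot} together with \eqref{thegoodstuff1}. The only difference is cosmetic bookkeeping of constants (the paper uses the cruder bound $|\lambda_3^{(n-1)}|\le 2\mm$ and leaves the $\bb^3t^3$-to-$t^3$ weight conversion implicit, which you make explicit).
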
 
\begin{proof}[Proof of Lemma \ref{lem:dtwn}]
Using the identity \eqref{xland-w:3} and the fact that
$\p_t w ^{(1)}  + w^{(1)} \p_\theta  w ^{(1)}  =0$, we have that
\begin{align} 
 \p_t w ^{(n)} -  \p_t w ^{(1)} 
&=  -   \lambda_3 ^{(n-1)} (  \p_\theta w ^{(n)} -  \p_\theta  w^{(1)}) - ( \lambda_3^{(n-1)} - w^{(1)}  ) \p_\theta  w^{(1)}  
\notag\\
& -  \tfrac{8}{3}a^{(n-1)} w^{(n-1)} 
+ \tfrac{1}{6} ( c ^{(n-1)})^2 \p_\theta k^{(n-1)} 
 \,, \label{dtwn-pre1}
\end{align} 
Now from \eqref{thegoodstuff1} and \eqref{eq:w:z:k:a:boot},  we have that for $\bar \eps$ taken sufficiently small, 
$$\sabs{( \lambda_3^{(n-1)} - w^{(1)}  ) \p_\theta  w^{(1)}  } \le  R_1\,, \ \    \tfrac{8}{3} \sabs{ a^{(n-1)} w^{(n-1)} } \le 3 \mm R_7\,, \ \ \text{and} \ \ 
 \tfrac{1}{6} \sabs{ ( c ^{(n-1)})^2 \p_\theta k^{(n-1)}} \les t^ {\frac{1}{2}} \,. $$
Then from   \eqref{eq:w:dx:boot} and with $\bar \eps$ taken even small, we have that
\begin{align*} 
\sabs{ \p_t w ^{(n)} -  \p_t w ^{(1)} } & \le 2 \mm  R_2  \left( \bb^3 t^3+ ( \theta - \sc(t))^2 \right)^ {-\frac{1}{6}} + 3\mm R_7 + R_1  \\
&\le 3 \mm^4  \left(\bb^3 t^3+ (\theta - \sc(t))^2 \right)^ {-\frac{1}{6}}\,,
\end{align*} 
where we have used \eqref{eq:b:m:ass}, and that $t \le \bar \eps$.  Hence, 
we obtain the bound  \eqref{eq:w:dt:boot}.
\end{proof} 

\subsubsection{The behavior of $w ^{(n)}$, $ z ^{(n)} $, and $k ^{(n)} $ on the shock curve}

\begin{lemma}\label{lem:jumpn}   Assume that $(w^{(n-1)},z^{(n-1)},k^{(n-1)},a^{(n-1)}) \in {\mathcal X}_{\bar \eps}$ and that $w ^{(n)}  \in {\mathcal X}_{\bar \eps}$. Then
 for all $t \in (0,\bar \eps]$ we have
 \begin{subequations}
\begin{alignat}{2}
&\sabs{ \jump{ w ^{(n)}(t) }  - \jump{ w ^{(1)}(t)  }}   \le  2 R_1 t  \,, \qquad && 
\sabs{ \mean{ w ^{(n)}(t) } - \mean{ w ^{(1)}(t) }   }   \le R_1 t  \,,     \label{1krod}  \\
& \abs{\tfrac{d}{dt} \jump{w^{(n)}} (t) -\tfrac{d}{dt}  \jump{w ^{(1)}  }(t)} \leq 2 R_1, \qquad  
&& \abs{\tfrac{d}{dt} \mean{w^{(n)} }(t) - \tfrac{d}{dt} \mean{w ^{(1)}  }(t)} \leq  R_1 \label{2krod} \,,
\end{alignat}
\end{subequations}
where $R_1$ is as defined in \eqref{eq:R:def}. In particular, in view of \eqref{eq:basic:jump} and \eqref{eq:dt:jump}, we have that 
\begin{subequations}
\label{eq:jump:mean:sharp}
\begin{alignat}{2}
&\sabs{\jump{w^{(n)}}(t) - 2 \bb^{\frac 32} t^{\frac 12}} \leq 3 \mm^3 t  \,, \qquad && 
\abs{\mean{w^{(n)}}(t)-\kappa} \leq  \tfrac 12 \mm^4 t \,,
\label{eq:J:M:sharp:bound} 
 \\
& \abs{\tfrac{d}{dt} \jump{w^{(n)}} (t) - \bb^{\frac 32} t^{-\frac 12}} \leq 3 \mm^4 , \qquad  
&& \abs{\tfrac{d}{dt} \mean{w^{(n)}}(t) } \leq  2 \mm^4  
\label{eq:J:M:sharp:bound:dt}  \,,
\end{alignat}
\end{subequations}
for all $t \in (0,\bar \eps]$.
\end{lemma}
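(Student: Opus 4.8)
Here is how I would approach Lemma~\ref{lem:jumpn}. The four estimates split naturally into the pair \eqref{1krod} and the pair \eqref{2krod}: the first pair is a routine consequence of the $L^\infty$‑closeness of $w^{(n)}$ to $w^{(1)}=\wb$ built into the space ${\mathcal X}_{\bar\eps}$, while the second pair — time derivatives along the shock — requires a cancellation argument exposing the common singular structure of $\tfrac{d}{dt}w^{(n)}_\pm$ and $\tfrac{d}{dt}\wb_\pm$. Throughout one may assume $n\ge 2$, the case $n=1$ being vacuous since $w^{(1)}=\wb$. For \eqref{1krod}: since $w^{(n)}\in{\mathcal X}_{\bar\eps}$, the first entry of the norm \eqref{eq:the:norm} gives $|w^{(n)}(\theta,t)-\wb(\theta,t)|\le R_1 t$ on $\DD_{\bar\eps}$; as $w^{(n)}$ and $\wb$ are $C^1$ off the shock curve they extend continuously from each side, so letting $\theta\to\sc(t)^{\mp}$ yields $|w^{(n)}_\pm(t)-\wb_\pm(t)|\le R_1 t$, and subtracting, resp.\ averaging, the $+$ and $-$ values gives \eqref{1krod}. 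The bounds in \eqref{eq:J:M:sharp:bound} then follow from \eqref{1krod}, \eqref{eq:basic:jump}, and the triangle inequality, using that $\mm\ge 1$ is sufficiently large.

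For \eqref{2krod} I would use that $w^{(n)}$ solves \eqref{xland-w:3} with $n\mapsto n-1$, namely $\p_t w^{(n)}+\lambda_3^{(n-1)}\p_\theta w^{(n)}=-\tfrac83 a^{(n-1)}w^{(n-1)}+\tfrac16(c^{(n-1)})^2\p_\theta k^{(n-1)}$, while $\wb$ solves $\p_t\wb+\wb\p_\theta\wb=0$ by \eqref{eq:basic:shit}. Writing $v:=w^{(n)}-\wb$ and differentiating along the shock,
\[
\tfrac{d}{dt}v_\pm(t)=\bigl(\dot\sc-\lambda_3^{(n-1)}\bigr)\p_\theta v\big|_{(\sc(t)^\pm,t)}-\bigl(\lambda_3^{(n-1)}-\wb\bigr)\p_\theta\wb\big|_{(\sc(t)^\pm,t)}-\tfrac83(a^{(n-1)}w^{(n-1)})\big|_{(\sc(t)^\pm,t)}+\tfrac16\bigl((c^{(n-1)})^2\p_\theta k^{(n-1)}\bigr)\big|_{(\sc(t)^\pm,t)}.
\]
The last two terms are controlled directly by the bootstrap bounds \eqref{eq:w:z:k:a:boot}--\eqref{eq:R:def} (the $k$‑term carries a factor $t^{1/2}$ and is negligible once $\bar\eps$ is small). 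For the middle term, $|\lambda_3^{(n-1)}-\wb|$ at $\sc(t)^\pm$ is $\le R_1 t+\tfrac13|z^{(n-1)}_\pm(t)|\le R_1 t+\OO(t^{3/2})$ by \eqref{1krod} and \eqref{eq:zl:on:shock:L:infinity}, while $|\p_\theta\wb|$ at $\sc(t)^\pm$ is $\lesssim t^{-1}$ by \eqref{eq:px:wb:def} and \eqref{eq:quotient:interest}; their product is $\lesssim R_1$. The delicate term is the first one.

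To treat it, I would appeal to the no‑derivative‑loss identity for $\p_\theta w^{(n)}$ obtained by differentiating the Lagrangian form of \eqref{xland-w:3} along the flow $\eta^{(n-1)}$ of $\lambda_3^{(n-1)}$ — exactly the structure exploited in \eqref{prelim-wx} and in the proof of Lemma~\ref{lem:dtwn} — which writes $\p_\theta w^{(n)}(\eta^{(n-1)}(x,t),t)=w_0'(x)/\p_x\eta^{(n-1)}(x,t)+\mathcal{F}(x,t)$, where $\mathcal{F}$ involves only first derivatives of $a^{(n-1)},w^{(n-1)},z^{(n-1)},k^{(n-1)}$ and time integrals of such against $\p_x\eta^{(n-1)}$ along $\eta^{(n-1)}$. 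Sending $x\to x^{(n-1)}_\pm(t)$, the extremal labels of Lemma~\ref{lem:AC:DC} with $\eta^{(n-1)}(x^{(n-1)}_\pm(t),t)=\sc(t)$, and bounding the integrals in $\mathcal{F}$ via \eqref{eq:cb:0}, the bootstrap bounds, \eqref{eq:AC:DC:con}--\eqref{eq:AC:DC:defcon}, and the transversality estimate \eqref{eq:finally:useful}, one gets $\p_\theta w^{(n)}(\sc(t)^\pm,t)=w_0'(x^{(n-1)}_\pm(t))/\p_x\eta^{(n-1)}(x^{(n-1)}_\pm(t),t)+\OO(C\bar\eps^{1/2})$. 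The Burgers counterpart, from \eqref{eq:px:wb:def}, is $\p_\theta\wb(\sc(t)^\pm,t)=w_0'(\xbpm(t))/\p_x\etab(\xbpm(t),t)$, so it remains to compare these two quotients. Since $|\dot\sc-\lambda_3^{(n-1)}|$ at $\sc(t)^\pm$ is $\OO(\bb^{3/2}t^{1/2})$ (by \eqref{eq:sc:ass}, \eqref{1krod}, Proposition~\ref{prop:Burgers}) and each quotient is $\OO(t^{-1})$ (by \eqref{eq:u0':interest}, \eqref{eq:quotient:interest}, \eqref{eq:cb:0}), it suffices to show the difference of quotients is $\OO(t^{-1/2})$ times a quantity carrying a positive power of $\bar\eps$, plus an $\OO(R_1 t^{-1/2}\cdot t)$‑sized piece; this uses $|x^{(n-1)}_\pm(t)-\xbpm(t)|=\OO(t^2)$ (from \eqref{eq:range:for:solutions:new} and \eqref{eq:AC:DC:ride:on}), the $w_0''$ bound \eqref{eq:u0'':interest}, and the flow‑closeness estimate \eqref{eq:cb:2} for $\p_x\eta^{(n-1)}-\p_x\etab$. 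Collecting terms gives $|\tfrac{d}{dt}(w^{(n)}_\pm-\wb_\pm)|\le R_1$ for $\bar\eps$ small; subtracting and averaging yields \eqref{2krod}, and then \eqref{eq:J:M:sharp:bound:dt} follows from \eqref{eq:dt:jump}.

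The main obstacle is the first term in the displayed formula for $\tfrac{d}{dt}v_\pm$: neither $\tfrac{d}{dt}w^{(n)}_\pm$ nor $\tfrac{d}{dt}\wb_\pm$ stays bounded as $t\to 0$ — both behave like $\mp\tfrac12\bb^{3/2}t^{-1/2}$ — so the estimate cannot be performed term by term. One has to extract the common singular factor $w_0'(\cdot)/\p_x(\text{flow})$, which is precisely what the structure of \eqref{xland-w:3} provides (as in Lemma~\ref{lem:dtwn}), and then quantify the closeness of the iterate flow $\eta^{(n-1)}$ and of its extremal shock labels $x^{(n-1)}_\pm$ to the Burgers flow $\etab$ and labels $\xbpm$, using Lemmas~\ref{lem:AC:DC} and~\ref{lem:a=0:inversion} together with the Hölder‑type control of $w_0',w_0''$ near the pre‑shock; tracking that every resulting error carries a positive power of $\bar\eps$ (or is $\lesssim R_1$) is the delicate bookkeeping.
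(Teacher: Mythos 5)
Your proof of \eqref{1krod} matches the paper's: the $L^\infty$ closeness \eqref{eq:w:boot}, which is exactly the first entry of the norm \eqref{eq:the:norm} in the hypothesis $w^{(n)}\in{\mathcal X}_{\bar\eps}$, gives it immediately, and \eqref{eq:J:M:sharp:bound} then follows by the triangle inequality with \eqref{eq:basic:jump}. For \eqref{2krod}, the evolution identity you write for $\tfrac{d}{dt}v_\pm$ is correct and is the trace form of the paper's \eqref{dtwn-pre1}; your handling of the two forcing terms and of the middle term $(\lambda_3^{(n-1)}-\wb)\p_\theta\wb$ also agrees with the paper, where that middle term (written there as $\jump{\lambda_3^{(n-1)}-w^{(1)}}\p_\theta w^{(1)}$) is in fact the dominant contribution, of size $\OO(R_1)$.

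Where you go wrong is in declaring the remaining term $(\dot\sc-\lambda_3^{(n-1)})\p_\theta v\big|_{\sc(t)^\pm}$ the ``delicate'' one and reaching for the Lagrangian no-derivative-loss representation, the extremal labels $x^{(n-1)}_\pm(t)$, and the flow-closeness machinery of Lemma~\ref{lem:AC:DC}. None of this is needed. The hypothesis $w^{(n)}\in{\mathcal X}_{\bar\eps}$ already carries, as the second entry of \eqref{eq:the:norm}, the weighted bound \eqref{eq:w:dx:boot} on $\p_\theta v=\p_\theta(w^{(n)}-w^{(1)})$, which in the trace limit $\theta\to\sc(t)^\pm$ yields $|\p_\theta v(\sc(t)^\pm,t)|\le R_2\,\bb^{-1/2}t^{-1/2}$. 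Since $|\dot\sc-\lambda_3^{(n-1)}|$ at $\sc(t)^\pm$ is $\OO(\bb^{3/2}t^{1/2}+R_1 t)$ by \eqref{eq:sc:ass}, Proposition~\ref{prop:Burgers}, \eqref{eq:w:boot}, and \eqref{eq:z:boot}, the product is $\OO(\bb R_2)$, a constant easily absorbed into the budget for \eqref{eq:J:M:sharp:bound:dt}. Your remark that ``the estimate cannot be performed term by term'' misreads the situation: the formula you yourself wrote for $\tfrac{d}{dt}v_\pm$ already effects the cancellation of the two $t^{-1/2}$ singularities, and each term on its right-hand side \emph{is} then estimated directly from the bootstrap. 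The long detour you sketch amounts to re-deriving the shock-trace bound for $\p_\theta v$ from scratch; that derivation is precisely what is used to \emph{close} \eqref{eq:w:dx:boot} in Proposition~\ref{lem:stable}, and the whole point of the lemma's hypothesis is that \eqref{eq:w:dx:boot} may be used here as a black box.
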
 
\begin{proof}[Proof of Lemma \ref{lem:jumpn}]
By assumption, $w^{(n)}$ satisfies the bound \eqref{eq:w:boot}, and so the inequalities in~\eqref{1krod}   follow. In order to prove \eqref{2krod}, we shall use that  $\sabs{ \jump{ w ^{(n-1)}(t) }  } \le \sabs{ \jump{ w ^{(1)}(t) }  }+\sabs{ \jump{ w ^{(n-1)}(t)- w ^{(1)}(t) }  } $, and hence 
by \eqref{eq:basic:jump} and  \eqref{eq:w:boot},  
\begin{align} 
\sabs{ \jump{ w ^{(n-1)}(t) } }   \le {\tfrac{21}{10}}  \bb^ {\frac{3}{2}} t^ {\frac{1}{2}}  + 2 R_1 t  \le {\tfrac{11}{5}}  \bb^ {\frac{3}{2}} t^ {\frac{1}{2}} 
\label{dork7}
\end{align} 
where we have taken $\bar \eps$ sufficiently small for the last inequality.
Next,  we have that from \eqref{dtwn-pre1},
\begin{align*} 
&\tfrac{d}{dt} \jump{w}^{(n)} -\tfrac{d}{dt}  \jump{w ^{(1)}  }  \\
&  
  =  \jump{ \p_t w ^{(n)} - \p_t w ^{(1)}  }+ \dot \sc  \jump{ \p_\theta w^{(n)} - \p_\theta w^{(1)} }\\
& 
 =
\bigl(\dot\sc(t) -   w ^{(1)} \bigr) \jump{\p_\theta w^{(n)} -   \p_\theta w^{(1)} } +\bigl(\lambda_3^{(n-1)} - w ^{(1)}   \bigr) \jump{\p_\theta w^{(n)} -  \p_\theta  w^{(1)} } 
 -  \jump{ \lambda_3 ^{(n-1)} }( \p_\theta  w^{(n)} -    \p_\theta w^{(1)})  \\
&\quad - ( \lambda_3^{(n-1)} - w^{(1)}  ) \jump{ \p_\theta  w^{(1)}   } 
- \jump{ \lambda_3^{(n-1)} - w^{(1)} } \p_\theta  w^{(1)}  
 -  \tfrac{8}{3} a^{(n-1)} \jump{ w^{(n-1)} }
+ \tfrac{1}{6}  \jump { ( c ^{(n-1)})^2 \p_\theta k^{(n-1)}  }
\,.
\end{align*} 
By \eqref{eq:u0:ass}, \eqref{eq:sc:ass}, and   \eqref{eq:range:for:solutions}, we see that   $w ^{(1)}  = \wb$ evaluated on the shock curve,
 $\sabs{ \dot \sc - w ^{(1)}  } =\OO(t) $.
Thus, using  the bounds \eqref{eq:w:z:k:a:boot} and \eqref{dork7} shows that 
\begin{align*} 
\abs{ \tfrac{d}{dt} \jump{w}^{(n)} -\tfrac{d}{dt}  \jump{w ^{(1)}  }} \le \sabs{ \jump{ \lambda_3^{(n-1)} - w^{(1)} } \p_\theta  w^{(1)} } + C t^ {\frac{1}{2}} 
\le 2 R_1 \,,
\end{align*} 
for $\bar \eps $ taken sufficiently small.
This proves the first bound in \eqref{2krod}, while the second  follows similarly.
\end{proof} 
Having established Lemma \ref{lem:jumpn}, the conditions of Lemmas~\ref{lem:zminus-on-shock}, Lemma~\ref{lem:zl:kl:Lip}, and
Corollary \ref{cor:jumps:abstract} are satisfied, which together yield
\begin{lemma}[\bf $\zl ^{(n)} $ and $\kl^{(n)} $ on the shock curve]
\label{lem:zknp1-shock} 
Let $w ^{(n)} $ be as in Lemma~\ref{lem:jumpn}. Applying Lemma~\ref{lem:zminus-on-shock}, 
on the shock curve we define $\zl^{(n)}$ and $\kl^{(n)}$ as the solutions of \eqref{eq:hate:iterations} with $n$ replacing $n+1$. In particular, $\zl^{(n)}$ and $\kl^{(n)}$ are 
explicit functions of $ \jump{ w ^{(n)} }$ and $\mean{w ^{(n)} }$ and satisfy
the following bounds:
\begin{subequations}
\label{zknp1:shock}
\begin{align} 
\abs{ \zl ^{(n)} (  t) +  \tfrac{9 \jump{w^{(n)} }(t)^3 }{16 \mean{w^{(n)} }(t)^2}     } 
&\leq C_0 t^{\frac 52}  \,.   
\\
\abs{ \kl^{(n)} ( t) -  \tfrac{4  \jump{w^{(n)} }(t)^3}{\mean{w^{(n)} }(t)^3}      } 
&\leq  C_0 t^{\frac 52}  \,,  
\end{align} 
\end{subequations}
where $C_0 = C_0(\kappa,\bb,\cc,\mm)>0$ is an explicitly computable constant.
Moreover, 
\begin{subequations}
\label{dork2}
\begin{alignat}{2}
&\sabs{\zl ^{(n)} ( t)} \leq 5 \bb^{\frac 92} \kappa^{-2} t^{\frac 32} \,, \qquad  && \sabs{\kl ^{(n)} ( t)} \leq 40 \bb^{\frac 92} \kappa^{-3} t^{\frac 32} \,,
\label{zknp1good}  \\
&\sabs{\tfrac{d}{dt}  \zl^{(n)} (t) } \leq 8 \bb^{\frac 92} \kappa^{-2}   t^{\frac 12}\,, \qquad  && \sabs{\tfrac{d}{dt} \kl^{(n)} (t)} \leq 50 \bb^{\frac 92} \kappa^{-3}   t^{\frac 12}  \,,   
\label{dotzknp1good}
\end{alignat}
\end{subequations}
for all $t \in (0,\bar \eps]$, assuming that $\bar \eps$ is sufficiently small.
\end{lemma}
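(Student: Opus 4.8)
\textbf{Proof proposal for Lemma~\ref{lem:zknp1-shock}.}

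The plan is to verify that, under the hypothesis $w^{(n)} \in {\mathcal X}_{\bar \eps}$ (and $w^{(n-1)}, z^{(n-1)}, k^{(n-1)}, a^{(n-1)} \in {\mathcal X}_{\bar\eps}$), all the structural assumptions needed to invoke the abstract machinery of Section~\ref{sec:z:k:shock} hold with universal (iteration-independent) constants, and then simply quote the conclusions of Lemma~\ref{lem:zminus-on-shock}, Lemma~\ref{lem:zl:kl:Lip}, and Corollary~\ref{cor:jumps:abstract} applied to the pair $(\vl,\vr) = (w^{(n)}_-, w^{(n)}_+)$. First I would record that Lemma~\ref{lem:jumpn} is in force (its hypotheses are exactly those assumed here), so the jump and mean of $w^{(n)}$ at the shock satisfy the ``rough'' bounds \eqref{eq:J:M:rough} with $\Rsf_j = 3\mm^3$, $\Rsf_m = \tfrac12 \mm^4$, as displayed in \eqref{eq:J:M:sharp:bound}, and the first-derivative analogue \eqref{eq:dt:jumps:on:shock:are:close} holds with $\Rsf = R_1 = 50\mm^2$, as displayed in \eqref{eq:J:M:sharp:bound:dt} together with \eqref{2krod}. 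These are precisely the inputs required by Lemma~\ref{lem:zminus-on-shock} and by the first half of Corollary~\ref{cor:jumps:abstract}.

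Next I would apply Lemma~\ref{lem:zminus-on-shock} with $(\vl,\vr) = (w^{(n)}_-,w^{(n)}_+)$, defining $\zl^{(n)}(t)$ and $\el^{(n)}(t)$ as the smallest-in-absolute-value root of the system \eqref{eq:hate:iterations} (which is \eqref{eq:zminus:kminus} with $n$ in place of $n+1$), and setting $\kl^{(n)} = \log(1+\el^{(n)})$. The bounds \eqref{eq:zm:and:km:on:xi} then give exactly \eqref{zknp1:shock} once we relabel $\jump{w} \mapsto \jump{w^{(n)}}$, $\mean{w} \mapsto \mean{w^{(n)}}$; and \eqref{eq:zl:and:kl:on:shock:L:infinity} gives the $L^\infty$ estimates \eqref{zknp1good}, after observing that $\jump{w^{(n)}} \le 3\bb^{3/2} t^{1/2}$ and $\mean{w^{(n)}} \ge \kappa/2$ for $\bar\eps$ small (consequences of \eqref{eq:J:M:sharp:bound}), which is what turns the sharp asymptotic formulas into the crude upper bounds with the stated numerical constants $5\bb^{9/2}\kappa^{-2}$ and $40\bb^{9/2}\kappa^{-3}$. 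Here the only point to check is that the small parameter $\bar\eps$ needed in Lemma~\ref{lem:zminus-on-shock} depends only on $\kappa,\bb,\cc,\mm$ and not on $n$ — but this is built into that lemma, since its constant $J_0$ depends only on $\mean{w}$ hence only on $\kappa$, and $\abs{\jump{w^{(n)}}} \le 3\bb^{3/2}\bar\eps^{1/2}$ uniformly in $n$.

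For the time-derivative bounds \eqref{dotzknp1good} I would invoke the first conclusion of Corollary~\ref{cor:jumps:abstract}: the hypothesis \eqref{eq:dt:jumps:on:shock:are:close} holds with $\Rsf = R_1$ by \eqref{2krod}, so \eqref{eq:dt:zl:kl:on:shock} gives $\tfrac{d}{dt}\zl^{(n)}(t) = -\tfrac{27\bb^{9/2}}{4\kappa^2} t^{1/2} + \OO(t)$ and $\tfrac{d}{dt}\kl^{(n)}(t) = \tfrac{48\bb^{9/2}}{\kappa^3} t^{1/2} + \OO(t)$; taking $\bar\eps$ small so that the $\OO(t)$ error is dominated by a small fraction of the leading term, we obtain the crude bounds $\abs{\tfrac{d}{dt}\zl^{(n)}} \le 8\bb^{9/2}\kappa^{-2} t^{1/2}$ and $\abs{\tfrac{d}{dt}\kl^{(n)}} \le 50\bb^{9/2}\kappa^{-3} t^{1/2}$ claimed in \eqref{dotzknp1good}. (Lemma~\ref{lem:zl:kl:Lip} is the intermediate Lipschitz statement feeding into Corollary~\ref{cor:jumps:abstract}, and is automatically applicable since $\vl,\vr$ are differentiable along the shock by construction of $w^{(n)}$ from \eqref{wn-alt}.) The main obstacle — really the only non-bookkeeping point — is to confirm that every threshold ``$\bar\eps$ sufficiently small'' encountered along the way ($\bar\eps$ in Lemma~\ref{lem:jumpn}, in Lemma~\ref{lem:zminus-on-shock}, in Corollary~\ref{cor:jumps:abstract}, and the one absorbing the $\OO(t)$ remainders into the numerical constants $5,40,8,50$) can be chosen depending only on $\kappa,\bb,\cc,\mm$, so that a single $\bar\eps$ works simultaneously for all $n$; since each such threshold is inherited from a lemma whose smallness constant is already stated to depend only on $\kappa,\bb,\cc,\mm$, and since the hypotheses \eqref{eq:J:M:rough}, \eqref{eq:dt:jumps:on:shock:are:close} are verified with $n$-independent constants, this is immediate. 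This completes the proof.
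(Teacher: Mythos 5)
Your proposal is correct and follows exactly the same route as the paper: the paper's own proof of this lemma is the single sentence ``Having established Lemma~\ref{lem:jumpn}, the conditions of Lemmas~\ref{lem:zminus-on-shock}, Lemma~\ref{lem:zl:kl:Lip}, and Corollary~\ref{cor:jumps:abstract} are satisfied, which together yield'' the statement. You have simply unpacked that sentence — identifying the constants $\Rsf_j$, $\Rsf_m$, $\Rsf$ that verify \eqref{eq:J:M:rough} and \eqref{eq:dt:jumps:on:shock:are:close}, and noting the $n$-uniformity of $\bar\eps$ — all of which is accurate.
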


\subsubsection{Existence, uniqueness, and invertibility of characteristics}

The following lemma follows from  \eqref{eq:dt:eta:def}--\eqref{eq:range:for:solutions:new} and Lemma \ref{lem:AC:DC}.
\begin{lemma}[Bijection set of labels]\label{lem:Upsilon-n}
Assume that $(w^{(n)},z^{(n)},k^{(n)},a^{(n)}) \in {\mathcal X}_{\bar \eps}$. Then, 
for each $t \in (0, \bar\eps]$, there exists a {\em largest} $ x^{(n)} _+(t)>0$ and a {\em smallest} $x^{(n)} _-= x_-(t)<0$ such that
\begin{align}
 \sc(t) = \eta^{(n)} (x^{(n)} _{\pm}(t),t)  
 \label{xnpm}
 \end{align}
where
 \begin{align}
- \tfrac{6}{5} (\bb t)^{\frac 32} < x^{(n)} _-(t) < -  \tfrac{4}{5}  (\bb t)^{\frac 32}
\qquad \mbox{and} \qquad 
 \tfrac{4}{5}   (\bb t)^{\frac 32} < x^{(n)} _+(t) < \tfrac{6}{5} (\bb  t)^{\frac 32}\,.
\label{xn-bounds}
\end{align}
Furthermore,  there exists a set of labels
$$
\Upsilon^{(n)} (t) = \TT \setminus[x^{(n)} _-(t),x^{(n)}_+(t)] \,,
$$
such that 
$\eta^{(n)} (\cdot,t)  \colon\Upsilon^{(n)} \to \TT \setminus \{\sc(t)\}$ is a bijection, and
the inverse map $\ie^{(n)}  \colon \DD_{\bar \eps} \to \TT \setminus \{0\}$ is continuous in spacetime.
\end{lemma}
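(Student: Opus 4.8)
\textbf{Proof strategy for Lemma~\ref{lem:Upsilon-n}.}
The plan is to apply the abstract machinery developed in Section~\ref{sec:Burgers:explicit} directly to the iterate velocity $\lambda_3^{(n)}$, after verifying that this velocity satisfies the hypotheses \eqref{eq:lambda:3:abstract} with the constants $\Rsf_1 = \Rsf_2 = \mm^3$ recorded in \eqref{eq:R:def}. First I would observe that by \eqref{eq:cc}, $\lambda_3^{(n)} = w^{(n)} + \tfrac 13 z^{(n)}$, and that $w^{(1)} = \wb$ by \eqref{wzka1}, so that
\begin{align*}
\lambda_3^{(n)} - \wb = (w^{(n)} - w^{(1)}) + \tfrac 13 z^{(n)} \,, \qquad
\p_\theta \lambda_3^{(n)} - \p_\theta \wb = (\p_\theta w^{(n)} - \p_\theta w^{(1)}) + \tfrac 13 \p_\theta z^{(n)}\,.
\end{align*}
Using the bootstrap bounds \eqref{eq:w:boot}, \eqref{eq:w:dx:boot}, \eqref{eq:z:boot}, \eqref{eq:z:dx:boot}, and the values in \eqref{eq:R:def}, together with $t \leq \bar \eps$ small, one checks that $|\lambda_3^{(n)} - \wb| \leq R_1 t + \tfrac 13 R_3 t^{\frac 32} \leq \mm^3 t + C t^{\frac 32}$, which matches \eqref{eq:lambda:3:abstract:a} with $\Rsf_1 = \mm^3$; similarly $|\p_\theta \lambda_3^{(n)} - \p_\theta \wb| \leq R_2 (\bb^3 t^3 + (\theta-\sc(t))^2)^{-\frac 16} + \tfrac 13 R_4 t^{\frac 12} \leq \mm^3 (\cdots)^{-\frac 16} + C t^{\frac 12}$, matching \eqref{eq:lambda:3:abstract:b} with $\Rsf_2 = \mm^3$. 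The required $C^1_{\theta,t}$ regularity of $\lambda_3^{(n)}$ on $\DD_{\bar\eps}$ follows from $(w^{(n)},z^{(n)},k^{(n)},a^{(n)}) \in {\mathcal X}_{\bar\eps}$, which is exactly the standing assumption of the lemma.

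Second, with hypotheses \eqref{eq:lambda:3:abstract} verified, I would invoke the construction around \eqref{eq:dt:eta:def}: the ODE $\p_t \eta^{(n)}(x,t) = \lambda_3^{(n)}(\eta^{(n)}(x,t),t)$, $\eta^{(n)}(x,0)=x$ from \eqref{flow3} is precisely of the form \eqref{eq:dt:eta:def}, so the stopping time $T_x$ of \eqref{eq:Tx:def} is well-defined and continuous in $x$, the extremal labels $x_\pm^{(n)}(t)$ are defined via \eqref{eq:x:pm:def}--\eqref{eq:x:pm:Upsilon} (with $\eta$ replaced by $\eta^{(n)}$), and they satisfy \eqref{eq:x:pm:def}, i.e. $\sc(t) = \eta^{(n)}(x_\pm^{(n)}(t),t)$, which gives \eqref{xnpm}. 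The two-sided bound \eqref{xn-bounds} is then immediate from \eqref{eq:range:for:solutions:new}, since $\Rsf_1 = \mm^3$ and $\bar\eps$ is taken small enough that $t^2(\mm^4 + \Rsf_1) \ll (\bb t)^{\frac 32}$. Finally, Lemma~\ref{lem:AC:DC} applied to $\eta^{(n)}$ yields that $\eta^{(n)}(\cdot,t)\colon \Upsilon^{(n)}(t) \to \TT\setminus\{\sc(t)\}$ is a bijection (by the strict monotonicity \eqref{eq:px:eta} on each side of the shock plus the intermediate value theorem for surjectivity) and that the inverse $\ie^{(n)} = (\eta^{(n)})^{-1}\colon \DD_{\bar\eps}\to\TT\setminus\{0\}$ is continuous in space-time, with the quantitative bounds \eqref{eq:steves:desires}. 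One slightly delicate point to spell out is the nesting $\Upsilon^{(n)}(s)\supset\Upsilon^{(n)}(t)$ for $s<t$, which comes from \eqref{eq:finally:useful}, ensuring that labels outside the collided set stay outside at earlier times; this is what makes the definition of $\Upsilon^{(n)}(t)$ consistent and the inverse map globally well-defined on $\DD_{\bar\eps}$.

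The main obstacle is bookkeeping rather than conceptual: one must be careful that the constants $\Rsf_1,\Rsf_2$ appearing in \eqref{eq:lambda:3:abstract} are exactly $\mm^3$ and not merely $\OO(\mm^3)$, because the smallness threshold for $\bar\eps$ in Lemma~\ref{lem:AC:DC} and in \eqref{eq:range:for:solutions:new} is allowed to depend on $\kappa,\bb,\cc,\mm$ but the structural estimates \eqref{eq:cb:0}--\eqref{eq:cb:2} and \eqref{eq:px:eta} are stated with explicit numerical constants; thus I would state at the outset that $\bar\eps$ is further reduced, solely in terms of $\kappa,\bb,\cc,\mm$, so that all the ``$\bar\eps$ sufficiently small'' invocations inside Lemmas~\ref{lem:a=0:inversion} and~\ref{lem:AC:DC} are in force simultaneously. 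No new estimate is needed; the proof is a clean citation of \eqref{eq:dt:eta:def}--\eqref{eq:range:for:solutions:new} and Lemma~\ref{lem:AC:DC} once the hypothesis check in the first paragraph is done.
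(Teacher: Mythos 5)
Your proposal is correct and matches the paper's approach exactly: the paper simply states that Lemma~\ref{lem:Upsilon-n} "follows from \eqref{eq:dt:eta:def}--\eqref{eq:range:for:solutions:new} and Lemma~\ref{lem:AC:DC}," and what you have written is precisely the hypothesis-check (that $\lambda_3^{(n)}$ satisfies \eqref{eq:lambda:3:abstract} with $\Rsf_1 = \Rsf_2 = \mm^3$, using \eqref{eq:w:boot}, \eqref{eq:w:dx:boot}, \eqref{eq:z:boot}, \eqref{eq:z:dx:boot}, and \eqref{eq:b:m:ass} to absorb the $50\mm^2$ and $R_4$ terms) that makes that citation rigorous. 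The remark about nesting $\Upsilon^{(n)}(s)\supset\Upsilon^{(n)}(t)$ via \eqref{eq:finally:useful} is a useful detail that is implicit in Lemma~\ref{lem:AC:DC} but good to surface.
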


\begin{lemma}[\bf Bounds for $3$-characteristics]\label{lem:characteristic-bounds}
Assume that $(w^{(n)},z^{(n)},k^{(n)},a^{(n)}) \in {\mathcal X}_{\bar \eps}$. Then, we have   
\begin{subequations} 
\begin{alignat}{2}
&\tfrac 12 \leq \p_x \eta^{(n)} (x,t) \leq \tfrac 74  , \qquad &&\mbox{for all} \qquad  x \in \Upsilon ^{(n)}   \,, \label{cb0}\\
&\sabs{\eta^{(n)}(x,t) - \eta^{(1)}(x,t)} \leq \tfrac 32 \Rsf_1 t^2 , \qquad &&\mbox{for all} \qquad  x \in \Upsilon ^{(n)}   \,, \label{cb1}\\
&\sabs{\p_x \eta^{(n)}(x,t) - \p_x \eta^{(1)}(x,t)} \leq (16 \Rsf_1 \bb^{-\frac 32} + 8 \Rsf_2) t^{\frac 12}, \qquad &&\mbox{for all} \qquad x \in \Upsilon ^{(n)}  \,,\label{cb2} 
\end{alignat}
\end{subequations}
and
\begin{align} 
\int_0^t \abs{\p_\theta  w ^{(1)}  (\eta^{(n)} (x,s),s)}  ds &\leq \tfrac{19}{40} \,.
 \label{dxw1-integral} 
\end{align} 
\end{lemma}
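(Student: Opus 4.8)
\textbf{Proof plan for Lemma~\ref{lem:characteristic-bounds}.}

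The strategy is to reduce everything to the abstract framework of Lemma~\ref{lem:AC:DC}, by verifying that $\lambda_3^{(n)}$ satisfies the two structural bounds \eqref{eq:lambda:3:abstract}. Indeed, by definition $\lambda_3^{(n)} = w^{(n)} + \tfrac13 z^{(n)}$, so that writing $\lambda_3^{(n)} - \wb = (w^{(n)} - w^{(1)}) + \tfrac13 z^{(n)}$ and appealing to the bootstrap bounds \eqref{eq:w:boot} and \eqref{eq:z:boot}, together with \eqref{eq:R:def}, gives
\begin{align*}
\sabs{\lambda_3^{(n)}(\theta,t) - \wb(\theta,t)} \leq R_1 t + \tfrac13 R_3 t^{\frac 32} \leq \Rsf_1 t + C t^{\frac 32}
\end{align*}
with $\Rsf_1 = \mm^3$ (recall $R_1 = 50\mm^2 \ll \mm^3$ by \eqref{eq:b:m:ass}). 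Similarly, differentiating and using \eqref{eq:w:dx:boot} and \eqref{eq:z:dx:boot},
\begin{align*}
\sabs{\p_\theta \lambda_3^{(n)}(\theta,t) - \p_\theta \wb(\theta,t)} \leq R_2 ( \bb^3 t^3 + (\theta-\sc(t))^2)^{-\frac16} + \tfrac13 R_4 t^{\frac 12} \leq \Rsf_2 ( (\bb t)^3 + (\theta-\sc(t))^2)^{-\frac16} + C t^{\frac 12}
\end{align*}
with $\Rsf_2 = \mm^3 = R_2$. Thus \eqref{eq:lambda:3:abstract} holds with $\Rsf_1 = \Rsf_2 = \mm^3$, exactly the values recorded in \eqref{eq:R:def}.

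Once \eqref{eq:lambda:3:abstract} is verified, Lemma~\ref{lem:AC:DC} applies verbatim to $\eta^{(n)}$ (which solves \eqref{flow3}, i.e.\ \eqref{eq:dt:eta:def} with $\lambda_3 = \lambda_3^{(n)}$). The existence and bijectivity of $\eta^{(n)}(\cdot,t)$ on the label set $\Upsilon^{(n)}(t)$ is the content of Lemma~\ref{lem:Upsilon-n}, which is already invoked. Then \eqref{cb0} is precisely \eqref{eq:cb:0}; the bound \eqref{cb1} is \eqref{eq:cb:1}, using that $\eta^{(1)} = \etab$ by \eqref{eta1}; the bound \eqref{cb2} is \eqref{eq:cb:2}, again with $\eta^{(1)} = \etab$ and noting that $\bb^{-\frac12} \leq C$ so the constant $16\Rsf_1\bb^{-\frac32} + 8\Rsf_2 \bb^{-\frac12}$ in \eqref{eq:cb:2} is absorbed into $16\Rsf_1\bb^{-\frac32} + 8\Rsf_2$ as stated (after possibly relabeling $\Rsf_2$, or simply because $\bb^{-\frac12}\geq 1$ is not available—here one either keeps the $\bb^{-\frac12}$ factor implicit in the constant or uses $\bb \leq 2$; this is a cosmetic adjustment of constants). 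Finally, \eqref{dxw1-integral} is the estimate \eqref{eq:AC:DC:con}, since $w^{(1)} = \wb$ by \eqref{wzka1} and the bound \eqref{eq:AC:DC:con} holds for every $x\in\Upsilon^{(n)}(t)$ and every $t\in[0,\bar\eps]$.

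The only genuine verification is the pair of displayed inequalities above reducing $\lambda_3^{(n)}$ to the abstract hypotheses, together with checking that the numerical constants line up with those declared in \eqref{eq:R:def}; everything else is a direct citation of Lemmas~\ref{lem:AC:DC} and~\ref{lem:Upsilon-n}. I expect no real obstacle: the main (mild) point of care is bookkeeping the constants $\Rsf_1,\Rsf_2$ and the $\bb$-dependent factors so that the statement matches \eqref{eq:cb:1}--\eqref{eq:cb:2} with the specific choices $\Rsf_1 = \Rsf_2 = \mm^3$, and ensuring $\bar\eps$ is taken small enough (depending only on $\kappa,\bb,\cc,\mm$) for all the ``sufficiently small'' reductions inside Lemma~\ref{lem:AC:DC} to be valid simultaneously. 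Since $(w^{(n)},z^{(n)},k^{(n)},a^{(n)})\in{\mathcal X}_{\bar\eps}$ is assumed, the bootstrap bounds \eqref{eq:w:z:k:a:boot} are available and the reduction is immediate.
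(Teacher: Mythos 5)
Your proposal is correct and follows exactly the route the paper takes: the paper's proof of Lemma~\ref{lem:characteristic-bounds} is a one-line citation of Lemmas~\ref{lem:Upsilon-n} and~\ref{lem:AC:DC}, and you have correctly identified that the content is to verify the structural hypotheses \eqref{eq:lambda:3:abstract} for $\lambda_3^{(n)}$ using the bootstrap bounds in ${\mathcal X}_{\bar\eps}$ together with the constant choices $\Rsf_1 = \Rsf_2 = \mm^3$ from \eqref{eq:R:def}. Your explicit check of \eqref{eq:lambda:3:abstract:a}--\eqref{eq:lambda:3:abstract:b}, and the observation about the $\bb^{-1/2}$ factor discrepancy between \eqref{eq:cb:2} and \eqref{cb2}, are sound (the latter is indeed just a harmless constant adjustment absorbed by the final inequality, since $\bb$ is bounded above and below).
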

\begin{proof}[Proof of Lemma \ref{lem:characteristic-bounds}]
From Lemma \ref{lem:Upsilon-n}, all of the conditions of Lemma \ref{lem:AC:DC} hold, so the stated inequalities are thus obtained.
\end{proof}

\begin{lemma}\label{lem:12flows}
For $n \ge 1$, assume that $w^{(n)}$ and $z^{(n)}$ satisfy the bounds \eqref{eq:w:boot}--\eqref{eq:z:dx:boot}. 

Then, for every $(\theta,t) \in \mathcal{D} _{\bar \eps}$ there exists a unique  Lipschitz smooth integral curve $\pst^{(n)} (\theta,\cdot) \colon [0,t] \to \DD_{\bar \eps}$ satisfying \eqref{flow1}. There exists a unique point $\mathring{\theta}_1^{(n)} \in \TT$ such that $\psi_{\bar \eps}^{(n)}(\mathring{\theta}_1^{(n)},0) = 0$, which allows us to define as in Definition~\ref{s1s2} the curve $\sc_1^{(n)}$ and the space-time region $\DD_{\bar \eps}^{z,(n)}$. For every $(\theta,t) \in \mathcal{D}^{z,(n)}_{\bar \eps}$,   there exists a unique shock-intersection time $ 0 < \stt^{(n)} (\theta,t) < t$ satisfying \eqref{stoppingtimes}.   Moreover, for $(\theta,t) \in (\overline{\mathcal{D} _{\bar \eps}^{z,(n)}})^\complement$, the characteristic curve $(\pst^{(n)}(\theta,s),s)_{s\in[0,t]}$ does not intersect the shock curve $(\sc(s),s)_{s\in[0,t]}$. 

Similarly, for every $(\theta,t) \in \mathcal{D} _{\bar \eps}$ there exists a unique Lipschitz smooth integral curve $\pt^{(n)} (\theta,\cdot) \colon [0,t] \to \DD_{\bar \eps}$ satisfying \eqref{flow2}. There exists a unique point $\mathring{y}_2^{(n)} \in \TT$ such that $\phi_{\bar \eps}^{(n)}(\mathring{\theta}_2^{(n)},0) = 0$, which allows us to define as in Definition~\ref{s1s2} the curve $\sc_2^{(n)}$ and the space-time region $\DD_{\bar \eps}^{k,(n)}$. For every $(\theta,t) \in \mathcal{D}^{k,(n)}_{\bar \eps}$,   there exists a unique shock-intersection time $ 0 < \st^{(n)} (\theta,t) < t$ satisfying \eqref{stoppingtimes}.   Moreover, for $(\theta,t) \in (\overline{\mathcal{D} _{\bar \eps}^{k,(n)}})^\complement$, the characteristic curve $(\pt^{(n)}(\theta,s),s)_{s\in[0,t]}$ does not intersect the shock curve $(\sc(s),s)_{s\in[0,t]}$. 

Lastly, we have the estimates
\begin{subequations} 
\label{c1c2diffkappa}
\begin{align} 
\pst^{(n)} (\theta,s)  =  \tfrac{1}{3}  \kappa s +   (\theta-  \tfrac{1}{3}  k t) + \OO(t^ {\frac{4}{3}} ) =  \tfrac{1}{3}  \kappa s +   (\theta-  \sc^{(n)} _1(t)) + \OO(t^ {\frac{4}{3}} )\,,
\ \ (\theta,t) \in \mathcal{D}^{z,(n)}_{\bar \eps}\,,  \\
\pt^{(n)} (\theta,s)  =  \tfrac{2}{3}  \kappa s +   (\theta-  \tfrac{2}{3}  k t) + \OO(t^ {\frac{4}{3}} ) =  \tfrac{2}{3}  \kappa s +   (\theta-  \sc^{(n)} _2(t)) + \OO(t^ {\frac{4}{3}} )\,,  
\ \ (\theta,t) \in \mathcal{D}^{z,(n)}_{\bar \eps}\,,  
\end{align} 
\end{subequations} 
and
\begin{align} 
\sup_{s \in [0,t]}  \sabs{\p_\theta  \pt^{(n)} (\theta,s) -1} \le C t^{\frac{1}{3}}  \,, \qquad
\sup_{s \in [0,t]}  \sabs{\p_\theta  \pst^{(n)} (\theta,s) -1} \le C t^{\frac{1}{3}} \,, \qquad (\theta,t) \in \mathcal{D}_{\bar \eps}\,, \label{phixn-bound} 
\end{align} 
where the constant $C>0$ only depends on $\kappa, \bb$, and $\mm$.
\end{lemma}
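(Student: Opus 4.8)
The plan is to prove Lemma~\ref{lem:12flows} by treating the $1$- and $2$-characteristics on the same footing, since the argument for $\pst^{(n)}$ (speed $\lambda_1^{(n)} = \frac13 w^{(n)} + z^{(n)}$) and for $\pt^{(n)}$ (speed $\lambda_2^{(n)} = \frac23 w^{(n)} + \frac23 z^{(n)}$) is identical up to the numerical value of the leading coefficient. I will carry out the details for $\pt^{(n)}$ and indicate the (obvious) modifications for $\pst^{(n)}$. The key input is that under the bootstrap bounds \eqref{eq:w:boot}--\eqref{eq:z:dx:boot}, the velocity $\lambda_2^{(n)}$ is of the form $\frac23 \wb + (\text{error})$ where the error is $\OO(t)$ in $C^0$ and $\OO(((\bb t)^3 + (\theta-\sc(t))^2)^{-1/6}) + \OO(t^{1/2})$ in $\p_\theta$; this places $\lambda_2^{(n)}$ exactly in the class of velocities treated abstractly in Section~\ref{sec:Burgers:explicit}, specifically as a velocity satisfying \eqref{eq:lambda:3:abstract} with $\wb$ replaced by $\frac23 \wb$ and with $\Rsf_1, \Rsf_2$ proportional to $R_1, R_2$ from \eqref{eq:R:def}. (One verifies $\lambda_2^{(n)} - \frac 23 \wb = \frac23(w^{(n)} - w^{(1)}) + \frac 23 z^{(n)}$, and both terms obey the required bounds by \eqref{eq:w:boot}, \eqref{eq:z:boot}, \eqref{eq:w:dx:boot}, \eqref{eq:z:dx:boot}, and $t\le \bar \eps$.)

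First I would establish existence, uniqueness, and Lipschitz regularity of the integral curves. Away from the shock curve $\{\theta=\sc(t)\}$, the velocity $\lambda_2^{(n)}$ is $C^1$ by the bootstrap assumptions (and the $C^2$-smoothness of $\wb$ off the shock from Proposition~\ref{prop:Burgers}), so Picard--Lindelöf gives a unique $C^1$ solution locally. Across the shock curve the velocity has bounded one-sided limits — indeed $|\lambda_2^{(n)}|$ is globally bounded by $\frac23 \mm + C$ by \eqref{eq:u0:ass:2} and \eqref{eq:w:boot}, \eqref{eq:z:boot}, and $\lambda_2^{(n)}$ has bounded one-sided $\theta$-derivatives on the shock by \eqref{thegoodstuff1}, \eqref{eq:w:dx:boot}, \eqref{eq:z:dx:boot} — so the curve can be continued across $\sc$ as a Lipschitz curve (this is the standard fact that an ODE with a one-sided-Lipschitz bounded RHS, with a transversality/monotonicity property, has a unique continuous solution; transversality is guaranteed here since $\dot\sc = \kappa + \OO(t)$ while $\lambda_2^{(n)} = \frac23 \kappa + \OO(t) < \dot\sc$ near the pre-shock). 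This yields the unique Lipschitz curve $\pt^{(n)}(\theta,\cdot)\colon[0,t]\to\DD_{\bar\eps}$. Next, for the monotonicity and definition of $\mathring\theta_2^{(n)}$: I would show $\theta \mapsto \pt^{(n)}(\theta,s)$ is strictly increasing for each $s$, using the bound $\p_\theta \pt^{(n)}(\theta,s) = \exp(\int_s^t \p_\theta\lambda_2^{(n)}(\pt^{(n)}(\theta,r),r)\,dr) > 0$, which is legitimate along segments not meeting $\sc$, and which persists across $\sc$ because the one-sided derivatives are bounded. Strict monotonicity plus the intermediate value theorem (and the continuity of the map $\theta\mapsto\pt^{(n)}(\theta,0)$) give a unique $\mathring\theta_2^{(n)}$ with $\phi_{\bar\eps}^{(n)}(\mathring\theta_2^{(n)},0)=0$; this then defines $\sc_2^{(n)}$, $\DD_{\bar\eps}^{k,(n)}$, and the shock-intersection time $\st^{(n)}(\theta,t)$ exactly as in Definition~\ref{s1s2} and Definition~\ref{def:stopping:times}. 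The fact that the intersection (when it exists) occurs only once, and that for $(\theta,t)\in(\overline{\DD_{\bar\eps}^{k,(n)}})^\complement$ the curve never meets $\sc$, follows from transversality: $\frac{d}{ds}(\sc(s) - \pt^{(n)}(\theta,s)) = \dot\sc(s) - \lambda_2^{(n)}(\pt^{(n)}(\theta,s),s) \geq \frac13\kappa - \OO(t) > 0$ whenever the curve is on the left of $\sc$, so $\sc(s) - \pt^{(n)}(\theta,s)$ is strictly increasing in $s$ near any crossing, forcing uniqueness of the crossing and monotone separation thereafter.

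Finally, for the asymptotic expansions \eqref{c1c2diffkappa} and the derivative bound \eqref{phixn-bound}: since $\lambda_2^{(n)}(\theta,s) = \frac23 w^{(n)}(\theta,s) + \frac23 z^{(n)}(\theta,s)$, and along the relevant trajectories (which by the transversality just established and \eqref{eq:no:trespassing}-type estimates stay a distance $\gtrsim \bb^{3/2} t^{1/2}(t-s)$ from $\sc$ away from the endpoint) we have $w^{(n)} = \kappa + \OO(t^{1/3})$ from \eqref{eq:u0:ass} composed with the flow-map control (this is where I would invoke Lemma~\ref{lem:Steve:needs:this} to bound $\int |\p_\theta \wb|$ along the transversal curve $\pt^{(n)}$ by $\OO(t^{1/3})$, together with \eqref{eq:w:boot}, and $z^{(n)} = \OO(t^{3/2})$ from \eqref{eq:z:boot}), integrating the ODE $\p_s\pt^{(n)} = \lambda_2^{(n)}\circ\pt^{(n)}$ from $s$ to $t$ gives $\pt^{(n)}(\theta,s) = \theta - \frac23\kappa(t-s) + \OO(t^{4/3})$, which together with the analogous expansion $\sc_2^{(n)}(t) = \frac23\kappa t + \OO(t^{4/3})$ (apply the expansion at $\theta = \sc_2^{(n)}(t)$, $s=0$) yields \eqref{c1c2diffkappa}. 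For \eqref{phixn-bound}, differentiate: $\p_\theta\pt^{(n)}(\theta,s) = \exp(-\int_s^t \p_\theta\lambda_2^{(n)}(\pt^{(n)}(\theta,r),r)\,dr)$, and bound the exponent using \eqref{thegoodstuff1}, \eqref{eq:w:dx:boot}, \eqref{eq:z:dx:boot}, and Lemma~\ref{lem:Steve:needs:this} (the $\p_\theta\wb$ integral along a transversal flow is $\OO(t^{1/3})$, the perturbation pieces are $\OO(t^{1/3})$ or smaller after integration) to get $|\p_\theta\pt^{(n)}(\theta,s) - 1| \le C t^{1/3}$; the case $(\theta,t)\in\DD_{\bar\eps}$ with the curve crossing $\sc$ is handled by splitting the time integral at the crossing time and using that both halves are transversal. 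I expect the main obstacle to be the careful handling of the one-sided derivatives of $\lambda_2^{(n)}$ across the shock curve and the resulting justification that the flow, its inverse, and $\p_\theta\pt^{(n)}$ remain well-behaved there — in particular making rigorous the claim that a crossing happens at most once and that the $C^0$ and $C^1$ estimates are uniform up to and across $\sc$, which requires combining the transversality inequality $\dot\sc - \lambda_2^{(n)} \gtrsim \kappa$ with the $L^1_s$-in-time control of $\p_\theta\wb$ from Lemma~\ref{lem:Steve:needs:this} on each side.
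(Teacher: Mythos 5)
Your proposal is correct and follows essentially the same route as the paper: Picard--Lindelöf away from the shock, transversality $\dot\sc - \lambda_i^{(n)} \gtrsim \kappa$ to continue the flow across $\sc$ and to guarantee a unique crossing, the positivity of $\p_\theta\pt^{(n)}$ (via the exponential formula, with the time integral of $\p_\theta\wb$ along the transversal trajectory controlled by Lemma~\ref{lem:Steve:needs:this}, splitting the integration region by distance to $\sc$) to get monotonicity and the intermediate-value argument for $\mathring\theta_i^{(n)}$, and direct integration of the ODE together with $\lambda_i^{(n)} = \tfrac{i}{3}\kappa + \OO(t^{1/3})$ along the flow to obtain \eqref{c1c2diffkappa} and \eqref{phixn-bound}. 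One small caveat: the opening framing --- that $\lambda_2^{(n)}$ ``fits exactly in the class of velocities satisfying \eqref{eq:lambda:3:abstract} with $\wb$ replaced by $\tfrac23\wb$'' --- is slightly misleading, because the abstract framework of Section~\ref{sec:Burgers:explicit} is built for forward-in-time flows (initial data at $s=0$) near $\wb$, whereas $\pt^{(n)}$, $\pst^{(n)}$ are backward-in-time flows (terminal data at $s=t$) near $\tfrac23\wb$, $\tfrac13\wb$; you cannot literally invoke Lemma~\ref{lem:AC:DC}. Since you do not actually use any result from that section and instead reproduce the direct argument (matching the paper's), this does not constitute a gap, but the framing should be dropped or weakened to an analogy rather than an inclusion.
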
 

\begin{proof}[Proof of Lemma \ref{lem:12flows}] We prove the lemma for the $1$-characteristics $\pst^{(n)}$, the proof for the $2$-characteristics $\pt^{(n)}$ being exactly the same. 

We begin with the existence and uniqueness of $1$-characteristics passing through any point $(\theta,t) \in  \mathcal{D}_{\bar \eps}$.   Using the definition \eqref{flow1}, we see that
\begin{subequations}
\label{blob-a-tron3}
\begin{align}
\p_s \pst^{(n)}  (\theta,s) &= \lambda_1 ^{(n)} ( \pst^{(n)}  (\theta,s),s) \notag\\
&=  \tfrac{1}{3} w ^{(1)}( \pst^{(n)}  (\theta,s),s) +     \Bigl(\tfrac 13 (w^{(n)}  - w^{(1)}) + z^{(n)} \Bigr)  ( \pst^{(n)}  (\theta,s),s) \,, \\
\psi_t ^{(n)}  (\theta,t)&=\theta  \,, 
\end{align}
\end{subequations} 
where we recall cf.~\eqref{wzka1} that $w^{(1)} = \wb$ and $z^{(1)} = 0$.
The bounds \eqref{thegoodstuff1}, \eqref{eq:w:dx:boot}, and \eqref{eq:z:dx:boot} show that $\lambda_1^{(n)}$ is Lipschitz continuous in $\DD_{\bar \eps}$; moreover, as long as $\pst^{(n)}(\theta,s) \in \DD_{\bar \eps}$, we have the explicit estimate
\begin{align} 
&\sabs{\p_\theta  \lambda_1^{(n)} ( \pst^{(n)} (\theta, s) , s)}\notag\\
&\quad \leq 
\tfrac{4}{15} \left(  (\bb  s)^3 +  \sabs{\pst^{(n)} (\theta, s)  - \sc( s)}^2\right)^{-\frac 13} 
+ \tfrac 13 R_2 \left(  (\bb  s)^3 +  \sabs{\pst^{(n)} (\theta, s)  - \sc( s)}^2\right)^{-\frac 16}  + R_4 s^{\frac 12}  
\notag\\
&\quad \leq \tfrac{1}{3} \bb \left(  (\bb  s)^3 +  \sabs{\pst^{(n)} (\theta, s)  - \sc( s)}^2\right)^{-\frac 13} + 2 \mm^3 \,.
 \label{blob-a-tron4}
 \end{align} 
Hence, by the Cauchy-Lipschitz theorem,  for each  such $(\theta,t) \in \DD_{\bar \eps}$, there is a unique local in time solution time to \eqref{blob-a-tron3}. Using \eqref{blob-a-tron4} and the bound $|\lambda_1^{(n)}| \leq \tfrac 12 \mm$, this solution $\pst^{(n)}(\theta,s)$ may be {\em maximally extended} as a Lipschitz function of $s$ on the time interval $[s_*,t]$, where $\pst(\theta,s_*) \in \partial \DD_{\bar \eps}$. In our case, this means that either  $[s_*,t] = [0, t]$ (if $(\pst^{(n)}(s),s)$ does not intersect the shock curve $(\sc(s),s)$ for $s\in (0,t]$), or  $[s_*,t] = [\stt^{(n)}(\theta,t),t]$, where we have denoted by $\stt^{(n)} (\theta,t) \in [0,t)$ the {\em largest} value of $s$ at which $\pst^{(n)} (\theta,s) = \sc (s)$. Of course, if $t< \bar \eps$ the solution $\pst(\theta,s)$ may also be similarly maximally extended to times $s$ past $t$, up to the time $s^*$ at which $\pst(\theta,s^*)$ reaches $\partial \DD_{\bar \eps}$.

In order to complete the existence and uniqueness part claimed in Lemma~\ref{lem:12flows}, we need to show that if $\stt^{(n)}(\theta,t) \in (0,t)$, then the integral curve may be uniquely continued as a Lipschitz function of $s$ also on the time interval $[0,\stt^{(n)}(\theta,t)]$. We note that in this case the limit $\lim_{s\to \stt^{(n)}(\theta,t)^+} \pst^{(n)}(\theta,s)$ is well-defined, and so to ensure continuity we let $\pst^{(n)}(\theta,\stt^{(n)}(\theta,t))$ equal this limit. The desired claim follows once we prove the following two statements: first, that the shock surface $(\sc(s),s)_{s\in [0,t]}$ is a non-characteristic surface for the ODE \eqref{blob-a-tron3}, so that $\pst^{(n)}(\theta,\stt^{(n)}(\theta,t)) = \sc(\stt^{(n)}(\theta,t)$ may serve as Cauchy data for the {\em transversal characteristic} $\pst(\theta,s)$ with $s< \stt^{(n)}(\theta,t)$; second, that the curve $\pst(\theta,s)$ does not intersect the shock curve for $s\in [0, \stt^{(n)}(\theta,t))$, thereby ensuring the uniqueness/well-definedness of $\stt^{(n)}(\theta,t)$ implicitly assumed in Definition~\ref{def:stopping:times}.

The {\em transversality} of $\pst^{(n)}$ and the shock surface is established as follows. We first  carefully estimate $\lambda_1^{(n)}$ in the vicinity of the shock curve. By \eqref{eq:wb:def}, \eqref{eq:tropic:2}, \eqref{eq:w:boot}, and \eqref{eq:z:boot}, for any $\tilde \theta$ such that $|\tilde \theta - \sc(s)| \leq \kappa t$ we have that 
\begin{align}
\sabs{\lambda_1^{(n)}(\tilde \theta, s) - \tfrac 13 \kappa} \leq \tfrac 13 \sabs{w_0(\etab^{-1}(\tilde \theta,s)) - \kappa} + \tfrac 13 R_1 s + R_3 s^{\frac 32} \leq  \tfrac 32 \bb (\kappa t)^{\frac 13} \,,
\label{blob-a-tron9}
\end{align}
since $\bar \eps$, and hence $s\leq t$, are sufficiently small. Note that if $|\tilde \theta - \sc(s)| \leq \kappa s$, then in the upper bound \eqref{blob-a-tron9} we may replace $t^{\frac 13}$ by $s^{\frac 13}$. Next, we note that
 the vector normal to the shock curve is given by $\bigl(  -1, \dot{\sc}(s) \bigr) $ while the tangent vector to the characteristic curve is given by
$\bigl( \p_s \pst^{(n)}  (\theta,s) , 1 \bigr) = \bigl(\lambda_1^{(n)}(\pst^{(n)}(\theta,s),s) , 1 \bigr)$. 
Computing the dot-product,  and appealing to \eqref{eq:sc:ass} and \eqref{blob-a-tron9}, we obtain that 
\begin{align} 
\bigl(  -1, \dot\sc(s) \bigr)  \cdot \bigl( \p_s \pst^{(n)}  (s) , 1 \bigr) =  \dot\sc(s) - \lambda_1^{(n)}(\pst^{(n)}(\theta,s),s)   =  \tfrac{2}{3}  \kappa +  \OO( s^ {\frac{1}{3}} )
\ge \tfrac{1}{2} \kappa  \,,
\label{blob-a-tron99}
\end{align} 
since $\bar\eps$ is small enough, and $s = \stt^{(n)}(\theta,t)$. Therefore, the characteristic curve $\pst^{(n)}$ intersects the shock curve transversally, and the crossing angle is bounded from below uniformly for on $[0,\bar \eps]$. As mentioned above, this means that we can use the values of the flows $\pst^{(n)}$  on the shock curve as Cauchy data, and continue the solutions in a Lipschitz fashion for $s < \stt^{(n)}(\theta,t)$. The fact that the angle measured in \eqref{blob-a-tron99} has a sign, and the smoothness of $\sc$, also ensures the uniqueness of the {\it shock-intersection time} $\stt(\theta,t) \in (0, t)$, so that it is a well-defined object. This concludes the proof of existence, uniqueness, and Lipschitz regularity for the characteristic curves $\pst^{(n)}(\theta,\cdot) \colon [0,t] \to \TT$. 

Next, we turn to the proof of the bound \eqref{phixn-bound}. Differentiating \eqref{blob-a-tron3} shows that 
\begin{align} 
 \p_\theta  \pst^{(n)} (\theta,s) 
 & = e^{\int_{s}^t (\p_\theta  \lambda_1^{(n)})(\pst^{(n)} (\theta,s') ,s') ds'} \notag\\
 & = e^{{\frac{1}{3}} \int_s^t  \p_\theta  w^{(1)} (  \pst^{(n)}(\theta,s') ,s')ds'}  
 e^{ \int_s^t \left( \frac{1}{3} (\p_\theta  w^{(n)}- \p_\theta  w^{(1)}) + \p_\theta  z^{(n)} \right)(  \pst^{(n)}(\theta,s') ,s') ds'} 
\,.
\label{blob-a-tron999}
\end{align} 
For $s' \in [s,t]$ such that $|\pst^{(n)}(\theta,s') - \sc(s')| \geq \kappa t$, from \eqref{blob-a-tron4} we deduce that $|\p_\theta  \lambda_1^{(n)}(\pst^{(n)}(\theta,s'),s')| \leq \frac 13 \bb (\kappa t)^{-\frac 23} + 2\mm^3 \leq \frac 23 \bb (\kappa t)^{-\frac 23}$, and thus the contribution from such $s'$ to the integral on the right side of \eqref{blob-a-tron999} is bounded from above by $\exp( 2 \bb \kappa^{-\frac 23} t^{\frac 13})$. On the other hand, $s' \in [s,t]$ such that $|\pst^{(n)}(\theta,s') - \sc(s')| \leq  \kappa t$, we may appeal to \eqref{blob-a-tron9}, so that $\p_s \pst^{(n)}(\theta,s') \leq \frac 12 s'$; this allows us to apply Lemma~\ref{lem:Steve:needs:this} with $\gamma = \pst^{(n)}(\theta,\cdot)$ and  $\mu = \frac 12$, for these intervals of $s'$, and together with the bounds \eqref{eq:w:z:k:a:boot} we deduce that the contribution from such $s'$ to the integral on the right side of \eqref{blob-a-tron999} is bounded from above by $\exp(30 \bb \kappa^{-\frac 23} t^{\frac 13})$. Combining these estimates we deduce that 
for all $s\in [0,t]$ and $t\in (0,\bar \eps]$, 
\begin{align} 
\sabs{ \p_\theta  \pst^{(n)} (\theta,s) - 1 } \le 40 \bb \kappa^{-\frac 23} t^{\frac 13} \,,
\label{blob-a-tron9999}
\end{align} 
when $\bar\eps$ is sufficiently small. This proves \eqref{phixn-bound} for the flow $\pst^{(n)}$, which implies that $\pst^{(n)}$ is continuous on $\TT \times [0,t]$, and is uniformly Lipschitz continuous both with respect to $\theta$ and with respect to $s$. 

The bound \eqref{blob-a-tron9999} does not just provide regularity with respect to $\theta$ of the flow $\pst^{(n)}(\theta,s)$, but it also shows that it is a monotone increasing function of $\theta$. This allows us to show the existence and uniqueness of a point $\mathring{\theta}_1^{(n)} \in \TT$ such that $\psi_{\bar \eps}^{(n)}(\mathring{\theta}_1^{(n)},0) = 0$. Existence follows by the intermediate function theorem, applied to $\psi_{\bar \eps}^{(n)}(\theta,0) \colon \TT \to \TT$: indeed, from \eqref{blob-a-tron9} (applied with $t = \bar \eps$) and \eqref{eq:sc:ass}, we see that for $ \sc(\bar \eps) < \theta$ we have $\psi^{(n)}_{\bar \eps}(\theta,0) \geq \frac 14 \kappa \bar \eps > 0$; on the other hand, for $\theta < \sc(\bar \eps) - \frac 34 \kappa \bar \eps$, we have $\psi^{(n)}_{\bar \eps}(\theta,0) \leq - \frac 18 \kappa \bar \eps < 0$. The uniqueness of $\mathring{\theta}_1^{(n)}$ follows by the monotonicity in $\theta$ guaranteed by  \eqref{blob-a-tron9999}. Note that the above argument gives the rough bound $\sc(\bar \eps) - \frac 34 \kappa \bar \eps \leq \mathring{\theta}_1^{(n)} \leq \sc(\bar \eps)$.

Thus, as in Definition~\ref{s1s2} the curve $\sc_1^{(n)}$ and the space-time region $\DD_{\bar \eps}^{z,(n)}$ are now well-defined. The fact that for $(\theta,t) \in (\overline{\mathcal{D} _{\bar \eps}^{z,(n)}})^\complement$  the  curve $(\pst^{(n)}(\theta,s),s)_{s\in[0,t]}$ does not intersect the shock curve $(\sc(s),s)_{s\in[0,t]}$, and the fact that for $(\theta,t) \in \mathcal{D} _{\bar \eps}^{z,(n)}$ intersection does indeed occur at a unique time $\stt^{(n)}(\theta,t)$, now follows from the monotonicity of $\pst^{(n)}(\theta,s)$ with respect to $\theta$, the definition of $\sc_1^{(n)}$, the transversality \eqref{blob-a-tron99}, and its consequences discussed earlier.

In order to conclude the proof, it remains to establish \eqref{c1c2diffkappa}.
From the aforementioned rough bound on $\mathring{\theta}_1^{(n)}$, appealing to the definition $\sc_1(s) = \psi_{\bar \eps}^{(n)}(\mathring{\theta}_1^{(n)},s)$, the bound \eqref{eq:sc:ass}, integrating \eqref{blob-a-tron9} with $\tilde \theta = \sc_1^{(n)}(s)$, and using that $\sc_1^{(n)}(0) = 0 = \sc(0)$, we see that 
\begin{align} 
\sabs{\sc(s) - \sc_1^{(n)}(s) - \tfrac 23 \kappa s} 
&\leq \sabs{\sc_1^{(n)}(s) - \tfrac 13 \kappa s} + \sabs{\sc(s) - \kappa s}  \notag\\
&\leq \tfrac 32 \bb \kappa^{-\frac 13} s^{\frac 43} +  \mm^4 s^2  \leq 2 \bb \kappa^{-\frac 13} s^{\frac 43} \qquad \mbox{for all}  \qquad  s \in [0, \bar\eps] \,.
\label{blob-a-tron6}
\end{align}  
More generally, for any $(\theta,t) \in \DD_{\bar \eps}^{z,(n)}$, we may integrate \eqref{blob-a-tron9} with $\tilde \theta = \pst^{(n)}(\theta,s)$ and deduce that 
\begin{align}
\pst^{(n)}(\theta,s) 
= \theta - \int_s^t \lambda_1^{(n)}( \pst^{(n)}(\theta,s')) ds'
= \theta - \frac 13 \kappa (t-s) + \OO(t^{\frac 43})\,,
\label{blob-a-tron7}
\end{align}
which proves the first equality in \eqref{c1c2diffkappa}. The second equality follows by combining \eqref{blob-a-tron7} with \eqref{blob-a-tron6}, which in turn shows via \eqref{eq:sc:ass} that $\sc_1^{(n)}(t) = \frac 13 \kappa t + \OO(t^{\frac 43})$. 

The arguments for the $2$-characteristic $\pt^{(n)}(\theta,s)$ are identical, except that $\frac 13 \kappa t$ must be replaced with $\frac 23 \kappa t$ because $\lambda_2^{(n)}$ contains $\frac 23 w^{(1)}$ instead of $\frac 13 w^{(1)}$. We omit these redundant details.
\end{proof}

\subsubsection{Stability of the iteration space}
\begin{proposition}[${\mathcal X}_{\bar \eps}$ is stable under iteration]\label{lem:stable} 
Let $\bar \eps$ be taken sufficiently small  with respect to $\kappa, \bb, \cc$, and $\mm$. 
For all $n\ge 1$, the map 
$$ (w^{(n)} , z^{(n)}, k^{(n)}, a^{(n)} ) \mapsto 
(w^{(n+1)}, z^{(n+1)}, k^{(n+1)}, a^{(n+1)} )$$
maps ${\mathcal X}_{\bar \eps} \to {\mathcal X}_{\bar \eps}$. 
In particular, the iterates $(w^{(n+1)}, z^{(n+1)}, k^{(n+1)}, a^{(n+1)} ) $ satisfy the bounds \eqref{eq:w:z:k:a:boot}.
\end{proposition}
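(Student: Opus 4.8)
The plan is to verify, one unknown at a time, that each of the seven bounds in \eqref{eq:w:z:k:a:boot} is propagated under the iteration $n \mapsto n+1$, using the explicit representation formulas for $w^{(n+1)}, z^{(n+1)}, k^{(n+1)}, a^{(n+1)}$ derived in Section~\ref{sec:construction:iteration}, together with the characteristic estimates (Lemmas~\ref{lem:characteristic-bounds}, \ref{lem:12flows}), the shock-curve behavior (Lemmas~\ref{lem:jumpn}, \ref{lem:zknp1-shock}), and the transversality integral bounds (Lemmas~\ref{lem:AC:DC}, \ref{lem:Steve:needs:this}). Throughout we assume $(w^{(n)},z^{(n)},k^{(n)},a^{(n)}) \in {\mathcal X}_{\bar \eps}$, i.e. \eqref{eq:w:z:k:a:boot} holds at level $n$, and we choose $\bar \eps = \bar \eps(\kappa,\bb,\cc,\mm)$ sufficiently small at the end so that all the error terms — which come with positive powers of $t \leq \bar \eps$ or with the smallness $|\cc| \ll \bb$, $\kappa \ll \mm$ from \eqref{eq:b:m:ass} — are absorbed into the constants $R_i$ from \eqref{eq:R:def} with room to spare.

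First I would handle $a^{(n+1)}$: from the explicit formula \eqref{eq:xland-a:2}, using $|\pt^{(n)}(\theta,s)|$-bounds and \eqref{phixn-bound}, and the inductive bounds \eqref{eq:w:boot}--\eqref{eq:z:dx:boot}, \eqref{eq:a:boot}, together with $\|a_0\|_{W^{1,\infty}} \leq \frac 32 \kappa$ from \eqref{eq:a0:ass}, one estimates $|a^{(n+1)}| \leq \frac 32 \kappa + C \mm^2 \bar \eps \leq R_7 = 4\mm$; the bound for $\p_\theta a^{(n+1)}$ is obtained by differentiating \eqref{eq:xland-a:2} and using $|\p_\theta \pt^{(n)} - 1| \lesssim t^{1/3}$ from \eqref{phixn-bound}, again giving $\leq R_7$ once $\bar \eps$ is small. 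Next, $w^{(n+1)}$: for the $C^0$ bound \eqref{eq:w:boot}, subtract \eqref{wn-alt} from $w^{(1)}(\eta^{(1)}(x,t),t)=w_0(x)$, use \eqref{cb1} to compare the flows, the forcing bounds $|a^{(n)}w^{(n)}| \lesssim \mm^2$ and $|\frac16 (c^{(n)})^2 \p_\theta k^{(n)}| \lesssim t^{1/2}$, and \eqref{dxw1-integral} to control $w^{(1)}\circ \eta^{(n)} - w^{(1)} \circ \eta^{(1)}$; this yields $|w^{(n+1)} - w^{(1)}| \leq C\mm^2 t \leq R_1 t$. For $\p_\theta w^{(n+1)}$ \eqref{eq:w:dx:boot} I would use the no-derivative-loss identity \eqref{prelim-wx} (with the superscripts shifted): the leading term $w_0'/\eta_x^{(n)}$ is controlled by \eqref{eq:u0':interest}, \eqref{cb0}, and the comparison \eqref{eq:px:wb:def}--\eqref{thegoodstuff1} gives exactly the weight $(\bb^3 t^3 + (\theta-\sc(t))^2)^{-1/6}$; the term $\frac14 (c^{(n)}k_\theta^{(n)})\circ \eta^{(n)}$ is $\lesssim t^{1/2}$, and the time integral in \eqref{prelim-wx} is handled by splitting the trajectory $\eta^{(n)}(x,\cdot)$ according to whether it lies within $\kappa t$ of the shock, invoking Lemma~\ref{lem:Steve:needs:this} (with $\mu$ close to $1$, since $\lambda_3 \approx w_0 \approx \kappa$ near the pre-shock requires care — this is one delicate point) to integrate $\p_\theta w^{(1)} \circ \eta^{(n)}$ and $\p_{\theta}^2 w^{(1)}\circ\eta^{(n)}$ against the $\eta_x$-factor, producing the gain of a power of $t$ that lets everything fit under $R_2 = \mm^3$.

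Then $k^{(n+1)}$ and $z^{(n+1)}$: on $(\DD^{k,(n+1)}_{\bar\eps})^\complement$ they vanish, so there is nothing to check; on $\DD^{k,(n+1)}_{\bar \eps}$ I would use the interior formula \eqref{kx-final} together with the shock-curve bounds \eqref{zknp1:shock}--\eqref{dotzknp1good} (which apply because Lemma~\ref{lem:jumpn} verifies the hypotheses of Lemmas~\ref{lem:zminus-on-shock}, \ref{lem:zl:kl:Lip} and Corollary~\ref{cor:jumps:abstract}): $|k^{(n+1)}|$ inherits $|\kl^{(n+1)}(\st)| \lesssim \bb^{9/2}\kappa^{-3} (\st)^{3/2}$ via \eqref{the-k-identity}, and since $\st(\theta,t) \leq t$ this gives $\leq R_5 t^{3/2}$; $\p_\theta k^{(n+1)}$ uses \eqref{kx-final}, the lower bound \eqref{blob-a-tron99} on $\dot\sc - \lambda_2$, the bound \eqref{phixn-bound} on $\p_\theta \pt^{(n+1)}$, and $|\dot\kl^{(n+1)}| \lesssim \bb^{9/2}\kappa^{-3}(\st)^{1/2}$ to get $\leq R_6 t^{1/2}$. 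For $z^{(n+1)}$ one uses \eqref{zx-Final} with $\mathcal{H}_{z_\theta}$ and $\mathcal{F}_{z_\theta}$ from \eqref{Hzx}, \eqref{Fzx}: the $\mathcal{H}$ term is controlled by the shock data $\dot\zl^{(n+1)}, \dot\kl^{(n+1)}$ and by the transversality denominators, while $\mathcal{F}_{z_\theta}$ contains $-\frac14 c k_\theta^{(n)}$ (already $\lesssim t^{1/2}$) and a time integral from $\stt^{(n)}$ to $t$ of terms like $c k_\theta (w_\theta + z_\theta + 8a)$ and $\p_\theta(az)$, all of which are bounded using the inductive $R_i$-bounds and the length of the integration interval. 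The main obstacle, I expect, is the $\p_\theta w^{(n+1)}$ estimate near the pre-shock: that is where the weight $(\bb^3 t^3 + (\theta-\sc(t))^2)^{-1/6}$ is genuinely singular, where the constant $R_2 = \mm^3$ must be chosen with the correct geometric meaning, and where the transversal-integral lemmas must be applied with the sharpest available constants (in particular tracking that the prefactors like $\frac{13\bb}{(1-\mu)\kappa^{2/3}}$ in \eqref{eq:Steve:needs:this:1} do not conspire to break the bound); a secondary subtlety is ensuring that the comparison of the $n$-th and first flowmaps, \eqref{cb1}--\eqref{cb2}, together with the forcing terms, closes \emph{strictly inside} the ball $|\!|\!|\cdot|\!|\!|_{\bar\eps} \leq 1$ rather than merely at its boundary, which is what ultimately forces $\bar \eps$ to be small.
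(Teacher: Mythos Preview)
Your treatment of $a^{(n+1)}$, $k^{(n+1)}$, and $z^{(n+1)}$ is essentially correct and matches the paper's argument, modulo routine constant-tracking. The $C^0$ bound on $w^{(n+1)}$ is also along the right lines.

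There is, however, a genuine gap in your $\p_\theta w^{(n+1)}$ strategy. You propose to control the time integral along the 3-characteristic $\eta^{(n)}$ by invoking Lemma~\ref{lem:Steve:needs:this} with $\mu$ close to $1$. This does not work: that lemma applies only to curves $\gamma$ with $\dot\gamma \leq \mu\kappa$, i.e.\ curves \emph{transversal} to the shock, and its constant $\tfrac{1}{1-\mu}$ blows up as $\mu \to 1$. The 3-characteristic has speed $\lambda_3^{(n)} \approx w \approx \kappa \approx \dot\sc$ and is asymptotically \emph{tangent} to the shock near the pre-shock, so Lemma~\ref{lem:Steve:needs:this} gives nothing there. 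You recognize this as ``one delicate point,'' but it is in fact the heart of the estimate.

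The paper's mechanism is different. One writes a forced ODE for the \emph{difference} $(w_\theta^{(n+1)} - w_\theta^{(1)}) \circ \eta^{(n)}$ along $\eta^{(n)}$, with an integrating factor built from $w_\theta^{(n)}\circ\eta^{(n)}$. The Duhamel formula splits into four pieces $\Isf_1,\dots,\Isf_4$; the dangerous one is
$\Isf_1 = -\int_0^t (\mbox{int.\ factor})\, w_\theta^{(1)}\circ\eta^{(n)}\,(w_\theta^{(n)}-w_\theta^{(1)})\circ\eta^{(n)}\,ds$,
which already contains the bootstrap quantity. Two inputs replace the transversality lemma. First, the structural integral bound $\int_0^t |w_\theta^{(1)}\circ\eta^{(n)}|\,ds \leq \tfrac{19}{40}$ from \eqref{dxw1-integral} (equivalently \eqref{eq:AC:DC:con}), which comes from the Burgers form of $\p_x\eta$ and holds \emph{without} transversality. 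Second, a \emph{monotonicity} estimate for the weight along the tangential characteristic, derived in the proof: since $\dot\sc - w_0(x)\approx -\bb x^{1/3}$ one shows $\sc(s)-\eta^{(n)}(x,s) \geq (\sc(t)-\theta) + \tfrac{\sqrt3}{2}\bb^{3/2}t^{1/2}(t-s)$, and minimizing the resulting polynomial in $s$ yields
\[
\bb^3 s^3 + \bigl(\eta^{(n)}(x,s)-\sc(s)\bigr)^2 \;\geq\; \tfrac{5}{16}\Bigl(\bb^3 t^3 + (\theta-\sc(t))^2\Bigr)\quad\text{for all }s\in[0,t].
\]
This lets one pull the weight $(\cdots)^{-1/6}$ out of $\Isf_1$ at the cost of $(\tfrac{5}{16})^{-1/6}$, and the product $\tfrac{19}{40}\cdot\tfrac{17}{10}\cdot(\tfrac{5}{16})^{-1/6} < 1$ closes the bootstrap on $R_2$ strictly inside the ball. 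The same monotonicity handles $\Isf_2$ (the $(w^{(n)}-w^{(1)})\,\wb_{\theta\theta}\circ\eta^{(n)}$ term) via \eqref{thegoodstuff2}, producing a contribution $\lesssim R_1\bb\,(\cdots)^{-1/6}$ which drives the choice $R_2 = \mm^3 \gg R_1$. The remaining pieces $\Isf_3,\Isf_4$ are lower order after an integration by parts in time.
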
 
\begin{proof}[Proof of Proposition \ref{lem:stable}]
In the course of the proof, we will repeatedly let  $\bar \eps$, and hence $t$, to be  sufficiently small with respect to
$\kappa, \bb, \cc, \mm$.

\noindent
{\bf Estimates for $w ^{(n+1)} $.}
By  Lemma \ref{lem:Upsilon-n}, for any $(\theta,t)  \in \mathcal{D} _{\bar \eps}$, there exists a label $x \in \Upsilon^{(n)} (t)$ such that 
$\eta ^{(n)} (x,t)=\theta$.

By the triangle inequality,
\begin{align*} 
\sabs{\bigl(w ^{(n+1)} -  w ^{(1)}\bigr)  \circ \eta ^{(n)} } & \le
\sabs{w ^{(n+1)} \circ \eta ^{(n)}  - w_0}  + \sabs{w ^{(1)}  \circ \eta ^{(1)}  - w ^{(1)}  \circ \eta ^{(n)}   }     \\
& 
=\sabs{w ^{(n+1)} \circ \eta ^{(n)}  - w_0}  + \sabs{   w_0 \circ \etab^{-1} \circ \eta^{(n)}  - w_0 }   \,.
\end{align*} 
By the fundamental theorem of calculus,
\begin{align*} 
w_0 \circ \etab^{-1} \circ \eta^{(n)}  - w_0 
&= \int_0^t \frac{d}{dt}  \bigl(w_0 \circ \etab^{-1} \circ \eta^{(n)} \bigr) ds\notag\\
&= \int_0^t w_0'  \circ \etab^{-1} \circ \eta^{(n)} \left( (\p_t \etab^{-1}) \circ \eta^{(n)}  + (\p_\theta  \etab^{-1}) \circ \eta \p_t \eta^{(n)} \right) ds  \\
&= \int_0^t w_0'  \circ \etab^{-1} \circ \eta^{(n)}(\lambda_3^{(n)}  - w ^{(1)}  ) \circ \eta^{(n)}  \   \bigl(\eta_x(\etab ^{-1} \circ  \eta^{(n)} )\bigr) ^{-1}   ds \,.
\end{align*} 
The bounds \eqref{eq:u0':interest},  \eqref{cb0} and \eqref{eq:w:z:k:a:boot} show that
\begin{align} 
\sabs{   w_0 \circ \etab^{-1} \circ \eta^{(n)}  - w_0 }  \le \tfrac{1}{2}  R_1 t \,. 
\label{bloob}
\end{align}

Next,  using the identity \eqref{wn-alt}, we have that
\begin{align*} 
\sabs{w ^{(n+1)} \circ \eta ^{(n)}  - w_0}
& \le
 {\tfrac{8}{3}}  \int_0^t\sabs{   \bigl( a^{(n)}  w^{(n)}  \bigr) \circ \eta ^{(n)} } ds  
 + {\tfrac{1}{4}} \int_0^t \sabs{
 c ^{(n)} \circ \eta^{(n)}  \tfrac{d}{ds} \Bigl( k ^{(n)}  \circ \eta^{(n)} \Bigr) } ds .
\end{align*} 
The bounds \eqref{eq:w:z:k:a:boot} 
with $\bar \eps$ taken sufficiently small, 
\begin{align*} 
\sabs{w ^{(n+1)} \circ \eta ^{(n)}  - w_0}& \le 3 \mm R_7 t \,.
\end{align*} 
Together with the bound \eqref{bloob} and the fact that
$\eta ^{(n)} ( x,t)$ is a diffeomorphism for each label $x \in \Upsilon ^{(n)} $, we  have that
for all $(\theta,t)  \in \mathcal{D} _{\bar \eps}$,
\begin{align*} 
\sabs{w ^{(n+1)} (\theta,t)  -  w ^{(1)} (\theta,t)   } & \le  \tfrac{3}{4} R_1 t    \,,
\end{align*}
as long as $12 \mm R_7  \le  R_1$. This inequality holds due to the choices in \eqref{eq:R:def}.

Let us now show that  the estimate \eqref{eq:w:dx:boot} holds.
Following the procedure we used to obtain the identity \eqref{prelim-wx}, we
differentiating \eqref{dtwn-alt}, use \eqref{cn}, and obtain that
\begin{align} 
\tfrac{d}{dt}  \bigl( w_\theta ^{(n+1)}  \circ \eta ^{(n)} \ \eta_x ^{(n)}  \bigr) &=  
 \tfrac{1}{4}    \tfrac{d}{dt} \bigl( (c ^{(n)}  k_\theta ^{(n)} ) \circ \eta ^{(n)} \ \eta_x ^{(n)}   \bigr) 
 +\mathcal{F}^{(n)} _{w_\theta} \circ \eta ^{(n)} \ \eta_x ^{(n)}  \,, \label{wx}
\end{align} 
where 
\begin{align} 
\mathcal{F}^{(n)} _{w_\theta} & =  
  k_\theta ^{(n)} \Bigl(
   \tfrac{1}{6}   c ^{(n)} c_\theta ^{(n)} + \tfrac{1}{6}   c ^{(n-2)} z_\theta ^{(n)} +  \tfrac{2}{3}  a ^{(n-2)} c ^{(n-2)} 
   + \tfrac{1}{4}   ( \lambda_3^{(n-2)} -\lambda_3^{(n)} ) c_\theta ^{(n)}  \Bigr) \circ \eta ^{(n)} \ \eta_x ^{(n)} \notag \\
& \qquad \qquad   - \tfrac83 \p_\theta (a ^{(n)} w ^{(n)} )
   \,.  \label{Fwx-alt}
\end{align} 
An equivalent form of \eqref{wx} is given by
\begin{align} 
\p_t w_\theta ^{(n+1)} + \lambda_3^{(n)} w_{\theta\theta} ^{(n+1)} + \p_\theta \lambda_3 ^{(n)} w_\theta ^{(n+1)}   =  
 \tfrac{1}{4}  \left(  \tfrac{d}{dt} \bigl( (c ^{(n)}  k_\theta ^{(n)} ) \circ \eta ^{(n)} \ \eta_x ^{(n)}   \bigr) \bigl(\eta_x ^{(n)}\bigr) ^{-1}   \right) \circ \ie ^{(n)}
 +\mathcal{F}^{(n)} _{w_\theta}  \,, \label{wx2}
\end{align} 
Therefore, 
\begin{align*} 
&\tfrac{d}{dt}\left(  (w_\theta ^{(n+1)} - w_\theta ^{(1)}  ) \circ \eta ^{(n)} \right)  
+ w_\theta^{(n)}   \circ \eta ^{(n)} (w_\theta ^{(n+1)}  - w_\theta ^{(1)}  ) \circ \eta^{(n)} 
 \\
&\qquad \qquad =  -   w_\theta^{(1)}   \circ \eta ^{(n)} 
(w_\theta ^{(n)}  - w_\theta ^{(1)}  ) \circ \eta^{(n)} 
-(w^{(n)} - w^{(1)}  ) \circ \eta ^{(n)}  w_{\theta\theta} ^{(1)}   \circ \eta^{(n)} \\
& \qquad\qquad \ \ 
+ \tfrac{1}{4}   \tfrac{d}{dt} \Bigl( (c ^{(n)}  k_\theta ^{(n)} ) \circ \eta ^{(n)} \ \eta_x ^{(n)}   \Bigr) \bigl(\eta_x ^{(n)}\bigr) ^{-1}  
 +\mathcal{F}^{(n)} _{w_\theta} \circ \eta ^{(n)}  \,.
\end{align*} 
For $0 \le s \le t$, let us define the integrating factor $ \mathcal{I}_{s,t} = e^{ \int_s^t w_y^{(n)} (\eta^{(n)} (x,r),r)dr }$.  Then, we have 
that 
\begin{align} 
& (w_\theta ^{(n+1)} - w_\theta ^{(1)}  ) \circ \eta ^{(n)} \notag
 \\
&\ =  \overbrace{\int_0^t -\mathcal{I}_{t,s}   \Bigl(   w_\theta^{(1)}   \circ \eta ^{(n)} 
(w_\theta ^{(n)}  - w_\theta ^{(1)}  ) \circ \eta^{(n)} \Bigr)  ds }^{\Isf_1}  
+\overbrace{\int_0^t  -\mathcal{I}_{t,s}   \Bigl( 
(w^{(n)} - w^{(1)}  ) \circ \eta ^{(n)}  w_{\theta\theta} ^{(1)}   \circ \eta^{(n)}  \Bigr)  ds}^{\Isf_2} \notag\\
& \ \ \
+ \underbrace{\tfrac{1}{4}  \int_0^t \mathcal{I}_{t,s}   \left(  \tfrac{d}{dt} \Bigl( (c ^{(n)}  k_\theta ^{(n)} ) \circ \eta ^{(n)} \ \eta_x ^{(n)}   \Bigr) 
\bigl(\eta_x ^{(n)}\bigr) ^{-1}  \right) ds}_{\Isf_3}
 +  \underbrace{\int_0^t \mathcal{I}_{t,s}  \mathcal{F}^{(n)} _{w_\theta} \circ \eta ^{(n)}    ds}_{\Isf_4} \,. \label{wy-diff}
\end{align} 
From \eqref{eq:w:dx:boot}, $\sabs{w_\theta ^{(n)} - w_\theta ^{(1)}  } \le R_2 t^ {-\frac{1}{2}} $ and thanks to  \eqref{dxw1-integral}, we have
that for $\bar \eps$ small enough, 
\begin{align*} 
\sabs{\mathcal{I}_{s,t} }= e^{ \int_s^t \sabs{ (w_y^{(n)}-w_\theta ^{(1)} )  (\eta^{(n)} (x,r),r)}dr }e^{ \int_s^t \sabs{ w_\theta ^{(1)} (\eta^{(n)} (x,r),r)} dr }
\le \tfrac{17}{10}  \,.
\end{align*} 
Let us now estimate each integral $\Isf_1$, $\Isf_2$, $\Isf_3$,  and $\Isf_4$  on the right side of \eqref{wy-diff}.   First, we have that  
\begin{align} 
\abs{\Isf_1} \le \tfrac{17}{10}  R_2 \int_0^t \abs{   w_\theta^{(1)}   \circ \eta ^{(n)} }
\Bigl( \bb ^3 s^3 + \bigl( \eta^{(n)}(x,s) - \sc(s)\bigr)^2 \Bigr)^ {-\frac{1}{6}}  ds \,. \label{Int0}
\end{align} 
The Burgers characteristic satisfies $\p_t (\sc(t) - \etab(x,t) ) =\dot\sc(s) - w_0(x)$. Integration from $s$ to $t$ for $0\le s \le t$ together
with the inequality  \eqref{eq:sc:ass}, the fact that $\abs{x} \ge \tfrac{3}{4}  (\bb t)^ {\frac{3}{2}} $, and taking
$\bar \eps$ sufficiently small, shows that
\begin{align*} 
\sc(s) - \etab(x,s)
&\geq \sc(t) - \etab(x,t)  + (t-s) (\kappa -  w_0(x) - C t) \notag\\
&\ge \sc(t) - \etab(x,t)    +( \tfrac{3}{4})^{\frac{1}{3}}     \bb^ {\frac{3}{2}}  t^ {\frac{1}{2}} (t-s ) - C  (t-s)t\,.
\end{align*} 
Using that  that $\theta = \eta^{(n)} (x,t)$,  \eqref{eq:u0:ass} and  \eqref{cb1}, and taking
$\bar \eps$ even smaller if necessary, we see that
\begin{align*} 
\sc(s) - \eta^{(n)} (x,s)   \ge\sc(t) - \theta  + {\tfrac{\sqrt{3} }{2}}   \bb^ {\frac{3}{2}}  t^ {\frac{1}{2}} (t-s ) \,,
\end{align*} 
and hence
\begin{align*} 
\bb^3 s^3+ \bigl(\eta^{(n)} (x,s) - \sc(s) \bigr)^2 \ge  \bigl(\theta - \sc(t)\bigr)^2  +  \tfrac{3}{4}      \bb^3  t (t-s )^2 + \bb^3 s^3 \,.
\end{align*} 
The function $ \tfrac{3}{4}      \bb^3  t (t-s )^2 + \bb^3 s^3$ has a minimum at $s=  \tfrac{t}{2}  $ and takes the value there of $ \tfrac{5}{16}\bb^3 t^3$, so that
\begin{align} 
\bb^3 s^3+ \bigl(\eta^{(n)} (x,s) - \sc(s) \bigr)^2 \ge \tfrac{5}{16}\Bigl(  \bigl(\theta - \sc(t)\bigr)^2 + \bb^3 t^3 \Bigr)   \,. 
\label{blobster}
\end{align}
Thus,  with \eqref{blobster}, the integral $\Isf_1$ in \eqref{Int0} is bounded as
\begin{align} 
\abs{\Isf_1}
 &\le \tfrac{17}{10}(\tfrac{5}{16})^ {-\frac{1}{6}} R_2
\Bigl(  \bigl(\theta - \sc(t)\bigr)^2 + \bb^3 t^3 \Bigr)^ {-\frac{1}{6}}   \int_0^t \abs{   w_\theta^{(1)}   \circ \eta ^{(n)} } ds 
 \notag \\
&
  \le \tfrac{19}{40}  \tfrac{17}{10}(\tfrac{5}{16})^ {-\frac{1}{6}} R_2
\Bigl(  \bigl(\theta - \sc(t)\bigr)^2 + \bb^3 t^3 \Bigr)^{-\frac{1}{6}} 
  \,, \label{Int1}
\end{align} 
the last inequality following from \eqref{dxw1-integral}.   
It is important to note that $\tfrac{19}{40}  \tfrac{17}{10}(\tfrac{5}{16})^ {-\frac{1}{6}} < {\frac{99}{100}}  $.

For the integral $\Isf_2$ in \eqref{wy-diff}, the estimate \eqref{thegoodstuff2} shows that
\begin{align*} 
\abs{\Isf_2} \le 2 \tfrac{17}{10} R_1 \bb \int_0^t  s    \Bigl(  (\bb s)^3 +  \abs{y - \sc(s)}^2 \Bigr)^{-\frac 56} ds \,.
\end{align*} 
Using \eqref{blobster} and that $ (\tfrac{5}{16})^ {-\frac{5}{6}} \le 3$, we then have that
\begin{align*} 
\abs{\Isf_2} \le 3 \tfrac{17}{10} R_1 \bb  \Bigl(  (\bb t)^3 +  \abs{\theta - \sc(t)}^2 \Bigr)^{-\frac 56} t^2 
\le 6 R_1 \bb  \Bigl(  (\bb t)^3 +  \abs{\theta - \sc(t)}^2 \Bigr)^{-\frac 16}
\,.
\end{align*} 
Thus, $w^{(n+1)}$ satisfies \eqref{eq:w:dx:boot} as soon as we choose $1200 R_1 \bb \le R_2$. In view of \eqref{eq:b:m:ass}, this inequality is ensured by the choice of $R_2$ and $R_1$ given in \eqref{eq:R:def}.

To bound $\Isf_3$, we integrate-by-parts and find that
\begin{align*} 
\Isf_3=
\tfrac{1}{4} \mathcal{I}_{t,s}  \Bigl( (c ^{(n)}  k_\theta ^{(n)} ) \circ \eta ^{(n)}  \Bigr) 
-
\tfrac{1}{4}  \int_0^t    (c ^{(n)}  k_\theta ^{(n)} ) \circ \eta ^{(n)} \ \eta_x ^{(n)}
 \tfrac{d}{dt} \Bigl(  \mathcal{I}_{t,s}  \bigl(\eta_x ^{(n)}\bigr) ^{-1}  \Bigr)  ds \,.
\end{align*} 
Since $ \p_t \mathcal{I} _{0,t} = \mathcal{I} _{0,t} w_\theta^{(n)} \circ \eta^{(n)} $ and $\p_t  \bigl(\eta_x ^{(n)}\bigr) ^{-1} 
=  -\bigl(\eta_x ^{(n)}\bigr) ^{-1}  \p_\theta  \lambda_3^{(n)} \circ \eta^{(n)} $, using the bounds \eqref{eq:w:z:k:a:boot}, we obtain
\begin{align*} 
\abs{\Isf_3} \le C t^ {\frac{1}{2}}  \,.
\end{align*} 
Finally, using the definition of  $\mathcal{F}^{(n)} _{w_\theta}$ in   \eqref{Fwx-alt} and the bounds \eqref{eq:w:z:k:a:boot}, we also find that
\begin{align*} 
\abs{\Isf_4} \le C t^ {\frac{1}{2}}  \,.
\end{align*} 
By combining the bounds for $\Isf_1$, $\Isf_2$, $\Isf_3$, and $\Isf_4$, we taking $\bar \eps$ sufficiently small so we have shown that
\begin{align*} 
\abs{ w_\theta ^{(n+1)}(\theta,t)  - w_\theta ^{(1)}  (\theta,t)  } \le \tfrac{999}{1000}R_2 \,,
\end{align*} 
for all $(\theta,t) \in \DD_{\bar \eps}$, thus establishing that \eqref{eq:w:dx:boot} holds.

\vspace{.1in}
\noindent
{\bf  Estimates for $z ^{(n+1)} $.}     Let $(\theta,t)  \in \mathcal{D} ^{z, {(n+1)} }_{\bar \eps}$.  
We integrate \eqref{xland-z:3} from $\stt^{(n)} (\theta,t) $ to $t$ and obtain
\begin{align} 
z^{(n+1)} (\theta,t)  &=  \zl ^{(n+1)} (\sc(\stt^{(n)}(\theta,t) ) )- \int_{\stt^{(n)} (\theta,t) }^t \bigl( \tfrac{8}{3}a^{(n)} z^{(n)} 
- \tfrac{1}{6}  (c ^{(n)})^2 \p_\theta  k^{(n)}  \bigr) \circ \pst ^{(n)}  ds'\,,  \label{znp1}
\end{align} 
Having shown that $w ^{(n+1)}  \in {\mathcal X}_{\bar \eps}$ (continuity will be established below), then $ w ^{(n+1)} $ satisfies the criteria of Lemma \ref{lem:jumpn}
and thus we can appeal to Lemma \ref{lem:zknp1-shock} for the bound of $ \zl ^{(n+1)} (\sc(\stt^{(n)}(\theta,t) ) )$.
It follows from \eqref{eq:w:z:k:a:boot}  and \eqref{zknp1good} that
\begin{align*} 
\sabs{z^{(n+1)} (\theta,t) } \le ( 5 \bb^{\frac 92} \kappa^{-2} + \tfrac{1}{8} \kappa^2 R_6) t^ {\frac{3}{2}} \,,
\end{align*} 
which shows that \eqref{eq:z:boot} holds for $ z ^{(n+1)}$ if   $ 5 \bb^{\frac 92} \kappa^{-2} + \tfrac{1}{8} \kappa^2 R_6 \le R_3$.
Using \eqref{eq:b:m:ass}, this inequality holds due to the definition of $R_3$ and $R_6$ in \eqref{eq:R:def}.

Next,  integrating \eqref{z-alt} from $\stt ^{(n)} $ to $t$ and using the definitions of $\mathcal{F} _{z_\theta} $ and  $ \mathcal{H}_{z_\theta}$ given by \eqref{Fzx} and  \eqref{Hzx}, respectively, for all $(\theta,t) \in \mathcal{D} ^{z, {(n+1)} }_{\bar \eps}$,
\begin{align} 
\p_\theta  z ^{(n+1)}  & = 
\mathcal{H}_{z_\theta} (\uu^{(n)} ,\dot\uu^{(n)} ,\pst^{(n)} ,\stt^{(n)} )
 +   \mathcal{F} _{z_\theta}  (U^{(n)} ,\pst^{(n)} ,\stt^{(n)} )  \,.
\end{align} 
It follows from \eqref{eq:Steve:needs:this:1},  \eqref{eq:w:z:k:a:boot}, \eqref{dotzknp1good},  and \eqref{phixn-bound} that for $t$ sufficiently small, 
\begin{align} 
\sabs{\p_\theta  z ^{(n+1)} (\theta,t) } \le  2\kappa ^{-3}(8 \bb^{\frac 92}  +50 \bb^{\frac 92} ) t^ {\frac{1}{2}} +  \tfrac{\kappa}{2}   R_6 t^ {\frac{1}{2}}
\le R_4 t^{\frac{1}{2}}   \,,
\end{align} 
which proves that \eqref{eq:z:dx:boot} holds for $\p_\theta  z ^{(n+1)} $ whenever  $ 116\kappa ^{-3} \bb^{\frac 92} +  \tfrac{\kappa}{2}   R_6  \le R_4$. Using  \eqref{eq:b:m:ass}, this inequality holds by defining $R_4$ and $R_6$ as in \eqref{eq:R:def}.

\vspace{.1 in}
\noindent
{\bf Estimates for $k ^{(n+1)} $.}  We have shown that $w^{(n+1)} $ and $z^{(n+1)} $ satisfy the bounds \eqref{eq:w:z:k:a:boot}, and we will
prove below that both functions are continuous on $ \mathcal{D} _{\bar \eps}$ and hence are in the set ${\mathcal X}_{\bar \eps}$.  For 
each $(\theta,t)  \in \mathcal{D}^{(n+1)}$, we then have existence
of unique characteristics $\pt ^{(n+1)}(\theta,s)  $  and shock-intersection times $\st^{(n+1)} (\theta,t) $ satisfying the properties   in Lemma
\ref{lem:12flows}.

Let $(\theta,t)  \in \mathcal{D} ^{k, {(n+1)} }_{\bar \eps}$.  
We integrate \eqref{k-alt} from $\st^{(n+1)} (\theta,t) $ to $t$ and obtain that
\begin{align} 
k^{(n+1)} (\theta,t)  &=  \kl ^{(n+1)} ( \sc ( \st^{(n+1)} (\theta,t)  ))   \,. \label{kn-eq}
\end{align} 
Again, appealing to Lemma \ref{lem:zknp1-shock}, 
the bound \eqref{zknp1good} then gives
\begin{align} 
\abs{k^{(n+1)} (\theta,t) } \le 40 \bb^{\frac 92}  \kappa^{-3} t^{\frac{3}{2}} \,, \label{knp1}
\end{align} 
which shows that \eqref{eq:k:boot} holds for $k^{(n+1)}$ if  $40 \bb^{\frac 92}  \kappa^{-3} \le R_5$. The condition \eqref{eq:b:m:ass} justifies the definition of $R_5$ in~\eqref{eq:R:def}.

In the same way that we obtained \eqref{kx-interior} and  \eqref{kx-shock}, we also  have that
\begin{align} 
\p_\theta  k^{(n+1)} (\theta,t)   
& = \tfrac{ \dot \kk^{(n+1)}  (\st^{(n+1)} (\theta,t) )) }{ \dot \sc(\st^{(n+1)} (\theta,t) )) 
-   \p_s \phi_t^{(n+1)} (\theta, \st^{(n+1)} (\theta,t)  )} \  \p_\theta  \phi_t^{(n+1)} ( \sc( \st^{(n+1)} (\theta,t)  ), \st^{(n+1)} (\theta,t) )  \,, \label{kxn-shock}
\end{align} 
and thus
from  \eqref{dotzknp1good},  and \eqref{phixn-bound} that for $t$ sufficiently small, 
\begin{align} 
\sabs{\p_\theta  k ^{(n+1)} (\theta,t) } \le 200 \bb^{\frac 92} \kappa^{-4}    t^ {\frac{1}{2}}  \,,
\end{align} 
which shows that \eqref{eq:k:dx:boot} holds for $\p_\theta k^{(n+1)}$ if $200 \bb^{\frac 92} \kappa^{-4}   \le R_6$. The condition \eqref{eq:b:m:ass}justifies the definition of $R_6$ in \eqref{eq:R:def}.

\vspace{.1 in}
\noindent
{\bf Estimates for $a ^{(n+1)} $.}  We consider any point $(\theta,t)  \in \mathcal{D}^{k,(n)} _{\bar \eps}$.    By Lemma \ref{lem:12flows}, the 
characteristic curve $\pt ^{(n)} (\theta,s) $ exists for all $s \in [0,t]$.  
From \eqref{xland-a:2}, we have that
\begin{align} 
\tfrac{d}{dt} \bigl( a^{(n+1)} \circ \pt ^{(n)} \bigr) 
& = \Bigl(- \tfrac43   (a^{(n)} )^2  + \tfrac{1}{6}(w^{(n)} )^2 + \tfrac{1}{6}(z^{(n)} )^2 + w ^{(n)} z ^{(n)} \Bigr) \circ \pt^{(n)} \,,
\label{annp1}
\end{align} 
and hence 
\begin{align} 
a^{(n+1)} (\theta,t)  = a_0(\pt ^{(n)} (\theta,0)) + \int_0^t  \Bigl(- \tfrac43   (a^{(n)} )^2  + \tfrac{1}{6}(w^{(n)} )^2 + \tfrac{1}{6}(z^{(n)} )^2 + w ^{(n)} z ^{(n)} \Bigr) \circ \pt^{(n)} ds \,. \label{anplus1}
\end{align} 
Using \eqref{eq:u0:ass:2},  \eqref{eq:a0:ass}, and \eqref{eq:w:z:k:a:boot}, we  find that
\begin{align} 
\sabs{a^{(n+1)} (\theta,t)  } \le \mm  + C t  \le 2 \mm \,. \label{plop}
\end{align}

Differentiating \eqref{annp1} gives
\begin{align*} 
\tfrac{d}{dt} \bigl( \p_\theta  a^{(n+1)} \circ \pt ^{(n)}  \ \p_\theta  \pt ^{(n)} \bigr) 
& = \p_\theta  \Bigl(- \tfrac43   (a^{(n)} )^2  + \tfrac{1}{6}(w^{(n)} )^2 + \tfrac{1}{6}(z^{(n)} )^2 + w ^{(n)} z ^{(n)} \Bigr) \circ \pt^{(n)} \  \p_\theta  \pt ^{(n)}  \,,
\end{align*} 
and so
\begin{align*} 
\p_\theta  a^{(n+1)}(\theta,t)  
&= a_0'(\pt ^{(n)} (\theta,0)) \p_\theta  \pt ^{(n)} (y,0) \notag\\
&\qquad + \int_0^t \p_\theta  \Bigl(- \tfrac43   (a^{(n)} )^2  + \tfrac{1}{6}(w^{(n)} )^2 + \tfrac{1}{6}(z^{(n)} )^2 + w ^{(n)} z ^{(n)} \Bigr) \circ \pt^{(n)} \  \p_\theta  \pt ^{(n)} ds \,.
\end{align*} 
Employing the bounds  \eqref{eq:u0:ass:2}, \eqref{eq:Steve:needs:this:1}, \eqref{eq:w:z:k:a:boot}, and \eqref{phixn-bound}, we find that
\begin{align*} 
\sabs{\p_\theta  a^{(n+1)} (\theta,t)  } \le \mm  + C t^ {\frac{1}{3}}   \le 2 \mm \,,
\end{align*} 
which together with \eqref{plop} shows that \eqref{eq:a:boot} holds for $a^{(n+1)}$ given that $R_7$ is defined by \eqref{eq:R:def}.

\vspace{.1 in}
\noindent
{\bf Continuity of $w ^{(n+1)} $, $z ^{(n+1)} $, $k ^{(n+1)} $, and $a ^{(n+1)} $.}
Composing \eqref{wn-alt} with $\ie ^{(n)} $, we see that
\begin{align*} 
 w ^{(n+1)} (\theta,t)  &= w_0(\ie ^{(n)}(\theta,t) ) - {\tfrac{8}{3}}\int_0^t    \left( a^{(n)}  w^{(n)}  \right)( \eta ^{(n)}(\ie ^{(n)}(\theta,t) ,t')  ,t') dt'  \notag \\
 & \qquad  
 + {\tfrac{1}{4}} \int_0^t
 c ^{(n)} ( \eta ^{(n)}(\ie ^{(n)}(\theta,t) ,t')  ,t')  \tfrac{d}{dt'} \left( k ^{(n)} ( \eta ^{(n)}(\ie ^{(n)}(\theta,t) ,t')  ,t')\right) dt' \,.   
\end{align*} 
By Lemma \ref{lem:Upsilon-n}, $\ie ^{(n)} $ is continuous on $\mathcal{D}_{\bar \eps}$, and hence by the definition of the set ${\mathcal X}_{\bar \eps}$ given in
\eqref{eq:w:z:k:a:boot:*}, we see that $w ^{(n+1)} $ is then continuous on $\mathcal{D}_{\bar \eps}$.

Continuity of the shock-intersection time $\stt(\theta,t)$ follows from the continuity of $\pst$ on  $\mathcal{D} _{\bar \eps}$ and the continuity of
$\sc(t)$.   From \eqref{dotzknp1good}, we see that $\zl ^{(n+1)} (t)$ is continuous.  Therefore, the identity \eqref{dotzknp1good} together with
the definition of ${\mathcal X}_{\bar \eps}$ shows that $z ^{(n+1)} $ is continuous on $\mathcal{D}_{\bar \eps}$.  Continuity of $k^{(n+1)}$ follows in the same way
from the identity \eqref{knp1}.   The identity \eqref{anplus1} together with the \eqref{eq:w:z:k:a:boot:*} and the continuity of $a_0$ shows that
 $a ^{(n+1)} $ is  also continuous on $\mathcal{D}_{\bar \eps}$.
\end{proof}

\subsubsection{Contractivity of the iteration map}

We set 
\begin{align*} 
&\delta w^{(n)}  := w ^{(n)} - w ^{(n-1)}  \,, \ \  \delta z^{(n)}  := z ^{(n)} - z ^{(n-1)}  \,, \ \ \delta k^{(n)}  := k ^{(n)} - k ^{(n-1)}  \,, \\
&\delta c^{(n)}  := c ^{(n)} - c ^{(n-1)}  \,, \ \  \delta\lambda_i  := \lambda_i^{(n)} - \lambda_i^{(n-1)} \,,
\end{align*} 
for $i \in \{1,2,3\}$.

 \begin{proposition}[The iteration is contractive]\label{prop:contraction} The map 
$$ (w^{(n)} , z^{(n)}, k^{(n)}, a^{(n)} ) \mapsto 
(w^{(n+1)}, z^{(n+1)}, k^{(n+1)}, a^{(n+1)} ) : {\mathcal X}_{\bar \eps} \to {\mathcal X}_{\bar \eps}$$  satisfies the contractive estimate
\begin{align} 
&\max_{s \in [0,t]} \Bigl( \snorm{  \delta  w^{(n+1)} ( \cdot , s)}_{L^ \infty }
+\snorm{  \delta  z^{(n+1)} ( \cdot , s)}_{L^ \infty }
+\snorm{  \delta  k^{(n+1)} ( \cdot , s)}_{L^ \infty }
+\snorm{  \delta  a^{(n+1)} ( \cdot , s)}_{L^ \infty }
\Bigr) \notag \\
& \qquad \qquad
\le \tfrac{3}{4}   
\max_{s \in [0,t]} \Bigl( \snorm{  \delta  w^{(n)} ( \cdot , s)}_{L^ \infty }
+\snorm{  \delta  z^{(n)} ( \cdot , s)}_{L^ \infty }
+\snorm{  \delta  k^{(n)} ( \cdot , s)}_{L^ \infty }
+\snorm{  \delta  a^{(n)} ( \cdot , s)}_{L^ \infty } \,.  \label{n-contraction}
\end{align} 
\end{proposition}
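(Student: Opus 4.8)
The plan is to estimate each of the differences $\delta w^{(n+1)}, \delta z^{(n+1)}, \delta k^{(n+1)}, \delta a^{(n+1)}$ separately, by subtracting consecutive iterates of the defining equations and tracking how the $L^\infty$ norms propagate along characteristics. All the hard analytic work — bounds on characteristics, bounds on $\zl^{(n)}, \kl^{(n)}$, bounds on the stopping times — has already been established in Lemmas~\ref{lem:characteristic-bounds}, \ref{lem:12flows}, \ref{lem:zknp1-shock}, and Proposition~\ref{lem:stable}, so the argument is essentially a bookkeeping exercise in which every error term carries a positive power of $\bar\eps$, which can be absorbed into the $\tfrac 34$ on the right side once $\bar\eps$ is taken small enough (depending on $\kappa,\bb,\cc,\mm$).

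First I would handle $\delta w^{(n+1)}$. Subtracting \eqref{wn-alt} for indices $n+1$ and $n$ one gets, composing with $\eta^{(n-1)}$ and using the change of variables $\eta^{(n)} \to \eta^{(n-1)}$ (whose discrepancy is $\OO(t^2)$ by \eqref{cb1}), an integral identity of the form $\delta w^{(n+1)}\circ\eta^{(n-1)} = \int_0^t (\text{terms})\,ds$ where every integrand is, by \eqref{eq:w:z:k:a:boot} and \eqref{dxw1-integral}, bounded by a constant times $\big(\snorm{\delta w^{(n)}}_{L^\infty} + \snorm{\delta z^{(n)}}_{L^\infty}+\snorm{\delta k^{(n)}}_{L^\infty}+\snorm{\delta a^{(n)}}_{L^\infty}\big)$ plus, for the shock-data term, $\snorm{\delta w^{(n)}}_{L^\infty}$ (via the Lipschitz dependence of $\zl^{(n)}$ on $\jump{w^{(n)}}$, i.e.~Lemma~\ref{lem:zl:kl:Lip}); integrating in time over $[0,t]\subset[0,\bar\eps]$ produces a factor $C\bar\eps$. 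The delicate point is that $\p_\theta k^{(n)}$ appears in the forcing, but because of the structure exploited in \eqref{prelim-wx}--\eqref{w0} one only needs $\int_0^t |c^{(n)}\tfrac{d}{ds}(k^{(n)}\circ\eta)|\,ds$, and writing $\tfrac{d}{ds}(k^{(n)}\circ\eta^{(n-1)}) = \tfrac{d}{ds}(k^{(n-1)}\circ\eta^{(n-1)}) + \tfrac{d}{ds}(\delta k^{(n)}\circ\eta^{(n-1)})$ and integrating by parts in $s$ converts the derivative onto the smooth coefficient $c^{(n)}\circ\eta$ — the only non-bounded factor, $\tfrac{d}{ds}\eta^{(n-1)}$ compositions, being handled by \eqref{cb0} — so no loss of derivatives occurs and the whole contribution is $\OO(\bar\eps^{1/2})$ times the right side.

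Next I would estimate $\delta z^{(n+1)}$ and $\delta k^{(n+1)}$ using \eqref{znp1} and \eqref{kn-eq}: subtracting the corresponding identities for indices $n+1$ and $n$, the difference has three sources — the difference of the shock data $\zl^{(n+1)} - \zl^{(n)}$ (resp.~$\kl^{(n+1)}-\kl^{(n)}$), which by Lemma~\ref{lem:zl:kl:Lip} is bounded by $C t^2 \snorm{\delta w^{(n+1)}}_{L^\infty}$ (already controlled by the previous step, hence effectively $C\bar\eps$ times the $\delta w^{(n)}$-side after the first step's bound is inserted); the difference of the integrand, bounded by $C\bar\eps^{1/2}$ times the right side via \eqref{eq:w:z:k:a:boot}, \eqref{eq:Steve:needs:this:1}, and \eqref{dotzknp1good}; and the difference of the flows $\pst^{(n-1)}$ vs $\pst^{(n-2)}$ and the stopping times $\stt^{(n-1)}$ vs $\stt^{(n-2)}$, which are Lipschitz in the velocity fields — this is where I expect the only genuine subtlety, since one must show that the shock-intersection time depends continuously (Lipschitz-ly, with an $\bar\eps$-small modulus after integration) on $\lambda_1^{(n)}$, which follows from the transversality lower bound \eqref{blob-a-tron99} in Lemma~\ref{lem:12flows} together with the implicit function theorem, and from the integral bounds \eqref{phixn-bound} on the flow derivatives. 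Finally $\delta a^{(n+1)}$ is the easiest: from \eqref{anplus1}, $\delta a^{(n+1)}\circ\pt^{(n-1)}$ is an integral of differences of $(a^{(n)})^2, (w^{(n)})^2, (z^{(n)})^2, w^{(n)}z^{(n)}$ plus the flow discrepancy, each $\OO(\bar\eps)$ times the right side. Adding the four estimates, choosing $\bar\eps$ small enough that the accumulated constant $C\bar\eps^{1/2}$ (or $C\bar\eps$) is below $\tfrac 34$, yields \eqref{n-contraction}. The main obstacle, then, is the Lipschitz-in-velocity dependence of the stopping times $\stt, \st$ and the characteristics $\pst, \pt$ on $U^{(n)}$ near the shock; everything else is a direct application of the a priori bounds already in hand.
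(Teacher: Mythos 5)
Your overall strategy — subtract consecutive iterates, propagate along characteristics, use the Lipschitz dependence of the shock data and of the stopping times on the velocity, and absorb errors by shrinking $\bar\eps$ — is indeed the paper's strategy, and your identification of the transversality/implicit-function-theorem argument for the stability of $\st,\stt$ is correct. But there is a genuine gap in the mechanism you propose for closing the $\delta w^{(n+1)}$ estimate, and it is not a matter of bookkeeping.

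You assert that ``every error term carries a positive power of $\bar\eps$,'' so that one could in principle get a contraction constant $C\bar\eps\ll\tfrac34$. This is false for the dominant term. Subtracting \eqref{wn-alt} at levels $n+1$ and $n$ produces a forcing term $-\delta\lambda_3^{(n)}\,\p_\theta w^{(n)}\circ\eta^{(n)}$; after the integration by parts in time that trades the $\p_\theta k$ derivative onto smooth coefficients (the paper's identity \eqref{deltawnp1}), the leading contribution is
\[
-\int_0^t \big(\p_\theta w^{(1)}\,\delta w^{(n)}\big)\circ\eta^{(n)}\,ds
\;-\;\tfrac13\int_0^t \big(\p_\theta w^{(n)}\,\delta z^{(n)}\big)\circ\eta^{(n)}\,ds + \cdots\,.
\]
The integrand $\p_\theta w^{(1)}\circ\eta^{(n)}$ is not uniformly bounded — it blows up like $s^{-1/2}$ near the pre-shock — and its time integral is bounded by $\tfrac{19}{40}$ (this is exactly \eqref{dxw1-integral}/\eqref{eq:AC:DC:con}), which is an $\OO(1)$ quantity that does not shrink with $\bar\eps$. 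Your text cites \eqref{dxw1-integral} and then claims the time integration ``produces a factor $C\bar\eps$''; these two statements contradict each other. The paper's \eqref{wn-contract} makes this precise: the coefficient in front of $\snorm{\delta w^{(n)}}_{L^\infty}$ is $\tfrac12+Ct^{1/2}$, and the coefficient in front of $\snorm{\delta z^{(n)}}_{L^\infty}$ is $\tfrac16+Ct^{1/2}$. The contraction therefore rests on the quantitative fact $\tfrac12+\tfrac16 = \tfrac23 < \tfrac34$, with only the residual $\OO(\bar\eps^{1/2})$ being absorbed by shrinking $\bar\eps$. A proof that treats the time integral of $\p_\theta w^{(1)}\circ\eta^{(n)}$ as merely bounded (rather than bounded by a specific constant strictly below $\tfrac12$) cannot conclude contraction. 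Two smaller points: the $\delta w^{(n+1)}$ identity should be composed with $\eta^{(n)}$, the flow of $\lambda_3^{(n)}$ (the velocity actually transporting $w^{(n+1)}$), not $\eta^{(n-1)}$; and the Lipschitz modulus of $\kl^{(n+1)},\zl^{(n+1)}$ with respect to the jump of $w^{(n+1)}$ scales like $Ct$ (since $\jump{w}^2\sim t$), not $Ct^2$ — both issues are harmless but worth fixing.
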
 
\begin{proof}[Proof of Proposition \ref{prop:contraction}]
From \eqref{xland-w:2}, we see that for any $(\theta,t)  \in \mathcal{D} _{\bar \eps}$,
\begin{align*} 
&\p_t \delta w ^{(n+1)}  + \lambda_3^{(n)} \p_\theta  \delta w^{(n+1)} +  \delta\lambda_3^{(n)} \p_\theta  w ^{(n)}  \notag\\
&\quad = \tfrac{1}{4}  c^{(n)} \left( \p_t \delta k ^{(n)} + \lambda _3 ^{(n)} \p_\theta  \delta k ^{(n)}\right)  + \tfrac{1}{4}   \delta\lambda_3^{(n)} c^{(n)} \p_\theta  k ^{(n-1)}  \\
& \quad\quad
+ \tfrac{1}{4} \delta  c^{(n)} \left( \lambda _3 ^{(n)} -\lambda_2^{(n-1)} \right) \p_\theta   k ^{(n-1)}
- \tfrac{8}{3} a ^{(n)} \delta w^{(n)} - \tfrac{8}{3} \delta a ^{(n)} \delta w^{(n-1)}  \,,
\end{align*} 
and thus for all $x \in \Upsilon^{(n)} (t)$, 
\begin{align*} 
&\p_t \left( \delta w ^{(n+1)}  \circ \eta ^{(n)} \right)  \notag\\
&= \tfrac{1}{4}  c^{(n)} \circ \eta^{(n)}  \p_t \left( \delta k ^{(n)} \circ \eta ^{(n)} \right)  
+  ( \delta w^{(n)} +  \tfrac{1}{3}  \delta z ^{(n)} ) \left(  \tfrac{1}{4} c^{(n)} \p_\theta  k ^{(n-1)}  -\p_\theta  w ^{(n)}\right)    \circ \eta ^{(n)} \\
& \quad
+ \tfrac{1}{8}( \delta  w^{(n)} + \delta z ^{(n)} ) \left( \lambda _3 ^{(n)} -\lambda_2^{(n-1)} \right) \p_\theta   k ^{(n-1)} \circ \eta ^{(n)} 
- \tfrac{8}{3}  \delta w^{(n)} a ^{(n)} \circ \eta ^{(n)}  - \tfrac{8}{3} \delta a ^{(n)}  w^{(n-1)} \circ \eta ^{(n)}  .
\end{align*} 
Using \eqref{cn0} and integrating by parts in time, 
 \begin{align*} 
 &\tfrac{1}{4} \int_0^t  c^{(n)} \circ \eta^{(n)}  \p_s \left( \delta k ^{(n)} \circ \eta ^{(n)} \right) ds 
 = \tfrac{1}{4} c^{(n)} \delta k ^{(n)} \circ \eta ^{(n)} \\
 & \qquad\qquad
+ \tfrac{1}{8} \int_0^t \left( \lambda_3^{(n-1)} w_\theta ^{(n)} + \lambda_1^{(n-1)} z_\theta ^{(n)} + \tfrac{64}{3} a ^{(n-1)} c ^{(n-1)} \right)
 \delta k ^{(n)}  \circ \eta ^{(n)} ds \,,
 \end{align*} 
 and thus, we have that
 \begin{align} 
 \delta w ^{(n+1)}  \circ \eta ^{(n)}  &= -\int_0^t w_\theta ^{(1)}  \delta w^{(n)}  \circ \eta ^{(n)} ds
 -\int_0^t (w_\theta ^{(n)} - w_\theta^{(1)}  ) \delta w^{(n)}  \circ \eta ^{(n)} ds
 +  \tfrac{1}{4}  \delta k ^{(n)} c^{(n)} \circ \eta ^{(n)} \notag\\
&\qquad
 + \tfrac{1}{8} \int_0^t  \delta k ^{(n)}  \left( \lambda_3^{(n-1)} w_\theta ^{(n)} + \lambda_1^{(n-1)} z_\theta ^{(n)} + \tfrac{64}{3} a ^{(n-1)} c ^{(n-1)} \right)
 \circ \eta ^{(n)} ds \notag  \\
 &\qquad
 + \tfrac{1}{8}\int_0^t 
 \delta  w^{(n)} \left( \left( 2 w ^{(n)} -  \tfrac{2}{3}  z ^{(n)}   -\lambda_2^{(n-1)} \right) -\tfrac{64}{3} a ^{(n)}  \right)\p_\theta   k ^{(n-1)} \circ \eta ^{(n)}
 \notag
 \\
 & \qquad
 +   \tfrac{1}{24}\int_0^t \delta z ^{(n)}  \left(   (4 w^{(n)} + z^{(n)} - 3\lambda _3 ^{(n-1)} ) \p_\theta  k ^{(n-1)}  -8 \p_\theta  w ^{(n)}\right)    \circ \eta ^{(n)}  ds
 \notag
 \\
 & \qquad
 - \tfrac{8}{3}\int_0^t \delta a ^{(n)}  w^{(n-1)} \circ \eta ^{(n)}ds \,. \label{deltawnp1}
  \end{align} 
  Appealing to \eqref{eq:AC:DC:con} and \eqref{eq:w:z:k:a:boot}, we find that
\begin{align} 
 \max_{s \in [0,t]}  \snorm{  \delta  w^{(n+1)} ( \cdot , s)}_{L^ \infty }
& \le ( \tfrac{1}{2} +C  t^ {\frac{1}{2}}) \max_{s \in [0,t]}   \snorm{  \delta  w^{(n)} ( \cdot , s)}_{L^ \infty }
 + C  \max_{s \in [0,t]} \snorm{  \delta  k^{(n)} ( \cdot , s)}_{L^ \infty } \notag  \\
& \qquad
  +( \tfrac{1}{6}   + C t^{\frac{1}{2}})  \max_{s \in [0,t]} \snorm{  \delta  z^{(n)} ( \cdot , s)}_{L^ \infty }  
   + C t \max_{s \in [0,t]} \snorm{  \delta  a^{(n)} ( \cdot , s)}_{L^ \infty }   \,. \label{wn-contract}
\end{align}

Using the evolution of $ z ^{(n)} $ given by \eqref{z-alt},  in the same way that we obtained \eqref{deltawnp1}, we find that for any $(\theta,t)  \in \mathcal{D} ^{z,{(n)} }_{\bar\eps}$, 
 \begin{align*} 
 \delta z ^{(n+1)} (\theta,t) & = \delta \zl^{(n+1)} ( \sc(\stt(\theta,t))) +  \tfrac{1}{4}  (\delta \kl ^{(n)} c^{(n)} ) ( \sc(\stt(\theta,t))) 
 - \tfrac{1}{4}  \delta k ^{(n)} c^{(n)} (\theta,t)  \\
& + \tfrac{1}{4}   \int_{\stt(\theta,t)}^t  \delta k ^{(n)} \left( ( \lambda_1^{(n)} -\lambda_1^{(n-1)}) c_\theta^{(n)}  +  \tfrac{2}{3} c ^{(n-1)} w_\theta^{(n)} 
 -  \tfrac{8}{3} a ^{(n-1)} c ^{(n-1)}    \right) \circ \pst ^{(n)} (\theta,s)  ds  \\
 &+ \int_{\stt(\theta,t)}^t \delta w ^{(n)} \left( -  \tfrac{1}{3}  z_\theta ^{(n)} - \tfrac{1}{12} c^{(n)} k_\theta ^{(n-1)} +  \tfrac{1}{3}  
 c^{(n-1)} k_\theta ^{(n-1)}  \right) \circ \pst ^{(n)} (\theta,s)  ds  \\
  &+ \int_{\stt(\theta,t)}^t \delta z ^{(n)} \left(   z_\theta ^{(n)} - \tfrac{1}{4} c^{(n)} k_\theta ^{(n)} +  \tfrac{1}{3}  
 c^{(n-1)} k_\theta ^{(n-1)} -  \tfrac{8}{3}  a ^{(n)}   \right) \circ \pst ^{(n)} (\theta,s)  ds  \\
 &- \tfrac{8}{3}\int_{\stt(\theta,t)}^t \delta a ^{(n)} z ^{(n-1)}  \circ \pst ^{(n)} (\theta,s)  ds  \,.
 \end{align*} 
 Using this identity together with  \eqref{eq:Steve:needs:this:1}, \eqref{eq:w:z:k:a:boot}, and \eqref{zknp1:shock} shows that
  \begin{align} 
 \max_{s \in [0,t]}  \snorm{  \delta  z^{(n+1)} ( \cdot , s)}_{L^ \infty }
& \le C t  \max_{s \in [0,t]}   \snorm{  \delta  w^{(n+1)} ( \cdot , s)}_{L^ \infty }
+C t^{\frac{3}{2}}   \max_{s \in [0,t]}   \snorm{  \delta  w^{(n)} ( \cdot , s)}_{L^ \infty }
 \notag  \\
&   + C  \max_{s \in [0,t]} \snorm{  \delta  k^{(n)} ( \cdot , s)}_{L^ \infty }
  + C t  \max_{s \in [0,t]} \snorm{  \delta  z^{(n)} ( \cdot , s)}_{L^ \infty }  
   + C t^{\frac{5}{2}}   \max_{s \in [0,t]} \snorm{  \delta  a^{(n)} ( \cdot , s)}_{L^ \infty }  .  \label{zn-contract}
\end{align} 
Next, the identity \eqref{kn-eq} together with the bound \eqref{zknp1:shock} provides us with the estimates
\begin{align} 
\max_{s \in [0,t]} \snorm{  \delta  k^{(n+1)} ( \cdot , s)}_{L^ \infty } 
&\leq C t \max_{s \in [0,t]}   \snorm{  \delta  w^{(n+1)} ( \cdot , s)}_{L^ \infty } \,, 
\notag\\
 \max_{s \in [0,t]} \snorm{  \delta  k^{(n)} ( \cdot , s)}_{L^ \infty } 
 &\leq C t \max_{s \in [0,t]}   \snorm{  \delta  w^{(n)} ( \cdot , s)}_{L^ \infty } \,.
  \label{kn-contract}
\end{align} 
Finally, using \eqref{xland-a:2}, we find that for $(\theta,t)  \in \mathcal{D}^{k,{(n)} }_{\bar \eps}$,
\begin{align*} 
\delta a ^{(n+1)} (\theta,t)  & =  
 \int_0^t \delta w ^{(n)} \left(  \tfrac{1}{3}  \delta w ^{(n)} + z ^{(n)} - \tfrac{2}{3}  a_\theta ^{(n)} \right) \circ \pt ^{(n)} ds  \\
& \qquad 
+ \int_0^t \delta z ^{(n)} \left(  \tfrac{1}{3}  \delta z ^{(n)} + w ^{(n-1)} - \tfrac{2}{3}  a_\theta ^{(n)} \right) \circ \pt ^{(n)} ds  
- \int_0^t \delta a ^{(n)}  \delta a ^{(n)} \circ \pt ^{(n)} ds   \,,
\end{align*} 
and therefore
\begin{align} 
& \max_{s \in [0,t]}  \snorm{  \delta  a^{(n+1)} ( \cdot , s)}_{L^ \infty }\notag\\
& \le   
C t \max_{s \in [0,t]}  \Bigl(    \snorm{  \delta  w^{(n)} ( \cdot , s)}_{L^ \infty } 
+   \snorm{  \delta  z^{(n)} ( \cdot , s)}_{L^ \infty } +   \snorm{  \delta  k^{(n)} ( \cdot , s)}_{L^ \infty } 
   +   \snorm{  \delta  a^{(n)} ( \cdot , s)}_{L^ \infty }  \Bigr) \,.   \label{an-contract}
\end{align} 
Summing the inequalities  \eqref{wn-contract}--\eqref{an-contract} yields
\begin{align*} 
&\max_{s \in [0,t]} \Bigl( \snorm{  \delta  w^{(n+1)} ( \cdot , s)}_{L^ \infty }
+\snorm{  \delta  z^{(n+1)} ( \cdot , s)}_{L^ \infty }
+\snorm{  \delta  k^{(n+1)} ( \cdot , s)}_{L^ \infty }
+\snorm{  \delta  a^{(n+1)} ( \cdot , s)}_{L^ \infty }
\Bigr) \\
&  \qquad
\le \tfrac{1}{2} \max_{s \in [0,t]} \snorm{  \delta  w^{(n)} ( \cdot , s)}_{L^ \infty } + \tfrac{1}{6} \max_{s \in [0,t]} \snorm{  \delta  z^{(n)} ( \cdot , s)}_{L^ \infty } + C t \max_{s \in [0,t]} \snorm{  \delta  w^{(n+1)} ( \cdot , s)}_{L^ \infty }  \\
&  \qquad\quad
 + C t^{\frac{1}{2}}  \max_{s \in [0,t]} \bigl(  \snorm{  \delta  w^{(n)} ( \cdot , s)}_{L^ \infty } + \snorm{  \delta  z^{(n)} ( \cdot , s)}_{L^ \infty } \bigr)
 + C t  \max_{s \in [0,t]} \bigl(  \snorm{  \delta  k^{(n)} ( \cdot , s)}_{L^ \infty } + \snorm{  \delta  a^{(n)} ( \cdot , s)}_{L^ \infty } \bigr) \,.
\end{align*} 
Choosing $ \bar \eps $ sufficiently small, we obtain the bound \eqref{n-contraction}.
\end{proof}

\subsubsection{Convergence of the iteration scheme}
\label{sec:convergence:of:scheme}
We define $\yy=\theta -\sc(t)$ and
$$
{\mathsf w} (\yy, t) = w(\theta,t) \,, \ \ {\mathsf z} (\yy,t) = z(\theta,t)  \,, \ \ {\mathsf k} (\yy,t) = k(\theta,t)  \,, \ \ {\mathsf a} (\yy,t) = a(\theta,t)  \,.
$$ 
The space-time gradient is denoted as $\nabla _{\yy,t}$, and it is convenient to introduce
$$
\Dsf_{\bar\eps} = ( \mathbb{T}  \setminus \{0\}) \times (0,\bar\eps) 
\,.
$$

The contractive estimate \eqref{n-contraction} shows that $( {\mathsf w}  ^{(n)} , {\mathsf z}  ^{(n)} , {\mathsf k}  ^{(n)} , {\mathsf a}  ^{(n)} ) 
\to ({\mathsf w} ,{\mathsf z} ,{\mathsf k} ,{\mathsf a} )$ uniformly in 
$\Dsf_{\bar \eps}$, and in particular we have that
\begin{align} 
 \lim_{n\to \infty }   \snorm{ {\mathsf w}  - {\mathsf w}  ^{(n)} }_{ L^ \infty (\Dsf_{\bar\eps})} = 0  \,.  \label{C0-convergence}
\end{align}
 
  Let us now describe the bounds on derivatives. 
According to  \eqref{eq:w:dx:boot} and \eqref{eq:w:dt:boot}, for all $\yy\neq0$ and $t\in[0,\bar\eps]$,  we have that
\begin{align*} 
\norm{  \left( t^3 + \yy^2 \right)^ {\frac{1}{6}} \nabla_{\yy,t} ({\mathsf w}  ^{(n)} -\wb)  }_{ \Dsf_{\bar\eps} } \le 
C  \,.
\end{align*} 
By the Banach–Alaoglu theorem, there exists a  limiting function $f$ and a  subsequence such that
$$
 \left( t^3 + \yy^2 \right)^ {\frac{1}{6}}  \nabla _{\yy,t} {\mathsf w}  ^{(n')} \rightharpoonup
  \left( t^3 + \yy^2 \right)^ {\frac{1}{6}} f  \,,
$$
the convergence in $L^ \infty (  \Dsf_{\bar\eps})$ weak-*.   Let us show that $f = \nabla _{\yy,t} {\mathsf w} $, the weak derivative of the uniform limit ${\mathsf w} $, 
and that the convergence holds for any subsequence.   For  test functions
$\varphi \in W^{1,1}_0 (\Dsf_{\bar\eps})$, 
\begin{align*} 
 \lim_{n\to \infty } \int_{ \Dsf_{\bar\eps}} \bigl( {\mathsf w}  - {\mathsf w}  ^{(n)}  \bigr)  \p_\yy\Bigl( \left( t^3 + \yy^2 \right)^ {\frac{1}{6}} \varphi \Bigr)\, d\yy dt 
 &= 
 \lim_{n\to \infty }   \tfrac{1}{3}  \int_{ \Dsf_{\bar\eps}} \bigl( {\mathsf w}  - {\mathsf w}  ^{(n)}  \bigr)  \yy  \left( t^3 + \yy^2 \right)^ {-\frac{5}{6}} \varphi \, d\yy dt  \\
 & \qquad \lim_{n\to \infty } \int_{ \Dsf_{\bar\eps}} \bigl( {\mathsf w}  - {\mathsf w}  ^{(n)}  \bigr)  \Bigl( \left( t^3 + \yy^2 \right)^ {\frac{1}{6}} \p_\yy\varphi \Bigr)\, d\yy dt \,.
\end{align*} 
It follows that
\begin{align*} 
&\abs{ \lim_{n\to \infty } \int_{ \Dsf_{\bar\eps}} \bigl( {\mathsf w}  - {\mathsf w}  ^{(n)}  \bigr)  \p_\yy\Bigl( \left( t^3 + \yy^2 \right)^ {\frac{1}{6}} \varphi \Bigr)\, d\yy dt } \\
& \qquad
\le \lim_{n\to \infty }   \snorm{ {\mathsf w}  - {\mathsf w}  ^{(n)} }_{ L^ \infty (\Dsf_{\bar\eps})}
\Bigl( \tfrac{1}{3}  \snorm{ \varphi }_{ L^ \infty (\Dsf_{\bar\eps})}  \int_{\Dsf_{\bar\eps}} \yy^ {-\frac{2}{3}} d\yy dt
+2 \int_{\Dsf_{\bar\eps}}\p_\yy \varphi\, d\yy dt
\Bigr) = 0 
\end{align*} 
by \eqref{C0-convergence}.   Similarly, if we replace $\p_\yy$ with $\p_t$, then the integral $\tfrac{1}{3}  \int_{\Dsf_{\bar\eps}} \yy^ {-\frac{2}{3}} d\yy dt$
is replaced with $ \tfrac{1}{2} \int_{\Dsf_{\bar\eps}}t^ {-\frac{1}{2}} d\yy dt$ - $\tfrac{1}{3}  \int_{\Dsf_{\bar\eps}} \dot\sc(t) \yy^ {-\frac{2}{3}} d\yy dt$, and the same conclusion holds, since again both integrals are bounded (using \eqref{eq:sc:ass}).  This shows that\footnote{In fact,  
$ \left( t^3 + \yy^2 \right)^ {\frac{1}{6}} {\mathsf w}  ^{(n)}\rightharpoonup  \left( t^3 + \yy^2 \right)^ {\frac{1}{6}}{\mathsf w}   $ in  $W^{1, \infty }(  \Dsf_{\bar\eps})$ 
weak-*.} 
$$
 \left( t^3 + \yy^2 \right)^ {\frac{1}{6}}  \nabla _{\yy,t} {\mathsf w}  ^{(n)} \rightharpoonup
  \left( t^3 + \yy^2 \right)^ {\frac{1}{6}} \nabla _{\yy,t} {\mathsf w}   \qquad  \text{ in  $L^ \infty (  \Dsf_{\bar\eps})$ weak-*},
$$
and hence we have by lower semi-continuity that ${\mathsf w} $ satisfies  \eqref{eq:w:boot}, \eqref{eq:w:dx:boot}, and \eqref{eq:w:dt:boot}.
The weak convergence for $(\p_\yy {\mathsf z} ^{(n)} ,\p_\yy {\mathsf k}^{(n)}  , \p_\yy {\mathsf a}^{(n)}  )\rightharpoonup (\p_\yy {\mathsf z} ,\p_\yy {\mathsf k} , \p_\yy {\mathsf a} )$ in $L^ \infty (  \Dsf_{\bar\eps})$ weak-* is standard.  We conclude that
\begin{align} 
(w,z,k,a) \in {\mathcal X}_{\bar \eps} \label{for-shock-stuff} \,.
\end{align} 
Let $\varphi \in C^ \infty_0 ( \Dsf_{\bar\eps}) $.
Integration of \eqref{xland-w:3} shows that
\begin{align*} 
&   \int_{ \Dsf_{\bar\eps}}\Bigl(
\p_t {\mathsf w}  ^{(n+1)} +  \bigl({\mathsf w}  ^{(n)}-\wb \bigr)  \p_\yy {\mathsf w}  ^{(n+1)} \notag\\
&\qquad \qquad +  \bigl( \tfrac{1}{3}{\mathsf z}  ^{(n)} + \wb -\dot\sc(t)\bigr)  \p_\yy {\mathsf w}  ^{(n+1)}  + \tfrac{8}{3}{\mathsf a} ^{(n)} {\mathsf w} ^{(n)} 
- \tfrac{1}{6}  (c ^{(n)})^2 \p_\yy {\mathsf k} ^{(n)} \Bigr)  \varphi  \, d\yy dt \\
& = \overbrace{ \int_{ \Dsf_{\bar\eps}}\Big( \p_t {\mathsf w} ^{(n+1)} + \bigl({\mathsf w}  ^{(n)}-\wb \bigr)  \p_\yy {\mathsf w}  ^{(n+1)}\Bigl) \varphi d\yy dt}^{ \mathcal{I}_1 ({\mathsf w}  ^{(n)}  )  }\\
&\qquad \qquad  
+\underbrace{\int_{ \Dsf_{\bar\eps}} \bigl( \tfrac{1}{3}{\mathsf z}  ^{(n)} + \wb -\dot\sc(t)\bigr)  \p_\yy {\mathsf w}  ^{(n+1)}  + \tfrac{8}{3}{\mathsf a} ^{(n)} {\mathsf w} ^{(n)} 
- \tfrac{1}{6}  (c ^{(n)})^2 \p_\yy {\mathsf k} ^{(n)} \Bigr)  \varphi  \, d\yy dt}_{ \mathcal{I} _2 ({\mathsf w}  ^{(n)} , {\mathsf z} ^{(n)} )} \,.
\end{align*} 
Its clear that $ \mathcal{I}_2( {\mathsf w}  ^{(n)} , {\mathsf z}  ^{(n)} , {\mathsf k}  ^{(n)} , {\mathsf a} ^{(n)} ) \to \mathcal{I} _2( {\mathsf w} ,{\mathsf z} ,{\mathsf k} , {\mathsf a} )$.  Let us show that $ \mathcal{I}_1( {\mathsf w}  ^{(n)} ) \to \mathcal{I} _1( {\mathsf w} )$.   
We have that
\begin{align*} 
\abs{ \mathcal{I} _1( {\mathsf w}  ) -  \mathcal{I}_1( {\mathsf w}  ^{(n)}  ) } 
& \le 
\abs{ \int_{ \Dsf_{\bar\eps}}  \left( t^3 + \yy^2 \right)^ {\frac{1}{6}} \bigl( \p_t {\mathsf w}  - \p_t {\mathsf w}  ^{(n+1)} \bigr) \  \varphi  \,  \left( t^3 + \yy^2 \right)^ {-\frac{1}{6}} \, d\yy dt } \\
& \qquad   \qquad
+
\abs{ \int_{ \Dsf_{\bar\eps}}  \left( t^3 + \yy^2 \right)^ {\frac{1}{6}} \bigl( \p_\yy {\mathsf w}  - \p_\yy {\mathsf w}  ^{(n+1)} \bigr) \  \varphi  \, ({\mathsf w} -\wb)  \left( t^3 + \yy^2 \right)^ {-\frac{1}{6}} \, d\yy dt } \\
& \qquad   \qquad
+ \snorm{ {\mathsf w}  - {\mathsf w}  ^{(n)} }_{ L^ \infty (\Dsf_{\bar\eps})}  \abs{ \int_{ \Dsf_{\bar\eps}}  \left( t^3 + \yy^2 \right)^ {\frac{1}{6}}\p_\theta  {\mathsf w}  ^{(n+1)} \varphi
 \left( t^3 + \yy^2 \right)^ {-\frac{1}{6}} \, d\yy dt }  \,.
\end{align*} 
Since $\left( t^3 + \yy^2 \right)^ {-\frac{1}{6}}  \in L^1 ( \Dsf_{\bar\eps})$, we see that the first two summands converges to $0$ by weak-* convergence in
 $L^ \infty (  \Dsf_{\bar\eps})$, while the second term converges to $0$ by the strong convergence \eqref{C0-convergence}.   It follows
 that ${\mathsf w} $ satisfies
 $$
 \int_{ \Dsf_{\bar\eps}} 
\Bigl( \p_t {\mathsf w}  +  \lambda_3   \p_\yy {\mathsf w}  + \tfrac{8}{3}{\mathsf a}  {\mathsf w}  - \tfrac{1}{6}  c^2 \p_\yy  {\mathsf k} \Bigr)  \varphi  
\, d\yy dt  =0 \,, $$
and together with the standard weak convergence argument for the other variables, we have that $({\mathsf w} ,{\mathsf z} ,{\mathsf k} ,{\mathsf a} )$ are solutions to
\eqref{eq:wzka} in $ \mathcal{D} _{\bar \eps}$.

Thanks to the uniform convergence $( { w}  ^{(n)} , { z}  ^{(n)} , { k}  ^{(n)} , {a}  ^{(n)} ) 
\to ({ w} ,{ z} , {k} ,{ a} )$  in $ \mathcal{D} _{\bar \eps}$,  it follows that the time derivatives
$\p_s (\eta ^{(n)} , \pt ^{(n)} , \pst^{(n)} ) \to \p_s (\eta  , \pt , \pst ) $ uniformly, and that 
$\eta (\cdot,t)  \colon\Upsilon(t)\to \TT \setminus \{\sc(t)\}$ is a bijection, and
the inverse map $\ie \colon \DD_{\bar \eps} \to \TT \setminus \{0\}$ is continuous in spacetime, where
the set of labels $\Upsilon^{(n)}  (t) \to \Upsilon (t)$ in the sense that
$\Upsilon (t) = \TT \setminus[x_-(t),x_+(t)] $
and $x^{(n)} _-(t) \to x_-(t)$ and       $x^{(n)}_+(t) \to x_+(t)$ uniformly.

Moreover, the uniform convergence $( { w}  ^{(n)} , { z}  ^{(n)} , { k}  ^{(n)} , {a}  ^{(n)} ) 
\to ({ w} ,{ z} , {k} ,{ a} )$  in $ \mathcal{D} _{\bar \eps}$, combined with the definitions \eqref{eq:hate:iterations} and the  continuity of $ \mathcal{E} _1 $ and $ \mathcal{E} _2$, implies that ${\mathcal E}_{1}(\vl,\vr,\zl,\el) = {\mathcal E}_{2}(\vl,\vr,\zl,\el) =0 $. Thus, the  equations relating $\zl$ and $\kl$ to $w_-$ and $w_+$ hold on the given shock curve.

\subsection{Proof of Proposition~\ref{thm:curve:determines:all}}
\label{sec:kucf}
The analysis given in Sections~\ref{sec:Burgers:explicit}--\ref{sec:construction:iteration} completes the proof of Propoisition~\ref{thm:curve:determines:all}, here we just summarize our findings. Given a regular shock curve $\sc$ satisfying \eqref{eq:sc:ass}, we have shown that there exists $\bar \eps>0$ sufficiently small (solely in terms of $\kappa,\bb,\cc,\mm$) such that the iteration described in Section~\ref{sec:construction:iteration} produces a limit point $(w,z,k,a) \in {\mathcal X}_{\bar \eps}$ (see \eqref{for-shock-stuff}), which solves the azimuthal form of the Euler equations~\eqref{eq:wzka} in $\DD_{\bar \eps}$; this proves items~\ref{item:5.6.i}, and \ref{item:5.6.ii}. From the last paragraph of the above section, we have that $(w_-,w_+,z_-,k_-)$ satisfy the system of algebraic equations \eqref{pjump77}-\eqref{pjump7}, arising from the Rankine--Hugoniot conditions, and by passing $n\to \infty$ in \eqref{eq:jump:mean:sharp} and \eqref{zknp1:shock}, we have that $\jump{w}$, $\jump{z}$, and $\jump{k}$ satisfy the bounds claimed in \eqref{eq:J:M:rough} and respectively~\eqref{eq:zl:and:kl:on:shock:L:infinity}; this proves items~\ref{item:5.6.iii}, \ref{item:5.6.v}, \ref{item:5.6.vi}, and~\ref{item:5.6.vii}. The stated bounds on $\sc_1$ and $\sc_2$, which are uniform limits of $\sc_1^{(n)}$ and $\sc_2^{(n)}$, follow by passing $n\to \infty$ in Lemma~\ref{lem:12flows},  proving item~\ref{item:5.6.iv}.

\subsection{Evolution of the shock curve}
\label{sec:shock:evo}

Proposition~\ref{thm:curve:determines:all} shows that given a shock curve $(\sc(t),t)_{t\in [0,\bar \eps]}$ which satisfies assumptions~\eqref{eq:sc:ass}, we may compute a  solution $(w,z,k,a)$ of  the  azimuthal form of the Euler equations \eqref{eq:w:z:k:a}--\eqref{eq:wave-speeds} on the spacetime region $ \DD_{\bar \eps} = (\TT \times [0,\bar \eps]) \setminus (\sc(t),t)_{t\in [0,\bar \eps]}$; moreover, this solution exhibits a jump discontinuity from the $(w_+,0,0)$ state on the right of the shock curve to the state $(w_-,z_-,k_-)$ on the left of the shock curve, and this jump is consistent with the system of algebraic equations \eqref{pjump77}--\eqref{pjump7} arising from the Rankine--Hugoniot conditions. Throughout this section we shall implicitly use that we have a map 
\begin{align}
(\sc, w_0, a_0) \xrightarrow{\mbox{Proposition~\ref{thm:curve:determines:all}}} (w,z,k,a) \,.
\label{eq:shock:to:solution:map}
\end{align}
Since at this stage of the proof uniqueness has not yet been established (this is achieved in Section~\ref{sec:uniqueness} below), in the map \eqref{eq:shock:to:solution:map} we select any one of the solutions guaranteed by Proposition~\ref{thm:curve:determines:all}. 

We note that throughout the proof of Proposition~\ref{thm:curve:determines:all}, the shock curve itself is {\em fixed}, and does not solve an evolutionary equation. The goal of this section is to provide an iteration scheme whose fixed point $\sc$ is a $C^2$ smooth curve which solves the equation \eqref{sdot1} (recall that in view of Lemma~\ref{lem:zminus-on-shock} the jump conditions \eqref{sdot1} and \eqref{sdot2} are equivalent), which we recall is 
\begin{align}
\dot \sc(t) = \FFF_{\sc}(t) \,, \qquad \sc(0) = 0 \,,
\label{eq:shock:ODE}
\end{align}
where 
\begin{align}
\FFF_{\sc}(t)
&= \frac 23  \frac{ (\vl(t) - \zl(t))^2 (\vl(t)+\zl(t)) - \vr(t)^3 }{ (\vl(t)-\zl(t))^2 - \vr(t)^2} 
\label{eq:F:def}
\end{align}
and we have implicitly used the notation \eqref{eq:left:right:states} to denote the limits from the left (indicated by a $-$ index) and the limit from the right (indicated by a $+$ index) at the shock point $(\sc(t),t)$ for the functions $(w,z,k)$. We emphasize however that the $(\vl,\vr,\zl,\kl)$ appearing in \eqref{eq:F:def} do not just depend on $\sc$ because they are one sided limits of their respective functions $(w,z,k)$ on the curve $(\sc(t),t)$; they also depend on $\sc$ because the functions $(w,z,k)$ themselves arise from the mapping \eqref{eq:shock:to:solution:map} given by Proposition~\ref{thm:curve:determines:all}; this mapping is {\em implicit} and {\em nonlinear}. Moreover, we note that due to Lemma~\ref{lem:zminus-on-shock} the $\zl$ and $\kl$ appearing in \eqref{eq:F:def} are themselves smooth functions of $\vl$ and $\vr$, so that $\FFF_\sc$ is truly a function that depends solely on $\vl$ and $\vr$, or alternatively, $\jump{w}$ and $\mean{w}$.

\subsubsection{Properties of $\FFF_{\sc}$}
Before giving the iteration scheme used to construct a solution to \eqref{eq:shock:ODE}, we establish a few useful properties of the function $\FFF_\sc$ defined in \eqref{eq:F:def}. 

\begin{lemma}
\label{lem:F:xi:t:approx}
Assume that $\sc$ satisfies \eqref{eq:sc:ass}, let $(w,z,k)$ be defined via \eqref{eq:shock:to:solution:map}, and $\FFF_{\sc}$ be given by~\eqref{eq:F:def}.
We then have that  
\begin{subequations}
\begin{align}
\abs{\FFF_{\sc}(t) - \kappa } 
&\leq  \tfrac 12  {\mm^4} t  \,,
\label{eq:F:xi:t:approx:1}\\
\abs{\tfrac{d}{dt} \FFF_{\sc}(t) } 
&\leq 5  {\mm^4} \,,
\label{eq:dot:F:xi:t:approx}
\end{align}
\end{subequations}
for all $t\in (0,\bar \eps]$.
\end{lemma}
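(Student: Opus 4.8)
\textbf{Proof plan for Lemma~\ref{lem:F:xi:t:approx}.} The plan is to treat $\FFF_\sc$ as a smooth function of the two scalar quantities $\jump{w}(t)$ and $\mean{w}(t)$, and to substitute into \eqref{eq:F:def} the asymptotic descriptions of these quantities obtained in the earlier sections. First I would rewrite $\vl = \mean{w}+\tfrac12\jump{w}$, $\vr=\mean{w}-\tfrac12\jump{w}$ in \eqref{eq:F:def}, and use Lemma~\ref{lem:zminus-on-shock} (which applies because, by Proposition~\ref{thm:curve:determines:all}, the solution $(w,z,k)$ produced by \eqref{eq:shock:to:solution:map} has jumps satisfying \eqref{eq:J:M:rough}, hence so do $\vl,\vr$) to express $\zl$ in terms of $\jump{w}$ and $\mean{w}$ up to the bound \eqref{zm-on-xi}, namely $\zl = -\tfrac{9\jump{w}^3}{16\mean{w}^2} + \OO(t^{5/2})$. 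Then a Taylor expansion of the rational function $\FFF_\sc$ about $\jump{w}=0$ — valid since $\mean{w}\geq \tfrac{\kappa}{2}$ keeps the denominator bounded away from zero — gives the leading-order identity $\FFF_\sc = \mean{w} + \OO(\jump{w}^2)$; this is the statement (consistent with Remark~\ref{shockcurvest}) that the shock speed equals the mean of the Burgers traces up to quadratic corrections in the jump. Since $\jump{w} = \OO(t^{1/2})$ by \eqref{eq:J:M:rough}, the correction is $\OO(t)$, and combining with $\abs{\mean{w}-\kappa}\leq \Rsf_m t$ from \eqref{eq:J:M:rough} (and the fact that $\Rsf_m$, being a constant depending only on $\kappa,\bb,\cc,\mm$, is absorbed after taking $\bar\eps$ small) yields \eqref{eq:F:xi:t:approx:1} with constant $\tfrac12\mm^4$.

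For the derivative bound \eqref{eq:dot:F:xi:t:approx}, I would differentiate the expansion $\FFF_\sc = \mean{w} + \jump{w}^2 \, g(\jump{w},\mean{w})$ in $t$, where $g$ is smooth and bounded (with bounded derivatives) on the relevant parameter range. This produces $\tfrac{d}{dt}\FFF_\sc = \tfrac{d}{dt}\mean{w} + \OO\!\big(\jump{w}\,|\tfrac{d}{dt}\jump{w}| + \jump{w}^2|\tfrac{d}{dt}\mean{w}|\big)$. Here I use the bounds on the time derivatives of the traces: $\abs{\tfrac{d}{dt}\mean{w}}\leq \Rsf_m$ and $\abs{\tfrac{d}{dt}\jump{w} - \bb^{3/2}t^{-1/2}}\leq \Rsf_j$ (the analogues of \eqref{eq:dt:jump}, which hold for the constructed solution by \eqref{eq:J:M:sharp:bound:dt} / the bounds proved in Lemma~\ref{lem:jumpn} passed to the limit $n\to\infty$ — these bounds survive because the iterates converge with their first derivatives in the weighted topology). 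The potentially singular term is $\jump{w}\,\tfrac{d}{dt}\jump{w} \sim t^{1/2}\cdot t^{-1/2} = \OO(1)$, which is in fact bounded; the term $\jump{w}^2 \tfrac{d}{dt}\mean{w}$ is even smaller. Hence $\tfrac{d}{dt}\FFF_\sc$ is bounded by a constant depending only on $\kappa,\bb,\cc,\mm$, which is $\leq 5\mm^4$ once $\bar\eps$ is chosen small enough and one uses $\kappa \mathsf C \leq \mm^{1/10}$, $\mathsf C\bb|\cc|\leq 1$ from \eqref{eq:b:m:ass}.

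The main obstacle I anticipate is \emph{not} the algebra of the Taylor expansion of $\FFF_\sc$ (which is routine, if tedious), but rather making precise that the one-sided traces $\vl,\vr$ and their time derivatives inherit from the construction in Proposition~\ref{thm:curve:determines:all} exactly the bounds \eqref{eq:J:M:rough} and the $\tfrac{d}{dt}$-analogues. This requires invoking that the limit solution $(w,z,k,a)\in {\mathcal X}_{\bar\eps}$ (cf.~\eqref{for-shock-stuff}) has well-defined one-sided limits on the shock curve satisfying the sharp bounds of Lemma~\ref{lem:jumpn} — in particular \eqref{eq:jump:mean:sharp} — and that, crucially, the identity \eqref{w0} differentiated along $\eta$ (yielding \eqref{for-dt2-sc-bound}) controls $\tfrac{d}{dt}(w\circ \eta)$ up to the shock, so that $\tfrac{d}{dt}\vl$ and $\tfrac{d}{dt}\vr$ exist and obey the needed bounds. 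Once these trace estimates are in hand, the lemma follows by plugging into \eqref{eq:F:def} and differentiating, with all error terms controlled by powers of $t\leq \bar\eps$ and absorbed into the final constants.
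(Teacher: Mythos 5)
Your proposal is correct and follows essentially the same approach as the paper's proof: rewrite $\FFF_\sc$ via $\vl=\mean{w}+\tfrac12\jump{w}$, $\vr=\mean{w}-\tfrac12\jump{w}$, eliminate $\zl$ through Lemma~\ref{lem:zminus-on-shock}, Taylor expand to get $\FFF_\sc = \mean{w} + \OO(\jump{w}^2)$ (the paper records the sharper $\FFF_\sc^{\rm app}=\mean{w}-\tfrac{7\jump{w}^2}{24\mean{w}}+\OO(t^{3/2})$), and then observe that the borderline product $\jump{w}\cdot\tfrac{d}{dt}\jump{w}\sim t^{1/2}\cdot t^{-1/2}=\OO(1)$ is bounded so that $\tfrac{d}{dt}\FFF_\sc$ is controlled. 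One small notational slip: $\Rsf_m$ is the constant in the amplitude bound $\abs{\mean{w}-\kappa}\le\Rsf_m t$ of \eqref{eq:J:M:rough}, not a bound for $\abs{\tfrac{d}{dt}\mean{w}}$; the latter is $\le 2\mm^4$ by \eqref{eq:J:M:sharp:bound:dt} (or via \eqref{eq:spanish:sparkling:wine}), but since both are $\OO(\mm^4)$ this does not affect the argument.
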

\begin{proof}[Proof of Lemma~\ref{lem:F:xi:t:approx}]
First we note that the function $w$ satisfies \eqref{eq:J:M:rough} with 
\begin{align}
\Rsf_j = 1 + 2 R_1 =  1 + 100 \mm^2 \leq  \mm^3
\qquad \mbox{and} \qquad 
\Rsf_m = \tfrac 13  {\mm^4}  + R_1 \leq \tfrac 13  {\mm^4} +  \mm^3  
\,.
\label{eq:Rj:Rm:explicit}
\end{align} 
This holds in view of \eqref{eq:basic:jump}, \eqref{eq:w:boot}, and the definition of $R_1$ in \eqref{eq:R:def}.

Due to \eqref{zm-on-xi}, in order to approximate the function $\FFF_{\sc}$ it is natural to insert $- \frac{9}{16} \jump{w}^3 \mean{w}^{-2}$  in \eqref{eq:F:def}, instead of $\zl$. Using the identities $\vl = \mean{w} + \frac 12 \jump{w}$ and $\vr = \mean{w} - \frac 12 \jump{w}$, this gives us the leading order terms in $\FFF_{\sc}$ defined by
\begin{align}
\FFF_{\sc}^{\rm app}
&= \frac 23  \frac{ (\vl + \frac{9 \jump{w}^3}{16\mean{w}^{2}}  )^2 (\vl - \frac{9 \jump{w}^3}{16\mean{w}^{2}}) - \vr^3 }{ (\vl + \frac{9 \jump{w}^3}{16\mean{w}^{2}})^2 - \vr^2} \,.
\label{eq:F:app:def}
\end{align}
Furthermore, since the formula in \eqref{eq:F:app:def} is explicit, using \eqref{eq:J:M:rough} we obtain that 
\begin{align}
\abs{\FFF_{\sc}^{\rm app} - \mean{w} + \frac{7 \jump{w}^2}{24 \mean{w}}}
&\leq C t^{\frac 32}
\label{eq:orange:orangutan:44}
\end{align}
since $t \leq \bar \eps$, and $\bar \eps$ is sufficiently small; here $C = C(\kappa,\bb,\cc,\mm)>0$. The error we make in the approximation \eqref{eq:F:app:def} may be bounded using the intermediate value theorem and the bounds \eqref{eq:J:M:rough}, \eqref{zm-on-xi}, \eqref{eq:zl:on:shock:L:infinity} as
\begin{align}
\abs{\FFF_{\sc}  - \FFF_{\sc}^{\rm app}} 
&\leq \frac 23 \abs{\zl + \frac{9\jump{w}^3}{16\mean{w}^{2}}  }  \abs{1 + \frac{  \jump{w} ( \mean{w} - \frac 12 \jump{w})}{(\jump{w} - z_*)^2} - \frac{ \mean{w}^2 - \frac 14 \jump{w}^2}{(2 \mean{w} - z_*)^2}}
\notag\\
&\leq C t^{\frac 52} \left( 1 + \frac{3 \bb^{\frac 32} \kappa t^{\frac 12}}{(\bb^{\frac 32} t^{\frac 12} - 5 \kappa^{-2} \bb^{\frac 92} t^{\frac 32})^2} +\frac{\kappa^2}{4 (\kappa - 5 \kappa^{-2} \bb^{\frac 92} t^{\frac 32})^2}  \right)
\notag\\
&\leq C t^{2} 
\label{eq:orange:orangutan:4}
\end{align}
since $t\leq \bar \eps$ is sufficiently small; here $z_*$ lies in between $\zl$ and $- \frac{9}{16} \jump{w}^3 \mean{w}^{-2}$, and $C = C(\kappa,\bb,\cc,\mm)>0$.
Combining \eqref{eq:orange:orangutan:44}--\eqref{eq:orange:orangutan:4} and \eqref{eq:J:M:rough} --- with $\Rsf_j$ and $\Rsf_m$ as determined by \eqref{eq:Rj:Rm:explicit}, we arrive at 
\[
\abs{\FFF_{\sc}(t) - \kappa } 
\leq \left( \tfrac 13  {\mm^4} +  \mm^3   + 2  \bb^3 \kappa^{-1} \right) t
\leq  \tfrac 12  {\mm^4} \,,
\]
thereby proving \eqref{eq:F:xi:t:approx:1}. In this last inequality we have also appealed to \eqref{eq:b:m:ass}.

In order to prove \eqref{eq:dot:F:xi:t:approx}, we first differentiate \eqref{eq:F:def} with respect to $t$, to arrive at
\begin{align}
 \frac 32 \frac{d}{dt} \FFF_{\sc} 
&=   \left( 1 + \frac{  \jump{w} ( \mean{w} - \frac 12 \jump{w})}{(\jump{w} - \zl)^2} - \frac{ \mean{w}^2 - \frac 14 \jump{w}^2}{(2 \mean{w} - \zl)^2}\right)\frac{d}{dt} \zl \notag\\
&+  \frac{\jump{w}^3 - 2 \jump{w}^2 \zl + \jump{w} \zl^2 + \zl (2 \mean{w} - \zl)^2}{2 (\jump{w} - \zl)^2 (2 \mean{w} - \zl)}  \frac{d}{dt} \jump{w} \notag\\
&+ \left( 1 + \frac{( \mean{w} - 2 \jump{w} ) ( \mean{w}  +\frac 12 \jump{w} -   \zl) (\jump{w} + \zl)}{2 (\jump{w} - \zl) (\mean{w} - \frac 12 \zl)^2} \right) \frac{d}{dt} \mean{w} 
\label{eq:orange:orangutan:444}
\end{align}
By combining \eqref{eq:orange:orangutan:444} with the bounds the derivative bounds \eqref{eq:spanish:sparkling:wine} (which holds due to \eqref{2krod} with constant $\Rsf = R_1 \leq \mm^3$ as defined in \eqref{eq:R:def}), \eqref{eq:dt:zl:kl:on:shock}, and the amplitude estimates  \eqref{eq:J:M:rough} (with \eqref{eq:Rj:Rm:explicit}) and \eqref{eq:zl:on:shock:L:infinity}, we arrive at 
\begin{align}
\frac 32 \abs{\frac{d}{dt} \FFF_{\sc}} 
&\leq \kappa \bb^{-\frac 32} t^{-\frac 12}  \abs{\frac{d}{dt} \zl} + 2 \bb^{\frac 32} \kappa^{-1} t^{\frac 12} \abs{\frac{d}{dt} \jump{w}} + \left( \frac 32 + 2 \bb^3 \kappa^{-2} t \right) \abs{\frac{d}{dt} \mean{w}}
\notag\\
&\leq \kappa \bb^{-\frac 32} t^{-\frac 12}  \left( 8 \bb^{\frac 92} \kappa^{-2} t^{\frac 12} \right) + 2 \bb^{\frac 32} \kappa^{-1} t^{\frac 12} \left( 2 \bb^{\frac 32} t^{-\frac 12} \right)  + 2 \left( 3   {\mm^4}+ 3 \mm^3 \right) 
\notag\\
&\leq 7  {\mm^4}
\,.
\label{eq:refer:to:this:shit:later}
\end{align}
In the second inequality above we have used that $t \leq \bar \eps$ is sufficiently small with respect to $\kappa, \bb, \cc$, and $\mm$, while in the third inequality we have used \eqref{eq:b:m:ass}. This concludes the proof of \eqref{eq:dot:F:xi:t:approx}.
\end{proof}

\subsubsection{The shock curve iteration} 
 
In view of \eqref{eq:shock:ODE} and \eqref{eq:F:xi:t:approx:1}--\eqref{eq:dot:F:xi:t:approx} we   note that the inequalities \eqref{eq:sc:ass} are stable (since $\frac 12 < 1$ and $5<6$). Upon integrating in time, the condition $|\dot \sc (t) - \kappa| \leq  {\mm^4} t$ present in \eqref{eq:sc:ass}, automatically implies $\sc(t) \in \Sigma(t)$. 

Next, we define a sequence of curves $\sc^{(i)}$ for $i\geq 0$, as follows. For $i=0$, we let $\sc^{(0)}(t) = \kappa t$. This curve trivially satisfies the conditions in \eqref{eq:sc:ass}.
Next, given a curve $\sc^{(i)}$ defined on $[0,\bar \eps]$ which satisfies \eqref{eq:sc:ass}, we first compute via \eqref{eq:shock:to:solution:map} a tuple $(w,k,z,a)^{(i)}$ associated to $\sc^{(i)}$:
\begin{align}
(\sc^{(i)}, w_0, a_0) \xrightarrow{\mbox{Proposition~\ref{thm:curve:determines:all}}} (w^{(i)},z^{(i)},k^{(i)},a^{(i)})  \,.
\label{eq:shock:to:solution:map:i}
\end{align}
Then, according to \eqref{eq:F:def}, from $(\vl^{(i)}, \vr^{(i)},\zl^{(i)})$, which are one-sided restrictions on $\sc^{(i)}$, we may uniquely define a velocity field $\FFF_{\sc^{(i)}}(t)$, which may be in turn integrated to define
\begin{align}
\sc^{(i+1)}(t) = \int_0^t \FFF_{\sc^{(i)}}(s) ds 
\label{eq:shock:curve:iterate}
\end{align}
for all $t\in [0,\bar \eps]$. Since $\sc^{(i)}$ satisfies \eqref{eq:sc:ass}, by Lemma~\ref{lem:F:xi:t:approx}, we have that $\FFF_{\sc^{(i)}}$ satisfies the bounds in \eqref{eq:F:xi:t:approx:1}--\eqref{eq:dot:F:xi:t:approx}. Using \eqref{eq:shock:curve:iterate} and Lemma~\ref{lem:F:xi:t:approx}, we in turn deduce that $\sc^{(i+1)}$ satisfies \eqref{eq:sc:ass},  on the same time interval $\bar \eps$. Thus, under the above described iteration $\sc^{(i)} \mapsto \sc^{(i+1)}$, the set of inequalities \eqref{eq:sc:ass} is stable.

The sequence of curves $\{\sc^{(i)}\}_{i\geq 0}$ is uniformly bounded in $W^{2,\infty}(0,\bar \eps)$, in light of the bounds \eqref{eq:sc:ass}, and for $i\geq 0$ it satisfies \eqref{eq:shock:curve:iterate}. From the Arzela-Ascoli theorem, we may thus deduce that there exists at least one sub-sequential uniform limit $\sc$, of the family $\{\sc^{(i)}\}_{i\geq 0}$, which inherits the bounds \eqref{eq:sc:ass}. However, in order to show that this limit point $\sc$ solves \eqref{eq:shock:ODE}, we would need to show that $\FFF_{\sc^{(i)}} \to \FFF_{\sc}$ when $\sc^{(i)} \to \sc$. This continuity of $\FFF_{\sc}$ with respect to $\sc$ is addressed in the next section, where we in fact show that the sequence $\{\sc^{(i)}\}_{i\geq 0}$ is in fact Cauchy in $W^{1,\infty}(0,\bar \eps)$.

\subsubsection{Contraction mapping and convergence of the shock curve iteration} 
By \eqref{eq:F:def}, in order to compare $\FFF_{\sc^{(i+1)}}$ and  $\FFF_{\sc^{(i)}}$, it is obviously sufficient and necessary to compare the tuples $(\vl^{(i+1)}, \vr^{(i+1)},\zl^{(i+1)})$ and $(\vl^{(i+1)}, \vr^{(i+1)},\zl^{(i+1)})$. Note however that these tuples represent restrictions of the functions $(w^{(i+1)},z^{(i+1)})$ and $(w^{(i)},z^{(i)})$, which are themselves {\em defined on different domains}; thus in order to compare $(w^{(i+1)},z^{(i+1)})$ and $(w^{(i)},z^{(i)})$, we need to re-map them of a fixed domain, by shifting $\yy = \theta - \sc^{(i+1)}(t)$, respectively $\yy = \theta - \sc^{(i)}(t)$. 

As such, for every $i \geq 0$, and for $(\yy,t) \in (\TT \setminus \{0\}) \times [0,\bar \eps]$, we define
\begin{align}
(\wsf^{(i)},\zsf^{(i)},\ksf^{(i)}, \asf^{(i)})\left(\yy,t\right) =  (w^{(i)},z^{(i)},k^{(i)},a^{(i)}) \left(\yy + \sc^{(i)}(t) ,t\right)
\,,
\label{eq:i:unknowns}
\end{align}
where $s^{(0)} (t) = \kappa t$, and for $i\geq 1$ the curve $s^{(i)}$ is defined recursively via \eqref{eq:shock:curve:iterate} .
Since Proposition~\ref{thm:curve:determines:all} and the bound \eqref{for-shock-stuff} guarantee that $(w^{(i)},z^{(i)},k^{(i)},a^{(i)}) \in {\mathcal X}_{\bar \eps}$ are well-defined and differentiable on the spacetime domain $\TT \times [0,\bar \eps] \setminus \{(\sc^{(i)}(t),t)\}_{t\in[0,\bar \eps]}$, the new unknowns $(\wsf^{(i)},\zsf^{(i)},\ksf^{(i)}, \asf^{(i)})$ are all well-defined and differentiable on the $i$-independent domain $(\TT \setminus \{0\}) \times [0,\bar \eps]$ with bounds inherited from the space ${\mathcal X}_{\bar \eps}$ defined in \eqref{eq:w:z:k:a:boot:*}, allowing us to compare them to each other. Note that due to the shift \eqref{eq:i:unknowns}, we have   
\begin{align*}
\vl^{(i)}(t) = \lim_{\yy \to 0^-} \wsf^{(i)}(\yy,t) \,,
\   
\vr^{(i)}(t) = \lim_{\yy \to 0^+} \wsf^{(i)}(\yy,t) \,,
\  
\zl^{(i)}(t) = \lim_{\yy \to 0^-} \zsf^{(i)}(\yy,t)\,,
\  
\kl^{(i)}(t) = \lim_{\yy\to 0^-} \ksf^{(i)}(\yy,t)\,,
\end{align*}
the system of equations \eqref{eq:zminus:kminus} (which encode the jump conditions) are satisfied for every $i\geq  0, t\in [0,\bar \eps]$, and $\FFF_{\sc^{(i)}}$ may be expressed in terms of the above variables.
Moreover, by \eqref{eq:wzka} we have that for each $i \geq 0$ the unknowns in \eqref{eq:i:unknowns} solve the system of equations 
\begin{subequations} 
\label{eq:wzka:i}
\begin{align}
\left(\p_t  + (\lambda_3^{(i)} -\dot\sc^{(i)}) \p_{\yy}\right) \wsf^{(i)} & = - \tfrac{8}{3}  \asf^{(i)} \wsf^{(i)}  + \tfrac{1}{4}  \csf^{(i)} \left( \p_t + (\lambda_3^{(i)} -\dot\sc^{(i)} ) \p_{\yy} \right)\ksf^{(i)}   \,,  \label{eq:wzka:i:w} \\
\left(\p_t + (\lambda_1^{(i)} -\dot\sc^{(i)}) \p_{\yy}\right) \zsf^{(i)} & = - \tfrac{8}{3}  \asf^{(i)} \zsf^{(i)} -   \tfrac{1}{4}  \csf^{(i)} \left( \p_t + (\lambda_1^{(i)} -\dot\sc^{(i)}) \p_{\yy} \right)\ksf^{(i)}   \,,  \label{eq:wzka:i:z} \\
\left(\p_t + (\lambda_2^{(i)} -\dot\sc^{(i)}) \p_{\yy}\right) \ksf^{(i)} & = 0  \,, \label{eq:wzka:i:k} \\
\left(\p_t + (\lambda_2^{(i)} -\dot\sc^{(i)}) \p_{\yy}\right)  \asf^{(i)} & = - \tfrac43 (\asf^{(i)})^2 + \tfrac{1}{3} (\wsf^{(i)}+\zsf^{(i)})^2 - \tfrac{1}{6} (\wsf^{(i)}-\zsf^{(i)})^2  \,, \label{eq:wzka:i:a}
\end{align}
\end{subequations} 
in the interior of $(\TT \setminus \{0\}) \times [0,\bar \eps]$, where we have denoted $\csf^{(i)}= \tfrac{1}{2} (\wsf^{(i)}-\zsf^{(i)})$, and have use the usual notation for the three wave speeds at level $i$.

Since we have seen earlier that for all $i\geq 0$ the curves $\sc^{(i)}$ satisfy \eqref{eq:sc:ass}, by the proof of Lemma~\ref{lem:F:xi:t:approx} (see the first line of estimate \eqref{eq:refer:to:this:shit:later}) and the mean value theorem, for all $i \geq 0$ we have that 
\begin{align}
\abs{  \FFF_{\sc^{(i+1)}} - \FFF_{\sc^{(i)}} } 
&\leq \tfrac 23\kappa \bb^{-\frac 32} t^{-\frac 12}  \sabs{\zsf_-^{(i+1)} - \zsf_-^{(i)}} + \tfrac 43 \bb^{\frac 32} \kappa^{-1} t^{\frac 12} \sabs{\jump{\wsf^{(i+1)}} - \jump{\wsf^{(i)}}} \notag\\
&\qquad + \left( 1 + 3 \bb^3 \kappa^{-2} t \right)\sabs{\mean{\wsf^{(i+1)}} - \mean{\wsf^{(i)}}}
\,,
\label{eq:FFF:increment}
\end{align}
holds uniformly for $t\in (0,\bar \eps]$. Thus, it remains to estimate the right side of \eqref{eq:FFF:increment}.

For this purpose, we fix an $i\geq 0$, and denote
\begin{align}
\label{eq:drifter:increments}
 (\delta \wsf, \delta \zsf, \delta \ksf, \delta \asf,\delta\csf,\delta \dot \sc)
 =
 (  \wsf^{(i+1)},   \zsf^{(i+1)},   \ksf^{(i+1)},   \asf^{(i+1)}, \csf^{(i+1)},  \dot \sc^{(i+1)})
 -
 (  \wsf^{(i)},   \zsf^{(i)},   \ksf^{(i)},   \asf^{(i)}, \csf^{(i)},  \dot \sc^{(i)})\,.
\end{align}
We note that $ (\delta \wsf, \delta \zsf, \delta \ksf, \delta \asf,\delta\csf)(x,0) = 0$.
We subtract from \eqref{eq:wzka:i} at level $i+1$, the equations \eqref{eq:wzka:i} at level  $i$, in order to estimate the increments defined above, via the maximum principle, to obtain
\begin{itemize}
\item From \eqref{eq:wzka:i:k} we have that 
\begin{align*}
 \left(\p_t + (\lambda_2^{(i+1)} -\dot\sc^{(i+1)}) \p_{\yy}\right) \delta \ksf 
 & = - \p_{\yy} \ksf^{(i)} \left( \tfrac 23 \delta w + \tfrac 23 \delta z - \delta \dot \sc \right)
 \,.
\end{align*} 
Since Proposition~\ref{thm:curve:determines:all} guarantees that $k^{(i)} \in {\mathcal X}_{\bar \eps}$, the function $\ksf^{(i)}$ satisfies the bound \eqref{eq:k:dx:boot}, and so similarly to \eqref{kn-contract} we may obtain 
\begin{align}
 \sup_{[0,t]} \norm{\delta \ksf}_{L^\infty} \leq \mm^3  t^{\frac 32} \left(  \sup_{[0,t]} \norm{\delta \wsf}_{L^\infty} +  \sup_{[0,t]} \norm{\delta \zsf}_{L^\infty} +  \sup_{[0,t]} |\delta \dot \sc| \right)
 \label{eq:drifter:delta:k}
\end{align}
where the $L^\infty$ norms are taken over the domain $\TT \setminus \{0\}$.

\item Similarly, from \eqref{eq:wzka:i:a} we have 
\begin{align*}
 \left(\p_t + (\lambda_2^{(i+1)} -\dot\sc^{(i+1)}) \p_{\yy}\right) \delta \asf 
 & = - \p_{\yy} \asf^{(i)} \left( \tfrac 23 \delta w + \tfrac 23 \delta z - \delta \dot \sc \right) - \tfrac 43  (\asf^{(i+1)} + \asf^{(i)} )\delta \asf \notag\\
 &\quad + \tfrac 13 (\wsf^{(i+1)} + \wsf^{(i)} + \zsf^{(i+1)} + \zsf^{(i)} ) (\delta \wsf + \delta \zsf) \notag\\
 &\quad - \tfrac 16 (\wsf^{(i+1)} + \wsf^{(i)} - \zsf^{(i+1)} - \zsf^{(i)} ) (\delta \wsf - \delta \zsf)
 \,.
\end{align*}
Using that $(w^{(i)},z^{(i)},k^{(i)},a^{(i)}) \in {\mathcal X}_{\bar \eps}$, and since $|w^{(1)}(\theta,t) | = |\wb(\theta,t) | \leq \mm$, similarly to \eqref{an-contract} we obtain 
\begin{align*}
 \sup_{[0,t]} \norm{\delta \asf}_{L^\infty} 
 &\leq \mm^3  t  \left(  \sup_{[0,t]} \norm{\delta \wsf}_{L^\infty} +  \sup_{[0,t]} \norm{\delta \zsf}_{L^\infty} +  \sup_{[0,t]} |\delta \dot \sc| \right) + 3 \mm^3 t  \sup_{[0,t]} \norm{\delta \asf}_{L^\infty} \notag\\
 &\qquad +\left(\mm t + \mm^3 t^2 + \mm^3 t^{\frac 52} \right)  \left(  \sup_{[0,t]} \norm{\delta \wsf}_{L^\infty} +  \sup_{[0,t]} \norm{\delta \zsf}_{L^\infty}  \right)
\end{align*}
and thus, taking into account \eqref{eq:R:def},
\begin{align}
 \sup_{[0,t]} \norm{\delta \asf}_{L^\infty} \leq 4 \mm^3 t  \left(  \sup_{[0,t]} \norm{\delta \wsf}_{L^\infty} +  \sup_{[0,t]} \norm{\delta \zsf}_{L^\infty} +  \sup_{[0,t]} |\delta \dot \sc| \right)  \label{eq:drifter:delta:a}
\end{align}
since $  t \leq  \bar \eps \ll 1$.

\item Next, we  turn to \eqref{eq:wzka:i:w}, which  gives
\begin{align*}
 \left(\p_t  + (\lambda_3^{(i+1)} -\dot\sc^{(i+1)}) \p_{\yy}\right) \delta \wsf
 &=  - \p_{\yy} \wsf^{(i)} \left(  \delta \wsf + \tfrac 13 \delta z - \delta \dot \sc \right) 
 - \tfrac 83 a^{(i+1)} \delta \wsf - \tfrac 83 \wsf^{(i)} \delta \asf
 \\
&  + \tfrac{1}{4}  \csf^{(i+1)}   \left( \p_t + (\lambda_3^{(i+1)} -\dot\sc^{(i+1)} ) \p_{\yy} \right)\delta \ksf 
\\ 
&  
- \tfrac{1}{4}  \csf^{(i+1)}   \left(\delta \wsf  + \tfrac 13 \delta \zsf - \delta \dot\sc^{(i)}   \right)\p_{\yy} \ksf^{(i)}  
+ \tfrac{1}{4}  \delta \csf  \left(  \lambda_3^{(i)}  - \lambda_2^{(i)} \right)\p_{\yy} \ksf^{(i)}
\end{align*}
Recalling  that $c^{(i+1)}$ solves $(\p_t + \lambda_3^{(i+1)} \p_\theta ) c^{(i+1)} = - \frac  83 a^{(i+1)} c^{(i+1)} - \frac 23 c^{(i+1)} \p_\theta  z^{(i+1)}$, see e.g.~\eqref{cn3},
we obtain from the above that 
\begin{align*}
\left(\p_t  + (\lambda_3^{(i+1)} -\dot\sc^{(i+1)}) \p_{\yy}\right) \left( \delta \wsf - \tfrac 14 \csf^{(i+1)} \delta \ksf\right)
 &  =  - \p_{\yy} \wsf^{(i)} \left(  \delta \wsf + \tfrac 13 \delta \zsf - \delta \dot \sc \right) 
 - \tfrac 83 a^{(i+1)} \delta \wsf - \tfrac 83 \wsf^{(i)} \delta \asf
 \\
&    - \tfrac{1}{4} \delta \ksf   \left(\tfrac  83 \asf^{(i+1)} \csf^{(i+1)} + \tfrac 23 \csf^{(i+1)} \p_{\yy} \zsf^{(i+1)}   \right)  
\\ 
&   
- \tfrac{1}{4}  \csf^{(i+1)}   \left(\delta \wsf  + \tfrac 13 \delta \zsf - \delta \dot\sc^{(i)}   \right)\p_{\yy} \ksf^{(i)}  
+ \tfrac{1}{6}   \csf^{(i)} \p_{\yy} \ksf^{(i)}\delta \csf 
\,.
\end{align*}
Following \eqref{deltawnp1}, the above equation is composed with the flow of $\lambda_3^{(i+1)} - \dot\sc^{(i+1)}$, which of course is just $\eta^{(i+1)} - \sc^{(i+1)}$, and then integrated in time. Note that $\p_{\yy} \wsf^{(i)} \circ (\eta^{(i+1)} - \sc^{(i+1)}) = (\p_\theta  w^{(i)})\circ \eta^{(i+1)}$ and \eqref{eq:AC:DC:con} holds. Thus, using that $(w^{(i)},z^{(i)},k^{(i)},a^{(i)}) \in {\mathcal X}_{\bar \eps}$ and $(w^{(i+1)},z^{(i+1)},k^{(i+1)},a^{(i+1)}) \in {\mathcal X}_{\bar \eps}$ similarly to \eqref{wn-contract} we may deduce that 
\begin{align*}
 \sup_{[0,t]} \norm{\delta \wsf}_{L^\infty} 
 &\leq   \mm \left( 1 + 4 \mm^3 t \right)  \sup_{[0,t]} \norm{\delta \ksf}_{L^\infty}   
 + \left( \tfrac{19}{40} + 4 \mm^3 t \right)  \sup_{[0,t]} \norm{\delta \wsf}_{L^\infty} + \left(\tfrac 16 + 2 \mm^4  t^{\frac 32}\right) \sup_{[0,t]} \norm{\delta \zsf}_{L^\infty} \notag\\
 &\qquad + 3 \mm t \sup_{[0,t]} \norm{\delta \asf}_{L^\infty} + \left( \tfrac{19}{40} + \mm^4 t^{\frac 32}  \right) \sup_{[0,t]} |\delta \dot \sc| \,.
\end{align*}
Upon taking $\bar \eps$ to be sufficiently small with respect to $\mm$, taking into account \eqref{eq:R:def} we deduce
\begin{align}
 \sup_{[0,t]} \norm{\delta \wsf}_{L^\infty} 
\leq   \mm^3 \sup_{[0,t]} \norm{\delta \ksf}_{L^\infty}   
 +  \tfrac 12   \sup_{[0,t]} \norm{\delta \zsf}_{L^\infty} 
 + 2 \mm^3 t \sup_{[0,t]} \norm{\delta \asf}_{L^\infty} + (\tfrac{19}{21} + 8 \mm^3 t) \sup_{[0,t]} |\delta \dot \sc|  
 \label{eq:drifter:delta:w}
\end{align}

\item Lastly, from \eqref{eq:wzka:i:z} and \eqref{cn1} we similarly deduce
\begin{align*}
 \left(\p_t  + (\lambda_1^{(i+1)} -\dot\sc^{(i+1)}) \p_{\yy}\right) \left( \delta \zsf + \tfrac 14 \csf^{(i+1)} \delta \ksf \right)
 &=  - \p_{\yy} \zsf^{(i)} \left(  \tfrac 13 \delta \wsf +  \delta \zsf - \delta \dot \sc \right) 
 - \tfrac 83 a^{(i+1)} \delta \zsf - \tfrac 83 \zsf^{(i)} \delta \asf
 \\
&  
+ \tfrac{1}{4}   \delta \ksf  \left( \tfrac  83 \asf^{(i+1)} \csf^{(i+1)} - \tfrac 23 \csf^{(i+1)} \p_{\yy} \wsf^{(i+1)}  \right)  
\\ 
&  
+\tfrac{1}{4}  \csf^{(i+1)}   \left(\tfrac 13 \delta \wsf  + \delta \zsf - \delta \dot\sc^{(i)}   \right)\p_{\yy} \ksf^{(i)}  
+\tfrac{1}{6}   \csf^{(i)} \p_{\yy} \ksf^{(i)} \delta \csf
\end{align*}
and then similarly to \eqref{eq:drifter:delta:w} we have
\begin{align}
 \sup_{[0,t]} \norm{\delta \zsf}_{L^\infty} 
 &\leq  \mm^3  \sup_{[0,t]} \norm{\delta \ksf}_{L^\infty}   
 +  \mm^6 t^{\frac 32}   \sup_{[0,t]} \norm{\delta \wsf}_{L^\infty}  
 + 3 \mm^3 t^{\frac 52}  \sup_{[0,t]} \norm{\delta \asf}_{L^\infty} +  \mm^6 t^{\frac 32}  \sup_{[0,t]} |\delta \dot \sc| \,.
  \label{eq:drifter:delta:z}
\end{align}

\end{itemize}
Combining the estimates \eqref{eq:drifter:delta:k}-\eqref{eq:drifter:delta:z}, and defining 
\begin{align}
 N_i(t):=\sup_{[0,t]} \norm{\delta \wsf}_{L^\infty}+  t^{-\frac 34} \sup_{[0,t]} \norm{\delta \zsf}_{L^\infty}  +  t^{-1} \sup_{[0,t]} \norm{\delta \ksf}_{L^\infty} + t^{-\frac 12}  \sup_{[0,t]} \norm{\delta \asf}_{L^\infty}
 \,,
   \label{eq:drifter:Ni:def}
\end{align}
where we recall the notation in \eqref{eq:drifter:increments}, we arrive at
\begin{align*}
N_i(t) \leq 3 (1+ \mm^3) t^{\frac 14} N_i(t) +   (\tfrac{19}{21} + 6 \mm^3 t^{\frac 12} ) \sup_{[0,t]} |\delta \dot \sc|
\end{align*}
and thus upon taking $t \leq \bar \eps$ to be sufficiently small in terms of $\mm$, we deduce
\begin{align}
N_i(t) \leq \tfrac{20}{21}    \sup_{[0,t]} |\delta \dot \sc| = \tfrac{20}{21}    \sup_{[0,t]} \sabs{\dot \sc^{(i+1)} - \dot \sc^{(i)}}
\,.
\label{eq:FFF:increment:2}
\end{align}

Recalling the definitions~\eqref{eq:shock:curve:iterate} and \eqref{eq:drifter:Ni:def}, from the bounds \eqref{eq:FFF:increment} and \eqref{eq:FFF:increment:2} we deduce that 
\begin{align}
\sup_{[0,t]} \sabs{\dot \sc^{(i+2)} - \dot \sc^{(i+1)}} 
&= \sup_{[0,t]} \abs{  \FFF_{\sc^{(i+1)}} - \FFF_{\sc^{(i)}} } \notag\\
&\leq \tfrac 23\kappa \bb^{-\frac 32} t^{-\frac 12}  \sup_{[0,t]} \norm{\zsf^{(i+1)} - \zsf^{(i)}}_{L^\infty} \notag\\
&\qquad \qquad + \left( 1 +3 \bb^{\frac 32} \kappa^{-1} t^{\frac 12} + 3 \bb^3 \kappa^{-2} t \right)\sup_{[0,t]} \norm{ \wsf^{(i+1)} - \wsf^{(i)}} \notag\\
&\leq \tfrac 23\kappa \bb^{-\frac 32} t^{\frac 14} N_i(t) + \left( 1 + \tfrac 83 \bb^{\frac 32} \kappa^{-1} t^{\frac 12} + 3 \bb^3 \kappa^{-2} t \right) N_i(t)
\notag\\
&\leq    ( 1 + t^{\frac 15}  ) N_i(t)
\notag\\
&\leq \tfrac{20}{21} ( 1 + t^{\frac 15}  )   \sup_{[0,t]} \sabs{\dot \sc^{(i+1)} - \dot \sc^{(i)}}
\notag\\
&\leq \tfrac{41}{42} \sup_{[0,t]} \sabs{\dot \sc^{(i+1)} - \dot \sc^{(i)}}
\label{eq:FFF:increment:3}
\end{align}
upon taking $\bar \eps$, and hence $t$, sufficiently small with respect to $\kappa,\bb,\cc$, and $\mm$. Note that $\frac{41}{42} < 1$, and so we have a contraction. Since $s^{(0)} = \kappa t$, and all the sequence of iterates satisfy \eqref{eq:sc:ass}, we deduce that 
\begin{align}
\sup_{[0,t]} \sabs{\dot \sc^{(i+1)} - \dot \sc^{(i)}} \leq \left( \tfrac{41}{42} \right)^i  \sup_{[0,t]} \sabs{\dot \sc^{(1)} - \kappa } \leq \left( \tfrac{41}{42} \right)^i  \mm^4 t \,.
\label{eq:FFF:increment:4}
\end{align}

The bounds \eqref{eq:FFF:increment:3}--\eqref{eq:FFF:increment:4} have as consequence the fact that the sequence of shock curve iterates $\{\sc^{(i)}\}_{i\geq 0}$ defined in \eqref{eq:shock:curve:iterate} is Cauchy in $W^{1,\infty}(0,\bar \eps)$, and thus has a unique limit point
\begin{align}
\sc = \lim_{i \to \infty} \sc^{(i)}
 \qquad \mbox{in} \qquad W^{1,\infty}(0,\bar \eps) \,,
\end{align}
which inherits the bound \eqref{eq:sc:ass}. The bound \eqref{eq:FFF:increment:3} moreover shows that  $\FFF_{\sc^{(i)}} \to \FFF_{\sc}$ as $i \to \infty$ in $C^0(0,\bar \eps)$, and by \eqref{eq:shock:curve:iterate} we obtain that $\sc$ solves shock evolution equation \eqref{eq:shock:ODE}, as desired.

Lastly, in view of \eqref{eq:shock:to:solution:map}, associated to this limit point $\sc$, which satisfies the bound \eqref{eq:sc:ass}, Proposition~\ref{thm:curve:determines:all} determines a unique solution $(w,z,k,a) \in {\mathcal X}_{\bar \eps}$ of the azimuthal  form of the Euler equations \eqref{eq:w:z:k:a}--\eqref{eq:wave-speeds} on either side of the shock curve, which also satisfies the Rankine-Hugoniot jump conditions \eqref{pjump77}--\eqref{pjump7}, and the shock speed $\dot \sc$ is given by \eqref{sdot1}, as desired.

\subsection{Uniqueness of solutions}
\label{sec:uniqueness} 
The uniqueness of solutions holds in the following sense. Consider $w_0$ which satisfies \eqref{eq:u0:ass:quant}, and $a_0$ which satisfies \eqref{eq:a0:ass}. For $i \in \{1,2\}$, assume that $\sc^{(i)}$ is a $C^2$ smooth shock curve defined on $[0,T]$ for some $T>0$, which satisfies \eqref{eq:sc:ass} on $[0,T]$. Assume that $(w,z,k,a)^{(i)}$ are $C^1_{x,t}$ smooth solutions of the azimuthal  form of the Euler equations \eqref{eq:w:z:k:a}--\eqref{eq:wave-speeds} on the spacetime domain $\DD_{T}$, i.e., on either side of the shock curve $\sc$, with initial datum $(w_0,0,0,a_0)$. Moreover, assume that the restrictions of $(w,z,k)^{(i)}$  satisfy the Rankine-Hugoniot jump conditions \eqref{pjump77}--\eqref{pjump7}, and that the shock speed $\dot \sc$ is given  by \eqref{sdot1}. Lastly, assume that $(w,z,k,a)^{(i)} \in {\mathcal X}_{\bar \eps}$, as defined in \eqref{eq:w:z:k:a:boot}--\eqref{eq:w:z:k:a:boot:*}. Then, if $\bar \eps \leq T$ is sufficiently small (in terms of the constants $\kappa, \bb, \cc, \mm$), we have that $\sc^{(1)} \equiv \sc^{(2)}$ on ${0,\bar \eps}$, and $(w,z,k,a)^{(1)}\equiv (w,z,k,a)^{(2)}$ on $\DD_{\bar \eps}$. 

The proof of this statement is a direct consequence of the contraction mapping established in Section~\ref{sec:shock:evo},  and of the fact that $z^{(i)}(\cdot,t) \equiv 0$ on $\TT\setminus[\sc_1^{(i)}(t),\sc^{(i)}(t)]$, and $k^{(i)}(\cdot,t) \equiv 0$ on $\TT\setminus[\sc_2^{(i)}(t),\sc^{(i)}(t)]$. More precisely, for $i\in \{1,2\}$ use the definition \eqref{eq:i:unknowns} to remap the two sets of solutions to the same space-time domain, and then use \eqref{eq:drifter:increments} (with $i=1$) to denote their difference. As in \eqref{eq:drifter:Ni:def}, define
\begin{align*}
N(t): = \sup_{[0,t]} \norm{\delta \wsf}_{L^\infty}+  t^{-\frac 34} \sup_{[0,t]} \norm{\delta \zsf}_{L^\infty}  +  t^{-1} \sup_{[0,t]} \norm{\delta \ksf}_{L^\infty} + t^{-\frac 12}  \sup_{[0,t]} \norm{\delta \asf}_{L^\infty}
\,.
\end{align*}
Then, as in \eqref{eq:FFF:increment:2} and \eqref{eq:FFF:increment:3}, we may show that the bounds
\begin{align*}
N(t) &\leq  \tfrac{20}{21}    \sup_{[0,t]} |\delta \dot \sc|
\end{align*}
and 
\begin{align*}
 \sup_{[0,t]} |\delta \dot \sc|
\leq (1 + t^{\frac 15}) N(t)
\end{align*}
hold for all $t\in [0,\bar \eps]$, whenever $\bar \eps$ is chosen to be sufficiently small with respect to the aforementioned parameters. This shows that $N(t) = 0 = \delta \dot \sc(t)$ for all $t\in [0,\bar \eps]$. Since $\sc^{(i)}(0)=0$, it follows that $\delta \sc \equiv 0$, and thus also that $N \equiv 0$, thereby concluding the uniqueness proof.

\subsection{Proof of Theorem~\ref{thm:main:development}}
\label{sec:proof:dev:C1}
The proof of Theorem~\ref{thm:main:development} is a direct consequence of Proposition~\ref{thm:curve:determines:all}, of the contraction mapping established in Section~\ref{sec:shock:evo}, and of the uniqueness in Section~\ref{sec:uniqueness}, as described next.

The parameter $\bar \eps>0$ in item~\ref{item:5.5.i} is chosen to be possibly smaller than what is required in Proposition~\ref{thm:curve:determines:all}, as required by the estimates in Sections~\ref{sec:shock:evo} and~\ref{sec:uniqueness}. The {\em existence} of the regular shock curve $\sc$ and of the solution $(w,z,k,a) \in {\mathcal X}_{\bar \eps}$ to the azimuthal form of the Euler equations \eqref{eq:w:z:k:a}, follows from the contraction mapping in Section~\ref{sec:shock:evo}. Note that in view of \eqref{eq:shock:ODE}, the shock curve $\sc$ obeys the correct ODE, while the desired properties for $(w,z,k,a)$ follow from Proposition~\ref{thm:curve:determines:all} applied to this limiting shock curve. The {\em uniqueness} of the solution $(\sc, w,k,z,a)$ such that $\sc$ satisfies \eqref{eq:sc:ass}  and $(w,z,k,a) \in {\mathcal X}_{\bar \eps}$, is established in section~\ref{sec:uniqueness}. Taking into account Proposition~\ref{thm:curve:determines:all}, we have thus established items~\ref{item:5.5.i}, \ref{item:5.5.ii}, \ref{item:5.5.iii}, \ref{item:5.5.iv}, \ref{item:5.5.vii}, and along with the support properties for $k$ and $z$ claimed in items~\ref{item:5.5.v} and~\ref{item:5.5.vi}. 

In order to complete the proof of the theorem, it remains to establish the following: the precise bounds for $k$ near $\sc_2$ (as claimed in item~\ref{item:5.5.v}), the precise bounds for $z$ near $\sc_1$ (as claimed in item~\ref{item:5.5.vi}), the specific vorticity bounds (and its continuity across $\sc$) claimed in item~\ref{item:5.5.viii}, and the continuity of $a$, respectively the jump for $\p_\theta a$ across $\sc$, as claimed in item~\ref{item:5.5.ix}. These properties of the solution are established in  Subsections~\ref{sec:make:up:pig:1} and~\ref{sec:make:up:pig:2}, below.

\subsubsection{Improved bounds for $z$ and $k$ near $\sc_1$ respectively $\sc_2$}
\label{sec:make:up:pig:1}
The information $(w,z,k,a) \in {\mathcal X}_{\bar \eps}$ does not directly provide estimates for $z(\theta,t) $ and $k(\theta,t) $ which vanish as $\theta \to \sc_1(t)^+$, respectively $\theta \to \sc_2(t)^+$. Such bounds may however be easily obtained, as follows.

From \eqref{xland-k}, the definitions of the stopping time $\st$ and of the flow $\pt$, and the estimate \eqref{eq:kl:on:shock:L:infinity}, we obtain   
\begin{align}
\sabs{k(\theta,t) } = \sabs{k_-(\sc(\st(\theta,t)),\st(\theta,t))} \leq 40 \bb^{\frac 92} \kappa^{-3} \st(\theta,t)^{\frac 32}
\label{eq:metal:1}
\end{align}
for all $(\theta,t)  \in \mathcal{D} ^k_{\bar\eps}$. Similarly, from \eqref{kx-final}, \eqref{eq:dt:zl:kl:on:shock}, and \eqref{phixn-bound} (with $n\to \infty$) we deduce that 
\begin{align}
\sabs{\p_\theta  k(\theta,t) } 
&\leq \tfrac{4}{\kappa} \sabs{\tfrac{d}{dt} \kl (\st(\theta,t)) }  
\leq 200 \bb^{\frac 92} \kappa^{-4}  \st(\theta,t)^{\frac 12}
\label{eq:metal:2}
\end{align}
for all $(\theta,t)  \in \mathcal{D} ^k_{\bar\eps}$. Since $\st(\theta,t) \approx \frac{3}{\kappa} (\theta  - \sc_2(t))$, see e.g.~\eqref{eq:bohemian:1} below, the above two estimates give a precise order of vanishing for $k$ and $k_y$ as $y\to \sc_2(t)^+$.

Next, let us consider the behavior of $z$ near $\sc_2(t)$. For $(\theta,t)  \in \mathcal{D} ^k_{\bar\eps}$, from \eqref{xland-z}  we obtain
 \begin{align} 
 z(\theta,t)  = z( \sc(\stt(\theta,t)),\stt(\theta,t)) e^{-\frac 83 \int_{\stt(\theta,t)}^t a \circ \pst ds} +\tfrac{1}{6}  \int_{\stt(\theta,t)}^t   (c^2 k_\theta ) \circ \pst e^{-\frac 83 \int_{s}^t a \circ \pst ds'} ds
 \,.
 \label{eq:metal:2a}
 \end{align} 
Using  \eqref{eq:zl:on:shock:L:infinity},  \eqref{eq:w:boot},    \eqref{eq:a:boot}, and  \eqref{eq:metal:2}, we deduce that 
\begin{align} 
 \sabs{z( y,t)  } 
 \le 
5 \bb^{\frac 92} \kappa ^{-2} \stt(\theta,t)^{\frac{3}{2}}  + 40 \mm^2 \bb^{\frac 92} \kappa^{-4} \int_{\stt(\theta,t)}^t \st(\pst(\theta,s),s)^{\frac 12} ds \,.  \notag
\end{align} 
In order to estimate the integral term in the above estimate, we use \eqref{c1c2diffkappa} to bound  $ \tfrac{5}{2} \kappa ^{-1} (\theta - \sc_2(t))\le \st(\theta,t) \le \tfrac{7}{2} \kappa ^{-1} (\theta - \sc_2(t))$ for all $\sc_2(t) < \theta  < \sc(t)$,  for $\bar \eps$ sufficiently small. As such, it is natural to define $\gamma(s) = \pst(\theta,s)- \sc_2(s)$, and note that due to  \eqref{blob-a-tron9}, we have  
$\dot \gamma(s) = \lambda_1(\pst(\theta,s),s)-  \dot{\sc}_2(s)  \in [- \frac{\kappa}{2}, - \frac{\kappa}{4}]$. 
Hence, 
\begin{align} 
\int_{\stt(\theta,t)}^t \st(\pst(\theta,s),s)^{\frac 12} ds 
&\le 2 \kappa ^{-\frac 12} 
\int_{\stt(\theta,t)}^t \gamma(s)^{\frac 12} ds 
\leq
- 8 \kappa^{-\frac 32} \int_{\stt(\theta,t)}^t \dot \gamma(s) (\gamma(s))^{\frac 12} ds \notag \\
&= 6 \kappa^{-\frac 32} \left(\gamma(\stt(\theta,t))^{\frac 32} -  \gamma(t)^{\frac 32}  \right) 
\le 6 \kappa^{-\frac 32}  (\sc(\stt(\theta,t)) - \sc_2(\stt(\theta,t)))^ {\frac{3}{2}}  \le 2   \stt(\theta,t)^ {\frac{3}{2}}  \,.
 \notag
\end{align} 
Combining the above two inequalities we arrive at 
\begin{align} 
\sabs{ z(\theta,t) } \leq 12 \bb^{\frac 92} \kappa ^{-2} \stt(\theta,t)^{\frac{3}{2}}
\label{eq:metal:3} 
\end{align} 
for all $(y ,t) \in \mathcal{D} ^k_{\bar \eps}$. For $(\theta,t)  \in \mathcal{D} ^z_{\bar\eps}\setminus \overline{\mathcal{D} ^k_{\bar \eps}}$, the same bound as in \eqref{eq:metal:3} holds. Indeed, for $s \in [\stt(\theta,t),t]$ such that $\pst(\theta,s) \not \in  \mathcal{D} ^k_{\bar \eps}$, we have that $k_\theta (\pst(\theta,s),s) = 0$, so that the integrand in the second term in \eqref{eq:metal:2a} vanishes for such $s$. On the other hand, for $s \in [\stt(\theta,t),t]$ such that $\pst(\theta,s) \in  \mathcal{D} ^k_{\bar \eps}$ we again appeal to \eqref{eq:metal:2}, and to the fact that $\stt( \pst(\theta,s),s) = \stt(\theta,t)$. Estimate \eqref{eq:metal:3}  and the bound $ \tfrac{5}{2} \kappa ^{-1} (\theta  - \sc_1(t))\le \stt(\theta,t) \le \tfrac{7}{2} \kappa ^{-1} (\theta  - \sc_2(t))$, which holds for $\sc_1(t) < \theta  < \sc_2(t)$ and $\bar \eps$ sufficiently small, gives the rate of vanishing of $z(\theta,t) $ as $\theta \to \sc_1(t)^+$. Moreover, since $z(\sc_1(t),t)=0$ by using the definition of the derivative as the limit of finite differences, from \eqref{eq:metal:3} we immediately deduce also that 
\begin{align}
(\p_\theta  z) (\sc_1(t),t) = 0 \,.
\label{eq:metal:4}
\end{align}

\subsubsection{Bounds for the specific vorticity, the radial velocity, and its derivative}
\label{sec:make:up:pig:2}
The continuity of the radial velocity $a$ on $\TT \times [0,\eps]$ is a consequence of the construction: the continuous initial data $a_0$ (see~\eqref{eq:a0:ass}) is propagated smoothly along the characteristic flow of $\lambda_2$ (which is continuous, in fact Lipschitz continuous in space and time) in the domain $(\DD_{\bar \eps}^k)^\complement$, and in particular a limiting value for $a$ from the right side of the shock curve is obtained; these values of $a$ on the shock curve then serve as Cauchy data for the region $ \DD_{\bar \eps}^k$, using that  the flow of $\lambda_2$ is transversal to the shock curve. In detail, from \eqref{eq:xland-a:2}, the the Lipschitz regularity of $\pt^{(n)}(\theta ,\cdot)$ with respect to both $\theta $ and $t$ (see Lemma~\ref{lem:12flows} and its proof, the boundedness of $\p_t \pt^{(n)}$ follows in the same way as \eqref{phixn-bound}, since $\p_t \pt^{(n)}$ solves the same equation as $\p_\theta  \pt^{(n)}$ except with datum $0$ instead of $1$ at $(\theta,t) $), the continuity of $a_0$, and the bounds \eqref{eq:w:z:k:a:boot}, inductively imply that $a^{(n)}$ is continuous on $\TT \times [0,\eps]$, and thus so is its uniform limit $a$.  In particular, $\jump{a(\cdot,t)} = 0$.

Concerning the specific vorticity, we note that from the uniform bound \eqref{eq:w:z:k:a:boot:*} and the lower bound on $w_0$ in \eqref{eq:u0:ass:2a}, we have that the sequence of specific vorticities $\{ \varpi^{(n)} \}_{n\geq 1}$, where $\varpi^{(n)}= 4 (w^{(n)} + z^{(n)} - \p_\theta  a^{(n)}) (c^{(n)})^{-2} e^{k^{(n)}}$, is uniformly bounded in $L^\infty(\DD_{\bar \eps})$, by $300 \mm \kappa^{-2}$. Thus the weak-* limiting   vorticity $\varpi$ also lies in $L^\infty(\DD_{\bar \eps})$, and inherits this global bound. By repeating the argument in Section~\ref{sec:convergence:of:scheme}, since the right side of \eqref{eq:varpi:n} vanishes as $n\to \infty$ (when integrated against smooth test functions),  we obtain that $\varpi$   is a $L^\infty_{x,t}$ weak solution of \eqref{xland-svort} in $\DD_{\bar \eps}$. Since $(w,z) \in {\mathcal X}_{\bar \eps}$, we have that $\lambda_2$ is  Lipschitz, giving uniqueness of weak solutions to \eqref{xland-svort}, and thus $\varpi$ can be computed classically by integrating along the characteristics of $\lambda_2$ (see \eqref{eq:svort:formula} below). 

In order to obtain a sharper estimate for the limiting specific vorticity $\varpi$ we recall that from \eqref{eq:svort0:ass} that 
\begin{align}
10 \kappa^{-1} \leq \varpi_0(\theta ) \leq 28 \kappa^{-1}
\label{eq:svort:time:0}
\end{align}
for all $\theta \in \TT$. Integrating the evolution \eqref{xland-svort} along the characteristics $\pt(\theta,s)$, for $s\in [0,t]$, we obtain that 
\begin{align}
\varpi(\theta,t)  
&= \varpi_0(\pt(\theta,0)) e^{\frac 83 \int_0^t a(\pt(\theta,s),s) ds}  \notag\\
&+ \begin{cases}
\tfrac 43 \int_{\st(\theta,t)}^t e^{k(\pt(\theta,s),s)} (\p_\theta  k)(\pt(\theta,s),s) e^{\frac 83 \int_{s}^t a(\pt(\theta,s'),s') ds'} ds
\,, &  \mbox{for } (\theta,t) \in \DD_{\bar \eps}^k \\
0\,, &  \mbox{for } (\theta,t)  \in (\DD_{\bar \eps}^k)^\complement
\end{cases}
\,.
\label{eq:svort:formula}
\end{align}
Then, for all $(\theta,t) \in \DD_{\bar \eps}$, using the bounds \eqref{eq:a:boot}, \eqref{eq:k:boot}, and \eqref{eq:k:dx:boot}, we deduce that 
\begin{align}
\abs{\varpi(\theta,t) - \varpi_0(\pt(\theta,0))} \leq 3 R_7 t \abs{\varpi_0(\pt(\theta,0))} e^{3 R_7 t} + R_6 ( t^{\frac 32} -\st(\theta,t)^{\frac 32}) e^{3 R_7 t + R_5 t^{\frac 32}} \leq C t
\,.
\end{align}
Since $t \leq \bar \eps \ll 1$, it follows from the above estimate and \eqref{eq:svort:time:0} that
\begin{align}
9 \kappa^{-1} \leq \varpi(\theta,t)  \leq 30 \kappa^{-1}
\,,
\label{eq:varpi:boot}
\end{align}
for all $(\theta,t) \in \DD_{\bar \eps}$. 

The continuity of the specific vorticity across the shock curve $\sc$ follows from \eqref{eq:svort:formula}, the  continuity of $\varpi_0$ (see~\eqref{eq:svort0:ass}), the continuity of $a$  established earlier, the Lipschitz continuity of $\pt(\theta,\cdot)$ in both space and time (which holds in light of the argument in Lemma~\ref{lem:12flows} and the uniform convergence $\lambda_1^{(n)} \to \lambda_1$),   the transversality of the flow $\pt(\theta,\cdot)$ to the shock curve, the bounds \eqref{eq:w:z:k:a:boot}, and the fact that by definition $\st(\theta,t) \to t$ as $y\to\sc(t)^-$. 
 
It only remains to consider the behavior of $\p_\theta a$ near the shock curve, claimed in item~\ref{item:5.5.ix}. From \eqref{xland-svort:def} we have that 
$\p_\theta a = w + z - \tfrac 14 \varpi c^2 e^k$
and thus, using the continuity of $\varpi$ across the shock curve, for every $t\in (0,\bar \eps]$ we deduce that 
\begin{align*}
\jump{\p_\theta a}  
&= \jump{w}  + \jump{z}  - \tfrac 14 \varpi|_{(\sc(t),t)} \jump{w}  \mean{c}   \mean{e^k} + \tfrac 14 \varpi|_{(\sc(t),t)} \jump{z}  \mean{c}   \mean{e^k}   - \tfrac 14 \varpi|_{(\sc(t),t)} \mean{c^2} \jump{e^k} 
\notag\\
&=  \underbrace{\jump{w}\left( 1  - \tfrac 14 \varpi|_{(\sc(t),t)}  \mean{c}   \mean{e^k}\right)}_{=:J_{a,1} } + \underbrace{\jump{z}  + \tfrac 14 \varpi|_{(\sc(t),t)} \jump{z}  \mean{c}   \mean{e^k}   - \tfrac 14 \varpi|_{(\sc(t),t)} \mean{c^2} \jump{e^k}}_{=: J_{a,2} } \,.
\end{align*}
Using the fact that $(w,z,k,a) \in {\mathcal X}_{\bar \eps}$, the precise information on $\wb$ provided by Proposition~\ref{prop:Burgers}, that the specific vorticity satisfies \eqref{eq:varpi:boot}, and that 
the jumps in $z$ and $k$ (hence also the jump in $e^k$) satisfy~\eqref{eq:zl:and:kl:on:shock:L:infinity}, we obtain
\begin{align*}
\sabs{J_{a,2}(t)} \leq C t^{\frac 32} 
\end{align*}
and that 
\begin{align*}
& J_{a,2}(t) = \left(2 \bb^{\frac 32} t^{\frac 12} + \OO(t)\right)\left( 1 - \tfrac 18 \varpi|_{(\sc(t),t)}  \kappa + \OO(t) \right) \notag\\
\xrightarrow{\eqref{eq:varpi:boot}}
\qquad &
- 3\bb^{\frac 32} t^{\frac 12} \leq - \tfrac{11}{4} \bb^{\frac 32} t^{\frac 12} - C t \leq J_{a,2}(t) \leq - \tfrac 14 \bb^{\frac 32} t^{\frac 12} + C t \leq - \tfrac 15 \bb^{\frac 32} t^{\frac 12} 
\end{align*}
for all $t \in (0,\bar \eps]$.  By combining the above three displays we arrive at 
\begin{align*}
- 4\bb^{\frac 32} t^{\frac 12} \leq \jump{\p_\theta a}(t) \leq  - \tfrac 16 \bb^{\frac 32} t^{\frac 12}
\end{align*}
since $\bar \eps$, and hence $t$, is sufficiently small. The above estimate concludes the proof of Theorem~\ref{thm:main:development}.


\section{A precise description of the higher order singularities}
\label{sec:C2}
The goal of this section is to establish:
\begin{theorem}[Shocks, cusps, and weak discontinuities] \label{thm:C2} 
Let $\bar \eps>0$, $\sc \in C^2$, $\sc_1, \sc_2 \in C^1$, $(w,z,k,a) \in {\mathcal X}_{\bar \eps}$ be as in Theorem~\ref{thm:main:development}.
For $t\in(0,\bar\eps]$, we have the following upper bounds on higher order derivatives:
\begin{subequations}
\label{eq:thm-bounds-final} 
\begin{align} 
\sabs{w_{\theta\theta} (\theta,t) }& \les
\begin{cases}
t^{-\frac{5}{3}} ,  &\mbox{if }  \theta \leq \sc_2(t) \mbox{ or } \theta\geq \sc(t) + \frac{\kappa t}{3} \\
 t^{-\frac{5}{2}}  + \st(\theta,t)^{-\frac{1}{2}}  & \mbox{if }  \sc_2(t) < \theta < \sc(t) \\
 t^{-\frac{5}{2}}  & \mbox{if }   \sc(t) < \theta < \sc(t) + \frac{\kappa t}{3}
\end{cases} \,, \label{thm-wyy-bound-final} \\
\sabs{z_{\theta\theta} (\theta,t) }& \les
\begin{cases}
\st(\theta,t)^{-{\frac{1}{2}} } ,  &\mbox{if }    \sc_2(t) < \theta < \sc(t)\\
\stt(\theta,t)^{-{\frac{1}{2}} } & \mbox{if }     \sc_1(t) < \theta \leq \sc_2(t)
\end{cases} \,, \label{thm-zyy-bound-final} \\
\sabs{k_{\theta\theta} (\theta,t) } & \les 
\st(\theta,t) ^{-{\frac{1}{2}} }  \qquad \mbox{if }   \sc_2(t) < \theta < \sc(t)\,,
 \label{thm-kyy-bound1} \\
 \sabs{a_{\theta\theta} (\theta,t) }& \les
\begin{cases}
t^{-\frac{2}{3}} ,  &\mbox{if } y \leq \sc_2(t) \mbox{ or } \theta \geq \sc(t) + \frac{\kappa t}{3} \\
 t^{-1}  + t\st(\theta,t)^{-\frac{1}{2}}  & \mbox{if }  \sc_2(t) < \theta < \sc(t) \\
 t^{-1}  & \mbox{if }    \sc(t) < \theta < \sc(t) + \frac{\kappa t}{3}
\end{cases} \,, \label{thm-ayy-bound-final} \\
\sabs{\varpi_{\theta} (\theta,t) } &\les 1 + {\bf 1}_{(\theta,t)  \in \mathcal{D} ^k_{\bar \eps}}  \left( t - \st(\theta,t)\right) \st(\theta,t)^{-\frac 12} \,,\label{thm-varpiyy-bound-final} 
\end{align} 
\end{subequations}
where the implicit constants in $\les$  only depend on $\mm$,  cf.~\eqref{eq:second:order:boot}--\eqref{eq:N4:N7}, and \eqref{eq:dy:varpi:fin}.
In particular, for every $t>0$, the first and second derivatives of $(w,z,k,a)$ are bounded on both $\sc(t)^-$ and $\sc(t)^+$. 

Moreover,  $\sc_1(t)$ and $\sc_2(t)$ are $C^{1}$ smooth curves of weak characteristic discontinuities in the  following precise sense:
\begin{enumerate} 
\item The spacetime curve $\sc_2(t)$ is
a {\em weak contact discontinuity} with the property that second derivatives of $(w,z,k,a)$ blow up on $\sc_2^+(t)$; in particular,  for generic constants $c$ and $ C$, 
\begin{align}
c (\theta - \sc_2(t))^{-\frac{1}{2}}  \le  w_{\theta\theta} (\theta,t)  ,  - z_{\theta\theta} (\theta,t) , k_{\theta\theta} (\theta,t)  ,- t^{-1} a_{\theta\theta} (\theta,t)    \le C (\theta - \sc_2(t))^{-\frac{1}{2}}  \, 
\label{eq:s2:dyy:bounds}
\end{align} 
for $ \sc_2(t)<\theta$ and $\theta-\sc_2(t) \ll t$. The sum $w_{\theta\theta} +z_{\theta\theta} $ remains bounded on $\sc_2(t)$ and
\begin{align} 
\sabs{w_{\theta\theta} (\theta,t) +z_{\theta\theta} (\theta,t) } \les t^ {-\frac{1}{2}}  \,,
\label{eq:s2:dyy:magic}
\end{align} 
for $ \sc_2(t)<\theta<\sc_2(t) + \frac{\kappa t}{6}$. Lastly, the functions $(w_\theta,z_\theta,k_\theta,a_\theta)$ form   $C^ {\frac{1}{2}} $-cusps along $\sc_2(t)^+$.
\item The spacetime curve $\sc_1(t)$ is a weak discontinuity such that only $z_{\theta\theta} $ blows up on $\sc_1(t)^+$, 
\begin{align}
c (\theta - \sc_1(t))^{-\frac{1}{2}}  \le - z_{\theta\theta} (\theta,t)   \le C (\theta - \sc_1(t))^{-\frac{1}{2} }   \,,
\label{eq:s1:dyy:bounds}
\end{align} 
for $ \sc_1(t)<\theta$ with $\theta-\sc_1(t) \ll t$, 
while second derivatives of $(w,k,z)$ remain bounded in terms of inverse powers of $t$. The function $z_{\theta}$ forms a  $C^ {\frac{1}{2}} $-cusp along $\sc_1(t)^+$.
\end{enumerate} 
\end{theorem}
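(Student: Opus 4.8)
The plan is to establish Theorem~\ref{thm:C2} by a bootstrap/continuation argument for the second derivatives, mirroring the structure of Section~\ref{sec:development}, but now one derivative higher. First I would differentiate once more the key transport identities already derived in Section~\ref{sec:transport}: starting from \eqref{prelim-wx} for $w_\theta$, from \eqref{zx-Final}--\eqref{Fzx} for $z_\theta$, from \eqref{kx-final} for $k_\theta$, and from \eqref{dxa-lag} for $a_\theta$, I would produce closed identities for $w_{\theta\theta}$, $z_{\theta\theta}$, $k_{\theta\theta}$, $a_{\theta\theta}$, and $\varpi_\theta$. The crucial structural point (already emphasized in {\bf Step~5} of the outline via the good unknowns $q^w, q^z$ of \eqref{eq:lazy:cat}) is that the naively-expected forcing term $\tfrac{1}{24}(w-z)^2 k_{\theta\theta}$ does \emph{not} actually appear: the identities \eqref{eq:twerking:3} show that only $k_\theta$ enters the $q^w, q^z$ evolutions. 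So I would work with $q^w = w_\theta - \tfrac14 c k_\theta$ and $q^z = z_\theta + \tfrac14 c k_\theta$ rather than $w_\theta, z_\theta$ directly, differentiate \eqref{eq:twerking:3} in $\theta$, compose with the respective characteristic flows $\eta$, $\pst$, and integrate in time. For $k_{\theta\theta}$ I would simply differentiate \eqref{kx-final} and use the $C^2$ regularity of $\sc$ together with Corollary~\ref{cor:jumps:abstract} (which already controls $\tfrac{d^2}{dt^2}\zl$ and $\tfrac{d^2}{dt^2}\kl$). The quantity $\varpi_\theta$ is controlled by differentiating \eqref{eq:svort:formula} and using $\p_\theta a = w + z - \tfrac14 \varpi c^2 e^k$ to transfer the $\varpi_\theta$ bound into an $a_{\theta\theta}$ bound, exactly as sketched in {\bf Step~4}.

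The second main step is the \emph{regularization estimate}, which is the analogue of \eqref{eq:twerking:2} at the level of second derivatives: when a forcing term which is $C^{1/2}$ across $\sc_2$ (namely $k_\theta \sim \st^{1/2}$ by \eqref{eq:metal:2}) is integrated along a flow transversal to $\sc_2$ --- i.e.\ the flow of $\lambda_3$ for $q^w$, or the flow of $\lambda_1$ for $q^z$, both of which have wave-speeds differing from $\lambda_2$ by an $\OO(\kappa)$ amount --- one gains a half power, producing a \emph{bounded} (in inverse powers of $t$) contribution. Quantitatively, I would invoke Lemma~\ref{lem:Steve:needs:this} (transversal time-integrals of $\p_\theta \wb$ and $\p_\theta^2 \wb$) together with the sharp vanishing rates $\st(\theta,t) \approx \tfrac{3}{\kappa}(\theta - \sc_2(t))$ and $\stt(\theta,t) \approx \tfrac{3}{\kappa}(\theta - \sc_1(t))$ from \eqref{c1c2diffkappa}, and with the first/second derivative bounds for $\wb$ near the pre-shock from \eqref{thegoodstuff}, which in turn rest on the $C^3$ pre-shock expansion \eqref{preshock-derivatives}. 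The integrals that need to be run are of the form $\int_{\st(\theta,t)}^t \bigl(\theta-\sc_2(t) - \lambda_i(t-s)\bigr)^{-1/2}\,ds \sim \st(\theta,t)^{1/2}$ for the $k_\theta$-type forcing and $\int \st^{-1/2} ds$ for the terms coming from $w_{\theta\theta}$, $z_{\theta\theta}$ themselves; these are handled by a Grönwall/continuation argument for the bootstrap functions (I will call them $N_j(t)$, measuring $\sup$ of appropriately-weighted second derivatives on each of the four spacetime regions), closing the bounds \eqref{eq:thm-bounds-final}. The part of the analysis away from $\sc_1, \sc_2$ is easier: in $(\DD^k_{\bar\eps})^\complement$ one has $k \equiv 0$, so $w$ solves a perturbed Burgers equation whose second-derivative behavior is read off from \eqref{thegoodstuff2} and \eqref{eq:AC:DC:defcon}, giving the $t^{-5/3}$ bound; and in the thin sliver $\sc(t) < \theta < \sc(t) + \tfrac{\kappa t}{3}$ one uses $\p_x^2\eta \sim t^{-1}$-type bounds inherited from $\etab$ (cf.\ \eqref{thegoodstuff2}) to get $t^{-5/2}$.

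The third step is the \emph{lower bounds} \eqref{eq:s2:dyy:bounds} and \eqref{eq:s1:dyy:bounds} that make $\sc_1, \sc_2$ genuine singular surfaces. Here I would keep the explicit leading term in each identity rather than just estimating it. For $k_{\theta\theta}$: from \eqref{kx-final}, $k_{\theta\theta}(\theta,t)$ equals (up to bounded factors) $\tfrac{d}{dt}\kl(\st(\theta,t)) \cdot \p_s\st(\theta,t) \cdot(\ldots)$ plus bounded terms, and since $\tfrac{d}{dt}\kl(s) \approx \tfrac{48\bb^{9/2}}{\kappa^3} s^{1/2}$ by \eqref{eq:dt:zl:kl:on:shock} and its second derivative blows up like $s^{-1/2}$ by \eqref{eq:dt:dt:zl:kl:on:shock}, differentiating once more in $\theta$ produces the $(\theta - \sc_2(t))^{-1/2}$ behavior with a definite sign. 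The $w_{\theta\theta}$ and $z_{\theta\theta}$ lower bounds near $\sc_2$ follow because their singular part is forced precisely by $c_\theta k_\theta$ and by the $k_\theta$-carrying Cauchy data on the shock, with matching sign; the $a_{\theta\theta}$ bound follows from $\p_\theta a = w + z - \tfrac14 \varpi c^2 e^k$ and the fact that $\varpi_\theta \sim (\theta-\sc_2)^{1/2}$ is the same order as $k_\theta$, contributing at order $(\theta-\sc_2)^{-1/2}$ only after one more differentiation, weighted by a factor $t$ (explaining the $t^{-1}a_{\theta\theta}$ normalization in \eqref{eq:s2:dyy:bounds}). The cancellation \eqref{eq:s2:dyy:magic}, i.e.\ $w_{\theta\theta} + z_{\theta\theta}$ bounded, is exactly $\tfrac{2}{3}\p_\theta^2 u_\theta = q^w_\theta + q^z_\theta$ bounded, which is immediate once the $q^w, q^z$ identities are in hand since the $k_\theta$ contributions to $q^w$ and $q^z$ enter with opposite signs by \eqref{eq:lazy:cat}. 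For $\sc_1$ the argument is the same but simpler: $k \equiv 0$ to the right of $\sc_2 \supset \sc_1$, so $z$ alone carries the $\tfrac32$-Hölder Cauchy data \eqref{newdata1} off the shock, $z_{\theta\theta}$ picks up $\stt^{-1/2} \sim (\theta-\sc_1)^{-1/2}$ with a sign from \eqref{eq:dt:dt:zl:kl:on:shock}, while $w, k$ see $z$ only through transversal-flow forcing and hence gain a derivative. I expect the main obstacle to be the \emph{bookkeeping of the mutually-coupled second-derivative bootstrap}: $w_{\theta\theta}$, $z_{\theta\theta}$, $a_{\theta\theta}$, and $\varpi_\theta$ all force each other (through $\p_\theta^2(a w)$, through $c^2 k_\theta$ with $c = \tfrac12(w-z)$, and through $a_\theta = w + z - \tfrac14\varpi c^2 e^k$), and several of these objects blow up severely as $(\theta,t)\to(0,0)$; making the weighted norms $N_j$ consistent across the four regions and across the shock, and verifying that the region-crossing Cauchy data (supplied by Corollary~\ref{cor:jumps:abstract}) matches the weights, is where the real work lies. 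This is precisely where the $C^3$ sense of the pre-shock expansion \eqref{preshock-derivatives}, and not merely \eqref{w-blowup}, is indispensable.
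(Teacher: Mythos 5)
Your overall plan matches the paper's: set up a second-derivative bootstrap across the four spacetime regions, pass to the good unknowns $q^w$ and $q^z$ of \eqref{eq:lazy:cat} to avoid the apparent $k_{\theta\theta}$ loss, exploit transversality of the $\lambda_1$ and $\lambda_3$ flows to the $\lambda_2$-characteristic $\sc_2$ (Lemma~\ref{lem:Steve:needs:this}) to trade a power of $\st$ for a power of time, read off $\varpi_\theta$ from \eqref{xland-svort} and convert it into an $a_{\theta\theta}$ bound via \eqref{xland-svort:def}, use Corollary~\ref{cor:jumps:abstract} for the shock-side Cauchy data, keep the explicit leading term to extract signed lower bounds, and obtain \eqref{eq:s2:dyy:magic} from the cancellation $q^w + q^z = w_\theta + z_\theta$. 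One remark on implementation: the paper gets $\p_\theta^2\pt$ and $\p_\theta^2\pst$ via closed-form expressions \eqref{dyphi}--\eqref{dyypsi} built from the conservation structure of the sound speed, which sidesteps circularity with the very quantities being estimated; the paper notes the alternative you implicitly describe (differentiate the flow ODEs twice and feed back into the bootstrap) also works, so this is a streamlining rather than a gap.

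However, your description of the $a_{\theta\theta}$ mechanism near $\sc_2$ contains a genuine error. You assert that $\varpi_\theta \sim (\theta-\sc_2(t))^{1/2}$, i.e.\ that $\varpi_\theta$ is bounded and vanishing at $\sc_2^+$, and that $a_{\theta\theta}$ only blows up ``after one more differentiation.'' That is backwards. Because $\varpi$ is transported at speed $\lambda_2$ --- the \emph{same} wave speed as $k$ --- there is no transversal regularization for the specific vorticity: the forcing $\tfrac43 e^k k_\theta$ in \eqref{xland-svort} is integrated along $\pt$, which is tangent to $\sc_2$, so $\varpi_\theta$ picks up $\int_{\st(\theta,t)}^t (e^k k_{\theta\theta})\circ\pt\,ds \sim (t-\st)\,\st^{-1/2}$ with no gain (see \eqref{eq:dy:varpi}--\eqref{eq:useful:later}, and \eqref{thm-varpiyy-bound-final}). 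In other words $\varpi$ is only $C^{1/2}$ across $\sc_2$, not $C^{1,3/2}$. Since $a_{\theta\theta}$ contains $\varpi_\theta$ directly (differentiating $a_\theta = w + z - \tfrac14 \varpi c^2 e^{-k}$ once), the $t\,(\theta-\sc_2(t))^{-1/2}$ blowup of $a_{\theta\theta}$ is inherited immediately, not after a further differentiation; with your claimed rate for $\varpi_\theta$ you would conclude $a_{\theta\theta}$ is bounded near $\sc_2^+$, contradicting \eqref{eq:s2:dyy:bounds}. This distinction --- that the regularization effect \emph{fails} precisely for the quantities that share the $\lambda_2$ wave speed, and that this failure is what produces the cusp in $a_\theta$ --- is central to why $\sc_2$ is a singular surface for the radial velocity at all.
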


\begin{figure}[htp]
    \centering
    \includegraphics[width=.9 \textwidth]{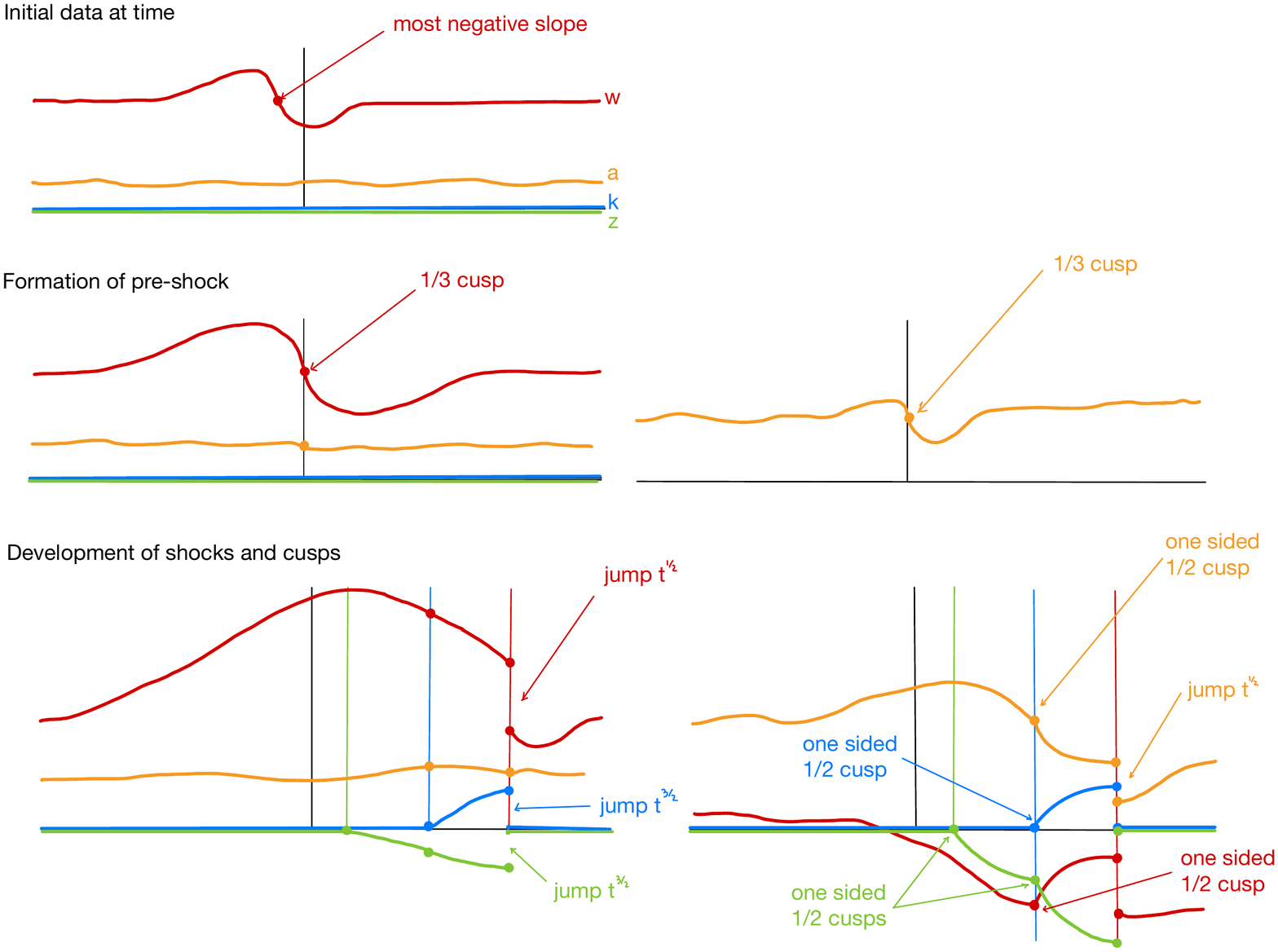}
    \caption{\footnotesize Schematic of the tuple $(w,z,k,a)$ at $t\in (0,\bar \eps]$. On the left, we have sketched $w$ in red, $z$ in green, $k$ in blue, and $a$ in orange. On the right, we have sketched the derivatives $w_\theta$ in red, $z_\theta$ in green, $k_\theta$ in blue, and $a_\theta$ in orange.}
    \label{fig:galaxy}
\end{figure}

The proof of Theorem~\ref{thm:C2} is the subject of the remainder of this section: in Section~\ref{sec:dyy:bootstraps} we give the bootstrap assumptions which yield~\eqref{eq:thm-bounds-final}, Sections~\ref{sec:dyy:flows}--\ref{sec:zyy} are dedicated to closing these bootstraps, while Sections~\ref{sec:dyy:lower:bound} and~\ref{sec:dy:Holder} are dedicated to the analysis of the weak singularities emerging on $\sc_1$ and $\sc_2$. The summary of the proof is given in Section~\ref{sec:proof:thm:C2}.

We note that the bounds for the second order derivatives of $(w,z,k,a)$ claimed in Theorem~\ref{thm:C2} greatly differ according to the location of the space-time point $(\theta,t) $ where they are evaluated: while {\em far away} from $\sc_1, \sc_2, \sc$ all information concerning $w$ and $a$ is propagated smoothly from the initial datum, for $(\theta,t) $ near the space-time curves $\sc_1,\sc_2, \sc$, obtaining upper bounds and {\em matching lower bounds} for second derivatives is a delicate matter, which requires a region-by-region analysis. Accordingly, we shall consider three separate cases: 
\begin{itemize}
\item $(\theta,t)  \in \mathcal{D} ^k_{\bar \eps}$, the region between $\sc_2$ and $\sc$. Here, for all $t>0$ the second derivatives of $(w,z,k,a)$ are bounded as $\theta \to \sc(t)^-$, but they all blow up as $\theta \to \sc_2(t)^+$, due to the presence of the entropy.
\item $(\theta,t)  \in \mathcal{D} ^z_{\bar \eps} \setminus  \overline{\mathcal{D} ^k_{\bar \eps} } $, the region between $\sc_1$ and $\sc_2$. In this region $k \equiv 0$, and this implies that the second derivatives of $w$ remain bounded as  $\theta \to \sc_2(t)^-$; nonetheless, the second derivative of $a$ still develops a singularity here, highlighting the two-dimensional nature of Euler in azimuthal symmetry model. On the other hand, approaching $\sc_1(t)$ from the right side, only the second derivative of $z$ develops a singularity. 
\item $(\theta,t)  \in \mathcal{D}_{\bar \eps} \setminus \overline{\mathcal{D} ^z_{\bar \eps}}$, the region which is either to the left of $\sc_1$ or the the right of $\sc$. In this region we have that $z \equiv 0$ and $k\equiv 0$, and thus the analysis reduces  to the study of $w$ and $a$ alone. We show that for all $t>0$, these quantities have bounded second derivatives, uniformly in this region, essentially because they are determined solely in terms of the initial data. 
\end{itemize}
\begin{remark}
\label{rem:FU:1}
Naturally, the further away $(\theta,t) $ are from $\sc_1(t)$ (to the left) or $\sc(t)$ (to the right), the further away we are from any singular behavior, and so the bounds for $\p_\theta ^2 w$ and $\p_\theta ^2 a$ become better. As such, for simplicity of the presentation we only give proofs of estimates for second derivatives at points $(\theta,t)  \in \mathcal{D}_{\bar \eps} \setminus \overline{\mathcal{D} ^z_{\bar \eps}}$ which are {\em close} to $\sc_1$ or $\sc$: either $\sc_1(t) - \bar \eps^{\frac 12} \leq \theta \leq \sc_1(t)$, or $\sc(t) < \theta < \sc(t) + \bar \eps^{\frac 12}$. In particular, the closeness considered is $t$-independent, and thus on the complement of this region it is not hard to establish bounds for $\p_\theta ^2 (\theta,t) $ and $\p_\theta ^2 a(\theta,t) $ which are uniform in time for $t\in [0,\bar \eps]$; these bounds only depend on $\bar \eps$, which is a fixed parameter. 
\end{remark}
\begin{remark} 
\label{rem:FU:3}
By the uniform convergence of our iteration scheme and \eqref{c1c2diffkappa}, we have that
\begin{subequations} 
\label{c1c2diffkappa2}
\begin{align} 
\pst(\theta,s)  =  \tfrac{1}{3}  \kappa s +   (\theta -  \tfrac{1}{3}  k t) + \OO(t^ {\frac{4}{3}} ) =  \tfrac{1}{3}  \kappa s +   (\theta -  \sc_1(t)) + \OO(t^ {\frac{4}{3}} )\,, \label{psi-line}
\ \ (\theta,t)  \in \mathcal{D}^{z}_{\bar \eps}\,,  \\
\pt (\theta,s)   =  \tfrac{2}{3}  \kappa s +   (\theta -  \tfrac{2}{3}  k t) + \OO(t^ {\frac{4}{3}} ) =  \tfrac{2}{3}  \kappa s +   (\theta -  \sc_2(t)) + \OO(t^ {\frac{4}{3}} )\,,  \label{phi-line}
\ \ (\theta,t)  \in \mathcal{D}^{z}_{\bar \eps}\,.
\end{align} 
\end{subequations} 
\end{remark} 

\begin{remark}[\bf Bounds on wave speeds $1$ and $2$]
\label{rem:FU:2}
Recall that $\pt$ and $\pst$ are the flows of the wave speeds $\lambda_2$ and $\lambda_1$, which are the identity at time $t$. Throughout this section we shall use the following fact: for all $t\in [0,\bar \eps]$, and all $y \in [\sc_1(t) - \bar \eps^{\frac 12}, \sc(t) + \bar \eps^{\frac 12}]$, we have
\begin{subequations}
\label{eq:FU:2}
\begin{align}
 \sabs{\p_s \pt(\theta,s) - \tfrac{2 \kappa}{3}} &=  \sabs{\lambda_2(\pt(\theta,s),s) - \tfrac{2 \kappa}{3}} \leq 4 \bb |\pt(\theta,s) - \sc(s)|^{\frac 13} + 4 \bb^{\frac 32} s^{\frac 12} \label{eq:ps:pt:bnd} \\
  \sabs{\p_s \pst(\theta,s) - \tfrac{\kappa}{3}} &= \sabs{\lambda_1(\pst(\theta,s),s) - \tfrac{\kappa}{3}} \leq 4 \bb |\pst(\theta,s) - \sc(s)|^{\frac 13} + 4 \bb^{\frac 32} s^{\frac 12} \label{eq:ps:pst:bnd}
\end{align}
\end{subequations}
for all $s\in [0,t]$, where $C = C(\kappa,\bb,\cc,\mm)>0$ is a constant. The proofs of \eqref{eq:ps:pt:bnd} and \eqref{eq:ps:pst:bnd} are identical, and rely on the fact that $z(\cdot,s) = \OO(s^{\frac 32})$, and that for $\bar y \in \{\pt(\theta,s),\pst(\theta,s)\}$ we have 
\begin{align*}
|\kappa - w(\bar \theta,s)| 
&\leq |\kappa - \wb(\bar \theta,s)| + R_1 s \notag\\
&\leq |\kappa - w_0(\etab^{-1}(\bar \theta,s))| + R_1 s \notag\\
&\leq 2 \bb |\etab^{-1}(\bar \theta,s)|^{\frac 13} + R_1 s \notag\\
&\leq 3 \bb^{\frac 32} s^{\frac 12} + 4 \bb |\bar \theta - \sc(s)|^{\frac 13} + R_1 s
\end{align*}
The aforementioned restriction on $\theta$ not being too far to the left of $\sc_1(t)$ or too far to the right of $\sc(t)$ was used in the third inequality above, because in light of \eqref{eq:u0:ass} this allows us to bound $|w_0(x) - \kappa| \leq 2 \bb |x|^{\frac 13}$, since $x = \etab^{-1}(\bar \theta,s)$ satisfies $|x| \leq \bar \eps^{\frac 14} \ll 1$. 
Note that a direct consequence of \eqref{eq:ps:pt:bnd}--\eqref{eq:ps:pst:bnd} and \eqref{eq:sc:ass}, we have that
\begin{align}
\label{eq:distance:curves}
\sabs{\sc(t) - \sc_2(t) - \tfrac{\kappa t}{3}} \leq  C t^{\frac 43}
\qquad\mbox{and} \qquad
\sabs{\sc_2(t) - \sc_1(t) - \tfrac{\kappa t}{3}} \leq  C t^{\frac 43}
\end{align}
holds uniformly for all $t\in [0,\bar \eps]$, for a suitable constant $C = C (\kappa,\bb,\cc,\mm)>0$.
\end{remark}

\subsection{Second derivative bootstraps}
\label{sec:dyy:bootstraps}
The core of the proof of Theorem~\ref{thm:C2} is to obtain suitable second derivative estimates for the unknowns $(w,z,k,a)$, and on the first derivative of $\varpi$, consistent with \eqref{eq:thm-bounds-final}. We achieve this by postulating a number of {\em bootstrap bounds} --- see \eqref{eq:second:order:boot}, \eqref{eq:second:order:boot2}, \eqref{eq:second:order:boot3} below --- and then show that these same bounds hold with a constant which is better by a factor of $2$. Note that the $\varpi_{\theta} $ and $a_{\theta\theta} $ estimates are  direct consequences of these bootstrap bounds, see Lemmas~\ref{lem:varpi:y} and \ref{lem:ayy}, they are not part of the bootstraps themselves.
Rigorously, the  bounds \eqref{eq:second:order:boot}, \eqref{eq:second:order:boot2}, and \eqref{eq:second:order:boot3}  need to be established iteratively for the sequence of approximations $(w^{(n)},z^{(n)},k^{(n)})$ which were considered in Section~\ref{sec:construction:iteration}; then, these estimates  hold for the unique limiting solution $(w,z,k)$ by passing $n\to \infty$.  When $n=1$ the bounds \eqref{eq:second:order:boot}, \eqref{eq:second:order:boot2}, and \eqref{eq:second:order:boot3} are trivially seen to hold in view of the definition given in \eqref{wzka1}. Then, assuming the bootstraps bounds hold for $(w^{(n)},z^{(n)},k^{(n)})$, the analysis in Sections~\ref{sec:dyy:flows}--\ref{sec:zyy} below,  shows that they hold for the next iterate $(w^{(n+1)},z^{(n+1)},k^{(n+1)})$ defined in Section~\ref{sec:construction:iteration}, and that they in fact hold with a better constant. In the proof in this section, instead of carrying around the super-indices $\cdot^{(n)}$ and $\cdot^{(n+1)}$ (as was done in Section~\ref{sec:construction:iteration}), we write the proof as if we had already passed $n \to \infty$, and work directly with the limiting solution. This abuse of notation is justified as described above in this paragraph.

\subsubsection{Bootstraps for the cone   $\mathcal{D} ^k_{\bar \eps}$}
For all $(\theta,t)  \in \DD^k_{\bar \eps}$, we suppose that
\begin{subequations}
\label{eq:second:order:boot}
\begin{align}
 \abs{ \p_{\theta}^2 w (\theta,t)  - \p_{\theta}^2 \wb  (\theta,t) } &\leq  M_1\bigl(\st(\theta,t)^{-\frac 12} + t^{-2}\bigr) \label{eq:w:dxx:boot} \\
\abs{ \p_{\theta}^2 z (\theta,t)   } &\leq M_2 \st(\theta,t)^{-\frac 12}  \label{eq:z:dxx:boot}  \\
\abs{\p_{\theta}^2 k (\theta,t)  } &\leq M_3 \st(\theta,t)^{-\frac 12}  
\label{eq:k:dxx:boot}  
\,,
\end{align}
\end{subequations}
where 
\begin{align} 
\label{eq:M1:M4}
M_1 = 10 \mm^4 \,, \qquad
M_2 = 10 \mm^3 \,,\qquad
M_3 = 2 \mm^2  \,.
\end{align}

\subsubsection{Bootstraps for the cone  $\mathcal{D} ^z_{\bar \eps} \setminus  \overline{\mathcal{D} ^k_{\bar \eps} } $}
\begin{subequations}
\label{eq:second:order:boot2}
For all $(\theta,t)  \in \mathcal{D} ^z_{\bar \eps} \setminus  \overline{\mathcal{D} ^k_{\bar \eps} } $, 
\begin{align}
 \abs{ \p_{\theta}^2 w (\theta,t)  - \p_{\theta}^2 \wb  (\theta,t) } 
 &\leq  N_1 t^{-\frac{2}{3}} \label{eq:w:dxx:boot2} \\
\abs{ \p_{\theta}^2 z (\theta,t)   } &\leq N_2 \stt(\theta,t)^{-\frac 12}  \label{eq:z:dxx:boot2} 
\,
\end{align}
\end{subequations}
where 
\begin{align}
\label{eq:N1:N3}
 N_1 = 5 \mm^4\,, \qquad 
 N_2 = 8 \mm^3\,.
\end{align}

\subsubsection{Bootstraps for  $\mathcal{D}_{\bar \eps} \setminus \mathcal{D} ^z_{\bar \eps}$}
For all $(\theta,t)  \in \mathcal{D}_{\bar \eps} \setminus \overline{\mathcal{D} ^z_{\bar \eps}}$. 
\begin{subequations} 
\label{eq:second:order:boot3}
\begin{align}
 \abs{ \p_{\theta}^2 w (\theta,t)  - \p_{\theta}^2 \wb  (\theta,t) }&
\le  \begin{cases}
N_4 t^{-\frac{2}{3}} ,  &\mbox{if } \theta \le \sc_1(t) \mbox{ or } \theta \geq \sc(t) + \frac{\kappa t}{3}\\
N_5 t^{-2}, & \mbox{if } \sc(t) < \theta <  \sc(t) + \frac{\kappa t}{3} \,, 
\end{cases}  \label{eq:w:dxx:boot3} 
\end{align}
\end{subequations}
where 
\begin{align}
\label{eq:N4:N7}
N_4 = 5 \mm^4\,, \qquad 
N_5 = 10 \mm^4\,.
\end{align}

\subsubsection{Bounds for $\varpi_{\theta} $ and $a_{\theta\theta} $}
We first show that the bootstrap for the second derivative of $k$ implies a good estimate for the derivative for the specific vorticity. 
\begin{lemma}\label{lem:varpi:y}  
Assume that $(w,z,k,a) \in {\mathcal X}_{\bar \eps}$ is such that \eqref{eq:k:dxx:boot} holds. 
Then, for all $(\theta,t)  \in \mathcal{D} _{\bar\eps}$,  we have
\begin{align} 
\label{eq:dy:varpi:fin}
 \sabs{\varpi_{\theta}(\theta,t) }& \leq   2\mm +  {\bf 1}_{(\theta,t)  \in \mathcal{D} ^k_{\bar \eps}} 4 \mm^2  \left( t - \st(\theta,t)\right) \st(\theta,t)^{-\frac 12} \,.  
\end{align} 
\end{lemma}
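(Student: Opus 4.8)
The identity relating $\varpi$ to the other unknowns comes from the definition \eqref{xland-svort:def}, namely $\varpi = 4(w+z - \p_\theta a) c^{-2} e^k$, together with the transport equation \eqref{xland-svort}. The idea is to integrate \eqref{xland-svort} along the $\lambda_2$-characteristics $\pt(\theta,\cdot)$, obtaining the representation formula \eqref{eq:svort:formula}:
\begin{align*}
\varpi(\theta,t) = \varpi_0(\pt(\theta,0)) e^{\frac 83 \int_0^t a \circ \pt\, ds} + {\bf 1}_{(\theta,t)\in \DD^k_{\bar \eps}} \tfrac 43 \int_{\st(\theta,t)}^t e^{k\circ \pt}(\p_\theta k)\circ \pt\; e^{\frac 83 \int_s^t a \circ \pt\, ds'}\, ds\,,
\end{align*}
where in the region $(\DD^k_{\bar \eps})^\complement$ the forcing term involving $\p_\theta k$ vanishes since $k\equiv 0$ there, and so $\varpi$ is simply $\varpi_0$ transported along $\pt$, amplified by the bounded integrating factor. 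Differentiating this representation formula in $\theta$ is the core of the computation; one must differentiate three ingredients: the initial datum $\varpi_0$ composed with $\pt(\cdot,0)$, the integrating factors $e^{\frac 83 \int a \circ \pt}$, and — crucially — the forcing integral $\int_{\st}^t e^{k\circ\pt}(\p_\theta k)\circ\pt \; e^{\frac 83 \int a}\, ds$.

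First I would handle the region $(\theta,t)\in \mathcal{D}_{\bar \eps}\setminus \overline{\DD^k_{\bar \eps}}$, where $\varpi(\theta,t) = \varpi_0(\pt(\theta,0)) e^{\frac 83 \int_0^t a \circ \pt\, ds}$. Differentiating gives $\varpi_\theta = \varpi_0'(\pt(\theta,0))\,\p_\theta \pt(\theta,0)\, e^{\frac 83 \int a} + \varpi_0(\pt(\theta,0)) e^{\frac 83 \int a}\; \tfrac 83 \int_0^t (\p_\theta a)\circ\pt\;\p_\theta\pt\, ds$. Using the bounds $|\varpi_0'|\le \mm$ from \eqref{eq:svort0:ass}, $|\p_\theta \pt|\le 1 + Ct^{1/3}$ from \eqref{phixn-bound}, $|a|\le R_7 t$ and $|\p_\theta a|\le R_7$ from \eqref{eq:a:boot}, and the fact that $e^{\frac 83 \int_0^t a \circ \pt} \le 1 + Ct$, one obtains $|\varpi_\theta(\theta,t)| \le \mm(1 + Ct^{1/3}) + \varpi_0 \cdot (1+Ct)\cdot C t \le 2\mm$ for $\bar \eps$ sufficiently small, which is the claimed bound in this region (where the indicator function is zero).

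The main obstacle is the region $(\theta,t)\in \DD^k_{\bar \eps}$, where the forcing integral contributes. Differentiating $\tfrac 43 \int_{\st(\theta,t)}^t e^{k\circ\pt}(\p_\theta k)\circ\pt\; e^{\frac 83 \int_s^t a\circ\pt\, ds'}\, ds$ in $\theta$ produces: (i) a boundary term at the lower limit $s = \st(\theta,t)$, multiplied by $\p_\theta \st(\theta,t)$ — but at $s = \st(\theta,t)$ the point $\pt(\theta,\st)$ lies on the shock curve where $k$ has a jump but $\p_\theta k$ along the $\lambda_2$-flow is controlled by $\dot\kl/(\dot\sc - \lambda_2)$, so this boundary term is $\OO(\st^{1/2})$ times $\p_\theta \st$; (ii) the interior term where the $\theta$-derivative falls on $(\p_\theta k)\circ\pt$, giving $(\p^2_\theta k)\circ\pt \cdot \p_\theta\pt$, which is precisely where the bootstrap \eqref{eq:k:dxx:boot} $|\p^2_\theta k|\lesssim M_3 \st(\cdot)^{-1/2}$ enters; and (iii) terms where the derivative falls on $e^{k\circ\pt}$ or on the integrating factor $e^{\frac 83 \int a}$, which are lower order since $k = \OO(s^{3/2})$ and $\p_\theta k = \OO(s^{1/2})$. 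For term (ii), one needs to control $\int_{\st(\theta,t)}^t |\p^2_\theta k(\pt(\theta,s),s)|\,ds \lesssim M_3 \int_{\st(\theta,t)}^t \st(\pt(\theta,s),s)^{-1/2}\, ds$; since $\st(\pt(\theta,s),s) = \st(\theta,t)$ is constant along the $\lambda_2$-flow (the flow $\pt$ does not change which shock-intersection time a point has), this integral is simply $(t-\st(\theta,t))\,\st(\theta,t)^{-1/2}$, yielding exactly the $4\mm^2(t - \st(\theta,t))\st(\theta,t)^{-1/2}$ contribution in \eqref{eq:dy:varpi:fin} once the constants $M_3 = 2\mm^2$ and the bound on $c^2$ are inserted. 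Combining this with the $2\mm$ contribution from differentiating the $\varpi_0\circ\pt$ term and the integrating factors (estimated as in the previous region), and noting $e^{k\circ\pt}, e^{\frac 83 \int a} \le 1 + Ct$, gives the desired estimate \eqref{eq:dy:varpi:fin} after taking $\bar \eps$ small enough to absorb the $Ct$ corrections. The one subtlety to verify carefully is that $\p_\theta \st(\theta,t)$ is bounded uniformly — this follows from the transversality estimate \eqref{blob-a-tron99} (the $\lambda_2$-flow crosses $\sc$ at an angle bounded below by $\tfrac 12 \kappa$) together with the regularity of $\sc$, so that differentiating the defining relation $\pt(\theta,\st(\theta,t)) = \sc(\st(\theta,t))$ gives $|\p_\theta \st| \lesssim 1$, making the boundary term (i) of order $\st(\theta,t)^{1/2}$, which is harmless.
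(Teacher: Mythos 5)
Your proposal is correct, but you take a genuinely different route from the paper's proof. The paper first differentiates the transport equation \eqref{xland-svort} with respect to $\theta$, obtaining a transport equation for $\varpi_\theta$, and then integrates this equation along the $\lambda_2$-characteristic $\pt$ with the integrating factor $I_{\varpi_\theta}$ from \eqref{eq:int:factor:dy:varpi}. This yields the representation formula \eqref{eq:dy:varpi} for $\varpi_\theta$ directly, in which the $k_{\theta\theta}$ forcing appears as an interior integral over $[0,t]$ and no boundary term arises, because the formula is obtained by integrating an identity in which the integrand already vanishes for $s<\st(\theta,t)$. You instead integrate first — starting from \eqref{eq:svort:formula} — and then differentiate, which produces a boundary term at the $\theta$-dependent lower limit $s=\st(\theta,t)$ and hence requires a separate uniform bound on $\p_\theta\st(\theta,t)$. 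Your outline of how to obtain that bound, via implicit differentiation of $\pt(\theta,\st(\theta,t))=\sc(\st(\theta,t))$ and transversality of the $\lambda_2$-flow to $\sc$, is sound, but it is an extra verification the paper's order of operations avoids. Both arguments ultimately hinge on the same two facts: the bootstrap \eqref{eq:k:dxx:boot} for $k_{\theta\theta}$, and the observation that $\st(\pt(\theta,s),s)=\st(\theta,t)$ is constant along the $\lambda_2$-characteristic, which converts the time integral of $|k_{\theta\theta}|\circ\pt$ into exactly $(t-\st(\theta,t))\st(\theta,t)^{-1/2}$.
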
 
\begin{proof}[Proof of Lemma~\ref{lem:varpi:y} ]
We differentiate the equation for the specific vorticity \eqref{xland-svort} with respect to $\theta$ and obtain
\begin{align*} 
(\p_t + \lambda_2 \p_\theta ) \varpi_{\theta}  + \left(\p_\theta  \lambda_2 - \tfrac 83 a\right) \varpi_{\theta}  = \tfrac 83 a_\theta \varpi + \tfrac 43 e^k \left(k_y^2 + k_{\theta\theta} \right) \,.
\end{align*} 
For any fixed $(\theta,t)  \in \mathcal{D} _{\bar \eps}$, we compose the above identity with $\pt(\theta,s)$ and arrive at 
\begin{align*} 
\tfrac{d}{ds}  \bigl( \varpi_{\theta}   \circ \pt   \bigr) 
+  \bigl(\p_\theta  \lambda_2\circ \pt - \tfrac{8}{3} a \circ \pt\bigr) (\varpi_{\theta}  \circ \pt)  = 
\bigl( \tfrac{8}{3} a_\theta \varpi
  +  \tfrac{4}{3}  e^k (k_\theta^2+k_{\theta\theta} )\bigr) \circ \pt  \,.
\end{align*}
Denoting the integrating factor associated to the above equation by
\begin{align}
I_{\varpi_{\theta} } = I_{\varpi_{\theta} }(\theta,t;s) 
&= - \int_s^t  \bigl(\p_\theta  \lambda_2(\pt(\theta,r),r) - \tfrac{8}{3} a (\pt(\theta,r),r) \bigr) dr \notag\\
&= - \tfrac 23 \int_s^t  \bigl(\p_\theta  w(\pt(\theta,r),r) + \p_\theta  z(\pt(\theta,r),r)  - 4 a (\pt(\theta,r),r) \bigr) dr \,,
\label{eq:int:factor:dy:varpi}
\end{align}
and using that $\pt(\theta,t) = \theta$, we then obtain
\begin{align} 
\varpi_{\theta} (\theta,t)  =   \varpi_0'(\pt(\theta,0)) e^{I_{\varpi_{\theta} }(\theta,t;0)}   + \int_0^t \bigl( \tfrac{8}{3} a _\theta \varpi
  +  \tfrac{4}{3}  e^k (k_\theta^2+k_{\theta\theta} )\bigr)(\pt(\theta,s),s) e^{I_{\varpi_{\theta} }(\theta,t;s)} ds \,.
  \label{eq:dy:varpi}
\end{align}

First, we estimate the integrating factor in \eqref{eq:int:factor:dy:varpi}, for a fixed $(\theta,t) $ in the region of interest, as described in Remark~\ref{rem:FU:1}. Using \eqref{eq:ps:pt:bnd} and \eqref{eq:sc:ass}, we have that the curve $\pt(\theta,s)$ is transversal to the shock curve $\sc$, in the sense that $\p_s \pt(\theta,s) \leq  \frac{2}{3} \kappa+ \OO(\bar \eps^{\frac 13}) \leq \frac{3}{4} \kappa < \dot \sc$. Hence, we may apply Lemma~\ref{lem:Steve:needs:this} with $\gamma(s) = \pt(\theta,s)$, separately on the intervals $[t',t] \mapsto [\st(\theta,t),t]$ and $[t',t] \mapsto [s,\st(\theta,t)]$, with the second case being of course empty if $\st(\theta,t) \leq s$. In this way, from estimate~\eqref{eq:Steve:needs:this:1}, \eqref{eq:w:dx:boot}, \eqref{eq:z:dx:boot}, and the triangle inequality, we deduce that 
\begin{align*}
\abs{I_{\varpi_{\theta} }(\theta,t;s) } \leq  40 \bb \kappa^{-\frac 23}  t^{\frac 13} + 2R_2 \bb^{-\frac 12} t^{\frac 12} + R_4 t^{\frac 32}\leq  50 \bb \kappa^{-\frac 23} t^{\frac 13} \,.
\end{align*}
As such,
\begin{align}
\abs{e^{I_{\varpi_{\theta} }(\theta,t;s)} -1} \leq 60 \bb \kappa^{-\frac 23} t^{\frac 13}
\label{eq:int:factor:dy:varpi:1}
\end{align}
uniformly for $s\in [0,t]$, since $t \leq \bar \eps \ll 1$.

Second, we appeal to  the bounds \eqref{eq:k:boot}, \eqref{eq:k:dx:boot},  \eqref{eq:a:boot}, and \eqref{eq:varpi:boot}, to deduce that
\begin{align}
\int_0^t \sabs{ \tfrac{8}{3} a_\theta  \varpi   +  \tfrac{4}{3}  e^k (k_\theta^2) }(\pt(\theta,s),s) ds 
\leq 12 \kappa^{-1} R_7 t +  R_6^2 t^2  \leq C t 
\label{eq:int:factor:dy:varpi:2}
\end{align}
for a suitable $C= C(\kappa,\bb,\cc,\mm)>0$.

Third, we use \eqref{eq:k:boot},  \eqref{eq:int:factor:dy:varpi:1}, the bound \eqref{eq:k:dxx:boot}, and the fact that $k\equiv 0$ on $[\mathcal{D} ^k_{\bar \eps}]^\complement$ to deduce that for all $(\theta,t)  \in \mathcal{D} ^k_{\bar \eps}$, we have
\begin{align}
\abs{ \int_{\st(\theta,t)}^t  \bigl(e^k k_{\theta\theta} \bigr)(\pt(\theta,s),s) e^{I_{\varpi_{\theta} }(\theta,t;s)} ds}
\leq 
 4 \mm^2 \left( t - \st(\theta,t)\right) \st(\theta,t)^{-\frac 12} 
 \label{eq:useful:later}
\end{align}
for a suitable $C= C(\kappa,\bb,\cc,\mm)>0$. Here we have implicitly used that $\st((\pt(\theta,s),s)) = \st(\theta,t)$.

Finally, by appealing to the $\varpi_0'$ estimate in \eqref{eq:svort0:ass}, we deduce from \eqref{eq:dy:varpi}, \eqref{eq:int:factor:dy:varpi:2}, and \eqref{eq:useful:later} that
\begin{align*}
\sabs{\varpi_{\theta} (\theta,t) }
&\leq \mm ( 1 + 60 \bb \kappa^{-\frac 23} t^{\frac 13}) + Ct +  {\bf 1}_{(\theta,t)  \in \mathcal{D} ^k_{\bar \eps}} 4 \mm^2  \left( t - \st(\theta,t)\right) \st(\theta,t)^{-\frac 12} \notag\\
&\leq 2\mm +  {\bf 1}_{(\theta,t)  \in \mathcal{D} ^k_{\bar \eps}} 4 \mm^2  \left( t - \st(\theta,t)\right) \st(\theta,t)^{-\frac 12} 
\,,
\end{align*}
which completes the proof of \eqref{eq:dy:varpi:fin}.
\end{proof}

The previously established estimate for the derivative of the specific vorticity,  \eqref{eq:dy:varpi:fin},  immediately implies a bound for the second derivative of the radial velocity $a$:
\begin{lemma}\label{lem:ayy}  
Assume that $(w,z,k,a) \in {\mathcal X}_{\bar \eps}$ is such that \eqref{eq:second:order:boot}, \eqref{eq:second:order:boot2}, and \eqref{eq:second:order:boot3} hold. Then, for all $(\theta,t)  \in \mathcal{D} _{\bar\eps}$ we have
\begin{align} 
\label{eq:ayy:improve:bootstrap}
 \sabs{a_{\theta\theta} (\theta,t) }& \leq
\begin{cases}
N_3  t^{-\frac{2}{3}} \,,  &\mbox{if }  \theta \leq \sc_2(t) \mbox{ or } \theta \geq \sc(t) + \frac{\kappa t}{3}\\
M_5 ( t^{-1}  + t\st(\theta,t)^{-\frac{1}{2}} ) \,,  & \mbox{if }  \sc_2(t) < \theta \leq \sc(t)\\
N_7 t^{-1} \,, & \mbox{if }   \sc(t) < \theta < \sc(t) + \frac{\kappa t}{3}
\end{cases}    
\end{align} 
where the constants $N_3$, $M_5$, and $N_7$ are defined as as 
\begin{align}
\label{eq:N3:M5:N7}
N_3 = \mm^3\,, \qquad M_5 = \mm^4\, , \qquad N_7 = \mm^3\,.
\end{align}
\end{lemma}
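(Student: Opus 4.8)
\textbf{Plan for the proof of Lemma~\ref{lem:ayy}.} The key algebraic identity is the relation between $a_{\theta\theta}$ and $\varpi_\theta$ obtained by differentiating twice the definition \eqref{xland-svort:def} of the specific vorticity. Solving $\varpi = 4(w+z-\p_\theta a) c^{-2} e^k$ for $\p_\theta a$ gives $\p_\theta a = w + z - \tfrac14 \varpi c^2 e^{-k}$ — wait, more precisely from \eqref{xland-svort:def} we have $\p_\theta a = w+z - \tfrac14 \varpi c^2 e^{-k}$; one more differentiation yields $a_{\theta\theta} = \p_\theta w + \p_\theta z - \tfrac14 \varpi_\theta c^2 e^{-k} - \tfrac12 \varpi c \, c_\theta e^{-k} + \tfrac14 \varpi c^2 k_\theta e^{-k}$. (I will recompute signs carefully against the paper's conventions when writing the full argument, using that $c = \tfrac12(w-z)$ so $c_\theta = \tfrac12(\p_\theta w - \p_\theta z)$.) The upshot is that $a_{\theta\theta}$ is a bounded combination of $\p_\theta w$, $\p_\theta z$, $\varpi_\theta$, $\varpi$, $k_\theta$, $c$, and $c_\theta$ — crucially, \emph{it does not involve any second derivative of $w$, $z$, or $k$}. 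This is the same structural observation that made the good unknowns $q^w, q^z$ useful, and it is what allows the $a_{\theta\theta}$ bound to follow directly from Lemma~\ref{lem:varpi:y} and the $\mathcal{X}_{\bar\eps}$ bounds, without needing the second-derivative bootstraps \eqref{eq:second:order:boot}--\eqref{eq:second:order:boot3} for anything beyond a harmless cross term.

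\textbf{Main steps.} First I would write down the identity for $a_{\theta\theta}$ just described, double-checking the constants against \eqref{xland-svort:def}. Second, I would estimate each term on the right-hand side: the terms $\p_\theta w$, $\p_\theta z$, $c$, $c_\theta$, $k_\theta$, $\varpi$ are all controlled by the membership $(w,z,k,a) \in \mathcal{X}_{\bar\eps}$ (specifically \eqref{eq:the:norm}, i.e.\ \eqref{eq:w:z:k:a:boot}) and the vorticity bound \eqref{eq:varpi:boot}; in particular $|\p_\theta w(\theta,t)| \lesssim (\bb^3 t^3 + (\theta-\sc(t))^2)^{-1/6} \lesssim t^{-1/3}$ when bounded crudely, or $\lesssim t^{-2/3}$ is weaker still and always available, $|\p_\theta z| \lesssim t^{1/2}$, $|k_\theta| \lesssim t^{1/2}$, $|c|,|c_\theta| \lesssim \mm$ (bounded above and below by $\kappa$-multiples), and $\varpi \sim \kappa^{-1}$. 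The dominant contribution in every region is $\tfrac14 \varpi_\theta c^2 e^{-k}$, so the three cases of \eqref{eq:ayy:improve:bootstrap} are inherited directly from the three cases of \eqref{eq:dy:varpi:fin} in Lemma~\ref{lem:varpi:y}: on $(\mathcal{D}^k_{\bar\eps})^\complement$ one has $\varpi_\theta = \OO(\mm)$ giving the $t^{-2/3}$ bound (the $\p_\theta w$ term is what forces the $t^{-2/3}$ rather than $\OO(1)$, using $|\p_\theta w| \lesssim t^{-1/3}$ and absorbing); inside $\mathcal{D}^k_{\bar\eps}$ the extra piece $4\mm^2(t-\st)\st^{-1/2}$ from Lemma~\ref{lem:varpi:y}, multiplied by $c^2 \sim \kappa^2$, produces the $t\,\st(\theta,t)^{-1/2}$ term, while the $t^{-1}$ floor comes from a crude bound on $\p_\theta w$ near the shock, i.e.\ $|\p_\theta w(\theta,t)| \lesssim t^{-1}$ via \eqref{eq:w:dx:boot}. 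Third, I would check the constants close with room to spare, so that the claimed $N_3 = \mm^3$, $M_5 = \mm^4$, $N_7 = \mm^3$ hold once $\bar\eps$ is small in terms of $\mm$ (the universal constants from the $c$, $\varpi$, $\p_\theta z$, $k_\theta$ factors are all absorbed into a single power of $\mm$ via \eqref{eq:b:m:ass}, i.e.\ $\kappa \mathsf{C} \le \mm^{1/10}$).

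\textbf{The main obstacle.} There is no serious analytic obstacle here — the lemma is essentially a corollary of Lemma~\ref{lem:varpi:y} combined with the $\mathcal{X}_{\bar\eps}$ bounds, since all the derivative loss is quarantined inside $\varpi_\theta$. The only points requiring a little care are: (i) getting the algebra of the $a_{\theta\theta}$ identity right, in particular tracking the $e^{-k}$ versus $e^{k}$ factors and the sign of the $c_\theta$ term — here one should remember $c_\theta = \tfrac12(\p_\theta w - \p_\theta z)$ so this term is itself $\OO(t^{-1/3})$ and contributes only to the $t^{-2/3}$ (resp.\ $t^{-1}$) floors, never to the singular $\st^{-1/2}$ part; and (ii) in the case $\sc(t) < \theta < \sc(t) + \tfrac{\kappa t}{3}$, noting that this region lies to the right of the shock where $z \equiv k \equiv 0$, so $\varpi_\theta = \OO(\mm)$ and $c = \tfrac12 w$, and the only source of a $t^{-1}$ blowup is $\p_\theta w$ via \eqref{eq:w:dx:boot} evaluated for $\theta$ close to $\sc(t)^+$; the $t^{-2/3}$ bound would in fact hold for $\theta$ bounded away from $\sc(t)$, but we state the weaker $t^{-1}$ uniformly over this strip. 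I would also remark, as in the paper's style, that this argument is to be run for the iterates $(w^{(n)},z^{(n)},k^{(n)},a^{(n)})$ and then passed to the limit, exactly as in the conventions of Section~\ref{sec:dyy:bootstraps}, so that Lemma~\ref{lem:ayy} is not circular with the second-derivative bootstraps it is stated alongside.
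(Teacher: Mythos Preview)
Your approach is correct and essentially identical to the paper's: differentiate the identity $\p_\theta a = w+z - \tfrac14 \varpi c^2 e^{-k}$ so that $a_{\theta\theta}$ involves only $\varpi_\theta$ and first derivatives of $(w,z,k)$, then invoke Lemma~\ref{lem:varpi:y} for $\varpi_\theta$ and the $\mathcal{X}_{\bar\eps}$ bounds for the rest, reading off the three cases of \eqref{eq:ayy:improve:bootstrap} from the structure of \eqref{eq:dy:varpi:fin} and the size of $\p_\theta w$. One minor correction: the full $\p_\theta w$ is controlled by the $-1/3$ power $((\bb t)^3 + |\theta-\sc(t)|^2)^{-1/3}$ via \eqref{thegoodstuff1} (the $-1/6$ power in \eqref{eq:w:dx:boot} bounds only $\p_\theta(w-\wb)$), and it is this $\p_\theta \wb$ contribution, not $\varpi_\theta$, that produces the $t^{-2/3}$ and $t^{-1}$ floors in the respective regions---exactly the mechanism you identify, just with the sharper exponent.
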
 
\begin{proof}[Proof of Lemma \ref{lem:ayy}]
The proof directly follows from the bounds on the derivative of the specific vorticity contained in the bootstrap estimates \eqref{eq:dy:varpi:fin}, \eqref{eq:dy:varpi:fin}, and \eqref{eq:dy:varpi:fin}. 
We rewrite the definition \eqref{xland-svort:def} as $a_{\theta} = w+z - \tfrac{1}{4}   c^2 e^{-k} \varpi$, and upon differentiating we see that
\begin{align*} 
a_{\theta\theta} 
&= w_\theta +z_\theta - \tfrac{1}{4}   c (w_\theta -z_\theta) e^{-k } \varpi + \tfrac{1}{4}   c^2 e^{-k} k_\theta \varpi - \tfrac{1}{4}    c^2 e^{-k} \varpi_{\theta}  \notag\\
&= - \tfrac{1}{4}    c^2 e^{-k} \varpi_{\theta}  + w_\theta \left( 1 -\tfrac{1}{4}   c  e^{-k } \varpi \right) +z_\theta \left(1+ \tfrac{1}{4}   c  e^{-k } \varpi \right) + \tfrac{1}{4}   c^2 e^{-k} k_\theta \varpi  \,.
\end{align*} 
By using that $(w,z,k,a) \in {\mathcal X}_{\bar \eps}$ and the bound \eqref{eq:varpi:boot}, it follows that for all $(\theta,t) $ in the region of interest, we have
\begin{align}
\abs{a_{\theta\theta}  +  \tfrac{1}{4}    c^2 e^{-k} \varpi_{\theta}  - \p_\theta  \wb \left( 1 -\tfrac{1}{4}   c  e^{-k } \varpi \right)}
&\leq  C t^{-\frac 12}  
\label{eq:proto:1}
\end{align}
for a suitable $C= C(\kappa,\bb,\cc,\mm)>0$. For $\p_\theta  \wb$ estimates we refer to \eqref{thegoodstuff1}, $\varpi$ is bounded via \eqref{eq:varpi:boot}, while for bounds on $\p_\theta  \varpi$ we refer to \eqref{eq:dy:varpi:fin}, \eqref{eq:dy:varpi:fin}, and \eqref{eq:dy:varpi:fin}. We deduce
\begin{align}
\sabs{a_{\theta\theta} (\theta,t) } 
& \leq  \mm^3 + {\bf 1}_{(\theta,t)  \in \mathcal{D} ^k_{\bar \eps}}  \mm^4  \left( t - \st(\theta,t)\right) \st(\theta,t)^{-\frac 12} 
+ \mm^2 \left( (\bb t)^3 + |\theta - \sc(t)|^2\right)^{-\frac 13} 
\label{eq:proto:1a}
\end{align}
The  bound \eqref{eq:proto:1a} now directly implies \eqref{eq:ayy:improve:bootstrap}, as follows.

For $\theta \leq \sc_2(t)$ or $\theta\geq \sc(t) + \frac{\kappa t}{3}$, we have that $|\theta-\sc(t)| \geq \frac{\kappa t}{3} - C t^{\frac 43}$, and also $(\theta,t)  \not \in \DD_{\bar \eps}^k$. As such, the first bound stated in \eqref{eq:ayy:improve:bootstrap} follows from \eqref{eq:proto:1a} as soon as $ N_3  \geq 2 \mm^2 (\kappa/4)^{-\frac 23}$. This condition motivates the choice  of $N_3   =\mm^3$ in \eqref{eq:N3:M5:N7}. Similarly, the third bound in \eqref{eq:ayy:improve:bootstrap} follows from \eqref{eq:proto:1a} as soon as $ N_7 \geq 2 \mm^2 \bb^{-1}$; this condition holds since $N_7 = \mm^3$ as in \eqref{eq:N3:M5:N7}. Lastly, we consider the case that $\sc_2(t) <\theta < \sc(t)$, case in which \eqref{eq:proto:1a} implies
\begin{align}
\abs{\p_{\theta}^2  a(\theta,t) } 
\leq  \mm^3  +  \mm^4 (t - \st(\theta,t)) \st(\theta,t)^{-\frac 12} + \mm^2 (\bb t)^{-1}
\leq \mm^4 (t^{-1} + t \st(\theta,t)^{-\frac 12})
\,.
\label{eq:p:yy:a:2}
\end{align}
The bound \eqref{eq:p:yy:a:2} then clearly implies the second bound in \eqref{eq:ayy:improve:bootstrap} as soon as $ M_5 \geq \mm^4$; a condition which holds in view of the definition of $M_5$ in \eqref{eq:N3:M5:N7}. 
\end{proof}

\subsection{Second derivatives of the three wave speeds}
\label{sec:dyy:flows}

\subsubsection{Improved estimates for derivatives of $\eta -\etab$}
\begin{lemma}\label{lem:dxeta-dff} 
Given $(\theta,t)  \in \mathcal{D} _{\bar \eps}$,  define the label $x  \in \Upsilon(t)$ by $x= \eta ^{-1} (\theta,t) $.  Then
\begin{subequations} 
\begin{align} 
\abs{\p_x \eta(x,t) -\p_x \etab(x,t)} &\le 
\begin{cases}
50 \mm  t^ {\frac{4}{3}}  ,  &\mbox{if } \theta \not \in (\sc_2(t), \sc(t) + \frac{\kappa t}{3}) \\
10 \mm  t , & \mbox{if }  \theta \in (\sc_2(t), \sc(t) + \frac{\kappa t}{3})   \,, 
\end{cases}\,, \label{dxeta-diff} \\
\abs{\p^2_x \eta(x,t) -\p^2_x \etab(x,t)} &\le 
\begin{cases}
10  \mm  t^{\frac 13} ,  &\mbox{if } \theta \not \in (\sc_2(t), \sc(t) + \frac{\kappa t}{3}) \\
20 \mm  t^{-\frac{1}{2}},  & \mbox{if } \theta \in (\sc_2(t), \sc(t) + \frac{\kappa t}{3}) 
\end{cases}\,.
  \label{dxxeta-diff}
\end{align} 
\end{subequations} 
\end{lemma}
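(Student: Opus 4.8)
\textbf{Proof proposal for Lemma~\ref{lem:dxeta-dff}.}
The plan is to revisit the derivation of the crude bounds \eqref{cb2:new}, respectively the bound for $\p_x^2 \eta - \p_x^2\etab$ one would obtain by differentiating the flow equations once more, and to sharpen them by exploiting the precise location of the label $x = \eta^{-1}(\theta,t)$ relative to the characteristic curves $\sc_1, \sc_2, \sc$. The starting point is the evolution equation for $\p_x \eta - \p_x \etab$ used just above \eqref{cb2:new}, namely
\begin{align*}
\p_t (\p_x \eta - \p_x \etab)
&= (\p_\theta \wb) \circ \eta\; ( \p_x \eta - \p_x \etab)
+ (\p_\theta \lambda_3 - \p_\theta \wb)\circ \eta \; \p_x \eta \notag\\
&\qquad + \left( (\p_\theta \wb) \circ \eta - (\p_\theta \wb) \circ \etab \right) \p_x \etab \,,
\end{align*}
together with the zero initial datum. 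First I would observe that, by the iteration structure, $\lambda_3 = w + \frac13 z$, so $\p_\theta \lambda_3 - \p_\theta \wb = (\p_\theta w - \p_\theta \wb) + \frac13 \p_\theta z$; on the region $\{\theta \le \sc_1(t)\} \cup \{\theta \ge \sc(t)\}$ we have $z \equiv 0$, and more importantly the forcing term $(\p_\theta \lambda_3 - \p_\theta \wb)\circ \eta$ is integrated along the flow $\eta(x,\cdot)$, which by the transversality lemmas (Lemma~\ref{lem:Steve:needs:this}, and the fact that $\eta$ stays on one side of the shock) spends a controlled amount of time near the singular sets. Thus I would split $[0,t]$ according to whether $\eta(x,s)$ lies in $\DD^k_{\bar\eps}$, in $\DD^z_{\bar\eps}\setminus\DD^k_{\bar\eps}$, or outside, and use on each piece the corresponding bootstrap bound \eqref{eq:w:dx:boot}, \eqref{eq:z:dx:boot}, together with the improved second-derivative estimates \eqref{eq:second:order:boot}--\eqref{eq:second:order:boot3} when estimating the third ``finite difference'' term via the mean value theorem and \eqref{eq:AC:DC:ride:on}. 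The key quantitative input is that when $\theta \notin (\sc_2(t), \sc(t)+\frac{\kappa t}{3})$, the whole trajectory $\{\eta(x,s)\}_{s\in[0,t]}$ stays at distance $\gtrsim \kappa(t-s)$ from the shock (cf.~\eqref{eq:finally:useful} and \eqref{eq:no:trespassing}), which upgrades the $t^{\frac12}$ in \eqref{cb2:new} to $t^{\frac43}$: integrating $\p_\theta \wb \circ \eta$, which is $\sim ((\bb s)^3 + \mathrm{dist}^2)^{-1/6}$, over such a trajectory gains an extra power of $t$ because the distance is bounded below by $\sim (t-s)$ rather than by $0$.

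For \eqref{dxxeta-diff} the plan is analogous but one derivative higher: differentiate the above evolution equation once more in $x$ to obtain an ODE (along $\p_t$) for $\p_x^2\eta - \p_x^2\etab$, whose forcing involves $\p_\theta^2 \lambda_3 \circ \eta \cdot (\p_x\eta)^2$, $\p_\theta \lambda_3 \circ \eta \cdot (\p_x^2\eta - \p_x^2\etab)$, a term $\p_\theta\lambda_3 \circ \eta \cdot \p_x^2\etab$, and finite-difference terms of $\p_\theta \wb$ and $\p_\theta^2 \wb$ evaluated at $\eta$ versus $\etab$. Here I would use the bootstrap bounds \eqref{eq:w:dxx:boot}, \eqref{eq:w:dxx:boot2}, \eqref{eq:w:dxx:boot3} for $\p_\theta^2 w - \p_\theta^2 \wb$, the bounds \eqref{thegoodstuff2} and \eqref{eq:AC:DC:defcon} for $\p_\theta^2 \wb$ composed with the flow, the bounds on $\p_x^2\etab$ from Lemma~\ref{lem:a=0:inversion} (or rather its analog, $\p_x^2\etab = s w_0''$, controlled by \eqref{eq:u0'':interest}), together with the crude bound $\p_x^2\eta = \OO(t^{-1/2})$ near the shock that follows from \eqref{eta-taylor:b}-type expansions; actually in this local setting I would instead derive the a priori bound $|\p_x^2 \eta(x,t)| \lesssim \eta_x(x,t)\int_0^t |\p_\theta^2 w \circ \eta| \eta_x \,ds$ and feed in the bootstraps. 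On $\DD^k_{\bar\eps}$ the dominant contribution is $\int_0^t \st(\eta(x,s),s)^{-1/2}\,ds \sim t^{-1/2}\cdot t \sim t^{1/2}$, but with the extra $\eta_x$ factors and the near-shock behavior $\st \sim t-s$ (or $\st \sim |\theta - \sc_2(t)|$) one gets the claimed $\mm t^{-1/2}$; off that region the forcing is integrable with a gain, yielding $\mm t^{1/3}$.

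The main obstacle I anticipate is the bookkeeping for the case $\theta \in (\sc_2(t), \sc(t)+\frac{\kappa t}{3})$, where $\eta(x,\cdot)$ genuinely passes through the entropy cone $\DD^k_{\bar\eps}$ and hence sees the $\p_\theta^2 k \sim \st^{-1/2}$ singularity indirectly through $\p_\theta^2 w - \p_\theta^2\wb$ (via \eqref{eq:w:dxx:boot}). The delicate point is that the $\st(\theta,t)^{-1/2}$ term is integrable in time along the transversal flow $\eta$ precisely because $\eta$ crosses the curve $\sc_2$ transversally (the crossing angle being bounded below, as in \eqref{blob-a-tron99}), so that $\st(\eta(x,s),s) \sim |s - \st(\theta,t)|$ near the crossing time; making this rigorous requires tracking the shock-intersection time $\st$ along the moving label, much as in Lemma~\ref{lem:12flows}, and carefully combining it with the change of variables $dt = \beta_\tau e^{-s} ds$ and the bound $t^{-2}$ from \eqref{eq:w:dxx:boot}, which is the worst-behaved piece but is harmless for a single time integration. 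I also expect that verifying the constants $50\mm$, $10\mm$, $10\mm$, $20\mm$ close (with the usual $\bar\eps$ smallness absorbing lower-order terms) will require keeping $\Rsf_1 = \Rsf_2 = \mm^3$ and the bootstrap constants $M_i, N_i$ from \eqref{eq:M1:M4}, \eqref{eq:N1:N3}, \eqref{eq:N4:N7} explicitly in the estimates, with Grönwall applied on $[0,t]$ using $\int_0^t |\p_\theta \wb \circ \eta| \le \frac{19}{40}$ from \eqref{eq:AC:DC:con}.
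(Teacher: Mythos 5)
Your plan for \eqref{dxeta-diff} structurally cannot produce the claimed $t^{4/3}$ bound. Starting from the evolution equation for $\p_x\eta - \p_x\etab$ quoted just above \eqref{cb2:new} and bounding the forcing $(\p_\theta\lambda_3 - \p_\theta\wb)\circ\eta \cdot \p_x\eta$ via the $\mathcal X_{\bar\eps}$ estimate \eqref{eq:w:dx:boot}, you get pointwise $\abs{(\p_\theta w - \p_\theta\wb)\circ\eta} \le R_2\bigl((\bb s)^3 + \abs{\eta(x,s)-\sc(s)}^2\bigr)^{-1/6} \sim \mm^3\, t^{-1/3}$ even when the trajectory stays at distance $\gtrsim\kappa t$ from the shock, and integrating over $[0,t]$ yields at best $\sim\mm^3 t^{2/3}$. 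This is off from $50\mm\,t^{4/3}$ by a factor $\mm^2\, t^{-2/3}$, which does not vanish as $t\to 0$. The paper's proof circumvents this precisely by \emph{not} touching $\p_\theta w - \p_\theta\wb$: it passes to the good unknown $q^w = \p_\theta w - \tfrac14 c\,\p_\theta k$ of \eqref{qw-def}, whose Lagrangian evolution \eqref{eq:red:panda:2} is forced only by the \emph{bounded} quantity $Q^w$ of \eqref{eq:Qw:def}. This leads to the identity \eqref{nopork1},
\[
\p_t(\eta_x - \etab_x) = w_0'(x)\bigl(e^{-{\mathcal I}(x,0,t)}-1\bigr) + Q_1\circ\eta\,\eta_x + \int_0^t Q^w(\eta(x,s),s)\,\eta_x(x,s)\,e^{-{\mathcal I}(x,s,t)}\,ds \,,
\]
in which the main term is $w_0'(x)\cdot\mathcal O(t)$; with $\abs{w_0'(x)}\lesssim t^{-2/3}$ for $\abs x\gtrsim\kappa t$ (resp.\ $\lesssim t^{-1}$ for $\abs x\gtrsim(\bb t)^{3/2}$) this gives exactly $\mathcal O(\mm\, t^{4/3})$ (resp.\ $\mathcal O(\mm\, t)$) after one more integration in time. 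Without extracting this extra factor of $t$ from the integrating factor $e^{-{\mathcal I}}-1 = \mathcal O(t)$, you cannot get below $t^{2/3}$.

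The same gap recurs for \eqref{dxxeta-diff}, and more severely. You do identify the transversality of $\eta$ to $\sc_2$ and the resulting integrability of $\st(\eta(x,s),s)^{-1/2}$ as in \eqref{st-integral}, and your decomposition of $\p_s(\eta_{xx}-\etab_{xx})$ matches the paper's \eqref{first-good-day2}. But feeding the bootstrap bound $\abs{\p_\theta^2 w - \p_\theta^2\wb} \le M_1(\st^{-1/2} + s^{-2})$ into a single Gr\"onwall argument gives, for the $s^{-2}$ piece, $\int_{\nu^\sharp(x,t)}^t M_1 s^{-2}\,ds \sim M_1 (\nu^\sharp)^{-1}\sim \mm^4 t^{-3/2}$ (using $\nu^\sharp\gtrsim t^{3/2}$ from \eqref{eta-stoptime:1}). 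This is exactly the paper's \emph{non-optimal} intermediate bound \eqref{temp-junk1}, not the claimed $20\mm\,t^{-1/2}$. Your remark that ``the $t^{-2}$ from \eqref{eq:w:dxx:boot} \ldots is harmless for a single time integration'' is incorrect: it is this very term that caps a direct argument at $t^{-3/2}$. The paper does a second pass, differentiating the $q^w$ identity \eqref{nopork1} in $x$ to obtain \eqref{eq:barf:1}, whose forcing involves $k_{\theta\theta}$, $z_{\theta\theta}$, $a_{\theta\theta}$ via $\p_\theta Q^w$ and $\p_\theta Q_1$ but \emph{never} $\p_\theta^2 w$, so the $s^{-2}$ contribution never appears; the sharp $20\mm\,t^{-1/2}$ (and $10\mm\,t^{1/3}$ off the cone) then comes out of the term $w_0''(x)(e^{-\mathcal I}-1)$ together with \eqref{first-fun-day}, \eqref{first-fun-dayL}. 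You would need to discover this good-unknown reformulation --- it is the heart of the lemma and is absent from your plan. Two smaller points: Lemma~\ref{lem:Steve:needs:this} does not apply to the flow $\eta$, since $\eta$ moves at speed $\lambda_3\approx\kappa$, not at speed $\le\mu\kappa$ with $\mu<1$; and the references to \eqref{eta-taylor:b} and to $dt=\beta_\tau e^{-s}ds$ belong to the self-similar shock-formation analysis of Section~\ref{sec:formation}, not to the development setting of Section~\ref{sec:development}.
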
 

\begin{proof}[Proof of Lemma \ref{lem:dxeta-dff}] 

We first record a few bounds for the derivatives of the Burgers flow map $\etab$. Using \eqref{eq:u0'':interest}--\eqref{eq:u0''':interest}, we have that   for  all $\sc_2(t) < \theta < \sc(t) + \frac{\kappa t}{3}$ and with $x = \eta^{-1}(\theta,t) $ 
\begin{align} 
\sabs{\p_x^2 \etab(x,t)} \le \sabs{ t w_0''(x)} \leq  \tfrac{1}{3} \bb^{- \frac 32} t^{-\frac 32}\,, \qquad
\sabs{\p_x^3 \etab(x,t)} \le \sabs{ t w_0'''(x)} \leq  {2\mm \bb^{-4}} t^{-3}\,. \label{first-fun-day}
\end{align} 
The above estimates hold since $|x| \geq \frac 45 (\bb t)^{\frac 32}$ 
For the case that $\theta \leq \sc_2(t)$ or $\theta\geq \sc(t) + \tfrac{\kappa t}{3}$,  similarly to \eqref{eq:finally:useful} we may show that 
\begin{align}
|\eta(x,s) - \sc(s)| \geq |\eta(x,t) - \sc(t)| + \tfrac 45 \bb^{\frac 32} t^{\frac 12} (t-s) \geq \tfrac{\kappa t}{4}  
\label{eq:finally:useful:2}
\end{align} 
and so $|x| = |\eta(x,0) - \sc(0)| \geq \frac{\kappa t}{4}$.
It follows from \eqref{eq:u0:ass:quant}  that for  labels $x$ such that $|x| \geq \frac{\kappa t}{5}$ 
\begin{align} 
\sabs{ w_0'(x)} \leq   \bb \kappa^{-\frac 23}  t^{-\frac 23}\,,  \ 
\sabs{\p_x^2 \etab(x,t)} \le \sabs{ t w_0''(x)} \leq  4 \bb \kappa^{-\frac 53}  t^{-\frac 23}\,,   \ 
\sabs{\p_x^3 \etab(x,t)} \le \sabs{ t w_0'''(x)} \leq {80 \mm \kappa^{-\frac 83}}  t^{-\frac 53}\,, \label{first-fun-dayL}
\end{align} 
upon taking $\bar \eps$ small enough.

In order to prove \eqref{dxeta-diff}, we appeal to the identities 
\begin{align}
\etab_x(x,t) =1 + \int_0^t \p_\theta \wb \circ \etab\etab_x ds
\,, \qquad
\eta_x(x,t) = 1+ \int_0^t (w_\theta +  \tfrac{1}{3}  z_\theta) \circ \eta \eta_x ds
\,.
\label{eq:red:panda:1}
\end{align} 
In anticipation of subtracting the two identities above, we first derive a useful identity for $\p_\theta  w \circ \eta  \eta_x$. To do so, we return to \eqref{prelim-dtwx}, which we rewrite as 
\begin{align} 
&\tfrac{d}{dt} \left( (w_\theta - \tfrac 14 c k_\theta) \circ \eta \ \eta_x \right) +  \left( ( \tfrac{8}{3}  a - \tfrac{1}{12} c k_\theta ) \circ \eta \right) \left( (w_\theta - \tfrac 14 c k_\theta) \circ \eta \ \eta_x \right)  
\notag\\
&\qquad =  \bigl( \tfrac{1}{48}   c k_\theta( c k_\theta + 4 z_\theta )  - \tfrac{8}{3} w  a_\theta   \bigr) \circ \eta  \ \eta_x  \,. 
\label{eq:red:panda:2} 
\end{align} 
At this stage it is convenient to introduce the $w$-good-unknown $q^w$ via
\begin{align} 
q^w(\theta,t)  & = w_\theta(\theta,t)  - \tfrac{1}{4}   c(\theta,t)  k_\theta (\theta,t)  \,, \label{qw-def}
\end{align} 
the integrating factor in \eqref{eq:red:panda:2} as 
\begin{align}
{\mathcal I}(x,s,t) =  \int_s^t \tfrac 83 a(\eta(x,s'),s') - \tfrac{1}{12} (ck_\theta)(\eta(x,s'),s') ds' \,,
\label{eq:Ia:def}
\end{align}
and the forcing term in \eqref{eq:red:panda:2} by
\begin{align}
 Q^w =  \tfrac{1}{48}   c k_\theta (c k_\theta + 4 z_\theta) - \tfrac{8}{3} w a_\theta  \,.
 \label{eq:Qw:def}
\end{align}
With this notation, integrating \eqref{eq:red:panda:2} and using that $k_0 = 0$, we arrive at
\begin{align}
q^w (\eta(x,t),t) \eta_x(x,t)
= w_0'(x) e^{-{\mathcal I}(x,0,t)} + \int_0^t Q^w(\eta(x,s),s) \eta_x(x,s) e^{-{\mathcal I}(x,s,t)} ds 
\label{eq:red:panda:3} 
\end{align}
Upon recalling the fact that  $\p_\theta  \wb \circ \etab \ {\etab}_x = w_0'$, from \eqref{eq:red:panda:1}, \eqref{eq:red:panda:3}, and the definition 
\begin{align}  \label{windy-day-garbage}
Q_1= \tfrac{1}{4} c k_\theta +  \tfrac{1}{3}  z_\theta \,, 
\end{align} 
we obtain
\begin{align}
\partial_t (\eta_x - \etab_x)
&= w_\theta \circ \eta \ \eta_x- \p_\theta  \wb \circ \etab \ {\etab}_x \notag\\
&= w_0'(x) \left(  e^{-{\mathcal I}(x,0,t)} - 1 \right) + Q_1 \circ \eta \ \eta_x 
+ \int_0^t Q^w(\eta(x,s),s) \eta_x(x,s) e^{-{\mathcal I}(x,s,t)} ds  
\label{nopork1}
\end{align}
which is the main identity relating the derivatives of $\eta$ and $\etab$. 

We will frequently use that the integrating factor ${\mathcal I}$ defined in \eqref{eq:Ia:def} satisfies
\begin{align}
\sabs{{\mathcal I}(\cdot,s,t)} &\leq \tfrac 83 R_7   (t-s) +\tfrac{1}{18} \mm R_6 (t^{\frac 32} - s^{\frac 32})
 \leq 12  \mm  (t-s) 
\,,
\label{eq:want:to:vomit:2aa} 
\end{align}
a bound which is a direct consequence of \eqref{eq:w:z:k:a:boot} and \eqref{eq:R:def}.

In order to prove \eqref{dxeta-diff}, we integrate \eqref{nopork1} on the interval $[0,t]$, use that $\eta_x(x,0) =1 = \etab_x(x,0)$, and  the fact that $(w,z,k,a) \in {\mathcal X}_{\bar \eps}$ (expressed through the bounds~\eqref{eq:w:z:k:a:boot}), and obtain  that 
\begin{align} 
\sabs{{\eta}_x(x,t) - {\etab}_x(x,t)} &\leq  |w_0'(x)|  \int_0^t \left|  e^{-{\mathcal I}(x,0,s)} - 1 \right| ds + C t^{\frac 32}
 \leq 8 \mm  t^2  |w_0'(x)| + C t^{2} 
\,. \label{windy-day1a}
\end{align} 
In the case that  $\theta = \eta(x,t) \not \in (\sc_2(t), \sc(t) + \frac{\kappa t}{3})$, since $|x| = |\eta^{-1}(\theta,t) | \geq \frac 45 (\bb t)^{\frac 32}$, from \eqref{eq:u0':interest} and \eqref{eq:R:def}, we obtain the second bound in \eqref{dxeta-diff}.  On the other hand, for $\theta = \eta(x,t) \in (\sc_2(t), \sc(t) + \frac{\kappa t}{3})$, from \eqref{eq:finally:useful:2} we have $|x|  \geq \frac{\kappa t}{4}$ and so from 
\eqref{first-fun-dayL}, \eqref{windy-day1a},  and the working assumption~\eqref{eq:b:m:ass}, we obtain that the first bound in  \eqref{dxeta-diff} holds.

We next estimate $\eta_{xx}-\etab_{xx}$.  Notice that by differentiating the identity \eqref{nopork1}, factors of $\eta_{xx}$ appear in both the  integral term, which at first leads to non-optimal bounds. Instead, we twice differentiate the equations $\p_s \eta = \lambda_3\circ \eta $ and $\p_s \etab = \wb \circ \etab$, to find that
\begin{align} 
\p_s( \eta_{xx} -\etab_{xx})
&= w_{\theta\theta}  \circ \eta\, \eta_x^2 - \wb_{\theta\theta} \circ \etab\, \etab_x^2 + w_\theta \circ \eta \, \eta_{xx} - \wb_{\theta} \circ \etab\, \etab_{xx} \notag \\
& = \underbrace{\bigl(\wb_{\theta\theta} \circ \etab\, \circ ( \etab ^{-1}\circ \eta) - \wb_{\theta\theta} \circ \etab\bigr) \eta_x^2}_{ \mathcal{K} _1}
+ \underbrace{(w_{\theta\theta} -\wb_{\theta\theta}) \circ \eta \ \eta_x^2}_{ \mathcal{K} _2} \notag \\
&\quad +  \underbrace{\wb_{\theta\theta} \circ \etab \ (\eta_x^2 - \etab_x^2)}_{ \mathcal{K} _3} 
+ \underbrace{(w_\theta \circ \eta - \wb_{\theta} \circ \etab) \etab_{xx}}_{ \mathcal{K} _4} 
+ w_\theta \circ \eta \ (\eta_{xx} - \etab_{xx}) \,.
\label{first-good-day2}
\end{align} 
We shall first  provide bounds for the terms $ \mathcal{K} _1$, $\mathcal{K}_2$, $\mathcal{K}_3$, and $\mathcal{K}_4$ on the right side of \eqref{first-good-day2} in
the regions $y$ far from $\sc(t)$ and $y$ close to $\sc(t)$, and then
apply the Gr\"onwall inequality to estimate $\eta_{xx} - \etab_{xx}$ in these two regions. 
To sharpen the bounds in the region close to $\sc(t)$, we then return to \eqref{nopork1} and differentiate it in $x$.

\vspace{.05 in} 
\noindent
{\bf The case $\theta\leq \sc_2(t)$ or $\theta\geq \sc(t) + \frac{\kappa t}{3}$.} 
We recall that $x = \eta ^{-1} (\theta,t) $ and define  the label  $\bar x = \etab ^{-1} (\theta,t) $. As earlier, from \eqref{eq:finally:useful:2} and \eqref{eq:AC:DC:ride:on} we have $|x| , |\bar x| \geq \frac{\kappa t}{5}$.
Using the mean value theorem, and estimates \eqref{eq:u0':interest},  \eqref{eq:pxx:wb:def},   \eqref{eq:tropic:1},  \eqref{eq:AC:DC:ride:on}, and  \eqref{first-fun-dayL}, we obtain that
\begin{align*} 
\sabs{\eta_ x^{-2}\mathcal{K} _1(x,s) } \le  2000 R_1 \mm \kappa^{-\frac{8}{3}}s^{-\frac{2}{3}}    +  C s^ {-\frac{1}{3}} \,,
\end{align*} 
so that using \eqref{eq:cb:0}, \eqref{eq:R:def}, and \eqref{eq:b:m:ass}
\begin{align} 
\sabs{\mathcal{K} _1(x,s)} \le {\mm^4} s^{-\frac{2}{3}} 
 \,.
\label{glassy-day0L}
\end{align} 
Then, using \eqref{eq:cb:0} and \eqref{eq:w:dxx:boot2} and \eqref{eq:w:dxx:boot3}, we have that
\begin{align} 
\sabs{\mathcal{K} _2(x,s)} \le 4 {(N_1+N_4)}  s^{-\frac{2}{3}}  \,. \label{glassy-day000L}
\end{align} 
In the above estimate we have implicitly used the fact that $\eta(x,s) \not \in \DD_{\bar \eps}^k$, which is a consequence of the assumption on $\theta$ being sufficiently far from $\sc(t)$ and of the bound \eqref{eq:finally:useful:2}.
Next, by \eqref{windy-day3}, the $s$-independent lower bound on $x$ provided by \eqref{eq:finally:useful:2}, and the $w_0$ estimates \eqref{eq:u0:ass:quant} and \eqref{eq:u0':interest}, we have
\begin{subequations}
\label{onemoretime}
\begin{align} 
 \abs{\p_{\theta}  \wb( \etab(x,s),s)} &\leq \tfrac 53 |w_0'(x)|  \le 2\bb (\kappa t)^{-\frac{2}{3}}     \,,
  \\
 \abs{\p^2_{\theta}  \wb(\etab(x,s),s)}&\leq (\tfrac 53)^3 |w_0''(x)| \le 16 \bb (\kappa t)^{-\frac{5}{3}}  \,,
\end{align} 
\end{subequations}
for all $s\in [0,t]$, 
and so
by \eqref{eq:cb:0} and  \eqref{dxeta-diff}
\begin{align} 
\sabs{\mathcal{K}_3(x,s)} \le C s^{\frac 43} t^{-\frac{5}{3}} \leq C s^{-\frac 13}
 \,, \label{the-star-chickenL}
\end{align} 
for a suitable $C = C(\kappa,\bb,\mm)>0$. 
Lastly, in order to bound $\mathcal{K}_4$, we write
\begin{align*} 
(w_\theta \circ \eta - \wb_{\theta} \circ \etab) = (w_\theta \circ \eta - \wb_{\theta} \circ \etab) \eta_x \eta_x ^{-1} \,,
\end{align*} 
and
\begin{align*} 
(w_\theta \circ \eta - \wb_{\theta} \circ \etab) \eta_x = \bigl( w_\theta \circ \eta \ \eta_x- \p_\theta  \wb \circ \etab \ {\etab}_x \bigr) - \p_\theta  \wb \circ \etab \bigl( \eta_x- \etab_x\bigr) \,.
\end{align*} 
Using the second equality in \eqref{nopork1}, similarly to \eqref{windy-day1a} but with $t$ replaced by $s$, we have that
$$\sabs{w_\theta \circ \eta \, \eta_x - \wb_{\theta} \circ \etab \, \etab_x} \le 4 R_7 s |w_0'(x)| + C s^{\frac 12} 
\leq  20 \bb \kappa^{-\frac 23} \mm s t^{-\frac 23} \leq C s^ {\frac{1}{3}} 
\,,
$$
where in the second inequality we have also appealed to \eqref{first-fun-dayL}.
Hence, by combining the above three displays with \eqref{eq:cb:0},  \eqref{dxeta-diff}, \eqref{first-fun-dayL}, and \eqref{onemoretime},  we have that
\begin{align} 
\sabs{\mathcal{K} _4(x,s) } \le  C t^{-\frac 23} \left(s^ {\frac{1}{3}}   + t^{-\frac 23} s^{\frac 43} \right) \leq C s^{-\frac 13}
\,,  \label{glassy-day1L}
\end{align} 
for a suitable $C = C(\kappa,\bb,\mm)>0$. 

Finally, using the bounds \eqref{glassy-day0L}--\eqref{glassy-day1L}, and the estimates \eqref{eq:AC:DC:con} and \eqref{eq:w:boot} we apply Gr\"onwall to 
\eqref{first-good-day2} and find that
\begin{align} 
\sabs{\eta_{xx}(x,t) -\etab_{xx}(x,t)} \le  e^{\frac{1}{2} + 2 R_2 \bb^{-\frac 12} t^{\frac 12}} 16 (  \mm^4 + {N_1 + N_4}) t ^{{\frac{1}{3}} } \leq 30 (  \mm^4 + {N_1 + N_4}) t^{\frac 13}   \,, \label{temp-junk2}
\end{align} 
in the case that $y \not \in (\sc_2(t),\sc(t) + \frac{\kappa t}{3})$.

\vspace{.05 in} 
\noindent
{\bf The case $\sc_2(t) < \theta < \sc(t) + \frac{\kappa t}{3}$.} 
We shall first use \eqref{first-good-day2} to provide a (non optimal) bound for the difference $ \eta_{xx}-\etab_{xx}$. Once we have such a bound, we will then return to the differentiated
form of \eqref{nopork1} to obtain the optimal bound.

Recall the definitions of the labels  $\bar x = \etab ^{-1} (\eta(x,t),t)$ and  $x = \eta ^{-1} (\theta,t) $.
At this stage it is convenient to introduce $s = \nu^\sharp(x,t) \in [0,t)$, the {\em largest time} at which {\em either} $\eta(x,s)=\sc_2(s) = \sc(s) - \frac{\kappa s}{3} + \OO(s^{\frac 43})$ {\em or} $\eta(x,s) = \sc(s) + \frac{\kappa s}{3}$. This time $\nu(x,t)$ exists in view of the intermediate function theorem since, $|\eta(x,0) - \sc(0)| =|x| \geq \frac 45 (\bb t)^{\frac 32} > 0$, and is unique since as in \eqref{eq:finally:useful} and in Lemma~\eqref{lem:12flows}, we have that the flow $\eta$ is transversal to both $\sc_2$ and to $\sc$. In fact, we recall from  \eqref{eq:finally:useful} that
\begin{align}
|\eta(x,s) - \sc(s)| \geq |y - \sc(t)| + \tfrac 45 \bb^{\frac 32} t^{\frac 12} (t-s)  \label{eq:finally:useful:3}
\end{align} 
and therefore, by also taking into account \eqref{eq:distance:curves}, we have that 
\begin{align}
\nu^\sharp(x,t) \geq \bb^{\frac 32} \kappa^{-1} t^{\frac 32}
\label{eta-stoptime:1}
\end{align}
uniformly for all $x = \eta^{-1}(\theta,t) $, and $\theta\in(\sc_2(t) , \sc(t) + \frac{\kappa t}{3})$.

Next, we return to bounding the terms on the right side of \eqref{first-good-day2}. 
Then by \eqref{eq:px:eta:b}, \eqref{eq:pxx:wb:def}, the mean value theorem,  \eqref{eq:AC:DC:ride:on}, and using   \eqref{eq:u0':interest},  \eqref{eq:u0'':interest},  \eqref{first-fun-day} we obtain
\begin{align}
\sabs{ \mathcal{K} _1(x,s) }
&\leq 4 \sabs{\etab^{-1}(\eta(x,s),s) - x} \abs{\frac{(1 + s w_0'(\tilde x)) w_0'''(\tilde x) - 3 s (w_0''(\tilde x))^2}{(1 + s w_0'(\tilde x))^4}} \notag\\
&\leq 4 R_1 s^2 (\tfrac 53)^4 \left(4 \mm \bb^{-4} t^{-4} + 16 \bb^{-3} s t^{-5} \right) \notag\\
&\leq \mm^4 s^2 t^{-4} \,.
\label{glassy-day0}
\end{align}
Here we have use that $\tilde x$ lies in between $x$ and $\etab^{-1}(\eta(x,s),s)$, and thus satisfies $\abs{\tilde x} \geq \frac 45 (\bb t)^{\frac 32}$.
 Next, by \eqref{eq:cb:0}, \eqref{eq:w:dxx:boot}, \eqref{eq:w:dxx:boot2}, and \eqref{eq:w:dxx:boot3},
\begin{align} 
\sabs{\mathcal{K} _2(x,s)} 
\le 4 ( M_1 + {N_5}) \left(s^{-2} + {\bf 1}_{\sc_2(s) <\eta(x,s) < \sc(s)} \st(\eta(x,s),s)^{-\frac{1}{2}} \right)
 \,.  \label{glassy-day000}
 \end{align}
Next, using \eqref{eq:pxx:wb:def} and the fact that $|x| \geq \frac 45 (\bb t)^{\frac 32}$, combined with the estimates \eqref{eq:u0':interest} and \eqref{eq:u0'':interest} we obtain that $|\wb_{\theta\theta} (\etab(x,s),s)| \leq 3 \bb^{-\frac 32} t^{-\frac 52}$. Hence, by also appealing to  \eqref{eq:cb:0} and  \eqref{dxeta-diff}, we deduce   
\begin{align} 
\sabs{\mathcal{K}_3(x,s) } \le C s  t^{-\frac 52}
 \,. \label{the-star-chicken}
\end{align} 
Finally, by \eqref{thechicken},  \eqref{eq:w:dx:boot}, \eqref{eq:R:def}, and  \eqref{first-fun-day}, 
\begin{align} 
\sabs{ \mathcal{K} _4(x,s) } 
&\le \left( 4 R_1 \bb^{-\frac 32} s^{-\frac 12} + R_2 (\bb s)^{-\frac 12}\right) s |w_0''(x)|  \notag\\
&\le \left( 4 R_1 \bb^{-\frac 32} s^{-\frac 12} + R_2 (\bb s)^{-\frac 12}\right) s \tfrac 13 \bb^{-\frac 32}t^{-\frac 52} \leq \mm^4 s^{\frac 12} t^{-\frac 52} 
\,.  \label{glassy-day1}
\end{align} 
Summing up the estimates \eqref{glassy-day0}--\eqref{glassy-day1}, we obtain
\begin{align}
&\sabs{ \mathcal{K} _1(x,s) } + \sabs{ \mathcal{K} _2(x,s) } + \sabs{ \mathcal{K} _3(x,s) } + \sabs{ \mathcal{K} _4(x,s) } \notag\\
&\qquad \leq   \left( 4 (M_1 + {N_5}) +2 \mm^4 \right)s^{-2} +  4 M_1 {\bf 1}_{\sc_2(s) <\eta(x,s)\leq \sc(s)} \st(\eta(x,s),s)^{-\frac{1}{2}} 
\,.
\label{eq:EASTER:1:15:AM}
\end{align}

Let $\tilde \nu (t) = \bb^{\frac 32} \kappa^{-1} t^{\frac 32}$ be the lower bound in \eqref{eta-stoptime:1}. 
With \eqref{eq:EASTER:1:15:AM} in hand we apply  the Gr\"onwall inequality to \eqref{first-good-day2} on the time interval $[\tilde \nu(t),t]$, which in view of \eqref{eta-stoptime:1} is slightly larger than $[\nu^\sharp(x,t) , t]$. The point here is that due to \eqref{eta-stoptime:1} we know that either $\eta(x, \tilde \nu) < \sc_2(\tilde \nu)$, or $\eta(x, \tilde \nu) > \sc(\tilde \nu) +  \frac{\kappa \tilde \nu}{3}$, and thus \eqref{temp-junk2} holds at the time $\tilde \nu$.
We thus deduce that 
\begin{align}
&\sabs{\eta_{xx}(x,t) -\etab_{xx}(x,t)} \notag\\
&\leq  30 (  \mm^4 + {N_1 + N_4}) \tilde \nu^{\frac 13} + (4 (M_1 + {N_5}) +2 \mm^4 ) \int_{\tilde \nu}^{t} s^{-2} ds + 4 M_1 \int_{\tilde \nu}^t {\bf 1}_{\sc_2(s) <\eta(x,s)\leq \sc(s)}  \st(\eta(x,s),s)^{-\frac 12} ds \notag\\
&\leq C t^{\frac 12} + (4 (M_1 + {N_5}) +2 \mm^4 ) \kappa \bb^{-\frac 32} t^{-\frac 32} + 4 M_1 \int_{\tilde \nu}^t {\bf 1}_{\sc_2(s) <\eta(x,s)\leq \sc(s)} \st(\eta(x,s),s)^{-\frac 12} ds \,.
\label{eq:EASTER:1:20:AM}
\end{align}
Note that if $\theta > \sc(t)$, then $\{\eta(x,s)\}_{s\in[0,t]}$ does not intersect $\DD_{\bar \eps}^k$, and so the integral term in the above is vacuous. We thus are left to consider the case $\theta \in ( \sc_2(t)  , \sc(t))$.

In order to bound the integral term on the right side of \eqref{eq:EASTER:1:20:AM}, for every $x \in [\eta^{-1}(\sc_2(t),t), \eta^{-1}(\sc(t),t))$ we define the intersection time $s = \nu_2(x)$ at which the $3$-characteristic $\eta(x,s)$ intersects the curve $\sc_2(s)$. Just as we showed that $\pt(\theta,s)$ is transverse
to the shock curve in the proof of Lemma~\ref{lem:12flows}, by the same argument,  for all labels $ x \in \Upsilon(t)$, the curve $\eta(x, s)$ is transverse to the 
characteristic curve $(\sc_2(t),t)$, and so there exists 
an $\sc_2(t)$-intersection time $ \nu_2(x)$ such that 
\begin{align} 
\eta(x,  \nu_2(x)) = \sc_2( \nu_2(x))\,. \label{eta-stoptime}
\end{align} 
Note that for these values of $x$, we have that $\nu_2(x) = \nu^\sharp(x,t)$, as was previously defined above \eqref{eq:finally:useful:3}.
When $x \not \in [\eta^{-1}(\sc_2(t),t), \eta^{-1}(\sc(t),t))$ we overload notation, and define $\nu_2(x) = \bar \eps$, to signify that $\eta(x,\cdot)$ does not intersect $\sc_2$.

For future purposes, for every $x \in [\eta^{-1}(\sc_1(t),t), \eta^{-1}(\sc(t),t))$ we define the intersection time $s = \nu_1(x)$ at which the $3$-characteristic $\eta(x,s)$ intersects the curve $\sc_1(s)$, i.e. 
\begin{align} 
\eta(x,  \nu_1(x)) = \sc_1( \nu_1(x))\,. \label{eta-stoptime1}
\end{align} 
The existence and uniqueness of $\nu_1(x)$ is again justified by the transversality of the $3$-characteristic and the $1$-characteristic. Again, for $x \not \in [\eta^{-1}(\sc_1(t),t), \eta^{-1}(\sc(t),t))$, we set $\nu_1(x) = \bar \eps$.

\begin{figure}[htb!]
\centering
\begin{tikzpicture}[scale=2.25]
\draw [<->,thick] (0,2.4) node (yaxis) [above] { } |- (3.5,0) node (xaxis) [right] { };
\draw[black,thick] (0,0)  -- (-2,0) ;
\draw[black,thick] (-2,2)  -- (3.5,2) ;
\draw[name path=A,red,ultra thick] (0,0) .. controls (1,0.5) and (2,1.5) .. (3,2);
\draw[red] (3,1.9) node { $\sc$}; 
\draw[name path = B,blue,ultra thick] (0,0) .. controls (0.5,0.85) and (1,1.25) .. (1.5,2) ;
\draw[blue] (1.3,1.9) node { $\sc_2$}; 
\draw[green!40!gray, ultra thick] (0,0) .. controls (.2,1) and (0.4,1.5) .. (.5,2) ;
\draw[green!40!gray] (.4,1.9) node { $\sc_1$}; 
\draw[black] (-2.4,0) node { $t=0$}; 
\draw[black] (-2.4,2) node { $t = \bar \eps$}; 

\draw[black,dotted] (1.8,0)  -- (1.8,1.7);
\draw[black] (-0.1,1.7) node { $t$}; 
\draw[black,dotted] (0,1.7)  -- (1.8,1.7);
\draw[black] (1.8,-0.1) node { $\theta$}; 
\filldraw[black] (1.8,1.7) circle (0.5pt);

\draw[red] (-1.5,0) .. controls (0.5,0.9) and (1,1.25) .. (1.8,1.7) ;
\filldraw[red] (-1.5,0) circle (0.5pt) node[anchor=north] {$x$};
\draw[red] (-1.3,0.65) node { $\eta(x,s)$}; 
\draw[dotted,->,red] (-1.3,0.55) -- (-1,0.27);

\draw[red,dotted,thick] (0.15,0.8)  -- (0,0.8);
\draw[red,dotted,thick] (0.8,1.16)  -- (0,1.16);
\filldraw[red] (0.85,1.16) circle (0.5pt);
\filldraw[red] (0.18,0.8) circle (0.5pt);
\draw[red] (-0.25,0.8) node { $\nu_1(x)$}; 
\draw[red] (-0.25,1.16) node { $\nu_2(x)$}; 
\end{tikzpicture}
\vspace{-0.2cm}
\caption{\footnotesize Fix a point $(\theta,t) $ which lies in between $\sc_1$ and $\sc_2$, and let $x$ be the label such that $\eta(x,t) = \theta$. The intersection time of $\eta(x,s)$ with $\sc_2$ is denoted by $\nu_2(x)$, while the intersection time of $\eta(x,s)$ with $\sc_1$ is denoted by $\nu_1(x)$.}
\label{fig:many:characteristics:2}
\end{figure}
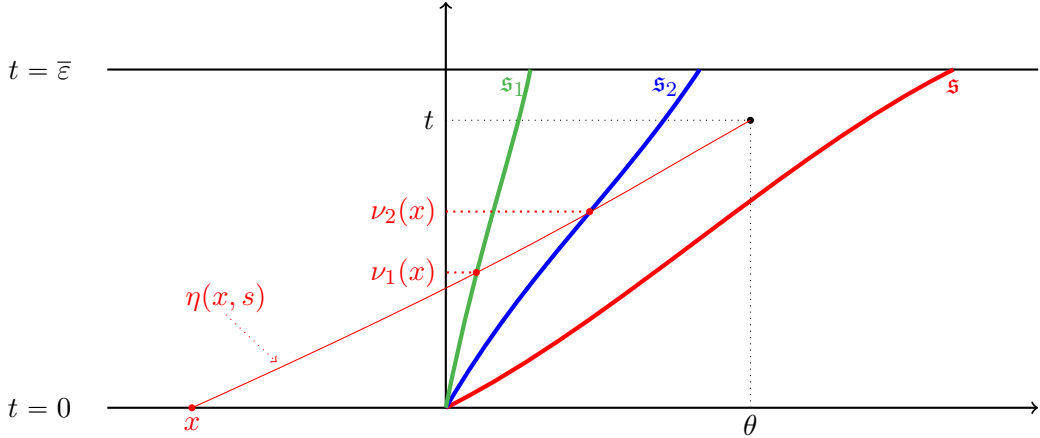

With this notation, we return to the integral term in \eqref{eq:EASTER:1:20:AM}, and
recall that $2 \kappa^{-1}(\theta- \sc_2(s)) \leq \st(\theta,s)  \leq 4 \kappa^{-1}(\theta- \sc_2(s))$. This justifies defining the curve $\gamma(s) = \eta(x,s) - \sc_2(s)$. Note that in view of Remark~\ref{rem:FU:3} and~\ref{rem:FU:2}, we have that 
 $\dot \gamma(s) = \lambda_3(\eta(x,s),s) - \dot{\sc}_2(s) \geq \frac 14 \kappa$.  Hence, 
\begin{align} 
\int_{\tilde \nu}^t {\bf 1}_{\sc_2(s) <\eta(x,s)\leq \sc(s)} \st(\eta(x,s),s)^{-\frac 12} ds
&\leq 
\int_{\nu_2(x)}^t \st(\eta(x,s))^{-\frac 12} ds \notag\\
&\le \kappa^{\frac 12} 
\int_{\nu_2(x)}^t (\eta(x,s) - \sc_2(s))^{-\frac 12} ds \notag\\
&\leq
4 \kappa^{-\frac 12} \int_{\nu_2(x)}^t \dot \gamma(s) (\gamma(s))^{-\frac 12} ds \notag\\
&\leq 8 \kappa^{-\frac 12}  \gamma(t)^{\frac 12} 
= 8 \kappa^{-\frac 12} (\theta - \sc_2(t))^{\frac 12}  \leq 8 t^{\frac 12}\,. \label{st-integral}
\end{align} 
In the last inequality above we have used that $|\theta -\sc_2(t) | \leq \sc(t) - \sc_2(t)  \leq \frac{\kappa t}{2}$. 
From \eqref{eq:EASTER:1:20:AM} and \eqref{st-integral}, we deduce the non-sharp upper bound
\begin{align} 
\sabs{\eta_{xx}(x,t) -\etab_{xx}(x,t)} \le  4 ( M_1 + {N_5} + \mm^4) \kappa \bb^{-\frac 32} t^{-\frac 32} 
\,,
\label{temp-junk1}
\end{align} 
for  $x = \eta^{-1}(\theta,t) $, when $ y \in(\sc_2(t) , \sc(t) + \frac{\kappa t}{3})$. 

Note that \eqref{temp-junk1} is weaker than the bound claimed in the second line of \eqref{dxxeta-diff}. This rough bound \eqref{dxxeta-bound} may now be used to  establish an optimal bound for $\eta_{xx} - \etab_{xx}$ as follows.
Estimate \eqref{temp-junk1} is combined with \eqref{first-fun-day} and \eqref{first-fun-dayL},  together with the bound \eqref{temp-junk2}, to show that for $\bar\eps $ taken sufficiently small we have
\begin{align} 
\sabs{\eta_{xx}(x,t)}  \le
\begin{cases}
12 \bb \kappa^{-\frac 53}  t^{-\frac 23}  ,  &\mbox{if } \theta \not \in(\sc_2(t) , \sc(t) + \frac{\kappa t}{3})\\
8 ( M_1 + {N_5} +  \mm^4) \kappa \bb^{-\frac 32} t^{-\frac 32}  & \mbox{if }   \theta  \in(\sc_2(t) , \sc(t) + \frac{\kappa t}{3})
\end{cases} \,,
 \label{dxxeta-bound}
\end{align} 
for $t\in[0,\bar\eps]$ where $x = \eta^{-1}(\theta,t)  \in \Upsilon(t)$. Moreover, by the definition of the time $\nu^\sharp(x,t)$ appearing in \eqref{eta-stoptime:1}, upon letting $(\theta,t)  \mapsto (\eta(x,s),s)$ in \eqref{dxxeta-bound}, we obtain that 
\begin{align} 
\sabs{\eta_{xx}(x,s)}  \le
\begin{cases}
12 \bb \kappa^{-\frac 53} s^{-\frac 23}  ,  &\mbox{if } s\leq \nu^\sharp(x,t)\\
8 ( M_1 + {N_5} +   \mm^4) \kappa \bb^{-\frac 32} s^{-\frac 32}  & \mbox{if }   \nu^\sharp(x,t) < s\leq t
\end{cases} \,,
 \label{dxxeta-bound-actually-useful}
\end{align} 
where we have overloaded notation and have defined $\nu^\sharp(x,t):=t$ whenever $ \eta(x,s) <  \sc_2(s)$ or $\eta(x,s) > \sc(s) + \frac{\kappa s}{3}$ for all $s\in [0,t]$.

Next, differentiating \eqref{nopork1}, we arrive at 
\begin{align}
\partial_s (\eta_{xx} - \etab_{xx}) 
&= w_0''  \left(  e^{-{\mathcal I}(\cdot,0,s)} - 1 \right) - w_0'  e^{-{\mathcal I}(\cdot,0,s)} \p_x {\mathcal I}(\cdot,0,s)  
\notag\\
&\quad + \p_\theta  Q_1 \circ \eta \ (\eta_x)^2 + Q_1 \circ \eta \ \eta_{xx}
\notag\\
&\quad + \int_0^s \left( \p_\theta  Q^w \circ \eta \ \eta_x^2 + Q^w\circ \eta \  \eta_{xx}  - Q^w \circ \eta \  \eta_x \p_x {\mathcal I}(\cdot,s',s) \right)e^{-{\mathcal I}(\cdot,s',s)} ds'
\label{eq:barf:1}
\end{align}
where we recall that ${\mathcal I}$, $Q^w$ and $Q^1$ are defined as in \eqref{eq:Ia:def}, \eqref{eq:Qw:def}, and \eqref{windy-day-garbage}.
From \eqref{eq:AC:DC:con}, \eqref{eq:w:z:k:a:boot}, \eqref{eq:k:dxx:boot}, and \eqref{st-integral} we deduce
\begin{subequations}
\label{eq:want:to:vomit:2}
\begin{align}
\sabs{\p_x {\mathcal I}(\cdot,s',s)} &\leq 24 \mm s + {\bf 1}_{s>\nu_2(x)} (\mm^{\frac 12} + \mm M_3)  s^{\frac 12}   \label{eq:want:to:vomit:2aaa}\\
\sabs{Q_1(\cdot,s)} &\leq \mm^2 s^{\frac 12}\label{eq:want:to:vomit:2a} \\
\sabs{Q^w(\cdot,s)} &\leq 12 \mm  \,.\label{eq:want:to:vomit:2b}
\end{align}
\end{subequations}
Moreover, differentiating  \eqref{eq:Qw:def} and \eqref{windy-day-garbage}, using  \eqref{eq:w:z:k:a:boot} we also obtain
\begin{subequations}
\label{eq:want:to:vomit:3}
\begin{align}
\sabs{\p_\theta  Q_1(\cdot,s)} &\leq  (\mm s)^{\frac 12} \sabs{w_\theta(\cdot,s)} + \tfrac{\mm}{4} \sabs{k_{\theta\theta} (\cdot,s)}+ \tfrac 13 \sabs{z_{\theta\theta} (\cdot,s)} + C s \label{eq:want:to:vomit:3a}\\
\sabs{\p_\theta  Q^w(\cdot,s)} &\leq \mm^3 s^{\frac 12} \sabs{k_{\theta\theta} (\cdot,s)} + \mm^2 s^{\frac 12}   \sabs{z_{\theta\theta} (\cdot,s)}  + 3 \mm \sabs{a_{\theta\theta} (\cdot,s)}+ 12 \mm  \sabs{w_{\theta}(\cdot,s)}   + C  \label{eq:want:to:vomit:3b}\,.
\end{align}
\end{subequations}
These bounds are used to estimate the three lines on the right side of \eqref{eq:barf:1} as follows.
Using \eqref{eq:want:to:vomit:2aa} and \eqref{eq:want:to:vomit:2aaa}, we obtain
\begin{align}
\mbox{first line on RHS of } \eqref{eq:barf:1} 
\leq 24 \mm s |w_0''(x)|  + 2 (\mm^{\frac 12} + \mm M_3) s^{\frac 12} |w_0'(x)| \,.
\label{eq:barf:2}
\end{align}
Next, using \eqref{eq:want:to:vomit:2a} and \eqref{eq:want:to:vomit:3a}, combined with \eqref{eq:cb:0},   \eqref{eq:z:dxx:boot}, \eqref{eq:k:dxx:boot},   \eqref{eq:z:dxx:boot2}, and \eqref{dxxeta-bound-actually-useful}, we estimate
\begin{align}
&\mbox{second line on RHS of } \eqref{eq:barf:1} \notag\\
&\quad \leq  \mm^2 s^{\frac 12} \left( 12 \bb \kappa^{-\frac 53} s^{-\frac 23} {\bf 1}_{s\leq \nu^\sharp(x,t)} + 8 (M_1 + {N_5} +\mm^4) \kappa \bb^{-\frac 32} s^{-\frac 32} {\bf 1}_{s> \nu^\sharp(x,t)} \right) + 4 (\mm s)^{\frac 12} \sabs{w_\theta(\cdot,s)} \notag\\
&\quad \quad +  ( \mm M_2 +2  M_3) \st(\eta(x,s),s)^{-\frac 12} {\bf 1}_{s > \nu_2(x)} + 2 N_2 \stt(\eta(x,s),s)^{-\frac 12} {\bf 1}_{\nu_1(x) < s < \nu_2(x)} + C s   \,.
\label{eq:barf:3}
\end{align}
The estimate for the third line of \eqref{eq:barf:1} is more delicate, and proceeds in several steps. By using  \eqref{dxxeta-bound-actually-useful}, \eqref{eq:want:to:vomit:2aaa}, \eqref{eq:want:to:vomit:2b}, and \eqref{eq:want:to:vomit:3b},
combined with \eqref{eq:cb:0}, \eqref{eq:AC:DC:con}, \eqref{eq:w:dx:boot}, \eqref{eq:second:order:boot}, \eqref{eq:second:order:boot2}, and \eqref{eq:second:order:boot3},   we have
\begin{align}
&\mbox{third line on RHS of } \eqref{eq:barf:1} \notag\\
&\quad \leq C 
+ C \int_0^{\min\{ s, \nu^\sharp(x,t)\}} (s')^{-\frac 23}  ds'  + {\bf 1}_{s> \nu^\sharp(x,t)} 96 (M_1 + {N_5}+ \mm^4) \kappa \bb^{-\frac 32} \mm  \int_{\nu^\sharp(x,t)}^s (s')^{-\frac 32}  ds' \notag\\
&\quad \quad + 4 (\mm^3 M_3 + \mm^2 M_2 + s^{\frac 12} M_5 ) {\bf 1}_{s>\nu_2(x)  }s^{\frac 12} \int_{\nu_2(x)}^{s}    \st(\eta(x,s'),s')^{-\frac 12} ds' \notag\\
&\quad \quad + 4 \mm^2 N_2 {\bf 1}_{s > \nu_1(x)  } s^{\frac 12} \int_{\nu_1(x)}^{\min\{s, \nu_2(x)\}}    \stt(\eta(x,s'),s')^{-\frac 12} ds' \notag\\
&\quad \quad + 12 \mm \left( (M_5 +N_7) {\bf 1}_{s>\nu_2(x)}  \int_{\nu_2(x)}^{s} (s')^{-1} ds' +  N_3 \int_{\nu_1(x)}^{\min\{s, \nu^\sharp(x,t)\}}  (s')^{-\frac 23} ds' \right) 
\,.
\label{eq:barf:4a}
\end{align}
Next, by using \eqref{st-integral}, and the fact that in view of the relations $\stt(\theta,s)  \approx \kappa^{-1}(\theta - \sc_1(s))$ and $\lambda_3(\eta(x,s),s) - \dot{\sc_1}(s) \geq \frac{1}{2} \kappa$ the same argument used to prove \eqref{st-integral} also establishes
\begin{align} 
\int_{\nu_1(x)}^t \stt(\eta(x,s))^{-\frac 12} ds \leq C t^{\frac 12}\,, \label{eq:stt:eta:int}
\end{align} 
and so from \eqref{eq:barf:4a}, \eqref{eta-stoptime:1}, \eqref{st-integral}, and \eqref{eq:stt:eta:int} we obtain that
\begin{align}
\mbox{third line on RHS of } \eqref{eq:barf:1} 
&\leq C  + {\bf 1}_{s> \nu^\sharp(x,t)} 200 (M_1 + {N_5}+ \mm^4) \kappa \bb^{-\frac 32} \mm   (\nu^\sharp(x,t))^{-\frac 12}   \notag\\
&\leq C  + {\bf 1}_{s> \bb^{\frac 32} \kappa^{-1} t^{\frac 32}} 200 (M_1 + {N_5}+ \mm^4) \kappa^{\frac 32} \bb^{-\frac 94} \mm  t^{-\frac 34}
\,.
\label{eq:barf:4}
\end{align}
Finally, using the bounds \eqref{eq:barf:2}, \eqref{eq:barf:3} (which needs to be combined with \eqref{eq:AC:DC:con}, \eqref{eq:w:dx:boot},  \eqref{eta-stoptime:1}, \eqref{st-integral}, \eqref{eq:stt:eta:int}), and \eqref{eq:barf:4}, we integrate \eqref{eq:barf:1} on $[0,t]$, use  \eqref{first-fun-day}, and arrive at
\begin{align}
(\eta_{xx} - \etab_{xx})(x,t)
&\leq 12 \mm t^2 |w_0''(x)| + 2 (\mm^{\frac 12} + \mm M_3) t^{\frac 32} |w_0'(x)| + C \log \frac{t}{\nu^\sharp(x,t)} + C  t^{\frac 12}   
\notag\\
&\qquad \qquad + 200 (M_1 + {N_5}+ \mm^4) \kappa^{\frac 32} \bb^{-\frac 94} \mm  t^{\frac 14}
\notag\\
&\leq 12 \mm t^2 |w_0''(x)| +  C \log t
\notag\\
&\leq  5 \mm \bb^{-\frac 32} t^{-\frac 12}
\label{eq:barf:final} 
\end{align}
for $x = \eta^{-1}(\theta,t) $ with $\theta \in (\sc_2(t) , \sc(t) + \frac{\kappa t}{3})$. This concludes the proof of the second inequality in \eqref{dxxeta-diff}.

\vspace{.05 in} 
\noindent
{\bf The case $\theta\leq \sc_2(t)$ or $\theta\geq \sc(t) + \frac{\kappa t}{3}$ revisited.} 
In order to prove the Lemma, we note that the constant claimed in the first inequality in \eqref{dxxeta-diff} is different than the one previously established in \eqref{temp-junk2}; this issue plays an important role proof of Lemma~\ref{lem:wyy}.

For this purpose we combine \eqref{eq:barf:1} with the bounds  \eqref{eq:barf:2}, \eqref{eq:barf:3}, \eqref{eq:barf:4} (the first line of this inequality is used here), and use the fact that for $y$ as above we have that $\nu^\sharp(x,t) ,\nu_2(x) \geq t > s$, to arrive at 
\begin{align*}
\sabs{\p_s (\eta_{xx} - \etab_{xx})} 
&\leq  24 \mm s |w_0''(x)|  + 2 (\mm^{\frac 12} + \mm M_3) s^{\frac 12} |w_0'(x)| \notag\\
&\qquad + 
  24 \bb \kappa^{-\frac 53}\mm^2  s^{-\frac 16} + 4  \mm^{\frac 12} s^{-\frac 12}  +    2 N_2 \stt(\eta(x,s),s)^{-\frac 12} {\bf 1}_{\nu_1(x) < s < \nu_2(x)}   + C 
\,.
\end{align*}
Integrating the above estimate on $[0,t]$ and appealing to \eqref{first-fun-dayL} and \eqref{eq:stt:eta:int} we obtain
\begin{align*}
\sabs{(\eta_{xx} - \etab_{xx})(x,t)}
 &\leq 12 \mm t^2 |w_0''(x)|  + 2 (\mm^{\frac 12} + \mm M_3) t^{\frac 32} |w_0'(x)| + C t^{\frac 12}
 \notag\\
 &\leq 48 \mm \bb \kappa^{-\frac 53} t^{\frac 13} + C t^{\frac 12}\,.
\end{align*}
Taking into account \eqref{eq:b:m:ass} and the fact that $t$ is sufficiently small with respect to $\kappa, \bb, \mm$, the above estimate proves the first inequality in \eqref{dxxeta-diff}.
\end{proof}

\subsubsection{Derivatives of the $1$- and $2$-characteristics}

\begin{lemma}\label{lem:dy12} For any $(\theta,t)  \in \mathcal{D}_{\bar \eps}$, 
\begin{align} 
\sup_{s \in [0,t]}  \abs{\p_\theta  \pt (\theta,s)  -1} & \le {60} \bb \kappa^{-\frac{2}{3}}  t^{\frac{1}{3}}  
\,,  
\qquad 
\sup_{s \in [0,t]}  \abs{\p_\theta  \pst (\theta,s)  -1}  \le {30} \bb \kappa^{-\frac{2}{3}}  t^{\frac{1}{3}} 
 \,.  \label{dxphipsi-bound} 
\end{align} 
\end{lemma}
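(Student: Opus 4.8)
\textbf{Proof plan for Lemma~\ref{lem:dy12}.}
The plan is to differentiate the defining ODEs \eqref{Flow2} and \eqref{Flow1} for the $2$- and $1$-characteristics with respect to the Eulerian variable $\theta$, solve the resulting linear ODE for $\p_\theta \pt$ (resp. $\p_\theta \pst$) by exponentiating, and then bound the time integral of $\p_\theta \lambda_2$ (resp. $\p_\theta \lambda_1$) along the characteristic. Since the characteristics under consideration are those for the {\em limiting} solution $(w,z,k,a) \in {\mathcal X}_{\bar\eps}$, rather than the $n$-th iterate, this is in effect the $n\to\infty$ version of the estimate \eqref{phixn-bound} already established in Lemma~\ref{lem:12flows}; the only thing to do is track the constants carefully in order to get the precise numerical coefficients $60\bb\kappa^{-2/3}$ and $30\bb\kappa^{-2/3}$ claimed here. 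Concretely, differentiating $\p_s \pt(\theta,s) = \lambda_2(\pt(\theta,s),s)$ and using $\p_\theta\pt(\theta,t)=1$ yields
\[
\p_\theta \pt(\theta,s) = \exp\left( \int_s^t (\p_\theta \lambda_2)(\pt(\theta,s'),s')\, ds' \right)\,,
\]
and analogously for $\pst$ with $\lambda_1$ in place of $\lambda_2$. So the task reduces entirely to bounding $\int_0^t |\p_\theta \lambda_2(\pt(\theta,s),s)|\,ds$ and $\int_0^t |\p_\theta \lambda_1(\pst(\theta,s),s)|\,ds$ by, respectively, something like $40\bb\kappa^{-2/3}t^{1/3}$, so that the exponential is within $\exp(\text{small})$ of $1$ and the final constant $60$ (resp. $30$) absorbs the error.

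The second step is the integral bound itself. Recall $\lambda_2 = \frac 23 w + \frac 23 z$ and $\lambda_1 = \frac 13 w + z$, so $\p_\theta\lambda_2 = \frac 23(\p_\theta w + \p_\theta z)$ and $\p_\theta \lambda_1 = \frac 13 \p_\theta w + \p_\theta z$. I would split $\p_\theta w = \p_\theta \wb + (\p_\theta w - \p_\theta \wb)$ and similarly observe $\p_\theta z$ is harmless. For the perturbative pieces, the bootstrap bounds \eqref{eq:w:dx:boot} and \eqref{eq:z:dx:boot} (with constants $R_2 = \mm^3$, $R_4 = \mm$ from \eqref{eq:R:def}) give $|\p_\theta w - \p_\theta \wb| \lesssim R_2 (\bb^3 s^3 + (\cdot - \sc(s))^2)^{-1/6}$ and $|\p_\theta z| \le R_4 s^{1/2}$, whose time integrals along $\pt$ (resp. $\pst$) are $\OO(t^{1/2})$ — negligible compared to $t^{1/3}$. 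The main term is $\int_0^t |\p_\theta \wb(\pt(\theta,s),s)|\,ds$. Here I invoke the transversality estimate: by Remark~\ref{rem:FU:2} (estimate \eqref{eq:ps:pt:bnd}) we have $\p_s\pt(\theta,s) = \lambda_2(\pt(\theta,s),s) \le \frac 23\kappa + \OO(\bar\eps^{1/3}) \le \frac 34 \kappa < \dot\sc$, so the curve $\gamma(\cdot) = \pt(\theta,\cdot)$ is transversal to the shock with slope bounded by $\mu\kappa$ where $\mu = \frac 34 < 1$ (after restricting to the interval $[0,t]$, and splitting into the sub-intervals where $\gamma$ is on one side of $\sc$ vs. the region far from $\sc$, exactly as in the proof of \eqref{phixn-bound}). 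Lemma~\ref{lem:Steve:needs:this}, estimate \eqref{eq:Steve:needs:this:1}, then gives $\int_{t'}^t |\p_\theta \wb(\gamma(s),s)|\,ds \le \frac{13\bb}{(1-\mu)\kappa^{2/3}} t^{1/3} = \frac{52 \bb}{\kappa^{2/3}} t^{1/3}$ on each piece; combined with the region where $|\pt(\theta,s) - \sc(s)| \gtrsim \kappa s$ (where $|\p_\theta\wb| \lesssim \bb(\kappa s)^{-2/3}$ by \eqref{eq:px:wb:def} and \eqref{eq:u0':interest}, whose integral is also $\OO(\bb\kappa^{-2/3}t^{1/3})$), one gets $\int_0^t |\p_\theta\lambda_2(\pt(\theta,s),s)|\,ds \le \frac 23 \cdot (\text{const}) \bb\kappa^{-2/3} t^{1/3} + \OO(t^{1/2})$, with the constant chosen so the total is $\le 50 \bb\kappa^{-2/3}t^{1/3}$, say. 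Exponentiating and using $e^x - 1 \le 2x$ for small $x$ yields $|\p_\theta \pt(\theta,s) - 1| \le 60\bb\kappa^{-2/3}t^{1/3}$ for $\bar\eps$ small enough. The $\pst$ case is identical but with the coefficient $\frac 13$ in place of $\frac 23$ in front of the $\p_\theta\wb$ term (since $\lambda_1 = \frac 13 w + z$), halving the leading contribution and giving the factor $30$.

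The main obstacle — really the only nontrivial point — is verifying the transversality condition $\dot\gamma(s) \le \mu\kappa$ with $\mu$ a fixed constant strictly less than $1$, uniformly along the characteristic, so that Lemma~\ref{lem:Steve:needs:this} applies with a controlled constant. This requires knowing that $w$ along $\pt$ (resp. $\pst$) stays close to $\kappa$, which in turn uses $w \approx \wb \approx w_0 \circ \etab^{-1}$ and the fact that $|\etab^{-1}|$ is small (so $|w_0 - \kappa| \lesssim \bb |\etab^{-1}|^{1/3} \ll \kappa$) — this is precisely the content of \eqref{eq:ps:pt:bnd}--\eqref{eq:ps:pst:bnd} in Remark~\ref{rem:FU:2}, which I would cite directly. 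One should also be mildly careful that the characteristic may cross $\sc$ once (in a Lipschitz fashion, per Lemma~\ref{lem:12flows}), so the integral $\int_0^t$ is genuinely split at the shock-intersection time $\st(\theta,t)$ (resp. $\stt(\theta,t)$); on each of the two resulting time intervals the curve $\gamma$ stays on one side of $\sc$ and Lemma~\ref{lem:Steve:needs:this} applies as stated, and the two contributions simply add. Everything else is routine bookkeeping of the constants from \eqref{eq:R:def} and \eqref{eq:b:m:ass}.
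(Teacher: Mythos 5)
Your proof is correct and follows the paper's route exactly: exponentiate the variational ODE, split $\p_\theta\lambda_i$ into the Burgers part $\p_\theta\wb$ plus the bootstrap-controlled perturbation $\p_\theta(w-\wb+z)$, and bound the Burgers integral via Lemma~\ref{lem:Steve:needs:this} with the transversality supplied by Remark~\ref{rem:FU:2}. The only (immaterial) difference is that the paper takes $\mu=\frac34$ for $\pt$ and the sharper $\mu=\frac12$ for $\pst$, while you keep $\mu=\frac34$ for both; since $\lambda_1\approx\frac13\kappa<\frac34\kappa$, this is still a valid application of \eqref{eq:Steve:needs:this:1} and the resulting constant still fits under $30\bb\kappa^{-2/3}$ after taking $\bar\eps$ small.
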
 
\begin{proof}[Proof of Lemma \ref{lem:dy12}] 
For any $(\theta,t)  \in \mathcal{D}_{\bar \eps}$ and $s \in [0,t]$,
 $\p_s \p_\theta  \pt =  \p_\theta \lambda_2 \circ \pt \ \p_\theta  \pt$, and since $\p_\theta  \pt(\theta,t)=1$, we see that
 $$\p_\theta  \pt(\theta,s) = e^{-\int_s^t \p_\theta \lambda_2 \circ \pt dr} 
 = e^{-{\frac{2}{3}} \int_s^t \p_\theta \wb\circ \pt dr}  e^{-{\frac{2}{3}} \int_s^t \p_\theta (w-\wb + z) \circ \pt dr} \,.
 $$
Similarly, for $s \in [0,t]$, $\p_s \p_\theta  \pst =  \p_\theta \lambda_1 \circ \pst \ \p_\theta  \pst$, and since $\p_\theta  \pst(\theta,t)=1$, so that
 $$\p_\theta  \pst(\theta,s) = e^{-\int_s^t \p_\theta \lambda_1 \circ \pst dr} 
 = e^{-{\frac{1}{3}} \int_s^t \p_\theta \wb\circ \pst dr}  e^{- \int_s^t \bigl({\frac{1}{3}} \p_\theta (w-\wb) + \p_\theta  z\bigr) \circ \pst dr}  \,.
 $$
By combining the above two identities with the bounds \eqref{eq:b:m:ass}, \eqref{eq:w:z:k:a:boot}, and \eqref{eq:Steve:needs:this:1} (with $\mu = \frac 34$ for $\pt$ and $\mu = \frac 12$ for $\pst$), and using that $\bar \eps$ is sufficiently small, the bound
\eqref{dxphipsi-bound} follows.
\end{proof} 

We next derive second derivative identities and bounds for these characteristics.   As we noted above, the bounds differ, depending on the spacetime region.  In
order to state these bounds, we first define the $2$-characteristic $\sc_1(t)$-intersection time.
Just as we showed that $\pt(\theta,s)$ is transverse
to the shock curve in the proof of Lemma \ref{lem:12flows}, by the same argument, the curve $\pt(\theta,s)$ is transverse to the characteristic curve $(\sc_1(t),t)$, and there exists 
an $\sc_1(t)$-intersection time $\st_1(\theta,t) $ such that 
$$\pt(\theta, \st_1(\theta,t) ) = \sc_1 ( \st_1(\theta,t) )\,.$$

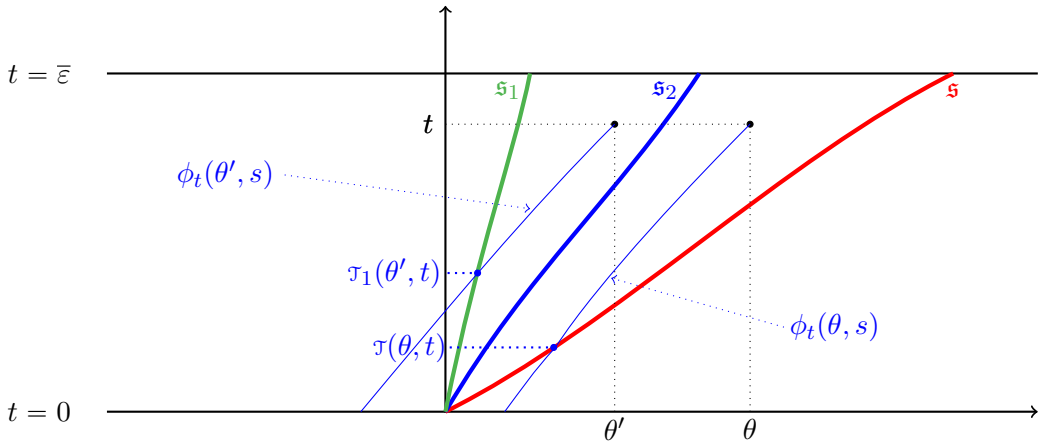
\begin{figure}[htb!]
\centering
\begin{tikzpicture}[scale=2.25]
\draw [<->,thick] (0,2.4) node (yaxis) [above] { } |- (3.5,0) node (xaxis) [right] { };
\draw[black,thick] (0,0)  -- (-2,0) ;
\draw[black,thick] (-2,2)  -- (3.5,2) ;
\draw[name path=A,red,ultra thick] (0,0) .. controls (1,0.5) and (2,1.5) .. (3,2);
\draw[red] (3,1.9) node { $\sc$}; 
\draw[name path = B,blue,ultra thick] (0,0) .. controls (0.5,0.85) and (1,1.25) .. (1.5,2) ;
\draw[blue] (1.3,1.9) node { $\sc_2$}; 
\draw[green!40!gray, ultra thick] (0,0) .. controls (.2,1) and (0.4,1.5) .. (.5,2) ;
\draw[green!40!gray] (.37,1.9) node { $\sc_1$}; 
\draw[black] (-2.4,0) node { $t=0$}; 
\draw[black] (-2.4,2) node { $t = \bar \eps$}; 

\draw[black,dotted] (1,0)  -- (1,1.7);
\draw[black] (-0.1,1.7) node { $t$}; 
\draw[black,dotted] (0,1.7)  -- (1.8,1.7);
\draw[black] (1,-0.1) node { $\theta'$}; 
\filldraw[black] (1,1.7) circle (0.5pt);

\draw[black,dotted] (1.8,0)  -- (1.8,1.7);
\draw[black] (-0.1,1.7) node { $t$}; 
\draw[black] (1.8,-0.1) node { $\theta$}; 
\filldraw[black] (1.8,1.7) circle (0.5pt);

\draw[blue] (0.65,0.38) .. controls (0.9,0.77) and (1.5,1.4) .. (1.8,1.7) ;
\draw[blue] (0.35,0) .. controls (0.44,0.12) and (0.52,0.24) .. (0.65,0.38) ;
\draw[blue] (-0.5,0) .. controls (0,0.6) and (0.5,1.2) .. (1,1.7) ;

\filldraw[blue] (0.64,0.38) circle (0.5pt);
\draw[blue,dotted,thick] (0.65,0.38)  -- (0,0.38);
\draw[blue] (-0.21,0.38) node { $\st(\theta,t)$}; 
\draw[dotted,->,blue] (2,0.5) -- (0.98,0.79);
\draw[blue] (2.3,0.5) node { $\phi_t(\theta,s) $};

\filldraw[blue] (0.19,0.82) circle (0.5pt);
\draw[blue,dotted,thick] (0.2,0.82)  -- (0,0.82);
\draw[blue] (-0.3,0.82) node { $\st_1(\theta',t)$}; 
\draw[dotted,->,blue] (-0.95,1.4) -- (0.5,1.2);
\draw[blue] (-1.3,1.4) node { $\phi_t(\theta',s)$}; 

\end{tikzpicture}

\vspace{-0.2cm}
\caption{\footnotesize Fix points: $(\theta',t)$ which lies in between $\sc_1$ and $\sc_2$, and $(\theta,t) $ which lies in between $\sc_1$ and $\sc$. The intersection time of $\pt(\theta',s)$ with $\sc_1$ is denoted by $\st_1(\theta',t)$, while the intersection time of $\pt(\theta,s)$ with $\sc$ is denoted as usual by $\st(\theta,t)$.}
\label{fig:many:characteristics:4a}
\end{figure}

\begin{lemma}\label{lem:dyy12} Let $(\theta,t)  \in \mathcal{D}_{\bar \eps}$.  
Then, for all $(\theta,t)  \in \mathcal{D}^k_{\bar \eps}$ we have 
\begin{align} 
\sup_{s \in [\st(\theta,t),t]} s \abs{\p_\theta ^2 \pt (\theta,s)  } & \le 3\mm^2 \kappa ^{-3}\,,  
\qquad
 \sup_{s \in [\stt(\theta,t),t]} s \abs{\p_\theta ^2 \pst (\theta,s) }  \le \mm^ {\frac{1}{2}}  \kappa^{-{\frac{3}{2}} }  \,, 
 \label{dxxphipsi-bound-all} 
\end{align} 
while for all $(\theta,t)  \in\mathcal{D}^z_{\bar\eps} \setminus \overline{ \mathcal{D} ^k_{\bar \eps}} $ it holds that 
\begin{align}
\sup_{s\in [\st_1(\theta,t) ,t]} s^{\frac 23} \abs{\p_\theta ^2 \pt (\theta,s)  } & \le 4\bb \mm^2 \kappa^{-3}   \,, 
\qquad
\sup_{s\in [\stt(\theta,t),t]}  \abs{\p_\theta ^2 \pst (\theta,s)  }  \le  \mm^ {\frac{1}{2}}  \kappa^{-{\frac{3}{2}} }  \stt(\theta,t)^{-1}     \,.  \label{dxxphi-bound2} 
\end{align} 
Lastly, for $(\theta,t)  \in\mathcal{D}_{\bar\eps} \setminus \overline{ \mathcal{D}^z_{\bar \eps}} $ we have
\begin{alignat} {2}
\sup_{s \in {[0,t]}}  \abs{\p_\theta ^2 \pt (\theta,s)  }& \le 3\mm^2 \kappa ^{-3} {t^{-1}}  \,,  \ \ 
&& \sup_{s \in {[0,t]}}  \abs{\p_\theta ^2 \pst (\theta,s)  } \le \mm^ {\frac{1}{2}}  \kappa^{-{\frac{3}{2}} } {t^{-1}}  \,, 
\ \ \ \sc(t) \le \theta \le \pi\,,
 \label{dxxphipsi-boundR}  \\
 \sup_{s \in {[0,t]}} s^{\frac 23} \abs{\p_\theta ^2 \pt (\theta,s)  }& \le 3\mm^2 \kappa ^{-3}   \,,  \ \ &&\sup_{s \in {[0,t]}}  \abs{\p_\theta ^2 \pst (\theta,s)  } \le \mm^ {\frac{1}{2}}  \kappa^{-{\frac{3}{2}} } {t^{-{\frac{2}{3}} }}  \,, 
\ \ \ -\pi \le  \theta \le \sc_1(t) \,.
 \label{dxxphipsi-boundL} 
\end{alignat} 
\end{lemma}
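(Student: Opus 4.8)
\textbf{Proof proposal for Lemma~\ref{lem:dyy12}.}

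The plan is to treat the $1$-characteristic $\pst$ and the $2$-characteristic $\pt$ in parallel, proving the bounds region-by-region exactly as stated. The starting point for all cases is the ODE satisfied by the second derivative, obtained by differentiating $\p_s \p_\theta \pt = \p_\theta \lambda_2 \circ \pt \; \p_\theta \pt$ once more in $\theta$: since $\p_\theta ^2 \pt(\theta,t) = 0$, we get
\begin{align*}
\p_\theta ^2 \pt(\theta,s) = -\int_s^t \left( \p_\theta ^2 \lambda_2(\pt(\theta,r),r) \; (\p_\theta \pt(\theta,r))^2 + \p_\theta \lambda_2(\pt(\theta,r),r) \; \p_\theta ^2\pt(\theta,r) \right) dr \,,
\end{align*}
and hence, after introducing the integrating factor $\exp(-\int_s^t \p_\theta \lambda_2 \circ \pt \, dr)$ (which is bounded above and below thanks to Lemma~\ref{lem:dy12} and the bound \eqref{eq:Steve:needs:this:1}), we arrive at the clean Duhamel formula $\p_\theta ^2 \pt(\theta,s) = -\int_s^t \p_\theta ^2\lambda_2(\pt(\theta,r),r) (\p_\theta \pt(\theta,r))^2 (\text{bounded factor})\, dr$; the analogous formula holds for $\pst$ with $\lambda_1$. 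Since $\p_\theta \pt$ is $1 + \OO(t^{1/3})$ by Lemma~\ref{lem:dy12}, the whole game reduces to estimating the time integral of $\p_\theta ^2 \lambda_i \circ \pt$ (respectively $\circ\, \pst$) against an $L^1$ weight.

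The second step is to extract $\p_\theta ^2 \lambda_i$ in terms of the Burgers quantities. Write $\lambda_3 = \wb + (w-\wb) + \frac13 z$, $\lambda_2 = \frac23(\wb + (w - \wb) + z)$, $\lambda_1 = \frac13 \wb + \frac13(w-\wb) + z$, so $\p_\theta ^2\lambda_i$ is a fixed linear combination of $\p_\theta ^2 \wb$, $\p_\theta ^2(w - \wb)$, and $\p_\theta ^2 z$. The contribution of $\p_\theta ^2(w-\wb)$ is governed by the bootstrap bounds \eqref{eq:second:order:boot}, \eqref{eq:second:order:boot2}, \eqref{eq:second:order:boot3}; the contribution of $\p_\theta ^2 z$ by \eqref{eq:z:dxx:boot}, \eqref{eq:z:dxx:boot2}; and the contribution of $\p_\theta ^2\wb$ is controlled by composing $\wb_{\theta\theta}$ with the flow $\pt$ or $\pst$ and invoking Lemma~\ref{lem:Steve:needs:this}, precisely \eqref{eq:Steve:needs:this:2good}--\eqref{eq:Steve:needs:this:2}, with $\mu = \frac34$ for $\pt$ (since $\p_s\pt \le \frac34\kappa + \OO(\bar\eps^{1/3})$ by \eqref{eq:ps:pt:bnd}) and $\mu = \frac12$ for $\pst$ (by \eqref{eq:ps:pst:bnd}). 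In each of the three spacetime regions one then integrates: in $\mathcal{D}^k_{\bar\eps}$ the singular weights are $\st(\eta,\cdot)^{-1/2}$ type, which integrate to $\OO(t^{1/2})$ by the transversality argument already used in \eqref{st-integral} (replacing the $3$-flow by $\pt$ or $\pst$ and $\sc_2$ by the appropriate curve); in $\mathcal{D}^z_{\bar\eps}\setminus\overline{\mathcal{D}^k_{\bar\eps}}$ one uses that $k \equiv 0$ there so the $\wb_{\theta\theta}$ term only feels $s^{-2/3}$-type data away from the shock (hence the different exponents $s^{2/3}$ versus $s^{-1/2}$), plus the $\stt$ weight from $\p_\theta ^2 z$; and in $\mathcal{D}_{\bar\eps}\setminus\overline{\mathcal{D}^z_{\bar\eps}}$ both $z$ and $k$ vanish so one only has the Burgers term, estimated via \eqref{eq:Steve:needs:this:2} which gives the $t^{-1}$ (to the right of $\sc$) or $t^{-2/3}$ (to the left of $\sc_1$) rates. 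The constants are then checked against the definitions \eqref{eq:M1:M4}, \eqref{eq:N1:N3}, \eqref{eq:N4:N7}, \eqref{eq:N3:M5:N7} to make sure the claimed inequalities hold with $\bar\eps$ sufficiently small; as throughout Section~\ref{sec:C2}, the argument is really run on the iterates $(w^{(n)},z^{(n)},k^{(n)})$ and the conclusion passed to the limit, but I would present it directly for the limiting solution.

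I expect the main obstacle to be the bookkeeping of the singular time-weights in the region $\mathcal{D}^z_{\bar\eps}\setminus\overline{\mathcal{D}^k_{\bar\eps}}$ for the $2$-characteristic: there $\pt(\theta,\cdot)$ spends an initial stretch of time inside $\mathcal{D}^k_{\bar\eps}$ (before intersecting $\sc_2$), where the $\p_\theta ^2 \wb$ and $\p_\theta ^2 z$ terms both blow up like $\st(\cdot)^{-1/2}$ and $\stt(\cdot)^{-1/2}$, and then a later stretch where $k\equiv 0$; one must split the time integral at the intersection time $\st_1(\theta,t)$ (the analogue of $\nu_2$ for the $\pt$ flow), carefully track which curve is relevant on each subinterval, and verify that the contribution from the $\mathcal{D}^k_{\bar\eps}$ portion integrates to something of lower order in $t$ than the leading $s^{-2/3}$ behaviour, so that the final bound $s^{2/3}|\p_\theta ^2\pt| \le 4\bb\mm^2\kappa^{-3}$ survives. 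The transversality of $\pt$ to both $\sc_2$ and $\sc_1$ — with a quantitative crossing-angle lower bound, exactly as established in Lemma~\ref{lem:12flows} — is what makes these time integrals of $\st^{-1/2}$ and $\stt^{-1/2}$ finite and of size $\OO(t^{1/2})$, hence negligible; getting the constants right there is the delicate part. The $\pst$ estimates are strictly easier because $\lambda_1$ carries only $\frac13\wb$ rather than $\frac23\wb$, so the Burgers contribution is already half the size.
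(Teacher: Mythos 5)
Your approach — differentiate $\p_s \p_\theta \pt = \p_\theta\lambda_2\circ\pt \cdot\p_\theta\pt$ once more in $\theta$ and run a Duhamel/Gr\"onwall argument — is genuinely different from what the paper does, and it has a gap that I do not see how to close as written. After applying the integrating factor you are left with $|\p_\theta^2\pt(\theta,s)| \lesssim \int_s^t |\p_\theta^2\lambda_2(\pt(\theta,r),r)|\, dr$, and $\p_\theta^2\lambda_2 = \tfrac23\p_\theta^2 w + \tfrac23\p_\theta^2 z$. You correctly route the $\wb_{\theta\theta}$ piece through Lemma~\ref{lem:Steve:needs:this}, but the $\p_\theta^2(w-\wb)$ and $\p_\theta^2 z$ pieces carry the bootstrap singularity $\st(\theta,t)^{-1/2}$ from \eqref{eq:w:dxx:boot} and \eqref{eq:z:dxx:boot}. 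Since $\st$ is a Lagrangian invariant of the $\lambda_2$-flow (i.e.\ $\st(\pt(\theta,r),r)=\st(\theta,t)$ for all $r$), that factor is \emph{constant} along the trajectory, so the time integral degenerates to $(t-s)\,\st(\theta,t)^{-1/2}$ and cannot be improved by transversality. Multiplying by $s$ and maximizing over $s\in[\st,t]$ gives something of size $t^2\st(\theta,t)^{-1/2}$, which blows up as $\theta\to\sc_2(t)^+$; and indeed \eqref{eq:stormy:monday:4} shows the $\st^{-1/2}$ blow-up of $w_{\theta\theta}$ is sharp, so no better $L^\infty$ bound on $\p_\theta^2\lambda_2$ is available to you. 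Hence the naive Duhamel route does not yield the uniform $\sup_s$ bound \eqref{dxxphipsi-bound-all}.

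The paper sidesteps this entirely by never forming $\p_\theta^2\lambda_2$. It exploits the specific structure of the sound-speed equation \eqref{xland-sigma}: composing with $\pt$ turns $\p_t c + \lambda_2\p_\theta c + \tfrac12 c\,\p_\theta\lambda_2 = -\tfrac83 ac$ into the exact conservation law $\p_s\bigl((\p_\theta\pt)^{1/2}\,c\circ\pt\bigr) = -\tfrac83 (a\circ\pt)(\p_\theta\pt)^{1/2}\,c\circ\pt$, which upon integration gives the closed Jacobian formula \eqref{dyphi}, $\p_\theta\pt(\theta,s) = e^{\frac{16}{3}\int_s^t a\circ\pt}\,c^2(\theta,t)/c^2(\pt(\theta,s),s)$, and analogously \eqref{dypsi} for $\pst$ (with exponent $\tfrac12$ and an extra $\int z_\theta\circ\pst$). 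Differentiating these once more produces \eqref{dyyphi} and \eqref{dyypsi}, which express $\p_\theta^2\pt$ and $\p_\theta^2\pst$ purely in terms of $c_\theta$ evaluated at the two endpoint times, a time integral of $a_\theta$ (bounded), and — only for $\pst$ — a time integral of $z_{\theta\theta}$, which is controlled by \eqref{zyy-integral} because for the $\lambda_1$-flow the intersection time $\st(\pst(\theta,s),s)$ is genuinely non-constant and transversality applies. Thus the paper's estimate only ever sees $c_\theta \sim s^{-1}$ rather than an un-integrable $\st^{-1/2}$, which is exactly what produces the clean rate $s|\p_\theta^2\pt|\lesssim 1$. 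If you want to keep your structure, you would need to replace your $\p_\theta^2\lambda_2$ integrand with something that is integrable along $\pt$; the paper's $c$-identity is precisely the mechanism that achieves this.
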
 
\begin{proof}[Proof of Lemma \ref{lem:dyy12}] 
 It is convenient to introduce the (temporary) variables
$C= c \circ \pt$,  $B  = \p_s\pt = \lambda _2 \circ \pt$ and $A  = a \circ \pt$
so that using the chain-rule, the equation for $c$ given by \eqref{xland-sigma} can be written as
$$
\p_s C+ \tfrac{1}{2} C (\p_\theta  \pt) ^{-1} \p_\theta  B = -\tfrac{8}{3} A C \,.
$$
It follows that
$$
(\p_\theta  \pt)^ {\frac{1}{2}} \p_sC +  \tfrac{1}{2} C(\p_\theta  \pt) ^{- {\frac{1}{2}} } \p_\theta B =  -\tfrac{8}{3}(\p_\theta  \pt)^ {\frac{1}{2}} A C \,,
$$
and hence
$$
 \p_s \left( (\p_\theta  \pt)^ {\frac{1}{2}}C\right) +\tfrac{8}{3} A( \p_\theta  \pt)^ {\frac{1}{2}}C =0 \,.
$$
For $(\theta,t)  \in \mathcal{D} _{\bar\eps}^k$,  and letting  $s \in [\st(\theta,t) ,t)$, we integrate this equation from $s$ to $t$ and find that
\begin{align} 
\p_\theta  \pt(\theta,s) = e^{{\frac{16}{3}} \int_s^t (a \circ \phi_t)(y,s') ds'} \frac{c^2(\theta,t) }{c^2(\pt(\theta,s),s)} \,.  \label{dyphi}
\end{align} 
Differentiating \eqref{dyphi}, we find that
\begin{align} 
\p_\theta ^2 \pt(\theta,s) & =2 e^{{\frac{16}{3}}  \int_s^t a \circ \pt ds'}  \frac{c(\theta,t) }{c^3 ( \pt(\theta,s),s)} \Bigl( \tfrac{8}{3} c(\theta,t)  c ( \pt(\theta,s),s)\int_s^t (a_\theta \circ \pt \ \p_\theta  \pt)ds'  
\notag \\
& \qquad\qquad\qquad\qquad\qquad
+ c ( \pt(\theta,s),s) c_\theta(\theta,t)  -   c(\theta,t)  c_\theta( \pt(\theta,s),s)   \p_\theta  \pt(\theta,s) \Bigr) \,.
\label{dyyphi}
\end{align} 
In essence, the two worst terms in the above identity are $c_\theta(\theta,t) $ and $c_\theta( \pt(\theta,s),s)$, so that in view of \eqref{thegoodstuff} and \eqref{eq:w:z:k:a:boot} the bounds will be determined by how close $y$ is to $\sc(t)$, respectively $\pt(\theta,s)$ to $\sc(s)$.

A similar argument can be used to obtain a formula for $\p_\theta  \psi_t$.  To do so, we make the observation (see also~\eqref{cn1}) that  \eqref{xland-sigma} can be written using 
$\lambda_1$ as the transport velocity in the special form
\begin{align*} 
\p_t c +  \lambda _1 \p_\theta  c +  2 c \p_\theta  \lambda _1
= 2 \ss \p_\theta  z - \tfrac{8}{3}  a \ss \,.
\end{align*} 
We again introduce temporary variables
$C= c \circ \pst $ and $B  = \lambda _1 \circ \pst = \p_s \pst$,
so that
$$
\p_sC+ 2C (\p_\theta  \pst) ^{-1} \p_\theta  B  =   \left(2 \p_\theta z - \tfrac{8}{3}  a  \right)\circ \pst \,  C \,.
$$
Then, 
$$
 \p_s \left( (\p_\theta  \pst)^ 2 C  \right) -  \left(2 \p_\theta z - \tfrac{8}{3}  a  \right) \circ \pst \  (\p_\theta  \pst)^ 2 C =0 \,,
$$
and for any $(\theta,t)  \in \mathcal{D}^z_{\bar \eps}$ and  $s \in [\st(\theta,t) ,t)$, we integrate this equation from $s$ to $t$ and find that
\begin{align} 
\p_\theta  \pst(\theta,s)  = e^{ \int_s^t  \left( \frac{4}{3}a(\pst(\theta,s'),s')-  z_\theta(\pst(\theta,s'),s') \right)  ds'} \frac{c^{\frac{1}{2}} (\theta,t) }{c^ {\frac{1}{2}}(\pst(\theta,s),s) } \,.
\label{dypsi}
\end{align} 
Differentiating \eqref{dypsi} once more yields 
\begin{align} 
\p_\theta ^2 \pst(\theta,s) & =  \tfrac{1}{2} e^{\int_s^t  ( \frac{4}{3}a - z_\theta) \circ \pst ds'}  \frac{c^{\frac{1}{2}}(\theta,t)  }{c^{\frac{1}{2}}(\pst(\theta,s),s) }  \notag\\
& \times \Bigl( \int_s^t  ( \tfrac{8}{3}a_\theta -2z_{\theta\theta} ) \circ \pst \, \p_\theta  \pst ds' 
+  \frac{ \p_\theta  c (\theta,t)  }{ c(\theta,t)  } -  \frac{ \p_\theta  c (\pst(\theta,s),s) }{ c(\pst(\theta,s),s) }
 \p_\theta  \pst(\theta,s) \Bigr)\,.
 \label{dyypsi}
\end{align}
As before,  the  worst terms in the above identity are $c_\theta(\theta,t) $ and $c_\theta( \pt(\theta,s),s)$, but in order to justify this heuristic we need to estimate  the time integral of $z_{\theta\theta}  \circ \pst$. 

For $(\theta,t)  \in \mathcal{D}^k_{\bar \eps}$, we shall need a good bound for  $\int_{\stt(\theta,t)}^t z_{\theta\theta} ( \pst(\theta,s),s)ds$, and to this end, 
we employ an argument which is
very similar to the one we used to obtain \eqref{st-integral}.  
  Let us define $\gamma(s) = \pst(\theta,s)- \sc_2(s)$.  Since 
$\lambda_1(\pst(\theta,s),s)-  \dot{\sc}_2(s)   \leq - \frac{3}{10} \kappa$, we obtain
 $\dot\gamma(s) \leq -  \frac{3}{10} \kappa$. Moreover, using \eqref{c1c2diffkappa2} we have that for $\bar \eps$ sufficiently small,
 $\st(\theta,t) \ge \tfrac{5}{2} \kappa ^{-1} (\theta - \sc_2(t))$ for all $\sc_2(t) \le \theta \le \sc(t)$.
 Hence, 
\begin{align} 
\int_{\stt(\theta,t)}^t \st(\pst(\theta,s),s)^{-\frac 12} ds 
&\le  \tfrac{3}{5} \kappa ^{\frac 12} 
\int_{\stt(\theta,t)}^t ( \pst(\theta,s)-\sc_2(s)  )^{-\frac 12} ds \notag\\
&\leq -2 \kappa^{-\frac 12} \int_{\stt(\theta,t)}^t \dot \gamma(s) (\gamma(s))^{-\frac 12} ds \notag \\
&= 4 \kappa^{-\frac 12} \left(\gamma(\stt(\theta,t))^{\frac 12} -  \gamma(t)^{\frac 12}  \right)\notag \\
&\le 4 \kappa^{-\frac 12}  (\sc(\stt(\theta,t)) - \sc_2(\stt(\theta,t)))^ {\frac{1}{2}}  \le \tfrac 52  \stt(\theta,t)^ {\frac{1}{2}}  \,
 \label{stt-integral}
\end{align} 
From \eqref{stt-integral} and the bootstrap assumption \eqref{eq:z:dxx:boot}, we get
\begin{align} 
\int_{\stt(\theta,t)}^t \abs{ z_{\theta\theta}  \circ \pst }ds  \leq 3 M_2  \stt(\theta,t)^ {\frac{1}{2}} \,.
\label{zyy-integral}
\end{align} 

First consider $(\theta,t)  \in \mathcal{D}^k_{\bar \eps}$.
Combining \eqref{dyyphi} and \eqref{dyypsi}, with the bounds \eqref{thegoodstuff},   \eqref{eq:w:z:k:a:boot}, \eqref{dxphipsi-bound}, \eqref{zyy-integral},  and taking $\bar \eps$ sufficiently small, we see that  
$\sabs{\p_\theta ^2 \pt (\theta,s)  }  \le 3\mm^2 \kappa ^{-3}s^{-1} $ and $\sabs{\p_\theta ^2 \pst (\theta,s) }  \le \mm^ {\frac{1}{2}}  \kappa^{-{\frac{3}{2}} } s^{-1}$, 
which are the bounds  stated in \eqref{dxxphipsi-bound-all}.   
Here we use that $\st(\theta,t)$ and $\stt(\theta,t)$ are the shock intersection times for trajectories $\pt(\theta,s)$ and $\pst(\theta,s)$.

We next consider the case   $(\theta,t)  \in \mathcal{D}^z_{\bar\eps} \setminus \overline{ \mathcal{D} ^k_{\bar \eps}}$.
From \eqref{dyyphi}, by using \eqref{eq:w:z:k:a:boot}, \eqref{thegoodstuff1}, and \eqref{dxphipsi-bound},  we obtain
\begin{align*} 
\abs{\p_\theta ^2 \pt (\theta,s)  } & \le 4\bb \mm^2 \kappa^{-3}    s^{-{\frac{2}{3}} } \,,
\end{align*} 
for all $s\in [\st_1(\theta,t) ,t]$, which establishes the first bound in \eqref{dxxphi-bound2}.
Using the bootstrap assumption  \eqref{eq:z:dxx:boot2} and the bound \eqref{zyy-integral} for $s$ such that $\pst(\theta,s) \in \DD_{\bar \eps}^k$, respectively \eqref{eq:z:dxx:boot2} and the fact that $\stt(\pst(\theta,s),s) = \stt(\theta,t)$ for $\pst(\theta,s) \in \mathcal{D}^z_{\bar\eps} \setminus \overline{ \mathcal{D} ^k_{\bar \eps}}$, we obtain
$$
\int_{\stt(\theta,t)}^t  \sabs{ z_{\theta\theta} (\pst(\theta,s),s)} ds \le t N_2 \stt(\theta,t) ^{{-\frac 12}} + 3 M_2 \stt(\theta,t)^{\frac 12}\,. 
$$
Therefore, the identity \eqref{dyypsi} together with \eqref{thegoodstuff},   \eqref{eq:w:z:k:a:boot}, \eqref{dxphipsi-bound}, \eqref{dyyphi},  \eqref{dyypsi}, and the above estimate, show that
\begin{align*} 
 \sup_{s\in [\stt(\theta,t),t]} \abs{\p_\theta ^2 \pst (\theta,s)  } & \le  \mm^ {\frac{1}{2}}  \kappa^{-{\frac{3}{2}} }     \stt(\theta,t)^{-1 } \,,
\end{align*} 
for all $(\theta,t)  \in\mathcal{D}^z_{\bar\eps} \setminus \overline{ \mathcal{D} ^k_{\bar \eps}}$, which establishes the second bound in \eqref{dxxphi-bound2}. Note that this bound is only sharp when $s$ is very close to $\stt(\theta,t)$.

For the case that $(\theta,t)  \in \mathcal{D}_{\bar \eps}$ such that $\theta > \sc(t)$, we have that  $z=0$, and so the identities \eqref{dyyphi} and \eqref{dyypsi} show that
second derivatives of these characteristics are largest at
points $(\theta,t) $ which are very close to $\sc(t)$. Using that $|\pt(\theta,s) - \sc(s)| , |\pst(\theta,s) - \sc(s)| \gtrsim \kappa t$ for $s\in [0,t/2]$, using \eqref{thegoodstuff} and \eqref{eq:w:z:k:a:boot} it follows from \eqref{dyyphi} and respectively \eqref{dyypsi}  that 
\begin{align*} 
\sup_{s \in [0,t]}  \abs{\p_\theta ^2 \pt (\theta,s)  }& \le 3\mm^2 \kappa ^{-3} {t^{-1}}  \,, 
\qquad
\sup_{s \in [0,t]}  \abs{\p_\theta ^2 \pst (\theta,s)  } \le \mm^ {\frac{1}{2}}  \kappa^{-{\frac{3}{2}} } {t^{-1}}  \,, 
\end{align*} 
which establishes  \eqref{dxxphipsi-boundR} for $\sc(t) \le \theta \le \pi$.

For the case that $(\theta,t)  \in \mathcal{D} _{\bar \eps}$ such that $-\pi \le \theta \le \sc_1(t)$ we again have  that $z=0$. Using \eqref{thegoodstuff},   \eqref{eq:w:z:k:a:boot}, \eqref{dxphipsi-bound},
it similarly follows from \eqref{dyyphi} and \eqref{dyypsi} that
\begin{align} 
\sup_{s \in [0,t]} {s^{\frac 23}} \abs{\p_\theta ^2 \pt (\theta,s)  }& \le 3\mm^2 \kappa ^{-3}   \,,  
\qquad 
\sup_{s \in [0,t]}  \abs{\p_\theta ^2 \pst (\theta,s)  } \le \mm^ {\frac{1}{2}}  \kappa^{-{\frac{3}{2}} } {t^{-{\frac{2}{3}} }}\,, 
\end{align} 
which is the stated bound  \eqref{dxxphipsi-boundL}.   This improved growth rate of second derivatives makes use of the fact that for $-\pi \le \theta \le \sc_1(t)$, one the one hand we have $|\pst(\theta,s) - \sc(s)| \geq |\pst(\theta,s) - \sc_2(s)| \approx |\theta-\sc_2(t)| \gtrsim \kappa t$ for all $s\in [0,t]$, 
while on the other hand
$|\pt(\theta,s) - \sc(s)| \geq |\sc_1(s) - \sc(s)| \approx   \kappa s$ for all $s\in [0,t]$.
\end{proof}

\subsection{Second derivatives for $w$ along the shock curve}

\begin{lemma}
\label{lem:second:w:deriv:on:shock}
Assume that the shock curve $\sc$ satisfies \eqref{eq:sc:ass}, that $(w,z,k,a) \in {\mathcal X}_{\bar \eps}$ (as defined in \eqref{eq:w:z:k:a:boot}--\eqref{eq:R:def}), and that the second derivative bootstraps \eqref{eq:second:order:boot}--\eqref{eq:second:order:boot3} hold. Then we have that 
\begin{align}
\abs{ \tfrac{d^2}{dt^2} w(\sc(t)^{\pm},t) - \tfrac{d^2}{dt^2} \wb(\sc(t)^{\pm},t) }
&\leq (4 \bb^3  M_1  +  \mm^5 )   t^{-1}
\label{eq:whispers:0}
\end{align}
where   $M_1 = M_1(\kappa,\bb,\cc,\mm) >0$ is the constant from \eqref{eq:w:dxx:boot}. In particular, the bound \eqref{eq:dt:dt:jumps:on:shock:are:close} holds with the constant $\Rsf^* = 4 \bb^3 M_1 +  \mm^5  $, which in turn implies \eqref{eq:dt:dt:zl:kl:on:shock}.
\end{lemma}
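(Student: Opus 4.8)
The plan is to control $\frac{d^2}{dt^2} w(\sc(t)^\pm,t)$ by writing it in terms of quantities we already understand along the shock curve: the second derivative of the Burgers solution $\wb$ (estimated precisely in \eqref{eq:dt:dt:jump} of Proposition~\ref{prop:Burgers}), the first and second \emph{spatial} derivatives of $w$ on either side of $\sc$, and the shock speed and its derivatives (controlled by \eqref{eq:sc:ass}). The natural starting point is the identity \eqref{for-dt2-sc-bound}, which already expresses $(\p_t w + \dot\sc \p_\theta w)\circ \eta$ --- and hence $\frac{d}{dt}(w(\sc(t)^\pm,t))$ after evaluating at the labels $x_\pm(t)$ corresponding to the endpoints falling into the shock --- in terms of $w_0'/\eta_x$, $(ck_\theta)\circ\eta$, the forcing integrals, and $(\dot\sc - \lambda_3\circ\eta)$. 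Differentiating this relation once more in $t$ along the shock curve produces $\frac{d^2}{dt^2}(w(\sc(t)^\pm,t))$, at the cost of introducing $\frac{d}{dt}(w_0'/\eta_x)$, one time derivative of the forcing terms, $\frac{d}{dt}(ck_\theta)$, and $\ddot\sc$.

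\textbf{Key steps.} First I would set up the calculation precisely using the labels $x_\pm(t)$ from Lemma~\ref{lem:Upsilon-n} (equivalently the $3$-characteristic endpoints), noting $\eta(x_\pm(t),t) = \sc(t)$, so that $w(\sc(t)^\pm,t) = w(\eta(x_\pm(t),t),t)$ and the chain rule gives $\frac{d}{dt}(w(\sc(t)^\pm,t)) = (\p_t w + \dot\sc \p_\theta w)(\sc(t)^\pm,t)$ once one accounts for $\dot x_\pm(t)$ --- and here the key simplification is that $\dot x_\pm(t)$ appears precisely through $(\dot\sc - \lambda_3)$ via the relation $\dot{\sc}(t) = \p_t\eta(x_\pm(t),t) + \p_x\eta(x_\pm(t),t)\dot x_\pm(t)$, which is already built into \eqref{for-dt2-sc-bound}. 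Second, I would differentiate \eqref{for-dt2-sc-bound} with respect to $t$ along $\sc$. The terms that arise are: $(\ddot\sc - \frac{d}{dt}(\lambda_3\circ\eta))\cdot \frac{w_0'}{\eta_x}$ and similar, which are bounded using \eqref{eq:sc:ass} (giving $|\ddot\sc|\le 6\mm^4$), the bound $|w_0'(x_\pm(t))| \lesssim (\bb t)^{-1}$ from \eqref{eq:u0':interest} applied at $|x_\pm(t)| \approx (\bb t)^{3/2}$, and $\eta_x \gtrsim (\bb t)^{3/2}$ via \eqref{eq:px:eta}; the dangerous factor $w_0'/\eta_x$ is therefore $\OO(t^{-1})$, consistent with the target. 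Third, the forcing integral terms $\int_0^t(\tfrac16 ck_\theta(z_\theta+c_\theta+4a) - \tfrac83\p_\theta(aw))\circ\eta\,\eta_x\,dt'$ and the prefactor $(\dot\sc-\lambda_3\circ\eta)$, when differentiated, produce boundary contributions at $t' = t$ plus $\frac{d}{dt}$ of the prefactor; these are all controlled by $(w,z,k,a)\in{\mathcal X}_{\bar\eps}$ together with the second-order bootstraps \eqref{eq:second:order:boot}--\eqref{eq:second:order:boot3} --- in particular $\p_\theta^2 w$ near $\sc$ enters through $\p_\theta(\p_\theta(aw))$, and this is where the $M_1$-dependence of the constant $\Rsf^* = 4\bb^3 M_1 + \mm^5$ originates, since $|\p_\theta^2 w(\sc(t)^\pm,t) - \p_\theta^2\wb(\sc(t)^\pm,t)| \le M_1(\st^{-1/2}+t^{-2})$ at the shock becomes $\lesssim M_1 t^{-2}$ (as $\st\to t$), and this gets multiplied by a factor which is $\OO(t)$ from $(\dot\sc - \lambda_3)\sim \jump{w}\sim t^{1/2}$ and the $c^2$-type coefficients, yielding $\OO(M_1 t^{-1})$. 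Fourth, I would invoke \eqref{eq:dt:dt:jump} to subtract off $\frac{d^2}{dt^2}\wb(\sc(t)^\pm,t)$, which is $-\tfrac14 \bb^{3/2} t^{-3/2}\mp(\cdots) + \OO(\mm^4 t^{-1})$ along the left/right traces (the $\mp\tfrac14\bb^{3/2}t^{-3/2}$ piece cancels between the true $w$ and $\wb$ since both solve Burgers-like dynamics up to the forcing), leaving the difference bounded by $\OO(t^{-1})$ with the advertised constant. Finally, I would record that $\frac{d^2}{dt^2}\jump{w} = \frac{d^2}{dt^2}w(\sc(t)^-,t) - \frac{d^2}{dt^2}w(\sc(t)^+,t)$ and likewise for $\mean{w}$, so \eqref{eq:whispers:0} immediately gives \eqref{eq:dt:dt:jumps:on:shock:are:close} with $\Rsf^* = 4\bb^3 M_1 + \mm^5$; then Corollary~\ref{cor:jumps:abstract} (whose hypothesis \eqref{eq:dt:dt:jumps:on:shock:are:close} is now verified, the hypothesis \eqref{eq:dt:jumps:on:shock:are:close} having been established in Lemma~\ref{lem:jumpn}) yields \eqref{eq:dt:dt:zl:kl:on:shock}.

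\textbf{Main obstacle.} The principal difficulty is bookkeeping the $t\to 0$ blow-up rates so that everything collapses to exactly $\OO(t^{-1})$, rather than something worse like $t^{-3/2}$ or $t^{-2}$. The delicate cancellation is that the genuinely singular $t^{-3/2}$ behavior in $\frac{d^2}{dt^2}w(\sc(t)^\pm,t)$ --- coming from $\frac{d}{dt}(w_0'/\eta_x)$ where both $w_0'\sim t^{-1}$ and $\eta_x\sim t^{3/2}$ vary --- must match the corresponding $\frac{d^2}{dt^2}\wb(\sc(t)^\pm,t)$ term from \eqref{eq:dt:dt:jump} and cancel in the difference. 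Making this cancellation rigorous requires comparing $\eta_x$ with the Burgers flow map derivative $\etab_x$ to high precision; fortunately Lemma~\ref{lem:dxeta-dff} (specifically \eqref{dxeta-diff}, \eqref{dxxeta-diff}) provides exactly the needed estimates $|\eta_x - \etab_x| \lesssim \mm t$ and $|\eta_{xx}-\etab_{xx}| \lesssim \mm\bb^{-3/2}t^{-1/2}$ near the shock, so after subtracting off $\wb$ the residual is controlled. The second nuisance is tracking which bootstrap constant multiplies which power of $t$; here the careful choice is to keep $M_1$ explicit (it appears linearly in the $\Rsf^*$ formula) while absorbing all ${\mathcal X}_{\bar\eps}$-level and $\wb$-level constants into the $\mm^5$ term, using $\mm$ sufficiently large relative to $\kappa,\bb,\cc$ per \eqref{eq:b:m:ass}. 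Since this is a "given the bootstraps, close the input hypothesis of Corollary~\ref{cor:jumps:abstract}" lemma, no new idea is needed beyond differentiating \eqref{for-dt2-sc-bound} and invoking the already-established flow-map and trace estimates.
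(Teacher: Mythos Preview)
Your approach is genuinely different from the paper's, and while it may be made to work, it is more roundabout and contains some errors in the details.

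The paper does \emph{not} differentiate the Lagrangian identity \eqref{for-dt2-sc-bound}. Instead it works purely in Eulerian variables: the chain rule gives
\[
\tfrac{d^2}{dt^2} w(\sc(t)^{\pm},t)
= \ddot{\sc}\, w_\theta + (\dot \sc)^2 w_{\theta\theta}
+ 2 \dot{\sc}\, w_{t\theta} + w_{tt}\,,
\]
and then the evolution equation \eqref{xland-w} is used to rewrite $w_{t\theta}$ and $w_{tt}$ in terms of $w_{\theta\theta}$, $w_\theta$, and lower-order forcing. The key structural observation is that, up to errors of size $\mm^5 t^{-1}$, this collapses to the ``normal form''
\[
(\dot\sc - w)^2\, w_{\theta\theta} - 2(\dot\sc - w)\, (w_\theta)^2\,,
\]
and the Burgers solution $\wb$ satisfies the \emph{exact} identity $\tfrac{d^2}{dt^2}\wb(\sc^\pm,t) = (\dot\sc - \wb)^2\, \wb_{\theta\theta} - 2(\dot\sc - \wb)\, (\wb_\theta)^2$. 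Subtracting, the leading $t^{-3/2}$ pieces cancel automatically, and the $M_1$ contribution arises from the single pointwise term $(\dot\sc - w)^2 (w_{\theta\theta}-\wb_{\theta\theta})$ evaluated at the shock, where $(\dot\sc - w)^2 \sim \bb^3 t$ and the bootstrap \eqref{eq:w:dxx:boot} gives $|w_{\theta\theta}-\wb_{\theta\theta}| \le 2M_1 t^{-2}$. Lemma~\ref{lem:dxeta-dff} is not used at all.

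Your Lagrangian route via \eqref{for-dt2-sc-bound} forces you to differentiate through the moving label $x_\pm(t)$, which brings in $\dot x_\pm\,\p_x$ of the forcing integral; this produces $\p_\theta^2(aw)$ \emph{integrated along the 3-characteristic} from time $0$ to $t$, not the pointwise value at the shock. Controlling that integral with the correct constant requires estimates of the type \eqref{st-integral} and careful tracking of where the characteristic enters $\DD^k_{\bar\eps}$, which is considerably more work than the paper's three-line subtraction. Also note a slip: $\eta_x$ is of order $1$ on $\Upsilon(t)$ by \eqref{eq:cb:0}, not $(\bb t)^{3/2}$ as you wrote; the near-degeneracy of $\eta_x$ is a formation-side phenomenon, not a development-side one.
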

\begin{proof}[Proof of Lemma~\ref{lem:second:w:deriv:on:shock}]
First, we note that from \eqref{eq:w:z:k:a}, Lemma~\ref{lem:u0:u0'}, and the fact that $(w,z,k,a) \in {\mathcal X}_{\bar \eps}$ cf.~\eqref{eq:w:z:k:a:boot}--\eqref{eq:R:def}, we have that 
\begin{subequations}
\label{eq:double:trouble}
\begin{align}
\abs{\p_\theta  w(\theta,t) }  &\leq \abs{\p_\theta  \wb(\theta,t) } + R_2 (\bb t)^{-\frac 12} \leq \tfrac{9}{11} t^{-1} +  R_2 (\bb t)^{-\frac 12} \leq t^{-1}
\label{eq:double:trouble:a}\\
\abs{\p_t w(\theta,t) }  &\leq (\mm + R_1 t + \tfrac 13 R_3 t^{\frac 32}) t^{-1} + \tfrac{8R_7}{3} (\mm + R_1 t) + \tfrac{R_6 }{24} t^{\frac 12} (\mm + R_1 t + R_3 t^{\frac 32})^2  \leq 2 \mm t^{-1}  \\
\abs{\p_t z(\theta,t) }  &\leq (\tfrac 13 (\mm + R_1 t) + R_3 t^{\frac 32}) R_4 t^{\frac 12} + \tfrac 83 R_7 R_3 t^{\frac 32}+ \tfrac{1}{24} (\mm + R_1 t + R_3 t^{\frac 32})^2 R_6 t^{\frac 12} \leq \mm^3 t^{\frac 12}\\
\abs{\p_t a(\theta,t) }  &\leq \tfrac 12 (\mm + R_1 t + R_3 t^{\frac 32}) R_7 + \tfrac 12 (\mm + R_1 t + R_3 t^{\frac 32})^2 \leq \mm^3
\end{align}
\end{subequations}
for all $(\theta,t) \in \DD_{\bar \eps}$, and in particular as $\theta \to \sc(t)^{\pm}$.

From the chain rule, we obtain that 
\begin{align}
\tfrac{d^2}{dt^2} w(\sc(t)^{\pm},t)
&= \ddot{\sc}(t) (w_\theta )(\sc(t)^{\pm},t) + (\dot \sc(t))^2 (w_{\theta\theta}  )(\sc(t)^{\pm},t)
+ 2 \dot{\sc}(t)   (w_{t\theta} )(\sc(t)^{\pm},t) + (w_{tt} )(\sc(t)^{\pm},t)
\,.
\label{eq:whispers:1}
\end{align}
From the  evolution equations \eqref{eq:w:z:k:a} and the definition of the wave speeds in \eqref{eq:wave-speeds} we have the identities
\begin{subequations}
\label{eq:yelling:and:screaming}
\begin{align}
w_{t\theta} &= - (w + \tfrac 13 z) w_{\theta\theta}   - (w_\theta  + \tfrac 13 z_\theta ) w_\theta  - \tfrac 83  (aw)_\theta + \tfrac{1}{12} (w-z) (w_\theta  - z_\theta ) k_\theta  + \tfrac{1}{24} (w-z)^2 k_{\theta\theta}   \\
w_{tt}  &= - (w + \tfrac 13 z) w_{t\theta}  - (w_t  + \tfrac 13 z_t ) w_\theta  - \tfrac 83 \p_t (aw) + \tfrac{1}{12} (w-z) (w_t  - z_t ) k_\theta  + \tfrac{1}{24} (w-z)^2 k_{t\theta} \notag\\
&=  (w + \tfrac 13 z) \left(  (w + \tfrac 13 z) w_{\theta\theta}   + (w_\theta  + \tfrac 13 z_\theta ) w_\theta  + \tfrac 83   (aw)_\theta -\tfrac{1}{12} (w-z) (w_\theta  - z_\theta ) k_\theta  - \tfrac{1}{24} (w-z)^2 k_{\theta\theta}  \right) \notag\\
&\quad +  \left((w + \tfrac 13 z) w_\theta + \tfrac 83 a w + \tfrac 13(\tfrac 13 w + z) z_\theta + \tfrac 89 a z - \tfrac{1}{18} (w-z)^2 k_{\theta}  \right) w_\theta - \tfrac 83 \p_t (aw) \notag\\
&\quad + \tfrac{1}{12} (w-z) (w_t  - z_t ) k_\theta  - \tfrac{1}{36} (w-z)^2 \left((w+z) k_{\theta\theta}  + (w_\theta + z_\theta) k_\theta \right)
\end{align}
\end{subequations}
pointwise for $(\theta,t) \in \DD_{\bar \eps}$. We shall in fact use \eqref{eq:yelling:and:screaming} only for $\theta \to \sc(t)^\pm$, so that the relevant bounds on second derivatives of $w$ are given by \eqref{eq:w:dxx:boot}, the second branch in \eqref{eq:w:dxx:boot3}, 
and from the estimate
$
|\p_{\theta}^2  \wb(\theta,t) | \leq \tfrac{11}{4} \bb^{-\frac 32} t^{-\frac 52}
$, 
which follows from Lemma~\ref{lem:u0:u0'} and \eqref{eq:pxx:wb:def}; together, these bounds and the fact $\st(\sc(t)^-,t) = t$, imply that
\begin{align*}
|\p_{\theta}^2  w(\sc(t)^{\pm} ,t)| \leq \tfrac{11}{4} \bb^{-\frac 32} t^{-\frac 52} +C t^{-2}   \leq 3 \bb^{-\frac 32} t^{-\frac 52}\,.
\end{align*}
Similarly, for the second derivative of $k$ we appeal to \eqref{eq:k:dxx:boot}, 
which gives
\begin{align*}
|\p_{\theta}^2  k(\sc(t)^{-} ,t)| \leq M_3 t^{-\frac 12} \,.
\end{align*}
From the above two estimates, the bounds \eqref{eq:yelling:and:screaming}, the fact that $(w,z,k,a) \in {\mathcal X}_{\bar \eps}$ cf.~\eqref{eq:w:z:k:a:boot}--\eqref{eq:R:def}, we deduce that at $(\sc(t)^\pm,t)$:
\begin{subequations}
\label{eq:whispers:2}
\begin{align}
\abs{w_{t\theta} + w w_{\theta\theta}  + (w_\theta)^2 } 
&\leq \tfrac 13 |z w_{\theta\theta} | + \tfrac 83 |a w_\theta| + C t^{-\frac 12} \notag\\
&\leq ( R_3 \bb^{-\frac 32} + 3 R_7 ) t^{-1} + C t^{-\frac 12}    \notag\\
&\leq  \tfrac 12 \mm^3  t^{-1} \\
\abs{w_{tt} - w^2 w_{\theta\theta}  - 2 w  (w_\theta)^2  } 
&\leq |w| \abs{w_{t\theta} + w w_{\theta\theta}  + (w_\theta)^2 } + \tfrac 13 |z w_{t\theta}| + \tfrac 83 |a w w_\theta| + \tfrac 83 |a \p_t w| + C t^{-\frac 12} \notag\\
&\leq \tfrac 12 \mm^4 t^{-1} + 3 R_3  \mm \bb^{-\frac 32} t^{-1}  + 10  \mm R_7   t^{-1}  + C t^{-\frac 12} \notag\\
&\leq  \mm^4  t^{-1}
\end{align}
\end{subequations}
upon taking $\bar \eps$, and hence $t$, to be sufficiently small, and using \eqref{eq:b:m:ass}.
Combining the $\sc$ bounds in \eqref{eq:sc:ass} with \eqref{eq:whispers:1} and \eqref{eq:whispers:2}, we thus deduce that 
\begin{align}
\abs{\tfrac{d^2}{dt^2} w(\sc(t)^{\pm},t) - (\dot \sc - w(\sc(t)^{\pm},t))^2 w_{\theta\theta} (\sc(t)^{\pm},t) + 2 (\dot \sc - w(\sc(t)^{\pm},t)) (w_\theta(\sc(t)^{\pm},t))^2} 
\leq \tfrac 12 \mm^5  t^{-1}
\,.
\label{eq:whispers:3}
\end{align}

In a similar fashion, we may show from \eqref{eq:basic:shit} that $\p_{t\theta} \wb = - \wb \p_{\theta}^2  \wb - (\p_\theta  \wb)^2$ and that $\p_{tt} \wb = \wb^2 \p_{\theta}^2  \wb + 2 \wb (\p_\theta  \wb)^2 $, and thus, as in \eqref{eq:whispers:1}, we have that 
\begin{align}
\tfrac{d^2}{dt^2} \wb(\sc(t)^{\pm},t) - (\dot \sc - \wb(\sc(t)^{\pm},t))^2 {\wb}_{\theta\theta}(\sc(t)^{\pm},t) + 2 (\dot \sc - \wb(\sc(t)^{\pm},t)) ({\wb}_\theta (\sc(t)^{\pm},t))^2 = 0
\,.
\label{eq:whispers:4} 
\end{align}
That is, for the Burgers solution we have \eqref{eq:whispers:3} without the $\OO(t^{-1})$ error term.
In order to prove \eqref{eq:whispers:0} it remains to subtract \eqref{eq:whispers:3} and \eqref{eq:whispers:4}. We obtain that 
\begin{align}
&\tfrac{d^2}{dt^2} \left( w(\sc(t)^{\pm},t) -  \wb(\sc(t)^{\pm},t)\right)\notag\\
&= \tfrac 12 \left( (\dot \sc(t) - w(\sc(t)^{\pm},t))^2  +(\dot \sc(t) - \wb(\sc(t)^{\pm},t))^2 \right) \p_{\theta}^2  (w-\wb) (\sc(t)^{\pm},t))\notag\\
&\qquad  +(w-\wb)(\sc(t)^{\pm},t)) \left( \dot{\sc}(t) - \tfrac 12 (w+\wb)(\sc(t)^{\pm},t)) \right) \p_{\theta}^2 (w+\wb) (\sc(t)^{\pm},t)) \notag\\
&\qquad - 2 \left( \dot{\sc}(t) - \tfrac 12 (w+\wb)(\sc(t)^{\pm},t)) \right) \p_\theta  (w-\wb)(\sc(t)^{\pm},t))  \p_\theta (w+\wb)(\sc(t)^{\pm},t)) \notag\\
&\qquad - (w-\wb)(\sc(t)^{\pm},t)) \left( (w_\theta (\sc(t)^{\pm},t)) )^2 + ({\wb}_\theta (\sc(t)^{\pm},t)) )^2 \right) + \OO(t^{-1})
\label{eq:whispers:5}
\end{align}
where the $\OO(t^{-1})$ term is bounded by the right side of \eqref{eq:whispers:3}. The estimate \eqref{eq:whispers:5} is now combined with the working assumption \eqref{eq:b:m:ass}, the $\dot \sc(t) - \kappa$ bound in \eqref{eq:sc:ass}, the $\wb$ estimates established in the proof of Proposition~\ref{prop:Burgers}, the estimates \eqref{eq:w:boot}--\eqref{eq:w:dx:boot}, and the bootstrap assumption \eqref{eq:w:dxx:boot}, to arrive at 
\begin{align}
&\abs{\tfrac{d^2}{dt^2} \left( w(\sc(t)^{\pm},t) -  \wb(\sc(t)^{\pm},t)\right)} \notag\\
&\leq    (\bb^{\frac 32} t^{\frac 12} +  (2 \mm^4 + R_1) t)^2  (2  M_1   t^{-2})  
 +R_1 t (\bb^{\frac 32} t^{\frac 12} +  (2 \mm^4 + R_1) t) (6 \bb^{-\frac 32} t^{-\frac 52})\notag\\
&\qquad  + 2 (\bb^{\frac 32} t^{\frac 12} +  (2 \mm^4 + R_1) t)  R_2 (\bb t)^{-\frac 12} (2 t^{-1}) 
 + R_1 t  \left( 2 t^{-2} \right) +  \mm^5 \bb^{-\frac 32} t^{-1}\notag\\
&\leq    ( 4 \bb^3 M_1 + 9 \mm^3 + \tfrac 12 \mm^5  ) t^{-1}
 \,.
 \label{eq:whispers:6}
\end{align}
This completes the proof of the lemma, upon appealing to  \eqref{eq:b:m:ass}.
\end{proof}

\subsection{Improving the bootstrap bounds for  $k_{\theta\theta} $}
 
\begin{lemma}\label{lem:kyy}  
For all $(\theta,t)  \in \mathcal{D}^k _{\bar\eps}$ we have that 
\begin{align} 
\sabs{\p_\theta ^2 k(\theta,t) } \le 
 \mm^2 \st(\theta,t) ^{-{\frac{1}{2}} } \,.
 \label{kyy-bound1}
\end{align} 
This justifies the choice of the constant $M_3$ in \eqref{eq:M1:M4} and improves the bootstrap assumption \eqref{eq:k:dxx:boot}.
\end{lemma}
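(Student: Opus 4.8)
The plan is to exploit the fact that $k$ is transported by the $\lambda_2$ wave speed (equation \eqref{xland-k2}), so that $\p_\theta  k$ satisfies the transport-type identity \eqref{kx-final}, and $k$ itself is given by \eqref{the-k-identity}. Differentiating \eqref{kx0} once more in $x$, or equivalently differentiating the Eulerian relation \eqref{kx-interior} (resp.~\eqref{kx-final}) with respect to $\theta$, will produce an explicit formula for $\p_\theta ^2 k(\theta,t) $ along the $\lambda_2$-characteristic $\pt(\theta,\cdot)$ emanating from the point $(\theta,t)  \in \DD_{\bar \eps}^k$ down to its shock-intersection time $\st(\theta,t)$. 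The key structural point, which is what makes the $C^{1,1/2}$ (rather than merely $C^{1/2}$) behavior possible, is that this formula involves only: (i) the value of $\tfrac{d^2}{dt^2} \kl$ at the shock-intersection time $\st(\theta,t)$, which by Corollary~\ref{cor:jumps:abstract} (whose hypotheses are verified via Lemma~\ref{lem:jumpn} and Lemma~\ref{lem:second:w:deriv:on:shock}, giving \eqref{eq:dt:dt:zl:kl:on:shock}) behaves like $\st(\theta,t)^{-1/2}$; (ii) first and second $\theta$-derivatives of the flow $\pt$, controlled by Lemma~\ref{lem:dy12} and Lemma~\ref{lem:dyy12} (in particular the bound $\sup_s s|\p_\theta ^2 \pt| \les \kappa^{-3}$ from \eqref{dxxphipsi-bound-all}); and (iii) the transversality factor $(\dot\sc - \p_s \pt)$ at the intersection time, which is bounded below by $\kappa/2$ as established in \eqref{blob-a-tron99}.

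Concretely, I would first differentiate \eqref{kx0} with respect to $x$ (the Lagrangian label along $\pt$) to get $\tfrac{d}{ds}(\p_\theta ^2 k \circ \pt \ (\p_\theta  \pt)^2 + \p_\theta  k \circ \pt \ \p_\theta ^2 \pt) = 0$, which upon integration from $\st(\theta,t)$ to $t$ and using $\pt(\theta,t)=\theta$, $\p_\theta \pt(\theta,t)=1$, $\p_\theta ^2\pt(\theta,t)=0$ yields
\begin{align*}
\p_\theta ^2 k(\theta,t)  = \Big( \p_\theta ^2 k(\sc(\st),\st) \p_\theta \pt(\theta,\st)^2 + \p_\theta  k(\sc(\st),\st) \p_\theta ^2\pt(\theta,\st) \Big),
\end{align*}
where $\st = \st(\theta,t)$. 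Then I would express the Cauchy data $\p_\theta ^2 k(\sc(t),t)$ and $\p_\theta  k(\sc(t),t)$ on the shock curve in terms of $\dot\kl$, $\tfrac{d^2}{dt^2}\kl$, $\dot\sc$, $\ddot\sc$, and $\p_s\pt$, $\p_{ss}\pt$, exactly as was done to pass from \eqref{systemk-dt} to \eqref{kx-gen}: differentiating the coupled system $\dot\kl = \p_t k + \dot\sc \p_\theta  k$, $0 = \p_t k + \lambda_2 \p_\theta  k$ once more gives an expression for $\p_\theta ^2 k(\sc(t),t)$ with denominator $(\dot\sc - \lambda_2(\sc(t),t))$ to an appropriate power (here we need $(\dot\sc - \lambda_2)$ bounded below, i.e.~the transversality); this brings in $\tfrac{d^2}{dt^2}\kl(\st)$, which is $\OO(\st^{-1/2})$ by \eqref{eq:dt:dt:zl:kl:on:shock}, and $\dot\kl(\st)$, which is $\OO(\st^{1/2})$ by \eqref{dotzknp1good} — together with $\ddot\sc$ bounded by \eqref{eq:sc:ass}, and $\p_s\pt$, $\p_{ss}\pt$ bounded using the ODE \eqref{Flow2} and the wave-speed bounds of Remark~\ref{rem:FU:2}.

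Combining these ingredients: the leading term is $\p_\theta ^2 k(\sc(\st),\st)\, \p_\theta \pt(\theta,\st)^2$, where $\p_\theta \pt(\theta,\st)^2 \le (1 + C t^{1/3})^2 \le 2$ by \eqref{dxphipsi-bound}, and $\p_\theta ^2 k(\sc(\st),\st) = \OO(\st^{-1/2})$ from the shock-curve analysis above; the subleading term $\p_\theta  k(\sc(\st),\st)\,\p_\theta ^2\pt(\theta,\st)$ is bounded by $\OO(\st^{1/2}) \cdot \OO(\st^{-1}) = \OO(\st^{-1/2})$ using \eqref{dotzknp1good} (hence \eqref{kx-gen} gives $\p_\theta  k(\sc(\st),\st) = \OO(\st^{1/2})$) and \eqref{dxxphipsi-bound-all}. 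Summing and tracking constants carefully — all the implicit constants depend only on $\kappa, \bb, \cc, \mm$, and after taking $\bar\eps$ small enough the $t^{1/3}$ and $t^{1/2}$ corrections are absorbed — gives $|\p_\theta ^2 k(\theta,t) | \le \mm^2 \st(\theta,t)^{-1/2}$, which is \eqref{kyy-bound1} and improves \eqref{eq:k:dxx:boot} by the factor of $2$ since $M_3 = 2\mm^2$. As always in this section, the argument is first run for the iterates $k^{(n+1)}$ using the inductive hypothesis \eqref{eq:second:order:boot} for level $n$, and then passed to the limit $n\to\infty$. The main obstacle is bookkeeping: one must verify that the hypotheses of Corollary~\ref{cor:jumps:abstract} giving the sharp $\tfrac{d^2}{dt^2}\kl(\st) = \OO(\st^{-1/2})$ bound are in place — this is precisely where Lemma~\ref{lem:second:w:deriv:on:shock} (the $\tfrac{d^2}{dt^2}(w-\wb)$ bound on the shock, which itself consumes the bootstrap \eqref{eq:w:dxx:boot}) is needed — and that the transversality lower bound $(\dot\sc - \lambda_2(\sc(t),t)) \ge \kappa/2$ is uniform; neither step is deep, but the chain of dependencies must be assembled in the right order.
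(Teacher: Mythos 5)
Your proposal is correct and follows essentially the same route as the paper: differentiate \eqref{kx0} once more to get \eqref{kxx0}, integrate along $\pt$ down to $\st(\theta,t)$ to obtain $\p_\theta^2 k(\theta,t) = \p_\theta^2 k(\sc(\st),\st)(\p_\theta\pt(\theta,\st))^2 + \p_\theta k(\sc(\st),\st)\p_\theta^2\pt(\theta,\st)$, then express the shock-curve Cauchy data by differentiating the system \eqref{systemk-dt} once more (producing \eqref{kyy-on-shock}), and close using $\ddot\kl(\st) = \OO(\st^{-1/2})$ from Corollary~\ref{cor:jumps:abstract}, $\dot\kl(\st)=\OO(\st^{1/2})$ from \eqref{dotzknp1good}, $\p_\theta^2\pt(\theta,\st)=\OO(\st^{-1})$ from \eqref{dxxphipsi-bound-all}, and the transversality lower bound $(\dot\sc-\lambda_2)\geq\kappa/2$. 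You also correctly flag that Lemma~\ref{lem:second:w:deriv:on:shock} is what verifies the hypotheses of Corollary~\ref{cor:jumps:abstract}, and that the argument is formally run at the level of the iterates before passing to the limit — exactly as the paper does.
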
 
\begin{proof}[Proof of Lemma \ref{lem:kyy}]
Differentiating \eqref{kx0}, we have that
\begin{align} 
\tfrac{d}{ds} \left( \p_\theta ^2k \circ \phi_t \ (\p_\theta  \phi_t)^2 + \p_\theta  k \circ \pt \ \p_\theta ^2 \pt \right) =0\,,  \label{kxx0}
\end{align} 
and integrating in time from $\st(\theta,t)$ to $t$, we have that for each $(\theta,t)  \in \mathcal{D}^k_{\bar\eps}$,
\begin{align} 
\p_\theta ^2 k(\theta,t)  = \p_\theta ^2 k( \sc(\st), \st) \bigl( \p_\theta  \pt(\theta, \st )   \bigr)^2 + \p_\theta  k ( \sc(\st), \st )  \p_\theta ^2 \pt(\theta , \st ) \,, \ \ 
\st = \st(\theta,t) \,.  \label{kyy-goodboy}
\end{align} 
It follows from \eqref{kx-shock} that 
\begin{align} 
\p_\theta ^2 k(\theta,t)  = \p_\theta ^2 k( \sc(\st), \st) \bigl( \p_\theta  \pt(\theta, \st )   \bigr)^2 +\tfrac{ \dot \kl (\st)) }{ \dot \sc(\st)) -   \p_s \phi_t(y, \st )}  \p_\theta ^2 \pt( y, \st ) \,, 
\label{kyy-fck-all}
\end{align} 
where $\st=\st(\theta,t)$.
Next, by differentiating the system \eqref{systemk-dt}, a lengthy computation reveals that
\begin{align} 
\p_\theta ^2 k(\sc(t),t)
&= \tfrac{\ddot \kl(t)}{(\dot\sc(t) - \lambda _2(\sc(t),t))^2} \notag\\
&\quad - \Bigl( \ddot\sc(t) - \bigl(\p_t\lambda_2(\sc(t),t)  +(2\dot\sc(t) -\lambda_2(\sc(t),t))\p_\theta  \lambda_2(\sc(t),t) \bigr) \Bigr) \tfrac{\dot \kl}{(\dot\sc(t) - \lambda _2(\sc(t),t))^3}  \,. \label{kyy-on-shock}
\end{align} 
Substitution of \eqref{kyy-on-shock} into \eqref{kyy-fck-all} shows that
for all $(\theta,t)  \in \mathcal{D} ^k_{\bar\eps}$,
\begin{align} 
\p_\theta ^2 k(\theta,t)  &= \left. \left( \tfrac{\ddot \kl}{(\dot\sc - \lambda _2)^2} - \Bigl( \ddot\sc - \bigl(\p_t\lambda_2  +(2\dot\sc -\lambda_2)\p_\theta  \lambda_2 \bigr) \Bigr) \tfrac{\dot \kl}{(\dot\sc - \lambda _2)^3} \right)\right|_{(\sc(\st(\theta,t)),\st(\theta,t))}
\bigl( \p_\theta  \pt( y, \st(\theta,t) )   \bigr)^2  \notag \\
& \qquad\qquad
+\tfrac{ \dot \kl (\st(\theta,t))) }{ \dot \sc(\st(\theta,t))) -   \p_s \phi_t(y, \st(\theta,t) )}  \p_\theta ^2 \pt( y, \st(\theta,t) ) \,.
\label{kyy-shock}
\end{align} 
Given the bounds  \eqref{eq:w:z:k:a:boot} together with \eqref{eq:b:m:ass}, 
\eqref{eq:sc:ass}, \eqref{thegoodstuff},  \eqref{eq:dt:zl:kl:on:shock}, \eqref{eq:dt:dt:zl:kl:on:shock}, \eqref{eq:FU:2}, \eqref{eq:distance:curves}, \eqref{eq:double:trouble}, \eqref{dxphipsi-bound}, and \eqref{dxxphipsi-bound-all}  we find that
\begin{align*}
\sabs{\p_\theta ^2 k(\theta,t) }
&\leq (1 + C t^{\frac 13})^2 \left( \tfrac{16}{\kappa^2} \sabs{\ddot \kl (\st(\theta,t))} + \tfrac{64}{\kappa^3} \left(6\mm^4 + \tfrac{\kappa}{3} \st(\theta,t)^{-1} \right) \sabs{\dot \kl (\st(\theta,t))} \right) 
\notag\\
&\qquad + \tfrac{9 \mm^2}{\kappa^4} \sabs{\dot \kl (\st(\theta,t))} \st(\theta,t)^{-1}+ C 
\notag\\
&
\leq 50 \bb^{\frac 92} \kappa^{-5} (1 + 10 \mm^2 \kappa^{-2}) \st(\theta,t)^{-\frac 12}   
\notag\\
&
\leq  \mm^2  \st(\theta,t)^{-\frac 12} 
\end{align*}
for all $(\theta,t)  \in \mathcal{D}^k_{\bar \eps}$. See the  details in the proof of~\eqref{eq:double:trouble:7} below for a sharper bound than the one given above.
The estimate \eqref{kyy-bound1} thus holds, concluding the proof.  
\end{proof}

\subsection{Improving the bootstrap bounds for  $w_{\theta\theta} $}
\label{sec:improve:wyy}
\begin{lemma}\label{lem:wyy}  
For all $(\theta,t)  \in \mathcal{D} _{\bar\eps}$,  we have that  
\begin{align} 
\sabs{w_{\theta\theta} (\theta,t) -\wb_{\theta\theta}(\theta,t) }& \le 
\begin{cases}
\tfrac 12 \min\{N_1, N_4\} t^{-\frac 23},  &\theta \leq  \sc_2(t) \mbox{ or } \theta \geq \sc(t) + \frac{\kappa t}{3}\\
\tfrac 12 M_1 ( t^{-2}  +  \st(\theta,t)^{-\frac{1}{2}} ) & \mbox{if }   \sc_2(t) < \theta < \sc(t)  \\
\tfrac 12 N_5 t^{-2} & \mbox{if }   \sc(t) \le \theta < \sc(t) + \frac{\kappa t}{3}
\end{cases} \,, \label{wyy-diff-bound-final}
\end{align} 
where $M_1$ is as defined as in \eqref{eq:M1:M4}, $N_1$ is given by \eqref{eq:N1:N3}, while $N_4$ and $N_5$ are defined in \eqref{eq:N4:N7}.
In particular, we have improved the bootstrap bounds \eqref{eq:w:dxx:boot}, \eqref{eq:w:dxx:boot2}, and \eqref{eq:w:dxx:boot3}.  Moreover, we have 
\begin{align} 
\sabs{w_{\theta\theta} (\theta,t) }& \le 
\begin{cases}
15\bb \kappa^{-\frac{5}{3}}t^{-\frac{5}{3}} ,  &\mbox{if } \theta \leq  \sc_2(t) \mbox{ or } \theta \geq \sc(t) + \frac{\kappa t}{3}\\
 3\bb^{-\frac{3}{2}}  t^{-\frac{5}{2}}  + 5 \mm^4 \st(\theta,t)^{-\frac{1}{2}}  & \mbox{if }   \sc_2(t) < \theta < \sc(t)  \\
 3\bb^{-\frac{3}{2}}  t^{-\frac{5}{2}}  & \mbox{if }   \sc(t) \le \theta < \sc(t) + \frac{\kappa t}{3}
\end{cases} \,. \label{wyy-bound-final}
\end{align} 
\end{lemma}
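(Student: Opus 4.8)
\textbf{Proof plan for Lemma~\ref{lem:wyy}.}
The plan is to derive an exact integral representation for $w_{\theta\theta}  \circ \eta \ \eta_x^2$ along the $3$-characteristic $\eta$, in the spirit of the first-derivative identity \eqref{prelim-wx} but carried one order further, and then to compare it term-by-term with the corresponding Burgers identity for $\wb_{\theta\theta} \circ \etab \ \etab_x^2$. The starting point is the differentiated form \eqref{wx} (or equivalently \eqref{wx2}): differentiating once more in $x$ and using \eqref{cn}, \eqref{xland-k2}, and the good-unknown structure to eliminate $k_{\theta\theta} $ in favour of $k_\theta$ (exactly as in the passage leading to \eqref{prelim-dtwx}), I obtain an evolution equation of the schematic form
\begin{align*}
\tfrac{d}{dt}\bigl( (w_{\theta\theta} - \tfrac 14 c k_{\theta\theta}) \circ \eta \ \eta_x^2 \bigr) + (\text{bounded})\cdot \bigl( (w_{\theta\theta} - \tfrac 14 c k_{\theta\theta}) \circ \eta \ \eta_x^2 \bigr)  = (\text{forcing}) \circ \eta \ \eta_x^2\,,
\end{align*}
where the forcing involves only $w_\theta, z_\theta, k_\theta, a_\theta, a_{\theta\theta} , z_{\theta\theta} , w_0''$, the characteristic derivatives $\eta_x, \eta_{xx}$, and the integrating factor $\mathcal I$ from \eqref{eq:Ia:def}. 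Integrating in time and subtracting the analogous Burgers identity (which has no $k$ terms and no $z$, $a$, and which obeys $\wb_{\theta\theta} \circ \etab \ \etab_x^2 = w_0''$), I get a master identity for $w_{\theta\theta}  \circ \eta \ \eta_x^2 - \wb_{\theta\theta} \circ \etab \ \etab_x^2$ in which every term is controlled by previously established inputs.

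The key inputs, in the order I would invoke them, are: (i) the bounds on $\eta_x,\eta_{xx}$ and on $\eta_x-\etab_x$, $\eta_{xx}-\etab_{xx}$ from Lemmas~\ref{lem:dxeta-dff} (estimates \eqref{dxeta-diff}, \eqref{dxxeta-diff}, \eqref{dxxeta-bound}, \eqref{dxxeta-bound-actually-useful}) and from \eqref{eq:px:eta}, together with the Burgers second-derivative estimates $|\p_{\theta}^2 \wb| \lesssim \bb^{-3/2} t^{-5/2}$ near the shock and $\lesssim \bb\kappa^{-5/3}t^{-5/3}$ far from it (from Lemma~\ref{lem:u0:u0'} via \eqref{eq:pxx:wb:def}, \eqref{first-fun-day}, \eqref{first-fun-dayL}); (ii) the ${\mathcal X}_{\bar \eps}$ first-derivative bounds \eqref{eq:w:z:k:a:boot} and the derivative-of-$\varpi$ bound from Lemma~\ref{lem:varpi:y}, which in turn yields the second-derivative bound for $a$ in Lemma~\ref{lem:ayy}; (iii) the already-improved bootstrap \eqref{kyy-bound1} from Lemma~\ref{lem:kyy} (so $k_{\theta\theta}  = \OO(\st^{-1/2})$ is now available with the final constant $M_3$), and the still-to-be-closed bootstrap for $z_{\theta\theta} $ \eqref{eq:z:dxx:boot}/\eqref{eq:z:dxx:boot2}; (iv) the transversal time-integral estimates of Lemma~\ref{lem:Steve:needs:this} and the explicit integrals \eqref{st-integral}, \eqref{stt-integral}, \eqref{zyy-integral}, \eqref{eq:stt:eta:int}, which are exactly what is needed to absorb the $\st^{-1/2}$-type singularities appearing in the forcing after composition with $\eta$. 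Finally, the behaviour on the shock itself uses Lemma~\ref{lem:second:w:deriv:on:shock} to know that $\tfrac{d^2}{dt^2}(w-\wb)|_{\sc^\pm}$ is at worst $\OO(t^{-1})$. I would split the estimate into the same three spatial regions as the statement: far from $\sc$ (where $z\equiv k\equiv 0$ and the forcing is $\lesssim s^{-1/3}$, giving the $N_1,N_4\, t^{-2/3}$ bound, sharpened using the better far-field $\eta_{xx}-\etab_{xx}$ estimate), the strip $\sc_2<\theta<\sc$ (where $k\neq 0$ and the $\st^{-1/2}$ singularity must be tracked, and where the rough bound \eqref{dxxeta-bound} must first be upgraded via the differentiated identity \eqref{eq:barf:1}), and the strip $\sc<\theta<\sc+\kappa t/3$ (where $z\equiv k\equiv 0$ but $(\theta,t)$ is close to the shock so only a $t^{-2}$ bound is available). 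In each region, after bounding all forcing terms and the integrating factor (using \eqref{eq:want:to:vomit:2aa}), a Grönwall-type closing argument on the time interval $[\nu,t]$ (where $\nu$ is the appropriate shock/weak-discontinuity intersection time) gives \eqref{wyy-diff-bound-final} with a constant smaller by a factor $2$ than the bootstrap constant, which then closes \eqref{eq:w:dxx:boot}, \eqref{eq:w:dxx:boot2}, \eqref{eq:w:dxx:boot3}; adding the triangle inequality with the Burgers bounds yields \eqref{wyy-bound-final}.

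I expect the main obstacle to be the strip $\sc_2(t)<\theta<\sc(t)$, and within it the need for a genuinely two-step argument: the crude first pass through \eqref{first-good-day2} only gives $\eta_{xx}-\etab_{xx} = \OO(t^{-3/2})$, which is insufficient to close the $w_{\theta\theta} $ bootstrap at the claimed rate $\st^{-1/2}+t^{-2}$; one must then feed this crude bound back into the differentiated flow identity \eqref{eq:barf:1} to get the sharp $\OO(t^{-1/2})$ estimate for $\eta_{xx}-\etab_{xx}$ (this is precisely \eqref{eq:barf:final}), and only then can the $w_{\theta\theta} $ estimate be closed. Keeping careful track of which powers of $t$, $\st(\theta,t)$, and $\stt(\theta,t)$ appear — and verifying that each time-integral against these inverse-power weights converges with the right exponent — is the delicate bookkeeping. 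The second subtlety is making sure the argument is run iteratively at the level of $(w^{(n)},z^{(n)},k^{(n)})$ so that the bootstrap constants are genuinely improved at each step and survive passing $n\to\infty$, as explained in the paragraph preceding \eqref{eq:second:order:boot}; since all the constituent lemmas are stated for the limiting solution and the iteration structure is identical, this is routine but must be acknowledged. Everything else — the far-field region and the region ahead of the shock — is comparatively mechanical, driven by the fact that there $z$ and $k$ vanish identically and the forcing is a harmless power of $s$.
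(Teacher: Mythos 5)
Your plan matches the paper's proof in all essentials: the master identity is exactly what you get by differentiating the integral representation \eqref{eq:red:panda:3} for $q^w = w_\theta - \tfrac 14 c k_\theta$ in $x$, yielding \eqref{eq:out:of:beer:0}; the region-by-region split, the inputs from Lemmas~\ref{lem:dxeta-dff}, \ref{lem:kyy}, \ref{lem:varpi:y}, \ref{lem:ayy}, \ref{lem:Steve:needs:this}, the transversal time integrals \eqref{st-integral}, \eqref{stt-integral}, \eqref{eq:stt:eta:int}, and the conversion from $\p_\theta q^w$ back to $w_{\theta\theta}$ using the first-derivative bounds and \eqref{kyy-bound1} are precisely the steps the paper takes. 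The economy you do not flag, which makes the paper's write-up short, is that the second and third lines of \eqref{eq:out:of:beer:0} are verbatim the first and third lines of \eqref{eq:barf:1}, already estimated inside the proof of Lemma~\ref{lem:dxeta-dff}; your two-step upgrade of $\eta_{xx}-\etab_{xx}$ is therefore carried out there, not in the proof of Lemma~\ref{lem:wyy} itself, where one simply invokes \eqref{dxxeta-diff}.

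One phrase in your plan is conceptually off, though it does not derail the argument: you say the good-unknown structure ``eliminate[s] $k_{\theta\theta}$ in favour of $k_\theta$'' from the forcing. It does not. The forcing $Q^w$ of the first-derivative identity \eqref{eq:red:panda:3} involves only $k_\theta$; once you differentiate, $\p_\theta Q^w$ \emph{does} contain $k_{\theta\theta}$ (cf.~\eqref{eq:want:to:vomit:3b}), but it arrives multiplied by a factor $\sim c k_\theta = \OO(t^{1/2})$, which tames the $\st^{-1/2}$ singularity. What the structure actually eliminates is $k_{\theta\theta\theta}$, not $k_{\theta\theta}$; the latter still needs the improved bootstrap from Lemma~\ref{lem:kyy}, which you do invoke, so your plan is internally consistent despite the misdescription. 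Finally, Lemma~\ref{lem:second:w:deriv:on:shock} is not needed here (it feeds the $\zl,\kl$ second-derivative estimates used in Lemmas~\ref{lem:kyy} and \ref{lem:zyy}), and the closing step is a direct term-by-term bound from the explicit formula rather than a Gr\"onwall iteration.
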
 
\begin{proof}[Proof of Lemma \ref{lem:wyy}] 
Throughout this proof we will take $\bar \eps$, and hence $t$, to be sufficiently small with respect to $\kappa,  \bb, \cc$ and $\mm$. For any $(\theta,t)  \in \mathcal{D} _{\bar\eps}$, we define $x \in \Upsilon(t)$ by $x = \eta^{-1} (\theta,t) $.  

Recall that the good unknown $q^w$ is defined in \eqref{qw-def}, and it satisfies \eqref{eq:red:panda:3}. 
Differentiating \eqref{eq:red:panda:3} with respect to the label $x$, we obtain the identity 
\begin{align}
\p_\theta  q^w(\eta(x,t),t) \, \eta_x^2(x,t) 
&= - q^w(\eta(x,t)) \eta_{xx} (x,t) \notag\\
&\qquad + \p_x \left( w_0'(x) e^{-{\mathcal I}(x,0,t)} + \int_0^t Q^w(\eta(x,s),s) \eta_x(x,s) e^{-{\mathcal I}(x,s,t)} ds \right) \,.
\end{align}
Taking into account the definition of $q^w$ in \eqref{qw-def} and the identity 
$\p_{\theta}^2  \wb \circ \etab \, \etab_x^2 + \p_\theta  \wb \circ \etab \, \etab_{xx} = w_0''(x)$, we thus obtain that 
\begin{align}
&\p_\theta  q^w \circ \eta \, \eta_x^2   - \p_{\theta}^2  \wb \circ \etab \, \etab_x^2\notag\\
&\qquad= w_0'(x) \etab_x^{-1} \left(  \etab_{xx} -  \eta_{xx} \right) +  \left( \p_\theta  \wb \circ \etab   - \p_\theta  w \circ \eta   + \tfrac 14 (c k_\theta) \circ \eta \right)  \eta_{xx}   \notag\\
&\qquad\qquad + w_0''  \left(  e^{-{\mathcal I}(\cdot,0,s)} - 1 \right) - w_0'  e^{-{\mathcal I}(\cdot,0,s)} \p_x {\mathcal I}(\cdot,0,s) \notag\\
&\qquad\qquad +  \int_0^s \left( \p_\theta  Q^w \circ \eta \ \eta_x^2 + Q^w\circ \eta \  \eta_{xx}  - Q^w \circ \eta \  \eta_x \p_x {\mathcal I}(\cdot,s',s) \right)e^{-{\mathcal I}(\cdot,s',s)} ds' \,.
\label{eq:out:of:beer:0}
\end{align}
The key observation is that second line in \eqref{eq:out:of:beer:0} is precisely the first line in \eqref{eq:barf:1}, while the third line in \eqref{eq:out:of:beer:0} is precisely the third line in \eqref{eq:barf:1}; we will use this fact to avoid redundant bounds.

\vspace{.05 in}
\noindent
{\bf Bounds in the region $ \sc_2(t) < \theta < \sc(t) + \frac{\kappa t}{3}$.}
By taking into account \eqref{eq:cb:0}, \eqref{dxxeta-diff},  \eqref{thechicken}, \eqref{first-fun-day}, \eqref{eq:w:z:k:a:boot} and \eqref{eq:R:def}, we obtain that 
\begin{align}
&\mbox{the first line on RHS of } \eqref{eq:out:of:beer:0} \notag\\
&\qquad \leq 40 \mm t^{-\frac 32}  + \left( 8 R_1 \bb^{-\frac 32} t^{-\frac 12}   + R_2 (\bb t)^{-\frac 12} + \tfrac 14 \mm R_6 t^{\frac 12} \right) \left(\tfrac 13 \bb^{-\frac 32} t^{-\frac 32} +  20 \mm t^{-\frac 12} \right) 
\notag\\
&\qquad \leq  \mm^3 (\bb t)^{-2} 
\,,
\label{eq:rock:you:1}
\end{align}
since $t$ is sufficiently small.
Next, since second line in \eqref{eq:out:of:beer:0} equals the first line in \eqref{eq:barf:1}, from \eqref{eq:barf:2},  \eqref{first-fun-day}, and the fact that $t > \nu^\sharp(x,t)$, we obtain
\begin{align}
\mbox{the second line on RHS of } \eqref{eq:out:of:beer:0}
\leq 24 \mm t |w_0''(x)|  + 2 (\mm^{\frac 12} + \mm M_3) t^{\frac 12} |w_0'(x)| 
\leq 10 \mm (\bb t)^{-\frac 32}\,.
\label{eq:rock:you:2}
\end{align}
Similarly, since third line in \eqref{eq:out:of:beer:0} equals the third line in \eqref{eq:barf:1}, from \eqref{eq:barf:4},  \eqref{first-fun-day}, and the fact that $t > \nu^\sharp(x,t)$, we obtain
\begin{align}
\mbox{the third line on RHS of } \eqref{eq:out:of:beer:0}
\leq C t^{-\frac 34} \,.
\label{eq:rock:you:3}
\end{align} 
By adding \eqref{eq:rock:you:1}, \eqref{eq:rock:you:2}, and \eqref{eq:rock:you:3}, since $t$ is sufficiently small we deduce that 
\begin{align}
\sabs{\p_\theta  q^w \circ \eta \, \eta_x^2   - \p_{\theta}^2  \wb \circ \etab \, \etab_x^2} 
\leq 2 \mm^3 (\bb t)^{-2}\,.
\label{eq:out:of:beer:2}
\end{align}

Next, by recalling the definition  of $q^w$  in \eqref{qw-def}, and appealing to \eqref{eq:w:z:k:a:boot}, \eqref{eq:R:def}, and \eqref{kyy-bound1}, we deduce   
\begin{align}
\sabs{w_{\theta\theta}  \circ \eta (\eta_x)^2 - {\wb}_{\theta\theta} \circ \etab ({\etab}_x)^2 }
\leq  3 \mm^3 (\bb t)^{-2} + \mm^3 \st(\theta,t)^{-\frac 12} \,.
\label{eq:out:of:beer:22}
\end{align}
With \eqref{eq:out:of:beer:22} in hand, we use the notation introduced in \eqref{first-good-day2} to rewrite
\begin{align}
w_{\theta\theta} (\theta,t)  - \wb_{\theta\theta}(\theta,t) 
&=  \eta_x^{-2} \left( w_{\theta\theta}  \circ \eta (\eta_x)^2 - {\wb}_{\theta\theta} \circ \etab ({\etab}_x)^2 \right) - \eta_x^{-2} {\mathcal K}_3 - \eta_x^{-2} {\mathcal K}_1 
\,,
\label{eq:out:of:beer:3}
\end{align}
and thus we may combine \eqref{eq:cb:0}, \eqref{glassy-day0}, and \eqref{the-star-chicken}, 
to arrive at
\begin{align}
\sabs{w_{\theta\theta} (\theta,t)  - \wb_{\theta\theta}(\theta,t) } \leq 5 \mm^4 t^{-2}   + 2 \mm^3 \st(\theta,t)^{-\frac 12}
\end{align}
since $\mm$ is large compared to $\bb$.
The above estimate proves the second and third bounds in \eqref{wyy-diff-bound-final} once we ensure that 
 $\tfrac 12 M_1 \geq 5 \mm^4  $  and  $\tfrac 12 N_5 \geq 5 \mm^4$. These conditions hold in view of the definitions \eqref{eq:M1:M4} and \eqref{eq:N4:N7}.

\vspace{.05 in}
\noindent
{\bf Bounds in the region $\theta \leq \sc_2(t)$ or $\theta \ge \sc(t) + \frac{\kappa t}{3}$.}
In order to estimate the first line on the right side of \eqref{eq:out:of:beer:0}, we rewrite
\begin{align}
w_\theta \circ \eta - \wb_{\theta} \circ \eta = \eta_x^{-1} \left( w_\theta \circ \eta \, \eta_x - \wb_{\theta}\circ \etab \, \etab_x \right) -  \eta_x^{-1} \wb_{\theta} \circ \etab \left(\eta_x - \etab_x  \right)
\end{align}
so that from the second equality in \eqref{nopork1}, \eqref{eq:cb:0}, \eqref{eq:want:to:vomit:2aa}, \eqref{eq:want:to:vomit:2a}, \eqref{eq:want:to:vomit:2b}, and \eqref{dxeta-diff}, we have
\begin{align}
\sabs{(w_\theta \circ \eta - \wb_{\theta} \circ \eta)(x,t)} 
&\leq  40 \mm t |w_0'(x)| + \mm^2 t^{\frac 12} + C t + 200 \mm |w_0'(x)| t^{\frac 43} \notag\\
&\leq  50 \mm t |w_0'(x)| + 2 \mm^2 t^{\frac 12} \,.
\end{align}
Thus, analogously to \eqref{eq:rock:you:1}, using \eqref{eq:cb:0},  \eqref{dxxeta-diff},  \eqref{first-fun-dayL}, and the fact that $k(\theta,t)  = 0$, we have
\begin{align}
\mbox{the first line on RHS of } \eqref{eq:out:of:beer:0} 
& \leq   20  |w_0'(x)| \mm t^{ \frac 13}  + \left(  50 \mm t |w_0'(x)| + 2 \mm^2 t^{\frac 12} \right) \left(4 \bb \kappa^{-\frac 53} t^{-\frac 23} +  10 \mm t^{ \frac 13} \right) 
\notag\\
& \leq  C t^{- \frac 13}  
\,.
\label{eq:rock:you:1a}
\end{align}
Next, similarly to \eqref{eq:rock:you:2} we have that 
\begin{align}
\mbox{the second line on RHS of } \eqref{eq:out:of:beer:0}
&\leq 24 \mm t |w_0''(x)|  + 2 (\mm^{\frac 12} + \mm M_3) t^{\frac 12} |w_0'(x)| \notag\\
&\leq  96 \mm \bb \kappa^{-\frac 53} t^{-\frac 23} + C t^{-\frac 16}\,.
\label{eq:rock:you:2a}
\end{align}
As in \eqref{eq:rock:you:3}, but this time using that $\nu^\sharp(x,t)  \geq t$, we obtain from the first line in  \eqref{eq:barf:4} that 
\begin{align}
\mbox{the third line on RHS of } \eqref{eq:out:of:beer:0}
\leq C  \,.
\label{eq:rock:you:3a}
\end{align} 
By adding \eqref{eq:rock:you:1a}, \eqref{eq:rock:you:2a}, and \eqref{eq:rock:you:3a}, using that $k(\eta(x,s),s) = 0$, since $t$ is sufficiently small we deduce   
 \begin{align}
\sabs{w_{\theta\theta}  \circ \eta (\eta_x)^2 - {\wb}_{\theta\theta} \circ \etab ({\etab}_x)^2 }
\leq  20 \mm t^{-\frac 23}  \,.
\label{eq:out:of:beer:22a}
\end{align}
Here we have also used \eqref{eq:b:m:ass}.
Finally, using the decomposition \eqref{eq:out:of:beer:3}, and appealing to the bounds \eqref{glassy-day0L} and \eqref{the-star-chickenL}
we deduce that 
\begin{align}
\sabs{w_{\theta\theta} (\theta,t)  - \wb_{\theta\theta}(\theta,t) } \leq  2\mm^4 t^{-\frac 23}\,.
\end{align}
The above estimate proves the first bound  in \eqref{wyy-diff-bound-final} once we ensure that 
 $\tfrac 12 \min\{ N_1, N_4\} \geq 2\mm^4 $. This condition  holds in view of the definitions \eqref{eq:N1:N3} and \eqref{eq:N4:N7}.
 
In order to complete the proof of the lemma, we note that \eqref{wyy-bound-final} follows from \eqref{wyy-diff-bound-final}, the triangle inequality, and \eqref{thegoodstuff2}\,.
\end{proof}

\begin{lemma} \label{lem:dyqw}
Recall  the definition of $q^w$ in \eqref{qw-def}. 
For all $(\theta,t)  \in \mathcal{D} _{\bar \eps}^k$ such that  $\sc_2(t) < \theta < \sc_2(t) + \tfrac{\kappa t}{6}$, we have that
\begin{align} 
\sabs{ q^w_\theta(\theta,t) } &\le 3 \bb (\kappa t)^{-\frac 53}  \,. \label{vladneeds2}
\end{align}
\end{lemma}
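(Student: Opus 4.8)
The goal is to bound $q^w_\theta = \partial_\theta(w_\theta - \tfrac14 c k_\theta)$ in the thin sliver $\sc_2(t) < \theta < \sc_2(t) + \tfrac{\kappa t}{6}$ adjacent to the weak contact curve. The natural starting point is the identity \eqref{eq:red:panda:3}, which expresses $q^w \circ \eta \, \eta_x$ as the sum of the transported initial slope $w_0'(x) e^{-{\mathcal I}(x,0,t)}$ and a time-integral of the forcing $Q^w$ defined in \eqref{eq:Qw:def}. Differentiating this in the Lagrangian label $x$ produces precisely the identity \eqref{eq:out:of:beer:0} (with $\partial_\theta q^w$ isolated), which I would re-use verbatim rather than re-derive. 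The point is that, for $\theta$ in this restricted range, the associated label $x = \eta^{-1}(\theta,t)$ satisfies $|x| \approx \theta - \sc_2(t) \lesssim \kappa t$ only in a controlled sense, but more importantly $\eta(x,s)$ stays strictly away from $\sc(s)$: since $\theta < \sc_2(t) + \tfrac{\kappa t}{6}$ and $\sc(t) - \sc_2(t) \approx \tfrac{\kappa t}{3}$ by \eqref{eq:distance:curves}, one has $|\eta(x,s) - \sc(s)| \gtrsim \kappa(t-s) + \kappa t$ for all $s$, and in particular $\eta(x,s)\notin \mathcal D^k_{\bar\eps}$ near the shock is irrelevant — what matters is that $\st(\eta(x,s),s)$, $\nu^\sharp(x,t)$ stay comparable to $t$ away from degeneration. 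The Burgers flow map $\etab$ controlling $x$ satisfies $|w_0'(x)| \lesssim \bb(\kappa t)^{-2/3}$, $|w_0''(x)| \lesssim \bb^{-3/2}$ type bounds via Lemma~\ref{lem:u0:u0'}, but since $|x| \gtrsim$ the distance to $\sc_2$, I expect $|w_0'(x)|, |w_0''(x)|$ to be controlled by inverse powers of $\kappa t$ that are one better than in the generic $\mathcal D^k_{\bar\eps}$ estimate.

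\textbf{Key steps in order.} First, establish that for $\theta$ in the stated range, $\eta^{-1}(\theta,\cdot)$ gives a label $x$ with $|x|$ bounded below by a constant times $\kappa t$ up to lower order (using the transversality estimate \eqref{eq:finally:useful:3} and the distance estimate \eqref{eq:distance:curves}), and deduce the improved $w_0'(x)$, $w_0''(x)$, $w_0'''(x)$ bounds from \eqref{eq:u0:interest:all}. Second, bound each of the three lines on the right side of \eqref{eq:out:of:beer:0}: the first line uses the improved $\eta_{xx} - \etab_{xx}$ estimate from Lemma~\ref{lem:dxeta-dff} (the first branch of \eqref{dxxeta-diff}, giving $O(\mm t^{1/3})$) together with $|w_\theta - \wb_\theta| \lesssim \cdots$ from \eqref{eq:w:dx:boot} and the $k_\theta$ bound \eqref{eq:k:dx:boot}; the second line reuses \eqref{eq:barf:2} in the regime $\nu^\sharp(x,t) = t$; the third line reuses the first branch of \eqref{eq:barf:4}. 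Third, divide by $\eta_x^2 \in [\tfrac14, 4]$ (using \eqref{eq:cb:0}) to pass from $\partial_\theta q^w \circ \eta \, \eta_x^2$ back to $\partial_\theta q^w$ at $(\theta,t)$, and compare against $\wb_{\theta\theta}\circ \etab\, \etab_x^2$, subtracting off $\wb_{\theta\theta}$ using the decomposition $\partial_\theta q^w = \eta_x^{-2}(\partial_\theta q^w \circ \eta\, \eta_x^2 - \wb_{\theta\theta}\circ\etab\,\etab_x^2) + \eta_x^{-2}\wb_{\theta\theta}\circ\etab\,\etab_x^2$, where the last term is itself $O(\bb(\kappa t)^{-5/3})$ by the $\wb_{\theta\theta}$ estimate \eqref{eq:pxx:wb:def} combined with the lower bound $|x|\gtrsim \kappa t$. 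Fourth, collect constants and check that the sum is $\le 3\bb(\kappa t)^{-5/3}$ for $\bar\eps$ small, using \eqref{eq:b:m:ass} to absorb $\mm$-dependent constants into the leading $\bb(\kappa t)^{-5/3}$ term.

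\textbf{Main obstacle.} The delicate point is that $q^w_\theta$ involves $w_{\theta\theta}$, which by Lemma~\ref{lem:wyy} genuinely blows up like $\st(\theta,t)^{-1/2} \sim (\theta - \sc_2(t))^{-1/2}$ as $\theta \to \sc_2(t)^+$ — yet the claimed bound \eqref{vladneeds2} is uniform (no $\st^{-1/2}$ factor). The resolution must be that the $\tfrac14 c k_\theta$ correction in the definition of $q^w$ exactly cancels the singular part of $w_{\theta\theta}$: indeed $k_{\theta\theta} \sim \st^{-1/2}$ by Lemma~\ref{lem:kyy}, and $\partial_\theta(c k_\theta) = c_\theta k_\theta + c k_{\theta\theta}$ contributes a matching $\st^{-1/2}$ term. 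So the crux is to verify that in \eqref{eq:out:of:beer:0} the would-be singular contributions — which in the $w_{\theta\theta}$ analysis came from the $M_1 \st^{-1/2}$ bootstrap term in \eqref{eq:w:dxx:boot} and entered via $\mathcal K_2$ in \eqref{glassy-day000} — are absent here because the $k$-forcing terms enter \eqref{eq:out:of:beer:0} only through $\partial_\theta Q^w$ (hence $k_{\theta\theta}$, bounded by $\st^{-1/2}$) integrated in time against a transversal flow, which by the gain-of-regularity mechanism of \eqref{st-integral} produces $\int \st^{-1/2}\, ds \lesssim t^{1/2}$, i.e. no surviving singularity. I would need to track carefully that the restricted range $\theta < \sc_2(t)+\tfrac{\kappa t}{6}$ (rather than all of $\mathcal D^k_{\bar\eps}$) keeps $\st(\eta(x,s),s)$ comparable to $t$ so that even the pointwise $k_{\theta\theta}$ term at the endpoint is $O(t^{-1/2})$, which is dominated by $\bb(\kappa t)^{-5/3}$ since $t$ is small. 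That bookkeeping — showing the entropy-induced singular terms are either integrated away or harmless in this sliver — is where the real work lies.
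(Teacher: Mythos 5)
Your proposal is correct and matches the paper's proof: bound $\p_\theta q^w$ in Lagrangian form $\eta_x^{-2}\,\p_\theta q^w\circ\eta\,\eta_x^2$, compare against $\wb_{\theta\theta}\circ\etab\,\etab_x^2$, and exploit that $\abs{\eta^{-1}(\theta,t)}\gtrsim\kappa t$ for $\theta$ in this sliver (a consequence of $\sc(t)-\theta\gtrsim\tfrac{\kappa t}{6}$ via \eqref{eq:distance:curves} and \eqref{eq:finally:useful:3}) so that $\wb_{\theta\theta}\circ\etab = w_0''(x)\bigl(1+t w_0'(x)\bigr)^{-3}$ enjoys the improved decay $\lesssim\bb(\kappa t)^{-5/3}$; your diagnosis that the $\tfrac14 c k_\theta$ piece of $q^w$ cancels the $\st^{-1/2}$ singularity — since the entropy enters \eqref{eq:out:of:beer:0} only through $\p_\theta Q^w$, integrated along a flow transversal to $\sc_2$ — is exactly the point. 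The one shortcut the paper takes is that your Step Two (bounding the three lines of \eqref{eq:out:of:beer:0}) was already carried out in the proof of Lemma~\ref{lem:wyy} and recorded as \eqref{eq:out:of:beer:2}, valid on all of $\sc_2(t)<\theta<\sc(t)+\tfrac{\kappa t}{3}$, so you may simply cite it; also, for $\theta$ in this range the applicable branch of \eqref{dxxeta-diff} is the second one (giving $O(\mm t^{-1/2})$, not $O(\mm t^{1/3})$) and $\nu^\sharp(x,t)<t$ since $\eta(x,\cdot)$ crosses $\sc_2$ before time $t$, but neither changes the outcome \eqref{eq:out:of:beer:2}.
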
 
\begin{proof}[Proof of Lemma \ref{lem:dyqw}] 
Combining \eqref{eq:out:of:beer:2} with \eqref{eq:u0:ass:quant}, \eqref{eq:pxx:wb:def}, \eqref{eq:cb:0},   \eqref{eq:finally:useful:3} (with $s=0$), and \eqref{eq:distance:curves} we deduce that $|\eta^{-1}(\theta,t) |\geq \frac{\kappa t}{7}$ and thus
\begin{align}
 \sabs{ q^w_\theta(\theta,t) }
 &\leq 2 \sabs{\wb_{\theta\theta}(\etab(\eta^{-1}(\theta,t) ,t),t)} + 2 \mm^3 (\bb t)^{-2} \notag\\
 &\leq 10 \sabs{w_0''(\eta^{-1}(\theta,t) )} +  2 \mm^3 (\bb t)^{-2} \notag\\
 &\leq 3 \bb (\kappa t)^{-\frac 53}\,.
\end{align}
The bound \eqref{vladneeds2} is thus proven.
\end{proof} 

\subsection{Improving the bootstrap bounds for  $z_{\theta\theta} $}
\label{sec:zyy}	
Just as we defined the function $q^w(\theta,t) $ in \eqref{qw-def}, we introduce the  function 
\begin{align} 
q^z(\theta,t)  & = z_\theta(\theta,t)  + \tfrac{1}{4}   c(\theta,t)  k_\theta (\theta,t)  \,. \label{qz-def} 
\end{align} 
Using this unknown, we rewrite the equation \eqref{prelim-dtwz}  as 
\begin{align} 
\tfrac{d}{ds} (q^z \circ \psi_t \p_\theta  \psi_t ) 
& = -Q^z \circ\! \psi_t  \ \p_\theta  \psi_t  \,, \label{qz-eqn}
\end{align} 
where
\begin{align} 
Q^z =   c k_\theta (\tfrac{1}{12} w_\theta + \tfrac{1}{12} z_\theta +  \tfrac{2}{3}  a) + \tfrac{8}{3} \p_\theta (a z)
 \,. \label{Qz-def}
\end{align} 
Differentiating \eqref{qz-eqn}, we have that
\begin{align} 
\tfrac{d}{ds} (q^z_{\theta} \circ \psi_t (\p_\theta  \psi_t)^2 + q^z \circ \psi_t \p_\theta ^2 \psi_t  ) 
& = 
- \p_\theta Q^z\! \circ\! \psi_t  \ (\p_\theta  \psi_t)^2
-Q^z\! \circ\! \psi_t  \ \p_\theta ^2 \psi_t \,,   \label{dyyqz-eqn} 
\end{align} 
which may be integrated on $[\stt(\theta,t),t]$ to obtain that 
\begin{align} 
q^z_{\theta} (\theta,t)   = \left. \bigl(q^z_{\theta}(\p_\theta  \psi_t)^2 + q^z \p_\theta ^2 \psi_t  \bigr) \right|_{ (\sc(\stt),\stt)} 
- \int_{\stt(\theta,t)}^t \Bigl( \p_\theta Q^z\! \circ\! \psi_t  \ (\p_\theta  \psi_t)^2+Q^z\! \circ\! \psi_t  \ \p_\theta ^2 \psi_t \bigr) ds \,. \label{dyyqz-identity}
\end{align} 
for all $(\theta,t)  \in \mathcal{D}^z_{\bar \eps}$. Here we have used that $\pst(\theta,\stt(\theta,t)) = \sc(\stt(\theta,t))$ and the fact that $\pst(\theta,t) = \theta$, which implies $\p_\theta  \pst(\theta,t) = 1$ and $\p_{\theta}^2  \pst(\theta,t) = 0$. In order to estimate the right side of \eqref{dyyqz-identity}, we first establish:
\begin{lemma} \label{lem:for-qz-stuff}
For $(\theta,t)  \in \mathcal{D} ^z _{\bar \eps}$, 
\begin{align} 
\int_{\stt(\theta,t)}^t \Bigl| \p_\theta Q^z\! \circ\! \psi_t  \ (\p_\theta  \psi_t)^2+Q^z\! \circ\! \psi_t  \ \p_\theta ^2 \psi_t \bigr| ds 
\le 3 \mm^3     \stt(\theta,t)^ {-\frac{1}{2}}  \,.
\label{lower-the-bridge1}
\end{align} 
\end{lemma}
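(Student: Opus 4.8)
\textbf{Proof plan for Lemma~\ref{lem:for-qz-stuff}.} The strategy is to bound the two pieces of the integrand separately, splitting the time interval $[\stt(\theta,t),t]$ according to whether the characteristic $\pst(\theta,s)$ lies in $\DD_{\bar\eps}^k$ (where the entropy gradient is active) or not. First I would expand $\p_\theta Q^z$ using the definition \eqref{Qz-def}; differentiating produces terms involving $c_\theta k_\theta$, $c k_{\theta\theta} $, $c k_\theta \p_\theta (w_\theta + z_\theta + a)$, and $\p_\theta ^2(az)$. The crucial structural point, exactly as in the heuristic of \textbf{Step 4} of the outline, is that the genuinely singular contribution comes from the $c k_{\theta\theta} $ term and from the $z_{\theta\theta} $ and $a_{\theta\theta} $ hidden inside $\p_\theta ^2(az)$ and $c k_\theta \p_\theta  z_\theta$; every other term is bounded by $\les t^{-\frac12}$ or better using only the first-derivative bounds $(w,z,k,a)\in{\mathcal X}_{\bar\eps}$ from \eqref{eq:w:z:k:a:boot} together with \eqref{thegoodstuff1}. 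For the singular pieces I would invoke the bootstrap assumptions \eqref{eq:k:dxx:boot}, \eqref{eq:z:dxx:boot}, \eqref{eq:z:dxx:boot2}, and the $a_{\theta\theta} $ estimate \eqref{eq:ayy:improve:bootstrap} from Lemma~\ref{lem:ayy}, all of which degenerate like $\st(\cdot)^{-1/2}$ or $\stt(\cdot)^{-1/2}$ near $\sc_2$, respectively $\sc_1$.

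The heart of the argument is then the transversality integral estimate. Along $\pst(\theta,\cdot)$ we have $\st(\pst(\theta,s),s)\approx \kappa^{-1}(\pst(\theta,s)-\sc_2(s))$ when $\pst(\theta,s)\in\DD_{\bar\eps}^k$, and the curve $\gamma(s):=\pst(\theta,s)-\sc_2(s)$ satisfies $\dot\gamma(s)\le -\tfrac{3}{10}\kappa$ by Remark~\ref{rem:FU:2} and \eqref{eq:distance:curves}; this is precisely the computation already carried out in \eqref{stt-integral}, which yields $\int_{\stt(\theta,t)}^t \st(\pst(\theta,s),s)^{-1/2}\,ds\le \tfrac52\,\stt(\theta,t)^{1/2}$. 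Similarly, for the portion of the trajectory lying in $\DD^z_{\bar\eps}\setminus\overline{\DD^k_{\bar\eps}}$ one uses that $\stt(\pst(\theta,s),s)=\stt(\theta,t)$ is constant, so the $\stt(\theta,t)^{-1/2}$ bound pulls out of the integral with a factor $t$, contributing $\les t\,\stt(\theta,t)^{-1/2}$, which is again $\les \stt(\theta,t)^{-1/2}$ after absorbing the small time factor. The $\p_\theta ^2\psi_t$ term is handled using the second-derivative bounds on the $1$-characteristic from Lemma~\ref{lem:dyy12}: in $\DD^k_{\bar\eps}$ one has $s|\p_\theta ^2\pst(\theta,s)|\les \kappa^{-3/2}$, and $Q^z\circ\pst$ is itself $\OO(s^{1/2})$ (since $Q^z$ contains a factor $ck_\theta$ which is $\OO(s^{1/2})$, plus the $\p_\theta (az)$ piece which is also controlled), so the product is integrable and contributes $\les \stt(\theta,t)^{1/2}$. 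Combining the first-derivative $\p_\theta  Q^z$ contributions ($\les t^{1/2}$, using \eqref{dxphipsi-bound}), the singular $c k_{\theta\theta} $ and $\p_\theta ^2(az)$ contributions ($\les \stt(\theta,t)^{1/2}+t\,\stt(\theta,t)^{-1/2}$), and the $Q^z\,\p_\theta ^2\pst$ contribution, and using $\stt(\theta,t)\le t\le\bar\eps$ to write everything in the form $\stt(\theta,t)^{-1/2}$ times a small constant, I expect to arrive at the stated bound $3\mm^3\,\stt(\theta,t)^{-1/2}$, with room to spare so that the final constant is correct.

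The main obstacle I anticipate is bookkeeping the constant: the bound has to come out with coefficient exactly $3\mm^3$, which means tracking how the constants $M_2, M_3, N_2$ from the bootstrap (all of size $\les\mm^3$) and $M_5, N_3$ from the $a_{\theta\theta} $ estimate combine, and checking that the small powers of $t$ accompanying the non-sharp terms (e.g. the $t\,\stt^{-1/2}$ from the $\DD^z_{\bar\eps}\setminus\overline{\DD^k_{\bar\eps}}$ region) genuinely beat them down once $\bar\eps$ is taken small in terms of $\kappa,\bb,\cc,\mm$. A secondary subtlety is that one must be careful about which region $\pst(\theta,s)$ occupies as $s$ ranges over $[\stt(\theta,t),t]$: for $(\theta,t) \in\DD^k_{\bar\eps}$ the trajectory stays in $\DD^k_{\bar\eps}$ for $s\in[\stt(\theta,t),t]$, whereas for $(\theta,t) \in\DD^z_{\bar\eps}\setminus\overline{\DD^k_{\bar\eps}}$ it may pass through both $\DD^k_{\bar\eps}$ and the entropy-free region, requiring the split described above and the use of $\stt(\pst(\theta,s),s)=\stt(\theta,t)$ on the entropy-free part. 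Otherwise the estimate is a routine, if lengthy, application of the already-established ingredients, and I do not expect any genuinely new difficulty beyond what was handled in Lemma~\ref{lem:dyy12} and the analogous estimate \eqref{stt-integral}.
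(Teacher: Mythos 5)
Your plan is correct and follows essentially the same route as the paper's proof: decompose $\p_\theta Q^z$, isolate the singular pieces $c k_\theta w_{\theta\theta}$, $c k_{\theta\theta}(\cdots)$, $(az)_{\theta\theta}$, $c k_\theta z_{\theta\theta}$, control them via the second-derivative bootstraps together with the transversality integrals \eqref{stt-integral} and \eqref{eq:Steve:needs:this:2}, and split at the time $\stt_1$ where $\pst(\theta,\cdot)$ crosses $\sc_2$ to treat the portion of the trajectory outside $\DD^k_{\bar\eps}$. The only minor imprecision is that $Q^z\circ\pst$ is $\OO(\stt(\theta,t)^{1/2})$ rather than $\OO(s^{1/2})$ as you state (the factor $k_\theta(\pst(\theta,s),s)$ is controlled by $\st(\pst(\theta,s),s)^{1/2}\le\stt(\theta,t)^{1/2}$, which is fixed in $s$), but this does not affect the final bound since the resulting $\stt^{1/2}\log(t/\stt)$ is still absorbed by $\stt^{-1/2}$ after taking $\bar\eps$ small.
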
 
\begin{proof}[Proof of Lemma \ref{lem:for-qz-stuff}]
We decompose $\p_\theta  Q^z =  \mathcal{Q}_1 + \mathcal{Q} _2$, where
 \begin{align*} 
 \mathcal{Q}_1 & = \overbrace{\tfrac{1}{12}   c k_\theta w_{\theta\theta}  }^{ \mathcal{Q} _{1a}}+ 
 \overbrace{\tfrac{1}{24}   k_\theta w_\theta w_\theta }^{ \mathcal{Q} _{1b}} + \overbrace{  c k_{\theta\theta}  ( \tfrac{1}{12}  w_\theta + \tfrac{1}{12} z_\theta +\tfrac{2}{3}  a)}^{ \mathcal{Q} _{1c}} \,, \\
 \mathcal{Q}_2 & = c k_\theta( \tfrac{1}{12}  z_{\theta\theta}  +  \tfrac{2}{3}   a_\theta ) + \tfrac{8}{3} (az)_{\theta\theta} 
 + \tfrac{1}{24} k_\theta z_\theta w_\theta  \,.
 \end{align*} 
For $(\theta,t) \in \DD_{\bar \eps}^z \setminus \overline{\DD_{\bar \eps}^k}$, is convenient to introduce a time $\stt_1(\theta,t) $, which is defined as the time at which the curve $\pst(\theta,\cdot)$ intersects the curve $\sc_2$; recall that $\stt(\theta,t)$ is the time at which $\pst(\theta,\cdot)$ intersects the shock curve $\sc$. From \eqref{eq:distance:curves}, \eqref{dxphipsi-bound}, and the definitions of $\stt$ and $\stt_1$, we note that 
\begin{align}
\stt_1(\theta,t)  = 2 \stt(\theta,t) + \OO(\stt(\theta,t)^{\frac 43}) \,. 
\label{eq:sanity:2}
\end{align}
When $(\theta,t) \in \DD_{\bar \eps}^k$, we abuse notation and write $\stt_1(\theta,t)  = t$, emphasizing that  $\pst(\theta,\cdot)$ does not intersect $\sc_2$.
By definition, note that for $s \in (\stt_1(\theta,t) ,t]$, all the terms in $ \mathcal{Q}_1 \circ \pst$ and $ \mathcal{Q}_2\circ \pst$ vanish.

\begin{figure}[htb!]
\centering
\begin{tikzpicture}[scale=2.25]
\draw [<->,thick] (0,2.4) node (yaxis) [above] { } |- (3.5,0) node (xaxis) [right] { };
\draw[black,thick] (0,0)  -- (-2,0) ;
\draw[black,thick] (-2,2)  -- (3.5,2) ;
\draw[name path=A,red,ultra thick] (0,0) .. controls (1,0.5) and (2,1.5) .. (3,2);
\draw[red] (3,1.9) node { $\sc$}; 
\draw[name path = B,blue,ultra thick] (0,0) .. controls (0.5,0.85) and (1,1.25) .. (1.5,2) ;
\draw[blue] (1.3,1.9) node { $\sc_2$}; 
\draw[green!40!gray, ultra thick] (0,0) .. controls (.2,1) and (0.4,1.5) .. (.5,2) ;
\draw[green!40!gray] (.4,1.9) node { $\sc_1$}; 
\draw[black] (-2.4,0) node { $t=0$}; 
\draw[black] (-2.4,2) node { $t = \bar \eps$}; 

\draw[black,dotted] (1,0)  -- (1,1.7);
\draw[black] (-0.1,1.7) node { $t$}; 
\draw[black,dotted] (0,1.7)  -- (1,1.7);
\draw[black] (1,-0.1) node { $\theta$}; 
\filldraw[black] (1,1.7) circle (0.5pt);

\draw[green!40!gray] (.72,1) .. controls (.85,1.35) .. (1,1.7) ;
\draw[green!40!gray] (.35,0) .. controls (.46,0.33) and (.57,0.66) .. (.72,1) ;
\filldraw[green!40!gray] (.72,1) circle (0.5pt);
\filldraw[green!40!gray] (.43,0.24) circle (0.5pt);
\draw[green!40!gray,dotted,thick] (.44,.24)  -- (0,.24);
\draw[green!40!gray,dotted,thick] (.71,1)  -- (0,1);
\draw[green!40!gray] (-.3,1) node {$\stt_1(\theta,t) $}; 
\draw[green!40!gray] (-.3,0.24) node {$\stt(\theta,t)$}; 
\draw[dotted,->,green!40!gray] (2,0.5) -- (0.63,0.69);
\draw[green!40!gray] (2.3,0.5) node { $\pst(\theta,s)$}; 

\end{tikzpicture}

\vspace{-0.2cm}
\caption{\footnotesize Fix a point $(\theta,t) $ which lies in between $\sc_1$ and $\sc_2$. The intersection time of $\pst(\theta,s)$ with $\sc_2$ is denoted by $\stt_1(\theta,t) $, while the intersection time with $\sc$ is denoted as usual by $\stt(\theta,t)$.}
\label{fig:many:characteristics:4}

\end{figure}
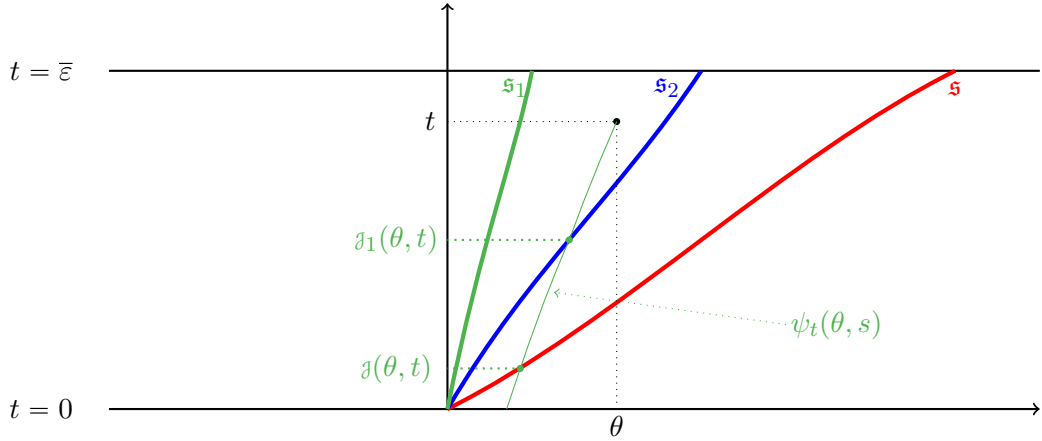

Let us thus consider first the case $(\theta,t) \in \DD_{\bar \eps}^k$. From \eqref{eq:metal:2} we have that
\begin{align} 
 \sabs{k_\theta( \pst(\theta,s),s)  } \le 
200 \bb^{\frac 92} \kappa ^{-4} \st(\pst(\theta,s),s)^ {\frac{1}{2}}  \,.  \label{good-kyflow-bound}
\end{align}  
for all $s\in [\stt(\theta,t),t]$. 
Thus, using \eqref{good-kyflow-bound} together with \eqref{eq:Steve:needs:this:2},  \eqref{eq:w:dxx:boot}, and the fact that $\st(\pst(\theta,s),s) \leq \stt(\pst(\theta,s),s) = \stt(\theta,t)$, we have that
\begin{align} 
\int_{\stt(\theta,t)}^t \sabs{\mathcal{Q} _{1a} \circ \pst} (\p_\theta  \psi_t)^2 ds
& \le  \tfrac{1}{4} (1+ C t^ {\frac{1}{3}} )     \mm\left( \int_{\stt(\theta,t)}^t \sabs{k_\theta (w_{\theta\theta} -\wb_{\theta\theta}) \circ \pst}  ds +   \int_{\stt(\theta,t)}^t \sabs{k_\theta \wb_{\theta\theta} \circ \pst}  ds\right)
\notag  \\
& \le     60 \mm    \bb^{\frac 92} \kappa^{-4}\left(  \int_{\stt(\theta,t)}^t  \bigl( M_1 + \stt(\theta,t)^ {\frac{1}{2}} s^{-2}\bigr) ds + 20 \kappa ^{-1} \stt(\theta,t) ^{-\frac 12} \right)
\notag \\
&  \le  40  \mm  \stt(\theta,t)^{-\frac{1}{2}} 
 \,.\label{fof3}
\end{align}
In the last inequality we have taken $t$ to be sufficiently small, and have used \eqref{eq:b:m:ass}.

Next, using  \eqref{eq:double:trouble:a}, \eqref{eq:w:dx:boot}, \eqref{dxphipsi-bound},  \eqref{eq:Steve:needs:this:1},  and \eqref{good-kyflow-bound},  we have that
\begin{align} 
\int_{\stt(\theta,t)}^t \sabs{\mathcal{Q} _{1b} \circ \pst} (\p_\theta  \psi_t)^2 ds 
& \le C  \stt(\theta,t)^ {-\frac{1}{2}}  \int_{\stt(\theta,t)}^t ( \sabs{ (w_\theta -\wb_{\theta}) \circ \pst} + \sabs{\wb_{\theta} \circ \pst} )ds \notag\\
&\le C t^ {\frac{1}{3}} \stt(\theta,t)^ {-\frac{1}{2}}  
 \,\label{fof4}
\end{align}
and with \eqref{eq:w:z:k:a:boot}, \eqref{eq:k:dxx:boot}, and  \eqref{stt-integral}, 
\begin{align} 
\int_{\stt(\theta,t)}^t \sabs{\mathcal{Q} _{1c} \circ \pst} (\p_\theta  \psi_t)^2 ds 
& \le 2 \mm \left(\tfrac{1}{12} \stt(\theta,t)^{-1} + \tfrac{1}{12} R_4 t^{\frac 12} + \tfrac 23 R_7 \right)\int_{\stt(\theta,t)}^t  \sabs{k_{\theta\theta}  \circ \pst} ds \notag \\
& \le \tfrac 13 \mm M_3 \stt(\theta,t)^ {-1} \int_{\stt(\theta,t)}^t   \st(\pst(\theta,s),s)^ {-\frac{1}{2}} ds \notag\\
& \le  2\mm^3 \stt(\theta,t)^{-\frac{1}{2}}  \,.
\label{fof5}
\end{align}
In the last inequality we have  taken into account the definition of $M_3$ in \eqref{eq:M1:M4},

Note that if $\theta \in (\sc_1(t), \sc_2(t))$ then the integrals in \eqref{fof3}, \eqref{fof4}, and \eqref{fof5} range  from $\stt(\theta,t)$ up to $\stt_1(\theta,t)  < t$, but this has no effect on the  bounds established  in \eqref{fof3}, \eqref{fof4}, and \eqref{fof5}.

Returning to our decomposition of $\p_\theta  \mathcal{Q}^z$ as $\mathcal{Q}_1 + \mathcal{Q}_2$, we note that  by the same bounds and arguments as above, and by appealing also to \eqref{eq:metal:3}, we also have that
\begin{align} 
\int_{\stt(\theta,t)}^t \sabs{\mathcal{Q} _{2} \circ \pst} (\p_\theta  \psi_t)^2 ds 
&\leq 2 \int_{\stt(\theta,t)}^t \left(3 R_7 \sabs{z_{\theta\theta}  \circ \pst} + C \stt(\theta,t)^{\frac 32} \sabs{a_{\theta\theta}  \circ \pst} + C \right) ds   \notag\\
&  \le C  \stt(\theta,t)^ {\frac{1}{2}}  + C t \stt(\theta,t)^ {-\frac{1}{2}} + C \stt(\theta,t)^{\frac 32} \log \tfrac{t}{\stt(\theta,t)} + C t 
\notag\\
&\leq C t \stt(\theta,t)^ {-\frac{1}{2}}
 \,.\label{fof6}
\end{align}
We note that for the bounds \eqref{fof3}--\eqref{fof6}, we have taken $\bar \eps$ sufficiently small.

Lastly, from \eqref{dxxphipsi-bound-all} we have that for $(\theta,t) \in \DD_{\bar \eps}^k$ we have $\abs{\p_\theta ^2 \pst (\theta,s) }  \le C s^{-1}$ so with the definition of $Q^z$ in \eqref{Qz-def} and the bounds \eqref{eq:Steve:needs:this:1}, \eqref{eq:w:z:k:a:boot},  and \eqref{good-kyflow-bound}, 
\begin{align} 
\int_{\stt(\theta,t)}^t \sabs{Q^z\! \circ\! \psi_t  \ \p_\theta ^2 \psi_t } ds 
&  \le  C t^ {\frac{1}{3}} \stt(\theta,t)^ {-\frac{1}{2}}
 \,.\label{fof7}
\end{align}
On the other hand, for $(\theta,t) \in \DD_{\bar \eps}^z \setminus \overline \DD_{\bar \eps}^k$, 
we have that $\abs{\p_\theta ^2 \pst (\theta,s) }  \le C \stt(\theta,t)^{-1}$ for $s\in [\stt(\theta,t),\stt_1(\theta,t) ]$ and hence using \eqref{eq:sanity:2} 
\begin{align} 
\int_{\stt(\theta,t)}^t \sabs{Q^z\! \circ\! \psi_t  \ \p_\theta ^2 \psi_t } ds 
&\leq \int_{\stt(\theta,t)}^{\stt_1(\theta,t) } \sabs{Q^z\! \circ\! \psi_t  \ \p_\theta ^2 \psi_t } ds  + \int_{\stt_1(\theta,t) }^t \sabs{Q^z\! \circ\! \psi_t  \ \p_\theta ^2 \psi_t } ds \notag\\
&  \le  C  \stt(\theta,t)^ {-\frac{1}{6}} + C t^ {\frac{1}{3}} \stt(\theta,t)^ {-\frac{1}{2}} \notag\\
&\leq C t^ {\frac{1}{3}} \stt(\theta,t)^ {-\frac{1}{2}}
 \,.\label{fof8}
\end{align}
Combining the bounds \eqref{fof3}--\eqref{fof7}, and taking $\bar \eps$ sufficiently small, we obtain the inequality \eqref{lower-the-bridge1}.
\end{proof}

\begin{lemma}\label{lem:zyy}  
For all $(\theta,t)  \in \mathcal{D}^z _{\bar\eps}$ we have the bounds
\begin{align} 
\sabs{z_{\theta\theta} (\theta,t) }& \le 
\begin{cases}
\tfrac 12 M_2 \st(\theta,t)^{-{\frac{1}{2}} } ,  &\mbox{if }  (\theta,t)  \in \mathcal{D} ^k_{\bar\eps}\\
\tfrac 12 N_2 \stt(\theta,t)^{-{\frac{1}{2}} } & \mbox{if }   (\theta,t)  \in \mathcal{D} ^z_{\bar\eps}\setminus\overline{\mathcal{D}^k_{\bar\eps}} 
\end{cases} \,, \label{zyy-bound-final}
\end{align} 
where $M_2$ and $N_2$ are defined in \eqref{eq:M1:M4}, respectively in \eqref{eq:N1:N3}.
Thus, the bootstrap assumptions \eqref{eq:z:dxx:boot} and \eqref{eq:z:dxx:boot2} are improved.
Moreover, the quantity $q^z$ defined in \eqref{qz-def} satisfies the bound
\begin{align}
\label{eq:lossy:qy:dz:bound} 
\sabs{q^z_\theta(\theta,t) } \leq 4 \mm^3 \stt(\theta,t)^{-\frac 12}
\end{align}
for all $\sc_2(t) < \theta  < \sc_2(t) + \tfrac{\kappa t}{6}$.
\end{lemma}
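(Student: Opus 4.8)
\textbf{Plan of proof for Lemma~\ref{lem:zyy}.} The proof follows the same architecture as the proof of Lemma~\ref{lem:wyy}, but now using the good unknown $q^z$ defined in \eqref{qz-def} and the integral identity \eqref{dyyqz-identity} for $q^z_\theta$ along the $1$-characteristic $\psi_t$. The plan is to estimate the three groups of terms on the right side of \eqref{dyyqz-identity}: first the ``boundary data'' contribution $\bigl(q^z_\theta(\p_\theta\psi_t)^2 + q^z \p_\theta^2\psi_t\bigr)\big|_{(\sc(\stt),\stt)}$ coming from the shock curve, second the ``forcing'' integral already bounded in Lemma~\ref{lem:for-qz-stuff}, which is $\OO(\mm^3 \stt(\theta,t)^{-1/2})$. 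The forcing integral is handled: Lemma~\ref{lem:for-qz-stuff} gives exactly $3\mm^3 \stt(\theta,t)^{-1/2}$, so the work is in the boundary term.

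\textbf{Boundary data on the shock.} For the term $\bigl(q^z_\theta(\p_\theta\psi_t)^2 + q^z\p_\theta^2\psi_t\bigr)\big|_{(\sc(\stt),\stt)}$, I would first recall that on the shock $q^z = z_\theta + \tfrac14 c k_\theta$, so I need the one-sided derivative $\p_\theta z(\sc(t),t)$, which is given explicitly by \eqref{zx-gen}, together with $\p_\theta^2 z(\sc(t),t)$, obtained by differentiating the coupled system \eqref{systemz-dt} once more (exactly as $\p_\theta^2 k$ on the shock was computed in \eqref{kyy-on-shock} in the proof of Lemma~\ref{lem:kyy}). The relevant inputs are: the sharp bounds on $\jump{z}$, $\jump{k}$ and their first two time derivatives from Lemma~\ref{lem:zminus-on-shock} and Corollary~\ref{cor:jumps:abstract}, specifically \eqref{eq:dt:zl:kl:on:shock} and \eqref{eq:dt:dt:zl:kl:on:shock} (whose hypotheses hold because of Lemma~\ref{lem:second:w:deriv:on:shock} providing \eqref{eq:dt:dt:jumps:on:shock:are:close}); the wavespeed and transversality estimates $\dot\sc - \lambda_1(\sc(t),t) \geq \tfrac12\kappa$ and $\dot\sc - \lambda_2(\sc(t),t)\geq\tfrac12\kappa$, cf.~\eqref{blob-a-tron99} and \eqref{eq:FU:2}; the bounds on $\sc$, $\ddot\sc$ from \eqref{eq:sc:ass}; and the $k_{\theta\theta}$ estimate on the shock from \eqref{kyy-bound1} with $\st(\sc(t)^-,t)=t$. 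Since $\dot\zl = \OO(t^{1/2})$ and $\dot\kl = \OO(t^{1/2})$, while $\ddot\zl = \OO(t^{-1/2})$ and $\ddot\kl = \OO(t^{-1/2})$, the explicit formula gives $z_{\theta\theta}(\sc(t)^-,t) = \OO(t^{-1/2})$, and since $\st(\theta,t) \to t$ as $\theta\to\sc(t)^-$ this has the form $\OO(\st(\theta,t)^{-1/2})$ with a constant that is a fixed power of $\mm$. Multiplying by $(\p_\theta\psi_t(\theta,\stt))^2 = 1 + \OO(t^{1/3})$ via \eqref{dxphipsi-bound}, and adding the $q^z\,\p_\theta^2\psi_t$ term which, by \eqref{eq:zl:on:shock:L:infinity}--\eqref{eq:kl:on:shock:L:infinity}, \eqref{eq:dt:zl:kl:on:shock}, \eqref{dxxphipsi-bound-all} (or \eqref{dxxphi-bound2}), is controlled by $\OO(t^{3/2})\cdot\OO(\stt(\theta,t)^{-1})=\OO(t^{1/2})$, we get that the boundary contribution is bounded by a fixed multiple of $\mm^2\st(\theta,t)^{-1/2}$ on $\DD^k_{\bar\eps}$ (respectively $\mm^3\stt(\theta,t)^{-1/2}$ on $\DD^z_{\bar\eps}\setminus\overline{\DD^k_{\bar\eps}}$, where one uses the $\stt$-based estimates).

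\textbf{From $q^z_\theta$ to $z_{\theta\theta}$ and closing the bootstrap.} Combining the boundary bound with Lemma~\ref{lem:for-qz-stuff}, identity \eqref{dyyqz-identity} gives $|q^z_\theta(\theta,t)| \lesssim \mm^3\st(\theta,t)^{-1/2}$ on $\DD^k_{\bar\eps}$ (respectively with $\stt$ on $\DD^z_{\bar\eps}\setminus\overline{\DD^k_{\bar\eps}}$). Then I differentiate the definition \eqref{qz-def}, writing $z_{\theta\theta} = q^z_\theta - \tfrac14 c_\theta k_\theta - \tfrac14 c\, k_{\theta\theta}$, and use $(w,z,k,a)\in{\mathcal X}_{\bar\eps}$ together with $c_\theta = \tfrac12(w_\theta - z_\theta)$, the bounds \eqref{eq:w:z:k:a:boot}, and the improved $k_{\theta\theta}$ bound \eqref{kyy-bound1}: the term $\tfrac14 c_\theta k_\theta = \OO(t^{-1/2}\cdot t^{1/2}) = \OO(1)$ and $\tfrac14 c\, k_{\theta\theta} = \OO(\st(\theta,t)^{-1/2})$, so that $|z_{\theta\theta}(\theta,t)| \leq \tfrac12 M_2 \st(\theta,t)^{-1/2}$ on $\DD^k_{\bar\eps}$ and $|z_{\theta\theta}(\theta,t)|\leq \tfrac12 N_2 \stt(\theta,t)^{-1/2}$ on $\DD^z_{\bar\eps}\setminus\overline{\DD^k_{\bar\eps}}$ once $\bar\eps$ is small enough, recalling that $k\equiv 0$ on $\DD^z_{\bar\eps}\setminus\overline{\DD^k_{\bar\eps}}$ so the $k$-terms drop out there and the estimate degenerates only like $\stt(\theta,t)^{-1/2}$. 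This verifies the constants $M_2 = 10\mm^3$ and $N_2 = 8\mm^3$ from \eqref{eq:M1:M4} and \eqref{eq:N1:N3}, improving the bootstraps \eqref{eq:z:dxx:boot} and \eqref{eq:z:dxx:boot2}. Finally, the bound \eqref{eq:lossy:qy:dz:bound} for $\sc_2(t)<\theta<\sc_2(t)+\tfrac{\kappa t}{6}$ follows directly from the $q^z_\theta$ estimate just obtained, using that in this narrow wedge $\stt(\theta,t)$ and $t$ are comparable up to fixed constants (cf.~\eqref{c1c2diffkappa2} and \eqref{eq:distance:curves}), so no additional argument is needed. As in the rest of Section~\ref{sec:C2}, all of this is carried out iteratively for $(w^{(n)},z^{(n)},k^{(n)})$ and then passed to the limit, but notationally one may work directly with the limiting solution. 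The main obstacle will be bookkeeping the second-derivative-on-the-shock computation for $z_{\theta\theta}(\sc(t)^-,t)$ — differentiating \eqref{systemz-dt} twice produces many terms involving $\p_\theta^2 k$ on the shock and products of $\dot\zl,\dot\kl$ — and making sure the transversality denominators $(\dot\sc - \lambda_1)$, $(\dot\sc - \lambda_2)$ are uniformly bounded below so that no spurious $t$-singularities appear beyond the genuine $\stt^{-1/2}$.
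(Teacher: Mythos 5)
Your proposal follows the paper's architecture essentially step for step: start from the transport identity \eqref{dyyqz-identity} for $q^z_\theta$ along $\psi_t$, split into a Cauchy-data contribution on the shock curve and a forcing integral, handle the forcing integral via Lemma~\ref{lem:for-qz-stuff}, express $z_{\theta\theta}$ and $q^z_\theta$ on the shock curve via the differentiated jump system \eqref{systemz-dt} (mirroring the $k_{\theta\theta}$ computation in Lemma~\ref{lem:kyy}), feed in the second-derivative-of-jump bounds from Corollary~\ref{cor:jumps:abstract} whose hypothesis is supplied by Lemma~\ref{lem:second:w:deriv:on:shock}, and finally convert $q^z_\theta$ back to $z_{\theta\theta}$ through the definition \eqref{qz-def} using the improved $k_{\theta\theta}$ bound from Lemma~\ref{lem:kyy}. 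This is the same decomposition the paper uses — the paper's ${\mathcal H}_1,{\mathcal H}_2,{\mathcal H}_3$ are just the explicit pieces of your ``boundary data'' after expanding $q^z = z_\theta + \tfrac14 ck_\theta$.

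There is, however, a bookkeeping error in your treatment of the $q^z\,\p_\theta^2\psi_t$ boundary term. You estimate it as $\OO(t^{3/2})\cdot\OO(\stt(\theta,t)^{-1})$ citing \eqref{eq:zl:on:shock:L:infinity}--\eqref{eq:kl:on:shock:L:infinity}, but those bound the \emph{jumps} $\zl,\kl$, not the one-sided \emph{spatial derivatives} $\p_\theta z,\p_\theta k$ on the shock. The latter are obtained from \eqref{zx-gen} and \eqref{kx-gen}, which divide $\dot\zl,\dot\kl = \OO(t^{1/2})$ by the transversality denominator, so $z_\theta(\sc(\stt),\stt)$ and $k_\theta(\sc(\stt),\stt)$ are $\OO(\stt^{1/2})$, giving $q^z(\sc(\stt),\stt) = \OO(\stt^{1/2})$. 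Multiplied against $\p_\theta^2\psi_t = \OO(\stt^{-1})$ from \eqref{dxxphipsi-bound-all}, this contributes $\OO(\stt^{-1/2})$ — it is a \emph{main-order} term, not the subleading $\OO(t^{1/2})$ your plan treats it as. A similar slip occurs in the conversion step, where you write $c_\theta = \OO(t^{-1/2})$; in fact $c_\theta = \tfrac12(\p_\theta w - \p_\theta z) = \OO(t^{-1})$ because $\p_\theta \wb = \OO(t^{-1})$, so $\tfrac14 c_\theta k_\theta = \OO(t^{-1/2})$, not $\OO(1)$. Neither error changes the ultimate order $\stt^{-1/2}$ (both corrected terms are still $\lesssim \st^{-1/2}$ since $\st \leq t$), but when you actually carry out the computation to verify the specific constants $M_2 = 10\mm^3$ and $N_2 = 8\mm^3$, you will need the correct exponents; as written, the plan would have led you to underestimate the boundary contribution.
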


\begin{proof}[Proof of Lemma \ref{lem:zyy}]
 Using \eqref{dyyqz-identity} and  the definition of $q^z$ in \eqref{qz-def}, we see that for all $(\theta,t)  \in \mathcal{D}^z_{\bar\eps}$ and with $\stt = \stt(\theta,t)$, we have 
\begin{align} 
z_{\theta\theta} (\theta,t) & = \overbrace{\left. \bigl(  z_{\theta\theta} (\p_\theta  \pst)^2+ z_{\theta} \circ \psi_t \p_\theta ^2 \psi_t  \bigr)\right|_{(\sc(\stt),\stt)}}^{ \mathcal{H}_1}
+\overbrace{ \left. \tfrac{1}{4} \bigl(  c k_{\theta\theta}  (\p_\theta  \pst)^2 + c_\theta k_{\theta} (\p_\theta  \pst)^2 + c k_\theta \p_\theta ^2\pst \bigr)\right|_{(\sc(\stt),\stt)}}^{ \mathcal{H} _2} \notag \\
& \qquad - \underbrace{\tfrac{1}{4}   (c k_{\theta\theta}  + c_\theta k_\theta)(\theta,t) }_{\mathcal{H} _3}
- \int_{\stt(\theta,t)}^t \Bigl( \p_\theta Q^z\! \circ\! \psi_t  \ (\p_\theta  \psi_t)^2+Q^z\! \circ\! \psi_t  \ \p_\theta ^2 \psi_t \bigr) ds
\,.   \label{zyy-identity}
\end{align}

In order to get a good bound for the term $\mathcal{H} _1$ in \eqref{zyy-identity} on the shock curve, 
it remains for us to express $z_{\theta\theta} (\sc(\stt(\theta,t)), \stt(\theta,t))$ in terms of derivatives of functions along the shock curve.
Differentiating the system \eqref{systemz-dt}, taking into account the identity $\frac{d}{dt} (f (\sc(t),t)) = ((\p_t + \dot \sc \p_\theta ) f)(\sc(t),t)$, and the formulas
\begin{align*} 
\p_t (c^2 k_\theta) + \lambda_2 \p_\theta  (c^2 k_\theta)  &=   - 2 (  \p_\theta  \lambda_2 + \tfrac 83 a) (c^2 k_\theta) \,, \\
\p_{t\theta} z&= - \lambda_1 z_{\theta\theta}   - \p_\theta  \lambda_1 z_\theta - \tfrac 83 (a z)_\theta + \tfrac 16 \p_\theta  (c^2 k_\theta) 
\end{align*} 
which are direct consequences of \eqref{xland-z}, \eqref{xland-k}, and \eqref{xland-sigma}, after a straightforward but lengthy computation we arrive at 
\begin{align}
\ddot{z_-} 
&= (\p_t + \dot \sc \p_\theta )^2 z - (\p_t + \dot \sc\p_\theta ) (\p_t + \lambda_1 \p_\theta ) z - (\p_t + \dot \sc \p_\theta ) ( \tfrac 16 c^2 k_\theta - \tfrac 83 az ) \notag\\
&= (\dot \sc - \lambda_1) z_{t\theta} + \dot \sc (\dot \sc -\lambda_1) z_{\theta\theta}  +  z_\theta (\ddot{\sc}  - \p_t \lambda_1 - \dot \sc \p_\theta  \lambda_1) +  (\p_t + \dot \sc \p_\theta ) ( \tfrac 16 c^2 k_\theta - \tfrac 83 az )\notag \\
&= (\dot \sc -\lambda_1)^2 z_{\theta\theta}  - (\dot \sc - \lambda_1) \left( \p_\theta  \lambda_1 z_\theta + \tfrac 83 (a z)_\theta - \tfrac 16 \p_\theta  (c^2 k_\theta)\right) \notag \\
&\quad  +    z_\theta \left(\ddot{\sc} + \tfrac 13 \lambda_3 w_\theta + \lambda_1 z_\theta - \tfrac 29 c^2 k_\theta + \tfrac 89 a w + \tfrac 83 a z - \dot \sc  (\tfrac{1}{3} w_\theta + z_\theta) \right) 
\notag\\
&\quad - \tfrac 13  (\p_\theta  \lambda_2 + \tfrac 83 a) ( c^2 k_\theta ) + \tfrac 16 (\dot \sc -\lambda_2) \p_\theta  (c^2 k_\theta) \notag \\ 
&\quad -  \tfrac 83 a \left( (\dot \sc - \lambda_1) z_\theta - \tfrac 83 a z + \tfrac 16 c^2 k_\theta \right) -  \tfrac 83 z \left( (\dot \sc - \lambda_2) a_\theta - \tfrac 43 a^2 + \tfrac 16 (w^2 + z^2) + w z \right)\notag  \\ 
&= (\dot \sc -\lambda_1)^2 z_{\theta\theta}  
+ \tfrac 13 \left(  \dot \sc - \tfrac 12 w - \tfrac 56 z\right) c^2 k_{\theta\theta}  \notag \\
&\quad + \tfrac 13 \left(   (  \dot \sc -\tfrac 56  w -  \tfrac 12 z ) c  k_\theta   -   (2 \dot \sc -  \tfrac 43 w - \tfrac 43 z  ) z_\theta  \right) w_\theta  + {\mathcal R}_{z_{\theta\theta} }
\label{eq:z:yy:proto}
\end{align}
where we have denoted the remainder term ${\mathcal R}_{z_{\theta\theta} }$ by 
\begin{align}
 {\mathcal R}_{z_{\theta\theta} }
 &:=   z_\theta \left(\ddot{\sc} + 2 (\lambda_1- \dot \sc) z_\theta  - \tfrac 16 (2 \dot \sc  + \tfrac 13 w - 3 z)   c   k_\theta  - \tfrac 83 a (2 \dot \sc - 3 \lambda_1)    \right) 
\notag \\
&\quad -  \tfrac 83 a \left(\tfrac 12 c^2 k_\theta  - \tfrac 83 a z    \right) -  \tfrac 83 z \left( (2 \dot \sc - \lambda_2 - \lambda_1 ) a_\theta - \tfrac 43 a^2 + \tfrac 16 (w^2 + z^2) + w z \right)
\label{eq:R:z:yy}
\end{align}
At this stage we note that the reason we call the term ${\mathcal R}_{z_{\theta\theta} }$ a remainder term is as follows; from \eqref{eq:sc:ass}, \eqref{eq:w:z:k:a:boot}, and the properties of $\wb$, we may directly show that 
\begin{align}
 \sabs{{\mathcal R}_{z_{\theta\theta} }(\sc(t),t)} \leq C t^{\frac 12}
 \label{eq:R:z:yy:bnd}
\end{align}
for a suitable constant $C = C(\kappa,\bb,\cc,\mm)>0$. In comparison, the remaining terms in \eqref{eq:z:yy:proto} will be shown to be $\OO(t^{-\frac 12})$, so that ${\mathcal R}_{z_{\theta\theta} }$ is negligible.

The identities \eqref{eq:z:yy:proto} and \eqref{eq:R:z:yy} are valid at any point $(\sc(t),t)$ on the shock curve, so in particular at $(\sc(\stt),\stt)$.
Hence, we see that
\begin{align} 
 z_{\theta\theta}  (\sc( \stt),\stt)
& =\frac{\ddot\zl - \tfrac 13 ( \dot \sc - \tfrac 12 w - \tfrac 56 z) (c^2 k_{\theta\theta} )  - \tfrac 13 \left(  ( \dot \sc - \tfrac 56 w - \tfrac 12 z ) c k_\theta -  (2 \dot \sc -  \tfrac 43 w - \tfrac 43 z  )  z_\theta  \right) w_\theta }{ (\dot \sc - \lambda_1)^2}  \Bigr|_{(\sc( \stt),\stt)}
\notag
\\ &  
\qquad - \frac{ {\mathcal R}_{z_{\theta\theta} }}{(\dot \sc - \lambda_1)^2} \Bigr|_{(\sc( \stt),\stt)}  \,.  \label{zyy-shock}
\end{align}

By combining \eqref{zyy-shock}
with \eqref{kx-shock} (in which we replace $\st$ with $\stt$), \eqref{zx-good}, \eqref{kyy-on-shock} (with $t$ replaced by $\stt$), and the estimates \eqref{eq:zl:and:kl:on:shock:L:infinity}, \eqref{eq:dt:zl:kl:on:shock}, \eqref{eq:dt:dt:zl:kl:on:shock}, \eqref{eq:the:burgers:jumps}, \eqref{eq:w:z:k:a:boot}, \eqref{dxphipsi-bound},   \eqref{kyy-bound1},  and taking $\bar \eps$ sufficiently small, we find that
 \begin{align} 
\sabs{ z_{\theta\theta}  (\sc( \stt),\stt)\p_\theta \pst(\sc( \stt),\stt)^2} 
&\leq 
3 \kappa^{-2}  ( 4 \bb^{\frac 92} \kappa^{-2}  +   \kappa^3  \mm^2 +  (\kappa^2 R_6 + \kappa R_4)  ) \stt^{-\frac 12} + C \stt^{\frac 12} 
 \le 6 \kappa \mm^2 \stt^{-\frac 12} \,.
\label{zyy-on-shock}
\end{align} 
On the other hand, from \eqref{eq:w:z:k:a:boot} and \eqref{dxxphipsi-bound-all}, 
\begin{align} 
 \abs{z_\theta(\sc( \stt),\stt)\p_\theta ^2 \pst(\sc( \stt),\stt) } & \le  R_4 \stt^{\frac 12} \mm^ {\frac{1}{2}}  \kappa^{-\frac{3}{2}}   \stt^ {-1}   \leq \mm^2 \stt^{-\frac 12}  \,.
 \label{zy-on-shock-tmp} 
\end{align} 
Combining \eqref{zyy-on-shock} and \eqref{zy-on-shock-tmp},  we have thus bounded the first term $\mathcal{H}_1$ on the right side of \eqref{zyy-identity} as
\begin{align} 
\sabs{\mathcal{H}_1} \le
7 \kappa \mm^2 \stt(\theta,t) ^{-{\frac{1}{2}} } \,.
\label{goodcat00}
\end{align} 

Next, we turn our attention to the second term, ${\mathcal H}_2$, in \eqref{zyy-identity}. Using  \eqref{eq:w:z:k:a:boot}, \eqref{dxphipsi-bound}, \eqref{dxxphipsi-bound-all}, \eqref{dxxphi-bound2}, \eqref{kyy-bound1}, and the fact that $\st( \sc(\stt(\theta,t)), \stt(\theta,t)) = \stt(\theta,t)$, we similarly obtain that 
\begin{align} 
\sabs{\mathcal{H}_2} 
&\le \kappa \mm^2 \stt(\theta,t)^{- \frac 12} + R_6 \stt(\theta,t)^{-\frac 12} + \kappa^{-\frac 12} R_6 \mm^{\frac 12} \stt(\theta,t)^{-\frac 12} + C \notag\\
&\leq 2 \kappa \mm^2 \stt(\theta,t)^{- \frac 12}\,.
\label{coolcat1} 
\end{align}

Since the integral term in \eqref{zyy-identity} was previously estimated in Lemma~\ref{lem:for-qz-stuff}, it thus remains to bound the term ${\mathcal H}_3$ on the right side of \eqref{zyy-identity}. Note that if $(\theta,t) \in \DD_{\bar \eps}^z\setminus \overline \DD_{\bar \eps}^k$, then $k$ vanishes, and so ${\mathcal H}_3 = 0$. In the case that $(\theta,t) \in \DD_{\bar \eps}^k$, by appealing to \eqref{eq:w:z:k:a:boot}, the bound $\frac{\kappa}{5} \leq c(\theta,t)  \leq \mm$, and \eqref{kyy-bound1}, we obtain
\begin{align} 
\sabs{\mathcal{H}_3} 
&\le \tfrac 14 \mm^3 \st(\theta,t)^{-\frac 12} + \tfrac 18 ( t^{-1}  + R_4 t^{\frac 12}) R_6 t^{-\frac 12}   \notag\\
&\leq \tfrac 14 \mm^3 \st(\theta,t)^{-\frac 12} + \mm t^{-\frac 12} \notag\\
&\leq  \mm^3 \st(\theta,t)^{-\frac 12}  \,.
\label{coolcat1a} 
\end{align}
In the last inequality of \eqref{coolcat1a} we have used that $\st(\theta,t) \leq t$. 

By combining the identity \eqref{zyy-identity} with the bounds \eqref{goodcat00}, \eqref{coolcat1},   \eqref{coolcat1a}, \eqref{lower-the-bridge1},  we have that 
\begin{align}
\sabs{z_{\theta\theta} (\theta,t) }
&\leq 9 \kappa \mm^2 \stt(\theta,t)^{-\frac 12} + 3 \mm^3 \stt(\theta,t)^{-\frac 12}
+
\begin{cases}
 \mm^3 \st(\theta,t)^{-\frac 12} \, , \qquad &\mbox{for} \qquad \theta \in \DD_{\bar \eps}^k \\
0\, , \qquad &\mbox{for} \qquad \theta \in \DD_{\bar \eps}^z\setminus \overline \DD_{\bar \eps}^k 
\end{cases} 
\,.
\end{align}
Taking into account that for $(\theta,t) \in \DD_{\bar \eps}^k$ by \eqref{c1c2diffkappa2} we have that $\st(\theta,t) \leq \stt(\theta,t)$, and we have $9 \kappa \leq \mm$, the above bound completes the proof of \eqref{zyy-bound-final}, once we ensure that $\frac 12 M_2 \geq 5 \mm^3 $ and $\frac 12 N_2 \geq 4 \mm^3$. This justifies the choices of $M_2$ and $N_2$ are defined in \eqref{eq:M1:M4}, respectively in \eqref{eq:N1:N3}.

In order to complete the proof of the Lemma, we need to establish the bound \eqref{eq:lossy:qy:dz:bound}, which is useful later in the proof. For this purpose, note that in view of \eqref{dyyqz-identity}, \eqref{zyy-identity}, the fact that $q^z_\theta(\theta,t)  = z_{\theta\theta} (\theta,t)  + {\mathcal H}_3$, and of the bounds  bounds \eqref{goodcat00}, \eqref{coolcat1},   \eqref{lower-the-bridge1},  we have that
\begin{align}
\sabs{q^z_\theta(\theta,t) } \leq 9 \kappa \mm^2 \stt(\theta,t)^{-\frac 12} +  3 \mm^3 \stt(\theta,t)^{-\frac 12}
\end{align}
which thus concludes the proof of \eqref{eq:lossy:qy:dz:bound}, and of the lemma. 
\end{proof}

\subsection{Lower bounds for second derivatives}
\label{sec:dyy:lower:bound}
In this section we prove that various second derivatives of the solution blow up as we approach the curves $\sc_1$ and $\sc_2$ from the right side. Throughout this section we fix $t \in (0,\bar \eps]$ and shall make reference to the following asymptotic descriptions:
\begin{subequations}
\label{eq:bohemian}
\begin{align}
\lim_{\theta\to \sc_2(t)^+} \frac{\theta -\sc_2(t)}{\st(\theta,t)} &= \frac{\kappa}{3}
\label{eq:bohemian:1} \\
\lim_{\theta\to \sc_2(t)} \stt(\theta,t) &\geq  \frac{t}{3}  
\label{eq:bohemian:2}\\
\lim_{\theta\to \sc_1(t)^+} \frac{\theta -\sc_1(t)}{\stt(\theta,t)} &= \frac{\kappa}{3} 
\label{eq:bohemian:4} \,.
\end{align} 
\end{subequations}
Here we have implicitly used that $\pt(\sc_2(t),s) = \sc_2(s)$, and $\pst(\sc_1(t),s) = \sc_1(s)$.
The bounds are a consequence of \eqref{c1c2diffkappa2}, \eqref{dxphipsi-bound}, and the definitions of $\sc_1$, $\sc_2$, $\pt$, and $\pst$. For example, in order to prove \eqref{eq:bohemian:1}, note that by the mean value theorem we have
\begin{align*}
\sc(\st(\theta,t)) - \sc_2(\st(\theta,t)) 
= \pt(\theta,\st(\theta,t))- \pt(\sc_2(t),\st(\theta,t))
= (\theta - \sc_2(t))  
\underbrace{\p_\theta \pt(\bar y, \st(\theta,t))}_{= 1 + \OO(\st^{\frac 13})}
\end{align*}
while by \eqref{eq:distance:curves} we have
\begin{align*}
\sc(\st(\theta,t)) - \sc_2(\st(\theta,t))  = \tfrac{\kappa}{3} \st(\theta,t) + \OO(\st(\theta,t)^{\frac 43})\,.
\end{align*}
The proof of \eqref{eq:bohemian:4} is similar. Lastly, in order to prove \eqref{eq:bohemian:2}, we use that one the hand
\begin{align*}
\sc(\stt(\theta,t)) - \sc_2(\stt(\theta,t)) = \tfrac{\kappa}{3} \stt(\theta,t) +  \OO(\stt(\theta,t)^{\frac 43}) \,,
\end{align*}
while on the other hand
\begin{align*}
 \sc(\stt(\theta,t)) - \sc_2(\stt(\theta,t)) 
 &= \pst(\sc_2(t),\stt(\theta,t)) - \pt(\sc_2(t),\stt(\theta,t)) \notag\\ 
 &= \int_{\stt(\theta,t)}^{t} \underbrace{\left( \p_s \pt -\p_s \pst\right)}_{= \frac{\kappa}{3} + \OO(r^{\frac 13})} ds
 = (t -\stt(\theta,t))  \left( \tfrac{\kappa}{3} + \OO(\bar \eps^{\frac 13}) \right)
 \,.
\end{align*}
By combining the above two estimates, it follows that $\stt(\sc_2(t),t) \geq t ( \frac 12 - \OO(\bar \eps^{\frac 13}) ) \geq \frac  t 3$,   proving \eqref{eq:bohemian:2}.

\subsubsection{Singularities on $\sc_2$, from the right side}
Note that the second derivative upper bounds established in \eqref{eq:second:order:boot}  blow up as $\theta \to \sc_2(t)^+$; the purpose of this subsection is to obtain lower bounds which are within a constant factor of these upper bounds, and thus also diverge as $\theta\to \sc_2(t)^+$.  
 
In this proof we shall frequently use the following facts. First, that $\frac{\kappa}{5} \leq c(\theta,t)  \leq \mm$ for all $(\theta,t) \in \DD_{\bar \eps}$. This follows from the identity $c(\theta,t)  = \tfrac 12 \wb(\theta,t)  + \tfrac 12 (w-\wb -z)$, which in view of \eqref{eq:wb:def}, and  \eqref{eq:w:z:k:a:boot} implies $c(\theta,t)  = \tfrac 12 w_0(\etab^{-1}(\theta,t) ) + \OO(t)$; the desired bound now follows from \eqref{eq:u0:ass:2} and \eqref{eq:u0:ass:2a}. Second, we note that a slightly sharper bound is required for $\p_\theta  \wb$ on the shock curve (when compared to \eqref{thegoodstuff1}). From \eqref{eq:sharp:range:for:solutions} we note that $\etab^{-1}(\sc(t)^-,t) = - (\bb t)^{\frac 32} + \OO(t^2)$. By appealing to \eqref{eq:u0:ass:3} we then obtain that $w_0'(\etab^{-1}(\sc(t)^-,t) ) = - \frac 13 t^{-1} + \OO(t^{-\frac 12})$ as $t \to 0$. We then conclude from \eqref{eq:px:wb:def} that 
\begin{align}
 \p_\theta \wb(\sc(t),t ) = \frac{ - \frac 13 t^{-1} + \OO(t^{-\frac 12})}{1 + t (-\frac 13 t^{-1} + \OO(t^{-\frac 12})) } = - \tfrac{1}{2t} + \OO(t^{-\frac 12})
 \label{eq:double:trouble:6}
\end{align}
as for $0 < t\leq \bar \eps$.

{\bf Lower bound for $|k_{\theta\theta} |$ on $\sc_2^+$}.  The desired lower bound turns out to be a consequence of \eqref{kyy-shock}. 

We first consider the second line of \eqref{kyy-shock}. 
Let $\theta> \sc_2(t)$ with $ \theta - \sc_2(t) \leq \frac{\kappa t}{6}$. Note  in this range of $\theta$, due to \eqref{eq:bohemian:1} and the fact that $t \leq \bar \eps$, we have $\st(\theta,t) \leq \frac{t}{3} \leq \bar \eps$. We claim that for a constant $C = C(\kappa,\mm,\bb,\cc)>0$ we have
\begin{align}
\p_\theta ^2 \pt(\theta,\st(\theta,t)) \geq   \tfrac{\kappa^2}{100 \mm^3} \st(\theta,t)^{-1} - C t^{-\frac 23} \geq   \tfrac{ \kappa^2}{100 \mm^3} \st(\theta,t)^{-1} - C \st(\theta,t)^{-\frac 23} 
\geq 0
\label{eq:kinda:magic}
\end{align}
for $\sc_2(t) < \theta < \sc_2(t) + \frac{\kappa t}{6}$, once $\bar \eps$ is sufficiently small. In order to prove \eqref{eq:kinda:magic}, we consider the formula \eqref{dyyphi} with $s=\st(\theta,t)$. We note that the largest term in \eqref{dyyphi}, the one  containing $c(\theta,t)  c_\theta(\sc(\st),\st) \p_\theta  \pt(\theta,\st)$, is positive. Indeed, from the  bounds $ \frac{\kappa}{5} \leq c(\theta,t)  \leq \mm $, \eqref{eq:double:trouble:6}, the bound \eqref{dxphipsi-bound},  \eqref{eq:w:dx:boot}, and \eqref{eq:z:dx:boot}, we obtain that 
\begin{align*}
&c(\theta,t)  c_\theta(\sc(\st(\theta,t))^+,\st(\theta,t)) \p_\theta  \pt(\theta,\st(\theta,t)) \notag \\
&\qquad = \tfrac 12  c(\theta,t)   \p_\theta  \pt(\theta,\st) \left( \p_\theta  \wb(\sc(\st)^+,\st) + \p_\theta  (w - \wb -z)(\sc(\st)^+,\st) \right) 
\notag\\
&\qquad = \tfrac 12  c(\theta,t)   (1 + \OO(t^{\frac 13}))  \left( - \tfrac{1}{2} \st^{-1}+ \OO(\st^{-\frac 12})  \right) 
\notag\\
&\qquad \leq - \tfrac{\kappa}{40} \st(\theta,t)^{-1}
\end{align*}
since $\st(\theta,t) \leq \frac t3 \ll 1$. The remaining terms in \eqref{dyyphi} may be estimated from above by 
\begin{align*}
2 e^{16 \mm t} \left(\tfrac{80 \mm^2}{\kappa^2} R_7 t + \tfrac{25 \mm }{\kappa^{2}} \left( \tfrac{4\bb}{5} (\tfrac{\kappa t}{6})^{-\frac 23} + R_2 (\tfrac{\kappa t}{6})^{-\frac 12} + R_4 t^{\frac 12} \right)\right)
\leq C t^{-\frac 23}
\end{align*}
for a constant $C = C(\kappa,\mm,\bb,\cc)>0$. The above two estimates then imply 
\begin{align*}
\p_\theta ^2 \pt(\theta,\st(\theta,t)) \geq 2 e^{-16 \mm t} \tfrac{\kappa}{5\mm^3} \tfrac{\kappa}{40} \st(\theta,t)^{-1} - C t^{-\frac 23} \,,
\end{align*}
and \eqref{eq:kinda:magic} follows.

Next, we return to the second line of \eqref{kyy-shock}, from \eqref{eq:dt:zl:kl:on:shock} we have
\begin{align}
 \dot \kl (\st(\theta,t)) = \tfrac{48 \bb^{\frac 92}}{\kappa^3} \st(\theta,t)^{\frac 12} + \OO(\st(\theta,t)) \geq 0
 \label{eq:kinda:magic:1}
\end{align}
since $\st(\theta,t) \leq t$ is small. Moreover, from \eqref{eq:ps:pt:bnd} and \eqref{eq:sc:ass} we have  $\frac{\kappa}{4}\leq (\dot \sc - \p_s \pt)(\theta,\st(\theta,t))  \leq \frac{\kappa}{2} $. As a consequence, from \eqref{eq:kinda:magic} and \eqref{eq:kinda:magic:1}, we obtain
\begin{align}
\mbox{second line of }\eqref{kyy-shock} 
&\geq \tfrac{2}{\kappa} \left( \tfrac{48 \bb^{\frac 92}}{\kappa^3} \st(\theta,t)^{\frac 12} - C \st(\theta,t) \right) \left( \tfrac{ \kappa^2}{100 \mm^3} \st(\theta,t)^{-1} - C \st(\theta,t)^{-\frac 23}  \right) \notag\\
&\geq \tfrac{48 \bb^{\frac 92}}{50 \kappa^2 \mm^2} \st(\theta,t)^{-\frac 12} - C \st(\theta,t)^{-\frac 16}
\notag\\
&\geq \tfrac{\bb^{\frac 92}}{2 \kappa^2 \mm^2} \st(\theta,t)^{-\frac 12}
 \qquad \mbox{as} \qquad \theta \to \sc_2(t)^+\,.
 \label{eq:kinda:magic:2}
\end{align}

Next, we consider the terms on the first line of \eqref{kyy-shock}. 
From the definition of $\lambda_2$ in \eqref{eq:wave-speeds} and the evolution equation \eqref{xland-w}, we obtain that 
\begin{align*}
  \left( \p_t\lambda_2  +(2\dot\sc -\lambda_2)\p_\theta  \lambda_2 \right) - \ddot \sc
 &= \tfrac 23 \p_\theta  w (\dot \sc - \lambda_2 + \dot \sc - w - \tfrac 13 z) \notag\\
 &\qquad 
 + \tfrac 23 (2\dot \sc - \lambda_2) \p_\theta  z + \tfrac 12 \p_t z + \tfrac 23 \left( \tfrac{1}{24} (w-z)^2 \p_\theta  k - \tfrac 82 a w\right) - \ddot \sc
 \,. 
\end{align*}
Taking into account the bound \eqref{eq:sc:ass}, Proposition~\ref{prop:Burgers}, and the fact that $(w,z,k,a) \in {\mathcal X}_{\bar \eps}$ (in particular, that \eqref{eq:double:trouble} holds), we obtain
\begin{align}
\label{eq:double:trouble:3}
 & \left( \p_t\lambda_2  +(2\dot\sc -\lambda_2)\p_\theta  \lambda_2 \right)(\sc(\st(\theta,t)),\st(\theta,t)) - \ddot \sc(\st(\theta,t)) \notag\\
 &= \tfrac 23 \p_\theta  \wb(\sc(\st(\theta,t),\st(\theta,t)) \left(2 \dot \sc (\st(\theta,t))- \tfrac 53 \wb (\sc(\st(\theta,t)),\st(\theta,t))  \right) 
 + \OO(\st(\theta,t)^{-\frac 12})
\end{align}
as $\theta \to \sc_2(t)^+$, or equivalently, as $\st(\theta,t) \to 0^+$. Next, from \eqref{eq:dt:zl:kl:on:shock} and \eqref{eq:dt:dt:zl:kl:on:shock} we note that 
\begin{align}
\ddot \kl (\st(\theta,t))) = \tfrac{1}{2 \st(\theta,t)} \dot \kl (\st(\theta,t))) + \OO(1)
 \label{eq:double:trouble:4}
\end{align}
as $\st(\theta,t) \to 0^+$. By combining \eqref{eq:double:trouble:3},  \eqref{eq:double:trouble:4},  the bound $\frac{\kappa}{2} \geq \dot \sc (\st(\theta,t)) - \lambda_2 (\sc(\st(\theta,t)),\st(\theta,t)) \geq \frac{\kappa}{4}$, and \eqref{eq:dt:zl:kl:on:shock}, we deduce
\begin{align}
& \mbox{first line of }\eqref{kyy-shock} \notag\\
&\quad = \left(\tfrac{ \p_\theta  \pt( \theta, \st(\theta,t) )  }{\dot \sc (\st(\theta,t)) - \lambda_2 (\sc(\st(\theta,t)),\st(\theta,t))}\right)^2
\dot \kl (\st(\theta,t))  \left(\tfrac{1}{2 \st(\theta,t)} + \tfrac 23\p_\theta  \wb(\sc(\st(\theta,t),\st(\theta,t)) \right) + \OO(1)
 \label{eq:double:trouble:5}
\end{align}
as $\st(\theta,t) \to 0^+$.
At this stage we appeal to \eqref{eq:double:trouble:6} with $t$ replaced by $\st = \st(\theta,t) \to 0^+$, which is the relevant regime for $\theta\to \sc_2(t)^+$.
From \eqref{eq:double:trouble:5}, \eqref{eq:double:trouble:6}, \eqref{dxphipsi-bound}, and  \eqref{eq:dt:zl:kl:on:shock}
we finally conclude that 
\begin{align}
\mbox{first line of }\eqref{kyy-shock} 
&= \left(\tfrac{ \p_\theta  \pt( \theta, \st(\theta,t) )  }{\dot \sc (\st(\theta,t)) - \lambda_2 (\sc(\st(\theta,t)),\st(\theta,t))}\right)^2
\dot \kl (\st(\theta,t))) \tfrac{1}{6 \st(\theta,t)}   + \OO(1)
\notag\\
&\geq \tfrac{3}{\kappa^2} \tfrac{48 \bb^{\frac 92} \st(\theta,t)^{\frac 12}}{\kappa^3}  \tfrac{1}{6 \st(\theta,t)}   - C \notag\\
&\geq \tfrac{24 \bb^{\frac 92} }{\kappa^4} \st(\theta,t)^{-\frac 12}
 \label{eq:double:trouble:7}
\end{align}
as $\st(\theta,t) \to 0^+$.

Lastly, by combining \eqref{eq:kinda:magic:2} with \eqref{eq:double:trouble:7}, we obtain that 
\begin{align}
\lim_{\theta \to \sc_2(t)^+} \p_\theta ^2 k(\theta,t)  \st(\theta,t)^{\frac 12}\geq \tfrac{24 \bb^{\frac 92} }{\kappa^4}
\label{eq:double:trouble:final}
\,.
\end{align}
In view of \eqref{eq:bohemian:1}, the above estimate and \eqref{eq:k:dxx:boot} thus precisely determines the blowup rate of $\p_\theta ^2 k(\theta,t) $ as $\theta\to \sc_2(t)^+$: this rate lies within two constants of $(\theta- \sc_2(t))^{- \frac 12}$.

{\bf Lower bound for $|z_{\theta\theta} |$ on $\sc_2^+$}.
Next, we show that the upper bound \eqref{eq:z:dxx:boot} also has a corresponding lower bound which blows up as $\theta \to \sc_2(t)^+$. We start by recalling the function $q^z$ defined in \eqref{qz-eqn}, and the formula for its derivative in \eqref{dyyqz-identity}. As above, we let $\sc_2(t) < \theta < \sc_2(t) + \frac{\kappa t}{6}$ and denote $\stt = \stt(\theta,t)$.
From estimate \eqref{eq:lossy:qy:dz:bound}, and by appealing to \eqref{eq:bohemian:1} which yields  $\stt(\theta,t) \geq \frac 14 t$  in the range of $\theta$ considered here, we arrive at 
\begin{align*}
\abs{z_{\theta\theta}  + \tfrac 14 c k_{\theta\theta}  + \tfrac 14 c_\theta k_\theta}(\theta,t)  = \abs{\p_\theta  q^z (\theta,t) } \leq C \stt(\theta,t)^{-\frac 12} \leq C t^{-\frac 12} \,,
\end{align*}
for all $\theta>\sc_2(t)$ which is close to $\sc_2(t)$. Furthermore, since~\eqref{eq:w:z:k:a:boot} and \eqref{eq:double:trouble:a} imply that
$\abs{c_\theta k_\theta}(\theta,t)  \leq \frac 12 (t^{-1} +  R_4 t^{\frac 12}) R_6 t^{\frac 12} \leq C t^{-\frac 12}$, 
the above estimate implies 
\begin{align}
\label{eq:stormy:monday:1}
\abs{z_{\theta\theta}  + \tfrac 14 c k_{\theta\theta} }(\theta,t)   \leq C t^{-\frac 12} \,,
\end{align}
for a suitable constant $ C = C(\kappa,\bb,\cc,\mm)>0$. 

Lastly, since $ \frac{\kappa}{5} \leq c(\theta,t)  \leq \mm $, we see that the blowup rate for $k_{\theta\theta} $ as $\theta\to \sc_2^+(t)$, given by \eqref{eq:double:trouble:final}, is immediately transferred to $z_{\theta\theta} $, and we have
\begin{align}
\lim_{\theta \to \sc_2(t)^+} \p_\theta ^2 z(\theta,t)  \st(\theta,t)^{\frac 12}
&\le - \tfrac 14 \lim_{\theta \to \sc_2(t)^+} c(\theta,t)  \p_\theta ^2 k(\theta,t)  \st(\theta,t)^{\frac 12}
 \leq -\tfrac{ \bb^{\frac 92} }{  \kappa^3}
\label{eq:stormy:monday:2}
\,.
\end{align}
Here we have used the fact that $\lim_{\theta\to \sc(t)^+} \st(\theta,t) t^{-\frac 12} = 0$.
The estimate \eqref{eq:stormy:monday:2}, and the upper bound \eqref{eq:z:dxx:boot}, show that $\p_{\theta}^2 z(\theta,t)  \to - \infty$ as $\theta\to \sc_2(t)^+$, at a rate which is proportional to $- (\theta-\sc_2(t))^{-\frac 12}$. 

{\bf Lower bound for $|w_{\theta\theta} |$ on $\sc_2^+$}.
The argument is nearly identical to the one for the second derivative of $z$. We recall that the variable $q^w$ defined in \eqref{qw-def} satisfies the derivative bound \eqref{vladneeds2}. By appealing to the fact that $(w,z,k,a) \in {\mathcal X}_{\bar \eps}$, the estimate \eqref{thegoodstuff2} for the second derivative of the Burgers solution, and to \eqref{vladneeds2}, we arrive at
\begin{align}
\abs{w_{\theta\theta}  - \tfrac 14 c k_{\theta\theta} }(\theta,t)  
&\leq \tfrac 14 \abs{ c_\theta k_{\theta}}(\theta,t)  +  \abs{ q^w_\theta(\theta,t) } \notag\\
&\leq \tfrac 18 (t^{-1} + R_4 t^{\frac 12}) R_6 t^{\frac 12} +  3 \bb (\kappa t)^{-\frac 53}\notag\\
&\leq C t^{-\frac 53} 
\label{eq:stormy:monday:3}
\end{align}
for all $\theta \in ( \sc_2(t), \sc_2(t) + \tfrac{\kappa t}{6})$, for a suitable constant $ C = C(\kappa,\bb,\cc,\mm)>0$. This estimate is the parallel bound to \eqref{eq:stormy:monday:1} for the second derivative of $z$. It implies, in a similar fashion to \eqref{eq:stormy:monday:3}, that 
\begin{align}
\lim_{\theta \to \sc_2(t)^+} \p_\theta ^2 w(\theta,t)  \st(\theta,t)^{\frac 12}
&\geq \tfrac{ \bb^{\frac 92} }{  \kappa^3}
\label{eq:stormy:monday:4}
\,.
\end{align}
The estimate \eqref{eq:stormy:monday:4}, and the upper bound \eqref{eq:w:dxx:boot}, show that $\p_{\theta}^2 w(\theta,t)  \to + \infty$ as $\theta\to \sc_2(t)^+$, at a rate which is proportional to $(\theta-\sc_2(t))^{-\frac 12}$. 
 
{\bf Lower bound for $|a_{\theta\theta} |$ on $\sc_2^+$}.
As before, consider $\theta \in (\sc_2(t), \sc_2(t) + \frac{\kappa t}{6})$. By combining \eqref{thegoodstuff1}, \eqref{eq:k:boot}, \eqref{eq:varpi:boot}, and \eqref{eq:proto:1}, we arrive at the  bound
\begin{align}
 \abs{a_{\theta\theta}  +  \tfrac{1}{4}    c^2 e^{-k} \varpi_{\theta}  }
&\leq C t^{-\frac 23} +  C t^{-\frac 12}  
\leq C t^{-\frac 23}
\,.
\label{eq:proto:2}
\end{align}
The desired lower bound on $a_{\theta\theta} $ is thus inherited from $\varpi_{\theta} $, which we recall is given by \eqref{eq:dy:varpi}. The principal contribution is due to the term containing the time integral of $k_{\theta\theta} $.  Indeed, using the same argument used to prove \eqref{eq:dy:varpi:fin}, we have that 
\begin{align}
\abs{ \varpi_{\theta} (\theta,t)  - \tfrac 43 \int_{\st(\theta,t)}^t  \bigl(e^k k_{\theta\theta} \bigr)(\pt(\theta,s),s) e^{I_{\varpi_{\theta} }(\theta,t;s)} ds} \leq  C \,.
\label{eq:proto:3}
\end{align}
The analysis reduces to establishing a lower bound which is commensurate with the upper bound \eqref{eq:useful:later}. The main idea here is as follows. From \eqref{eq:kinda:magic:2} and \eqref{eq:double:trouble:7}, as in \eqref{eq:double:trouble:final}
we have that $\p_{\theta}^2  k(\theta,t)  \geq 24 \bb^{\frac 92} \kappa^{-4} \st(\theta,t)^{-\frac 12}$, for all $\theta$ sufficiently close to $\sc_2(t)$, i.e. $\sc_2(t)< \theta < \sc_2(t) + \frac{\kappa t}{6}$. Therefore, if the point $(\theta,t) $ is replaced by the point $(\pt(\theta,s),s)$, which in view of Remark~\ref{c1c2diffkappa2} and estimate \eqref{eq:distance:curves} is such that $\pt(\theta,s)$ is sufficiently close to $\sc_2(s)$, we have that 
\begin{align*}
 \p_{\theta}^2  k(\pt(\theta,s),s) \geq 24 \bb^{\frac 92} \kappa^{-4} \st(\pt(\theta,s),s)^{-\frac 12} = 24 \bb^{\frac 92} \kappa^{-4} \st(\theta,t)^{-\frac 12}
\end{align*}
uniformly for all $s \in [\st(\theta,t),t]$. In particular, $\p_\theta ^2 k \circ \pt > 0$, and so by combining \eqref{eq:proto:2}--\eqref{eq:proto:3}, with \eqref{eq:k:dx:boot}, \eqref{eq:int:factor:dy:varpi:1}, and with the estimate $\frac{\kappa}{5} \leq c \leq \mm$, we arrive at
\begin{align}
 a_{\theta\theta} (\theta,t)  
 &\leq - \tfrac 14 c^2 e^{-k} \varpi_{\theta}  + C t^{-\frac 23} \notag\\
 &\leq - \tfrac{\kappa^2}{75} \int_{\st(\theta,t)}^t  k_{\theta\theta} (\pt(\theta,s),s) ds + C t^{-\frac 23} 
\notag\\
&\leq - \tfrac{\bb^{\frac 92}}{4 \kappa^2} (t-\st(\theta,t)) \st(\theta,t)^{-\frac 12} \,.
\end{align}
The above estimate implies 
\begin{align}
\lim_{\theta \to \sc_2(t)^+} \st(\theta,t)^{\frac 12} a_{\theta\theta} (\theta,t) 
\leq - \tfrac{\bb^{\frac 92}}{4 \kappa^2} t \,,
\label{eq:oxigen:2}
\end{align}
which may be combined with the upper bound \eqref{eq:ayy:improve:bootstrap} show that $a_{\theta\theta} (\theta,t)  \to -\infty$ as $\theta\to \sc_2(t)^+$, at a rate which is proportional to $t (\theta-\sc_2(t))^{-\frac 12}$.

\subsubsection{Singularities on $\sc_1$, from the right side}
Passing to the limit $\theta \to \sc_1(t)^+$ in the estimates \eqref{eq:second:order:boot2}, we obtain that 
\begin{align*}
\lim_{\theta \to \sc_1(t)^+} | w_{\theta\theta}  (\theta,t) | \leq C t^{-\frac 53}
\,, \qquad \mbox{and} \qquad 
\lim_{\theta \to \sc_1(t)^+} | a_{\theta\theta}  (\theta,t) | \leq C t^{-\frac 23} 
\,,
\end{align*}
for a suitable constant $C = C(\kappa,\bb,\cc,\mm)>0$,
which shows that these quantities do not blow up as $\theta$ approaches $\sc_1$ from the right side. 
The only quantity that does indeed blow up is the second derivative of $z$.

Here we establish a lower bound for $|\p_{\theta}^2 z(\theta,t) |$ which is commensurate with \eqref{eq:z:dxx:boot2} as $\theta \to \sc_1(t)^+$; more precisely we claim that 
\begin{align}
\lim_{\theta \to \sc_1(t)^+} \stt(\theta,t)^{\frac 12} z_{\theta\theta}  (\theta,t)  \leq - \tfrac 14 \bb^{\frac 92} \kappa^{-4} \,,
\label{eq:sanity}
\end{align}
which shows the precise rate of divergence of $\p_\theta ^2 z$ towards $-\infty$ as $\theta$ approaches $\sc_1$ from the right side.
The proof of \eqref{eq:sanity} is quite involved, and will be broken up into several parts, which correspond to estimating the various terms in \eqref{dyyqz-identity}. We rewrite this identity as
\begin{align}
z_{\theta\theta} (\theta,t)  = {\mathcal B}_1 + {\mathcal B}_2 + {\mathcal B}_3 \,,
\label{eq:sanity:3}
\end{align}
where we define
\begin{align}
{\mathcal B}_1 
&:= q^z_{\theta}(\sc(\stt(\theta,t)),\stt(\theta,t)) (\p_\theta  \psi_t)^2(\theta,\stt(\theta,t)) + q^z(\sc(\stt(\theta,t)),\stt(\theta,t)) \p_\theta ^2 \psi_t (\theta,\stt(\theta,t)) = {\mathcal B}_{11} + {\mathcal B}_{12}
\label{eq:the:HUN} 
\\
{\mathcal B}_2
&:= - \int_{\stt_1(\theta,t) }^t \Bigl( \p_\theta Q^z\! \circ\! \psi_t  \ (\p_\theta  \psi_t)^2+Q^z\! \circ\! \psi_t  \ \p_\theta ^2 \psi_t \bigr) ds 
\label{eq:the:TURK}
\\
{\mathcal B}_3
&:= - \int_{\stt(\theta,t)}^{\stt_1(\theta,t) } \Bigl( \p_\theta Q^z\! \circ\! \psi_t  \ (\p_\theta  \psi_t)^2+Q^z\! \circ\! \psi_t  \ \p_\theta ^2 \psi_t \bigr) ds 
\label{eq:the:SLAV}
\end{align}
and $\stt_{1}$ is the time at which $\pst(\theta,\cdot)$ intersects the curve $\sc_2$; as given by \eqref{eq:sanity:2}, see also Figure~\ref{fig:many:characteristics:4}.
Since $\theta \to \sc_1(t)^+$ is equivalent in view of \eqref{eq:bohemian:4}  to $\stt(\theta,t) \to 0$, our goal is to extract the leading order term in ${\mathcal B}_1$ with respect to $\stt\ll 1$, and then to obtain sharp estimates for ${\mathcal B}_2$ and ${\mathcal B}_3$ with respect to $\stt$. In this direction we claim:
\begin{lemma}
\label{lem:the:HUN}
Fix $t\in (0,\bar \eps]$ and $\sc_1(t) < \theta < \sc_1(t) + \frac{\kappa t}{6}$. Then we have that 
\begin{align}
-  \tfrac{86}{16} \bb^{\frac 92} \kappa^{-4} \stt(\theta,t)^{-\frac 12} \leq  {\mathcal B}_1 \leq -  \tfrac{85}{16} \bb^{\frac 92} \kappa^{-4} \stt(\theta,t)^{-\frac 12} \,,
\label{eq:the:HUN:*}
\end{align}
where the term ${\mathcal B}$ is as defined in \eqref{eq:the:HUN}.
\end{lemma}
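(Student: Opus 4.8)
\textbf{Proof proposal for Lemma~\ref{lem:the:HUN}.}
The plan is to extract the leading-order behavior of the two pieces ${\mathcal B}_{11}$ and ${\mathcal B}_{12}$ of ${\mathcal B}_1$ defined in \eqref{eq:the:HUN}, keeping track of all $\stt = \stt(\theta,t)$-dependent quantities, and to show that ${\mathcal B}_{12}$ is subdominant with respect to ${\mathcal B}_{11}$, while ${\mathcal B}_{11} \sim c\, \stt^{-1/2}$ with $c$ a negative universal constant (up to $\kappa,\bb$ factors) coming from the shock data. First I would recall the already-established asymptotics on the shock curve: $\dot \kl(\stt) = \frac{48 \bb^{9/2}}{\kappa^3}\stt^{1/2} + \OO(\stt)$ from \eqref{eq:dt:zl:kl:on:shock}, $\dot\zl(\stt) = -\frac{27 \bb^{9/2}}{4\kappa^2}\stt^{1/2} + \OO(\stt)$ from \eqref{eq:dt:zl:kl:on:shock}, and the sharp second-derivative-on-the-shock formula for $z_{\theta\theta}$ encoded in \eqref{zyy-shock}, which combined with \eqref{kx-shock}, \eqref{zx-good}, \eqref{kyy-on-shock} and the amplitude/first-derivative estimates \eqref{eq:zl:and:kl:on:shock:L:infinity}, \eqref{eq:dt:dt:zl:kl:on:shock}, \eqref{eq:the:burgers:jumps}, \eqref{eq:w:z:k:a:boot}, \eqref{dxphipsi-bound}, \eqref{kyy-bound1}, \eqref{eq:double:trouble:6} was already shown in \eqref{zyy-on-shock}--\eqref{zy-on-shock-tmp} to give $|z_{\theta\theta}(\sc(\stt),\stt)| \lesssim \stt^{-1/2}$. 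The key refinement needed here is to turn that one-sided bound into a matching lower bound: I would retrace the derivation of \eqref{zyy-shock} to isolate its dominant term. The leading contribution comes from $\dot\zl(\stt)$ divided by $(\dot\sc - \lambda_1)^2 \approx (\frac{2\kappa}{3})^2$, plus the $c_-^2 k_{\theta\theta}$ and the $\dot\kl w_\theta$ contributions; using \eqref{eq:double:trouble:final} (valid on $\sc_2^+$, hence applicable since $\pst(\theta,\stt)=\sc(\stt)$ lies on the shock where $k_{\theta\theta}$ has the lower bound on $\sc_2$-side transported along $\pt$) together with $w_\theta(\sc(\stt)^-,\stt) = -\frac{1}{2\stt} + \OO(\stt^{-1/2})$ from \eqref{eq:double:trouble:6}, I would show that $z_{\theta\theta}(\sc(\stt),\stt) = -C_0 \bb^{9/2}\kappa^{-4}\stt^{-1/2} + \OO(\stt^{-1/6})$ for an explicit $C_0 \in [\frac{85}{16},\frac{86}{16}]$ after carefully summing the contributions.

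Next I would handle ${\mathcal B}_{11} = q^z_\theta(\sc(\stt),\stt)\,(\p_\theta\pst)^2(\theta,\stt)$. Since $q^z_\theta = z_{\theta\theta} + \frac14 c k_{\theta\theta} + \frac14 c_\theta k_\theta$ and on the shock $\frac14 c_\theta k_\theta = \OO(\stt^{-1/2})$ is subdominant to $\stt^{-1/2}$ only by a constant, I would combine the sharp $z_{\theta\theta}$ lower bound above with the sharp $k_{\theta\theta}(\sc(\stt),\stt)$ bound \eqref{eq:double:trouble:final} to conclude $q^z_\theta(\sc(\stt),\stt) = -C_1\bb^{9/2}\kappa^{-4}\stt^{-1/2} + \OO(\stt^{-1/6})$ with $C_1$ in a tight window, then multiply by $(\p_\theta\pst)^2(\theta,\stt) = 1 + \OO(\stt^{1/3})$ from \eqref{dxphipsi-bound}. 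For ${\mathcal B}_{12} = q^z(\sc(\stt),\stt)\,\p_\theta^2\pst(\theta,\stt)$: by \eqref{qz-def} and \eqref{eq:w:z:k:a:boot} plus \eqref{eq:zl:and:kl:on:shock:L:infinity} we have $|q^z(\sc(\stt),\stt)| \lesssim \stt^{1/2}$, while the relevant second-derivative bound on $\pst$ at the shock-intersection time is \eqref{dxxphi-bound2} or \eqref{dxxphipsi-bound-all}, giving $|\p_\theta^2\pst(\theta,\stt)| \lesssim \stt^{-1}$, so $|{\mathcal B}_{12}| \lesssim \stt^{-1/2}$ --- but with a constant that I must verify is small relative to $C_1$; this requires using the sharper estimate on $q^z$ near the shock, namely that $q^z(\sc(\stt),\stt) = \frac14 \cl(\stt)\kl(\stt) + \zl(\stt) = \OO(\stt^{3/2})$ (not just $\OO(\stt^{1/2})$), since both $\kl$ and $\zl$ are $\OO(\stt^{3/2})$ by \eqref{eq:zl:and:kl:on:shock:L:infinity}. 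That improvement makes ${\mathcal B}_{12} = \OO(\stt^{1/2})$, genuinely negligible. Assembling, ${\mathcal B}_1 = {\mathcal B}_{11} + \OO(\stt^{1/2}) = -C_1\bb^{9/2}\kappa^{-4}\stt^{-1/2} + \OO(\stt^{-1/6})$, and choosing $\bar\eps$ small so the error is absorbed yields \eqref{eq:the:HUN:*}.

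The main obstacle I anticipate is the bookkeeping in pinning down the universal constant $C_1$ tightly enough to land in the interval $[\frac{85}{16},\frac{86}{16}]$: this requires propagating the exact coefficients through the chain \eqref{zyy-shock} $\to$ \eqref{zyy-on-shock}, correctly evaluating the cancellations among the $\dot\zl$, $\cl^2 k_{\theta\theta}$, $\cl k_\theta w_\theta$, and $\dot\sc - \lambda_1$ factors at leading order in $\stt$, and checking that the Burgers-side quantities $\wb(\sc(\stt),\stt)$, $\p_\theta\wb(\sc(\stt),\stt)$ enter with the claimed constants --- all while confirming that $\p_\theta\wb(\sc(\stt),\stt) = -\frac{1}{2\stt} + \OO(\stt^{-1/2})$ is the correct sharp asymptotic via \eqref{eq:sharp:range:for:solutions} and \eqref{eq:px:wb:def}. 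A secondary technical point is justifying that the lower bound \eqref{eq:double:trouble:final} for $k_{\theta\theta}$ on $\sc_2^+$ does transfer correctly to the point $\sc(\stt(\theta,t))$, which lies on the shock curve $\sc$, not literally on $\sc_2$; this is handled by noting that along the $\lambda_2$-characteristic $\pt$ through $(\theta,t)$ the shock-intersection time equals $\stt(\theta,t)$, so $k_{\theta\theta}$ evaluated at $(\sc(\stt),\stt)$ is governed by \eqref{kyy-shock} with the same sign-definite leading term derived in the $\sc_2^+$ analysis. Everything else is a routine application of the already-proven bootstrap bounds and flow estimates from Sections~\ref{sec:dyy:flows}--\ref{sec:zyy}.
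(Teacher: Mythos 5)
Your overall plan—expand $q^z_\theta$ and $q^z$ on the shock curve to leading order in $\stt$, multiply by $(\p_\theta\pst)^2$ and $\p_\theta^2\pst$ at the intersection time, then add—is the same strategy the paper uses. But there is a concrete error that changes the final constant and makes the proof fail.

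You claim that ${\mathcal B}_{12}$ is ``genuinely negligible,'' i.e.\ $\OO(\stt^{1/2})$, based on the assertion $q^z(\sc(\stt),\stt) = \tfrac14 \cl(\stt)\kl(\stt) + \zl(\stt) = \OO(\stt^{3/2})$. This misreads the definition \eqref{qz-def}: $q^z(\theta,t) = z_\theta(\theta,t) + \tfrac14 c(\theta,t)\,k_\theta(\theta,t)$ is built from the \emph{spatial derivatives} $z_\theta$ and $k_\theta$, not from the \emph{values} $\zl$ and $\kl$. On the shock, $k_\theta(\sc(\stt),\stt)$ and $z_\theta(\sc(\stt),\stt)$ are computed from \eqref{kx-gen} and \eqref{zx-gen}, using $\dot\kl, \dot\zl \sim \stt^{1/2}$ and $\dot\sc - \lambda_2 \approx \tfrac{\kappa}{3}$; one finds $k_\theta(\sc(\stt),\stt) = 144\,\bb^{9/2}\kappa^{-4}\stt^{1/2} + \OO(\stt)$ and $z_\theta(\sc(\stt),\stt) = -\tfrac{153}{8}\bb^{9/2}\kappa^{-3}\stt^{1/2} + \OO(\stt)$, hence $q^z(\sc(\stt),\stt) = -\tfrac{9}{8}\bb^{9/2}\kappa^{-3}\stt^{1/2} + \OO(\stt)$, which is only $\OO(\stt^{1/2})$. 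Meanwhile the sharp asymptotic $\p_\theta^2\pst(\theta,\stt) = \tfrac{1}{4\kappa}\stt^{-1} + \OO(\stt^{-2/3})$ (obtained from \eqref{dyypsi}, not just the upper bound \eqref{dxxphi-bound2}) shows ${\mathcal B}_{12} = -\tfrac{9}{32}\bb^{9/2}\kappa^{-4}\stt^{-1/2} + \OO(\stt^{-1/6})$, which is of the \emph{same} order as ${\mathcal B}_{11}$.

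This is not a cosmetic issue: the paper's computation yields ${\mathcal B}_{11} = -\tfrac{81}{16}\bb^{9/2}\kappa^{-4}\stt^{-1/2} + \OO(1)$, so if you discard ${\mathcal B}_{12}$ you land at $-\tfrac{162}{32}\bb^{9/2}\kappa^{-4}\stt^{-1/2}$, and $\tfrac{162}{32} < \tfrac{170}{32} = \tfrac{85}{16}$ falls \emph{outside} the window $[\tfrac{85}{16},\tfrac{86}{16}]$ claimed in the lemma. The correct total is $\tfrac{81}{16} + \tfrac{9}{32} = \tfrac{171}{32}$, which does lie in the stated window. So ${\mathcal B}_{12}$ must be tracked at leading order; it cannot be absorbed into the error. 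A secondary caution: the transfer of the $\sc_2^+$-side lower bound \eqref{eq:double:trouble:final} to the shock point $(\sc(\stt),\stt)$ is not automatic—the stopping times $\st$ and $\stt$ that appear are associated with different characteristics, and the paper instead evaluates $k_{\theta\theta}(\sc(\stt),\stt)$ directly from the formula \eqref{kyy-on-shock}, which is the safer route.
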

 
\begin{lemma} \label{lem:insanity}
Fix $t\in (0,\bar \eps]$ and $\sc_1(t) < \theta < \sc_1(t) + \frac{\kappa t}{6}$. Then we have that 
\begin{subequations}
\label{eq:insanity}
\begin{align} 
{\mathcal B}_2 &\le    t^{\frac 14}  \bb ^\frac{9}{2} \kappa^{-4} \stt(\theta,t)^{-\frac{1}{2}}   \,, \label{vinsanity} \\
{\mathcal B}_3 &\le  \tfrac 92 \bb^\frac{9}{2} \kappa^{-4} \stt(\theta,t)^{-\frac{1}{2}}  + C \stt(\theta,t)^{-\frac 16} \,, \label{insanity}
\end{align} 
\end{subequations}
where the terms ${\mathcal B}_2$ and ${\mathcal B}_3$ defined in \eqref{eq:the:TURK} and respectively in \eqref{eq:the:SLAV}. Note that the sum of the estimates in \eqref{eq:insanity} gives an improvement over \eqref{lower-the-bridge1}, in the sense that the constant is sharper. 
\end{lemma}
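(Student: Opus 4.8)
\textbf{Proof plan for Lemma \ref{lem:insanity}.}

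The plan is to estimate ${\mathcal B}_2$ and ${\mathcal B}_3$ separately, exploiting the fact that on the time interval $[\stt_1(\theta,t),t]$ the entropy vanishes (so the forcing $Q^z$ is small, of size $\OO(s^{\frac 32})$ via the $(az)_\theta$ term alone), whereas on the short interval $[\stt(\theta,t),\stt_1(\theta,t)]$ the characteristic $\pst(\theta,\cdot)$ lies in $\DD^k_{\bar\eps}$ and the entropy gradient contributes the dominant term. First I would recall the decomposition of $\p_\theta Q^z$ into $\mathcal{Q}_1 + \mathcal{Q}_2$ already introduced in the proof of Lemma~\ref{lem:for-qz-stuff}, where $\mathcal{Q}_1 = \mathcal{Q}_{1a}+\mathcal{Q}_{1b}+\mathcal{Q}_{1c}$ collects the terms carrying $k_\theta$ or $k_{\theta\theta}$ against $w$-quantities, and $\mathcal{Q}_2$ collects the remaining terms involving $z_{\theta\theta}$, $a_\theta$, $(az)_{\theta\theta}$. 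On $[\stt_1,t]$, where $k\equiv 0$ along the flow, only the $\mathcal{Q}_2$ part survives and the terms containing $z_{\theta\theta}\circ\psi_t$ are the ones to watch; the point is that on this interval $\pst(\theta,\cdot)$ is in the region $\DD^z_{\bar\eps}\setminus\overline{\DD^k_{\bar\eps}}$ where $\stt(\pst(\theta,s),s) = \stt(\theta,t)$, so the bootstrap~\eqref{eq:z:dxx:boot2} gives $|z_{\theta\theta}\circ\psi_t|\le N_2\stt(\theta,t)^{-\frac12}$ uniformly, and combining with the integral bounds~\eqref{eq:stt:eta:int} (rather the analogue for $\pst$), \eqref{dxphipsi-bound}, \eqref{dxxphi-bound2}, and the $\OO(t^{\frac13})$-smallness of $\int |Q^z\circ\psi_t \, \p_\theta^2\psi_t|$ from \eqref{fof8}, one arrives at ${\mathcal B}_2 \le C t^{\frac14}\bb^{\frac92}\kappa^{-4}\stt(\theta,t)^{-\frac12}$ after absorbing constants into a power of $t$; the gain of the extra $t^{\frac14}$ is the key, and it comes from the fact that away from $\DD^k_{\bar\eps}$ every term in $\mathcal{Q}_2$ either carries a positive power of $s$ (from $z$, $(az)$) or is integrated against a transversal flow, so the full integral is bounded by $t^{\frac13}$ times $\stt(\theta,t)^{-\frac12}$ up to constants. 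I would be careful to use \eqref{eq:sanity:2} to replace $\stt_1(\theta,t)$ by $2\stt(\theta,t)(1+\OO(\stt^{\frac13}))$ wherever needed.

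For ${\mathcal B}_3$, the integral runs over $[\stt(\theta,t),\stt_1(\theta,t)]$, an interval of length $\approx \stt(\theta,t)$, over which $\pst(\theta,s)\in\DD^k_{\bar\eps}$. Here the dominant contribution is from $\mathcal{Q}_{1a} = \tfrac{1}{12}ck_\theta w_{\theta\theta}$, and more precisely from the part where $w_{\theta\theta}$ is replaced by $\wb_{\theta\theta}$: using \eqref{eq:metal:2} for $k_\theta\circ\psi_t \lesssim \bb^{\frac92}\kappa^{-4}\st(\pst(\theta,s),s)^{\frac12}$, the bound $\st(\pst(\theta,s),s)\le\stt(\theta,t)$ (since $\st\le\stt$ in $\DD^k_{\bar\eps}$ by \eqref{c1c2diffkappa2}), the sharp estimate $|\wb_{\theta\theta}|\lesssim \bb^{-\frac32}s^{-\frac52}$ and $c\le \mm$, and integrating $\int_{\stt}^{2\stt} (\cdot)\,ds$, one extracts a term of size $\lesssim \bb^{\frac92}\kappa^{-4}\stt(\theta,t)^{-\frac12}$ with an explicit constant that I would track carefully to land below $\tfrac92$ (the $\frac{1}{12}$, the $\frac43$ from $c\le\mm$ vs $\kappa/5$, and the integral of $s^{-\frac52}$ over an interval of relative width $1$ combine to give a computable constant $\le \tfrac 92 \bb^{\frac92}\kappa^{-4}$ after using $|c k_\theta|$ bounds sharply and $\kappa\ll\mm$). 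The remaining pieces — $\mathcal{Q}_{1b}$, $\mathcal{Q}_{1c}$ (which carries $k_{\theta\theta}$ and must be handled via \eqref{kyy-bound1} and \eqref{stt-integral}), $\mathcal{Q}_2$, and the $Q^z\circ\psi_t\,\p_\theta^2\psi_t$ term via \eqref{dxxphipsi-bound-all} and \eqref{fof7} — are all lower order, of size $C\stt(\theta,t)^{-\frac16}$ or better, which matches the stated error term.

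The main obstacle, as usual in this paper, is bookkeeping of constants: the lemma's bounds are stated with sharp constants ($t^{\frac14}$ smallness for ${\mathcal B}_2$, and the explicit $\tfrac92 \bb^{\frac92}\kappa^{-4}$ for ${\mathcal B}_3$), so unlike the cruder bound~\eqref{lower-the-bridge1} one cannot afford to be lossy. The delicate step is isolating, inside $\mathcal{Q}_{1a}\circ\psi_t$, exactly the $c k_\theta \wb_{\theta\theta}$ piece and showing that the difference $w_{\theta\theta}-\wb_{\theta\theta}$, while large ($\OO(s^{-2})$ from~\eqref{eq:w:dxx:boot}), when multiplied by $k_\theta\circ\psi_t\lesssim \stt^{\frac12}$ and integrated over the length-$\stt$ interval $[\stt,\stt_1]$, produces only a $C\stt^{-\frac16}$-type remainder rather than something comparable to the main term — this uses crucially that $\int_{\stt}^{\stt_1} s^{-2}ds \lesssim \stt^{-1}$ and the factor $\stt^{\frac12}$ from $k_\theta$, giving $\stt^{-\frac12}$ but with a constant that must be arranged to be absorbed into the error (here I would split $k_\theta\circ\psi_t$ further, using $\st(\pst(\theta,s),s)\lesssim s-\stt_1+\text{const}\cdot\stt$ to gain an extra power near the endpoint, mirroring the argument around~\eqref{fof3}). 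The analogous care is needed for the $\mathcal{Q}_{1c}$ term where $k_{\theta\theta}$ appears. Once these pieces are controlled with explicit constants and summed, \eqref{vinsanity} and \eqref{insanity} follow, and the sum indeed improves~\eqref{lower-the-bridge1} by sharpening the constant, as claimed.
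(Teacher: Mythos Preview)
Your treatment of ${\mathcal B}_2$ is essentially the paper's: on $[\stt_1,t]$ the entropy vanishes along $\pst$, so $Q^z=\tfrac83(az)_\theta$ and $\p_\theta Q^z=\tfrac83(az)_{\theta\theta}$; the bounds \eqref{fof6} and \eqref{fof7} then give $\mathcal B_2\le Ct^{\frac13}\stt^{-\frac12}$, which is absorbed into the stated $t^{\frac14}\bb^{\frac92}\kappa^{-4}\stt^{-\frac12}$.

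However, your strategy for ${\mathcal B}_3$ has a genuine gap in the treatment of $\mathcal{Q}_{1a}=\tfrac{1}{12}ck_\theta w_{\theta\theta}$. You propose to split $w_{\theta\theta}=\wb_{\theta\theta}+(w_{\theta\theta}-\wb_{\theta\theta})$ and argue the remainder is $\OO(\stt^{-\frac16})$. But the bootstrap \eqref{eq:w:dxx:boot} only gives $|w_{\theta\theta}-\wb_{\theta\theta}|\le M_1(\st^{-\frac12}+s^{-2})$ with $M_1=10\mm^4$, and since $\st(\pst(\theta,s),s)\approx 2\stt-s$ on $[\stt,\stt_1]$ (via \eqref{c1c2diffkappa2}), one computes
\[
\int_{\stt}^{\stt_1}\!\! |ck_\theta|\cdot M_1 s^{-2}\,ds \;\gtrsim\; \mm\cdot \bb^{\frac92}\kappa^{-4}\cdot M_1\cdot\stt^{\frac12}\int_{\stt}^{2\stt}s^{-2}ds \;\approx\; 10\mm^5\bb^{\frac92}\kappa^{-4}\,\stt^{-\frac12},
\]
which is vastly larger than the target $\tfrac92\bb^{\frac92}\kappa^{-4}\stt^{-\frac12}$ and cannot be absorbed into $C\stt^{-\frac16}$. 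The ``further splitting of $k_\theta$'' you allude to does not help: the vanishing of $\st(\pst)^{\frac12}\sim(2\stt-s)^{\frac12}$ near $s=\stt_1$ is already accounted for above and gains nothing against $s^{-2}\sim\stt^{-2}$. A similar issue afflicts your claim that $\mathcal{Q}_{1c}$ is $\OO(\stt^{-\frac16})$ via \eqref{kyy-bound1} and \eqref{stt-integral}; those bounds only yield $\mm^3\stt^{-\frac12}$.

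The paper bypasses this difficulty by a structural trick: it derives a transport equation for the \emph{product} $k_\theta w_\theta$ along $\lambda_1$, namely
\[
\p_s(k_\theta w_\theta)+\lambda_1\p_\theta(k_\theta w_\theta)+2(k_\theta w_\theta)\p_\theta\lambda_1=16\mathcal{A}+\mathcal{G},\qquad \mathcal{A}=-\tfrac{1}{12}(ck_\theta w_{\theta\theta}+cw_\theta k_{\theta\theta}),
\]
and integrates it over $[\stt,\stt_1]$. The left boundary term $-(k_\theta w_\theta)(\sc(\stt),\stt)(\p_\theta\pst)^2$ is computed explicitly from the shock asymptotics \eqref{ky:LUV1} and \eqref{eq:double:trouble:6} as $\tfrac{72\bb^{\frac92}}{\kappa^4}\stt^{-\frac12}+\OO(\stt^{-\frac16})$, while the right boundary term vanishes because $k_\theta=0$ on $\sc_2$. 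After showing $\int(\mathcal G-\tfrac43 cw_\theta k_{\theta\theta})\circ\pst=\OO(\stt^{-\frac16})$ (for which the paper introduces the midpoint time $\stt_2=\tfrac32\stt$ and splits $[\stt,\stt_1]$ accordingly to control $\int|ck_{\theta\theta}w_\theta|\circ\pst\lesssim\stt^{-\frac16}$), this yields the \emph{exact} asymptotic $-\int_{\stt}^{\stt_1}\mathcal{Q}_{1a}\circ\pst(\p_\theta\pst)^2\,ds=\tfrac{9\bb^{\frac92}}{2\kappa^4}\stt^{-\frac12}+\OO(\stt^{-\frac16})$, with no reference to $\wb_{\theta\theta}$ or the bootstrap constant $M_1$. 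This identity is the missing idea in your plan.
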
 

\vspace{0.05in}
\noindent {\bf Proof of \eqref{eq:sanity}}.
We note that the bound \eqref{eq:sanity} follows from \eqref{eq:sanity:3}, \eqref{eq:the:HUN:*}, \eqref{vinsanity},   \eqref{insanity}, and the inequality 
$$\tfrac 92 + C  t^{\frac 14} + C \stt^{\frac 13} - \tfrac{85}{16} \leq  - \tfrac 14 \,,$$ 
for $\bar \eps$, and hence $t$ and $\stt$, sufficiently small. 
Thus, in order to complete the proof of the \eqref{eq:sanity}, it only remains to prove Lemmas~\ref{lem:the:HUN} and~\ref{lem:insanity}. These proofs occupy the remainder of this subsection.

\begin{proof}[Proof of Lemma~\ref{lem:the:HUN}]
We recall that $q^z$ is defined in \eqref{qz-def} as $z_\theta + \frac 14 c k_\theta$. The easiest term is the sound speed. From \eqref{eq:u0:ass}, \eqref{eq:range:for:solutions}, \eqref{eq:w:boot}, and \eqref{eq:z:boot} we  note that
\begin{align}
c(\sc(\stt),\stt) 
&=  \tfrac 12 \wb(\sc(\stt),\stt) + \tfrac 12 ( w- \wb + z)(\sc(\stt),\stt) \notag\\
&= \tfrac 12 w_0(x_{{\mathsf B},-}(\stt)) + \OO(\stt) \notag\\
&= \tfrac{\kappa}{2} - \tfrac 12 \bb^{\frac 32} \stt^{\frac 12} + \OO(\stt)
\,,
\label{eq:HUN:1}
\end{align}
as $\stt\to 0$.
The next term we consider is the $y$ derivative of $k$, restricted to the shock curve. This term is given by \eqref{kx-gen}, with $t$ replaced by $\stt$. The denominator of this fraction is given by 
$ 
\dot \sc(\stt) -   \lambda_2(\sc(\stt),\stt) = \dot \sc(\stt)  - \tfrac 43 c(\sc(\stt),\stt) - \tfrac 43 z(\sc(\stt),\stt) = \tfrac 13 \kappa + \OO(\stt^{\frac 12})\,,
$ by appealing to \eqref{eq:z:boot} and \eqref{eq:HUN:1}.
By combining the above estimate with the identity \eqref{eq:dt:zl:kl:on:shock}, we arrive at 
\begin{align}
k_\theta(\sc(\stt),\stt) = \frac{ \frac{48 \bb^{\frac 92}}{\kappa^3} \stt^{\frac 12} + \OO(\stt) }{\frac 13 \kappa + \OO(\stt^{\frac 12})}   =  144 \bb^{\frac 92} \kappa^{-4} \stt^{\frac 12} + \OO(\stt)
\,, \label{ky:LUV1} 
\end{align}
as $\stt\to 0$. The last ingredient needed to compute $q^z$ on the shock curve is to obtain a leading order term for the derivative of $z$. For this term we appeal to identity \eqref{zx-gen} with $t$ replaced by $\stt$. As above, we may show that $\dot \sc(\stt) -   \lambda_1(\sc(\stt),\stt) = \tfrac 23 \kappa + \OO(\stt^{\frac 12})$, and we may appeal to the estimate \eqref{eq:dt:zl:kl:on:shock} and the already established \eqref{eq:HUN:1} and \eqref{ky:LUV1}, to deduce 
\begin{align}
z_\theta(\sc(\stt),\stt) = \frac{\Bigl( - \frac{27 \bb^{\frac 92}}{4 \kappa^2} \stt^{\frac 12} + \OO(\stt)\Bigr)  - \frac{1}{6}\left( (\frac \kappa 2 + \OO(\stt^{\frac 12}))^2 \frac{144 \bb^{\frac 92}}{\kappa^{4}} \stt^{\frac 12} + \OO(\stt) \right) }{\frac 23 \kappa + \OO(\stt^{\frac 12})}  = - \tfrac{153}{8} \bb^{\frac 92} \kappa^{-3} \stt^{\frac 12} + \OO(\stt)
 \label{zy:LUV1} 
 \,.
\end{align}
We then combine the definition of $q^z$ in \eqref{qz-def} with \eqref{eq:HUN:1}--\eqref{zy:LUV1} and arrive at 
 \begin{align}
q^z(\sc(\stt),\stt) 
&=  - \tfrac{9}{8} \bb^{\frac 92} \kappa^{-3} \stt^{\frac 12} + \OO(\stt)
\,,
\label{eq:HUN:2}
\end{align}
as $\stt \to 0$.  In order to have a complete asymptotic description of the second term on the right side of \eqref{eq:the:HUN}, we need to determine $\p_\theta ^2 \pst(\theta,\stt)$. For this purpose, we use \eqref{dyypsi} with $s$ replaced by $\stt = \stt(\theta,t)$, and we recall that we are interested in the region $\sc_1(t) < \theta < \sc_1(t) + \tfrac{\kappa t}{6}$. By using \eqref{thegoodstuff1}, \eqref{eq:w:z:k:a:boot}, \eqref{dxphipsi-bound}, \eqref{zyy-integral}, \eqref{eq:double:trouble:6}, \eqref{eq:HUN:1}
\begin{align}
\p_\theta ^2 \pst(\theta,\stt) 
& =  \tfrac{1}{2} e^{\int_{\stt}^t  ( \frac{4}{3}a - z_\theta) \circ \pst ds'}  \tfrac{c^{\frac{1}{2}}(\theta,t)  }{c^{\frac{1}{2}}(\sc(\stt),\stt) }  \Bigl( \int_{\stt}^t  ( \tfrac{8}{3}a_\theta -2z_{\theta\theta} ) \circ \pst \, \p_\theta  \pst ds' 
+  \tfrac{ \p_\theta  c (\theta,t)  }{ c(\theta,t)  } -  \tfrac{ \p_\theta  c (\sc(\stt),\stt) }{ c(\sc(\stt),\stt) }
 \p_\theta  \pst(\theta,\stt) \Bigr) 
 \notag\\
 & =  \tfrac{1}{2} e^{\OO(t-\stt)}  \tfrac{(\frac{\kappa}{2} + \OO(t^{\frac 13}))^{\frac 12}}{(\tfrac{\kappa}{2} + \OO(\stt^{\frac 12}))^{\frac 12}}  \Bigl( \OO(t-\stt) + \OO(\stt^{\frac 12})  
-  \tfrac{   \OO(t^{-\frac 23}) }{\frac{\kappa}{2} + \OO(t^{\frac 13})} 
-  \tfrac{ - \tfrac 14 \stt^{-1} + \OO(\stt^{-\frac 12})}{\tfrac{\kappa}{2} + \OO(\stt^{\frac 12})}
(1 + \OO(\stt^{\frac 13})) \Bigr)
\notag\\
&=   \tfrac{1}{4 \kappa} \stt^{-1} + \OO( t^{-\frac 23} )  + \OO(\stt^{-\frac 12})  
\label{eq:HUN:3}
\end{align}
for $\stt < t \ll 1$. From \eqref{eq:HUN:2} and \eqref{eq:HUN:3}, and using that $\stt \leq t$,  we finally obtain that the second term in \eqref{eq:the:HUN} is given by 
\begin{align}
{\mathcal B}_{12} 
= - \left( \tfrac{9}{8} \bb^{\frac 92} \kappa^{-3} \stt^{\frac 12} - \OO(\stt)\right) \left( \tfrac{1}{4 \kappa} \stt^{-1} + \OO(\stt^{-\frac 23} )  \right)
= - \tfrac{9}{32} \bb^{\frac 92} \kappa^{-4} \stt^{-\frac 12} + \OO(\stt^{-\frac 16})\,.
\label{eq:the:HUN:2}
\end{align}

It remains to consider the first term on the right side of \eqref{eq:the:HUN}. We recall that $q^z_\theta = z_{\theta\theta}  + \frac 14 c_\theta k_\theta + \frac 14 c k_{\theta\theta} $. Thus, in view of \eqref{eq:HUN:1} and \eqref{ky:LUV1}, we need to estimate separately three terms on the shock curve: $c_\theta, k_{\theta\theta} $, and $z_{\theta\theta} $. First, similarly to \eqref{eq:HUN:1}, we have from \eqref{eq:double:trouble:6} and \eqref{eq:w:z:k:a:boot} that 
\begin{align}
c_\theta (\sc(\stt),\stt) 
=  \tfrac 12 (\p_\theta  \wb)(\sc(\stt),\stt) + \tfrac 12 \p_\theta  ( w- \wb + z)(\sc(\stt),\stt)
= - \tfrac 14  \stt^{-1} + \OO(\stt^{-\frac 12}) 
\,,
\label{eq:HUN:4}
\end{align}
as $\stt\to 0$. Next, we turn to $\p_{\theta}^2  k$, which is given by \eqref{kyy-on-shock}. By appealing to \eqref{eq:double:trouble:3}, \eqref{eq:double:trouble:6}, 
\eqref{eq:dt:zl:kl:on:shock},   \eqref{eq:dt:dt:zl:kl:on:shock}, and  \eqref{eq:w:z:k:a:boot}, we obtain
\begin{align}
k_{\theta\theta} (\sc(\stt),\stt)
&= 
\tfrac{\ddot \kl(\stt)}{(\dot\sc(\stt) - \lambda _2(\sc(\stt),\stt))^2} \notag\\
&\qquad + \Bigl(  \bigl(\p_t\lambda_2(\sc(\stt),\stt)  +(2\dot\sc(\stt) -\lambda_2(\sc(\stt),\stt))\p_\theta  \lambda_2(\sc(\stt),\stt) \bigr) - \ddot\sc(\stt) \Bigr) 
\tfrac{\dot \kl}{(\dot\sc(\stt) - \lambda _2(\sc(\stt),\stt))^3}
\notag\\
&= \tfrac{\tfrac{24 \bb^{\frac 92}}{ \kappa^3} \stt^{-\frac 12} + \OO(1)}{(\frac{\kappa}{3} + \OO(\stt^{\frac 12}))^2} 
+ \Bigl( -  \tfrac{\kappa}{9}  \stt^{-1}    + \OO(\stt^{-\frac 12}) \Bigr)
\tfrac{\tfrac{48 \bb^{\frac 92}}{\kappa^3} \stt^{\frac 12} + \OO(\stt)}{(\frac{\kappa}{3} + \OO(\stt^{\frac 12}))^3} 
\notag\\
&= 72 \bb^{\frac 92} \kappa^{-5} \stt^{-\frac 12} + \OO(1)
\label{ky:LUV2} 
\end{align}
as $\stt \to 0$. Lastly, we turn to $\p_{\theta}^2  z$, which is given by the expression \eqref{zyy-shock}.
By using \eqref{eq:sc:ass}, \eqref{eq:w:z:k:a:boot}, and \eqref{eq:R:z:yy},  we first rewrite
\begin{align} 
 z_{\theta\theta}  (\sc( \stt),\stt)
& =\frac{\ddot\zl - \tfrac 13 ( \kappa - \tfrac 12 w ) (c^2 k_{\theta\theta} )  - \tfrac 13 \left(  (\kappa - \tfrac 56 w ) c k_\theta -  (2 \kappa -\tfrac 43 w) z_\theta  \right) w_\theta }{ (\dot \sc - \lambda_1)^2}  \Bigr|_{(\sc( \stt),\stt)}
+ \OO(\stt^{\frac 12})  \,.  
\end{align} 
Then, by appealing to \eqref{eq:dt:dt:zl:kl:on:shock}, \eqref{eq:w:z:k:a:boot}, \eqref{eq:double:trouble:6}, \eqref{eq:HUN:1}, \eqref{ky:LUV1}, \eqref{zy:LUV1}, \eqref{eq:HUN:4}, and \eqref{ky:LUV2}, from the above formula we obtain
\begin{align}
z_{\theta\theta} (\sc(\stt),\stt)
& =\frac{\ddot \zl - \tfrac 13 ( \kappa - \tfrac 12 \wb ) (c^2 k_{\theta\theta} )  - \tfrac 13 \left(  (\kappa - \tfrac 56 \wb ) c k_\theta -  (2 \kappa  -\tfrac 43 \wb) z_\theta  \right) \p_\theta \wb }{(\frac{2\kappa}{3} + \OO(\stt^{\frac 12}))^2}     \Bigr|_{(\sc( \stt),\stt)}
+ \OO(1)  \notag\\
& =\frac{- \frac{27 \bb^{\frac 92}}{8 \kappa^2} \stt^{-\frac 12} - \tfrac 13 \frac{\kappa}{2} \frac{\kappa^2}{4}  \frac{72 \bb^{\frac 92}}{ \kappa^{5}} \stt^{-\frac 12}  - \tfrac 13 \left( \frac{\kappa}{6} \frac{\kappa}{2} \frac{144 \bb^{\frac 92}}{\kappa^{4}} \stt^{\frac 12} +  \frac{2\kappa}{3}  \frac{153\bb^{\frac 92}}{8 \kappa^{3}}   \stt^{\frac 12} \right) (-\frac{1}{2\stt}) }{(\frac{2\kappa}{3} + \OO(\stt^{\frac 12}))^2}   
+ \OO(1)  \notag\\
&= -  \tfrac{81}{16}  \bb^{\frac 92} \kappa^{-4} \stt^{-\frac 12} + \OO(1)
\,,   \label{zy:LUV2}
\end{align} 
as $\stt \to 0$. 
Using the definition of $q^z_y$, upon combining \eqref{eq:HUN:1}, \eqref{ky:LUV1}, \eqref{eq:HUN:4}, \eqref{ky:LUV2}, and \eqref{zy:LUV2} we obtain
 \begin{align}
q^z_\theta(\sc(\stt),\stt) 
&= - \tfrac{81}{16} \bb^{\frac 92} \kappa^{-4} \stt^{-\frac 12} + \OO(1)  + \tfrac 14 \left( - \tfrac 14  \stt^{-1} + \OO(\stt^{-\frac 12})  \right) \left(  144 \bb^{\frac 92} \kappa^{-4} \stt^{\frac 12} + \OO(\stt) \right) \notag\\
&\qquad + \tfrac 14 \left( \tfrac{\kappa}{2} + \OO(\stt^{\frac 12})\right) \left( 72 \bb^{\frac 92} \kappa^{-5} \stt^{-\frac 12} + \OO(1)\right)
\notag\\
&= - \tfrac{81}{16} \bb^{\frac 92} \kappa^{-4} \stt^{-\frac 12} + \OO(1)
\,.
\label{eq:HUN:5}
\end{align}
Lastly, by combining \eqref{dxphipsi-bound} with \eqref{eq:HUN:5}, and using that $\stt \leq t$, we obtain that 
the first term in \eqref{eq:the:HUN} is given by 
\begin{align}
{\mathcal B}_{11} 
&=  \left( - \tfrac{81}{16} \bb^{\frac 92} \kappa^{-4} \stt^{-\frac 12} + \OO(1) \right) \left(1 + \OO(t^{\frac 13})\right)^2  =  - \left(\tfrac{81}{16} \bb^{\frac 92} \kappa^{-4} + \OO(t^{\frac 13})\right) \stt^{-\frac 12}  + \OO(1)\,.
\label{eq:the:HUN:1}
\end{align}
Adding the bounds \eqref{eq:the:HUN:2} and \eqref{eq:the:HUN:1} completes the proof of the lemma.
\end{proof}

\begin{proof}[Proof of Lemma \ref{lem:insanity}]
Recall from \eqref{Qz-def} that $Q^z =   c k_\theta (\tfrac{1}{12} w_\theta + \tfrac{1}{12} z_\theta +  \tfrac{2}{3}  a) + \tfrac{8}{3} \p_\theta (a z)$. 
As in the proof of Lemma \ref{lem:for-qz-stuff}, we write $\p_\theta  Q^z =  \mathcal{Q}_1 + \mathcal{Q} _2$, where
 \begin{align*} 
 \mathcal{Q} _1 & = \overbrace{\tfrac{1}{12}   c k_\theta w_{\theta\theta}  }^{ \mathcal{Q} _{1a}}+ 
 \overbrace{\tfrac{1}{24}   k_\theta w_\theta w_\theta}^{ \mathcal{Q} _{1b}} + \overbrace{  c k_{\theta\theta}  ( \tfrac{1}{12}  w_\theta + \tfrac{1}{12} z_\theta +\tfrac{2}{3}  a)}^{ \mathcal{Q} _{1c}} \,, \\
 \mathcal{Q} _2 & = c k_\theta( \tfrac{1}{12}  z_{\theta\theta}  +  \tfrac{2}{3}   a_\theta) + \tfrac{8}{3} (az)_{\theta\theta} 
 + \tfrac{1}{24} k_\theta z_\theta w_\theta  \,. 
 \end{align*}

We first give the proof of the more difficult bound, \eqref{insanity}. Several times in this proof we require a bound on $\int_{\stt}^{\stt_1} |k_{\theta\theta}  w_\theta| \circ \pst$. In order to obtain a suitable estimate, we recall the bound of $\stt_1$ in \eqref{eq:sanity:2}, and introduce  the time which lies half way in between $\stt$ and $\stt_1$,  namely $\stt_2 = \stt + \frac 12 (\stt_1 - \stt) = \frac 32 \stt + \OO(\stt^{\frac 43})$. The reason is as follows. For $s\in [\stt, \stt_2]$, from Remark~\ref{rem:FU:3} we may deduce that $\st(\pst(\theta,s),s) \geq \frac 15 \stt(\theta,t)$; this lower bound is useful when combined with \eqref{eq:k:dxx:boot}, \eqref{dxphipsi-bound}, \eqref{eq:w:dx:boot}, and \eqref{eq:Steve:needs:this:1} for $\gamma(s) = \pst(\theta,s)$:
\begin{align}
\int_{\stt}^{\stt_2} |k_{\theta\theta}  w_\theta| \circ \pst \, (\p_\theta  \pst)^2 ds 
\les \stt^{-\frac 12} \int_{\stt}^{\stt_2} (| \p_\theta  \wb \circ \pst|  + s^{-\frac 12} )ds 
\les \stt^{-\frac 16}
\,.  \label{eq:HUN_0}
\end{align}
On the other hand, for the contribution coming from $s\in [\stt_2, \stt_1]$, the trick is to use that $|\pst(\theta,s) - \sc(s)|\geq \frac{\kappa s}{8}$. Then, we may appeal to  the bound \eqref{stt-integral}, to \eqref{eq:w:dx:boot}, and to the estimate \eqref{thegoodstuff1}, which in this region gives that $|\p_\theta  \wb(\pst(\theta,s),s)| \leq \frac 45 \bb |\pst(\theta,s) - \sc(s)|^{-\frac 23} \les s^{-\frac 23} \les \stt^{-\frac 23}$, concluding in 
\begin{align}
\int_{\stt_2}^{\stt_1} |k_{\theta\theta}  w_\theta| \circ \pst \, (\p_\theta  \pst)^2 ds 
\les \stt^{-\frac 23} \int_{\stt_2}^{\stt_1} \st(\pst(\theta,s),s)^{-\frac 12} ds 
\les \stt^{-\frac 16}
\,.  
\end{align}
Combining the above two bounds, and the fact that $\frac{\kappa}{5} \leq c \leq \mm$, we conclude that
\begin{align}
\int_{\stt}^{\stt_1} |c k_{\theta\theta}  w_\theta| \circ \pst \, (\p_\theta  \pst)^2 ds 
\les \stt^{-\frac 16}
\,. \label{eq:HUN*}
\end{align}
The remaining contribution to  ${\mathcal Q}_{1c}$ is bounded as
\begin{align} 
\int_{\stt}^{\stt_1} \abs{ c  k_{\theta\theta}  (\tfrac{1}{12} z_\theta + \tfrac 23 a)} \circ \pst (\p_\theta  \psi_t)^2ds \les 
 \stt^{\frac{1}{2}}\,. \label{Q1c-less-insane}
\end{align} 

Next, let us   estimate  $-\int_{\stt(\theta,t)}^{\stt_1(\theta,t) }\mathcal{Q} _{1b}\! \circ\! \psi_t  \ (\p_\theta  \psi_t)^2ds$.   From \eqref{eq:sanity:2}, we see that $\stt_1(\theta,t)  = 2 \stt(\theta,t) 
+ \OO(t^ {\frac{4}{3}} )$.  Hence,  using the bounds \eqref{eq:Steve:needs:this:1}, \eqref{eq:w:dx:boot}, \eqref{good-kyflow-bound}, and  \eqref{eq:double:trouble:6}, 
we have that
\begin{align} 
\int_{\stt}^{\stt_1} \bigl(   k_\theta (w_\theta)^2\bigr)  \circ \pst (\p_\theta  \psi_t)^2ds \les \stt^{-\frac{1}{2}} \int_{\stt}^{\stt_1} \left( \sabs{w_\theta -\wb_{\theta}} \circ \pst + \sabs{\wb_{\theta}} \circ \pst \right)ds \les
 \stt ^ {-\frac{1}{6}}\,. \label{Q1b-insane}
\end{align} 

Next, in order to bound the contribution from $\mathcal{Q}_{1a}$,  we define
\begin{align*} 
 \mathcal{A} & =-\tfrac{1}{12}\bigl( c k_\theta w_{\theta\theta}  + cw_\theta k_{\theta\theta} \bigr) \, \\
 \mathcal{G} &=  \tfrac{2}{3}  c w_\theta k_{\theta\theta}  +   \tfrac{1}{3}  c c_\theta (k_\theta)^2 
+ \tfrac{1}{6}   c^2 k_{\theta\theta}  k_\theta -  \tfrac{8}{3}  (aw)_\theta k_\theta -  k_\theta(w_\theta)^2 + k_\theta w_\theta z_\theta\,.
\end{align*} 
A straightforward computation shows that the product $k_\theta w_y$ solves the equation 
\begin{align} 
\p_s (k_\theta w_\theta) + \lambda _1 \p_\theta (k_\theta w_\theta) + 2 (k_\theta w_\theta)\p_\theta  \lambda _1
&=  \tfrac{48}{3}  \mathcal{A} +  \mathcal{G} \,. \label{kywy-eqn}
\end{align} 
We now obtain an explicit solution to \eqref{kywy-eqn}. In order to solve \eqref{kywy-eqn}, we set 
$$
\chi = (k_\theta w_\theta) \circ \pst \,, \qquad  \mathcal{F} =  (\tfrac{48}{3}  \mathcal{A} +  \mathcal{G}) \circ \pst \,,
$$
and by employing the chain-rule, we write \eqref{kywy-eqn} as
\begin{align*} 
\p_s \chi + 2 \chi (\p_\theta \pst) ^{-1} \p_s( \p_\theta  \pst) =  \mathcal{F} \,.
\end{align*} 
It follows that
\begin{align*} 
\tfrac{d}{ds}  \left( (\p_\theta  \pst)^2  \chi\right) = (\p_\theta  \pst)^2 \mathcal{F}  \,,
\end{align*} 
and integration from $\stt$ to $\stt_1$ yields the identity
\begin{align} 
&( k_\theta w_\theta) (\sc_2(\stt_1),\stt_1)  (\p_\theta  \pst(\theta, \stt_1))^2  - ( k_\theta w_\theta) (\sc(\stt),\stt)w_\theta(\sc(\stt),\stt)  (\p_\theta  \pst(\theta, \stt))^2\notag\\
&= \int_\stt^{\stt_1}  (\tfrac{48}{3}  \mathcal{A} +  \mathcal{G}) \circ \pst \, (\p_\theta  \pst)^2 ds \notag\\
&= \int_\stt^{\stt_1}  \left( \mathcal{G} - \tfrac 43 c w_\theta k_{\theta\theta} \right) \circ \pst \, (\p_\theta  \pst)^2 ds
- \tfrac{48}{3}  \int_\stt^{\stt_1}  \left( \tfrac{1}{12} c k_\theta w_{\theta\theta}  \right)\circ \pst \, (\p_\theta  \pst)^2 ds
\,.
\label{eq:night:time:madness}
\end{align} 
First, we note that since $\st(\sc_2(\stt_1),\stt_1) = 0$, the estimate \eqref{eq:metal:2} implies that $k_\theta(\sc_2(\stt_1),\stt_1) = 0$, and so the first term on the left side of \eqref{eq:night:time:madness} vanishes. 
The first term on the right side  of \eqref{eq:night:time:madness} is estimated using  \eqref{thegoodstuff1}, \eqref{eq:w:z:k:a:boot}, \eqref{eq:k:dxx:boot}, \eqref{dxphipsi-bound}, \eqref{stt-integral},  \eqref{good-kyflow-bound},   \eqref{eq:HUN*}, and \eqref{Q1b-insane} as 
\begin{align} 
 \int_\stt^{\stt_1} \left| \mathcal{G} - \tfrac 43 c w_\theta k_{\theta\theta} \right|  \circ \pst \, (\p_\theta  \pst)^2  ds \les \stt^ {-\frac{1}{6}} \,. \label{pre-insane1}
\end{align}  
Moreover, the estimates \eqref{eq:w:dx:boot},  \eqref{dxphipsi-bound},  \eqref{eq:double:trouble:6}, and \eqref{ky:LUV1}  show that
\begin{align} 
-   ( k_\theta w_\theta) (\sc(\stt),\stt) (\p_\theta  \pst(\theta, \stt))^2
& = \left( \tfrac{144\bb^\frac{9}{2} }{\kappa^{4}}   \stt^{\frac{1}{2}} + \OO(\stt) \right) \left(\tfrac 12 \stt^{-1} + \OO(\stt^{-\frac 12}) \right) \left(1 + \OO(\stt^{\frac 13}) \right)^2 \notag\\
& = \tfrac{72\bb^\frac{9}{2} }{\kappa^{4}}   \stt^{-\frac{1}{2}} + \OO(\stt^{-\frac 16})\,.
\label{pre-insane1a}
\end{align} 
By using \eqref{eq:night:time:madness}, the observation $k_\theta(\sc_2(\stt_1),\stt_1) = 0$, and the bounds \eqref{pre-insane1} and \eqref{pre-insane1a} we obtain that
\begin{align} 
-  \int_\stt^{\stt_1}  \left( \tfrac{1}{12} c k_\theta w_{\theta\theta}  \right)\circ \pst \, (\p_\theta  \pst)^2 ds 
=  \tfrac{9 \bb ^\frac{9}{2}}{2  \kappa^{4}}   \stt(\theta,t)^{-\frac{1}{2}} + \OO(\stt^{-\frac 16})  \,. \label{pre-insane2}
\end{align} 
Combining \eqref{eq:HUN*}, \eqref{Q1c-less-insane}, \eqref{Q1b-insane}, \eqref{pre-insane1}, and \eqref{pre-insane2}, we have proven that for $\bar\eps$ small enough,
\begin{align} 
-\int_{\stt(\theta,t)}^{\stt_1(\theta,t) }  \mathcal{Q} _1 \! \circ\! \psi_t  \ (\p_\theta  \psi_t)^2 ds 
\le  \tfrac{9 \bb ^\frac{9}{2}}{2  \kappa^{4}}   \stt(\theta,t)^{-\frac{1}{2}} + C \stt(\theta,t)^{-\frac 16}  \,. \label{Q1-insanity}
\end{align} 

In addition to the bounds \eqref{thegoodstuff1}, \eqref{eq:w:z:k:a:boot}, \eqref{eq:k:dxx:boot}, \eqref{dxphipsi-bound}, \eqref{stt-integral},  \eqref{good-kyflow-bound}, by
also appealing to \eqref{eq:z:dxx:boot} and \eqref{zyy-integral}, we deduce that
\begin{align} 
-\int_{\stt(\theta,t)}^{\stt_1(\theta,t) }  \mathcal{Q} _2 \! \circ\! \psi_t  \ (\p_\theta  \psi_t)^2  ds  \les  \stt(\theta,t)^ {\frac{1}{2}}  \,. \label{Q2-insanity}
\end{align} 
Moreover, by using the identity \eqref{dyypsi} for $\p_\theta ^2\pst$, we see that the integrand $Q^z\! \circ\! \psi_t  \ \p_\theta ^2 \psi_t $ is estimated in the identical fashion as
the term $\mathcal{Q}_{1b}$ in \eqref{Q1b-insane}, and hence we have that
\begin{align} 
-\int_{\stt(\theta,t)}^{\stt_1(\theta,t) }  Q^z\! \circ\! \psi_t  \ \p_\theta ^2 \psi_t    ds \les  \stt ^ {-\frac{1}{6}}  \,. \label{Qz-insanity}
\end{align} 
Together, the bounds \eqref{Q1-insanity}, \eqref{Q2-insanity}, and \eqref{Qz-insanity} establish the desired inequality \eqref{insanity}, for $\stt(\theta,t) \ll 1$.

The proof of the lemma is completed once we establish \eqref{vinsanity}. These estimates are however simpler because by the definition of the time $\stt_1(\theta,t) $, for all $\sc_1(t) < \theta < \sc_1(t) + \frac{\kappa t}{6}$, and for all $s \in (\stt_1(\theta,t) , t)$, we have that $(\pst(\theta,s),s) \in \mathcal{D} ^z_{\bar\eps}\setminus\overline{\mathcal{D}^k_{\bar\eps}}$, and $k \equiv0$ in this region. In particular, this means that in this region we have that $Q^z = \frac 83 \p_\theta  (az)$, and $\p_\theta  Q^z  = {\mathcal Q}_2 = \frac 83 \p_{\theta}^2  (a z)$; there are no dangerous $k$ terms. As such the bounds we seek directly follow from \eqref{fof6} and \eqref{fof7}:
\begin{align}
 {\mathcal B}_2 \leq \int_{\stt_1(\theta,t) }^t \sabs{\p_\theta   {Q}^z \circ \pst} (\p_\theta  \psi_t)^2 ds + \int_{\stt_1(\theta,t) }^t \sabs{Q^z\! \circ\! \psi_t  \ \p_\theta ^2 \psi_t } ds 
&  \le  C t^ {\frac{1}{3}} \stt(\theta,t)^ {-\frac{1}{2}}
\end{align}
for a suitable constant $C$. The bound \eqref{vinsanity} follows since $t\leq \bar \eps \ll 1$. This completes the proof of Lemma~\ref{lem:insanity}.
\end{proof}

\subsection{Precise H\"older estimates for derivatives}
\label{sec:dy:Holder}

Here we combine the upper bounds established in Section~\ref{sec:dyy:bootstraps}, with the lower bounds proven in Section~\ref{sec:dyy:lower:bound}, to precisely characterize the behavior of $(w_\theta, z_\theta, k_\theta, a_\theta)$ as $\theta\to \sc_1(t)^+$ and $\theta\to \sc_2(t)^+$.

We first consider the behavior of these derivatives on  $\sc_1(t)$. Note that on the left side of $\sc_1(t)$, by \eqref{eq:second:order:boot3} and \eqref{wyy-bound-final} we have that the order second derivatives of $w$ and $a$ are finite for every $t \in (0,\bar \eps]$, but that the bounds are not uniform in $t$ as $t\to 0^+$ (as should be expected, since $w_0,a_0 \not \in C^2$). On this left side of $\sc_1(t)$, we moreover have that $k \equiv z \equiv 0$. Similarly, on the right side of $\sc_1(t)$, the second derivative of $w$ is bounded due to \eqref{wyy-bound-final}, the second derivative of $a$  is bounded in light of~\eqref{eq:ayy:improve:bootstrap}, these bounds not being uniform as $ t\to 0^+$, while $k \equiv 0$. It remains to consider the behavior of $z_\theta(\theta,t) $ as $\theta \to \sc_1(t)^+$. From \eqref{eq:metal:4} we know that $z_\theta(\sc_1(t),t) = 0$, so that using \eqref{eq:z:dxx:boot2} and \eqref{eq:bohemian:4}
\begin{align}
\sup_{0< h < \frac{\kappa t}{6}} \frac{\sabs{z_\theta(\sc_1(t) + h,t) - z_\theta(\sc_1(t),t)}}{h^\alpha} 
&\leq  \sup_{0< h < \frac{\kappa t}{6}} h^{1-\alpha} \int_0^1 \sabs{z_{\theta\theta} (\sc_1(t) + \lambda h,t)} d\lambda \notag\\
&\leq N_2 \sup_{0< h < \frac{\kappa t}{6}} h^{1-\alpha} \int_0^1 \stt(\sc_1(t) + \lambda h,t)^{-\frac 12} d\lambda \notag\\
&\leq 2 N_2  \kappa^{-\frac12}   \sup_{0< h < \frac{\kappa t}{6}} h^{1-\alpha} \int_0^1 \sabs{ \lambda h}^{-\frac 12} d\lambda \notag\\
&= 32 \mm^3  \kappa^{-\frac12}  \sup_{0< h < \frac{\kappa t}{6}} h^{\frac 12 -\alpha} 
\,.
\label{eq:zyy:s1:upper}
\end{align}
The right side of \eqref{eq:zyy:s1:upper} is finite whenever $\alpha \leq \frac 12$. Thus, from \eqref{eq:z:dxx:boot2}, \eqref{eq:bohemian:2}, and \eqref{eq:zyy:s1:upper}, we deduce that $z \in C^{1,\frac 12}$ in  $\mathcal{D} ^z_{\bar \eps} \setminus  \overline{\mathcal{D} ^k_{\bar \eps} }$. The remarkable fact is that due to \eqref{eq:sanity}, this upper bound is sharp: for any $\alpha> \frac 12$, $z \not \in C^{1,\alpha}$ near $\sc_1$. Indeed, by \eqref{eq:sanity}, we have that for $h$ sufficiently small but positive, 
\begin{align}
\frac{ z_\theta(\sc_1(t) + h,t) - z_\theta(\sc_1(t),t)}{h^\alpha} 
&= h^{1-\alpha} \int_0^1 z_{\theta\theta} (\sc_1(t) + \lambda h,t) d\lambda \notag\\
&\leq - \tfrac 18 \bb^{\frac 92} \kappa^{-4} h^{1-\alpha} \int_0^1 \stt(\sc_1(t) + \lambda h,t)^{-\frac 12} d\lambda \notag\\
&\leq - \tfrac{1}{16} \bb^{\frac 92} \kappa^{- \frac 92} h^{1-\alpha} \int_0^1 (\lambda h)^{-\frac 12} d\lambda \notag\\
&\leq - \tfrac{1}{8} \bb^{\frac 92} \kappa^{- \frac 92} h^{\frac 12-\alpha} 
\,.
\label{eq:zyy:s1:lower}
\end{align}
For $\alpha> \frac 12$, the right side of \eqref{eq:zyy:s1:lower} converges to $-\infty$ as $h \to 0^+$, proving that $z_\theta \not \in C^\alpha$ in the vicinity of $\sc_1$.

Next, we consider the behavior of derivatives  on  $\sc_2(t)$. On the left side of $\sc_2(t)$ we have that $k\equiv 0$, while the second derivatives of $w$, $z$, and $a$ are bounded in terms of inverse powers of $t$ in view of \eqref{eq:z:dxx:boot2}, \eqref{eq:bohemian:2}, \eqref{eq:ayy:improve:bootstrap}, and \eqref{wyy-bound-final}. On the right side of $\sc_2$, the situation is different. Similarly to \eqref{eq:zyy:s1:upper}, we may use \eqref{wyy-bound-final}, \eqref{eq:z:dxx:boot}, \eqref{eq:ayy:improve:bootstrap}, and \eqref{eq:bohemian:1} to show that $w_\theta, z_\theta, k_\theta, a_\theta \in C^{\alpha}$ near $\sc_2$, for any $\alpha \leq \frac 12$. Indeed, the only difference to \eqref{eq:zyy:s1:upper} is that $ \stt(\sc_1(t) + \lambda h,t)^{-\frac 12} $ is replaced by $ \st(\sc_1(t) + \lambda h,t)^{-\frac 12} \leq 2 \kappa^{-\frac 12} (\lambda h)^{-\frac 12}$. Moreover, for any $\alpha > \frac 12$, similarly to \eqref{eq:zyy:s1:lower}, we may use \eqref{eq:double:trouble:final}, \eqref{eq:stormy:monday:2}, \eqref{eq:stormy:monday:4}, and \eqref{eq:oxigen:2} to prove 
\begin{subequations}
\label{eq:dyy:s2:lower}
\begin{align}
\frac{k_\theta(\sc_2(t) + h,t) - k_\theta(\sc_1(t),t)}{h^\alpha} 
& \geq 12 \bb^{\frac 92} \kappa^{- \frac 92} h^{\frac 12-\alpha}   \\
\frac{z_\theta(\sc_2(t) + h,t) - z_\theta(\sc_1(t),t)}{h^\alpha} 
& \leq - \tfrac 12 \bb^{\frac 92} \kappa^{- \frac 72} h^{\frac 12-\alpha}   \\
\frac{w_\theta(\sc_2(t) + h,t) - w_\theta(\sc_1(t),t)}{h^\alpha} 
& \geq \tfrac 12 \bb^{\frac 92} \kappa^{- \frac 72} h^{\frac 12-\alpha}   \\
\frac{a_\theta(\sc_2(t) + h,t) - a_\theta(\sc_1(t),t)}{h^\alpha} 
& \leq - \tfrac 18 \bb^{\frac 92} \kappa^{- \frac 52} t h^{\frac 12-\alpha}  
\,,
\end{align}
\end{subequations}
for $h >0$ sufficiently small. The estimates in \eqref{eq:dyy:s2:lower} show that $k_\theta ,z_\theta, w_\theta ,a_\theta \not \in C^{\alpha}$ for any $\alpha > \frac 12$.

\subsection{Proof of Theorem~\ref{thm:C2}}
\label{sec:proof:thm:C2}
The bounds in \eqref{eq:thm-bounds-final} are merely a restatement of the bootstrap bounds stated in \eqref{sec:dyy:bootstraps} for $(w_{\theta\theta} ,z_{\theta\theta} ,k_{\theta\theta} )$. The bounds for $a_{\theta\theta} $ and $\varpi_{\theta} $ follow as shown in Lemmas~\ref{lem:varpi:y} and~\ref{lem:ayy}. These bootstrap estimates were closed (i.e., improved by a factor of $2$) by the analysis in Sections~\ref{sec:dyy:flows}--\ref{sec:zyy}. As discussed in the first paragraph of Section~\ref{sec:dyy:bootstraps}, this analysis should formally be carried out at the level of the approximating sequence $(w^{(n)},z^{(n)},k^{(n)})$, but we have not chosen to do so for simplicity of the presentation. One remark is  in order at this point: when dealing with the approximating sequence $(w^{(n)},z^{(n)},k^{(n)},a^{(n)})$ the identities \eqref{dyyphi} and \eqref{dyypsi} for the second derivatives of $\pt$ and $\pst$ are not available; this is because the structure of the equation for the sound speed at $c^{(n+1)}$, given in \eqref{cn}--\eqref{cn1}, lacks a necessary $n \to n+1$ symmetry; in this case, estimates for $\p_{\theta}^2  \pt^{(n)}$ and $\p_{\theta}^2  \pst^{(n)}$ are obtained simply by differentiating \eqref{flow1} and \eqref{flow2} twice with respect to $y$ and appealing to the bootstrap bounds for $\p_{\theta}^2  w^{(n)}$ and $\p_{\theta}^2  z^{(n)}$; the resulting bounds are however exactly the same as the ones given in Lemma~\ref{lem:dyy12}.

The bounds in \eqref{eq:s2:dyy:bounds} follow from \eqref{eq:thm-bounds-final} on the one hand, and \eqref{eq:bohemian}, \eqref{eq:double:trouble:final}, \eqref{eq:stormy:monday:2}, \eqref{eq:stormy:monday:4}, \eqref{eq:oxigen:2}, on the other hand. The estimate \eqref{eq:s2:dyy:magic} follows by adding the bounds in \eqref{eq:stormy:monday:1} and \eqref{eq:stormy:monday:3}, observing that the terms $\frac 14 c k_{\theta\theta} $ cancel. The characterization of the singularity formed by $(w_\theta,z_\theta,k_\theta,a_\theta)$ as $\theta\to \sc_2(t)^+$ as being precisely a $C^{\frac 12}$ cusp is given by Section~\ref{sec:dy:Holder}, estimate \eqref{eq:dyy:s2:lower}. 
The estimate \eqref{eq:s1:dyy:bounds} is implied by bounds \eqref{eq:thm-bounds-final} and \eqref{eq:sanity}. The characterization of the singularity formed by $z_\theta$ as $\theta \to \sc_1(t)^+$ as being precisely a $C^{\frac 12}$ cusp is given by Section~\ref{sec:dy:Holder}, estimate \eqref{eq:zyy:s1:lower}. This concludes the proof of  Theorem~\ref{thm:C2}.


\section{Shock development for 2D Euler}
\label{sec:2D:Euler}

In view of the transformations $(u_\theta,u_r,\sigma,S) \mapsto (b,c,k,a) \mapsto (w,z,k,a)$ described in \eqref{scale0} and \eqref{eq:riemann}, the  results obtained in Sections~\ref{sec:formation}--\ref{sec:C2} for the azimuthal variables $(w,z,k,a,\varpi)$   imply the following results for the usual hydrodynamic variables $(u,\rho, E, p)$. First, from Theorem~\ref{thm:blowup-profile} we deduce: 

\begin{theorem}[\bf  Shock formation for 2D Euler with azimuthal symmetry]
\label{thm:main:formationeuler}
There exists $\kappa_0>1$ sufficiently large, and $\eps>0$ sufficiently small, such that the following holds. Consider initial data at time $-\eps$ given by
\begin{align*}
\left(u_r,u_\theta,\sigma,S\right)(r,\theta,-\eps) :=\left(r a(\theta,-\eps),\tfrac{r}{2} w(\theta,-\eps) ,\tfrac{r}{2}w(\theta,-\eps),0\right) \,,
\end{align*}
with $(w,a)(\cdot,-\eps)$ satisfying conditions \eqref{w0-to-W0}--\eqref{eq:A_bootstrap:IC}. 
In particular, the initial data is smooth and has azimuthal symmetry. Then, there exists $T_* > -\eps$ (explicitly computable), and a unique   solution $(u,\sigma,S) \in C^0([-\eps,T_*);C^4(\RR^2\setminus\{0\}))$ of the Euler equations \eqref{eq:Euler2}, which has the azimuthal symmetry~\eqref{scale0}. The associated density is  $\rho = \frac{1}{4} \sigma^2 e^{{-{\kcal}} } = \frac{1}{16} r^2 w^2$, and the total energy is $E=  \tfrac{1}{2} \rho \abs{u}^2 + \frac 12  \rho^2 e^{\kcal} = \tfrac{1}{32} r^4 w^2 (a^2 + \frac{5}{16} w^2)$.
Moreover, at time blowup time $T_*$ we have $S(\theta,T_*) = 0$,  and  there exists a unique angle $\xi_* \in \TT$ (explicitly computable) such that an {\em azimuthal pre-shock} forms on the half-infinite ray $\{ (r,\xi_*,T_*)\}_{r\in \RR_+}$. The azimuthal pre-shock is described by the fact that for $|\theta - \xi_*| \ll_\eps 1$ we have
\begin{align*}
u_\theta(r,\theta,T_*) &=  \tfrac{1}{2} r \left( \kappa_* + {\mathsf a}_1 (\theta - \xi_*)^{\frac 13} + {\mathsf a}_2 (\theta - \xi_*)^{\frac 23} + {\mathsf a}_3 (\theta - \xi_*)+ \OO((\theta - \xi_*)^{\frac 43})\right) \\
u_r(r,\theta,T_*) &=  r \left( {\mathsf a}_0' + {\mathsf a}_1' (\theta - \xi_*)  + {\mathsf a}_2' (\theta - \xi_*)^{\frac 43} + \OO((\theta - \xi_*)^{\frac 53})\right) \\
\rho(r,\theta,T_*) &=  \tfrac{1}{16} r^2 \left( \kappa_*^2 + 2 {\mathsf a}_1 \kappa_* (\theta - \xi_*)^{\frac 13}  + ({\mathsf a}_1^2 + 2 {\mathsf a}_2 \kappa_*) (\theta - \xi_*)^{\frac 23} + \OO(\theta - \xi_*)  \right) \\
E(r,\theta,T_*) &=  \tfrac{1}{32} r^4 \left(  \kappa_*^2  ({\mathsf a}_0'^2+ \tfrac{5 \kappa_*^2}{16} ) + {\mathsf a}_1 \kappa_* (2 {\mathsf a}_0'^2 + \tfrac{5\kappa_*^2}{4}) (\theta - \xi_*)^{\frac 13}  +  \OO((\theta - \xi_*)^{\frac 23})\right) 
\end{align*}
where $\kappa_*, {\mathsf a}_1,{\mathsf a}_2,{\mathsf a}_3, {\mathsf a}_0',{\mathsf a}_1', {\mathsf a}_2'$ are suitable constants which may be computed in terms of the data. Moreover, in view of \eqref{preshock-derivatives} we have that these asymptotic descriptions are valid (to leading order), for the first three derivatives of the solution, and for $|\theta - \xi_*| \ll_\eps 1$. For angles $\theta$ which are at any fixed distance away from $\xi_*$, the functions $(u,\rho,E)(r,\theta,T_*)$ are $C^4$ smooth. Lastly, the specific vorticity and its derivatives remain uniformly bounded up to $T_*$. 
\end{theorem}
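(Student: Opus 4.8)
<br>

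The proof of Theorem~\ref{thm:main:formationeuler} is essentially a translation of Theorem~\ref{thm:blowup-profile} through the explicit algebraic change of variables $(u_\theta, u_r, \sigma, \kcal) \mapsto (b,a,\ss,k) \mapsto (w,z,k,a)$ recorded in \eqref{scale0}, \eqref{eq:riemann}, together with the relabeling of time $\p_t \mapsto \tfrac{3}{4} \p_{\tilde t}$. The plan is as follows. First, I would set $z_0 \equiv 0$, $k_0 \equiv 0$ and invoke Theorem~\ref{thm:blowup-profile} for the reduced system \eqref{eq:w:z:k:a2} to obtain the smooth solution $(w,a) \in C^0([-\eps,T_*);C^4(\TT)) \cap C^4([-\eps,T_*);C^0(\TT))$, the blowup time $T_*$, the blowup location $\xi_*$, and the fractional expansions \eqref{w-blowup}--\eqref{preshock-derivatives}. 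Since $z$ and $k$ remain zero up to $T_*$, the azimuthal Euler system \eqref{eq:w:z:k:a} coincides with \eqref{eq:w:z:k:a2}, and the formulation \eqref{eq:Euler} (with entropy transport in place of energy conservation) is equivalent to the conservation-law form \eqref{euler-weak} for classical solutions; this is precisely what makes the statement about $(u,\rho,E)$ legitimate up to (and including) the pre-shock time.

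Second, I would pass from $(w,a)$ on $\TT$ to the full solution on $\RR^2 \setminus \{0\}$ via the ansatz \eqref{scale0}: $u_\theta = r b = \tfrac r2 (w+z) = \tfrac r2 w$, $u_r = r a$, $\sigma = r\ss = \tfrac r2 (w-z) = \tfrac r2 w$, and $\kcal = k = 0$. From \eqref{sigma1} with $\alpha = \tfrac12$ and \eqref{peos} with $\gamma = 2$, one reads off $\rho = \tfrac14 \sigma^2 e^{-\kcal} = \tfrac{1}{16} r^2 w^2$, and $E = \tfrac12 \rho |u|^2 + \tfrac12 \rho^2 e^{\kcal} = \tfrac{1}{32} r^4 w^2 (a^2 + \tfrac{5}{16} w^2)$; here I should double-check the arithmetic $\tfrac12 \rho^2 e^\kcal = \tfrac12 \cdot \tfrac{1}{256} r^4 w^4 = \tfrac{1}{512} r^4 w^4 = \tfrac{1}{32} r^4 w^2 \cdot \tfrac{1}{16} w^2$, which is consistent. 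The $C^4$ regularity on $\RR^2 \setminus \{0\}$ for $t < T_*$ follows from the $C^4_\theta$ bound \eqref{thm-aw-good-bound} of Theorem~\ref{thm:blowup-profile} together with the smooth dependence on $r$ built into \eqref{scale0}; the singularity at $r=0$ is an artifact of the polar coordinates and of the linear-in-$r$ ansatz, hence the exclusion of the origin.

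Third, for the asymptotic expansions at $t = T_*$: the displayed formulas for $u_\theta, u_r, \rho, E$ are obtained by substituting the expansions \eqref{w-blowup} for $w(\theta,T_*)$ and \eqref{pre-shockb} (equivalently, the $a$-analogue of \eqref{w-blowup}, which follows from the identity $\p_x a = \tfrac{w^2}{16}\varpi - w$, the $C^{1,1/3}$ regularity of $a(\cdot,T_*)$, and the Lipschitz regularity of $\varpi$) into the algebraic relations above, and then expanding the products in powers of $(\theta - \xi_*)^{1/3}$, keeping track of leading coefficients. The constants $\kappa_*, {\mathsf a}_1, {\mathsf a}_2, {\mathsf a}_3$ are identified directly from \eqref{w-blowup}; the constants ${\mathsf a}_0', {\mathsf a}_1', {\mathsf a}_2'$ are the analogous Taylor coefficients for $a(\cdot, T_*)$, which come from the $C^{1,1/3}$ structure inherited through $\varpi$ and the bounds \eqref{svort-thm-bounds}. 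The claim that these hold ``to leading order'' for the first three derivatives is precisely the content of \eqref{preshock-derivatives}, transported through the smooth algebraic maps (which do not degrade the differentiability order of the remainder), and the $C^4$-away-from-$\xi_*$ statement follows from \eqref{thm-aw-good-bound} and \eqref{etax-lower-bad}. The uniform boundedness of the specific vorticity $\varpi$ and $\p_x \varpi$ up to $T_*$ is exactly \eqref{svort-thm-bounds}, and the corresponding statement for the physical vorticity $\omega = \rho \varpi$ follows since $\rho$ is bounded on $\{r \le R\} \setminus \{0\}$.

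The main obstacle in writing this cleanly is bookkeeping rather than conceptual: one must verify that every power of $(\theta-\xi_*)$ and every numerical coefficient is correctly propagated through the quadratic and quartic combinations $w^2$, $w^2(a^2 + \tfrac{5}{16}w^2)$, and through the $\tfrac34$ time rescaling which affects the relation between $T_*$ here and $T_*$ in Theorem~\ref{thm:blowup-profile}. There is also a minor subtlety in asserting that the expansions are valid ``in a $C^3$ sense'': this requires that the change of variables $(w,a) \mapsto (u_\theta, u_r, \rho, E)$ be $C^3$ as a map on the relevant open set, which holds because it is polynomial in $w, a, r$ and because $\rho > 0$ is bounded below away from the pre-shock, so that no loss of the remainder's differentiability occurs. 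None of this requires new ideas beyond Theorem~\ref{thm:blowup-profile}; it is a direct corollary.
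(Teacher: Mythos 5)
Your proposal is correct and matches the paper's argument: Theorem~\ref{thm:main:formationeuler} is indeed obtained by pushing Theorem~\ref{thm:blowup-profile} through the algebraic substitutions \eqref{scale0}, \eqref{eq:riemann}, \eqref{sigma1}, \eqref{peos} (with $z\equiv k\equiv 0$, $\gamma=2$, $\alpha=\tfrac12$), reading off $\rho = \tfrac{1}{16}r^2w^2$ and $E = \tfrac{1}{32}r^4w^2(a^2+\tfrac{5}{16}w^2)$, and transporting the fractional expansions \eqref{w-blowup}--\eqref{preshock-derivatives} together with the $a$-expansion deduced from $\p_\theta a = w - \tfrac{w^2}{16}\varpi$ and the Lipschitz bound \eqref{svort-thm-bounds}. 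The paper's proof is a single paragraph asserting exactly this translation; your write-up is a correct and slightly more explicit version of the same argument.
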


The above result, which establishes the formation of the pre-shock and gives its detailed description, is nothing but a rewriting of Theorem~\ref{thm:blowup-profile} in terms of the usual fluid variables. This is possible in view of the mapping $(u_r , u_\theta, \sigma , S) = (r a, \frac 12 r w, \frac 12 r w, 0)$, valid on $[-\eps,T_*]$, and the above mentioned formulas for the density and energy. The series expansion for the radial velocity $r a(\theta,T_*)$ is not explicitly stated in Theorem~\ref{thm:blowup-profile}, but it immediately follows from the fact that $a$ has regularity precisely $C^{1,1/3}$ and no better, and from the bounds on $a \circ \eta$ obtained in Section~\ref{sec:formation}. 

For the development part of our result, for simplicity of notation it is convenient to re-label the pre-shock location $(r,\xi_*,T_*) \mapsto (r,0,0)$. Moreover, the fields at which we arrive at the end of the formation part, namely $(u,\sigma,S)(\cdot,T_*)$, are re-labeled as $(u_0,\sigma_0,S_0)$. Then, from Theorems~\ref{thm:main:development} and~\ref{thm:C2} we obtain:
\begin{theorem}[\bf  Shock  development for 2D Euler with azimuthal symmetry]
\label{thm:main:developmenteuler}
Given pre-shock initial data 
\begin{align*}
(u_r,u_\theta,\sigma,S)|_{t=0} :=(r a_0,\tfrac{r}{2}(w_0+ z_0) ,\tfrac{r}{2}(w_0-z_0),k_0),
\end{align*}
 with $(w_0,z_0,a_0,k_0)$ satisfying conditions \eqref{eq:u0:ass:quant}--\eqref{eq:svort0:ass}, there exist:
\begin{enumerate}
\item  $\bar\eps   > 0 $ sufficiently small; 
\item  a shock surface $\mathcal{S}:=\{ (r,\theta,t) \in \RR^2 \times [0,\bar \eps] \colon \theta= \sc(t)\}$ with  $\sc \in C^2([0,\bar \eps])$;
\item fields $(u, \rho,E)$  with  $\rho = \frac{1}{4} \sigma^2 e^{{-{\kcal}} }$ and 
$E=  \frac{1}{2} \rho \abs{u}^2 + \frac 12 \rho^2 e^{\kcal} $, such that the $(u,\rho, E, {\mathcal S})$ is a {\em regular shock solution of the compressible Euler equations}~\eqref{euler-weak} on the time interval $[0,\bar \eps]$, in the sense of Definition~\ref{shockdef}; 
\item two $C^1$ smooth  functions $\sc_1, \sc_2 \colon [0,\bar \eps] \to \TT$, with $\sc_1(0) = \sc_2(0)= 0$ and $\sc_1(t) < \sc_2(t) < \sc(t)$ for $t \in (0,\bar \eps]$, such that  $\mathcal{S}_i:= \{ (r,\theta,t) \in \RR^2 \times [0,\bar \eps] \colon  \theta= \sc_i(t)\}$ is a characteristic surface for the $\lambda_i$ wave-speed, where $\lambda_1 =  u_\theta - \frac 12 \sigma$ and $\lambda_2 =   u_\theta$;
\end{enumerate}
such that for any $t\in (0,\bar \eps]$ all fields are twice differentiable at points $(r,\theta)$ with $\theta \not \in \{\sc_1(t), \sc_2(t),\sc(t)\}$, and the following hold:
\begin{enumerate}
  \setcounter{enumi}{4}
\item  letting $\DD_{\bar \eps}^{(2)} = \{ (r, \theta,t) \in \mathbb{R}^2 \times(0,\bar\eps]   \colon \sc_2(t) < \theta < \sc(t)\}$  we have that 
\begin{itemize}
\item  $S\in C^{1,1/2}(\DD_{\bar \eps}^{(2)})$,  $S \equiv 0$ on $(\DD_{\bar \eps}^{(2)})^\complement$,  and
$\frac{1}{C} \leq ( \theta-\sc_2(t))^{\frac 12} \partial_\theta^2S(r,\theta,t) \leq  C $ as $\theta\to \sc_2(t)^+$,
\item  $p, u_\theta \in C^{2}(\DD_{\bar \eps}^{(2)})$, 
 $|\partial_\theta^2 u_\theta(r,\theta,t)|  \leq C r t^{-\frac{1}{2}}$ and 
 $|\partial_\theta^2 p (r,\theta,t)| \leq C r^4 t^{-2}$ as $\theta\to \sc_2(t)^+$, 
\item  $u_r \in C^{1,1/2}(\DD_{\bar \eps}^{(2)})$ and
 $-r t C \leq ( \theta-\sc_2(t))^{\frac 12} \partial_\theta^2u_r(r,\theta,t) \leq - \frac{1}{C} r t $ as $\theta\to \sc_2(t)^+$,
\item $\rho \in C^{1,1/2}(\DD_{\bar \eps}^{(2)})$ and
 $-r^2 C\leq ( \theta-\sc_2(t))^{\frac 12} \partial_\theta^2\rho(r,\theta,t) \leq - \frac{1}{C} r^2$ as $\theta\to \sc_2(t)^+$,
\end{itemize}
for a suitable constant $C>0$;

\item  letting  $\DD_{\bar \eps}^{(1)} = \{ (r, \theta,t) \in  \mathbb{R}^2 \times(0,\bar\eps]  \colon \sc_1(t) < \theta< \sc_2(t)\}$, we have
\begin{itemize}
\item
 $S(r,\theta,t) = 0$ on $\DD_{\bar \eps}^{(1)}$, 
 \item
 $u_\theta \in C^{1,1/2}(\DD_{\bar \eps}^{(1)})$ and
 $ \frac{1}{C} r \leq ( \theta-\sc_1(t))^{\frac 12} \partial_\theta^2u_\theta(r,\theta,t) \leq  C r $ as $\theta\to \sc_1(t)^+$,
   \item   $u_r \in C^{2}(\DD_{\bar \eps}^{(1)})$ and
 $|\partial_\theta^2 u_r(r,\theta,t)|  \leq C r t^{-1}$  as $\theta\to \sc_1(t)^+$,
  \item $\rho \in C^{1,1/2}(\DD_{\bar \eps}^{(1)})$ and
 $-r^2 C\leq ( \theta-\sc_1(t))^{\frac 12} \partial_\theta^2\rho(r,\theta,t) \leq - \frac{1}{C} r^2$ as $\theta\to \sc_1(t)^+$,
\end{itemize}
for a suitable constant $C>0$;
\item  on $\mathcal S$, the functions $u_\theta(r, \cdot,t)$ and $\partial_\theta u_r(r, \cdot, t)$ exhibit $\OO(r t^{\frac 12})$ jumps, the density $\rho(r,\cdot,t)$ exhibits an $\OO(r^2 t^{\frac 12})$ jump,  the entropy $S(r,\cdot,t)$  exhibits an $\OO(t^{\frac 32})$ jump, the total energy $E(r,\cdot,t)$ exhibits an $\OO(r^4 t^{\frac 12})$ jump (cf.~\eqref{eq:J:M:rough} and ~\eqref{eq:zl:and:kl:on:shock:L:infinity}), while  $u_r(r,\cdot,t)$ does not jump.
\end{enumerate}
Moreover, this solution is unique in the class of entropy producing regular shock solutions (cf.~Definition~\ref{shockdef})  with azimuthal symmetry, such that the corresponding azimuthal variables $(w,z,k,a)$ belong to the space ${\mathcal X}_{\bar \eps}$ (cf.~Definition~\ref{def:XT}).
\end{theorem}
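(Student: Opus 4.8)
\textbf{Proof plan for Theorem~\ref{thm:main:developmenteuler}.}
The strategy is to recognize that Theorem~\ref{thm:main:developmenteuler} is not a new result but rather a translation of the azimuthal-symmetry results of Sections~\ref{sec:formation}--\ref{sec:C2} into the hydrodynamic variables $(u,\rho,E,p)$. The starting point is the change of variables recorded in \eqref{scale0} and \eqref{eq:riemann}: from $(w,z,k,a)$ we recover $b = \tfrac12(w+z)$, $\ss = \tfrac12(w-z) = c$, $k$, $a$, and then $u_\theta = rb$, $u_r = ra$, $\sigma = r\ss$, $\kcal = k$. Since $\gamma = 2$ and $\alpha = \tfrac12$, the pressure is $p = \tfrac{\alpha^2}{\gamma} \rho \sigma^2 = \tfrac18 \rho \sigma^2$, the density is $\rho = \tfrac1{\alpha^2} \sigma^2 e^{-\kcal}/\gamma \cdot \tfrac{\gamma}{1}$... more precisely from \eqref{sigma1} with $\alpha = \tfrac12$ we get $\rho = \tfrac14 \sigma^2 e^{-\kcal} = \tfrac1{16} r^2(w-z)^2 e^{-k}$, and the total energy is $E = \tfrac12 \rho|u|^2 + \tfrac{p}{\gamma-1} = \tfrac12\rho|u|^2 + \tfrac12\rho^2 e^{\kcal}$. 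The first step is therefore to fix these algebraic dictionary relations precisely and verify that they are smooth, invertible maps on the relevant range (this uses $\rho \geq \rho_{\min} > 0$, i.e.\ $w - z$ bounded below, which follows from \eqref{eq:w:z:k:a:boot} and \eqref{eq:u0:ass:2a}).

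Second, I would invoke Theorem~\ref{thm:main:development} and Theorem~\ref{thm:C2} to obtain the shock curve $\sc \in C^2$, the characteristic curves $\sc_1, \sc_2 \in C^1$, and the solution $(w,z,k,a) \in {\mathcal X}_{\bar\eps}$ with all the stated regularity, jump, and cusp properties. One then transports each conclusion through the dictionary. The identification $\lambda_1 = \tfrac13 w + z$ in azimuthal variables corresponds, after undoing the temporal rescaling $\p_t \mapsto \tfrac34 \p_{\tilde t}$ (see the paragraph above \eqref{eq:w:z:k:a}) and the scaling $\sigma = r\ss$, to the wave speed $u_\theta - \tfrac12 \sigma$ on each fixed circle (and $\lambda_2 = \tfrac23(w+z)$ corresponds to $u_\theta$); this is what is asserted in item~(4). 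The jump relations of item~(7) follow from $\jump{u_\theta} = \tfrac{r}{2}\jump{w + z} \sim r\jump{w}$, $\jump{\rho} \sim \tfrac{r^2}{16}\jump{(w-z)^2} \sim r^2 \mean{w}\jump{w} \sim r^2 t^{1/2}$ using \eqref{eq:J:M:rough} and \eqref{eq:zl:and:kl:on:shock:L:infinity}, $\jump{S} = \jump{k} \sim t^{3/2}$, $\jump{E} \sim r^4 t^{1/2}$ (dominated by the $\rho^2 e^k$ and $\rho|u|^2$ terms, whose leading jump is $\OO(r^4 t^{1/2})$), while $\jump{u_r} = r\jump{a} = 0$ by the continuity of $a$ across $\sc$ established in Theorem~\ref{thm:main:development}\ref{item:5.5.ix}. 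The cusp statements in items~(5)--(6) follow from $\p_\theta^2 S = \p_\theta^2 k$, from $u_\theta = \tfrac r2(w+z)$ and $p = \tfrac18 \rho\sigma^2 = \tfrac{1}{128} r^4 (w-z)^4 e^{-2k}$ so that $\p_\theta^2 u_\theta = \tfrac r2(\p_\theta^2 w + \p_\theta^2 z)$ is controlled by the magic cancellation \eqref{eq:s2:dyy:magic} (giving $C^2$ for $u_\theta$ across $\sc_2$, hence for $p$ via the chain rule since $\p_\theta(w-z)$ appears squared and the worst terms are $\p_\theta^2 w + \p_\theta^2 z$ type combinations after expanding), and from $\p_\theta^2 u_r = r\p_\theta^2 a$, $\p_\theta^2 \rho = \tfrac{r^2}{16}\p_\theta^2[(w-z)^2 e^{-k}]$ whose leading singular term is a constant multiple of $\p_\theta^2 w - \p_\theta^2 k$ type combination, both inheriting the $C^{1,1/2}$ cusp and the sharp $(\theta - \sc_2(t))^{-1/2}$ blow-up of $\p_\theta^2 w$, $\p_\theta^2 z$, $\p_\theta^2 k$, $\p_\theta^2 a$ from \eqref{eq:s2:dyy:bounds}. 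The signs of the one-sided cusps are read off from the sign information in \eqref{eq:s2:dyy:bounds} and \eqref{eq:s1:dyy:bounds}: e.g.\ $\p_\theta^2 u_r = r\p_\theta^2 a \leq -\tfrac1C rt(\theta - \sc_2)^{-1/2}$, and the monotonicity claim ``normal velocity decreasing in the direction of its motion'' on $\sc_1$ corresponds to the sign of $\jump{\p_\theta a}$ or the relevant one-sided derivative, which I would extract from Theorem~\ref{thm:main:development}\ref{item:5.5.ix} and the $z$-cusp structure.

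Third, the verification that $(u,\rho,E,\mathcal S)$ is a \emph{regular shock solution} in the sense of Definition~\ref{shockdef} requires checking: (i) it is a weak solution of the conservation-law form \eqref{euler-weak} --- this follows because on $\DD_{\bar\eps}$ the fields solve the classical (non-conservative) system \eqref{eq:Euler2}, which is equivalent to \eqref{euler-weak} for smooth solutions, and the Rankine--Hugoniot conditions \eqref{RH_condition}, equivalently \eqref{pjump77}--\eqref{pjump7}, hold across $\mathcal S$ by Theorem~\ref{thm:main:development}\ref{item:5.5.vii}, so the distributional Euler equations are satisfied; (ii) $\rho \geq \rho_{\min} > 0$, which follows from the lower bound on $w - z = 2c \geq \tfrac25\kappa$; (iii) Lipschitz regularity off $\mathcal S$, which follows from $(w,z,k,a) \in C^1_{\theta,t}(\DD_{\bar\eps})$; (iv) the weak-shock smallness $\jump{u},\jump\rho,\jump E = \OO(t^{1/2}) \ll 1$. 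The entropy-production property (``entropy producing regular shock'') follows from $\jump{k} > 0$ via Lemma~\ref{lemmaent}, and $\jump k \sim t^{3/2} > 0$ by \eqref{eq:kl:on:shock:L:infinity}. Finally, uniqueness in the stated class is inherited directly from the uniqueness part of Theorem~\ref{thm:main:development}: any entropy-producing regular shock solution with azimuthal symmetry whose azimuthal variables lie in ${\mathcal X}_{\bar\eps}$ must have those azimuthal variables coincide with the constructed ones by Section~\ref{sec:uniqueness}, hence the hydrodynamic fields coincide too.

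\textbf{Main obstacle.} The bulk of this theorem is bookkeeping, but the one genuinely delicate point is item~(5)'s claim that $p$ and $u_\theta$ retain $C^2$ regularity across $\sc_2$ while $\rho$, $S$, $u_r$ do not. This is \emph{not} automatic from the individual second-derivative bounds \eqref{eq:s2:dyy:bounds}, each of which blows up like $(\theta - \sc_2(t))^{-1/2}$; it requires the exact cancellation \eqref{eq:s2:dyy:magic}, $|\p_\theta^2 w + \p_\theta^2 z| \lesssim t^{-1/2}$, combined with a careful expansion of $\p_\theta^2 p = \p_\theta^2(\tfrac1{128} r^4 (w-z)^4 e^{-2k})$. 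Expanding the latter produces terms like $(w-z)^3 e^{-2k}(\p_\theta^2 w - \p_\theta^2 z)$, which is singular, so the claim $p \in C^2$ across $\sc_2$ must in fact use that the singular parts of $\p_\theta^2 w$, $\p_\theta^2 z$, and $\p_\theta^2 k$ conspire; indeed $\p_\theta^2 w - \tfrac14 c\,\p_\theta^2 k$ and $\p_\theta^2 z + \tfrac14 c\,\p_\theta^2 k$ are the ``good unknowns'' $\p_\theta q^w$, $\p_\theta q^z$ whose bounds are $\OO(t^{-5/3})$, $\OO(t^{-1/2})$ (Lemmas~\ref{lem:dyqw},~\ref{lem:zyy}), so I would re-express $\p_\theta^2 p$ and $\p_\theta^2 u_\theta$ in terms of $q^w_\theta + q^z_\theta = \p_\theta^2(w+z)$ and the (non-singular across $\sc_2$, to leading order) combination, exactly as the paper does around \eqref{eq:done} and \eqref{eq:s2:dyy:magic}. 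Getting the precise regularity class (in particular matching the stated bound $|\p_\theta^2 p| \leq Cr^4 t^{-2}$, which is a $t$-dependent but $\theta$-uniform bound rather than a cusp) is where the proof must be written with care; everything else is a mechanical substitution.
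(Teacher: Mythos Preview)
Your proposal is correct and follows essentially the same route as the paper: invoke Theorems~\ref{thm:main:development} and~\ref{thm:C2}, then push the results through the dictionary \eqref{scale0}--\eqref{eq:riemann}, with the only non-mechanical step being the $C^2$ regularity of $p$ and $u_\theta$ across $\sc_2$, which you correctly identify as requiring the good unknowns $q^w,q^z$ and the cancellation \eqref{eq:s2:dyy:magic}.

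Two minor corrections to tighten before writing it up. First, the pressure formula: from $p=\tfrac18\rho\sigma^2$ and $\rho=\tfrac14\sigma^2 e^{-k}$, $\sigma=rc$, one gets $p=\tfrac{1}{32}r^4 c^4 e^{-k}$ (single power of $e^{-k}$, not $e^{-2k}$). Second, your description of the density singularity is slightly off. Expanding $\partial_\theta^2\rho$ with $\rho=\tfrac{r^2}{4}c^2 e^{-k}$, the leading singular piece is $\tfrac{r^2}{4}ce^{-k}(2c_{\theta\theta}-ck_{\theta\theta})$; the paper then writes $2c_{\theta\theta}=w_{\theta\theta}-z_{\theta\theta}=(q^w_\theta-q^z_\theta)+\tfrac12 c_\theta k_\theta+\tfrac12 ck_{\theta\theta}$, so $2c_{\theta\theta}-ck_{\theta\theta}=(q^w_\theta-q^z_\theta)-\tfrac12 ck_{\theta\theta}+\tfrac12 c_\theta k_\theta$. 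Since $q^w_\theta,q^z_\theta$ are bounded in $\theta$ near $\sc_2$ (Lemmas~\ref{lem:dyqw} and~\ref{lem:zyy}), the surviving singular term is $-\tfrac12 c^2 e^{-k}k_{\theta\theta}$, not a ``$w_{\theta\theta}-k_{\theta\theta}$'' combination. This is exactly the computation in the displayed lines preceding \eqref{eq:done}, and the same algebra with $c^4 e^{-k}$ in place of $c^2 e^{-k}$ (now producing $4c_{\theta\theta}-ck_{\theta\theta}=2(q^w_\theta-q^z_\theta)+c_\theta k_\theta$, with \emph{no} residual $k_{\theta\theta}$) gives the $C^2$ bound for $p$ in \eqref{eq:done}.
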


The above theorem directly follows from our previous two Theorems \ref{thm:main:development} and \ref{thm:C2}, by taking into account the relation between the fluid variables and the azimuthal variables in \eqref{scale0},  and in turn to the Riemann variables in \eqref{eq:riemann}.   The bounds on second derivatives are all a consequence of Theorem \ref{thm:C2}. In the region $\DD_{\bar \eps}^{(2)}$, the bounds for the entropy $S$  and radial velocity $u_r$  follow from \eqref{eq:s2:dyy:bounds}. Since $u_\theta= \tfrac{r}{2} (w+z)$, the bound for the second derivative of $u_\theta$ in the region $\DD_{\bar \eps}^{(2)}$, which does not blow up as $\theta \to \sc_2(t)^+$ in positive time,  follows from \eqref{eq:s2:dyy:magic}.  Since $\rho = \tfrac{r^2}{4} c^2 e^{-k}$,  the claimed bound for the second derivative of the density follows from  \eqref{eq:the:X:T},  \eqref{eq:s2:dyy:bounds}, \eqref{vladneeds2}, and \eqref{eq:lossy:qy:dz:bound}  since  we may write
\begin{align*}
 \tfrac{16}{r^2}\partial_\theta^2 \rho 
 &=  c e^{-k} \left(2 c_{\theta\theta} - c k_{\theta\theta} \right) + (\mbox{terms which are bounded as } \theta \to \sc_2(t)^+ \mbox{ in terms of  powers of } t^{-1}) \notag\\
  &=  c e^{-k} \left( q^w_{\theta} - q^z_\theta - \tfrac 12  c k_{\theta\theta} \right) + (\mbox{terms which are bounded as } \theta \to \sc_2(t)^+ \mbox{ in terms of   powers of } t^{-1}) \notag\\
   &=   - \tfrac 12  c^2 e^{-k} k_{\theta\theta}  + (\mbox{terms which are bounded as } \theta \to \sc_2(t)^+ \mbox{ in terms of   powers of } t^{-1})\,.
\end{align*}
and so the singularity of $k_{\theta\theta}$ on $\sc_2$ carries over to $\rho$. Lastly, the claimed estimate for the second derivative of pressure, which does not blow up as $\theta \to \sc_2(t)^+$ in positive time, follows from the identity $p = \frac{1}{32} r^4 c^4 e^{-k}$ and a similar computation as above
\begin{align}
 \tfrac{32}{r^4}\partial_\theta^2 p 
 &=  c^3 e^{-k} \left(4 c_{\theta\theta} - c k_{\theta\theta} \right) + (\mbox{terms which are bounded as } \theta \to \sc_2(t)^+ \mbox{ in terms of  powers of } t^{-1}) \notag\\
  &= 2 c^3 e^{-k} \left(  q^w_{\theta} - q^z_\theta   \right) + (\mbox{terms which are bounded as } \theta \to \sc_2(t)^+ \mbox{ in terms of   powers of } t^{-1}) \notag\\
   &=  (\mbox{terms which are bounded as } \theta \to \sc_2(t)^+ \mbox{ in terms of   powers of } t^{-1})\,.
   \label{eq:done}
\end{align}
The dependence of the bound on $t^{-1}$ follows from  \eqref{eq:the:X:T}, \eqref{vladneeds2}, and \eqref{eq:lossy:qy:dz:bound}. 

In the region $\DD_{\bar \eps}^{(1)}$,  we have that $w_{\theta \theta}$ is bounded in terms of inverse powers of $t$ and $z_{\theta\theta}$ satisfies \eqref{eq:s1:dyy:bounds}, which gives the bounds on $u_\theta$ and $\rho$.  The bound for the radial velocity appears in \eqref{thm-wyy-bound-final}.

The size of the jumps along the shock curve, and the uniqueness statement, follow directly from Theorem~\ref{thm:main:development}. To avoid redundancy we omit further details.

\subsection*{Acknowledgments} 
T.B. \ was supported by the NSF grant DMS-1900149 and a Simons Foundation Mathematical and Physical Sciences Collaborative Grant.
T.D.\ was supported by NSF grant DMS-1703997.   S.S.\ was supported by  NSF grant DMS-2007606 and the Department of Energy Advanced Simulation and Computing (ASC) Program.  V.V.\ was supported by the NSF CAREER grant DMS-1911413.


\begin{bibdiv}
\begin{biblist}

\bib{BuShVi2019b}{article}{
      author={{Buckmaster}, T.},
      author={{Shkoller}, S.},
      author={{Vicol}, V.},
       title={{Formation of point shocks for 3D compressible Euler}},
        date={2019-12},
     journal={arXiv e-prints},
       pages={arXiv:1912.04429},
      eprint={1912.04429},
}

\bib{BuShVi2019a}{article}{
      author={{Buckmaster}, T.},
      author={Shkoller, S.},
      author={{Vicol}, V.},
       title={Formation of shocks for 2d isentropic compressible Euler},
        date={2020},
     journal={Communications on Pure and Applied Mathematics},
      eprint={https://onlinelibrary.wiley.com/doi/pdf/10.1002/cpa.21956},
         url={https://onlinelibrary.wiley.com/doi/abs/10.1002/cpa.21956},
}

\bib{BuShVi2020}{article}{
      author={{Buckmaster}, T.},
      author={{Shkoller}, S.},
      author={Vicol, V.},
       title={{Shock formation and vorticity creation for 3d Euler}},
        date={2020-06},
     journal={arXiv e-prints},
       pages={arXiv:2006.14789},
      eprint={2006.14789},
}

\bib{ChDo2001}{article}{
      author={Chen, S.},
      author={Dong, L.},
       title={Formation and construction of shock for {$p$}-system},
        date={2001},
        ISSN={1006-9283},
     journal={Sci. China Ser. A},
      volume={44},
      number={9},
       pages={1139\ndash 1147},
         url={https://doi.org/10.1007/BF02877431},
      review={\MR{1860832}},
}

\bib{ChDeKr2015}{article}{
      author={Chiodaroli, E.},
      author={De~Lellis, C.},
      author={Kreml, O.},
       title={Global ill-posedness of the isentropic system of gas dynamics},
        date={2015},
        ISSN={0010-3640},
     journal={Comm. Pure Appl. Math.},
      volume={68},
      number={7},
       pages={1157\ndash 1190},
         url={https://doi.org/10.1002/cpa.21537},
      review={\MR{3352460}},
}

\bib{Ch2019}{book}{
      author={Christodoulou, D.},
       title={The shock development problem},
      series={EMS Monographs in Mathematics},
   publisher={European Mathematical Society (EMS), Z\"{u}rich},
        date={2019},
        ISBN={978-3-03719-192-7},
         url={https://doi.org/10.4171/192},
      review={\MR{3890062}},
}

\bib{ChLi2016}{article}{
      author={Christodoulou, D.},
      author={Lisibach, A.},
       title={Shock development in spherical symmetry},
        date={2016},
        ISSN={2524-5317},
     journal={Ann. PDE},
      volume={2},
      number={1},
       pages={Art. 3, 246},
         url={https://doi.org/10.1007/s40818-016-0009-1},
      review={\MR{3489205}},
}

\bib{dafermos2005hyperbolic}{book}{
      author={Dafermos, C.~M.},
       title={Hyperbolic conservation laws in continuum physics},
   publisher={Springer},
        date={2005},
      volume={3},
}

\bib{drivas2018onsager}{article}{
      author={Drivas, T.~D.},
      author={Eyink, G.~L.},
       title={An {O}nsager singularity theorem for turbulent solutions of
  compressible {E}uler equations},
        date={2018},
     journal={Commun. Math. Phys.},
      volume={359},
      number={2},
       pages={733\ndash 763},
}

\bib{klingenberg2020shocks}{article}{
      author={Klingenberg, C.},
      author={Kreml, O.},
      author={M{\'a}cha, V.},
      author={Markfelder, S.},
       title={Shocks make the Riemann problem for the full Euler system in
  multiple space dimensions ill-posed},
        date={2020},
     journal={Nonlinearity},
      volume={33},
      number={12},
       pages={6517},
}

\bib{Kong2002}{article}{
      author={Kong, D.-X.},
       title={Formation and propagation of singularities for {$2\times2$}
  quasilinear hyperbolic systems},
        date={2002},
        ISSN={0002-9947},
     journal={Trans. Amer. Math. Soc.},
      volume={354},
      number={8},
       pages={3155\ndash 3179},
         url={https://doi.org/10.1090/S0002-9947-02-02982-3},
      review={\MR{1897395}},
}

\bib{landaulifshitz}{book}{
      author={Landau, L.D.},
      author={Lifshitz, E.M.},
       title={Fluid mechanics},
   publisher={Pergamon Press, Oxford},
        date={1987},
}

\bib{Lebaud1994}{article}{
      author={Lebaud, M.-P.},
       title={Description de la formation d'un choc dans le {$p$}-syst\`eme},
        date={1994},
        ISSN={0021-7824},
     journal={J. Math. Pures Appl. (9)},
      volume={73},
      number={6},
       pages={523\ndash 565},
      review={\MR{1309163}},
}

\bib{majda1983existence}{book}{
      author={{Majda}, A.},
       title={The existence of multi-dimensional shock fronts},
   publisher={American Mathematical Soc.},
        date={1983},
      volume={281},
}

\bib{majda1983stability}{book}{
      author={Majda, A.},
       title={The stability of multi-dimensional shock fronts},
   publisher={American Mathematical Soc.},
        date={1983},
      volume={275},
}

\bib{Yin2004}{article}{
      author={Yin, H.},
       title={Formation and construction of a shock wave for 3-{D} compressible
  {E}uler equations with the spherical initial data},
        date={2004},
        ISSN={0027-7630},
     journal={Nagoya Math. J.},
      volume={175},
       pages={125\ndash 164},
         url={https://doi.org/10.1017/S002776300000893X},
      review={\MR{2085314}},
}

\bib{YinZhu2021b}{article}{
      author={Yin, H.},
      author={Zhu, L.},
       title={{Formation and construction of a multidimensional shock wave for
  the first order hyperbolic conservation law with smooth initial data}},
        date={2021-03},
     journal={arXiv e-prints},
       pages={arXiv:2103.12230},
      eprint={2103.12230},
}

\end{biblist}
\end{bibdiv}

\end{document}